\newcommand{\re}{\mathrm{Re}\,}
\newcommand{\R}{\mathbb R}
\newcommand{\C}{\mathbb C}
\newcommand{\T}{\mathbb T}
\renewcommand{\S}{\mathbb S}
\newcommand{\N}{\mathbb N}
\newcommand{\Z}{\mathbb Z}
\renewcommand{\AA}{\mathcal A}
\newcommand{\BB}{\mathcal B}
\newcommand{\CC}{\mathcal C}
\newcommand{\OO}{\mathcal O}
\newcommand{\DD}{\mathcal D}
\newcommand{\FF}{\mathcal F}
\newcommand{\GG}{\mathcal X}
\newcommand{\HH}{\mathcal H}
\newcommand{\LL}{\mathcal L}
\newcommand{\MM}{\mathcal M}
\newcommand{\PP}{\mathsf P}
\newcommand{\QQ}{\mathcal Q}
\newcommand{\RR}{\mathcal R}
\renewcommand{\SS}{\mathcal S}
\newcommand{\TT}{\mathcal T}
\newcommand{\UU}{\mathcal U}
\newcommand{\VV}{\mathcal Y}
\newcommand{\BBB}{\mathscr B}
\newcommand{\FFF}{\mathscr F}
\newcommand{\HHH}{\mathscr H}
\newcommand{\GGG}{\mathscr X}
\newcommand{\MMM}{\mathscr M}
\newcommand{\Ss}{H}       % Principal (le Space small)
\newcommand{\Ssp}{H^{\bullet}}    % Dissipation (le Space small plus)
\newcommand{\Ssm}{H^{\circ}}    % Dual de dissipation (le Space small minus)
\newcommand{\SSs}{\HH}       					% Principal
\newcommand{\rSSs}[1]{\HH^{#1}}	 		% Principal en prÃ©cisant la rÃ©gularitÃ©
\newcommand{\SSsp}{\HH^{\bullet}}    		 	 	 % Dissipation
\newcommand{\rSSsp}[1]{\HH^{{\bullet}, #1}}     % Dissipation en prÃ©cisant la rÃ©gularitÃ©
\newcommand{\SSsm}{\HH^{\circ}}   			 	% Dual de dissipation
\newcommand{\rSSsm}[1]{\HH^{{\circ}, #1}}    % Dual de dissipation en prÃ©cisant la rÃ©gularitÃ©
\newcommand{\SSSs}{\HHH}       			% Principal
\newcommand{\rSSSs}[1]{\HHH^{#1}}     % Principal en prÃ©cisant la rÃ©gularitÃ©
\newcommand{\Sl}{X}       % Principal (le Space large)
\newcommand{\Slp}{X^{\bullet}}    % Dissipation (le Space large plus)
\newcommand{\Slm}{X^{\circ}}    % Dual de dissipation (le Space large minus)
\newcommand{\SSl}{\GG}       					% Principal
\newcommand{\rSSl}[1]{\GG^{#1}}	 		% Principal en prÃ©cisant la rÃ©gularitÃ©
\newcommand{\SSlp}{\GG^{\bullet}}    		 	 	 % Dissipation
\newcommand{\rSSlp}[1]{\GG^{\bullet, #1}}     % Dissipation en prÃ©cisant la rÃ©gularitÃ©
\newcommand{\SSlm}{\GG^{\circ}}   			 	% Dual de dissipation
\newcommand{\rSSlm}[1]{\GG^{{\circ}, #1}}    % Dual de dissipation en prÃ©cisant la rÃ©gularitÃ©
\newcommand{\SSSl}{\GGG}       			% Principal
\newcommand{\rSSSl}[1]{\GGG^{#1}}     % Principal en prÃ©cisant la rÃ©gularitÃ©
\newcommand{\Sh}{\bm{X}}       % Principal (le Space large)
\newcommand{\Shp}{\bm{X}^{\bullet}}    % Dissipation (le Space large plus)
\newcommand{\Shm}{\bm{X}^{\circ}}    % Dual de dissipation (le Space large minus)
\newcommand{\SSh}{\bm{\GG}}       					% Principal
\newcommand{\rSSh}[1]{\bm{\GG}^{#1}}	 		% Principal en prÃ©cisant la rÃ©gularitÃ©
\newcommand{\SShp}{\bm{\GG}^{\bullet}}    		 	 	 % Dissipation
\newcommand{\rSShp}[1]{\bm{\GG}^{\bullet, #1}}     % Dissipation en prÃ©cisant la rÃ©gularitÃ©
\newcommand{\SShm}{\bm{\GG}^{\circ}}   			 	% Dual de dissipation
\newcommand{\SSSh}{\pmb{\GGG}}       			% Principal
\newcommand{\SSSm}{\FFF}       % Principal
\newcommand{\rSSSm}[1]{\FFF^{#1}}       % Principal (en prÃ©cisant la rÃ©gularitÃ©)
\newcommand{\Sg}{Y}       % Principal (le Space general)
\newcommand{\Sgp}{Y^{\bullet}}    % Dissipation (le Space general plus)
\newcommand{\Sgm}{Y^{\circ}}    % Dual de dissipation (le Space general minus)
\newcommand{\SSg}{\VV}       % Principal
\newcommand{\rSSg}[1]{\VV^{#1}}       % Principal (en prÃ©cisant la rÃ©gularitÃ©)
\newcommand{\SSgp}{\VV^{\bullet}}    % Dissipation
\newcommand{\rSSgp}[1]{\VV^{\bullet, #1}}    % Dissipation (en prÃ©cisant la rÃ©gularitÃ©)
\newcommand{\SSgm}{\VV^{\circ}}    % Dual de dissipation
\newcommand{\rSSgm}[1]{\VV^{\circ, #1}}    % Dual de dissipation (en prÃ©cisant la rÃ©gularitÃ©)
\newcommand{\Sr}{W}       % Principal (le Space regular)
\theoremstyle{plain}
\newtheorem{theo}{Theorem}
\newtheorem{prop}[theo]{Proposition}
\newtheorem{lem}[theo]{Lemma}
\newtheorem{exe}[theo]{Example}
\newtheorem{hypL}{\textsc{Structural Linear Assumptions}}
\newtheorem*{hypLE}{\textsc{Assumptions on enlarged spaces}}
\newtheorem{hypQ}{\textsc{Structural Quadratic Assumptions}}
\newtheorem*{hypQE}{\textsc{Structural Assumptions -- non symmetric case}}
\newtheorem{defi}[theo]{Definition}
\newtheorem{cor}[theo]{Corollary}
\theoremstyle{remark}
\newtheorem{rem}[theo]{Remark}
\numberwithin{equation}{section}
\numberwithin{theo}{section}
\def\le{\leqslant}
\def\ge{\geqslant}
\def\leq{\leqslant}
\def\geq{\geqslant}
\DeclareMathOperator{\supp}{supp}
\DeclareMathOperator{\Span}{Span}
\DeclareMathOperator{\id}{Id}
\DeclareMathOperator{\range}{\mathrm{Range}}
\DeclareMathOperator{\nul}{\mathrm{Ker}}
\newcommand{\ini}{\textnormal{in}}
\newcommand{\err}{\textnormal{err}}
\newcommand{\ns}{\textnormal{NS}}
\newcommand{\hyd}{\textnormal{hydro}}
\newcommand{\mix}{\textnormal{mix}}
\newcommand{\kin}{\textnormal{kin}}
\newcommand{\sym}{\mathrm{sym}}
\def\eps{{\varepsilon}}
\newcommand{\Nt}{|\hskip-0.04cm|\hskip-0.04cm|}
\newcommand{\la}{\langle}
\newcommand{\ra}{\rangle}
\newcommand{\lla}{\left\langle}
\newcommand{\rra}{\right\rangle}
\newcommand{\rectL}{\mathbb{L}}
\newcommand{\rectsL}{\mathbb{M}}
\newcommand{\BurA}{{\mathbf{A}}}
\newcommand{\BurB}{{\mathbf{B}}}
\newsavebox{\@brx}
\newcommand{\dlla}[1][]{\savebox{\@brx}{\(\m@th{#1\langle}\)}%
	\mathopen{\copy\@brx\kern-0.5\wd\@brx\usebox{\@brx}}}
\newcommand{\drra}[1][]{\savebox{\@brx}{\(\m@th{#1\rangle}\)}%
	\mathclose{\copy\@brx\kern-0.5\wd\@brx\usebox{\@brx}}}
\renewcommand{\d}{\mathrm{d}}
\newcommand{\Id}{\mathrm{Id}}
\newcommand{\hypst}[1]{\textnormal{\textbf{(#1)}}}
\newcommand{\dom}{\mathscr{D}}
\newcommand{\thttl}[1]{\textit{\textbf{#1}}}
\newcommand{\step}[2]{\medskip\noindent\textit{Step #1: #2.}}
\newcommand{\Wave}{\textnormal{wave}}
\newcommand{\disp}{\textnormal{disp}}
\newcommand{\Inc}{\textnormal{inc}}
\newcommand{\Bou}{\textnormal{Bou}}
\title[Hydrodynamic limits for kinetic equations]{Hydrodynamic limits for kinetic equations preserving mass, momentum and energy: a spectral and unified approach in the presence of a spectral gap}
\def\theauthor{P. Gervais, B. Lods}
\author{P. Gervais}
\address{Universit\`{a} degli Studi di Torino, Department of Economics, Social Sciences, Applied Mathematics and Statistics ``ESOMAS'', Corso Unione Sovietica, 218/bis, 10134 Torino, Italy.} \email{pierre.gervais@inria.fr}
\author{B. Lods}
\address{Universit\`{a} degli Studi di Torino \& Collegio Carlo Alberto, Department of Economics, Social Sciences, Applied Mathematics and Statistics ``ESOMAS'', Corso Unione Sovietica, 218/bis, 10134 Torino, Italy.}
\email{bertrand.lods@unito.it}
\date{\today}
\begin{document}

	\maketitle 
	
	\begin{abstract}
		Triggered by the fact that, in the hydrodynamic limit, several different kinetic equations of physical interest all lead to the same Navier-Stokes-Fourier system, we develop in the paper an abstract framework which allows to explain this phenomenon. The method we develop can be seen as a significant improvement of known approaches for which we fully exploit some structural assumptions on the linear and nonlinear collision operators as well as a good knowledge of the Cauchy theory for the limiting equation. {In particular, we fully exploit the fact that the collision operator is preserving both momentum and kinetic energy.}  We adopt a perturbative framework in a Hilbert space setting and first develop a general and fine spectral analysis of the linearized operator and its associated semigroup. Then, we introduce a splitting adapted to the various regimes (kinetic, acoustic, hydrodynamic) present in the kinetic equation which allows, by a fixed point argument, to construct a solution to the kinetic equation and prove the convergence towards suitable solutions to the Navier-Stokes-Fourier system. Our approach is robust enough to treat, in the same formalism, the case of the Boltzmann equation with hard and moderately soft potentials, with and without cut-off assumptions, as well as the Landau equation for hard and moderately soft potentials in presence of a spectral gap. New well-posedness and strong convergence results are obtained within this framework. In particular, for initial data with algebraic decay with respect to the velocity variable, our approach provides the first result concerning the strong Navier-Stokes limit  from Boltzmann equation without Grad cut-off assumption or Landau equation. The method developed in the paper is also robust enough to apply, at least at the linear level, to quantum kinetic equations for Fermi-Dirac or Bose-Einstein particles.\end{abstract}
	
\tableofcontents
	
	\section{Introduction}

	\subsection{From nonlinear collisional model to Navier-Stokes-Fourier system}
	
	The connection between the  Navier-Stokes  and Boltzmann equations originates seemingly from the work  \cite{H1912} regarding the mathematical treatment of the axioms of physics. Since this original idea, the derivation of  suitable hydrodynamic equations from nonlinear kinetic equations  has attracted a lot of attention in the recent years. We will review later in this introduction several of the main contributions in the field, illustrating in particular the large variety of models considered in the literature, but we wish to focus here on some striking universal features shared by several binary collisional models in the diffusive scaling. 
	
	Namely, for kinetic equations in adimensional form given by the evolution of a {particles number} density $f^{\eps}(x,v,t)$ {(with $x \in \R^d$ denoting position, $v \in \R^d$ the velocity, $t \ge 0$ the time and $\eps > 0$ the mean free path between particles collisions)} 
	\begin{equation}\label{eq:Kin-Intro}
		\partial_{t}f^{\eps}+\frac{1}{\eps}v\cdot \nabla_{x}f^{\eps} =\frac{1}{\eps^{2}}\LL f^{\eps} + \frac{1}{\eps}\QQ(f^{\eps},f^{\eps}), \qquad f^{\eps}(x,v,0)=f_{\ini}(x,v)\end{equation}
	for some suitable \emph{linear operator} $\LL$ and \emph{quadratic operator} $\QQ$, it has been shown in various contexts that, in the limit $\eps \to 0$, the solution $f^{\eps}$ converges (in some sense to determine) towards a ``macroscopic'' distribution $f_{\ns}(x,v,t)$ of the form
	\begin{equation}\label{eq:11}
		f_{\ns}(x,v,t)= \left(\varrho(t, x) + u(t, x) \cdot v   + C_{0}\theta(t, x)\left( |v|^2 - E \right)\right) \mu(v)\end{equation}
	where $C_{0} >0,E >0$ are depending {only on the universal distribution $\mu$ (independent of $f_\ini$)}. More surprisingly, it is also known that the triple of functions 
	$$(\varrho(t,x),u(t,x),\theta(t,x)) \in \R \times \R^{d}\times \R$$
	{associated to} the macroscopic mass, mean velocity and temperature of the gas
	are suitable solutions to the Navier-Stokes-Fourier system
	\begin{equation}\label{eq:NSFint}
		\begin{cases}
			\partial_{t}u-\kappa_{\Inc}\,\Delta_{x}u + \vartheta_{\Inc}\,u\cdot \nabla_{x}\,u = \nabla_{x}p \,,\\[6pt]
			\partial_{t}\,\theta-\kappa_{\Bou}\,\Delta_{x}\theta +\vartheta_{\Bou}\,u\cdot \nabla_{x}\theta=0,\\[8pt]
			\nabla_{x}\cdot u=0\,, \qquad \nabla_x\left(\varrho + \theta\right)= 0\,,
		\end{cases}
	\end{equation}
	where the third line describe respectively the \emph{incompressibility} condition of the fluid and the \emph{Boussinesq relation} between mass and temperature. The pressure of the fluid $p$ is here above obtained implicitly as a Lagrange multiplier associated to the incompressibility constraint $\nabla_{x}\cdot u=0.$ 
	
	The striking phenomena we wish to discuss in this paper is the fact that a large variety of kinetic models described by \eqref{eq:Kin-Intro} provide in the hydrodynamic limit the \emph{same Navier-Stokes-Fourier system} \eqref{eq:NSFint}, making that system a \emph{universal hydrodynamic limit} for \eqref{eq:Kin-Intro}. The only memory of the original equation \eqref{eq:Kin-Intro} kept in  the system \eqref{eq:NSFint} is encapsulated in the various coefficients:
	$$\kappa_{\Inc} >0, \quad \kappa_{\Bou} >0,$$
	which represent the viscosity and thermal conductivity, as well as and $\vartheta_\Inc,\vartheta_\Bou$ , all of being defined explicitly in terms of the operators $\LL$ and $\QQ$ that encode the collision process. We refer to Section \ref{sec:detail} for more details on those coefficients. 
	
	Recall that, in the kinetic equation \eqref{eq:Kin-Intro}, the unknown $f^\eps(x,v,t)$ denotes typically the density of particles having position $x \in \R^{d}$ and velocity $v \in \R^{d}$ at time $t\geq0$ while the parameter $\eps$ represents the \emph{Knudsen number} which is proportional to the mean free path between collisions. Typically, small values of $\eps$ correspond to a case in which particles suffer a very large number of collisions. The hydrodynamic limit $\eps \to 0$ consists in assuming that the mean free path is negligible when compared to the typical physical scale length. We refer to \cite{Cercignani,Sone} for details on the kinetic description of gases.
	
	That kinetic equation \eqref{eq:Kin-Intro} leads to \eqref{eq:NSFint} in the limit $\eps \to 0^{+}$ is a well-understood fact that have been proven, for several type of solutions and various mode of convergence, in the case of the classical Boltzmann equation for which 
	\begin{equation}\label{eq:QQBoltz}
		\QQ(f,f)(v)=\QQ_{\mathrm{Boltz}}(f,f)=\int_{\R^{d}\times\S^{d-1}}B(|v-v_{*}|,\sigma)\left[f(v')f(v_{*}')-f(v)f(v_{*})\right]\d v_{*}\d\sigma\end{equation}
	where 
	$$v'=\frac{v+v_*}{2}+\frac{|v-v_*|}{2}\sigma, \qquad v_{*}'=\frac{v+v_*}{2}-\frac{|v-v_*|}{2}\sigma, \qquad \sigma \in \S^{d-1}$$
	and the collision kernel $B(|v-v_{*}|,\sigma)$ is given by
	$$B(|v-v_{*}|,\sigma)=|v-v_{*}|^{\gamma}\,b(\cos\theta), \qquad \cos \theta=\sigma \cdot \frac{v-v_{*}}{|v-v_{*}|}.$$
	The method developed in the paper allows to consider \emph{all kinds of collision kernel} of physical interest, covering the cases of hard {and Maxwell} potentials $(\gamma \ge 0)$ with and without cut-off assumptions as well as that of moderately soft potentials {(without cut-off assumption) for which $b(\cos \theta) \approx \theta^{-(d-1) - 2 s}$ and $\gamma + 2 s \ge 0$}. We refer to Appendix \ref{sec:Landau-Boltz} for details. 
	Besides this Boltzmann model, our approach is also robust enough to treat in the same formalism the case of the Landau equation
	\begin{equation*}
		\begin{split}
			\QQ(f,f)&=\QQ_{\mathrm{Landau}}(f,f)\\
			&=\nabla_{v}\cdot \int_{\R^{d}} |v-v_{*}|^{\gamma+2} \, \Pi_{v-v_{*}}
			\Big\{f(t,v_{*}) \nabla_{v} f(t,v) - f(t,v) {\nabla_{v_{*}} f}(t,v_{*}) \Big\}
			\, \d v_{*}\end{split}\end{equation*}
	where {$\gamma \geq -d$} and 
	$$\Pi_{z}=\mathrm{Id}-\frac{z \otimes z}{|z|^{2}}, \qquad z \in \R^{d} \setminus \{0\}$$
	denotes the projection in the direction orthogonal to $z \in \R^{d},z\neq 0$. As before, our results cover the two cases of hard {or Maxwell $(\gamma \ge 0)$ and moderately soft potentials $(\gamma + 2 \ge 0)$}.   
	For both these models, the solutions to \eqref{eq:Kin-Intro} converges to a solution $f$ given by \eqref{eq:11} where
	$$\mu(v)=(2\pi)^{-\frac{d}{2}}\exp\left(-\frac{|v|^{2}}{2}\right)$$
	is a Maxwellian distribution with  {unit mass, unit energy and mean zero velocity}, which is an equilibrium state of the collision operator $\QQ$, i.e.
	$$\QQ(\mu,\mu)=0$$
	whereas $\LL$ is the linearized operator around that equilibrium, i.e. 
	\begin{equation}\label{eq:linearized}
		\LL f=\QQ(\mu,f)+\QQ(f,\mu)\end{equation}
	for any suitable $f$ for which this makes sense. \medskip

	In this paper, we introduce an abstract framework allowing to recover the above universal behaviour, as well as the well-posedness of \eqref{eq:Kin-Intro} in a perturbative framework. Even tough the Boltzmann and Landau equations are the two main models we have in mind as field of applications of our method, we wish again to point out that we are able to prove the convergence towards \eqref{eq:NSFint} for much more general models than those ones. In particular, we can handle general linear operator $\LL$ and do not ask for the rest of the analysis that $\LL$ and $\QQ$ are related through \eqref{eq:linearized}. 
	
	The abstract framework developed in the paper is very general and robust and rely only on core assumptions about the linear part $\LL$ and the quadratic part $\QQ$. In particular, our approach can also  be adapted to handle the case of the Boltzmann equation with \emph{relativistic velocities} and it is flexible enough to also encompass, at the price of some modifications, the case of  quantum kinetic model (for which the collision operator is actually trilinear). Work is in progress in that direction in order to prove the strong convergence of solutions to the Boltzmann-Fermi-Dirac equation towards the above NSF system \eqref{eq:NSFint}, see \cite{GL2023}.

	\subsection{Literature review}
	
	As said, the derivation of hydrodynamic limits from linear and nonlinear equation is an important problem which received a lot of attention since the pioneering work of \cite{H1912} and \cite{E1917}. We do not review here the vast literature on the problem of diffusion approximation for transport processes, just referring to the classical references \cite{BLP1979,BSS} and the more recent contributions \cite{GW,BM} and the references therein.
	
For nonlinear collisional models, we refer the reader to {\cite{SR,golse}} for a more exhaustive description of the mathematically relevant results in the field regarding the Boltzmann equation. Depending on the limiting equation and the type of convergence one is interested with, there are mainly three different approaches for the derivation of hydrodynamical limit from the Boltzmann equation: a first approach consists in justifying rigorously suitable (truncated) asymptotic expansions of the solution to the kinetic equation around some hydrodynamic solution
	\begin{equation*}\label{eq:C-E}
		f_{\eps}(t,x,v)=f_{0}(t,x,v)\left(1+\sum_{n}\eps^{n}F_{n}(t,x,v)\right)\end{equation*}
	where, typically $f_{0}(t,x,v)$ is a local Maxwelllian whose macroscopic fields are required to satisfy the limiting fluid model. With such an approach, the works \cite{caflisch} and \cite{demasi} obtained respectively the first rigorous justification of the compressible Euler limit up to the first singular time for the solution of the Euler system and a justification of the incompressible Navier-Stokes limit from Boltzmann equation. The work \cite{G2006} is another important reference on this line of research and we point out that, with such an approach, one is mainly interested with strong solutions  for both the kinetic and fluid equations.
	
	Regarding now \emph{weak solutions} at both the kinetic and fluid models, a very important program  has been introduced in {\cite{BaGoLe1,BaGoLe2}} whose goal was to prove the convergence of the renormalized solutions to the Boltzmann equation towards weak solutions to the compressible Euler system or to the incompressible Navier-Stokes equations. This program has been continued exhaustively and the convergence have been obtained in several important results (see {\cite{golseSR,golseSR1,jiang-masm,lever,lions-masm1,lions-masm2}} to mention just a few).  
	
	The present contribution belongs to the third line of research which investigates \emph{strong solutions close to equilibrium} and exploits a careful spectral analysis of the linearized kinetic equation. Strong solutions to the Boltzmann equation close to equilibrium have been obtained in a weighted $L^{2}$-framework in the work \cite{U1974} and the \emph{local-in-time} convergence of these solutions towards solution to the compressible Euler equations have been derived in {\cite{nishida}}. For the limiting incompressible Navier-Stokes solution, a similar result have been carried out in {\cite{BU1991}} for smooth  global solutions in $\R^{3}$ with a small {initial datum}. The recent work \cite{GT2020} recently removed this smallness assumption, allowing to treat also non global in time solutions to the Navier-Stokes equation. A recent extension to less restrictive integrability conditions has been obtained in \cite{G2023}.  Our work is falling into this framework and is closer in spirit to the work \cite{GT2020} than to \cite{BU1991} since it fully exploits the Cauchy theory of the limiting NSF system. This line of research, complemented for instance with  {\cite{B2015,BMM2019, CC2023}}, exploits a very careful description of the spectrum of the linearized Boltzmann equation derived in {\cite{EP1975}}.  We notice that they are framed in the space $L^{2}(\mu^{-1})$ where the linearized Boltzmann operator is self-adjoint and coercive. The fact that the analysis of \cite{EP1975} has been extended recently in \cite{G2021} to larger functional spaces of the type $L^{2}_{v}(\langle \cdot\rangle^{q})$ opens the gate to some refinements of several of the aforementioned results. We also mention here the work {\cite{zhao}} which deals with an energy method in $L^{2}(\mu^{-1})$ spaces (see also {\cite{guo,guo2}} and~\cite{R2021}) in order to prove the \emph{strong convergence} of the solutions to the Boltzmann or Landau equation towards the incompressible Navier-Stokes equation without resorting to the work of {\cite{EP1975}}. 
	
	Besides the above lines of research and contributions which are dealing mainly with Boltzmann or Landau equation, we wish to point out that other kinetic and fluid models have been considered in the literature. Exhaustive list of contributions to the field is out of reach and we just mention some recent works  spanning from high friction regimes for kinetic models of swarwing (see e.g. \cite{karper,figalli} for the Cucker-Smale model) to the reaction-diffusion limit for Fitzhugh-Nagumo kinetic equations \cite{crevat}. For  fluid-kinetic systems, the literature is even more important, we mention simply here the works \cite{goudona,goudonb} dealing with light or fine particles regimes for the Vlasov-Navier-Stokes system and refer to \cite{daniel} for the more recent advances on the subject. We also mention the challenging study of gas of charged particles submitted to electro-magnetic forces (Vlasov-Maxwell-Boltzmann system) for which several incompressible fluid limits have been derived recently in the monograph \cite{arsenio}.

	\subsection{Objectives of the paper}
	
	The main scope of the paper is threefold:
	\begin{enumerate}
		\item[\textbf{(I)}] First, we  provide a unified framework which allows to capture a large variety of quadratic models and explain the emergence of the universal NSF system \eqref{eq:NSFint} in the hydrodynamic limit. To do so, we provide a general though seemingly minimal set of Assumptions under which the NSF would emerge. Those are structural assumptions on the collision operator $\QQ$ as well as the linear operator $\LL$. They are related to physical properties of the kinetic equation: we assume in particular the rotational symmetry of $\LL$ and $\QQ$ due to the isotropy of the collision process as well as usual \emph{local conservation laws} related to mass, bulk velocity and energy. This work can be considered as a quantitative version of the founding paper by \cite{BaGoLe1} in which general collision operators are considered. We refer to Section \ref{scn:general_collisions} for more details.
		%In particular, our structural assumptions cover the case of elastic interactions but are not suited to inelastic interactions  as studied in \cite{ALT}.
		\item[\textbf{(II)}] Second, within the abstract framework considered here, we aim to provide a very fine spectral analysis of the linearized operator $\LL-v\cdot \nabla_{x}$ as well as a thorough description of the decay {and regularization} properties of the associated semigroup. As in previous contributions to the field, such an analysis is performed in a Fourier-based formalism under which the linearized operator of peculiar interest becomes 
		$$\LL_{\xi}:=\LL-i(\xi\cdot v)$$
		where the transport term has been transformed in the more tractable multiplication operator by $i(v\cdot \xi)$ in Fourier variable (see Section \ref{sec:detail} for details). The advantage of working in this Fourier-based formalism is that it encompass the various scales of frequencies according to
		$$|\xi| \simeq \eps, \quad |\xi| \ll \eps \quad \text{ or } \eps \ll |\xi|$$
		which let emerge the various (kinetic, hydrodynamic, dispersive) regimes of description at the  linearized level. Under the structural assumptions on the linear part $\LL$, we give a full description of the spectrum of $\LL_{\xi}$, including the asymptotic expansion of both its leading eigenvalues and associated spectral projectors in the regime of  small frequencies, $|\xi| \simeq 0$. Such a spectral description yields to result similar to those obtained in the seminal work \cite{EP1975} but we provide here a \textbf{\textit{completely new and more direct approach to this question}} in the unified and abstract framework. Our new approach is based upon a combination of Kato's perturbation theory \cite{K1995} and enlargment and factorization techniques from \cite{GMM2017}.

		\item[\textbf{(III)}] Finally,  we provide a  \emph{strong convergence} result from solutions $f^\eps$ to \eqref{eq:Kin-Intro} towards the solution $f$ given in \eqref{eq:11} associated to \eqref{eq:NSFint}. Moreover, the strong convergence result is in essence \emph{quantitative} since we carefully estimate the difference between the solution $f^{\eps}$ and the solution $f=f_{\ns}$ by introducing a suitable splitting of $f^{\eps}$ which, roughly speaking, can be given as
		$$f^{\eps}=f_{\ns} + h^{\eps}_{\err}$$
		where $h_{\err}^{\eps}$ is an error term that that we aim to estimate as
		$$\sup_{t\ge t_{*}}\|h_{\err}^{\eps}(t)\| \leq \beta(\eps), \qquad \lim_{\eps\to0}\beta(\eps)=0$$
		for any $t_{*} >0$ and  some quantified error estimate $\beta(\eps)$. Here above, the norm $\|\cdot\|$ is quite involved and takes into account several phenomena that produce different convergence rates (e.g, acoustic waves, dissipation of entropy). The restriction $t \geq t_{*}$ stems from the difficult task of estimating the initial layer and can be removed in the case of \emph{well-prepared} initial datum (see Theorem \ref{thm:hydrodynamic_limit}
		for a precise statement and a complete description of the difference $f^{\eps}-f_{\ns}$). \end{enumerate}
	
	As a by-product of our third objective \textbf{(III)} here above, we show, for this variety of model, a close-to-equilibrium Cauchy theory for the kinetic equation \eqref{eq:Kin-Intro} for suitably small value of $\eps$. 
	One of the main feature of our approach is that, inspired by the work \cite{GT2020}, our methodology is ‘‘top-down" from the limit equation to the kinetic equation rather than ‘‘bottom-up" as usually done. This means that, as far as possible, we adapt our approach to the existing Cauchy theory for the limiting system \eqref{eq:NSFint} and deduce the Cauchy theory for the kinetic equation \eqref{eq:Kin-Intro} by comparing it to the limiting equation \eqref{eq:NSFint} for small values of $\eps.$ This is achieved through a suitable fixed-point argument involving fluctuations around the solution $f_{\ns}$. The fixed-point argument is based upon a simple use of Banach fixed point theorem or, for the more general case considered in the paper, by the convergence of a suitable scheme mimicking Picard iteration. Such an approach allows in particular to obtain well-posedness results \emph{without} any smallness assumption on the initial datum $f_{\ini}$ but only under some smallness assumption on the scaling parameter $\eps$ yielding several improvements of known results in the field.

	Among the novelty of the paper, as just said, we adapt our approach to the existing Cauchy theory for the limiting system \eqref{eq:NSFint}. A lot of efforts in the present paper are given to adapt several tools   used in the estimates of the Navier-Stokes system and, in particular, we resort to several Fourier analysis tools as developed in \cite{BCD2011} to treat nonlinear terms. We in particular adapt the paraproduct estimates described in \cite{BCD2011} to handle $x$-estimates of products of the form $\QQ(f,g)$ (see Appendix \ref{scn:littlewood-paley} for more details). The case $d=2$ needs in particular a peculiar treatment for which we face several technical difficulties to handle nonlinear estimates.

	%Regarding the method used to achieve the above  objectives, as in previous contributions to the field, we study \eqref{eq:Kin-Intro} in its equivalent formulation
	%\begin{multline}\label{eq:FoKin}
	%\partial_{t}\widehat{f}^{\eps}(\xi,v,t)+\frac{i}{\eps}(\xi \cdot v)\widehat{f}(\xi,v,t)=\frac{1}{\eps^{2}}\LL \widehat{f}^{\eps}(\xi,v,t)\\
	%+\frac{1}{\eps}\int_{\R^d}\QQ(\widehat{f}^{\eps}(\xi,\cdot,t),\widehat{f}^{\eps}(\xi-\zeta,\cdot,t))(v)\d\zeta\end{multline}
	%where 
	%$$\widehat{f}^{\eps}(\xi,v,t)=\int_{\R^{d}}e^{-i \xi \cdot x}f^{\eps}(x,v,t)\d x$$
	%is the Fourier transform with respect to the position variable $x\in \R^{d}$ and we exploited the fact that $\LL$ and $\QQ$ are local in $x.$ In this framework, the linearized operator of peculiar interest becomes 
	%$$\LL_{\xi}:=\LL-i(\xi\cdot v)$$
	%where the transport term has been transformed in the more tractable multiplication operator by $i(v\cdot \xi)$ in Fourier variable. The idea of studying \eqref{eq:Kin-Intro} in the equivalent form \eqref{eq:FoKin} originates from the seminal work \cite{EP1975} where a careful spectral analysis of the linearized Boltzmann operator was performed. It enforces somehow the study of both \eqref{eq:Kin-Intro} and \eqref{eq:NSFint} in $L^{2}_{x}$-functional spaces. Here, we push forward this idea and try to extract from it minimal assumptions and optimal estimates for $\QQ$.

	Regarding the method used to achieve the above  objectives, as in previous contributions to the field, we start by studying \eqref{eq:Kin-Intro} without its non-linear part and in Fourier variables:
	\begin{equation}\label{eq:FoKin}
		\partial_{t}\widehat{f}^{\eps}(\xi,v,t)+\frac{i}{\eps}(\xi \cdot v)\widehat{f}(\xi,v,t)=\frac{1}{\eps^{2}}\LL \widehat{f}^{\eps}(\xi,v,t)
		\end{equation}
	where 
	$$\widehat{f}^{\eps}(\xi,v,t)=\int_{\R^{d}}e^{-i \xi \cdot x}f^{\eps}(x,v,t)\d x$$
	is the Fourier transform with respect to the position variable $x\in \R^{d}$ and we exploited the fact that $\LL$ is local in $x.$ In this framework, the linear  operator of peculiar interest becomes 
	$$\LL_{\xi}:=\LL-i(\xi\cdot v)$$
	where the transport term has been transformed in the more tractable multiplication operator by $i(v\cdot \xi)$ in Fourier variable. The idea of studying \eqref{eq:FoKin} originates from the seminal work \cite{EP1975} where a careful spectral analysis of the linearized Boltzmann operator was performed. It enforces somehow the study of both \eqref{eq:Kin-Intro} and \eqref{eq:NSFint} in $L^{2}_{x}$-functional spaces. Here, we push forward this idea and try to extract from it minimal assumptions and optimal estimates for $\LL$ and $\QQ$.
	\color{black}
	
	\subsection{Notations} In all the sequel, given a closed densely defined linear operator on a Banach space $Y$ of functions $f\::v \in \R^{d} \mapsto f(v)\in \C$, 
	$$L\::\:\dom(L) \subset Y \to Y$$
	we denote, for any $\xi \in \R^{d}$, the operator 
	$L_{\xi}\::\:\dom(L_{\xi}) \subset Y \to Y$
	by
	$$\dom(L_{\xi})=\{f \in \dom(L)\;;\; v f \in Y\}\, \qquad L_{\xi}f=f-i(v\cdot \xi)f, \qquad f \in \dom(L_{\xi}).$$
	The spectrum of $L$ is denoted $\mathfrak{S}(L)$ (or $\mathfrak{S}_X(L)$ if it appears necessary to explicit the underlying Banach space) and, for $z \in \C \setminus \mathfrak{S}(L)$, the resolvent of $L$ at $z$ is denoted by 
	$$\RR(z,L)=(z-L)^{-1} \in \BBB(Y)$$
where $\BBB(Y)$ is the space of all bounded linear operators on $Y$ (with its usual norm $\|\cdot\|_{\BBB(Y)}$).
	%
	%
	%When $(\cdot, \cdot)$ denotes the product of scalar valued functions, we will naturally denote for $\C^d$-valued functions $f, g$ and $\MMM_{d \times d}(\C)$-valued functions $A, B$
	%$$(f, g) = \sum_{n=1}^{d} (f_i, g_i), \quad (A, B) = \sum_{i, j=1}^{d} (A_{i, j}, B_{i,j}).$$
	We introduce, for any $a \in \R$ the right-half plane of the complex field $\C$ as
	$$\Delta_{a}:=\left\{z \in \C\,;\;\mathrm{Re} z >a\right\}.$$
	To handle now functions depending on the position variable $x\in \R^d$, we define the {inhomogeneous} Sobolev spaces of order $s \in \R$,
	$$\mathbb{H}^{s}_{x}(\R^{d})=\left\{f \in L^{2}(\R^{d})\;;\;\|f\|_{\mathbb{H}_{x}^{s}}^{2}=\int_{\R^{d}}\langle \xi\rangle^{2s}|\widehat{f}(\xi)|^{2}\d\xi <\infty\right\}$$
	and the homogeneous Sobolev space
	$$\dot{\mathbb{H}}^{s}_{x}(\R^{d})=\left\{f \in \mathscr{S}'_x(\R^{d})\;;\;\|f\|_{\dot{\mathbb{H}}_{x}^{s}}^{2}=\int_{\R^{d}}|\xi|^{2s}|\widehat{f}(\xi)|^{2}\d\xi <\infty\right\}$$
	where $\mathscr{S}'_x(\R^d)$ denotes the space of tempered distributions over $\R^d$. One can identify $\mathbb{H}^{s}_{x}(\R^{d})$ as the space of tempered distribution $f \in \mathscr{S}'_{x}(\R^{d})$ such that $\left(\Id-\Delta_{x}\right)^{s/2}f \in L^{2}_{x}(\R^{d})$ whereas $\dot{\mathbb{H}}_{x}^{s}(\R^{d})$ is the space of mappings $f \in \mathscr{S}'_{x}(\R^{d})$ such that $(-\Delta_{x})^{s/2}f = |\nabla_x |^s  f \in L^{2}_{x}(\R^{d})$.
	We also introduce the homogeneous Besov spaces for $p, q \in [1, \infty]$ and $s \in \R$
	$$\dot{\mathbb{B}}^s_{p, q}\left( \R^d \right) = \left\{ f \in \mathscr{S}'_x \left(\R^d\right) \, ; \, \| f \|_{ \dot{\mathbb{B}}^s_{p, q} }^q = \sum_{n \in \Z} \left( 2^{ n s } \| \dot{\Delta}_{n} f \|_{ L^p_x } \right)^q < \infty\right \}$$
	where the homogeneous dyadic projector $\dot{\Delta}_n$ from Littlewood-Paley theory is recalled in Appendix \ref{scn:littlewood-paley}.

	For a Banach space $(Y_{v},\|\cdot\|{Y_{v}})$ of mappings depending on the variable $v$, the space 
	$\mathbb{H}^{s}_{x}\left(Y_{v}\right)$
	denotes the space of functions $f\::\;(x,v) \mapsto f(x,v)$ such that $$\|f\|_{\mathbb{H}^{s}_{x}\left(Y_{v}\right)}=\left\|\,\|f(x,\cdot)\|_{Y_{v}}\,\right\|_{\mathbb{H}^{s}_{x}} < \infty.$$
	Equivalently, one has
	\begin{equation}\label{eq:normHsx}
		\|f\|_{\mathbb{H}^s_x\left(Y_v\right)}^2=\int_{\R^d}\,\langle\xi\rangle^{2s}\,\left\|\widehat{f}(\xi)\right\|_{Y}^2\,\d\xi.
	\end{equation}
A similar definition applies to Besov spaces.

	\subsection{Assumptions}
	We work in a general setting of a perturbed kinetic equation of the form \eqref{eq:Kin-Intro} which, for $\eps=1$, reads
	\begin{gather*}
		(\partial_t + v \cdot \nabla_x)f = \LL f + \QQ(f, f),
	\end{gather*}
	where $\LL$ and $\QQ$ are local in $x$, that is to say, they act on functions depending only on $v$. Their actions on functions $f=f(x, v)$ depending on both $x$ and $v$ are naturally defined as
	$$[\LL f](x, v) = \big[ \LL f(x, \cdot) \big](v), \qquad \QQ(f, f)(x, v) = \Big( \QQ\big( f(x, \cdot), f(x, \cdot) \big) \Big)(v).$$
	
	%\subsubsection{Assumptions on the linear part}
	
	At the linear level, we make the following assumptions on the linearized operator $\LL$ in the space 
	$$\Ss=L^{2}\left(\mu^{-1}(v)\d v\right),$$
	of functions depending only on the velocity variable where $\mu \, : \, \R^d \to [0,\infty)$ is some measurable weight function.
	\begin{hypL}\label{AsL1} The linear operator $\LL\::\dom(\LL) \subset \Ss \to \Ss$ satisfies the following.
		\begin{enumerate}[label=\hypst{L\arabic*}]
			\item \label{L1} The operator $\LL$ is self-adjoint in $\Ss$ and commutes with orthogonal matrices:
			\begin{gather*}
				\langle \LL f, g \rangle_{\Ss} = \langle f, \LL g \rangle_{\Ss} = \langle \LL ({\Theta}f), \Theta g \rangle_\Ss,
			\end{gather*}
			for any $f, g \in \dom(\LL)$ and orthogonal matrix $\Theta \in \MMM_{d \times d}(\R)$, where $[\Theta f](v):=f(\Theta v)$.
			\item \label{L2} The weight function $\mu$ is nonnegative, normalized, radial, and such that:
			\begin{gather*}
				\mu=\mu(|v|) \ge 0, \qquad \int_{\R^d} \mu(v) \d v = 1,\\
				E = \int_{\R^d} |v|^2 \mu(v) \d v < \infty, \qquad K = \frac{1}{E^2} \int_{\R^d} | v |^4 \mu(v) \d v < \infty.
			\end{gather*}
			\item \label{L3} The null-space of $\LL$ is given by
			\begin{equation*} 
				\nul\left( \LL \right) =\mathrm{Span}\left\{ \mu, v_1 \mu,  v_2 \mu, \dots, v_d \mu, |v|^2 \mu \right\}\end{equation*}
			and there exists a Hilbert space $\Ssp$ such that
			\begin{equation*}
				\dom(\LL) \subset \Ssp \subset \Ss, \qquad \| \cdot \|_{\Ss} \le \| \cdot \|_{\Ssp},
			\end{equation*}
			and such that there holds
			\begin{equation*}
				\la \LL f, f \ra_{\Ss} \le - \lambda_\LL \| f \|^2_{\Ssp} \qquad \text{ for any } f \in \dom(\LL) \cap \nul\left( \LL \right)^\perp\,.
			\end{equation*}
			\item \label{L4} The operator $\LL$ can be decomposed as 
			$$\LL = \BB+ \AA, \qquad \dom(\BB)=\dom(\LL), \qquad \AA \in \BBB(\Ss),$$
			where the splitting is compatible with a hierarchy of Hilbert spaces $(\Ss_j)_{j=0}^{2}$ such that
			\begin{enumerate}
				\item \label{assumption_hierarchy} the spaces $\Ss_j$ continuously and densely embed into one another:
				$$\Ss_{2} \hookrightarrow \Ss_1 \hookrightarrow \Ss_0 = \Ss,$$
				\item \label{assumption_multi-v} the multiplication by $v$ is bounded from $\Ss_{j+1}$ to $\Ss_{j}$, i.e.
				\begin{equation*}
					\|v f\|_{\Ss_{j}} \lesssim \|f\|_{\Ss_{j+1}} \qquad f \in \Ss_{j+1}, \quad j=0,1,
				\end{equation*}
				\item \label{assumption_bounded_A} the operator $\AA : \Ss_j \rightarrow \Ss_{j+1}$ is bounded:
				$$\AA \in \BBB(\Ss_j,\Ss_{j+1}), \qquad j=0,1,$$
				\item \label{assumption_dissipative} the part $\BB_{\xi}$ is hypo-dissipative on each space $\Ss_j$ uniformly in $\xi  \in \R^d$, that is to say there exists $\lambda_\BB \geq \lambda_{\LL}$ such that, for $j=0,1,2$
				$$\mathfrak{S}_{\Ss_{j}}(\BB_{\xi}) \cap \Delta_{-\lambda_\BB} = \varnothing,$$
				and
				$$\sup_{\xi\in \R^{d}}
				\left\| \RR( z,\BB_{\xi})\right\|_{\BBB(\Ss_j)} \lesssim | \re z + \lambda_{\BB}|^{-1}, \qquad \forall z \in \Delta_{-\lambda_{\BB}}.$$
			\end{enumerate}
		\end{enumerate}
	\end{hypL}
	\begin{rem}
		Note that $K > 1$ by a simple use of Jensen's inequality applied to the probability measure $\mu(v)\d v$. Moreover, according to \ref{L1}, Assumption \ref{L3} can be formulated as follows: 	
		\begin{equation*}
			\forall f \in \dom(\LL), \quad \langle \LL f, \mu \rangle_{\Ss} = \langle \LL f, v \mu \rangle_{\Ss} = \left\langle \LL f, |v|^2 \mu \right\rangle_{\Ss} = 0,
		\end{equation*}
		that is to say $\nul(\LL)^{\perp}=\range(\LL)$, and  the operator $\LL$ has a spectral gap in $\Ss$:
		\begin{equation*}
			\mathfrak{S}_{\Ss}(\LL) \cap \Delta_{-\lambda_\LL} = \{0\}\,.
		\end{equation*}
	\end{rem}
	\begin{rem}\label{rem:KernelH2} Notice also that $\nul(\LL) \subset \Ss_2$. Indeed, given $f \in \nul(\LL)$, with the spitting given in \ref{L4}, $\LL f=0$ implies 
		$$f=\RR(0,\BB)\AA f$$
		and thanks to \ref{assumption_bounded_A}, $\AA f \in \Ss_1$ which, with now \ref{assumption_dissipative}, yields $\RR(0,\BB)\AA f\in \Ss_1.$ Thus $f \in \Ss_1$ and one can repeat the argument to deduce that $f \in \Ss_2$.
	\end{rem}
	
	\begin{exe}\label{rem:L1B1} We show in Appendix \ref{sec:Landau-Boltz} that the various Assumptions  \ref{L1}--\ref{L4} hold for  several models of physical interest, expliciting for each of those models the precise definition of the various spaces $\Ssp$ and $\Ss_{j}$ as well as the splitting $\LL=\AA+\BB$. {Typically, Assumptions \ref{L1}--\ref{L4} apply to the Boltzmann equation with hard potentials with or without Grad's cut-off assumptions or to Landau equation \emph{in spaces with Gaussian weights}.} To clarify right away the role of this set of Assumptions in our analysis, we  illustrate here the form of the spaces $\Ssp,\Ss_{j}$ in the case of Boltzmann equation with hard-spheres interactions. This corresponds to \eqref{eq:QQBoltz} with the choice
		$$B(|v-v_{*}|,\sigma)=|v-v_{*}|, \qquad v,v_{*} \in \R^{d}\times\R^{d}, \quad \sigma \in \S^{d-1}.$$
		In such a case, as said, $\mu$ is a Maxwellian distribution:
		\begin{equation}\label{eq:muMax}
		\mu(v) := (2 \pi)^{-d/2} \exp\left( - \frac{|v|^2}{2} \right), \qquad E = d, \quad K = 1 + \frac{2}{d} \, ,\end{equation}	and the usual linearized operator given by \eqref{eq:linearized} 
		is known to satisfy \ref{L1}--\ref{L2} with $\dom(\LL)=L^2( \langle v\rangle^2\mu^{-1}(v)\d v)$. Moreover, assumption \ref{L3}  is met with the choice 
		$$\Ssp=L^2\left( \la v\ra \mu^{-1}(v)\d v\right)$$
		Regarding assumption \ref{L4},  one can chose the hierarchy of spaces $\Ss_{j}$ as 
		$$\Ss_j := L^2\left( \la v \ra^{2 j} \mu^{-1}(v) \d v \right),$$
		for $j=0,1,2$. The splitting is taken to be Grad's splitting:
		$$(\BB f)(v) = - f(v) \int_{\R^d} |v-v_*| \mu(v_*) \d v_*, \qquad \AA f = \LL - \BB.$$
		Details are given in Appendix \ref{sec:Landau-Boltz}. We point out that, in full generality, $\Ssp$ maybe much more complicated than the above one and this is what motivated the introduction of the abstract framework \ref{L1}--\ref{L4}.\end{exe}
	
	\begin{defi}
		\label{defi:Hhstar}
		Under Assumption \ref{L2}, we define the ``dual'' space $\Ssm$ of the dissipation Hilbert space $\Ssp$ as the completion of~$\Ss$ for the norm
		$$\| f \|_{\Ssm} := \sup_{ \| \varphi \|_{\Ssp} \le 1 } \la f, \varphi \ra_{\Ss}.$$
	\end{defi}
	\begin{rem}
		\label{rem:rangHh}
		Since $\|\cdot\|_{\Ss} \leq \|\cdot\|_{\Ssp}$, for any $f \in\Ss$ one has from Cauchy-Schwarz inequality
		$$\|f\|_{\Ssm} \le \sup_{\|\varphi\|_{\Ssp}\leq 1} \|f\|_{\Ss}\|\varphi\|_{\Ss} \le \|f\|_{\Ss}$$
		we thus have the following comparison:
		$$\Ssp \hookrightarrow \Ss \hookrightarrow \Ssm, \qquad \|\cdot\|_{\Ssm} \leq \|\cdot\|_{\Ss} \leq \|\cdot\|_{\Ssp}.$$
	\end{rem}

	%	\subsubsection{Assumptions on the nonlinear term}
	At the nonlinear level, we make the following assumptions on $\QQ$.
	\begin{hypQ}\label{AsB1} The nonlinear operator $\QQ$ is satisfying the following assumptions:
		\begin{enumerate}[label=\hypst{B\arabic*}]
			\item \label{Bortho} The bilinear operator is $\Ss$-orthogonal to the null-space of $\LL$:
			$$\la \QQ(f, g), \mu \ra_{\Ss} = \la \QQ(f, g), v \mu \ra_{\Ss} = \la \QQ(f, g), |v|^2 \mu \ra_{\Ss} = 0,$$
			or, equivalently, in terms of integrals:
			$$\int_{\R^d} \QQ(f, g)(v) \d v = \int_{\R^d} v \QQ(f, g)(v) \d v = \int_{\R^d} |v|^2 \QQ(f, g)(v) \d v = 0.$$
			\item \label{Bisotrop} The bilinear operator commutes with orthogonal matrices:
			$$\la \QQ(f, g), h \ra_{\Ss} = \la \QQ\left( \Theta f, \Theta g \right), \Theta h \ra_{\Ss},$$
			for any orthogonal matrix $\Theta \in \MMM_{d \times d}(\R).$
			\item \label{Bbound} The bilinear operator satisfies the following dual estimate
			$$\| \QQ(f, g) \|_{\Ssm} \lesssim \| f \|_{\Ss} \| g \|_{\Ssp} + \| f \|_{\Ssp} \| g \|_{\Ss}\,,$$
			or, in other words, there holds
			$$\lla \QQ(f, g) , h \rra_{\Ss} \lesssim \| h \|_{\Ssp} \left( \| f \|_{\Ss} \| g \|_{\Ssp} + \| f \|_{\Ssp} \| g \|_{\Ss} \right)\,.$$
		\end{enumerate}
	\end{hypQ}

	\subsection{Main results -- first version}  Under the above structural assumptions \ref{L1}--\ref{L4}, the full description of the spectrum of $\LL_{\xi}$ and the decay {and regularization properties} of the associated semigroup are made explicit in the following.
	\begin{theo}[\textit{\textbf{Main spectral theorem}}]
		\label{thm:spectral_study}
		Assume \ref{L1}--\ref{L4}, there exist {explicitly computable constants} $C, \alpha_{0}, \lambda, \gamma, \sigma_0 > 0$ such that the following spectral and dynamical properties hold.\\
		
		\noindent \textit{\textbf{(1)}} \textit{\textbf{Localization of the spectrum.}}
		The spectrum of $\LL_\xi$ is localized as follows.
		\begin{itemize}
			\item If $|\xi| \ge \alpha_{0}$, the spectrum is at a positive distance from $\{ \re z \ge 0 \}$:
			$$ \mathfrak{S}_{\Ss}\left(\LL_{\xi}\right)  \cap \Delta_{-\gamma} = \varnothing.$$
			\item If $|\xi| \le \alpha_{0}$, the spectrum is at a positive distance from $\{ \re z \ge 0 \}$, except for a finite number of small eigenvalues:
			$$\mathfrak{S}_{\Ss}(\LL_{\xi}) \cap \Delta_{-\lambda} = \big\{ \lambda_\Inc(\xi), \, \lambda_\Bou(\xi), \, \lambda_{- \Wave}(\xi), \,\lambda_{+\Wave}(\xi) \big\},$$
			and these eigenvalues $\lambda_\star(\xi)$ expand for $\xi \to 0$ as
			\begin{subequations}
				\label{eq:lambda_star}
				\begin{gather}
					\lambda_{\pm \Wave}(\xi) = \pm i c | \xi | - \kappa_\Wave | \xi |^2 + \OO\left( | \xi |^3 \right),\\
					\lambda_{\star}(\xi) = - \kappa_\star | \xi |^2 + \OO\left( | \xi |^3 \right), \quad \star = \Bou, \Inc,
				\end{gather}
			\end{subequations}
			where the speed of sound is defined as
			\begin{equation}
				\label{eq:speed_sound}
				c:=\sqrt{\frac{KE}{d}},
			\end{equation}
			and the diffusion coefficients  {$\kappa_\star \in (0, \infty)$} are given by
			\begin{equation}\label{eq:kappa}
				\begin{split}
					\kappa_\Inc := - \dfrac{1}{(d-1)(d+1) } &\left \langle \LL^{-1}  \BurA  ,\BurA  \right \rangle_{\Ss}, 
					\qquad  \kappa_\Bou := - \dfrac{1}{d} \left \langle \LL^{-1} \BurB  , \BurB  \right \rangle_{\Ss},\\
					\kappa_\Wave &:= \frac{d-1}{2 d} \kappa_\Inc + \frac{E^2(K-1)}{2} \kappa_\Bou,
				\end{split}
			\end{equation}
			where the Burnett functions $\BurA $ and $\BurB $ are defined as
			\begin{equation}\label{eq:burnett}\begin{split}\begin{cases}
						\BurA (v) & := \displaystyle  \sqrt{ \frac{d}{E} } \left( v \otimes v - \frac{|v|^2}{d} \Id \right) \mu(v),\\
						\\
						\BurB (v) & := \displaystyle  \frac{1}{\sqrt{K (K-1)}} v \left( K - \frac{|v|^2}{E} \right) \mu(v).
			\end{cases}\end{split}\end{equation}
		\end{itemize}
		\noindent \textit{\textbf{(2)}} \textit{\textbf{Asymptotic behavior of the spectral projectors.}}
		For any non-zero $| \xi | \le \alpha_0$, the spectral projectors associated with these eigenvalues expand in $\BBB\left( \Ssm ; \Ssp \right)$ as
		\begin{equation}
			\label{eq:PPstar}
			\PP_\star(\xi) = \PP^{(0)}_\star\left( \frac{\xi}{|\xi|} \right) + i \xi \cdot \PP^{(1)}_\star\left( \frac{\xi}{|\xi|} \right) +  S_{\star}(\xi), \qquad \star=\Inc,\Bou,\pm\Wave,
		\end{equation}
		where  $S_{\star}(\xi) \in \BBB(\Ssm;\Ssp)$ with $\|S_{\star}(\xi)\|_{\BBB(\Ssm;\Ssp)} \lesssim | \xi |^2$.
		The zeroth order coefficients are  defined for any $\omega \in \S^{d-1}$ as
		\begin{gather*} 
			\PP_\Inc^{(0)}\left( \omega \right) f(v)=\dfrac{d}{E} \big( \Pi_\omega \langle f, v \mu \rangle_{\Ss}\big) \cdot v \mu(v),\\
			\PP_\star^{(0)}(\omega)f = \langle f, \psi_\star(\omega) \rangle_{\Ss} \, \psi_\star(\omega), \qquad \star = \Bou, \pm \Wave,
		\end{gather*}
		where we denoted $\Pi_\omega=\Id - \omega \otimes \omega$ the orthogonal projection onto $( \R \omega )^\perp$, and the first order terms write explicitly for any $f \in \ker(\LL)^\perp$ as
		\begin{gather*}
			\PP^{(1)}_\Inc(\omega) f(v) = \sqrt{\frac{d}{E}} \left\la f , \LL^{-1}\BurA \right\ra_{\Ss} \Pi_\omega v \mu,\\
			\PP^{(1)}_{\pm \Wave}(\omega) f = \left(\pm \frac{1}{\sqrt{2}  } \left\langle f , \LL^{-1}\BurA \omega \right\rangle_{\Ss} + E \sqrt{\frac{K-1}{2}} \left\langle f, \LL^{-1} \BurB \right\rangle_{\Ss} \right) \psi_{\pm \Wave}(\omega),
		\end{gather*}
		and
		\begin{equation}
			\label{eq:P_1_Bou}
			\PP^{(1)}_{\Bou}(\omega) f = \langle f, \LL^{-1} \BurB \rangle_{\Ss} \psi_\Bou,
		\end{equation}
		 {where the zeroth order eigenfunctions $\psi_{\pm \Wave}$ and $\psi_\Bou$ are defined as}
		\begin{gather}
			\label{eq:def_psi_wave}
			\psi_{\pm \Wave}(\omega, v) := \frac{1}{\sqrt{2 K}} \left( 1 \pm \sqrt{\frac{d K}{E}} \omega \cdot v  + \frac{1}{E}\left(|v|^2 - E\right) \right) \mu(v),\\
			\label{eq:def_psi_Bou}
			\psi_{\Bou}(v) := \frac{1}{\sqrt{K (K - 1) } } \left(K - \frac{|v|^2}{E} \right) \mu(v).
		\end{gather}
		\color{black}
		Notice, in particular, that
		\begin{equation}
			\label{eq:alternative_P_1_inc}
			\omega \cdot \PP^{(1)}_\Inc(\omega) f = \sqrt{\frac{d}{E}} \Big( \Pi_\omega \left\la f , \LL^{-1}\BurA \right\ra_{\Ss} \omega \Big) \cdot v \mu.
		\end{equation}
		\noindent \textit{\textbf{(3)}} \textit{\textbf{Resolvent bounds and decay estimates.}} Setting
		\begin{equation}\label{eq:splitPP}
			\PP(\xi)= \mathbf{1}_{|\xi| \le \alpha_0} \bigg(\PP_{\Bou}(\xi)+\PP_{\Inc}(\xi)+\PP_{+\Wave}(\xi)+\PP_{-\Wave}(\xi)\bigg)\end{equation}
		the spectral projector associated to the part of the spectrum from point \textbf{(1)}, the following resolvent bound  holds 
		\begin{equation}
			\label{eq:ResolI-PP}
			\sup_{z \in \Delta_{-\sigma_{0}}}\bigg\|\RR(z,\LL_{\xi})\left(\Id-\PP(\xi)\right)\bigg\|_{\BBB(\Ss)} \le C, \qquad \forall \xi\in \R^d  ,
		\end{equation} 
		where $\sigma_0 := \min\{\lambda, \gamma\}$. Finally, the $C^{0}$-semigroup $\left(U_{\xi}(t)\right)_{t\geq0}$ generated by $(\LL_{\xi},\dom(\LL_{\xi}))$ satisfies for any $\sigma \in (0, \sigma_0)$, any $\xi \in \R^{d}$ and any $f \in \Ss$
		\begin{subequations}\label{decay-semigroup}
			\begin{equation}
				\label{eq:decay}
				\begin{split} \sup_{t \ge 0} \, & e^{2 \sigma_0 t} \left\| U_{\xi}(t)\ {\left( \Id - \PP(\xi) \right)} f \right\|^2_{\Ss} \\
					&+ \int_0^\infty e^{2 \sigma t} \left\| U_{\xi}(t) \left( \Id - \PP(\xi) \right)f \right\|_{\Ssp}^2 \, \d t \le C_\sigma \| {\left( \Id - \PP(\xi) \right)}f \|_{\Ss}^2,
			\end{split}\end{equation}
			whereas, for any $f \in \Ssm$, 
			\begin{equation}
				\label{eq:decay_Hh'}
				\int_0^\infty e^{2 \sigma t} \left\| U_{\xi}(t) {\left( \Id - \PP(\xi) \right)} f \right\|_{\Ss}^2 \, \d t \le C_\sigma \|  {\left( \Id - \PP(\xi) \right)}f \|_{\Ssm}^2\,.
			\end{equation}
		\end{subequations}
	\end{theo}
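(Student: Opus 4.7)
The plan is to treat the frequency $\xi$ in two regimes, small and large, and combine them via the factorization strategy of \cite{GMM2017}.

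For small frequencies $|\xi|\le\alpha_0$, I view $\LL_\xi = \LL - i(v\cdot\xi)$ as a (linear, hence entire) analytic perturbation of $\LL$; the perturbation $-i(v\cdot\xi)$ is $\LL$-relatively bounded thanks to the boundedness of $f\mapsto vf$ between the spaces $\Ss_{j+1}\to\Ss_j$ of \ref{assumption_multi-v}. By \ref{L3}, $0$ is an isolated eigenvalue of $\LL$ of multiplicity $d+2$, separated from the rest of the spectrum by the gap $\lambda_\LL$, so Kato's analytic perturbation theory \cite{K1995} provides $\alpha_0>0$ and a total spectral projector
\begin{equation*}
	\PP(\xi) = \frac{1}{2i\pi}\oint_\Gamma \RR(z,\LL_\xi)\,\d z,
\end{equation*}
of constant rank $d+2$, analytic in $\xi$ for $|\xi|\le\alpha_0$, where $\Gamma$ is a fixed contour enclosing $0$ and no other point of $\mathfrak{S}(\LL)$. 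The four small eigenvalues $\lambda_\star(\xi)$ are those of the finite-dimensional reduced operator $\PP(\xi)\LL_\xi\PP(\xi)$; their leading expansion is read off from the $(d+2)\times(d+2)$ matrix of $-i\PP_0(v\cdot\omega)\PP_0$ on the explicit orthonormal basis of $\ker\LL$ (with normalizing constants involving $E,K$ from \ref{L2}). Rotational invariance \ref{L1} forces this matrix to split into a $(d-1)$-dimensional transverse block with eigenvalue $0$ (incompressible), a longitudinal $2\times 2$ block yielding the acoustic modes $\pm ic|\xi|$ with speed of sound $c$ of \eqref{eq:speed_sound}, and a scalar block with eigenvalue $0$ (Boussinesq). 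The $|\xi|^2$-corrections arise from the usual Kato reduction formula involving the pseudoinverse of $\LL$ on $\ker\LL^\perp$; direct computation against the Burnett functions $\BurA,\BurB$ of \eqref{eq:burnett} then delivers the diffusion coefficients $\kappa_\star$ of \eqref{eq:kappa}. The first-order coefficients $\PP^{(1)}_\star$ in \eqref{eq:PPstar} are obtained by differentiating the contour integral for $\PP(\xi)$ at $\xi=0$ and identifying the resulting bilinear forms on the same basis.

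For frequencies $|\xi|\ge\alpha_0$, I exploit the factorization $\LL=\BB+\AA$ from \ref{L4} and the identity
\begin{equation*}
	\RR(z,\LL_\xi) = \bigl(\Id - \RR(z,\BB_\xi)\AA\bigr)^{-1}\RR(z,\BB_\xi),
\end{equation*}
valid whenever $\Id - \RR(z,\BB_\xi)\AA$ is invertible. The uniform hypodissipativity of $\BB_\xi$ on each $\Ss_j$ from \ref{assumption_dissipative} together with the regularity gain $\AA\in\BBB(\Ss_j,\Ss_{j+1})$ from \ref{assumption_bounded_A} allows, by iterating the factorization along the hierarchy, to express $(\Id - \RR(z,\BB_\xi)\AA)^{-1}$ as a convergent operator-valued Neumann series on $\Delta_{-\gamma}$ once $|\xi|$ is large enough; the smallness comes from a velocity-averaging estimate in which the oscillation $-i(v\cdot\xi)$ inside $\BB_\xi$ acts as a dispersive multiplier against the more regular image of $\AA$. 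A continuity argument within the analytic perturbation family of the previous paragraph rules out the appearance of additional small eigenvalues at intermediate $|\xi|$, producing $\gamma>0$ such that $\mathfrak{S}_\Ss(\LL_\xi)\cap\Delta_{-\gamma}=\varnothing$ for $|\xi|\ge\alpha_0$, which completes part \textbf{(1)}. The uniform resolvent bound \eqref{eq:ResolI-PP} on $\Id-\PP(\xi)$ then follows from the same factorization combined with the energy identity $\re\langle\LL_\xi g,g\rangle_\Ss = \langle\LL g,g\rangle_\Ss \le -\lambda_\LL\|g\|^2_{\Ssp}$ (valid on $\ker\LL^\perp$, since $-i(v\cdot\xi)$ is skew in $\Ss$ by \ref{L1}) applied to $g=(\Id-\PP(\xi))f$, after using part \textbf{(1)} to control the projection of $g$ onto $\ker\LL$.

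Finally, \eqref{eq:decay} follows from the Gearhart--Prüss theorem applied to the semigroup on $\range(\Id-\PP(\xi))$ in the Hilbert space $\Ss$, refined by the same energy identity which converts pointwise exponential decay into the integrated $L^2_t(\Ssp)$-control; \eqref{eq:decay_Hh'} is then obtained by duality from \eqref{eq:decay}, noting that $\LL_\xi^*=\LL_{-\xi}$ in $\Ss$ by \ref{L1}. The main obstacle is to upgrade the projector expansion \eqref{eq:PPstar} from the natural $\BBB(\Ss)$-topology produced by Kato's machinery to the finer $\BBB(\Ssm;\Ssp)$-topology needed for the hydrodynamic limit: this forces one to iterate the factorization $\LL=\BB+\AA$ through the hierarchy $\Ss\hookleftarrow\Ss_1\hookleftarrow\Ss_2$ of \ref{L4} when inserting $\RR(z,\BB_\xi)\AA$ under the Dunford contour, and to carefully track the loss of one level in the hierarchy caused by each application of the multiplication $v\cdot\xi$ inside the perturbation series, which is precisely where the dual structure $\Ssm\hookleftarrow\Ss\hookleftarrow\Ssp$ of Remark \ref{rem:rangHh} becomes indispensable.
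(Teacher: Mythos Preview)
Your overall architecture---Kato reduction for small $|\xi|$, a separate argument for large $|\xi|$, Gearhart--Pr\"uss plus energy for the decay, and duality for \eqref{eq:decay_Hh'}---matches the paper. Two of your technical inputs, however, are not supported by the hypotheses and are precisely the places where the paper has to work harder.

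First, your claim that $-i(v\cdot\xi)$ is $\LL$-relatively bounded is not justified by \ref{assumption_multi-v}: that assumption only says multiplication by $v$ maps $\Ss_{j+1}\to\Ss_j$, and nothing links $\dom(\LL)$ to $\Ss_1$ (for Landau or non-cutoff Boltzmann, $\dom(\LL)$ involves $v$-derivatives, not weights). The paper states this explicitly and bypasses Kato's unbounded perturbation theory altogether: the localization of the spectrum for small $|\xi|$ (Lemma~\ref{lem:localization_spectrum}) is obtained by the factorization $\RR(z,\LL_\xi)=\RR(z,\BB_\xi)+\RR(z,\LL)\AA\RR(z,\BB_\xi)+\RR(z,\LL_\xi)(-iv\cdot\xi)\RR(z,\LL)\AA\RR(z,\BB_\xi)$, where the regularization $\AA:\Ss\to\Ss_1$ absorbs the loss from multiplication by $v$, so that $\|(v\cdot\xi)\RR(z,\LL)\AA\RR(z,\BB_\xi)\|_{\BBB(\Ss)}\lesssim|\xi|(1+|z|^{-1})$ and a Neumann argument closes for $|z|\gtrsim|\xi|$. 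The contour integral defining $\PP(\xi)$ is then well-defined without any relative-boundedness.

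Second, for $|\xi|\ge\alpha_0$ your Neumann series for $(\Id-\RR(z,\BB_\xi)\AA)^{-1}$ needs $\|\RR(z,\BB_\xi)\AA\|<1$, and you attribute the smallness to a velocity-averaging effect. No such mechanism is available in the abstract framework of \ref{L1}--\ref{L4}: $\RR(z,\BB_\xi)$ is merely uniformly bounded in $\xi$, with no decay in $|\xi|$, and $\AA$ is a fixed bounded operator. The paper instead invokes Duan's hypocoercivity lemma (Lemma~\ref{lem:lemhypocoercivity}): there is a bilinear form $\Phi_\xi$ built from macroscopic moments such that $\langle\cdot,\cdot\rangle_\Ss+\eta\Phi_\xi[\cdot,\cdot]$ is an equivalent inner product for which $\LL_\xi$ is uniformly dissipative when $|\xi|\ge\alpha_0$. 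This yields the spectral gap $\gamma$ and the resolvent bound directly, with no smallness to manufacture.

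Your identification of the $\BBB(\Ssm;\Ssp)$ upgrade as the main obstacle for part \textbf{(2)}, and of the bootstrap through the hierarchy $\Ss_0,\Ss_1,\Ss_2$ as the remedy, is correct and is exactly what the paper does in Lemma~\ref{lem:expansion_projection} (using the formulae of Appendix~\ref{scn:boostrap_projectors}).
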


	\begin{figure}[h]
		\centering
		
		\begin{tikzpicture}
			% Spectre cinétique
			\fill [pattern=north west lines, pattern color=red] (-5, -2.5) rectangle (-2.7, 2.5);
			\draw [color=red] (-2.7, -2.5) -- (-2.7, 2.5);
			\node[red] at (-4, -3) {$\Re z \le - \lambda$};

			% Valeurs propres ondes
			\draw[black, variable=\y, dash pattern=on 5pt] plot[smooth,domain=-2.25:2.25] ( {-0.5 * (\y)^2 }, \y );
			\node[blue] at ( {-0.5 * (1.7)^2} , {1.7 + 0.7} ) {$\lambda_{+ \Wave}(\xi)$};
			\fill [color=blue] ( {-0.5 * (2.2)^2} , 2.2) circle (0.05);
			\node[blue] at ( {-0.5 * (1.7)^2} , {-1.7 - 0.7} ) {$\lambda_{- \Wave}(\xi)$};
			\fill [color=blue] ( {-0.5 * (2.2)^2} , -2.2) circle (0.05);
			
			% Valeur propre Boussinesq
			\fill [color=blue] (-1.9, 0) circle (0.05);
			\node[blue] at (-1.9, -0.35) {$\lambda_\Bou(\xi)$};
			
			% Valeur propre incompressible
			\fill [color=blue] (-1.1, 0) circle (0.05);
			\node[blue] at (-1, 0.4) {$\lambda_\Inc(\xi)$};
			
			% Repère
			\draw[-stealth] (-5, 0) -- (1, 0);
			\draw[-stealth] (0, -2.5) -- (0, 2.5);
			
		\end{tikzpicture}
		\qquad\qquad\begin{tikzpicture}
			% Spectre cinétique
			\fill [pattern=north west lines, pattern color=red] (-5, -2.5) rectangle (-1.2, 2.5);
			\draw [color=red] (-1.2, -2.5) -- (-1.2, 2.5);
			\node[red] at (-3, -3) {$\Re z \le - \gamma$};
			
			% Repère
			\draw[-stealth] (-5, 0) -- (1, 0);
			\draw[-stealth] (0, -2.5) -- (0, 2.5);
			
		\end{tikzpicture}
		
		\caption{Localization of the spectrum of $\LL - i (v \cdot \xi) $ for $| \xi | \le \alpha_0 $ and for~$| \xi | \ge \alpha_0$ }
		
	\end{figure}
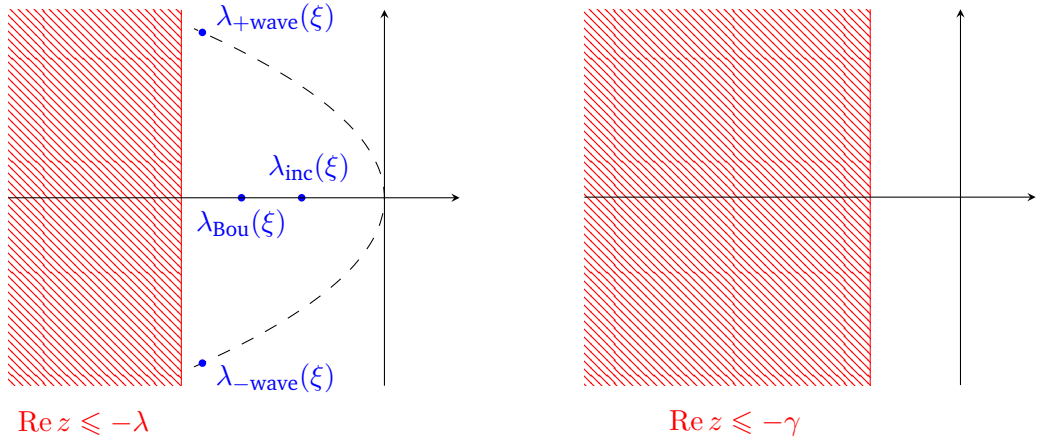

	\begin{rem}
		Recall from Remark \ref{rem:KernelH2} that \ref{L4} implies that $|\cdot|^2\mu \in \Ss_2$. Using then \ref{assumption_multi-v} twice, we deduce that the mapping $| \cdot |^4\mu$ belongs to $\Ss$. Thus,
		$$\int_{\R^d}|v|^8\mu(v)\d v < \infty.$$
		Consequently, $\BurA , \BurB  \in \Ss$ and $\LL^{-1}\BurA ,\LL^{-1}\BurB  \in \Ss$, and thus
		$$\kappa_\star < \infty, \qquad \star=\Bou,\Inc,\Wave.$$
		We point out that, in a sense, we only assume (almost) enough integrability for the diffusion coefficients $\kappa_\star$ to be finite. This is to be contrasted with the work \cite{Mellet} in which they prove that if $\kappa_\star = \infty$, then, under some appropriate scaling, one observes fractional diffusion in the limit $\eps \to 0$. In this framework, a corresponding version of Theorem \ref{thm:spectral_study} was proved in \cite{Puel}. We also refer to \cite{BM} for a unified spectral approach to the (fractional) diffusion limit for a large variety of linear collisional kinetic equations with a single conservation law. Finally, we point out that contrary to previous proofs of Theorem \ref{thm:spectral_study} for specific models, we do not assume that the weight $\mu$ decays like a gaussian.
	\end{rem} 
	\begin{rem}
		Notice that $\RR(z,\LL_{\xi})\left(\Id-\PP(\xi)\right)=\RR\Big(z,\LL_{\xi}(\Id-\PP(\xi))\Big)$ and, by virtue of \eqref{eq:PPstar} the above resolvent bound \eqref{eq:ResolI-PP} can be rewritten as
		$$\sup_{z \in \Delta_{-\lambda}}\bigg\|\RR(z,\LL_{\xi})-\sum_{\star=\Inc,\pm\Wave,\Bou}\left(z-\lambda_{\star}(\xi)\right)^{-1}\PP_{\star}(\xi)\bigg\|_{\BBB(\Ss)} \leq C.$$
		Note also that as $\LL$ is self adjoint in {$\Ss$}, the dual semigroup 
		$$\Big(U_{\xi}(t) \left(\Id - \PP(\xi)\right) \Big)^\star =  U_{-\xi}(t) \left(\Id - \PP(-\xi)\right) \,,\, \qquad t \geq 0,$$ automatically satisfies the same estimates \eqref{decay-semigroup}.
	\end{rem}
	
	\begin{rem}
		\label{rem:macro_representation_spectral}
		The zeroth order terms in the expansions of the projectors are macroscopic in the sense that
		$$\PP \PP^{(0)}_\star = \PP^{(0)}_\star \PP = \PP^{(0)}_\star, \quad \star = \Inc, \Bou, \pm \Wave.$$
		As a consequence, they can be characterized in terms of the macroscopic components $\varrho[\cdot], u[\cdot]$ and $\theta[\cdot]$ where
		\begin{equation}
			\label{eq:fluctuat}
			\begin{cases}
				\varrho_f &= \varrho[f] := \la f, \mu \ra_\Ss, \\
				\\
				u_f & = u[f] := \dfrac{d}{E} \la f, v \mu \ra_\Ss ,\\
				\\
				\theta_f &= \theta[f] := \dfrac{1}{E} \lla f , (|v|^2 - E) \mu \rra_\Ss  		
			\end{cases}
		\end{equation}
		for any $f\in \Ss.$ We refer to Proposition \ref{prop:macro_representation_spectral} for a precise statement.  
	\end{rem}

	For the Boltzmann equation with hard potential interaction, the above {theorem} has been proven first in the seminal work \cite{EP1975} whose  method has been {adapted} subsequently to encompass much more general models in the recent work \cite{YY} (see also \cite{LY2016, LY2017,YY23}). The method in these contributions is based upon some compactness argument and a study of the eigenvalue problem through the use of the Implicit Function Theorem. 
	
	The approach we perform in the present paper appears much more direct and simpler. Any explicit computation relies solely on the isotropy of the operator $\LL$. To be more precise, we adapt here the perturbation theory of eigenvalues introduced in \cite{K1995} and exploit the structural assumption \ref{L4} to prove the regularity and expansion of the eigen-projectors. Notice that, except for some peculiar cases (including the Boltzmann equation for hard-spheres interactions), our perturbative approach does not directly fall into the realm of the classical perturbation theory of  unbounded operators developed in \cite{K1995} since the multiplication operator $i(v\cdot\xi)$ is \emph{not} $\LL$-bounded in general. This induces some technical complications and requires to adapt the method of \cite{K1995} to the general situation we are dealing with here.  This is done, borrowing and pushing further some ideas of \cite{T2016}, by fully exploiting the splitting of $\LL$ as
	$$\LL=\AA+\BB$$
	where $\BB$ enjoys dissipative properties whereas $\AA$ is a regularizing operator which compensates the unboundedness of the multiplication by $v$ (see \ref{L4}).  Moreover, in contrast with existing results based upon \cite{EP1975}, our method takes into account the role of the dissipation space $\Ssp$ and its dual $\Ssm$. This allows us to emphasise and exploit regularizing effects of $\LL$ in the scale of spaces $\Ssp \hookrightarrow \Ss \hookrightarrow \Ssm$. A more detailed description of the approach we follow will be given in Section \ref{sec:newspec}. We point out already that we strongly use here the fact that all functional spaces considered here are Hilbert spaces: this allows to use a suitable ``diagonalization'' of the transport operator thanks to Fourier transform and also permits to deduce spectral properties of the semigroup $\left(U_{\xi}(t)\right)_{t\ge0}$ through some of its generator $\LL_{\xi}$ thanks to Gearhart-Pruss theorem.

	We strongly believe that the new  method we propose here to the fine spectral analysis of kinetic models is robust enough to be adapted to various contexts and can become a valid alternative to the technical approach of \cite{EP1975}. In our opinion, it replaces in an efficient way the compactness arguments introduced in \cite{EP1975} for the localization of the spectrum by a much modern and quantitative approach, combining enlargement techniques from \cite{GMM2017} to describe small frequencies $\xi\simeq 0$ with hypocoercivity methods from \cite{D2011} for frequencies $|\xi| \gtrsim 1.$ Moreover, since it is based on the isotropic nature of $\LL$ and $\QQ$, it can be directly adapted to more general equations of the type
	$$\partial_t f+ \mathfrak{a}(|v|) v \cdot \nabla_x f=\LL f+ \QQ(f,f)$$
	where $\mathfrak{a}(\cdot)$ is a suitable smooth radial mapping and 
	$$\nul \LL=\mathrm{Span}\left\{\mu,v_1\mu,\ldots,v_d\mu,\mathfrak{b}(|v|)\mu\right\}$$
	for a suitable radial mapping $\mathfrak{b}(\cdot)$ such that 
	$$\int_{\R^d}\QQ(f,f)\mathfrak{b}(|v|)\d v=0.$$ 
	The relativistic Boltzmann and Landau equations both fall within the above framework with
	$$\mathfrak{a}(|v|)=\frac{1}{\mathfrak{b}(|v|)}, \qquad \mathfrak{b}(|v|)=\sqrt{1+c_0^{-2}|v|^2}, \qquad \mu(v)=Z^{-1}e^{-\mathfrak{b}(|v|)}, \qquad c_0 > 0,$$
	where $Z >0$ is a normalization constant so that the Juttner distribution $\mu$ satisfies \ref{L1}. Our structural assumptions \ref{L1}--\ref{L4} can then easily be modified to cover such a case.  For instance, Assumption \ref{assumption_multi-v} should read now 
	$$\left\|\mathfrak{a}(|v|)vf\right\|_{\Ss_j} \lesssim \|f\|_{\Ss_{j+1}}.$$
	We point out that several moments of the Juttner distribution $\mu$ involving powers of $\mathfrak{a}(|v|)v$ and $\mathfrak{b}(|v|)$ would have to be considered in Assumption \ref{L2}. In particular the expressions of $\psi_\star$ and $\kappa_\star$ would be much more intricate. Thus, we do not pursue further this line of research since the present contribution is already quite technical and lengthy.  \bigskip
	
	Besides the thorough description of the spectrum of $\LL_\xi$ and the relevant eigen-projectors, Theorem \ref{thm:spectral_study} also describe the long-time behaviour of the associated linearized semigroup $\left(U_\xi(t)\right)_{t\ge0}$. Our approach uses, as said enlargement techniques from \cite{GMM2017} as well as an abstract hypocoercivity result from \cite{D2011}. The decay of the linearized semigroup $\left(U_{\xi}(t)\right)_{t\geq0}$ in \eqref{decay-semigroup} is one of the fundamental brick on which it is possible to build the Cauchy theory for \eqref{eq:Kin-Intro} whereas a comparison of $\left(U_\xi(t)\right)_{t \geq0}$ with the linearized semigroup associated to \eqref{eq:NSFint} is the main tool for the study of the hydrodynamic limit. This yields, in the Hilbert space setting $\mathbb{H}^s_x(\Ss_v)$ 
	to our main result as far as the above objective \textbf{(III)} is concerned:  
	\begin{theo}[\textit{\textbf{Hydrodynamic limit theorem}}]
		\label{thm:hydrodynamic_limit}
		Let $s > \frac{d}{2}$ be given  as well as some initial data 
		$$f_\ini \in\mathbb{H}^s_x \left( \Ss_v \right),$$ 
		satisfying additionally, if $d=2$, $f_\ini \in \dot{\mathbb{H}}^{-\alpha}_x \left( \Ss_v \right)$ for some $0 < \alpha < \frac{1}{2}.$
		
		Consider the solution of the Navier-Stokes-Fourier system   (see Theorem \ref{thm:spectral_study} and Proposition \ref{prop:equivalence_kinetic_hydrodynamic_INSF} for the definitions of the coefficients,  {and Theorem \ref{thm:cauchy_NSF} for the existence of this solution})
		\begin{equation}\label{eq:main-NSF} 
			\begin{cases}
				\partial_{t}u-\kappa_{\Inc}\,\Delta_{x}u + \vartheta_{\Inc}\,u\cdot \nabla_{x}\,u= \nabla_{x}p\,,\\[6pt]
				\partial_{t}\,\theta-\kappa_{\Bou}\,\Delta_{x}\theta +\vartheta_{\Bou}\,u\cdot \nabla_{x}\theta=0,\\[8pt]
				\nabla_{x}\cdot u=0\,, \qquad \nabla_{x}\left(\varrho + \theta\right)= 0\,,
			\end{cases}
		\end{equation}
		spanned by the initial conditions
		$$u(0, x) = \mathbb{P} u[f_\ini](x), \qquad \theta(0, x) = \frac{1}{K(K-1)}\left( ( K-1 ) \varrho[f_\ini](x) - \theta[f_\ini](x) \right),$$
		and which satisfies for some $T \in (0, \infty]$
		\begin{gather*}
			(\varrho, u, \theta) \in \CC_b\left( [0, T) ;\mathbb{H}^s_x \right), \qquad (\nabla_x \varrho, \nabla_x u, \nabla_x \theta) \in L^2\left( [0, T) ;\mathbb{H}^s_x \right).
		\end{gather*}
		Introducing, for any $(x,v) \in \R^{d}\times\R^{d}$ and $t \in [0,T)$,
		$$ f_\ns(t, x, v) = \left(\varrho(t, x) + u(t, x) \cdot v   + \frac{\theta(t, x)}{E(K-1)} \left( |v|^2 - E \right)\right) \mu(v),$$ 
		the following holds.
		\begin{enumerate}
			\item[\textit{\textbf{(1)}}]\textit{\textbf{Existence of a unique solution.}}
			There exists some small $c_0 > 0$ and $\eps_0 > 0$ such that the equation
			$$\partial_t f^\eps = \frac{1}{\eps^2} \left( \LL - \eps v \cdot \nabla_x \right) f^\eps + \frac{1}{\eps} \QQ\left( f^\eps, f^\eps \right), \quad f^\eps(0, x, v) = f_\ini(x, v)$$
			admits for any $\eps \in (0, \eps_0]$ a unique solution 
			$$f^\eps \in L^2_{\rm{loc}} \left( [0, T) ; \mathbb{H}^s_x \left( \Ssp_v\right) \right) \cap \CC\left([0,T);\mathbb{H}^s_x(\Ss_v)\right)$$
			such that
			$$\sup_{0 \le t < T} \| f^\eps(t) \|_{ \mathbb{H}^s_x \left( \Ss_v \right) } \le \frac{c_0}{\eps},$$
			which satisfies furthermore the following uniform estimate:
			$$\sup_{0 \le t < T} \| f^\eps(t) \|_{ \mathbb{H}^s_x \left( \Ss_v \right) }^2 + \int_0^T \| | \nabla_x |^{ 1-\alpha} f^\eps(t) \|_{ \mathbb{H}^{s - (1-\alpha) }_x \left( \Ssp_v \right) }^2 \d t \, \lesssim 1,$$
			where we recall that  $0 < \alpha < \frac{1}{2}$ if $d=2$ and $\alpha=0$ if $d\geq3.$
			\item[\textit{\textbf{(2)}}]\textit{\textbf{Decomposition and convergence of the solution.}}
			The solution $f^\eps$ splits as the sum of some limiting part $f_\ns$, some initial layers $(f^\eps_\disp, f^\eps_\kin)$, and a vanishing part~$f^\eps_\err$:
			\begin{equation}\label{eq:decomp}
				f^\eps = f_\ns + f^\eps_\disp + f^\eps_\kin + f^\eps_\err\end{equation} 
			where {each part belongs to $L^\infty\left( [0, T) ; \mathbb{H}_x^s \left( \Ss_v \right) \right)$ uniformly in $\eps$ and}
			\begin{itemize}
				\item       The dispersive part $f^\eps_\disp$ vanishes in an averaged sense:
				\begin{equation*}
					\lim_{\eps \to 0}\int_0^{t_*} \|f^\eps_\disp(\tau)\|^p_{L^\infty_x(\Ss_v)}\d\tau=0, \qquad \forall 0 < t_\star < T, \quad p >\frac{2}{d-1}
				\end{equation*}
				and uniformly away from $t=0$:
				\begin{equation*} 
					\lim_{\eps \to 0}\sup_{t_\star \le t < T} \| f^\eps_\disp(t) \|_{ L^\infty_x \left( \Ss_v \right) }=0, \qquad \forall 0 < t_\star < T.
				\end{equation*}
				\item     The kinetic part $f^\eps_\kin$ satisfies for some universal $\sigma > 0$
				\begin{equation}\label{eq:fKIN}
					\sup_{0 \le t < T} e^{2 \sigma t / \eps^2} \| f^\eps_\kin(t) \|_{ \mathbb{H}^s_x \left( \Ss_v \right) }^2 + \frac{1}{\eps^2} \int_0^T e^{2 \sigma t / \eps^2} \| f^\eps_\kin(t) \|_{ \mathbb{H}^s_x \left( \Ssp_v \right) }^2 \d t \, {\lesssim 1}.
				\end{equation} 
				\item        The error term $f^\eps_\err$ vanishes uniformly:
				\begin{equation*}
					\lim_{\eps \to 0} \sup_{0 \le t < T} \| f^\eps_\err(t) \|_{ \mathbb{H}^s_x \left( \Ss_v \right) }=0.
				\end{equation*}
			\end{itemize}
		\end{enumerate}
	\end{theo}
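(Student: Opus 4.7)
The strategy is ``top-down'': take the NSF solution $(\varrho,u,\theta)$ and the associated $f_\ns$ as given by Theorem \ref{thm:cauchy_NSF}, construct the initial-layer terms $f^\eps_\disp$ and $f^\eps_\kin$ explicitly from the spectral resolution of Theorem \ref{thm:spectral_study}, and then solve for the fluctuation $f^\eps_\err$ via a fixed-point argument. Working in Fourier in $x$ with the rescaled time $\tau=t/\eps^2$, the spectral projector $\PP(\eps\xi)$ isolates the five slow modes $\lambda_{\Inc}, \lambda_{\Bou}, \lambda_{\pm\Wave}$, while the complementary part $(\Id-\PP(\eps\xi))$ decays at rate $e^{-\sigma_0 t/\eps^2}$ via \eqref{eq:decay}. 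The decomposition \eqref{eq:decomp} mirrors exactly this spectral splitting: the parabolic modes reconstruct $f_\ns$, the wave modes produce the dispersive layer, and the orthogonal complement gives the kinetic layer.

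Concretely, I set
\begin{equation*}
\widehat{f}^\eps_\kin(t,\xi) := U_{\eps\xi}(t/\eps^2)\bigl(\Id - \PP(\eps\xi)\bigr)\widehat{f}_\ini(\xi),
\end{equation*}
so that \eqref{eq:decay} applied at the rescaled rate $2\sigma_0/\eps^2$ delivers \eqref{eq:fKIN} directly. The dispersive layer $f^\eps_\disp$ is built from the wave projections applied to $\widehat{f}_\ini$ minus the portion already carried by $f_\ns$; in view of \eqref{eq:PPstar}--\eqref{eq:lambda_star}, each wave mode evolves as $\exp(\pm i c t|\xi|/\eps - \kappa_\Wave t|\xi|^2 + O(\eps|\xi|^3 t))$, i.e.\ a Schr\"odinger-type oscillation at frequency $c/\eps$ with parabolic damping. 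Standard dispersive estimates for such ``Schr\"odinger-with-dissipation'' propagators yield the $L^p_tL^\infty_x(\Ss_v)$ bound for $p>2/(d-1)$ as well as pointwise vanishing of $\|f^\eps_\disp(t)\|_{L^\infty_x(\Ss_v)}$ away from $t=0$. The incompressibility, Boussinesq and zero-mean constraints built into the initial conditions for $(\varrho,u,\theta)$ ensure that $(\PP_\Inc+\PP_\Bou)(0)\widehat{f}_\ini$ reproduces exactly $\widehat{f}_\ns(0,\xi)$.

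With these pieces fixed, $f^\eps_\err:= f^\eps - f_\ns - f^\eps_\disp - f^\eps_\kin$ solves
\begin{equation*}
\eps^2 \partial_t f^\eps_\err = (\LL - \eps v\cdot\nabla_x) f^\eps_\err + \eps\,\QQ(f^\eps_\err + g^\eps, f^\eps_\err + g^\eps) + R^\eps,
\end{equation*}
where $g^\eps := f_\ns + f^\eps_\disp + f^\eps_\kin$ and the source $R^\eps = O(\eps)$ because $(\varrho,u,\theta)$ solve \eqref{eq:main-NSF} under the macroscopic constraints (the mismatch reduces to the $O(\eps)$ corrections in \eqref{eq:lambda_star}--\eqref{eq:PPstar} together with the quadratic term $\QQ(f_\ns,f_\ns)$ projected onto the non-hydrodynamic modes). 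I then set up a Banach fixed point (or convergent Picard iteration) on
\begin{equation*}
\XX := L^\infty_t\mathbb{H}^s_x(\Ss_v) \,\cap\, L^2_t\mathbb{H}^{s+1-\alpha}_x(\Ssp_v),
\end{equation*}
(with $\alpha=0$ for $d\ge 3$), using the Duhamel formula with $U_{\eps\xi}(\cdot/\eps^2)$, the resolvent bound \eqref{eq:ResolI-PP} to invert the linear part on the complement of $\PP(\eps\xi)$, and the dual bilinear estimate \ref{Bbound} lifted to $\mathbb{H}^s_x$ by the paraproduct machinery of Appendix \ref{scn:littlewood-paley}. Smallness of $\eps$ (not of $f_\ini$) yields the contraction and hence both the a priori bound $\|f^\eps(t)\|_{\mathbb{H}^s_x(\Ss_v)}\le c_0/\eps$ and the quantitative vanishing of $f^\eps_\err$.

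The principal obstacle is dimension $d=2$: the dispersive rate $t^{-(d-1)/2}=t^{-1/2}$ is not integrable and the natural Strichartz exponent saturates at the critical value $p=2$, so the cross term $\QQ(f^\eps_\disp,f^\eps_\err)$ sits borderline in $L^1_t L^2_x$. The extra assumption $f_\ini\in\dot{\mathbb{H}}^{-\alpha}_x(\Ss_v)$ for $\alpha\in(0,1/2)$ supplies low-frequency decay that shifts the effective Strichartz exponent strictly below $2$, at the cost of a fractional $1-\alpha$ loss in the dissipation norm (hence the exponent $s+1-\alpha$ appearing in $\XX$ and in the statement). Controlling these borderline bilinear terms by interpolating between $\mathbb{H}^s_x(\Ss_v)$ and $\mathbb{H}^{s+1-\alpha}_x(\Ssp_v)$, while simultaneously closing the initial-layer estimates for $f^\eps_\disp$ and $f^\eps_\kin$, is the most delicate technical point of the fixed-point scheme.
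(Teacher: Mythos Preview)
Your overall architecture---spectral splitting into kinetic, wave, and parabolic modes, then a fixed point for the remainder---matches the paper's. But there is a genuine gap in the contraction step that your proposal does not address, and it is precisely the point that makes this theorem nontrivial compared to earlier results such as \cite{BU1991}.

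The Duhamel equation for $f^\eps_\err$ contains, among its linear-in-$f^\eps_\err$ terms, the contribution
\[
\frac{2}{\eps}\int_0^t U^\eps(t-\tau)\,\QQ^\sym\bigl(f^\eps_\err(\tau),\,f_\ns(\tau)\bigr)\,\d\tau.
\]
After exploiting the orthogonality $\PP\QQ=0$ and the expansion of $\PP_\star(\eps\xi)$ to absorb the $1/\eps$, this operator has norm on your space $\XX$ of order $\|\nabla_x f_\ns\|_{L^2_t\mathbb{H}^s_x}$, which is $O(1)$ and \emph{not} $O(\eps)$. Since the theorem explicitly does \emph{not} assume $f_\ini$ small, you cannot make this term contractive by taking $\eps$ small. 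The same issue arises with the coupling to $f^\eps_\disp$, whose $L^\infty_t\mathbb{H}^s_x(\Ss_v)$ norm is $O(1)$ uniformly in $\eps$ (only its $L^\infty_x$ norm vanishes, away from $t=0$).

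The paper resolves this in two ways that your proposal omits. First, it works in a time-weighted space $\SSSs$ built with the weight
\[
w_{f_\ns,\eta}(t)=\exp\!\Big(\tfrac{1}{2\eta^2}\int_0^t\|\,|\nabla_x|^{1-\alpha}f_\ns(\tau)\|_{\SSsp}^2\,\d\tau\Big),
\]
designed precisely so that $\|\Psi^\eps_\hyd[\,\cdot\,,f_\ns]\|_{\BBB(\SSSs)}\lesssim\eta$ regardless of the size of $f_\ns$ (Proposition~\ref{prop:special_bilinear_hydrodynamic}). Second, it does \emph{not} run a single fixed point for $f^\eps_\err$: it splits the unknown into three coupled pieces $(f^\eps_\kin,f^\eps_\mix,g^\eps)\in\SSSl\times\SSSm\times\SSSs$, where $f^\eps_\kin$ is \emph{not} merely the linear evolution $U^\eps_\kin(t)f_\ini$ but solves its own nonlinear equation, and $f^\eps_\mix$ lives in an intermediate ``mixed'' space capturing the kinetic--hydrodynamic interaction. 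The contraction is then obtained on the product space with parameters $\eps\ll c_3\ll\eta\ll c_2\ll 1$. Without this machinery, your single-space fixed point will not close for large data.
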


	\begin{rem}
		The rate of convergence of $f^\eps_\disp$ and $f^\eps_\err$ can be made explicit. Namely, the dispersive part $f^\eps_\disp$ satisfies:
		$$ \| f^\eps_\disp \|_{L^\infty_x \left( \Ss_v \right) } \lesssim 1 \land \left( \frac{\eps}{t} \right)^{ \frac{d-1}{2} } \left( {\| \PP_\disp f_\ini} \|_{ \dot{\mathbb{B}}^{\frac{d+1}{2}}_{1,1} \left( \Ss_v \right) } + \| \PP_\disp f_\ini \|_{ \mathbb{H}^s_x (\Ss_v) } \right),$$
		whereas the error term $f^\eps_\err$ is such that
		$$\| f^\eps_\err \|_{ L^\infty_t \mathbb{H}^s_x \Ss_v } \lesssim {\beta_{\disp}}(f_\ini, \eps) + \beta_\ns(f_\ns, f_\ini, \eps),$$
		where ${\PP_\disp}$ is defined in Definition \ref{def:hydrodynamic_projectors}, and ${\beta_{\disp}}$ and $\beta_\ns$ are rates of convergence to zero described in Proposition \ref{prop:source_term}.
	\end{rem}
\begin{rem}
Notice that the initial datum here above $f_\ini$ is not depending on $\eps$. On this respect, the initial datum is \emph{well-prepared} in a more restrictive sense than \cite{BU1991} (see in particular Remark 1.5 (ii) in \cite{BU1991}). 
We however point out that it would be possible to choose a family of initial data $f_\ini^{\eps}$ depending on $\eps$ provided with assume $\lim_{\eps\to0}f_{\ini}^{\eps}=f_\ini$ in some explicit and quantitative sense which would allow to to quantify the convergence of the error term as in the previous Remark. We also wish to emphasise that, aware of the general issue of sensitivity with respect to initial data for both kinetic and hydrodynamic equations, the kind of solutions we are considering in the present contribution is much more regular than weak (renormalised) solutions considered for instance in \cite{golseSR,golseSR1} or, at the level of fluid-dynamical equation, Leray solutions to the Navier-Stokes-Fourier system. In particular, pathological issues as those exhibited in e.g. \cite{albri,delellis} are naturally excluded by our analysis.
\end{rem}	

\begin{exe}\label{exa:L1B1} Elaborating on Example \ref{rem:L1B1}, we can formulate Theorem \ref{thm:hydrodynamic_limit} in the special case of the Boltzmann equation with hard spheres interactions. Recall that, in such a case, the collision operator $\QQ$ is given by \eqref{eq:QQBoltz} with $B(|v-v_{*}|,\sigma)=|v-v_{*}|.$ In such a case, considering for simplicity the physical dimension $d=3$, we can consider functional spaces associated with Gaussian weight
$$H_{v}=L^{2}(\mu^{-1}(v)\d v), \qquad  \Ssp=L^2\left( \la v\ra \mu^{-1}(v)\d v\right)$$
where $\mu$ is given by \eqref{eq:muMax} and, with an initial datum $f_\ini \in\mathbb{H}^s_x \left( \Ss_v \right),$ $s >\frac{3}{2},$ Theorem\ref{thm:hydrodynamic_limit}  provides, for any $\eps >0$ a unique solution $f^\eps \in L^2_{\rm{loc}} \left( [0, T) ; \mathbb{H}^s_x \left( \Ssp_v\right) \right) \cap \CC\left([0,T);\mathbb{H}^s_x(\Ss_v)\right)$ to the Boltzmann equation with moreover the convergence of $f^{\eps}$ to $f_\ns$ as $\eps \to 0$ (in some suitable sense, we refer to Example \ref{exe:rate} for a more explicit statement).\end{exe}
	
	To study both the kinetic equation \eqref{eq:Kin-Intro} and the Navier-Stokes-Fourier system \eqref{eq:NSFint}, we adopt a mild formulation which consists in writing the equations in Duhamel form
	\begin{equation*}\begin{split}
			f^\eps(t)&=U^\eps(t)f_\ini+\frac{1}{\eps}\int_0^t U^\eps(t-\tau) \QQ\left(f^\eps(\tau),f^\eps(\tau)\right)\d\tau\\
			&=:U^\eps(t)f_\ini+\Psi^\eps[f^\eps,f^\eps](t)\end{split}\end{equation*}
	where  $\left(U^\eps(t)\right)_{t\ge0}$ is the semigroup generated by $-{\eps^{-2}}\LL - {\eps}^{-1}v\cdot \nabla_x$. 
	
	{Of course, the most obvious difficulty in establishing the hydrodynamic limit $\eps \to 0$ lies in the control of the stiff term 
		$\frac{1}{\eps}\QQ\left(f^\eps,f^\eps\right)$, however one first needs to construct a solution for any $\eps$.} In \cite{BU1991, GT2020} in which the cutoff Boltzmann equation is considered, the authors only need to consider uniform  in time estimates for the semigroup $(U^\eps(t))_{t\ge0}$ to prove that $\Psi^\eps[f,f]$ is well-defined.  This means that, in such a case, $\Psi^\eps$ is bounded in a space of the type $L^\infty_t\mathbb{H}^s_x \Ss_v$. The study of the Boltzmann equation under cut-off assumption makes this approach possible since then, in the splitting
	$$\LL=\AA+\BB$$
	the dissipative  part $\BB$ is simply the multiplication by the collision frequency. 
	In \cite{CRT2022} in which the Landau equation is considered, the authors prove uniform  in time regularization estimates for the semigroup and the kinetic solution which subsequently yield the boundedness of $\Psi^\eps[f,f].$ The short-time regularization effects are of course due to the elliptic-like nature of the Landau collision operator.
	
	The abstract framework we are considering in this paper covers both cases, but the assumptions \ref{L1}--\ref{L4} or \ref{LE} are not strong enough to directly deduce regularization effects and boundededness of $\Psi^\eps$. To overcome this, we draw inspiration from suitable energy methods used to construct close-to-equilibrium solutions for Boltzmann and Landau that leads us to a study of the solution in spaces of the form    $$L^\infty_t \Ss \cap L^2_t \Ssp$$
	as expressed in \eqref{eq:fKIN}. See Section \ref{sec:detail} for more details of the functional setting and the mathematical difficulties encoutered in the proof of Theorem \ref{thm:hydrodynamic_limit}. 
	
	We wish to point out also that the nonlinear effects induced by the collision and the lack of control for the $L^2_{x}\Ss_v$ norm requires a refined inequality of quantity like $\| f g \|_{\mathbb{H}^s_x}$  of the type
	$$\| f g \|_{\mathbb{H}^s_x} \lesssim \| | \nabla_x |^{r} f \|_{\mathbb{H}^{s - r}} \| g \|_{\mathbb{H}^s_x}, \quad {0 \le r < \frac{d}{2} < s}.$$
	In the case of $d \ge 3$, one can take $r = 1$ so that the dissipation of the $L^2_x$-norm and the energy control the nonlinearity. In the case $d = 2$, we cannot take $r= 1$ so we consider $r = 1-\alpha$ where $\alpha > 0$. This explains 
	why we require, in this specific case, 
	$$f_\ini \in \dot{\mathbb{H}}^{-\alpha}$$
	which is related to the control of the heat semigroup of the type
	$$\| e^{t \Delta_x} | \nabla_x|^{1-\alpha}_x f \|_{L^2_{t, x}} \lesssim \| f \|_{ \dot{\mathbb{H}}^{-\alpha} }.$$
	Note that in \cite{GT2020, G2023}, the stronger assumption $f_\ini \in L^1_x \left( \Ss_v \right)$ was made.
	
	More details about  the proof of Theorem \ref{thm:hydrodynamic_limit} will be given in the next Section \ref{sec:detail}. However, let us already anticipate that the main relevant facts of our approach  lie in the following 
	\begin{enumerate}
		\item[(i)] We insist here on the fact that, in the broad generality we are dealing with here, Theorem \ref{thm:hydrodynamic_limit} is new even if results of similar flavors do exist in the literature for the Boltzmann or Landau equations. In particular, as already said, we do not assume here any special link between $\LL$ and $\QQ$ apart from the structure of $\nul(\LL)$ and compatible nonlinear estimates. Precisely, \emph{we do not require here} that $\LL$ is the linearized version of $\QQ$ around the equilibrium $\mu$.
		
		\item[(ii)] In dimension $d=2$, one knows that solutions to the NSF system exist globally in time.  When working in dimension $d\ge 3$ one can prove that the solutions to the NSF systems are global assuming 
		$$\left\|(\varrho_\ini,u_\ini,\theta_\ini)\right\|_{\mathbb{H}^{\frac{d}{2}-1}_x} \ll 1$$
		(see Appendix \ref{sec:N-S} for details), or equivalently, provided therefore that {the corresponding parts of the initial datum $f_\ini$ are small in $\mathbb{H}^{\frac{d}{2}-1}_{x}(\Ss_v)$ norms.} In both cases, solutions to \eqref{eq:Kin-Intro} constructed in Theorem \ref{thm:hydrodynamic_limit} are also global. This was already the case in the work \cite{GT2020} and this is an important contrast with respect to the result in \cite{BU1991}  which assumed small $f_{\ini}$ to generate global solutions. 
		
		\item[(iii)] In the same spirit, contrary to \cite{BU1991, CRT2022, CC2023}, we do not work with small $f_\ini$, but as in \cite{GT2020, G2023}, we consider the \emph{a priori} limit $f_{\ns}$ (which exists at least locally in time)  and construct the kinetic solution in its neighborhood with the same lifespan.  The smallness assumption we impose is transferred to the physical parameter $\eps$, i.e. assuming that a large number of collisions are experienced by the gas. This for instance extends the results of \cite{CRT2022,CC2023} to a larger class of initial data. Notice also that, since solutions to the NSF system \eqref{eq:NSFint} can be global depending on the properties of the initial data (such as {symmetry, etc.}), the kinetic solution to \eqref{eq:Kin-Intro} we construct are also global. 
		
		\item[(iv)] We also point out that our analysis is performed in the whole space $\R^{d}_{x}$. The strategy we adopt in the paper can be easily adapted to treat the case of a spatial torus $\T^{d}_{x}$. Furthermore, in such a case, assuming the initial datum $\PP f_\ini$ to be mean-free in space, i.e. 
		$$\int_{\T^d} \PP f_\ini(x) \d x = 0,$$ one can show the exponential trend to equilibrium for solutions to the kinetic equation \eqref{eq:Kin-Intro}. This is an easy consequence of Theorem \ref{thm:spectral_study} and this can be seen in the case of the Boltzmann equation (see \cite{GT2020, G2023, GMM2017, BMM2019}). The situation is much more delicate in the case of the whole space $\R^{d}_{x}$ and the trend to equilibrium for solutions to \eqref{eq:Kin-Intro} is not addressed in this paper. 
	\end{enumerate}
	Moreover, we wish to emphasize that, for \emph{well-prepared} initial datum, i.e. in the case in which $f_{\ini}$ is such that
	$$\nabla_x\cdot u[f_\ini]=0, \quad \nabla_x\left(\varrho[f_\ini]+\theta[f_\ini]\right)=0$$ 
	then  {no acoustic waves are produced:}
	$$f^\eps_\disp(t)=0 \qquad t \in [0,T]$$
	and, with the notations of Proposition \ref{prop:source_term}, there holds $\beta_\Wave(f_\ini,\eps)=0$. In particular, for a smooth initial datum $f_\ini \in \mathbb{H}^{s+1}_x(\Ss_v)$, this yields to the convergence rate  {$\beta_{\ns} = \mathcal{O}(\eps)$} which is optimal (see \cite{G2006}). Let us state this clearly in the following corollary.

	\begin{cor}[\thttl{Optimal convergence rate}]
		If the initial datum is smooth and well-prepared, in the sense that
		\begin{equation}\label{eq:addi}
		f_\ini \in \mathbb{H}^{s+1}_x \left( \Ss_v \right), \quad \nabla_x\cdot u[f_\ini]=0, \quad \nabla_x\left(\varrho[f_\ini]+\theta[f_\ini]\right)=0,\end{equation}	then the conclusion of Theorem \ref{thm:hydrodynamic_limit} holds with the decomposition
		$$f^\eps = f_\ns + f^\eps_\kin + f^\eps_\err,$$
		where, in this case, the error term is such that
		\begin{gather*}
			\sup_{0 \le t < T} \| f^\eps_\err(t) \|_{ \mathbb{H}^s_x \left( \Ss_v \right) } \lesssim \eps,
		\end{gather*}
		and, in particular, away from $t=0$
		$$\sup_{t_* \le t < T} \| f^\eps(t) - f_\ns(t) \|_{ \mathbb{H}^s_x \left( \Ss_v \right) } \lesssim \eps, \qquad \forall 0 < t_* < T.$$
	\end{cor}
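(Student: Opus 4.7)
The plan is to derive this corollary directly from Theorem \ref{thm:hydrodynamic_limit} together with the estimates collected in Proposition \ref{prop:source_term}, by showing that the well-prepared hypothesis kills the dispersive/acoustic contribution entirely and that the extra regularity upgrades the Navier--Stokes comparison rate to first order in $\eps$.

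First I would unpack the macroscopic content of the assumption $\nabla_x \cdot u[f_\ini] = 0$ and $\nabla_x(\varrho[f_\ini] + \theta[f_\ini]) = 0$. Recalling the zeroth-order wave eigenfunctions $\psi_{\pm\Wave}(\omega,v)$ from \eqref{eq:def_psi_wave}, a direct computation using \eqref{eq:fluctuat} shows that
\begin{equation*}
\langle f, \psi_{\pm\Wave}(\omega) \rangle_{\Ss} = \tfrac{1}{\sqrt{2K}}\Bigl(\varrho[f] \pm \sqrt{\tfrac{dK}{E}}\,\omega\cdot u[f] + (K-1)^{-1}\,\theta[f]\bigr) \cdot (\text{constants}),
\end{equation*}
so in Fourier, $\langle \widehat{f_\ini}(\xi), \psi_{\pm\Wave}(\xi/|\xi|) \rangle_{\Ss}$ depends linearly on $\widehat{\varrho[f_\ini]}+\widehat{\theta[f_\ini]}$ and on $\xi\cdot\widehat{u[f_\ini]}/|\xi|$; both vanish identically under \eqref{eq:addi}. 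Together with the definition of $\PP_\disp$ in Definition~\ref{def:hydrodynamic_projectors}, this yields $\PP_\disp f_\ini = 0$. Since the dispersive component $f^\eps_\disp$ in the decomposition \eqref{eq:decomp} is generated by propagating $\PP_\disp f_\ini$ through the acoustic semigroup, one concludes $f^\eps_\disp \equiv 0$ on $[0,T)$, so the decomposition reduces to $f^\eps = f_\ns + f^\eps_\kin + f^\eps_\err$. Moreover, in the notation of Proposition~\ref{prop:source_term}, $\beta_{\disp}(f_\ini,\eps)=0$ since the associated source term vanishes.

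Next I would show that the remaining rate $\beta_\ns(f_\ns,f_\ini,\eps)$ is of order $\eps$ under the improved regularity $f_\ini \in \mathbb{H}^{s+1}_x(\Ss_v)$. Inspecting the expression of $\beta_\ns$ in Proposition~\ref{prop:source_term}, it is built from $\eps$ times norms of one additional $x$-derivative of the Navier--Stokes part and of the initial datum (these derivatives arise from the Chapman--Enskog-type expansion turning $\eps^{-1} v\cdot\nabla_x f_\ns$ into a $\nabla_x$-controlled remainder, plus the gap between $f_\ini$ and the true initial layer profile). The well-prepared assumption ensures that all contributions at order $\eps^0$ cancel, leaving only $\eps \cdot \|f_\ini\|_{\mathbb{H}^{s+1}_x(\Ss_v)}$ together with the propagation estimate for the NSF system in $\mathbb{H}^{s+1}_x$, which is already included in the Cauchy theory of Theorem~\ref{thm:cauchy_NSF}. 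Hence $\beta_\ns(f_\ns,f_\ini,\eps) \lesssim \eps$.

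Finally, combining the two rates with the error estimate
\begin{equation*}
\|f^\eps_\err\|_{L^\infty_t \mathbb{H}^s_x(\Ss_v)} \lesssim \beta_{\disp}(f_\ini,\eps) + \beta_\ns(f_\ns,f_\ini,\eps)
\end{equation*}
stated in the remark following Theorem~\ref{thm:hydrodynamic_limit}, we obtain $\sup_{0\le t<T}\|f^\eps_\err(t)\|_{\mathbb{H}^s_x(\Ss_v)} \lesssim \eps$. The second assertion then follows because, away from the initial time, the kinetic layer $f^\eps_\kin$ decays like $e^{-\sigma t/\eps^2}$ by \eqref{eq:fKIN}, so $\|f^\eps_\kin(t)\|_{\mathbb{H}^s_x(\Ss_v)} \lesssim e^{-\sigma t_*/\eps^2} \ll \eps$ uniformly for $t\ge t_*>0$, which absorbs into the $O(\eps)$ bound. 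The only delicate point will be the careful verification that the definition of $\PP_\disp$ in Definition~\ref{def:hydrodynamic_projectors} coincides with the span of $\psi_{\pm\Wave}$ modes identified above; this is however a direct algebraic consequence of the spectral description in Theorem~\ref{thm:spectral_study}, so no real obstacle is present.
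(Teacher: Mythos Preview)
Your proposal is correct and follows exactly the approach the paper takes (which gives only a sketch in the paragraph preceding the corollary): well-preparedness forces $\PP_\disp f_\ini = 0$ hence $f^\eps_\disp \equiv 0$ and $\beta_\disp = 0$, while the extra regularity $f_\ini \in \mathbb{H}^{s+1}_x(\Ss_v)$ gives $\beta_\ns \lesssim \eps$ via Proposition~\ref{prop:source_term} with $\delta = 1$, and the kinetic layer is exponentially small away from $t=0$.

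One small correction: your sentence ``The well-prepared assumption ensures that all contributions at order $\eps^0$ cancel'' in the discussion of $\beta_\ns$ is misleading. The rate $\beta_\ns(f_\ns, f_\ini, \eps) \to 0$ holds for \emph{any} admissible $f_\ini$ (see Lemma~\ref{lem:SourceS1}), and the upgrade to $\beta_\ns \lesssim \eps$ comes purely from the extra derivative $f_\ini \in \mathbb{H}^{s+1}_x$ (take $\delta = 1$ in the explicit bound of Proposition~\ref{prop:source_term}), together with the propagation of this regularity to $f_\ns$ by Theorem~\ref{thm:cauchy_NSF}. Well-preparedness plays no role in $\beta_\ns$; it only kills $\beta_\disp$.
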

	\color{black}
	\begin{exe}\label{exe:rate} With reference to Example \ref{exa:L1B1}, under the additional assumption \eqref{eq:addi}, the solution $f^{\eps}$ to the Boltzmann equation \eqref{eq:QQBoltz} with hard-sheres interactions is converging to $f_\ns$ with an explicit rate in, say $L^{\infty}_{x}\left(\Ss_{v}\right)$ with
$$\sup_{t_* \le t < T} \sup_{x} \| f^\eps(t) - f_\ns(t) \|_{ L^2(\mu^{-1}\d v) }  \lesssim \eps $$
 for any $< t_* < T.$
	\end{exe}

	\subsection{Main results -- second version in larger functional spaces} We improve also the two main results here above by showing that the same conclusion still holds in a larger functional space  $\Sl$ such that
	$$\Ss \hookrightarrow \Sl.$$
	To do so, our analysis requir{es a new set of Assumption which complement \ref{L1}--\ref{L4}:
		\begin{hypLE} Besides Assumptions \ref{L1}--\ref{L4}, one assumes that $\LL$ satisfies
			\begin{enumerate}[label=\hypst{LE}]
				\item \label{LE} Besides the splitting provided in \ref{L3}, the operator $\LL$ can be decomposed as 
				$$\LL = \BB^{(0)}+ \AA^{(0)}, \qquad \dom\left(\BB^{(0)}\right)=\dom(\LL), \qquad \AA^{(0)} \in \BBB(\Sl,\Ss)$$
				where the splitting is 	
				compatible with a hierarchy of Hilbert spaces $\left(\Sl_j\right)_{j=0}^{2}$ such that
				\begin{enumerate}
					\item the spaces $\Sl_j$ continuously and densely embed into one another:
					$$\Sl_{2} \hookrightarrow \Sl_{1} \hookrightarrow \Sl_{0} =\Sl, \quad \Ss \hookrightarrow \Sl,$$
					\item \label{assumption_large_multi-v} the multiplication by $v$ and its adjoint are bounded from $\Sl_{j+1}$ to $\Sl_{j}$:
					\begin{equation*}
						\|v f\|_{\Sl_{j}} \lesssim \|f\|_{\Sl_{j+1}}, \qquad 
						\|v^{\star} f\|_{\Sl_{j}} \lesssim \|f\|_{\Sl_{j+1}}, \quad j=0,1,
					\end{equation*}
					%		$$X_2 \xrightarrow{v} X_1 \xrightarrow{v} X_0 = X,$$
					%$$Y_{2} \xrightarrow{\times v, ~ (\times v)^\star} Y_{1} \xrightarrow{ \times v, ~ (\times v)^\star } Y_{0} = Y,$$
					\item the part $\BB^{(0)}_{\xi} = \BB^{(0)} - i v \cdot \xi$ is dissipative on each space $\Sl_j$ and $\Ss$ uniformly in $\xi  \in \R^d$, that is to say, for $\Sg = \Sl_0, \Sl_1, \Sl_2, \Ss$
					$$
					\mathfrak{S}_\Sg\left(\BB_{\xi}^{(0)} \right) \cap \Delta_{-\lambda_\BB} = \varnothing$$
					and $$\sup_{\xi \in \R^d}\left\|\RR\left(z,\BB^{(0)}_{\xi} \right)\right\|_{\BBB(\Sg)} \lesssim | \re z + \lambda_\BB|^{-1}, \qquad \forall z \in \Delta_{-\lambda_\BB}.$$
					Specifically, in the space $\Sl$, there holds
					$$\Re \lla  \BB^{(0)}_{\xi} f , f \rra_{\Sl} \le - \lambda_\BB \| f \|_{\Slp}^2, \qquad \forall f \in \Slp$$
					for some dissipation Hilbert space $\Slp$ satisfying
					$$\Ssp \hookrightarrow \Slp \hookrightarrow  \Sl, \qquad \| \cdot \|_{\Sl} \le \| \cdot \|_{\Slp},$$
					\item \label{assumption_large_bounded_A} the operator $\AA^{(0)}$ and its adjoint $(\AA^{(0)})^{\star}$ are bounded in the following spaces
					$$\AA^{(0)} \in \BBB(\Sl;\Ss) \cap \BBB(\Sl_{j};\Sl_{j+1}), \qquad   (\AA^{(0)}) ^{\star} \in \BBB(\Sl_{j};\Sl_{j+1}), \quad j=0,1\,.$$
					%		while its adjoint $\left(\AA^{(0)}\right)^{\star}$ satisfies
					%		$$\left(\AA^{(0)}\right)^{\star} \in \BBB(\Sl_{j};\Sl_{j+1}), \qquad j=0,1.$$
					%		 from $Y$ to $\Ss$, and both the operator $\AA^{(0)}$ and its adjoint $(\AA^{(0)})^{\star}$ are bounded from $Y_{j}$ to $Y_{j+1}$:
					%\begin{gather*}
					%	Y=Y_{0} \xrightarrow{\AA^{(0)}, ~ (\AA^{(0)})^{\star}} Y_{1} \xrightarrow{\AA^{(0)}, ~ (\AA^{(0)})^{\star}} Y_{2},\quad Y \xrightarrow{\AA^{(0)}} X.
					%\end{gather*}
				\end{enumerate}
			\end{enumerate}
	
			\begin{enumerate}[label=\hypst{BE}]
				\item \label{BE} The corresponding nonlinear assumption is then the following:
				$$\lla \QQ(f, g) , h \rra_{\Sl} \lesssim \| h \|_{\Slp} \left( \| f \|_{\Sl} \| g \|_{\Slp} + \| f \|_{\Slp} \| g \|_{\Sl} \right), \qquad f,g,h \in \Slp\,.$$
			\end{enumerate}
		\end{hypLE}
	 
		\begin{rem}
			As shown in \cite{GMM2017, BMM2019, G2023}, the operator $\LL$ and $\QQ$ satisfy \ref{LE} and \ref{BE} in the spaces
			$$\Sl_j = L^2\left( \la v \ra^{k + 2j} \d v \right), \qquad \Slp = L^2\left( \la v \ra^{k+1} \d v \right), \qquad \dom\left( \LL \right) = L^2\left( \la v \ra^{k+2} \d v \right)$$
			for some $k > 0$.
		\end{rem}
		\color{black}
		
		As in Definition \ref{defi:Hhstar}, one can define the dual space $\Slm$ of $\Slp$ as the completion of $\Sl$ for the norm
		$$\| f \|_{\Slm} := \sup_{ \| \varphi \|_{\Slp} \le 1 } \la f, \varphi \ra_{\Sl}.$$
		In that space $\Sl$, combining suitable enlargement techniques introduced in \cite{GMM2017} with a bootstrap argument, we derive the following improvement of the spectral Theorem \ref{thm:spectral_study}.
		\begin{theo}[\textit{\textbf{Enlarged spectral result}}]
			\label{thm:enlarged_thm}
			Assume \ref{L1}--\ref{L4} as well as \ref{LE}. Then the results of Theorem \ref{thm:spectral_study} hold with $(\Ss, \Ssp, \Ssm)$ replaced by $(\Sl, \Slp, \Slm)$. Furthermore the spectral projectors are \emph{regularizing} in the sense that, in the decomposition
			\begin{equation*}
				\PP_\star(\xi) = \PP^{(0)}_\star\left( \widetilde{\xi} \right) + i \xi \cdot \PP_\star^{(1)}\left( \widetilde{\xi} \right) + S_{\star}(\xi),
			\end{equation*}
			each term belongs to $\BBB\left(\Slm ; \Ssp \right)$ uniformly in $|\xi| \le \alpha_{0}$, and $\| S_\star(\xi) \|_{\BBB( \Slm ; \Ssp )} \lesssim | \xi |^2$. Finally, the decay estimate \eqref{decay-semigroup}
			extends to $\Sl$ as follows: for any $\xi \in \R^d$, any $\sigma \in (0,\sigma_0)$ and any $f \in \Sl$
			\begin{subequations}
				\label{decay-semigroup-EE} 
				\begin{align}
					\notag
					\sup_{t \ge 0} \, e^{2 \sigma_0 t} \big\| & U_{\xi}(t) \left( \Id - \PP(\xi) \right) f \big\|^2_{\Sl} \\
					\label{eq:decay-EE} 
					& + \int_0^\infty e^{2 \sigma t} \big\| U_{\xi}(t)\left( \Id - \PP(\xi) \right)f \big\|_{\Slp}^2 \, \d t \le C_\sigma \|\left( \Id - \PP(\xi) \right)f \|_{\Sl}^2, 
				\end{align}
				whereas for any $f \in \Slm$
				\begin{equation}
					\label{eq:decay_Ee'}
					\int_0^\infty e^{2\sigma t} \left\| U_{\xi}(t)\left( \Id - \PP(\xi) \right) f \right\|_{\Sl}^2 \, \d t \le C_\sigma \| \left( \Id - \PP(\xi) \right)f \|_{\Slm}^2\,.
				\end{equation}
			\end{subequations}
		\end{theo}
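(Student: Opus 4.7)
The plan is to derive Theorem \ref{thm:enlarged_thm} from Theorem \ref{thm:spectral_study} via the enlargement and factorization technique of \cite{GMM2017}, exploiting the additional splitting $\LL_\xi = \BB^{(0)}_\xi + \AA^{(0)}$ provided by \ref{LE}. The crucial structural inputs are that $\AA^{(0)}$ maps $\Sl$ into the smaller space $\Ss$, that it gains one step along the hierarchy $\Sl_j \hookrightarrow \Sl_{j+1}$, and that $\BB^{(0)}_\xi$ is uniformly hypo-dissipative on $\Sl$ and the $\Sl_j$'s with the same rate $\lambda_\BB$ appearing in \ref{L4}. These properties allow one to transfer each piece of the spectral analysis from $\Ss$ down to the larger space $\Sl$.

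The first step is to prove the resolvent bound on $\Sl$. For $z \in \Delta_{-\lambda_\BB} \setminus \mathfrak{S}(\LL_\xi)$ one iterates the Dyson--Phillips identity
\begin{equation*}
\RR(z,\LL_\xi) \;=\; \sum_{k=0}^{N-1} \RR(z,\BB^{(0)}_\xi)\bigl(\AA^{(0)}\RR(z,\BB^{(0)}_\xi)\bigr)^{k} \;+\; \bigl(\RR(z,\BB^{(0)}_\xi)\AA^{(0)}\bigr)^{N} \RR(z,\LL_\xi).
\end{equation*}
Each factor $\RR(z,\BB^{(0)}_\xi)$ is bounded on $\Sl$ by \ref{LE}, so the finite sum is controlled uniformly on $\Sl$. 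For $N$ large enough, the remainder factors through $\Ss$ after one application of $\AA^{(0)}$, so \eqref{eq:ResolI-PP} of Theorem \ref{thm:spectral_study} supplies the required bound on $\RR(z,\LL_\xi)(\Id-\PP(\xi))$ in $\BBB(\Ss)$, and hence in $\BBB(\Sl)$. This simultaneously shows that $\mathfrak{S}_\Sl(\LL_\xi) \cap \Delta_{-\sigma_0}$ coincides with $\mathfrak{S}_\Ss(\LL_\xi) \cap \Delta_{-\sigma_0}$: the eigenvalues are the same $\lambda_\star(\xi)$, associated to eigenfunctions already in $\Ss$.

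For the second point (expansion of the spectral projectors), I would use the Dunford--Riesz formula $\PP_\star(\xi) = -\frac{1}{2\pi i}\oint_{\Gamma_\star}\RR(z,\LL_\xi)\,\d z$ around each $\lambda_\star(\xi)$, combined with the resolvent identity written on both sides: $\RR(z,\LL_\xi)=\RR(z,\BB^{(0)}_\xi)+\RR(z,\BB^{(0)}_\xi)\AA^{(0)}\RR(z,\LL_\xi)$ and its left-handed analogue using the bound $(\AA^{(0)})^\star \in \BBB(\Sl_j,\Sl_{j+1})$ from \ref{assumption_large_bounded_A} to dualize. After enough left and right iterations, the contour integral is sandwiched between operators that send $\Slm$ into $\Ss$ on one side and $\Ss$ into $\Ssp$ on the other, so one may invoke the regularity of $\PP_\star(\xi)$ already established in $\BBB(\Ssm;\Ssp)$ by Theorem \ref{thm:spectral_study}. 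The expansion \eqref{eq:PPstar} is inherited term by term, with the quadratic remainder $S_\star(\xi)$ retaining its $|\xi|^2$ bound now in $\BBB(\Slm;\Ssp)$.

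For the third point (semigroup decay), the natural tool is the factorized Duhamel formula
\begin{equation*}
U_\xi(t)(\Id-\PP(\xi)) \;=\; V^{(0)}_\xi(t)(\Id-\PP(\xi)) \;+\; \int_0^t U_\xi(t-s)(\Id-\PP(\xi))\,\AA^{(0)}\,V^{(0)}_\xi(s)(\Id-\PP(\xi))\,\d s,
\end{equation*}
where $V^{(0)}_\xi$ is the semigroup generated by $\BB^{(0)}_\xi$ on $\Sl$. The first term decays exponentially on $\Sl$ by hypo-dissipativity from \ref{LE}. In the convolution, $V^{(0)}_\xi(s)$ decays exponentially in $\Sl$ while $\AA^{(0)}$ sends $\Sl$ into $\Ss$, so one falls back on the $\Ss$-decay \eqref{eq:decay} for $U_\xi(t-s)$, and a Young-type convolution inequality in $t$ (preserving the weight $e^{2\sigma t}$ for any $\sigma<\sigma_0$) yields \eqref{eq:decay-EE}. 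The bound \eqref{eq:decay_Ee'} follows by duality: the adjoint structure $(\AA^{(0)})^\star \in \BBB(\Sl_j;\Sl_{j+1})$ and $v^\star$ bounded from $\Sl_{j+1}$ to $\Sl_j$ ensure that $\LL^\star_\xi=\LL_{-\xi}$ satisfies \ref{LE} with roles of $\Slp$ and $\Slm$ exchanged, whence the factorized Duhamel formula gives \eqref{eq:decay_Ee'} exactly as in the $\Ssm$ case of Theorem \ref{thm:spectral_study}. The main technical obstacle I expect is calibrating the number of iterations $N$ and performing the time convolution carefully so that the gain of $e^{2\sigma t}$ survives: one needs to match the exponentials coming from $V^{(0)}_\xi$ (rate $\lambda_\BB \ge \sigma_0$) with those from $U_\xi$ (rate $\sigma_0$) without loss, which is why the strict inequality $\sigma < \sigma_0$ appears in \eqref{decay-semigroup-EE}.
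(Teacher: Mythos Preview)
Your overall strategy is correct and matches the paper's: enlargement via factorization à la \cite{GMM2017}, using the splitting $\LL_\xi = \BB^{(0)}_\xi + \AA^{(0)}$ from \ref{LE}. The resolvent step and the uniform $L^\infty_t\Sl$ decay via Duhamel are essentially identical to what the paper does.

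Two differences are worth noting. First, for the integral $L^2_t\Slp$ estimate in \eqref{eq:decay-EE}, the paper does \emph{not} use Duhamel and Young, but rather a direct energy argument: from $\Re\langle \BB^{(0)}_\xi f,f\rangle_\Sl \le -\lambda_\BB\|f\|_{\Slp}^2$ and $\AA^{(0)}\in\BBB(\Sl)$ one gets $\frac{\d}{\d t}\|f(t)\|_\Sl^2 + 2\lambda_\BB\|f(t)\|_{\Slp}^2 \lesssim \|f(t)\|_\Sl^2$, and then one substitutes the already-proved uniform decay on the right-hand side and integrates. Your Duhamel route can be made to work too, but you would need the $L^2_t\Slp$ bound for $V^{(0)}_\xi$ itself (which again comes from the same energy inequality), so the energy method is not really avoided. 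Also, in your Duhamel formula the projector $(\Id-\PP(\xi))$ should sit to the \emph{left} of $V^{(0)}_\xi$ (composing $U_\xi(t)(\Id-\PP(\xi))=(\Id-\PP(\xi))U_\xi(t)$ from the left); it does not commute with $V^{(0)}_\xi$.

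Second, and more substantively, your sketch for the projector expansion is too vague at the key step. Sandwiching the contour integral to get $\PP_\star(\xi)\in\BBB(\Slm;\Ssp)$ is fine, but the theorem asks for the bound \emph{term by term}, in particular $\|S_\star(\xi)\|_{\BBB(\Slm;\Ssp)}\lesssim|\xi|^2$. The paper obtains this via specific algebraic bootstrap identities for the Taylor remainders (Appendix \ref{scn:boostrap_projectors}): for the total projector $\PP(\xi)=\PP+\xi\cdot\PP^{(1)}(\xi)$ one has $\PP^{(1)}(\xi)=\PP(\xi)\PP^{(1)}(\xi)+\PP^{(1)}(\xi)\PP$, and an analogous formula at second order. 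These identities factor each remainder through $\PP(\xi)$ or $\PP$, which are already known to be regularizing $\Slm\to\Sl_2$ (respectively their adjoints $\Sl\to\Sl_2$), so one can bootstrap from a crude $\BBB(\Sl_j;\Sl)$ bound on $\PP^{(N)}(\xi)$ to the desired $\BBB(\Slm;\Ssp)$ bound. Your ``inherited term by term'' hides exactly this mechanism; without it, it is not clear how iterating the resolvent identity alone isolates the $|\xi|^2$ remainder with the correct operator norm.
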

		
		 {The extension of Theorem \ref{thm:spectral_study} to a larger Hilbert space $\Sl$ is done using the enlargement procedure  developed in \cite{GMM2017}, which inspired the subtle bootstrap argument leading to the regularity properties of $\PP_\star$}. Again, we refer to Section \ref{sec:newspec} for a description of the proof.  {The aforementioned regularity of $\PP_\star$ can be improved in the presence of yet another suitable splitting of $\LL$, this time in Banach spaces instead of just Hilbert spaces}. This is done in Theorem \ref{thm:regularized_thm}, but, since such a result is not necessary for  the derivation of the NSF system \eqref{eq:NSFint}, we do not give the statement in this introduction.
		
		We assume for this next theorem \ref{L1}--\ref{L4} and \ref{Bortho}--\ref{Bisotrop}, as well as \ref{LE} and \ref{BE}.
		
		\begin{theo}[\textit{\textbf{Enlarged hydrodynamic limit theorem}}]
			\label{thm:hydrodynamic_limit-gen_symmetric}
			Under the assumptions of Theorem \ref{thm:hydrodynamic_limit}  {on $f_\ini \in \mathbb{H}^s_x \left( \Sl_v \right)$} and on the solution to the Navier-Stokes-Fourier system,  {the conclusion of Theorem \ref{thm:hydrodynamic_limit} still holds with the following differences:}
			\begin{enumerate}
				\item[\textit{\textbf{(1)}}]\textit{\textbf{Existence of a unique solution.}}
				There exists some small $c_0 > 0$ and $\eps_0 > 0$ such that the equation
				$$\partial_t f^\eps = \frac{1}{\eps^2} \left( \LL - \eps v \cdot \nabla_x \right) f^\eps + \frac{1}{\eps} \QQ\left( f^\eps, f^\eps \right), \quad f^\eps(0, x, v) = f_\ini(x, v)$$
				admits for any $\eps \in (0, \eps_0]$ a unique solution among those satisfying
				$$\sup_{0 \le t < T} \| f^\eps(t) \|_{ \mathbb{H}^s_x \left( \Sl_v \right) } \le \frac{c_0}{\eps}, \qquad f^\eps \in L^2_{\rm{loc}} \left( [0, T) ; \mathbb{H}^s_x \left( \Slp_v\right) \right),$$
				and it satisfies furthermore the following uniform estimate:
				$$\sup_{0 \le t < T} \| f^\eps(t) \|_{ \mathbb{H}^s_x \left( \Ss_v \right) }^2 + \int_0^T \| | \nabla_x |^{1-\alpha } f^\eps(t) \|_{ \mathbb{H}^{s-1+\alpha}_x \left( \Ssp_v \right) }^2 \d t \, \lesssim 1.$$\color{black}
				Moreover, %under Assumption \ref{BE}, 
				$$f^\eps \in\CC\left( [0, T) ; \mathbb{H}^s_x \left( \Sl_v \right) \right)\,.$$
%				whereas, under Assumption \ref{BED},
%				$$f^\eps \in \CC\left( [0, T) ; \mathbb{H}^s_x \left( \Sh_{-1} \right) \right).$$
				\item[\textit{\textbf{(2)}}]\textit{\textbf{Decomposition and convergence of the solution.}}
				The solution $f^\eps$ splits as the sum of some limiting part $f_\ns$, some initial layers $(f^\eps_\disp, f^\eps_\kin)$, and a vanishing part~$f^\eps_\err$:
				$$f^\eps = f_\ns + f^\eps_\disp + f^\eps_\kin + f^\eps_\err$$
				where  $f^\eps_\disp$ and $f^\eps_\err$ satisfy the same estimates, and this time the kinetic part $f^\eps_\kin$ satisfies
				$$\sup_{0 \le t < T} e^{2 \sigma t / \eps^2} \| f^\eps_\kin(t) \|_{ \mathbb{H}^s_x \left( \Sl_v \right) }^2 + \frac{1}{\eps^2} \int_0^T e^{2 \sigma t / \eps^2} \| f^\eps_\kin(t) \|_{ \mathbb{H}^s_x \left( \Slp_v \right) }^2 \d t \,  {\lesssim 1}.$$
				\color{black}
			\end{enumerate}
		\end{theo}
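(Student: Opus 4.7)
The plan is to mimic the architecture of the proof of Theorem \ref{thm:hydrodynamic_limit}, replacing everywhere the triple $(\Ss,\Ssp,\Ssm)$ by the enlarged triple $(\Sl,\Slp,\Slm)$, and leveraging the enlarged spectral and dynamical information provided by Theorem \ref{thm:enlarged_thm} together with the nonlinear estimate \ref{BE}. The starting point is the Duhamel representation
\begin{equation*}
f^\eps(t)=U^\eps(t)f_\ini+\frac{1}{\eps}\int_0^t U^\eps(t-\tau)\QQ\left(f^\eps(\tau),f^\eps(\tau)\right)\d\tau=:U^\eps(t)f_\ini+\Psi^\eps[f^\eps,f^\eps](t),
\end{equation*}
where $\left(U^\eps(t)\right)_{t\ge0}$ is the semigroup generated by $\eps^{-2}\LL-\eps^{-1}v\cdot\nabla_x$. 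Working in Fourier, the rescaled generator at frequency $\xi$ is $\eps^{-2}\LL_{\eps\xi}$ whose semigroup inherits from Theorem \ref{thm:enlarged_thm} both the decay estimates \eqref{decay-semigroup-EE} in the $\Sl$ scale and the splitting $\mathrm{Id}=\PP(\eps\xi)+(\mathrm{Id}-\PP(\eps\xi))$.

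The first step is to decompose, as in Theorem \ref{thm:hydrodynamic_limit}, the solution into hydrodynamic and microscopic parts. Using \eqref{eq:splitPP}, set $f^\eps_{\mathrm{hyd}}=\PP(\eps\xi)f^\eps$ and $f^\eps_{\mathrm{mic}}=(\mathrm{Id}-\PP(\eps\xi))f^\eps$. The crucial observation from Theorem \ref{thm:enlarged_thm} is that $\PP_\star(\xi)\in\BBB(\Slm;\Ssp)$ uniformly in $|\xi|\le\alpha_0$, so all the hydrodynamic modes automatically lie in $\mathbb{H}^s_x(\Ssp_v)\subset\mathbb{H}^s_x(\Sl_v)$. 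This allows us to invoke verbatim the analysis performed in the smaller space $\Ss$ to identify the macroscopic part as $f_\ns+f^\eps_\disp+(\text{error})$, where $f_\ns$ is the solution of \eqref{eq:main-NSF} and the dispersive part satisfies the same $L^\infty_x$ and $L^p_t L^\infty_x$ bounds as before — indeed these were proved for the hydrodynamic projections, which take values in the small space.

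The core new work is therefore the microscopic part $f^\eps_\mic$ and the closure of the fixed point in $\mathbb{H}^s_x(\Sl_v)$. Here we use that \eqref{decay-semigroup-EE} provides on one hand the $L^\infty_t\Sl\cap L^2_t\Slp$ bound with initial data in $\Sl$ (via \eqref{eq:decay-EE}), and on the other hand the $L^2_t\Sl$ bound with source in $\Slm$ (via \eqref{eq:decay_Ee'}). Combined with the dual nonlinear estimate \ref{BE}, namely
\begin{equation*}
\|\QQ(f,g)\|_{\Slm}\lesssim \|f\|_{\Sl}\|g\|_{\Slp}+\|f\|_{\Slp}\|g\|_{\Sl},
\end{equation*}
an application of Cauchy--Schwarz in $t$ shows that $\Psi^\eps$ maps the space
\begin{equation*}
\mathcal{Z}^\eps_T:=\left\{f\in\CC([0,T);\mathbb{H}^s_x(\Sl_v))\cap L^2([0,T);\mathbb{H}^s_x(\Slp_v))\,:\,\sup_{t<T}\|f(t)\|_{\mathbb{H}^s_x(\Sl_v)}\le c_0/\eps\right\}
\end{equation*}
into itself, provided $\eps$ is small enough, the factor $\eps^{-1}$ in front of $\QQ$ being absorbed by the time rescaling which produces $\eps^2$ from the $L^2_t$ integral of the decay estimate. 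The product and paraproduct estimates of Appendix \ref{scn:littlewood-paley} handle the $x$-dependence as in the proof of Theorem \ref{thm:hydrodynamic_limit}, with the dichotomy $d\ge 3$ versus $d=2$ (the latter requiring the $\dot{\mathbb{H}}^{-\alpha}_x$ control on $f_\ini$) carried over unchanged.

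The main obstacle, and the reason the enlarged framework is not immediate, is that in $\Sl$ the transport operator $v\cdot\nabla_x$ is not directly controlled by any dissipation of $\LL$ — the hypocoercive structure only gives dissipation in $\Slp$, which is weaker than what is used in $\Ss$. Resolving this is precisely the role of \ref{LE}: the splitting $\LL=\BB^{(0)}+\AA^{(0)}$ together with the iterated Duhamel identity
\begin{equation*}
U_\xi(t)=U_{\BB^{(0)},\xi}(t)+\int_0^t U_\xi(t-s)\,\AA^{(0)}\,U_{\BB^{(0)},\xi}(s)\,\d s,
\end{equation*}
combined with the regularization $\AA^{(0)}\in\BBB(\Sl;\Ss)$, allows one to transfer $\Sl$-estimates on $U^\eps$ to $\Ss$-estimates after a short lapse of time, which is exactly the mechanism used to prove Theorem \ref{thm:enlarged_thm}. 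This mechanism percolates to the nonlinear analysis: the error term $f^\eps_\err$ is controlled in the small space by exploiting the regularization, while its $\Sl$-norm is directly controlled by the fixed-point contraction. Uniqueness in the class under consideration follows from a standard energy estimate on the difference of two solutions using the same dissipation-nonlinearity duality, and the continuity in time in $\mathbb{H}^s_x(\Sl_v)$ is a consequence of the Duhamel representation together with the strong continuity of $U^\eps$ on $\Sl$, which itself follows from \ref{LE}.
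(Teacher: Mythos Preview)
Your high-level plan is exactly the paper's: Theorems \ref{thm:hydrodynamic_limit} and \ref{thm:hydrodynamic_limit-gen_symmetric} are proved \emph{simultaneously} in Section \ref{scn:proof_hydrodynamic_limit_symmetrizable} under the convention that $\Sl$ denotes $\Ss$ or the enlarged space according to which assumptions are in force. Your key observations are all correct and are precisely what the paper uses: the projectors $\PP_\star(\xi)\in\BBB(\Slm;\Ssp)$ force the hydrodynamic analysis into the small space unchanged, the enlarged decay \eqref{decay-semigroup-EE} replaces \eqref{decay-semigroup}, and \ref{BE} replaces \ref{Bbound} in the kinetic bilinear estimate \eqref{eq:bilinear_kin_kin_kin}.

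Where your write-up goes astray is the fixed-point space. The set $\mathcal{Z}^\eps_T$ you describe is the \emph{uniqueness class} of the theorem, not where the contraction is run; a direct Banach argument on $f^\eps$ in $\mathcal{Z}^\eps_T$ cannot close. The obstruction is not the transport term (that is already absorbed in Theorem \ref{thm:enlarged_thm}) but rather that the hydrodynamic and kinetic pieces demand incompatible time-weighted norms: $f^\eps_\kin$ lives in $\SSSl$ with the exponential weight $e^{\sigma t/\eps^2}$, while $f^\eps_\hyd\sim f_\ns$ is $O(1)$ for all time and is only controlled through the parabolic-type norm $\SSSs$ with gradient dissipation $\||\nabla_x|^{1-\alpha}\cdot\|_{L^2_t\SSsp}$. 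Crucially, $f_\ns$ is not small, so no smallness is available until it is subtracted. The paper therefore runs the contraction on the \emph{triple} $(f^\eps_\kin,f^\eps_\mix,g^\eps)\in\SSSl\times\SSSm\times\SSSs$, where $g^\eps=f^\eps_\hyd-f_\ns-f^\eps_\disp$, and the contraction constants are produced by $\eps$ together with the free parameter $\eta$ in the weight $w_{f_\ns,\eta}$ (see \eqref{eq:bilinear_hyd_phi} and \eqref{eq:bilinear_mix_phi}). The intermediate space $\SSSm$ is not optional: the term $\Psi^\eps_\kin[f^\eps_\hyd,f^\eps_\hyd]$ falls into neither $\SSSl$ (no exponential decay) nor $\SSSs$ (no gradient smallness), which is why a two-component ``hydro vs.\ micro'' split such as you sketch does not close. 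If by ``mimic the architecture'' you mean to import this full three-space machinery, then your proof is complete and coincides with the paper's; but the explicit $\mathcal{Z}^\eps_T$ paragraph should be replaced by it.
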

	\begin{exe} The above result provides a generalization of Theorem \ref{thm:hydrodynamic_limit} to \emph{larger functional spaces}. Keeping on elaborating on the Boltzmann equation with hard spheres interactions as in Exampe \ref{rem:L1B1} , we obtain now the existence, uniqueness of solutions $f^{\eps}$ to \eqref{eq:QQBoltz} in the space 
$$\CC\left( [0, T) ; \mathbb{H}^s_x \left( \Sl \right) \right) \cap L^{2}([0,T],\mathbb{H}^{s}_{x}(\Slp))$$
where now $\Sl$ and $\Slp$ are $L^{2}$--spaces with \emph{polynomial weights}:	
	$$\Sl := L^2\left( \la v \ra^k \d v \right), \quad \Slp := L^2\left( \la v \ra^{k+\frac{1}{2}}\d v\right), \quad k > 2.$$
	Theorem \ref{thm:hydrodynamic_limit-gen_symmetric} also provides the convergence in similar spaces of $f^{\eps}$ to $f_\ns$ as $\eps \to 0.$	\end{exe}
		
		Finally, drawing inspiration from works such as \cite{CM2017, CTW2016, HTT2020, CG2022} which dealt with the Boltzmann equation without cutoff or the Landau equation, we present an alternative to the nonlinear assumption \ref{BE} in which the two arguments of $\QQ$ do not play symmetric roles.
		\begin{hypQE}
			Besides  Assumptions \ref{L1}--\ref{L4}, we assume the following:
			\begin{enumerate}[label=\hypst{BED}]
				\item \label{BED} We consider a hierarchy of spaces $(\Sh_j)_{j=-2-s}^1$ for some $s \ge 0$ such that
				$$\Ss \hookrightarrow \Sh_1 \hookrightarrow \Sl = \Sh_{0} \hookrightarrow \dots \hookrightarrow \Sh_{-2-s},$$
			\end{enumerate}
			whose dissipation spaces are embedded in the same way:
			$$\Ssp \hookrightarrow \Shp_1 \hookrightarrow  \Slp = \Shp_{0} \hookrightarrow \dots \hookrightarrow \Shp_{-2-s},$$
			and such that the following conditions hold:
			\begin{enumerate}
				\item the assumption \ref{LE} is satisfied in the larger spaces $\Sh_{-2-s}, \dots, \Sh_{-2}$ (but not necessarily in $\Sh_1$) and with a splitting of $\LL$ that may be different in each space $\Sh_j$,
				\item the nonlinear operator satisfies the following \emph{non-closed} dual estimate:
				\begin{equation}\label{eq:BEDNon}
					\| \QQ(f, g) \|_{\Shm_{j} } 
					\lesssim 
					\| f \|_{\Sh_{j}} \| g \|_{\Shp_{j+1}}
					+ \| f \|_{\Shp_{j}} \| g \|_{\Sh_{j+1}}
					, \quad j=-1-s, \dots,  0,
				\end{equation}
				\item the nonlinear operator satisfies the following \emph{closed} dual estimates:
				\begin{equation}\label{eq:BEDclosed}
					\begin{split}
											\la \QQ(f, g), g \ra_{ \Sh_{j} }
						\lesssim \sum_{ \{ a, b, c \} = \{j, j, -1-s \} }  & \| f \|_{\Sh_{ a }} \| g \|_{\Shp_{b}} \| g \|_{\Shp_{c}} \\
						& + \| f \|_{\Shp_{a}} \| g \|_{\Sh_{b}} \| g \|_{\Shp_{c}}, \quad j = -1-s, \dots, 0,
					\end{split}
				\end{equation}
				\item $\AA$ sends $\Sh_{j-1}$ to $\Sh_{j}$ at the dual level, in the sense that
				$$\la \AA f, f \ra_{\Sh_{j}} \lesssim \| f \|^2_{ \Sh_{j-1} }, \quad j=-1-s, \dots, 0.$$
			\end{enumerate}
		\end{hypQE}

		\begin{theo}[\textit{\textbf{Enlarged hydrodynamic limit theorem under \ref{BED}}}]
			\label{thm:hydrodynamic_limit-gen_degenerate}
			Consider $s \in \N$ such that $s \ge \max\{3, d/2 + 1\}$ and denote for $j = -1, 0$ the spaces $\rSSh{s}$ and $\rSShp{s}$ defined by the norms
			$$\|f\|_{\rSSh{s}_j} = \| f \|_{ L^2_x \left( \Sh_j \right) } + \| \nabla_x^s f \|_{  L^2_x(\Sh_{j-s}) },
			\qquad \|f\|_{\rSShp{s}_j} = \| f \|_{ L^2_x \left( \Shp_j \right) } + \| \nabla_x^s f \|_{ L^2_x (\Shp_{j-s}) }.$$
			For any $f_\ini \in \rSSh{s}_0$, and under the assumptions of Theorem \ref{thm:hydrodynamic_limit-gen_symmetric} on the solution to the Navier-Stokes-Fourier system,  the conclusion of Theorem \ref{thm:hydrodynamic_limit-gen_symmetric} still holds with the following difference.
			There exists some small $c_0 > 0$ and $\eps_0 > 0$ such that the equation
				$$\partial_t f^\eps = \frac{1}{\eps^2} \left( \LL - \eps v \cdot \nabla_x \right) f^\eps + \frac{1}{\eps} \QQ\left( f^\eps, f^\eps \right), \quad f^\eps(0, x, v) = f_\ini(x, v)$$
				admits for any $\eps \in (0, \eps_0]$ a unique solution among those satisfying
				$$\sup_{0 \le t < T} \| f^\eps(t) \|_{ \rSSh{s}_0 } \le \frac{c_0}{\eps}, \qquad f^\eps \in L^2_{\rm{loc}} \left( [0, T) ; \rSShp{s}_0 \right).$$
				Moreover, it satisfies   the following uniform estimate:
				$$\sup_{0 \le t < T} \| f^\eps(t) \|_{ \rSSh{s}_0  }^2 + \int_0^T \| | \nabla_x |^{ \bm \alpha } f^\eps(t) \|_{ \left( \rSShp{s-\boldsymbol{\alpha}}_0 \right) }^2 \d t \, \lesssim 1$$
				and is continuous in the larger space $\rSSh{s}_{-1}$:
				$$f^\eps \in \CC\left( [0, T) ; \mathbb{H}^s_x \left( \rSSh{s}_{-1} \right) \right).$$
				Finally, the kinetic initial layer is such that
				$$\sup_{0 \le t < T} e^{2 \sigma t / \eps^2} \| f^\eps_\kin(t) \|_{ \rSSh{s}_0 }^2 + \frac{1}{\eps^2} \int_0^T e^{2 \sigma t / \eps^2} \| f^\eps_\kin(t) \|_{ \rSShp{s}_0 }^2 \, \d t \,  {\lesssim 1}.$$
		\end{theo}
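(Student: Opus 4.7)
The strategy is to build the solution through a cascade of energy estimates climbing the hierarchy $\Sh_{-2-s} \hookrightarrow \cdots \hookrightarrow \Sh_0$, anchoring the construction at the bottom of the hierarchy (where the full symmetric framework applies) and exploiting the triangular structure of the estimate \eqref{eq:BEDclosed} to propagate control upwards.

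First, I would note that the hypothesis \ref{BED}(i) ensures that \ref{LE} and the compatible nonlinear estimate \ref{BE} hold in the space $\Sh_{-2}$ (and below), so that Theorem \ref{thm:hydrodynamic_limit-gen_symmetric} directly applies in this enlarged functional setting. Since the initial datum $f_\ini \in \rSSh{s}_0$ embeds into $\mathbb{H}^s_x(\Sh_{-2,v})$, this produces a unique close-to-$f_\ns$ solution $f^\eps$ together with its splitting $f^\eps = f_\ns + f^\eps_\disp + f^\eps_\kin + f^\eps_\err$ at the level $\Sh_{-2}$. The decay properties of $f^\eps_\kin$ and the dispersive decay of $f^\eps_\disp$ are thus already established at this base level, and in particular one has full $L^2_t L^2_x(\Shp_{-2,v})$ control on $f^\eps - f_\ns - f^\eps_\disp$.

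Second, I would run an inductive argument on $j = -1-s, -s, \dots, 0$. At step $j$, I write the equation satisfied by the remainder $g^\eps_j := f^\eps - f_\ns - f^\eps_\disp$ and perform an energy estimate in $\Sh_j$. The linear contribution is handled through \ref{LE}(iv) combined with the hypo-dissipativity of $\BB^{(0)}_\xi$ and the smoothing of $\AA^{(0)}$, which yields coercivity in $\Shp_j$ up to an error controlled in the weaker space $\Sh_{j-1}$; the latter was bounded at the previous step. The nonlinear contribution $\eps^{-1}\la \QQ(f^\eps, f^\eps), g^\eps_j\ra_{\Sh_j}$ is treated using the closed trilinear estimate \eqref{eq:BEDclosed}: because exactly one of the three factors lives in a weaker space $\Sh_{-1-s}$ or $\Shp_{-1-s}$ whose norm has already been controlled in the initial step (with the smallness coming from $c_0/\eps$), the remaining two factors in $\Sh_j, \Shp_j$ can be absorbed into the dissipation. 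Combined with a Grönwall argument, this closes the estimate at level $j$, giving the $L^\infty_t L^2_x(\Sh_{j,v}) \cap L^2_t L^2_x(\Shp_{j,v})$ bound for $g^\eps_j$.

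Third, I would treat the top-order derivative contribution $\|\nabla_x^s f^\eps\|_{L^2_x(\Sh_{-s,v})}$ appearing in the norm $\rSSh{s}_0$. Applying $\nabla_x^s$ to the kinetic equation and testing against $\nabla_x^s g^\eps_j$ in $\Sh_{-s+j}$, the commutator with $\QQ$ is controlled by the non-closed dual bound \eqref{eq:BEDNon} combined with the Kato--Ponce/paraproduct inequalities recalled in Appendix \ref{scn:littlewood-paley}: derivatives are distributed across the two arguments and one factor is placed in $L^\infty_x$ via the Sobolev embedding $\mathbb{H}^{s}_x \hookrightarrow L^\infty_x$ (valid since $s > d/2$). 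The resulting estimate involves only norms that have been controlled either in the cascade of step two or through $f^\eps_\kin$ (which brings a factor $e^{-\sigma t/\eps^2}$ compensating the $\eps^{-1}$ prefactor). The 2D case requires, as in Theorem \ref{thm:hydrodynamic_limit-gen_symmetric}, the low-frequency hypothesis encoded in the $\dot{\mathbb{H}}^{-\alpha}$ norm to secure the integrability of the heat semigroup contribution. Uniqueness then follows from a standard difference argument in the space $\rSSh{s}_{-1}$, which is precisely why continuity is only asserted at this lower level.

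The main obstacle will be keeping the cascade consistent at the top level $j=0$: since the closed estimate \eqref{eq:BEDclosed} forces the "lowest" factor to live at level $-1-s$ rather than at level $j=0$, one must rely on an intricate bootstrap in which the contribution of $f^\eps_\kin$ provides the smallness needed to absorb the nonlinearity back into the dissipation $\Shp_0$. The non-trivial quantitative counting of weights, and the fact that $s \ge \max\{3, d/2+1\}$ (and not simply $s > d/2$) is required precisely to guarantee that every factor arising from the paraproduct decomposition of $\nabla_x^s \QQ$ lands in a space that is either $L^\infty_x$-controlled or covered by the preceding inductive step.
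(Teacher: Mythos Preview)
Your anchor step does not work. You propose to apply Theorem \ref{thm:hydrodynamic_limit-gen_symmetric} at the bottom level $\Sh_{-2}$ to produce the solution, and then bootstrap control upward through the hierarchy. But Theorem \ref{thm:hydrodynamic_limit-gen_symmetric} requires the symmetric nonlinear estimate \ref{BE}, and this is \emph{not} available under \ref{BED} at any level of the hierarchy. The non-closed estimate \eqref{eq:BEDNon} at level $j$ always places one argument in the \emph{higher} space $\Sh_{j+1}$, and the closed estimate \eqref{eq:BEDclosed} only controls $\la \QQ(f,g),g\ra$ with the second and third arguments equal --- neither yields the symmetric bound on $\la \QQ(f,g),h\ra$ needed for the Banach fixed point in Section \ref{scn:proof_hydrodynamic_limit_symmetrizable}. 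Indeed, the paper remarks explicitly before Proposition \ref{prop:bilinear_kinetic} that the key estimate \eqref{eq:bilinear_kin_kin_kin} fails under \ref{BED}, which is precisely why a separate argument is required. So you have no solution to bootstrap from.

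The paper's approach is structurally different from a cascade. Rather than starting from an existing solution and propagating regularity upward in $j$, it constructs $f^\eps_\kin$ directly at level $j=0$ via a \emph{semi-implicit} Picard scheme \eqref{eq:scheme}: at step $N$ one solves a problem that is linear in $f^\eps_{\kin,N}$, with the first argument of $\QQ$ frozen at $f^\eps_{\kin,N-1}$. This is exactly what lets the closed estimate \eqref{eq:BEDclosed} act, since one tests $\QQ(f^\eps_{\kin,N-1}, f^\eps_{\kin,N})$ against $f^\eps_{\kin,N}$. To make the energy method close, the paper introduces an equivalent inner product $\dlla\cdot,\cdot\drra_{\SSh_j,\eps}$ (Proposition \ref{prop:dissipative_kinetic_inner_product}) built from the kinetic semigroup itself, for which $\LL - \eps v\cdot\nabla_x$ becomes genuinely dissipative on $\range(\PP^\eps_\kin)$. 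Stability of the scheme is proved in $\SSSh_0$ (Lemma \ref{lem:stable-scheme}), but convergence of the iteration is only shown in the larger space $\SSSh_{-1}$ --- this downgrade is forced by the non-closed estimate when bounding $\QQ(d^\eps_{\kin,N-1}, f^\eps_{\kin,N-1})$, and it is the real reason continuity is only asserted in $\rSSh{s}_{-1}$. Your proposed inductive cascade in $j$, even with a valid anchor, would not naturally produce this two-level stability/convergence structure.
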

	\begin{exe} The above result is particularly well-suited to the study of the Landau equation. In such a case, the collision operator is given by
	\begin{equation*}
			\QQ(f, f) = \nabla_v \cdot \int_{\R^3_{v_*} \times \S^{d-1}_\sigma } | v - v_* |^{\gamma+2} \Pi(v-v_*) \Big( f(v_*) \nabla_v f(v) - \nabla_{v_*} f(v_*) f(v) \Big) \d v_*,
		\end{equation*}
		where $\Pi(z) = \Id - |z|^{-2} z \otimes z$ is the orthogonal projection onto $z^\perp$. We refer to Appendix \ref{sec:Landau-Boltz} for more details.\end{exe}
		
		A few comments are in order regarding Theorem \ref{thm:hydrodynamic_limit-gen_symmetric} and \ref{thm:hydrodynamic_limit-gen_degenerate}:
		\begin{itemize}
			\item Only the kinetic part is now living in the smaller space $\mathbb{H}^s_x(\Sl_v)$, whereas the dispersive parts $f_\disp^\eps$ and $f^\eps_\err$ satisfy the same properties as in Theorem \ref{thm:hydrodynamic_limit}.  
			\item The method of proof of Theorem \ref{thm:hydrodynamic_limit-gen_symmetric} is the same as for \ref{thm:hydrodynamic_limit}; the existence of solution to the kinetic equation can be deduced from the application of Banach fixed point theorem {(and the proof is led independently of whether we assume \ref{BE} or not)}. Concerning Theorem \ref{thm:hydrodynamic_limit-gen_degenerate}, the approach is more involved and a careful study of an approximating scheme (a variation of Picard iterations) has to be done in order to overcome the difficulty induced by the lack of symmetry of $\QQ.$ 
			\item Notice that, as for Theorem \ref{thm:hydrodynamic_limit-gen_symmetric}, we \emph{do not require} here any smallness assumptions on $f_{\rm in}$ and the smallness is totally transferred in the parameter $\eps.$ Recalling that assumptions \ref{BED} are suited for the study of the Boltzmann equation without cut-off assumption and for the Landau equation, this is an important improvement with respect to the previous results in the field which all require some additional restriction on the size of the initial datum to derive the hydrodynamical limit (see \cite{CRT2022} for the Landau equation and \cite{CC2023} for the Boltzmann equation).
			\item {Finally, we emphasize that this provides, up to our knowledge, the first result concerning the strong Navier-Stokes limit for initial data with algebraic decay with respect to the velocity variable in the case of Boltzmann equation without cut-off assumption or Landau equation (see Appendix \ref{sec:Landau-Boltz}).}
			\end{itemize}
		We will comment with more details the conclusion of Theorems  \ref{thm:hydrodynamic_limit}, \ref{thm:hydrodynamic_limit-gen_symmetric} and \ref{thm:hydrodynamic_limit-gen_degenerate} in Section \ref{sec:detail} where a detailed description of the proof and the role of the various assumptions will be illustrated. 
		
		\subsection{Outline of the paper} In the next Section \ref{sec:detail}, we introduce the main ideas underlying the proofs of {the hydrodynamic limit Theorems} \ref{thm:hydrodynamic_limit}, \ref{thm:hydrodynamic_limit-gen_symmetric} and \ref{thm:hydrodynamic_limit-gen_degenerate}. Notations and mathematical objects that are used in the rest of the analysis are also introduced in Section \ref{sec:detail}.
		
		Section \ref{scn:spectral_study} gives the full proof of both the spectral theorems \ref{thm:spectral_study} and \ref{thm:enlarged_thm}. A detailed description of approach is given in Section \ref{sec:newspec} and the proof of Theorem \ref{thm:spectral_study} is then derived in various steps, together with its ‘‘regularized version‘‘ Theorem \ref{thm:regularized_thm}.
		
		Section \ref{scn:study_physical_space} establishes the main consequences of Theorem \ref{thm:spectral_study} on the semigroup  $(U^\eps(t))_{t\geq0}$ generated by the linear part $\eps^{-2} \left( \LL - \eps v \cdot \nabla_x \right)$ of \eqref{eq:Kin-Intro} in the various regimes/time scales relevant for the hydrodynamic limit. In particular, the comparison between the linearized semigroups associated to \eqref{eq:Kin-Intro} and \eqref{eq:NSFint} is given in Section \ref{scn:study_physical_space}.
		
		The main bilinear estimates are then established in Section \ref{sec:Bilin} as well as the main tools used for the hydrodynamic limit (and in particular the mild formulation of \eqref{eq:NSFint}). The proof of Theorem \ref{thm:hydrodynamic_limit} and \ref{thm:hydrodynamic_limit-gen_symmetric} under assumptions \ref{BE}, is then given in Section \ref{scn:proof_hydrodynamic_limit_symmetrizable}, whereas the proof of Theorem \ref{thm:hydrodynamic_limit-gen_degenerate} under Assumption \ref{BED} is given in Section \ref{scn:hydrodynamic_limit_BED}.
		
To make the paper self-contained, we end it with three different Appendices.	In Appendix \ref{sec:Landau-Boltz}, we discuss the general assumptions \ref{L1}--\ref{L4}, \ref{Bortho}--\ref{Bisotrop}, \ref{LE}--\ref{BE} and \ref{BED} for an extensive list of physical models including, as said, the classical Boltzmann and Landau equations as well as their quantum counterpart covering, at the linearized level, both the Fermi-Dirac and Bose-Einstein descriptions. Appendix \ref{sec:toolbox} gives the functional toolbox with particular emphasis to Littlewood-Paley theory (Section \ref{scn:littlewood-paley} and other results relevant for our analysis). We also present in Section \ref{scn:boostrap_projectors} the bootstrap argument for projection operators which is a cornerstone of Theorem \ref{thm:enlarged_thm}. The final Appendix \ref{sec:N-S} recalls the main properties of the Navier-Stokes-Fourier system  \eqref{eq:NSFint} that are needed for our analysis {and proves some results necessary for our framework}.
		
\subsection*{Acknowledgments} Both the author gratefully acknowledge the financial support from the Italian Ministry of Education, University and Research (MIUR), ``Dipartimenti di Eccellenza'' grant 2022-2027. They also thank Isabelle Gallagher and Isabelle Tristani for insightful discussions about hydrodynamic limits. 		
		
		\section{Detailed description of our proofs}\label{sec:detail}
		
		We give here a precise description of the main steps of our approach to prove the above two Theorem \ref{thm:hydrodynamic_limit} and \ref{thm:hydrodynamic_limit-gen_symmetric}. We use repeatedly the spectral properties of $\LL_\xi$ and the properties of the associated semigroup $\left(U_\xi(t)\right)_{t\ge0}$ as established in Theorems \ref{thm:enlarged_thm} and \ref{thm:enlarged_thm}. Notations are those introduced in those two results.

		\subsection{The functional setting} The conclusion of the Theorem \ref{thm:hydrodynamic_limit} and the splitting of $f^\eps$ in \eqref{eq:decomp} suggest to introduce the following definitions of position-velocity spaces and time-position-velocity spaces suited to the different regimes (kinetic, diffusive and mixed) we will consider in this work.
		\begin{defi}\label{defi:NORMSPACES} Let $s \in \R$ be given. 
			\begin{enumerate}
				\item For $\Sg=\Ss$ or $\Sg=\Sl$,  we define the position-velocity spaces
				$$\SSg = \SSg^s = \mathbb{H}^{s}_{x}(\Sg_v),$$
				which we endow with their natural norms $\|\cdot\|_{\SSg}$ defined in \eqref{eq:normHsx}.    
				We define in the same way the spaces 
				$$\rSSgp{s}=\mathbb{H}^s_x(\Sgp_v), \qquad \rSSgm{s}=\mathbb{H}^s_x(\Sgm_v)$$
				with $\Sgp_v=\Ssp$ or $\Slp$ and $\Sgm_v=\Ssm$ or $\Slm.$
				\item Given $T \in (0,\infty]$ and $\sigma \in (0,\sigma_0)$ (with $\sigma_0$ defined in Theorem \ref{thm:spectral_study} or \ref{thm:enlarged_thm}), we introduce here the kinetic-type time-position-velocity space 
				$$\SSSl=\rSSSl{s}(T, \sigma, \eps):= \big\{ f \in \CC_b\left( [0, T) ; \rSSl{s} \right) \, ; \, \Nt f \Nt_{\rSSSl{s}} < \infty \big\},$$
				where the norm $\Nt \cdot \Nt_{\rSSSl{s}}$ is given by
				$$	\Nt f \Nt_{ \rSSSl{s}}^2 := \sup_{0 \le t < T} \, e^{2 \sigma t / \eps^2 } \| f(t) \|_{\rSSl{s}}^2 + \frac{1}{\eps^2} \int_0^T e^{2 \sigma t / \eps^2} \| f(t) \|_{{\rSSlp{s}}}^2 \d t.$$
				\item Given $T \in (0,\infty]$, $0 < \eta \ll 1$ and some mapping $\phi$ such that
				\begin{equation}\label{eq:nablaPhi}
					\left|\nabla_x\right|^{1-\alpha}\phi \in L^2\left( [0, T) ; \rSSsp{s} \right)
				\end{equation}
				where $\alpha \in \left(0, \frac{1}{2}\right)$ if $d = 2$ and $\alpha = 0$ if $d \ge 3$, we introduce the parabolic-type time-position-velocity space 
				$$\SSSs=\rSSSs{s}(T, \phi, \eta):= \left\{ f \in \CC_b\left( [0, T) ; \rSSs{s} \right) \, ; \, \Nt f \Nt_{\rSSSs{s}} < \infty \right\}$$
				endowed with the norm
				$$
				\Nt f \Nt_{\rSSSs{s}}^2 := \sup_{0 \le t < T} \left\{ w_{\phi, \eta}(t)^2 \| f(t) \|_{\rSSsp{s}}^2 + w_{\phi, \eta}(t)^2 \int_0^t \left\| |\nabla_x|^{1-\alpha} f( \tau ) \right\|_{\rSSsp{s}}^2 \d \tau \right\}, 
				$$
				where  we set
				\begin{equation}\label{eq:wphieta}
					w_{\phi, \eta}(t) = \exp\left( \frac{1}{2 \eta^2} \int_0^t\| |\nabla_x|^{1-\alpha} \phi(\tau) \|_{\rSSsp{s}}^2 \d \tau \right)\,.\end{equation} 
				\item Finally, with the notation of the previous points, we introduce the mixed-type time-position-velocity space 
				$$\SSSm = \rSSSm{s}(T, \phi, \eta, \eps):= \left\{ \CC_b\left( [0, T) ; \rSSs{s} \right) \, ; \, \Nt f \Nt_{\rSSSm{s}} < \infty \right\}$$
				endowed with the norm
				$$\Nt f \Nt_{\rSSSm{s}}^2 := \sup_{0 \le t < T } \left\{ w_{\phi, \eta}(t)^2 \| f(t) \|_{\rSSs{s}}^2 + \frac{1}{\eps^2} w_{\phi, \eta}(t)^2 \int_0^t \| f(\tau) \|_{\rSSsp{s}}^2 \d \tau \right\}.$$
		\end{enumerate}\end{defi}
		\begin{rem}
			 {Drawing inspiration from \cite{GT2020} or more precisely \cite{G2023},} the above norm $\Nt\cdot\Nt_{\SSSs}$ and $\Nt \cdot \Nt_\SSSm$, {and more specifically the time weight $w_{\phi, \eta}$,} are designed so that, no matter how big $\phi$ is, there holds
			\begin{equation}
				\label{eq:NS_exponential_weight}
				\forall t \in [0, T), \quad w_{\phi, \eta}(t) \left\| w_{\phi, \eta}^{-1} | \nabla_{x} |^{1-\alpha} \phi \right\|_{L^2 \left( [0, t] ; {\rSSsp{s}} \right) } \le \eta
			\end{equation}
			as can be seen by a simple computation (see Proposition \ref{prop:special_bilinear_hydrodynamic}).  Notice also that $w_{\phi, \eta}$ is decreasing and satisfies the bounds $0 < w_{\phi, \eta}(T) \le w_{\phi, \eta} \le 1$, consequently 
			\begin{equation}
				\label{eq:roughNt}
				\Nt f \Nt_{\rSSSs{s}}^2 \le \sup_{0 \leq t \leq T}\|f(t)\|_{\rSSsp{s}}^2+\int_0^T\left\|\,|\nabla_x|^{1-\alpha}f(\tau)\right\|_{\rSSsp{s}}^2\d \tau \le w_{\phi, \eta}(T)^{-2} \Nt f \Nt_{\rSSSs{s}}^2,
			\end{equation}
			and
			\begin{equation*}
				\Nt f \Nt_{\rSSSm{s}}^2 \le \sup_{0 \leq t \leq T}\|f(t)\|_{\rSSs{s}}^2 + \frac{1}{\eps^2} \int_0^T\left\|\,f(\tau)\right\|_{\rSSsp{s}}^2\d \tau \le w_{\phi, \eta}(T)^{-2} \Nt f \Nt_{\rSSSm{s}}^2.
			\end{equation*}
			We point out already that we will work in the proof of Theorems \ref{thm:hydrodynamic_limit}, \ref{thm:hydrodynamic_limit-gen_symmetric} and \ref{thm:hydrodynamic_limit-gen_degenerate} with the choice $\phi=f_\ns$ in \eqref{eq:nablaPhi}. 
		\end{rem}
		\begin{rem}
			Clearly, Theorems \ref{thm:hydrodynamic_limit-gen_symmetric} and \ref{thm:hydrodynamic_limit-gen_degenerate} suggest that, in the decomposition of the solution $f^\eps$ in \eqref{eq:decomp},  we will look for the kinetic part $f^\eps_\kin$ in the space $\SSSl$ in the sense that
			$$\Nt f^\eps_\kin\Nt_{\rSSSl{s}}  < \infty, \qquad s >\frac{d}{2},$$
			and, although its is not as obvious, the error term $f^\eps_\err$ will be constructed as the sum of a part in $\SSSs$ and one in $\SSSm$, which explains the global energy estimate satisfied by the solution.
			
			The first two regimes $\SSSl$ and $\SSSs$ are preserved by the two corresponding parts $U^\eps_\kin(\cdot)$ and $U^\eps_\hyd(\cdot)$ of the linearized flow of the equation \eqref{eq:Kin-Intro} (see Sections \ref{scn:reduction_problem}--\ref{scn:pseudo_hydro_kinetic} for their definitions), and the mixed regime $\SSSm$ is introduced to describe the interactions between the two different regimes (see Lemmas \ref{lem:decay_semigroups_hydro} and \ref{lem:decay_regularization_convolution_kinetic_semigroup}).
			\color{black}
		\end{rem}

		\subsection{Reduction of the problem}
		\label{scn:reduction_problem}
		We present in this section how we frame the problem of hydrodynamic limits. We start from the integral formulation
		\begin{equation}\label{eq:Kin-Mild}
			f^\eps(t) = U^\eps(t) f_\ini + \Psi^\eps\left[ f^\eps, f^\eps \right](t),
		\end{equation}
		where, denoting $2 \QQ^\sym(f, g) := \QQ(f, g) + \QQ(g, f)$
		\begin{gather*}
			\Psi^\eps[f,g](t) := \frac{1}{\eps} \int_0^t U^\eps(t - \tau) \QQ^\sym (f(\tau), g(\tau)) \d \tau
		\end{gather*}
		and $(U^\eps(t))_{t\geq0}$ is the $C^0$-semigroup in $\SSg^s$ generated by the full linearized operator (in original variables):
		$$\mathcal{G}_\eps f:=-\frac{1}{\eps} v\cdot \nabla_x f +\frac{1}{\eps^2}\LL f, \qquad  \dom(\mathcal{G}_\eps)=\left\{f \in \SSg\,;\,\LL f \in \SSg\right\}$$
		where $\Sg=\Ss$ or $\Sg=\Sl.$ Notice that the semigroup $\left(U^\eps(t)\right)_{t\geq0}$ is related to the semigroup $\left(U_{\eps\,\xi}(t)\right)_{t \ge0}$; for $g \in \SSg$, setting
		$$\widehat{g}(\xi)=\widehat{g}(\xi,\cdot)=\FF_{x}\left[g(x,\cdot)\right](\xi)=\int_{\R^{d}}e^{i\xi\cdot x}g(x,\cdot)\d x \in \Sg$$
		one has
		$$\FF_x\left[\mathcal{G}_\eps g\right](\xi) = \eps^{-2}\LL_{\eps\,\xi}\widehat{g}(\xi,\cdot)$$
		so that
		$$\FF_x U^\eps(t)\FF_x^{-1}=U_{\eps\,\xi}\left(\frac{t}{\eps^2}\right), \qquad t\ge0.$$
		
		We define now the linearized semigroup $(U_\ns(t))_{t\ge0}$, adopting again a Fourier-based description which involves the projectors $\PP^{(0)}_\Inc$ and $\PP_\Bou^{(0)}$ as defined in Theorem \ref{thm:spectral_study}.
		\begin{defi}
			\label{def:U_NS_V_NS}
			We define the \emph{diffusive} Navier-Stokes semigroup $\left(U_\ns(t)\right)_{t\geq0}$ through its Fourier transform for any $g \in \SSgm=\SSsm$ or $\SSgm=\SSlm$ and $t \ge 0$ as
			\begin{align*}
				U_\ns(t) g  = \FF_x^{-1} \Big[ & \exp( - t \kappa_\Inc | \xi |^2 ) \PP^{(0)}_\Inc \left( \widetilde{\xi} \right) \widehat{g}(\xi) \\
				& +  \exp( - t \kappa_\Bou | \xi |^2 ) \PP^{(0)}_\Bou \left( \widetilde{\xi} \right) \widehat{g}(\xi) \Big], \quad \widetilde{\xi} := \frac{\xi}{|\xi|}.
			\end{align*}
			We also define the one parameter family   $\left(V_\ns(t)\right)_{t\ge 0}$ as
			\begin{align}\label{eq:defi_V_ns}
				V_\ns(t) g := \FF^{-1}_x\Big[  \exp( - t \kappa_\Inc | \xi |^2 ) \PP^{(1)}_\Inc\left( \widetilde{\xi} \right) \widehat{g}(\xi)  + \exp( - t \kappa_\Bou | \xi |^2 ) \PP^{(1)}_\Bou\left( \widetilde{\xi} \right) \widehat{g}(\xi) \Big].
			\end{align}
		\end{defi}
		The link between the above objects and  solutions to the NSF system \eqref{eq:NSFint} is given by the following Proposition whose proof is postponed to Appendix \ref{sec:N-S}.
		
		\begin{prop}
			\label{prop:equivalence_kinetic_hydrodynamic_INSF}
			Consider $T_0 > 0$ and $(\varrho, u, \theta) \in L^\infty\left( [0, T_0] ; \mathbb{H}^s_x \right) \cap L^2\left( [0, T_0] ; H^{s+1}_x \right)$,
			and define the corresponding macroscopic distribution $f$ as
			\begin{equation}\label{eq:f-macro}
				f(t, x, v) = \varrho(t, x) \mu(v) + u(t, x) \cdot v \mu(v) + \frac{1}{E(K-1)} \theta(t, x) \left( |v|^2 - E \right) \mu(v).\end{equation}
			The macroscopic distribution $f$ satisfies the integral equation 
			\begin{equation}\begin{split}
					\label{eq:NS-IntVNS}
					f(t) & = U_\ns(t) f_\ini + \Psi_\ns\left[ f, f \right](t) \\
					& = U_\ns(t) f_\ini + \int_0^t \nabla_x \cdot  V_\ns(t-\tau) \QQ^\sym (f(\tau), f(\tau) )(t).
			\end{split}\end{equation}
			if and only if the coefficients $(\varrho, u, \theta)$ satisfy the incompressible Navier-Stokes-Fourier equations:
			$$
			\begin{cases}
				\partial_t u + \vartheta_\Inc u \cdot \nabla_x u = \kappa_\Inc \Delta_x u - \nabla_x p, & \nabla_x \cdot u = 0, \\
				\\
				\partial_t \theta + \vartheta_\Bou u \cdot \nabla_x \theta = \kappa_\Bou \Delta_x \theta, & \nabla_x (\varrho + \theta) = 0,
			\end{cases}
			$$
			where we denoted
			\begin{gather*}
				\vartheta_\Inc = - \frac{\vartheta_1}{2} \left( \frac{d}{E} \right)^{\frac{3}{2}}, \qquad
				\vartheta_\Bou = -\frac{1}{ K \sqrt{K(K-1)} } \left( 2 \vartheta_2 + \frac{2 \vartheta_3}{E (K-1)} \right)
			\end{gather*}
			with $\vartheta_1,\vartheta_2$ and $\vartheta_3$ defined in Lemma \ref{lem:Q_A}.
		\end{prop}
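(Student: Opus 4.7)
\medskip

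\noindent\textbf{Proof proposal for Proposition \ref{prop:equivalence_kinetic_hydrodynamic_INSF}.}
The plan is to work in Fourier variables $\xi \in \R^d$, where the semigroup $U_\ns$ and the family $V_\ns$ act as explicit multipliers built from the spectral projectors $\PP_\star^{(0)}$ and $\PP_\star^{(1)}$, and then to project the integral equation onto the incompressible and Boussinesq modes. The first step is to verify that, when $(\varrho,u,\theta)$ satisfies the constraints $\nabla_x\cdot u=0$ and $\nabla_x(\varrho+\theta)=0$ (which in Fourier read $\xi\cdot\hat u=0$ and $\hat\varrho+\hat\theta=0$), the macroscopic distribution $\hat f(t,\xi,v)$ lies entirely in the range of $\PP_\Inc^{(0)}(\tilde\xi)+\PP_\Bou^{(0)}(\tilde\xi)$, i.e.\ the wave-mode content vanishes. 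This follows from a direct computation with \eqref{eq:def_psi_wave}:
$$\langle \hat f,\psi_{\pm\Wave}(\tilde\xi)\rangle_\Ss \;=\;\frac{1}{\sqrt{2K}}\Big(\hat\varrho+\hat\theta\pm c\,\hat u\cdot\tilde\xi\Big)\;=\;0,$$
using the computations of the moments $\int v\otimes v\,\mu$, $\int|v|^4\mu$ provided by \ref{L2}, together with $c=\sqrt{KE/d}$. At the same time one obtains $\PP^{(0)}_\Inc(\tilde\xi)\hat f=\hat u\cdot v\mu$ and an explicit expression for $\PP^{(0)}_\Bou(\tilde\xi)\hat f$ in terms of $\hat\theta$ alone, so that $\hat f=\PP^{(0)}_\Inc(\tilde\xi)\hat f+\PP^{(0)}_\Bou(\tilde\xi)\hat f$.

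\medskip

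Since the projectors $\PP_\Inc^{(0)}$ and $\PP_\Bou^{(0)}$ are mutually orthogonal and commute with $\hat U_\ns(t,\xi)$, and since (via \eqref{eq:P_1_Bou} and \eqref{eq:alternative_P_1_inc}) the operators $i\xi\cdot\PP^{(1)}_\Inc(\tilde\xi)$ and $i\xi\cdot\PP^{(1)}_\Bou(\tilde\xi)$ map into $\mathrm{Range}\,\PP_\Inc^{(0)}$ and $\mathrm{Range}\,\PP_\Bou^{(0)}$ respectively, the Fourier transform of the integral equation \eqref{eq:NS-IntVNS} decouples into two Volterra equations for the incompressible and Boussinesq components of $\hat f$:
\begin{align*}
\hat f_\Inc(t,\xi) &= e^{-t\kappa_\Inc|\xi|^2}\PP^{(0)}_\Inc(\tilde\xi)\widehat{f_\ini} + \int_0^t e^{-(t-\tau)\kappa_\Inc|\xi|^2}(i\xi)\cdot\PP^{(1)}_\Inc(\tilde\xi)\widehat{\QQ^\sym(f,f)}(\tau,\xi)\,\d\tau,\\
\hat f_\Bou(t,\xi) &= e^{-t\kappa_\Bou|\xi|^2}\PP^{(0)}_\Bou(\tilde\xi)\widehat{f_\ini} + \int_0^t e^{-(t-\tau)\kappa_\Bou|\xi|^2}(i\xi)\cdot\PP^{(1)}_\Bou(\tilde\xi)\widehat{\QQ^\sym(f,f)}(\tau,\xi)\,\d\tau.
\end{align*}
Differentiating each in $t$ yields the equivalent evolution equations
$$\partial_t\hat f_\Inc=-\kappa_\Inc|\xi|^2\hat f_\Inc+(i\xi)\cdot\PP^{(1)}_\Inc(\tilde\xi)\widehat{\QQ^\sym(f,f)},\qquad \partial_t\hat f_\Bou=-\kappa_\Bou|\xi|^2\hat f_\Bou+(i\xi)\cdot\PP^{(1)}_\Bou(\tilde\xi)\widehat{\QQ^\sym(f,f)}.$$

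\medskip

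The last step is to translate these two equations back into equations for $(\hat u,\hat\theta)$, and to identify their nonlinear right-hand sides with the convective terms $u\cdot\nabla_x u$ and $u\cdot\nabla_x\theta$ of the Navier-Stokes-Fourier system. Here the crucial input is Lemma \ref{lem:Q_A} which, for a macroscopic $f$ of the form \eqref{eq:f-macro}, computes explicitly the inner products
$$\bigl\langle \QQ^\sym(f,f)\,,\,\LL^{-1}\BurA\bigr\rangle_\Ss\quad\text{and}\quad \bigl\langle \QQ^\sym(f,f)\,,\,\LL^{-1}\BurB\bigr\rangle_\Ss,$$
in terms of symmetric quadratic expressions in $(u,\theta)$ with coefficients $\vartheta_1,\vartheta_2,\vartheta_3$. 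Combining with the formulas in Theorem \ref{thm:spectral_study} for $\PP^{(1)}_\Inc$ and $\PP^{(1)}_\Bou$, the term $(i\xi)\cdot\PP^{(1)}_\Inc(\tilde\xi)\widehat{\QQ^\sym(f,f)}$ produces (up to a full gradient which is absorbed in the pressure via the Leray projector $\Pi_{\tilde\xi}$) precisely $-\vartheta_\Inc\widehat{\mathbb{P}(u\cdot\nabla_x u)}\cdot v\mu$, while $(i\xi)\cdot\PP^{(1)}_\Bou(\tilde\xi)\widehat{\QQ^\sym(f,f)}$ produces $-\vartheta_\Bou\widehat{u\cdot\nabla_x\theta}\cdot\psi_\Bou$, with the specific constants $\vartheta_\Inc,\vartheta_\Bou$ stated in the proposition. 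Matching coefficients mode-by-mode then gives the incompressible Navier-Stokes equation for $u$ (the gradient part being reinterpreted as $-\nabla_x p$) and the transport-diffusion equation for $\theta$.

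\medskip

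The converse direction uses the same identities in reverse: starting from a solution of the Navier-Stokes-Fourier system, the evolution equations for $\hat u$ and $\hat\theta$, recombined with the vanishing of wave modes, integrate back via Duhamel's formula into the integral equation \eqref{eq:NS-IntVNS}. The main difficulty, and the only step that is not essentially algebraic, is the careful computation encapsulated in Lemma \ref{lem:Q_A}: the identification of the quadratic forms $\langle\QQ^\sym(f,f),\LL^{-1}\BurA\rangle_\Ss$ and $\langle\QQ^\sym(f,f),\LL^{-1}\BurB\rangle_\Ss$ with multiples of $u\otimes u$, $\theta\, u$ (plus terms that reduce to full gradients and disappear under the Leray projection) — this relies on the isotropy assumption \ref{Bisotrop} and on the specific structure of the Burnett functions $\BurA,\BurB$ defined in \eqref{eq:burnett}. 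All the bookkeeping of the normalizing constants $E,K,c,\kappa_\star,\vartheta_\star$ is routine once this identification is made.
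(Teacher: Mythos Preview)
Your proposal is correct and follows essentially the same approach as the paper: both arguments rest on (i) the observation that the constraints $\nabla_x\cdot u=0$, $\nabla_x(\varrho+\theta)=0$ force $f$ to live in $\mathrm{Range}(\PP^{(0)}_\Inc+\PP^{(0)}_\Bou)$, and (ii) Lemma~\ref{lem:Q_A} to identify $\langle\QQ(f,f),\LL^{-1}\BurA\rangle_\Ss$ and $\langle\QQ(f,f),\LL^{-1}\BurB\rangle_\Ss$ with the convective terms. The only stylistic difference is that you differentiate the Volterra equations in $t$ and match the resulting differential equations, whereas the paper compares the integral (Duhamel) formulations of Navier--Stokes--Fourier and \eqref{eq:NS-IntVNS} directly via the macroscopic projections of Proposition~\ref{prop:macro_representation_spectral}; this is an equivalent and equally short route.
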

		With this at hands, one sees that \eqref{eq:NS-IntVNS} provides a \emph{kinetic formulation} of the NSF system \eqref{eq:NSFint}. As said in the Introduction, our approach is ‘‘top-down" so we start with solutions $(\varrho,u,\theta)$ to the Navier-Stokes-Fourier system \eqref{eq:NSFint} to recover information about the kinetic equation \eqref{eq:Kin-Intro} in its mild formulation \eqref{eq:Kin-Mild}. This allows in particular to define the ‘‘kinetic formulation'' to the NSF system
		\begin{equation}
			\label{eq:reduction_NS_kin}
			f_\ns(t)=U_\ns(t)f_\ini + \Psi_\ns[f_\ns,f_\ns](t).
		\end{equation}
		 Then, on the basis of the above Proposition, the hydrodynamic limit problem consists in proving
		$$\lim_{\eps \to0}\Big(U^\eps(t) f_\ini + \Psi^\eps\left[ f^\eps, f^\eps \right](t) \Big)= U_\ns(t) f_\ini + \Psi[f_\ns, f_\ns](t) = f_\ns(t)$$
		in some precise sense. We point out already that using the representation \eqref{eq:f-macro}, the solution $f_\ns$ to \eqref{eq:reduction_NS_kin} actually belongs to $\SSSs$ (see Lemma \ref{lem:NS_parabolic_space}).\color{black} 
		The key point will be therefore to split suitably $U^\eps(\cdot)$ (and $\Psi^\eps$ accordingly) in order to prove the convergence. The splitting will be based upon the different parts of the spectrum identified in Theorem \ref{thm:spectral_study}:
		$$U^\eps(t)=U^\eps_\ns(t)+U^\eps_\Wave(t)+U^\eps_\kin(t).$$
		Here $U^\eps_\ns(t)$ is the leading order term which is expected to converge, as $\eps\to0$ towards the linearized Navier-Stokes semigroup $U_\ns(t)$ whereas $U^\eps_\Wave(t)$ contains the acoustic waves {responsible for dispersive effects (which are absent if the initial data is well-prepared)}, and the combination of these two semigroups can be seen as a \emph{pseudo-hydrodynamic semigroup} encapsulating the macroscopic behavior of the solution $f^\eps$. The part $(U^\eps_\kin(t))_{t\ge0}$ keeps track of the  {pseudo-}kinetic (microscopic) behavior of the solution which is exponentially small in $\frac{t}{\eps^2}$ due to the dissipation of entropy, enhanced by the numerous collisions in this hydrodynamic scaling. Since 
		$$\Psi^\eps[f,f](t)=\frac{1}{\eps}\int_0^t U^\eps(t-\tau)\QQ^{\sym}(f(\tau),f(\tau))\d\tau$$
		the above splitting of $U^\eps(\cdot)$ induces a similar splitting of the nonlinear term as 
		$$\Psi^\eps[f,f]=\Psi^\eps_\ns[f,f]+\Psi^\eps_\Wave[f,f]+\Psi^\eps_\kin[f,f].$$
		Precise definitions of these objects are given in the next sections. Before this, we 
		briefly describe the main difficulties faced in the proof of Theorem \ref{thm:hydrodynamic_limit}:
		\begin{enumerate}
			\item  As said,  the major difficulty in establishing the hydrodynamic limit $\eps \to 0$ lies in the control of the stiff term $\frac{1}{\eps}\QQ\left(f^\eps,f^\eps\right)$.  This requires a precise understanding of the asymptotic behavior when $\eps \to 0$ of both $U^\eps(t)f$ and convolutions of the type
			$$\frac{1}{\eps} \int_0^t U^\eps(t-\tau)\varphi(\tau)\d\tau$$
			in various norms, having in mind that $\varphi=\QQ(f,f)$.   {Furthermore,} the nonlinear operator $\QQ$ induces a loss of regularity in the sense that $\QQ(f, f) \in \Ssm$ when $f \in \Ssp$, where we recall
			$$ {\Ssp \subset \Ss \subset \Ssm = ( \Ssp )'}.$$
			One of the difficulty is therefore to show that  convolution by $\left( U^\eps(t) \right)_{t \ge 0}$ is able to compensate this loss of regularity.
			\item  As explained in the introduction, in the abstract framework considered here, our minimal assumptions on $\LL$ and $\QQ$ are not sufficient to deduce in a direct way regularization estimates or direct boundedness of $\Psi^\eps$ as it is the case for the Boltzmann equation under cut-off assumptions in \cite{BU1991,GT2020} or for the Landau equation \cite{CRT2022}. In a more explicit way, our splitting
			$$\LL=\AA+\BB$$ 
			does not induce, in full generality, regularization estimates of the form
			$$t \mapsto \| \exp(t \BB) \|_{ \BBB\left( \Ssm ; \Ss \right) } +  \| \exp(t \BB) \|_{ \BBB\left( \Ss ; \Ssp \right) } \in L^1_{\text{loc}}((0,T))$$
			which would allow to compensate the unboundedness of $\QQ$ in the Duhamel nonlinear term $\Psi^\eps$. Inspired by known energy methods introduced for instance in \cite{G2004} and which rely on a suitable  dissipation in $L^2$-norm, the abstract functional setting which is adapted to our framework is the one involving spaces of the type
			$$L^\infty_t \Ss \cap L^2_t \Ssp.$$
			{Such spaces correspond to the above defined space $\SSSl, \SSSs$ and $\SSSm$.}
		\end{enumerate}
		\subsection{The pseudo-hydrodynamic and pseudo-kinetic projectors}
		\label{scn:pseudo_hydro_kinetic}
		In this section, we denote by $(\Sg, \SSg, \SSgp, \SSgm)$ the spaces $(\Ss, \SSs, \SSsp, \SSsm)$ under assumption \ref{L1}--\ref{L4}, as well as $(\Sl, \SSl, \SSlp, \SSlm)$ under the extra assumption \ref{LE}.
		We introduce the \textit{pseudo-hydrodynamic projector}, denoted $\PP_\hyd^{\eps}$, corresponding to the small eigenvalues identified in Theorem \ref{thm:spectral_study}, and defined as a Fourier multiplier: 
		\begin{gather*}
			\PP_\hyd^\eps g := \FF^{-1}_\xi \Big[ \PP(\eps \xi) \widehat{g}(\xi)\Big].
		\end{gather*}
		Using the splitting of $\PP$ in \eqref{eq:splitPP}, one sees that it is made up of two parts; one corresponding to the acoustic modes, denoted $\PP^\eps_\Wave$, and another one corresponding to the Navier-Stokes-Fourier modes, denoted $\PP^\eps_\ns$:
		$$\PP_{\hyd}^{\eps}=\PP^{\eps}_{\Wave} + \PP^{\eps}_{\ns}$$
		defined in the following.
		\begin{defi}
			\label{def:hydrodynamic_projectors}
			The projectors $\PP_{\Wave}^{\eps}$ and $\PP_{\ns}^{\eps}$ are defined through their Fourier transform, namely for any $g \in \SSgm$
			\begin{gather*}
				\PP_\Wave^\eps g := \FF^{-1}_{\xi} \Big[\PP_{+\Wave} (\eps \xi) \widehat{g}(\xi) + \PP_{-\Wave} (\eps \xi) \widehat{g}(\xi)\Big],\\
				\PP_\ns^\eps g := \FF^{-1}_{\xi} \Big[\PP_{\Inc} (\eps \xi) \widehat{g}(\xi) + \PP_{\Bou} (\eps \xi) \widehat{g}(\xi)\Big].
			\end{gather*}
			We also define the limit of the first one as $\eps \to 0$ provided by the expansions of the projectors in Theorem \ref{thm:spectral_study}:
			\begin{equation*}
				\PP_\disp g := \FF^{-1}_{\xi} \Big[\PP_{+\Wave}^{(0)} \left( \widetilde{\xi} \right) \widehat{g}(\xi) + \PP_{-\Wave}^{(0)} \left( \widetilde{\xi} \right) \widehat{g}(\xi)\Big], \quad \widetilde{\xi} := \frac{\xi}{|\xi|}.
			\end{equation*}
		\end{defi}
		Using these projectors, we define the corresponding partial semigroups:  
		\begin{gather*}
			U^\eps_\star(\cdot):= \PP^\eps_\star U^\eps(\cdot) = U^\eps (\cdot)\PP^\eps_\star, \quad \star = \hyd, \ns, \Wave,
		\end{gather*}
		which gives
		\begin{equation*}
			U^\eps_\hyd(\cdot) := U^\eps_\ns(\cdot) + U^\eps_\Wave(\cdot).
		\end{equation*}
		More precisely, the above semigroups are defined as follows.
		\begin{defi}
			\label{def:hydro_semigroups}
			The pseudo-Navier-Stokes (diffusive) part $\left(U^\eps_\ns(t)\right)_{t\ge 0}$ is defined through its Fourier transform for any $g \in \SSgm$ and $t \ge 0$ as
			\begin{align}
				\label{eq:Unst}
				U^\eps_\ns(t) g = \FF_\xi^{-1} \Big[&  \exp\left( - \eps^{-2} t \lambda_\Bou(\eps \xi) \right) \PP_\Bou(\eps \xi)\widehat{g}(\xi)   \\
				\notag
				& + \exp\left( - \eps^{-2} t \lambda_\Inc(\eps \xi) \right) \PP_\Inc(\eps \xi)\widehat{g}(\xi)\Big] ,
			\end{align}
			whereas the pseudo-acoustic (dispersive) part $\left(U^\eps_\Wave(t)\right)_{t\ge 0}$ is defined as
			\begin{align*}
				\label{eq:UWave}
				U^\eps_\Wave(t) g = \FF_\xi^{-1} \Big[ & \exp\left( \eps^{-2} t \lambda_{+ \Wave}(\eps \xi) \right) \PP_{+ \Wave}(\eps \xi)\widehat{g}(\xi)\\
				\notag
				& + \exp\left( \eps^{-2} t \lambda_{- \Wave}(\eps \xi) \right) \PP_{- \Wave}(\eps \xi)\widehat{g}(\xi)\Big].
			\end{align*}
		\end{defi}
		
		Because $\PP_\Wave^\eps \to \PP_\disp$ as $\eps \to 0$, the leading order terms of $U^\eps_\Wave(t)$ denoted respectively $U^\eps_\disp(t)$ will play also a crucial roles in the study of hydrodynamic limits:
		\begin{defi}
			The dispersive semigroup $\left(U^\eps_\disp(t)\right)_{t\ge 0}$ is defined as
			\begin{align*}
				U^\eps_\disp(t)g = \FF_\xi^{-1} \Big[  & \exp\left( i c \eps^{-1} t|\xi|  - t \kappa_\Wave | \xi |^2 \right) \PP^{(0)}_{+ \Wave}\left( \widetilde{\xi} \right) \widehat{g}(\xi) \\
				& + \exp\left( - i c \eps^{-1} t|\xi| - t \kappa_\Wave | \xi |^2 \right) \PP^{(0)}_{- \Wave}\left( \widetilde{\xi} \right) \widehat{g}(\xi) \Big].
			\end{align*}
		\end{defi}
		\color{black}
		\begin{rem}
			Recall that $U_\ns(t)$ and $V_\ns(t)$ were introduced in Definition \ref{def:U_NS_V_NS}. Observe that $U_\ns(t)$ is the leading order term in the expansion of $U^\eps_\ns(t)$ while  $\nabla_x \cdot V_{\ns}(t)$ is the leading order term of $U^\eps_\ns(t)$ on $\nul(\LL)^\perp$, that is to say
			$${U^\eps_\ns(t) \approx U_\ns(t), \qquad \left(U^\eps_\ns(t)\right)_{| \nul(\LL)^\perp } \approx \eps \nabla_{x} \cdot V_\ns(t)}$$
			as will be  {exploited} in Lemma \ref{lem:asymptotic_equiv_NS_semigroup}.
		\end{rem}
	 
		Note that the projectors $\PP_{\Inc}^{(0)}$, $\PP_{\Bou}^{(0)}$ and $\PP_{\pm \Wave}^{(0)}$ are macroscopic in the sense that they take values in
		$$\nul(\LL) = \left\{ \left(\varrho + u \cdot v + \theta \left( |v|^2 - E \right) \right) \mu \, ; \, \varrho, u, \theta \in L^2\left( \R^d \right) \right \}$$
		and vanish on its orthogonal, thus they can be characterized using the macroscopic components $\varrho, u$ and $\theta$ (see Remark \ref{rem:macro_representation_spectral}). Similarly, the first order projectors $\PP^{(1)}_\Inc$ and $\PP^{(1)}_\Bou$ restricted to $\nul(\LL)^\perp$ can be characterized in such a way, which will be useful for describing $V_\ns$.
		\begin{prop}
			\label{prop:macro_representation_spectral}
			The zeroth order projector related to the Navier-Stokes (incompressible) mode is characterized for $f=f(x,v) \in L^2_x(\Ss_v)$ by
			$$\varrho\left[ \PP^{(0)}_\Inc f \right] = \theta\left[ \PP^{(0)}_\Inc f \right] = 0, \qquad u \left[ \PP^{(0)}_\Inc f \right] = \frac{E}{d} \mathbb{P} u[f],$$
			the one related to the Fourier (Boussineq) mode for $f=f(x, v)$ by
			$$u\left[ \PP^{(0)}_\Bou f \right] =\varrho\left[ \PP^{(0)}_\Bou f \right] + \theta\left[ \PP^{(0)}_\Bou f \right] = 0,$$
			$$\sqrt{K(K-1)} \big( (K-1) \varrho \left[ \PP^{(0)}_\Bou f \right]  - \theta \left[ \PP^{(0)}_\Bou f \right]  \big) =  (K-1) \varrho[f] - \theta[f] \,,$$
			and the ones related to the acoustic modes (recall that $( \Id - \mathbb{P}) \nabla_x = \nabla_x $) for $f=f(x, v)$  by
			$$(K-1) \varrho\left[ \PP^{(0)}_{\pm \Wave} f \right] - \theta\left[ \PP^{(0)}_{\pm \Wave} f \right] = 0,$$
			$$\sqrt{2K} u\left[ \PP^{(0)}_{\pm \Wave} f \right] = (- \Delta_x)^{-\frac{1}{2}} \nabla_x \left( \varrho[f] + \theta[f]\right)  \pm c \left( \Id - \mathbb{P} \right) u[f],$$
			$$\sqrt{2K} \left( 1 - \frac{1}{K} \right)\left( \varrho \left[ \PP^{(0)}_{\pm \Wave} f \right] + \theta \left[ \PP^{(0)}_{\pm \Wave} f \right] \right) = ( \varrho[f] + \theta[f]) \pm c (- \Delta_x)^{-\frac{1}{2}} \nabla_x \cdot u[f].$$
			The first order projectors related to the Navier-Stokes (incompressible) mode satisfy the identities for $f(x, \cdot) \perp \nul(\LL)$
			$$\varrho\left[ \PP^{(1)}_\Inc f \right] = \theta\left[ \PP^{(1)}_\Inc f \right] = 0,$$
			$$\left( \frac{E}{d} \right)^{\frac{3}{2}} u \left[ \nabla_x \cdot \PP^{(1)}_\Inc f \right] = \mathbb{P} \left( \nabla_x \cdot \la f, \LL^{-1}\BurA \ra_{\Ss} \right),$$
			and the first order coefficient related to the Fourier (Boussinesq) mode for $f(x, \cdot) \perp \nul(\LL)$
			$$u\left[ \PP^{(1)}_\Bou f \right] = \varrho\left[ \PP^{(1)}_\Bou f \right]  + \theta\left[ \PP^{(1)}_\Bou f \right] = 0,$$
			$$  (K-1) \varrho\left[ \PP^{(1)}_\Bou f \right]  - \theta\left[ \PP^{(1)}_\Bou f \right]    = \frac{1}{\sqrt{K(K-1)}}\la f, \LL^{-1}\BurB \ra_{\Ss}.$$
			
		\end{prop}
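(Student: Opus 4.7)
The plan is to derive each identity by direct computation starting from the explicit formulas for the projectors given in Theorem \ref{thm:spectral_study}, combined with a systematic translation of the direction-dependent Fourier symbols (involving $\omega \in \S^{d-1}$) into physical-space operators ($\mathbb{P}$, $(-\Delta_x)^{-1/2}\nabla_x$, etc.). The whole proof reduces to evaluating inner products of the form $\langle\phi(v)\mu,\psi(v)\mu\rangle_{\Ss}=\int\phi\psi\mu\,\d v$ for explicit polynomial $\phi,\psi$, where the needed moments of $\mu$ are fixed by assumption \ref{L2} and the radiality of $\mu$: $\int\mu=1$, $\int v_iv_j\mu=(E/d)\delta_{ij}$, $\int|v|^2\mu=E$, $\int v_iv_j|v|^2\mu=(KE^2/d)\delta_{ij}$, $\int|v|^4\mu=KE^2$. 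As a preliminary step I would record the orthogonality relations among $\mu,v_i\mu,(|v|^2-E)\mu$ in $\Ss$, which make the identification of the macroscopic triple $(\varrho[g],u[g],\theta[g])$ of any $g\in\nul(\LL)$ straightforward.

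The zeroth order projectors are then treated one by one. For $\PP_\Inc^{(0)}(\omega)f=(d/E)(\Pi_\omega\langle f,v\mu\rangle_{\Ss})\cdot v\mu$, the output lies in $\mathrm{Span}\{v_i\mu\}$, giving $\varrho=\theta=0$ by orthogonality; the identity for $u$ follows from the moment $\int v_iv_j\mu=(E/d)\delta_{ij}$, together with the fact that $\Pi_{\widetilde{\xi}}=\Id-\widetilde{\xi}\otimes\widetilde{\xi}$ is exactly the Fourier symbol of the Leray projector $\mathbb{P}$. For $\PP_{\Bou}^{(0)}$ and $\PP_{\pm\Wave}^{(0)}$, I would substitute the explicit expressions \eqref{eq:def_psi_wave}, \eqref{eq:def_psi_Bou} of $\psi_\Bou$ and $\psi_{\pm\Wave}$, expand $\langle f,\psi_\star(\omega)\rangle_{\Ss}$ in terms of $\varrho[f]$, $\omega\cdot u[f]$, and $\theta[f]$, then multiply by $\psi_\star(\omega)$ and read off the three macroscopic components. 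Translating back to physical space uses the two key identifications $\widehat{u}\cdot\omega=\widehat{(-\Delta_x)^{-1/2}\nabla_x\cdot u}$ and $\omega\,\widehat{g}=\widehat{(-\Delta_x)^{-1/2}\nabla_x g}$, which exactly produce the non-local operators appearing in the statement.

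For the first order projectors on $\nul(\LL)^{\perp}$, the same mechanism applies. Since $\PP_\Inc^{(1)}(\omega)f=\sqrt{d/E}\,\langle f,\LL^{-1}\BurA\rangle_{\Ss}\,\Pi_\omega v\mu$ is again proportional to $v\mu$, one has $\varrho=\theta=0$ immediately; taking an extra $\nabla_x\cdot$ converts the Fourier multiplier $i\xi\cdot\Pi_{\widetilde\xi}$ into $\mathbb{P}\nabla_x\cdot$ and the moment computation for $v\otimes v\mu$ yields the formula for $u[\nabla_x\cdot\PP_\Inc^{(1)}f]$. For $\PP_\Bou^{(1)}(\omega)f=\langle f,\LL^{-1}\BurB\rangle_{\Ss}\,\psi_\Bou$, the macroscopic triple is simply that of $\psi_\Bou$ already computed in the previous step, multiplied by the scalar $\langle f,\LL^{-1}\BurB\rangle_{\Ss}$.

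The main obstacle is the acoustic case: $\psi_{\pm\Wave}$ mixes mass, longitudinal momentum $(\omega\cdot v)$ and energy in non-trivial linear combinations, so the bookkeeping to isolate the contributions of $\varrho[f]$, $u[f]$ and $\theta[f]$ is delicate. In particular, one must carefully separate the longitudinal part $\omega(\omega\cdot u[f])$ from the transverse part $\Pi_\omega u[f]$ — the former contributes to the acoustic modes while the latter is captured by $\PP_\Inc^{(0)}$ — and this is precisely what produces the factors $(\Id-\mathbb{P})u[f]$ and $(-\Delta_x)^{-1/2}\nabla_x(\varrho+\theta)$ in the announced identities. As an internal consistency check, one can verify at the end that $\PP_\Inc^{(0)}+\PP_\Bou^{(0)}+\PP_{+\Wave}^{(0)}+\PP_{-\Wave}^{(0)}$ coincides, at the macroscopic level, with the orthogonal projector onto $\nul(\LL)$, which gives the correct sum rules on $\varrho$, $u$ and $\theta$.
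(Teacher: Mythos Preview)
Your proposal is correct and is exactly the approach implicit in the paper: the proposition has no dedicated proof block and is treated as a direct verification from the explicit formulas for $\PP^{(0)}_\star,\PP^{(1)}_\star$ given in Theorem~\ref{thm:spectral_study} together with the moment identities of $\mu$ from \ref{L2} and the Fourier identifications $\Pi_{\widetilde\xi}\leftrightarrow\mathbb{P}$, $\omega\leftrightarrow(-\Delta_x)^{-1/2}\nabla_x$. Your consistency check via the decomposition $\PP=\sum_\star\PP^{(0)}_\star$ is precisely the content of \eqref{eq:representation_hydrodynamic_modes} in Proposition~\ref{prop:representation_P}.
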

		
		We end this section by defining, in a similar way, the pseudo-kinetic part of the whole linearized semigroup
		\begin{defi}\label{def:kinetic_semigroups}
			We define the \textit{pseudo-kinetic projector} $\PP_\kin^\eps$ through its Fourier transform for any $g \in \SSg$
			\begin{equation*}
				\PP_\kin^\eps g := \FF_\xi^{-1} \Big[ \left(\Id - \PP(\eps \xi)\right) \widehat{g}(\xi) \Big] = \left(\Id - \PP^\eps_\hyd\right) g,
			\end{equation*}
			as well as the corresponding  semigroup $\left(U^\eps_\kin(t)\right)_{t\ge0}$
			\begin{align*}
				U^\eps_\kin(t)g := \FF^{-1}_\xi \Big[ U_{\xi}(\eps^{-2}t)\left( \Id - \PP(\eps \xi) \right) \widehat{g}(\xi) \Big]=\FF^{-1}_\xi \Big[ \left( \Id - \PP(\eps \xi) \right)U_{\xi}(\eps^{-2}t)\widehat{g}(\xi) \Big].
			\end{align*}
		\end{defi}
		\color{black}
		
		\subsection{Decomposition of the solution}
		With the above definitions, we obtain the following decomposition of the semigroup $U^\eps(t)$ as
		\begin{equation}\label{eq:decompUeps}
			U^\eps(t) = U^\eps_\hyd(t) + U^\eps_\kin(t) 
			=U^\eps_\ns(t)+U^\eps_\Wave(t) + U^\eps_\kin(t)\qquad \quad t\ge0\end{equation}
		and we split the nonlinear integral operator $\Psi^\eps$ accordingly, that is to say as a hydrodynamic part and a kinetic part:
		$$		\Psi^\eps[f,g](t) = \Psi^\eps_\hyd [f, g](t) + \Psi^\eps_\kin [f, g](t),$$
		with
		$$		\Psi^\eps_\star [f, g](t) := \PP^\eps_\star \Psi^\eps[f,g](t) = \frac{1}{\eps} \int_0^t U^\eps_\star(t - \tau) \QQ^\sym (f(\tau), g(\tau)) \d \tau.$$
		
		The main idea behind the proof of Theorems \ref{thm:hydrodynamic_limit} or \ref{thm:hydrodynamic_limit-gen_symmetric} or \ref{thm:hydrodynamic_limit-gen_degenerate} is to consider an \textit{a priori} decomposition of the unknown $f^\eps$ in $\SSSl + \SSSm + \SSSs$:
		\begin{equation}\label{eq:decompfeps}
			f^\eps = f^\eps_\kin + f^\eps_\mix + f^\eps_\hyd, \qquad  f^\eps_\kin \in \SSSl, ~ f^\eps_\mix \in \SSSm, ~ f^\eps_\hyd \in \SSSs\end{equation}
		which will enable us to reduce the construction of a solution of \eqref{eq:Kin-Mild} to that of a solution to some appropriate system of equations for all the new unknowns
		$$(f^\eps_\kin,f^\eps_\mix,f^\eps_\hyd) \in \SSSl\times \SSSm\times \SSSs.$$
		The term $f^\eps_\mix$ is a coupling term between the purely kinetic $f^\eps_\kin$ and macroscopic $f^\eps_\hyd$ parts and which need to be studied separately. 
		
		Let us dive more deeply in such a strategy, aiming to determine the system solved by $(f^\eps_\kin,f^\eps_\mix,f^\eps_\hyd)$. The splitting \eqref{eq:decompfeps}  induces the \textit{a priori} decomposition of the kinetic part of the non-linear term:
		$$\Psi^\eps_\kin[f^\eps, f^\eps] = \Psi^\eps_\kin \left[ f^\eps_\kin, f^\eps_\kin \right] + 2 \Psi^\eps_\kin \left[f^\eps_\kin, f^\eps_\hyd + f^\eps_\mix\right]  + A_\mix $$
		where we expect the first two terms to belong to $\SSSl$ and the third one
		$$A_\mix:=\Psi^\eps_\kin \left[f^\eps_\mix + f^\eps_\hyd , f^\eps_\mix + f^\eps_\hyd \right]  \in \SSSm.$$

		In the same way, we introduce the following \textit{a priori} decomposition of the hydrodynamic part of the non-linearity, which will only be used to make the following presentation more compact:
		$$\Psi^\eps_\hyd\left[f^\eps, f^\eps\right]=\Psi^\eps_\hyd \left[f^\eps_\hyd, f^\eps_\hyd\right]  + A_\hyd\left[f^\eps_\hyd\right]  + B_\hyd$$	
		where we denoted
		$$A_\hyd\left[f^\eps_\hyd\right]=2 \Psi^\eps_\hyd\left[f^\eps_\hyd , f^\eps_\mix + f^\eps_\kin\right], \qquad B_\hyd:=\Psi^\eps_\hyd \left[f^\eps_\mix + f^\eps_\kin , f^\eps_\mix + f^\eps_\kin \right]\,.$$
		We consider an \emph{arbitrary} system of equations for each part, where $A_\mix$ is assigned to the equation for $f^\eps_\mix$:
		\begin{equation*}\label{eq:systemE}
			\begin{cases}
				f^\eps_\kin(t) = U^\eps_\kin(t) f_\ini + \Psi^\eps_\kin \left[ f^\eps_\kin, f^\eps_\kin \right](t) + 2 \Psi^\eps_\kin \left[f^\eps_\kin, f^\eps_\hyd + f^\eps_\mix\right],\\
				\\
				f^\eps_\mix(t) = A_\mix(t),\\
				\\
				f^\eps_\hyd(t) = U^\eps_\hyd(t) f_\ini + \Psi^\eps_\hyd\left[f^\eps_\hyd, f^\eps_\hyd\right](t) + A_\hyd\left[f^\eps_\hyd\right](t) + B_\hyd(t).
			\end{cases}
		\end{equation*}
		
		We see that, under the \emph{ansatz} \eqref{eq:decompfeps}, solving \eqref{eq:Kin-Mild} amounts to solve the above system for
		$$(f^\eps_\kin,f^\eps_\mix,f^\eps_\hyd) \in \SSSl\times \SSSm\times \SSSs,$$
	 {as well as proving the uniqueness of solutions to the original equation \eqref{eq:Kin-Intro} since our system is arbitrary.}
		
		In the hydrodynamic limit, we moreover expect $f^\eps_\hyd$ to be the leading term of $f^\eps$ converging to $f_\ns$, the other two terms being expected to converge to zero. Notice that we look for the solution $f^\eps_\hyd \in \SSSs$ and, as observed already, this is  the functional space to which $f_\ns$  actually belongs. In other words, we  expect
		$$\lim_{\eps\to0}\Nt f_\hyd^\eps-f_\ns \Nt_{\SSSs}=0.$$
		Moreover, we need to prove that, in the above system, all terms are well-defined and belong to the desired spaces. This is one the main technical difficulties of the work and, as explained in the introduction, will follow from a careful study of the behaviour of the various semigroups $U^\eps_\star(\cdot)$ as well as their action on convolutions. See Section \ref{scn:study_physical_space} for full proofs.

		\subsection{Removing the acoustic initial layer and the hydrodynamic limit}

		To justify the convergence of $f^\eps_\hyd$ towards $f_\ns$, we actually will need to  rewrite the equation for $f^\eps_\hyd(t)$ by removing its leading order terms, namely a Navier-Stokes part and an acoustic part . The construction of those leading order terms rely on already existing theory for the Navier-Stokes equations and the wave equation.
		
		More precisely, we split the hydrodynamic part $f^\eps_\hyd$ into an oscillating one $f^\eps_\disp(t)$ (which is explicit) and another one $f^\eps_\ns(t)$ that will be shown to be an approximation of the hydrodynamic limit $f_\ns(t)$:
		\begin{equation}\label{eq:fepshyd}
			f^\eps_\hyd(t) = f^\eps_\disp(t) + f^\eps_\ns(t), \qquad
			f^\eps_\disp(t) := U^\eps_\disp(t) f_{\ini}\,.
		\end{equation}
		Inserting this into the equation solved by $f^\eps_\hyd$, we see that $f^\eps_\ns(t)$ satisfies
		\begin{multline*}
			f^\eps_\ns(t) =  \left(U^\eps_\hyd(t)f_\ini - U^\eps_\disp(t)f_\ini\right)    + \Psi^\eps_\hyd\left[ f^\eps_\ns, f^\eps_\ns \right](t)\\+ 2 \Psi^\eps_\hyd\left[f^\eps_\disp, f^\eps_\ns\right](t) + A_\hyd\left[f^\eps_\ns\right](t)\\
			+ A_\hyd\left[f^\eps_\disp\right](t) + B_\hyd(t) + \Psi^\eps_\hyd\left[ f_\disp^\eps, f_\disp^\eps \right](t).
		\end{multline*}
		We further split $f^\eps_\ns$ into its \emph{a priori} limit $f_\ns(t)$ and an error term $g^\eps(t)$:
		$$f^\eps_\ns(t) = f_\ns(t) + g^\eps(t),$$
		so that, using \eqref{eq:reduction_NS_kin}, the part $g^\eps(t)$ satisfies the equation
		\begin{align*}
			g^\eps(t)  &= \left(U^\eps_\hyd(t)f_{\ini} - U_\ns(t)f_{\ini} - U^\eps_\disp(t)f_\ini\right)   + \left(\Psi^\eps_\hyd\left[ f_\ns , f_\ns \right](t) - \Psi_\ns\left[ f_\ns , f_\ns \right](t)\right ) \\
			&\phantom{+++} + 2 \Psi^\eps_\hyd\left[ f_\ns, g^\eps \right](t)   + 2 \Psi^\eps_\hyd\left[f^\eps_\disp, g^\eps \right](t) + A_\hyd\left[g^\eps\right](t) + \Psi^\eps_\hyd\left[g^\eps, g^\eps\right](t) \\
			&\phantom{+++} + 2 \Psi^\eps_\hyd\left[ f^\eps_\disp, f_\ns \right](t) + \Psi^\eps_\hyd\left[ f^\eps_\disp, f^\eps_\disp\right](t)  \\
			&\phantom{+++} + A_\hyd\left[f_\ns\right](t) + A_\hyd\left[ f^\eps_\disp\right](t) + B_\hyd(t) \\
			&=2 \Psi^\eps_\hyd\left[ g^\eps, f_\ns + f^\eps_\disp + f^\eps_\mix + f^\eps_\kin\right](t)  + \Psi^\eps_\hyd\left[g^\eps, g^\eps\right](t) + \SS^\eps(t)
		\end{align*}
		Here, we have denoted  the vanishing non-linear source term (which depends on $f^\eps_\kin$ and $f^\eps_\mix$ but not on $g^\eps$), as
		\begin{subequations}\label{eq:source}
			\begin{equation}\label{eq:sourceSS}
				\SS^\eps(t) =\SS^\eps_1(t) + \SS^\eps_2(t) + \SS^\eps_3[f^\eps_\kin, f^\eps_\mix](t)
			\end{equation} where  {the first two parts depend only on $f^\eps_\disp$ through $f_\ini$ and $f_\ns$, which are considered given}
			%\begin{align*}
			%		\SS^\eps(t) = & \left(U^\eps_\hyd(t)f_{\ini} - U_\ns(t)f_{\ini} - U^\eps_\disp(t)f_{\ini} \right)   + \left(\Psi^\eps_\hyd\left[ f_\ns , f_\ns \right](t)	 - \Psi_\ns\left[ f_\ns , f_\ns \right](t)\right)  \\
			%		& + \Psi^\eps_\hyd\left[ f^\eps_\disp, 2 f_\ns + f^\eps_\disp \right](t) \\
			%		& + A_\hyd\left[f_\ns + f^\eps_\disp\right](t) + B_\hyd(t) \\
			%		= & \SS^\eps_1(t) + \SS^\eps_2(t) + \SS^\eps_3[f^\eps_\kin, f^\eps_\mix](t),
			%	\end{align*}
			%	where each part writes
			\begin{equation}\label{eq:sourceS1S2} \begin{split}
					\SS^\eps_1(t)&=\left(U^\eps_\hyd(t)f_{\ini} - U_\ns(t)f_{\ini} - U^\eps_\disp(t)f_{\ini} \right)   + \left(\Psi^\eps_\hyd\left[ f_\ns , f_\ns \right](t)	 - \Psi_\ns\left[ f_\ns , f_\ns \right](t)\right)  \\
					\SS^\eps_2(t) &= \Psi^\eps_\hyd\left[ f^\eps_\disp, 2 f_\ns + f^\eps_\disp \right](t)\,,\end{split}\end{equation}
			and {the third one also depends on the partial solutions $f^\eps_\kin$ and $f^\eps_\mix$}
			\begin{equation}\label{eq:sourceS3} \begin{split}	
					\SS^\eps_3[f^\eps_\kin, f^\eps_\mix](t) &= A_\hyd\left[f_\ns + f^\eps_\disp\right](t) + B_\hyd(t)\\
					&= \Psi^\eps_\hyd\left[ f^\eps_\kin + f^\eps_\mix , f^\eps_\kin + f^\eps_\mix + f_\ns + f^\eps_\disp \right](t).
			\end{split}\end{equation}
		\end{subequations}
		
		\subsection{Summary of the proof}\label{sec:detail-sum} The above technical splitting allows us to consider the solution to \eqref{eq:Kin-Mild} we aim to construct in the form
		\begin{equation*}\begin{split}
				f^\eps(t)&=f^\eps_\kin(t)+f^\eps_\mix(t)+f^\eps_\hyd(t)\\
				&=f^\eps_\kin(t)+f^\eps_\mix(t)+f^\eps_\disp(t)+ f_\ns(t)+g^\eps(t)
			\end{split}
		\end{equation*}  
		where $f_\ns(\cdot)$ as well as $f_\ini$ (and thus $f^\eps_\disp(t) = U^\eps_\disp(t) f_\ini$) are functions to be considered as fixed parameters  since they depend only on the initial datum $f_\ini$ (and $\eps$). 
		
		According to the analysis performed in Section \ref{scn:study_physical_space} (see Lemma \ref{lem:decay_regularization_kinetic_semigroup}, Lemma \ref{lem:decay_semigroups_hydro} and Lemma \ref{lem:NS_parabolic_space} respectively) that
		$$\Nt U^\eps_\kin(\cdot) f_\ini \Nt_{\SSSl} \lesssim 1, \qquad \Nt f^\eps_\disp \Nt_{\SSSs} \lesssim 1, \qquad \Nt f_\ns \Nt_{\SSSs} \lesssim 1.$$
		Since $f_\ns$ is entirely determined by the solutions $(\varrho,u,\theta)$ to the NSF system \eqref{eq:main-NSF}, we point out that, in the definition of the space $\SSSs$, we choose the function $\phi$ to be \emph{exactly} the solution $f_\ns$. This corresponds, in Eq. \eqref{eq:wphieta}, to the choice of the weight function 
		$$w_{f_\ns,\eta}(t)=\exp\left( \frac{1}{2 \eta^2} \int_0^t\| |\nabla_x|^{1-\alpha} f_\ns(\tau) \|_{\rSSsp{s}}^2 \d \tau \right) \qquad t \ge0,$$
		with $\eta >0$ is a parameter which is still to free to be chosen {suitably small for the upcoming fixed point argument to work}. The above system considered in Section \ref{scn:reduction_problem} writes now:
		\begin{equation}\label{eq:systemKinMixG}
			\begin{cases}
				f^\eps_\kin(t) &= U^\eps_\kin(t) f_\ini + \Psi^\eps_\kin \left[ f^\eps_\kin, f^\eps_\kin \right](t) + 2 \Psi^\eps_\kin\left[ f^\eps_\kin, f^\eps_\hyd + f^\eps_\mix\right](t),\\
				\\
				f^\eps_\mix(t)    &=\Psi^\eps_\kin\left[\left(f_\ns + f^\eps_\disp\right)+ f_\mix^\eps+g^\eps\, , \,\left(f_\ns + f^\eps_\disp\right)+ f_\mix^\eps+g^\eps \right](t),\\
				\\	
				g^\eps(t) &= \Phi^\eps[ f^\eps_\kin, f^\eps_\mix ](t) g^\eps(t)+
				\Psi^\eps_\hyd\left[ g^\eps, g^\eps\right] + \SS^\eps(t).
			\end{cases}
		\end{equation}
		We will construct a solution $\left( f^\eps_\kin, f^\eps_\mix, g^\eps \right)$ of this system in the space
		$ \SSSl \times \SSSm \times \SSSs $	and more specifically, in a product of suitable balls in such spaces. This is achieves through a suitable use of Banach fixed point Theorem in the case of Assumptions \ref{BE} whereas, under Assumptions \ref{BED}, the situation is much more involved and we adapt a Picard-like scheme to construct our solution $(f^\eps_\kin,f^\eps_\mix,g^\eps)$. \\
		\medskip 
		
		As said already, in order to study the system \eqref{eq:systemKinMixG}, we first need to prove that all the various terms make sense under the \emph{ansatz} \eqref{eq:decompfeps}, that is we need to show that the various bilinear terms $\Psi^\eps_\kin \left[ f^\eps_\kin, f^\eps_\kin \right]$, $\Psi^\eps_\kin\left[ f^\eps_\kin, f^\eps_\hyd + f^\eps_\mix\right]$ are defined and belong to $\SSSl$ if $\left( f^\eps_\kin, f^\eps_\mix, g^\eps \right) \in \SSSl \times \SSSm \times \SSSs$, that the bilinear term appearing in the equation for $f_\mix^\eps$ is well defined and belong to $\SSSm$ while the bilinear terms involved in the equation for $g^\eps$ are well-defined and belong to $\SSSs.$ This is done in Section \ref{sec:Bilin} which is based on the thorough analysis led in Section \ref{scn:study_physical_space} of the various semigroups $U^\eps_\kin(\cdot)$, $U^\eps_\hyd(\cdot)$ and convolutions of the type 
		$$\frac{1}{\eps} U^\eps_\kin(\cdot) \ast \varphi \quad \text{ and } \quad \frac{1}{\eps}U^\eps_\hyd(\cdot) \ast \varphi.$$
		
		\section{Spectral analysis of the linearized operator}
		\label{scn:spectral_study}
		
		This section is mainly devoted to the proof of the main spectral result Theorem \ref{thm:spectral_study} in the Introduction about the linearized operator $\LL_{\xi} = \LL - i (v \cdot \xi).$
		
		\subsection{Description of the novel spectral approach} \label{sec:newspec}
		We give here a precise description of the main steps of our approach to prove the above two main results.
		
		{
		In order to prove our main spectral result, we adopt a ``direct method'' which appears much simpler than the original method of \cite{EP1975}. More precisely, their approach relies on the Lyapunov-Schmidt reduction process, which consists roughly in projecting the eigenvalue problem on the unperturbed eigenspace, whereas ours relies on Kato's reduction process, which consists roughly in rectifying the perturbed operator as another one defined on the unperturbed space. To some extent, we believe our approach to be somehow more natural and direct, fully exploiting the symmetry properties of the collision operators $\QQ$ and $\LL$. It is for sure of a more ``functional analytic flavour'' than the one of \cite{EP1975}.}
		
		First, to study the spectrum of $\LL_{\xi}$, we use the fact that, for $\xi=0,$ the spectrum of $\LL_{0}=\LL$ is explicit thanks to Assumption \ref{L3} and show that, for $|\xi|$ small enough, the structure of $\mathfrak{S}(\LL_{\xi})$ is similar to that of $\mathfrak{S}(\LL),$ i.e. there exists some explicit value $\alpha_{0}$ and $\gamma >0$ such that
		$$\mathfrak{S}(\LL_{\xi}) \cap \left\{z \in \mathbb{C}_{+}\;;\;\re z >-\gamma\right\}, \qquad \forall |\xi | \le \alpha_0$$
		consists in a finite number of eigenvalues. Such a localization of the spectrum is obtained here \emph{without resorting to any compactness argument}. This is the main contrast with respect to the original work \cite{EP1975} whose approach disseminates in the literature.
		
		The localization of the spectrum is not enough for the purpose of the paper and we also need to compute explicitly {the asymptotic spectrum $\mathfrak{S}(\LL_{\xi}) \cap \left\{z \in \mathbb{C}_{+}\;;\;\re z >-\gamma\right\}$ and the associated spectral projector}. This is done in a quantitative way, using Kato's perturbation theory as developed in \cite{K1995}.
		Typically, one observes that, since for any $\xi \in \R^{d}$, $\LL_{\xi}$ is a perturbation of $\LL$ by the multiplication operator with $-i(v\cdot \xi)$, for any $z \notin \mathfrak{S}(\LL_{\xi}) \cup \mathfrak{S}(\LL)$, the following expansion formulae are valid for $N \ge 0$:
		\begin{equation}
			\label{eq:factorization_L_ivxi_left}
			\RR(z,\LL_{\xi}) = \sum_{n = 0}^{N-1} \RR(z,\LL) \Big((- i v \cdot \xi) \RR(z,\LL)\Big)^n + \RR(z,\LL_{\xi}) \Big((- i v \cdot \xi) \RR(z,\LL)\Big)^N,\end{equation}
		as well as
		\begin{equation}		\label{eq:factorization_L_ivxi_right}
			\RR(z,\LL_{\xi}) = \sum_{n = 0}^{N-1} \RR(z,\LL) \Big((- i v \cdot \xi) \RR(z,\LL)\Big)^n + \RR(z,\LL_{\xi}) \Big((- i v \cdot \xi) \RR(z,\LL)\Big)^N.
		\end{equation}
		Various choice of the parameter $N\geq1$ would allow us to recover estimates on $\RR(z,\LL_{\xi})$ from known result on $\RR(z,\LL)$ and provide the asymptotic expansion of the eigenvalue and eigen-projectors. In a more specific way, the proof of Theorem \ref{thm:spectral_study} is done according to the following roadmap:
		\begin{itemize}
		\item In Lemma \ref{lem:localization_spectrum}, we show that the spectrum of $\LL_\xi := \LL - i v \cdot \xi$ contained in some right half plane is confined in a ball of radius of order $\xi$ centered around the origin, and establish some bounds on the resolvent. 
		\item In Lemma \ref{lem:expansion_projection}, we prove that the spectral projector associated with this part of the spectrum has a first order expansion as $\xi \to 0$.  {To study the aforementionned part of the spectrum, we then introduce $\rectL_{ \xi}$ which is a matrix conjugated to the restriction of $\LL_\xi$ to the corresponding stable subspace (sum of eigenspaces)}, thus allowing to rely on perturbation theory in finite dimension. 
		\item We establish in Lemma \ref{lem:rectified_operator} some invariance \textit{(isotropy)} properties satisfied by $\rectL_{ \xi}$ and give its first order expansion. 		
		\item A block matrix representation of $\rectL_{ \xi}$ is presented in Lemma \ref{lem:rectified_block_matrix},  thus identifying its only multiple eigenvalue, and isolating it from the three simple remaining ones as is shown in Lemma \ref{lem:expansion_rectified_matrix}.
		\item From that point on, we use finite dimensional perturbation theory and show that $\rectL_{ \xi}$ is diagonalizable and establish a second order expansion of its eigenvalues as well as a first order expansion of its spectral projectors in Lemma  \ref{lem:diagonalization_rectified}, from which we deduce the spectral decomposition of the original operator $\LL_\xi$, as well as expansions of the projectors in Lemma \ref{lem:expansion_projectors}.

		\end{itemize}

		Finally, we combine the resolvent bounds for $| \xi | \ll 1$ from the previous lemmas, and use a hypocoercivity theorem from \cite{D2011} for $ | \xi | \gtrsim$ to obtain an uniform exponential decay in $\Ss$ of the semigroup generated by $\LL_\xi$ on the stable subspace associated with the rest of the spectrum. We then improve this uniform decay estimate in $\Ss$ as an {integral regularization and decay in $\Ss-\Ssp$ and $\Ssm-\Ss$} by combining it with an energy method.
		
		\subsection{The spatially homogeneous setting}
		
		Before  undertaking the program described here above, it is important to recall the spectral picture in the spatially homogeneous setting corresponding to $\xi=0$. Assumptions \ref{L1}--\ref{L4} directly give the localization of the spectrum and the fact that $0$ is a semi-simple eigenvalue of $\LL$ with $d+2$-dimensional (geometric) multiplicity. Associated to such an eigenvalue, the spectral projection
		$$\PP:=\frac{1}{2i\pi}\oint_{|z|=r}\RR(z,\LL)\d z$$
		has the following properties: 
		%One has then the following 
		Let us present the properties of the orthogonal projection on the null space of $\LL$.
		
		\begin{prop}[\textit{\textbf{Representation formulae involving $\PP$}}]
			\label{prop:representation_P}
			We  recall   the macroscopic (fluctuations of) mass~$\varrho \in \R$, velocity $u \in \R^d$ and temperature~$\theta \in \R$ as defined in \eqref{eq:fluctuat}. Under Assumptions \ref{L1}-\ref{L2}, the spectral projector $\PP$ on the null-space $\nul(\LL)$ is $\Ss$-orthogonal and has the following explicit representation: \begin{equation}\label{eq:representation_P}
				\PP f(v) = \left(\varrho_f + u_f \cdot v   + \frac{\theta_f}{E(K-1)} \left( |v|^2 - E \right)\right) \mu(v),
			\end{equation}
			as well as, denoting $\Pi_\omega=\Id-\omega\otimes\omega$ the orthogonal projection on $\omega^\perp= \left\{ u \in \R^d ~| ~ u \perp \omega \right\}$ for any $\omega \in \S^{d-1}$:
			\begin{align*}
				\PP f(v) = \left( \Pi_\omega u_f \right) \cdot v \mu(v) & + \frac{1}{K(K-1)} \Big( (K-1) \varrho_f - \theta_f \Big) \left( K - \frac{|v|^2}{E} \right) \mu(v) \\
				& + \frac{1}{d c^2} (\varrho_f + \theta_f) |v|^2 \mu(v) + \big( \left( \Id - \Pi_\omega \right) u_f \big) \cdot v \mu(v)
			\end{align*}
			where we introduced the speed of sound $c$ in \eqref{eq:speed_sound}. More compactly,  {in terms of the eigenfunctions $\psi_{\pm \Wave}$ and $\psi_\Bou$ defined in \eqref{eq:def_psi_wave}--\eqref{eq:def_psi_Bou}}
			\begin{equation}
				\label{eq:representation_hydrodynamic_modes}
				\begin{aligned}
					\PP f(v)= \frac{d}{E} \Pi_\omega  \langle f, v\mu \rangle_\Ss v \mu &+ \langle f, \psi_{- \Wave}(\omega) \rangle_\Ss \psi_{- \Wave}(\omega) \\
					& + \langle f, \psi_{+ \Wave}(\omega) \rangle_\Ss \psi_{+ \Wave}(\omega) + \langle f, \psi_{\Bou} \rangle_\Ss \psi_{\Bou}.
				\end{aligned}
			\end{equation}
			We finally notice that the Burnett functions introduced in \eqref{eq:burnett} are  related to $v \mu \in \nul(\LL)$ and $\psi_\Bou \in \nul(\LL)$ through
			$$\BurA (v)=(\Id - \PP) [ v \otimes v \mu ] \quad \text{ and } \quad \BurB (v)=( \Id - \PP ) [ v \psi_\Bou ].$$
		\end{prop}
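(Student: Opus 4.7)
The plan is to exploit the fact that, by \ref{L1}, the operator $\LL$ is self-adjoint on $\Ss$, and that $0$ is an isolated eigenvalue of $\LL$ with explicit eigenspace given by \ref{L3}, so that the Riesz projector $\PP$ coincides automatically with the $\Ss$-orthogonal projection onto $\nul(\LL)$ (self-adjointness of $\PP$ is obtained by taking the adjoint in the Riesz formula, using $\RR(z,\LL)^* = \RR(\bar z,\LL)$ and a change of contour). With this reduction in hand, all the identities of the proposition reduce to producing convenient orthogonal bases of $\nul(\LL)$ and evaluating a handful of velocity integrals, all of which decouple thanks to the radial symmetry of $\mu$ granted by \ref{L2}.

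To establish \eqref{eq:representation_P}, I would first observe that the family $\{\mu, v_1\mu, \ldots, v_d\mu, (|v|^2 - E)\mu\}$ is $\Ss$-orthogonal: odd moments of $\mu$ vanish by radial parity, the normalization $\int |v|^2 \mu\, \d v = E$ kills the pairing of $\mu$ with $(|v|^2-E)\mu$, and the Gram matrix of the $v_i\mu$ equals $(E/d)\,\mathrm{Id}$. The corresponding squared norms are $1$, $E/d$ and $(K-1)E^2$, after which the standard orthogonal-projection formula gives
\begin{equation*}
\PP f = \langle f, \mu\rangle_\Ss \mu + \frac{d}{E}\sum_{i=1}^d \langle f, v_i\mu\rangle_\Ss v_i\mu + \frac{1}{(K-1)E^2}\langle f, (|v|^2-E)\mu\rangle_\Ss (|v|^2 - E)\mu ,
\end{equation*}
and identifying the inner products with $\varrho_f, u_f, \theta_f$ from \eqref{eq:fluctuat} yields \eqref{eq:representation_P}. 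The alternative decomposition is then a purely algebraic rearrangement: split $u_f\cdot v\mu$ along $\Pi_\omega$ and $\Id-\Pi_\omega$, and rewrite the pair $\varrho_f \mu$ and $\theta_f (|v|^2-E)\mu/[E(K-1)]$ as a combination of $(K-|v|^2/E)\mu$ and $|v|^2\mu$, with coefficients fixed by matching those of $\mu$ and $|v|^2\mu$ and using $c^2=KE/d$.

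For the compact representation \eqref{eq:representation_hydrodynamic_modes}, I would construct an $\Ss$-orthonormal basis of $\nul(\LL)$ adapted to a direction $\omega \in \S^{d-1}$: pick an orthonormal basis $\{e_1, \ldots, e_{d-1}\}$ of $\omega^\perp$ and consider $\{\sqrt{d/E}\,e_i \cdot v\mu\}_{i=1}^{d-1}$ together with $\psi_{-\Wave}(\omega)$, $\psi_{+\Wave}(\omega)$ and $\psi_\Bou$. Orthonormality is checked using only the moment identities $\int \mu\,\d v = 1$, $\int v_iv_j\mu\,\d v = (E/d)\delta_{ij}$ and $\int |v|^4\mu\,\d v = KE^2$: parity annihilates all cross-products between transverse velocity modes and radial modes, while the identities $\|\psi_{\pm\Wave}\|_\Ss^2 = 1$, $\langle\psi_{+\Wave}, \psi_{-\Wave}\rangle_\Ss = 0$ and $\langle\psi_\Bou, \psi_{\pm\Wave}\rangle_\Ss = 0$ reduce to polynomial identities in $E$ and $K$; the key cancellation $\langle\psi_{+\Wave},\psi_{-\Wave}\rangle_\Ss=0$ relies on the compensation between the wave contribution $\frac{dK}{E}\int(\omega\cdot v)^2\mu\,\d v = K$ and the radial contribution $\int|v|^4\mu\,\d v/E^2 = K$. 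Summing the transverse piece via $\sum_i e_ie_i^\top = \Pi_\omega$ produces $(d/E)\,\Pi_\omega\langle f, v\mu\rangle_\Ss\cdot v\mu$, completing \eqref{eq:representation_hydrodynamic_modes}. Finally, the Burnett identities follow by applying this projection to $v\otimes v\,\mu$ and to $v\psi_\Bou$: parity kills the odd modes, and isotropy forces $\PP[v\otimes v\,\mu]$ to be a scalar multiple of $\mathrm{Id}\,\mu$ plus a scalar multiple of $|v|^2\mathrm{Id}\,\mu$, which collapse via $E\mu + (|v|^2-E)\mu = |v|^2\mu$ to $(|v|^2/d)\,\mathrm{Id}\,\mu$; subtraction then matches $\BurA$ up to the normalization prescribed in \eqref{eq:burnett}, while $v\psi_\Bou$ projects only onto the $v\mu$ modes and reduces to $\BurB$ by a single moment computation. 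The only real obstacle in the whole argument is the careful bookkeeping of the constants $E$ and $K$; no analytic input beyond \ref{L1}--\ref{L3} is required.
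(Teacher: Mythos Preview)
Your proposal is correct and follows the only natural approach to this statement. The paper itself does not supply a proof of Proposition~\ref{prop:representation_P}: it is stated as a collection of explicit formulae and then used throughout Section~\ref{scn:spectral_study} (see for instance the proof of Lemma~3.7, where the identity $\LL^{-1}\BurA=\sqrt{d/E}\,\mathsf{R}_0[v\otimes v\,\mu]$ is invoked). Your argument---exhibiting an $\Ss$-orthogonal basis of $\nul(\LL)$ via the moment identities of \ref{L2}, computing the Gram matrix, and then rewriting the projection in the $\omega$-adapted orthonormal basis $\{\sqrt{d/E}\,e_i\cdot v\mu,\psi_{\pm\Wave}(\omega),\psi_\Bou\}$---is exactly the computation the paper takes for granted. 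Your remark that the Burnett identity matches $\BurA$ only ``up to the normalization prescribed in \eqref{eq:burnett}'' is accurate: as written in the proposition the factor $\sqrt{d/E}$ is absorbed into the notation, consistently with how the paper uses it later.
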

		\begin{rem}
			Note that, if a given function $\mu$ satisfies \ref{L2} , and if one defines $\PP$ as the $\Ss$-orthogonal projection on $\Span \{\mu, v_1, \dots, v_d \mu, |v|^2 \mu\}$, i.e. as \eqref{eq:representation_P}, then Proposition \ref{prop:representation_P} still holds.
		\end{rem}
		
		\subsection{Proof of Theorem \ref{thm:spectral_study}}
		
		We are now ready to attack the full proof of Theorem \ref{thm:spectral_study}. We begin with the localization of the spectrum.

		\begin{lem}[\textit{\textbf{Localization of the spectrum}}]
			\label{lem:localization_spectrum}
			For any gap size 
			$$0 < \lambda < \lambda_\LL,$$
			there exists some $C_0=C_0(\lambda) > 0$ and $\alpha_{0} = \alpha_{0}(\lambda) > 0$ that can be assumed small, such that the spectrum is localized as follows:
			$$\mathfrak{S}(\LL_{\xi}) \cap \Delta_{-\lambda} \subset \big\{ | z | \le C_0 | \xi | \big\}, \qquad \forall | \xi | \le \alpha_{0}\,.$$
			Moreover, there exist $C_1=C_1(\alpha_{0}, \lambda) > 0$ such that, for any $|\xi| \leq \alpha_{0}$
			\begin{equation}
				\label{eq:bound_L_xi}
				\sup_{ | z | = r } \| \RR(z,\LL_{\xi}) \|_{\BBB(\Ss;\Ssp) }  + \sup_{|z| = r} \| \RR(z, \LL_\xi ) \|_{ \BBB( \Ssm ; \Ss ) }  + \sup_{z\in\Omega} \| \RR(z,\LL_{\xi}) \|_{\BBB(\Ss)} \le C_1
			\end{equation}
			where $r=C_{0}\alpha_{0} >0$ and $\Omega := \Delta_{-\lambda} \cap \{ | z | \ge r \}$. 
		\end{lem}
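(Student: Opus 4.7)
The idea is to reduce the spectral problem for $\LL_\xi$ near the origin to a finite-dimensional eigenvalue problem on $\nul(\LL)$ via a Schur-complement decomposition, exploiting the fact that $\xi=0$ is a highly degenerate but gapped case. Indeed Assumption \ref{L3} tells us that the $(d+2)$-dimensional eigenvalue $0$ of $\LL$ is semi-simple and isolated, with the rest of the spectrum lying in $\{\re z \le -\lambda_\LL\}$. Under the formally skew-symmetric perturbation $-i(v\cdot\xi)$, one therefore expects this eigenvalue to split into at most $d+2$ eigenvalues of order $|\xi|$, while the rest of the spectrum stays in $\Delta_{-\lambda}^{c}$ for any $\lambda<\lambda_{\LL}$ and $|\xi|$ sufficiently small.

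To make this rigorous, introduce the $\Ss$-orthogonal projector $\PP$ onto $\nul(\LL)$ described by Proposition \ref{prop:representation_P}. Thanks to Remark \ref{rem:KernelH2} one has $\nul(\LL) \subset \Ss_2$, and so by \ref{assumption_multi-v} the three ``corner'' operators $\PP(v\cdot\xi)\PP$, $\PP(v\cdot\xi)(\Id-\PP)$ and $(\Id-\PP)(v\cdot\xi)\PP$ are bounded on $\Ss$ with norm of order $|\xi|$. With respect to the orthogonal decomposition $\Ss = \nul(\LL) \oplus \nul(\LL)^{\perp}$ one therefore obtains the block representation
\begin{equation*}
\LL_\xi \;=\; \begin{pmatrix} -i\,\PP(v\cdot\xi)\PP & -i\,\PP(v\cdot\xi)(\Id-\PP) \\[2pt] -i\,(\Id-\PP)(v\cdot\xi)\PP & D_\xi \end{pmatrix},\qquad D_\xi := (\Id-\PP)\,\LL_\xi\,(\Id-\PP),
\end{equation*}
in which the three corner blocks are uniformly controlled by $|\xi|$.

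The crucial step is to prove that the diagonal block $D_\xi$ is uniformly invertible on $\Delta_{-\lambda}$. Since $\LL$ is self-adjoint and coercive on $\nul(\LL)^{\perp}$ by \ref{L3} while $-i(v\cdot\xi)$ is formally anti-self-adjoint, one has the accretivity inequality
\begin{equation*}
\re\la -D_\xi f,\, f\ra_{\Ss} \;\ge\; \lambda_\LL\,\|f\|_{\Ssp}^{2},\qquad f\in\dom(\LL)\cap\nul(\LL)^{\perp}.
\end{equation*}
Combining this with the uniform hypo-dissipativity \ref{assumption_dissipative} of $\BB_\xi$ and the factorization $\RR(z,\LL_\xi)=(\Id-\RR(z,\BB_\xi)\AA)^{-1}\RR(z,\BB_\xi)$, one deduces that for any $\lambda<\lambda_\LL$ and $|\xi|$ small the resolvent $\RR(z,D_\xi)$ is well defined and uniformly bounded for $z\in\Delta_{-\lambda}$, as a map $\Ss\to\Ss$, $\Ss\to\Ssp$ and $\Ssm\to\Ss$. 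With $\RR(z,D_\xi)$ under control, the Schur-complement formula asserts that $z\in\mathfrak{S}(\LL_\xi)\cap\Delta_{-\lambda}$ if and only if the finite-dimensional operator
\begin{equation*}
M(z,\xi)\;:=\; z + i\,\PP(v\cdot\xi)\PP + \PP(v\cdot\xi)(\Id-\PP)\,\RR(z,D_\xi)\,(\Id-\PP)(v\cdot\xi)\PP
\end{equation*}
fails to be invertible on $\nul(\LL)\cong\C^{d+2}$. Since $M(z,\xi)=z\,\Id+O(|\xi|)$ uniformly for $z$ in bounded subsets of $\Delta_{-\lambda}$, a Neumann-series argument shows that $M(z,\xi)$ is invertible as soon as $|z|\ge C_0|\xi|$ for a suitable constant $C_0>0$, which yields the desired localization $\mathfrak{S}(\LL_\xi)\cap\Delta_{-\lambda}\subset\{|z|\le C_0|\xi|\}$. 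The three resolvent estimates in \eqref{eq:bound_L_xi} then follow from the explicit Schur-complement expression for $\RR(z,\LL_\xi)$, distinguishing $z\in\Omega$ (where $M(z,\xi)^{-1}$ and $\RR(z,D_\xi)$ are uniformly of order $1$) from $|z|=r$ (where $M(z,\xi)^{-1}$ contributes a factor of order $|\xi|^{-1}$ which is compensated by the $|\xi|$-smallness of the corner blocks).

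The principal obstacle, and the reason why the classical perturbation theory of \cite{K1995} does not apply directly, is that multiplication by $v$ is not $\LL$-bounded in general; hence $-i(v\cdot\xi)$ cannot be viewed as a relatively bounded perturbation of $\LL$. The hierarchy of spaces $\Ss_2\hookrightarrow\Ss_1\hookrightarrow\Ss_0$ with \ref{assumption_multi-v}, combined with the regularity $\nul(\LL)\subset\Ss_2$ (Remark \ref{rem:KernelH2}) and the splitting $\LL=\AA+\BB$ with $\BB_\xi$ uniformly hypo-dissipative, is exactly what is needed to make the corner blocks bounded on $\Ss$ and to construct the resolvent $\RR(z,D_\xi)$ by a Neumann-type argument, thereby legitimizing the Schur-complement reduction despite the unboundedness of the transport perturbation.
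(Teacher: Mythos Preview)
Your approach via a Schur-complement (Lyapunov--Schmidt) reduction is genuinely different from the paper's, and it is worth noting that the paper explicitly contrasts its method with this one in Section~\ref{sec:newspec}: the authors opt for Kato's reduction process rather than projecting the eigenvalue problem onto $\nul(\LL)$. Concretely, the paper never isolates the block $D_\xi$; instead it writes the factorization
\[
\RR(z,\LL_\xi)=\RR(z,\BB_\xi)+\RR(z,\LL)\,\AA\,\RR(z,\BB_\xi)+\RR(z,\LL_\xi)\,(-iv\cdot\xi)\,\RR(z,\LL)\,\AA\,\RR(z,\BB_\xi)
\]
and exploits the chain $\RR(z,\BB_\xi):\Ss\to\Ss$, $\AA:\Ss\to\Ss_1$, $\RR(z,\LL):\Ss_1\to\Ss_1$, $v:\Ss_1\to\Ss$ to bound the last factor by $|\xi|(1+|z|^{-1})$ and invert the resulting Neumann series directly. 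The regularization by $\AA$ thus compensates the unboundedness of $v\cdot\xi$ without ever passing through $D_\xi$. Your route, by contrast, buys a very transparent finite-dimensional picture via $M(z,\xi)$, at the cost of having to control $D_\xi$ separately.

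There is, however, a genuine gap in your argument for $\RR(z,D_\xi)$. The accretivity inequality $\re\langle -D_\xi f,f\rangle_\Ss\ge\lambda_\LL\|f\|_{\Ssp}^2$ gives a priori bounds \emph{provided} the resolvent exists, but it does not by itself yield existence on all of $\Delta_{-\lambda}$; you need $D_\xi$ to be m-accretive on $\nul(\LL)^\perp$. The factorization you invoke, $\RR(z,\LL_\xi)=(\Id-\RR(z,\BB_\xi)\AA)^{-1}\RR(z,\BB_\xi)$, is for $\LL_\xi$ rather than $D_\xi$, and in any case is not automatically valid: the operator $\Id-\RR(z,\BB_\xi)\AA$ has no reason to be invertible in $\BBB(\Ss)$ for general $z\in\Delta_{-\lambda}$ (the paper never claims this). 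A correct way to close the gap is to first observe that $\LL_\xi=\BB_\xi+\AA$ generates a $C^0$-semigroup on $\Ss$ (bounded perturbation of the hypo-dissipative $\BB_\xi$), so $\RR(z_0,\LL_\xi)$ exists for $\re z_0$ large; the block decomposition then yields $\RR(z_0,D_\xi)$, and the accretivity estimate lets you extend holomorphically to $\Delta_{-\lambda}$ with the uniform bound $\|\RR(z,D_\xi)\|_{\BBB(\Ss)}\le(\lambda_\LL-\lambda)^{-1}$. With this in place, the rest of your Schur-complement argument goes through.
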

		
		\begin{proof}
			In all the proof, we assume $0 < \lambda < \lambda_\LL$ to be \emph{fixed}.

			\step{1}{Resolvent properties in the spaces $\Ss_j$} We first observe that, $\BB_{\xi} - \lambda$ being dissipative in the spaces $\Ss_j$ by hypothesis \ref{assumption_dissipative}, it holds for any $j = 0, 1, 2$
			\begin{equation}
				\label{eq:ResBB_xi}
				\|\RR(z,\BB_{\xi})\|_{\BBB(\Ss_{j})} \lesssim 1, \qquad \forall z \in \Delta_{-\lambda},
			\end{equation}
			uniformly in $\xi \in \R^{d}$.
			In particular, the above is true for $\xi=0$, that is to say for $\RR(z,\BB)$. Using that $\mathfrak{S}_{\Ss}\left(\LL\right) \cap \Delta_{-\lambda} = \{0\}$ from \ref{L3}, we have the factorization formula for any $N \ge 0$:
			\begin{gather}
				\label{eq:shrinkage_A_B}
				\RR(z,\LL) = \sum_{n = 0}^{N-1} \RR(z,\BB) \Big( \AA \RR(z,\BB) \Big)^n + \Big(\RR(z,\BB) \AA \Big)^N \RR(z,\LL).
			\end{gather}
			holds for any $z \in \Delta_{-\lambda} \setminus\{0\}.$
			Furthermore, since $\LL$ is self-adjoint in $\Ss$ by hypothesis \ref{L1}, it is well-known that the zero eigenvalue is semi-simple so that
			\begin{equation}
				\label{eq:RRLLH}\|\RR(z,\LL)\|_{\BBB(\Ss)} \lesssim \frac{1}{|z|} \qquad \qquad
				\forall z \in \Delta_{-\lambda}\setminus\{0\}.
			\end{equation}	
			Thus, using the factorization \eqref{eq:shrinkage_A_B} with $N=1$, we deduce from a repeated use of \eqref{eq:ResBB_xi} 
			that, for any $f \in \Ss_{1}$ and any $z \in \Delta_{-\lambda}\setminus\{0\}$, 
			\begin{align*}
				\|\RR(z,\LL)f\|_{\Ss_{1}} \leq \|\RR(z,\BB)f\|_{\Ss_{1}} + \|\RR(z,\BB)\AA\RR(z,\LL)f\|_{\Ss_{1}} 
				\lesssim \|f\|_{\Ss_{1}} +\|\AA\RR(z,\LL)f\|_{\Ss_{1}},
			\end{align*}
			and using the boundedness of $\AA : \Ss_j \to \Ss_{j+1}$ from \ref{assumption_bounded_A} as well as \eqref{eq:RRLLH}
			\begin{align*}
				\|\RR(z,\LL)f\|_{\Ss_{1}}  & \lesssim \|f\|_{\Ss_{1}} +  \|\AA\|_{\BBB(\Ss_{1},\Ss)}\|\RR(z,\LL)f\|_{\Ss}\\
				&\lesssim \|f\|_{\Ss_{1}}+ |z|^{-1}\|f\|_{\Ss} \lesssim \left(1+\frac{1}{|z|}\right)\|f\|_{\Ss_{1}}
			\end{align*}
			where we used $\Ss_{1}  \hookrightarrow \Ss$ in the last inequality. Using this estimate and proceeding in the same way for $j=2$, we deduce that
			\begin{equation}
				\label{eq:bound_resolvent_L_X_j}
				\| \RR(z,\LL) \|_{\BBB(\Ss_{j})} \lesssim 1 + \frac{1}{|z|}, \qquad \forall z \in \Delta_{-\lambda}  \setminus \{0\}, \quad  j=0,1,2.
			\end{equation}
			We conclude that, in each space $\Ss_j$, the eigenvalue $0$ is semi-simple (i.e. a simple pole of $\RR(\cdot,\LL)$) and the resolvent writes in $\BBB(\Ss_j)$ as the sum of a singular part and a holomorphic part (see \cite[Chapter 3, Section 6.5]{K1995}):
			\begin{equation}
				\label{eq:pseudo_Laurent_expansion}  \RR(z,\LL) = z^{-1}\PP + \RR^\perp(z), 
				\qquad	\forall z \in \Delta_{-\lambda} \setminus \{0\}\,.
			\end{equation}
			with the regular part defined as
			\begin{equation}
				\label{eq:pseudo_Laurent_expansion1}
				\RR^{\perp}(z)=\sum_{n=1}^{\infty}z^{n}\mathsf{R_{0}}^{n+1}, \qquad \mathsf{R}_{0}:=\RR(0,\LL)\left(\mathrm{Id}-\PP\right)\,,
			\end{equation}
			where we notice that $\mathsf{R}_{0} =-\LL^{-1} \left(\Id-\PP\right) \in \BBB(\Ss_j)$.
			
			\step{2}{Localization of the spectrum and resolvent bound in $\BBB(\Ss)$}
			We draw inspiration from the proof of \cite[Lemma 2.16]{T2016}. Let us start with the following factorization formula permitted by the dissipativity hypothesis from \ref{assumption_dissipative} for some large enough $a > 0$:
			$$\RR(z,\LL_{\xi}) = \RR(z,\BB_{\xi}) + \RR(z,\LL_{\xi}) \AA \RR(z,\BB_{\xi}) \qquad \forall z \in \Delta_{a}$$
			and, expanding the term $\RR(z,\LL_{\xi})$ using \eqref{eq:factorization_L_ivxi_left} with $N = 1$, we deduce now
			\begin{align}
				\label{eq:factorization_R_xi_localization}
				\RR(z,\LL_{\xi}) = & \RR(z,\BB_{\xi}) + \RR(z,\LL) \AA \RR(z,\BB_{\xi}) + \RR(z,\LL_{\xi}) (- i v \cdot \xi) \RR(z,\LL) \AA \RR(z,\BB_{\xi}).
			\end{align}
			This allows to localize the spectrum using the bounds \eqref{eq:bound_resolvent_L_X_j} and the regularization hypothesis for $\AA$ coming from \ref{assumption_bounded_A}. Indeed, according to \ref{assumption_multi-v}
			$$\left\|v \RR(z,\LL) \AA \RR(z,\BB_{\xi}) \right\|_{\BBB(\Ss)} \lesssim \left\|\RR(z,\LL)\AA\RR(z,\BB_{\xi})\right\|_{\BBB(\Ss,\Ss_{1})}$$
			and, using now \eqref{eq:bound_resolvent_L_X_j} and the fact that $\AA \in \BBB(\Ss,\Ss_{1})$ from \ref{assumption_bounded_A}, we deduce that
			$$\left\|v \RR(z,\LL) \AA \RR(z,\BB_{\xi}) \right\|_{\BBB(\Ss)} \lesssim \left(1+\frac{1}{|z|}\right)\|\RR(z,\BB_{\xi})\|_{\BBB(\Ss)} \lesssim 1+\frac{1}{|z|},$$
			thanks to \eqref{eq:ResBB_xi}. In particular, for any $c_0 >0,$
			$$ \left\|\left(\xi \cdot v\right)\RR(z,\LL) \AA \RR(z,\BB_{\xi}) \right\|_{\BBB(\Ss)} \lesssim  |\xi|+c_0, \qquad \forall | z | > \frac{| \xi |}{c_0}\,,$$
			thus, considering $c_0, \alpha_{0} > 0$ small enough, we deduce  that
			$$\| \xi \cdot v \RR(z,\LL) \AA \RR(z,\BB_{\xi}) \|_{\BBB(\Ss)} \le \frac{1}{2}, \qquad \forall |z| > \frac{|\xi|}{c_0}, \quad | \xi | \le \alpha_0\, ,$$
			and in particular, for such a choice of $(\xi, z)$, the operator $\Id + (i v \cdot \xi) \RR(z,\LL) \AA\RR(z,\BB_{\xi})$ is invertible in $\BBB(\Ss)$ with
			\begin{equation}
				\label{eq:inverseId} \left\|\Big( \Id + (i v \cdot \xi) \RR(z,\LL) \AA\RR(z,\BB_{\xi}) \Big)^{-1}\right\|_{\BBB(\Ss)} \leq 2,\qquad \text{ for any } |z| > \frac{|\xi|}{c_0}, ~ |\xi | \le \alpha_0\, , 
			\end{equation}
			thus it follows from \eqref{eq:factorization_R_xi_localization} that
			\begin{align*}
				\RR(z,\LL_{\xi}) =\Big( \RR(z,\BB_{\xi}) + \RR(z,\LL) \AA \RR(z,\BB_{\xi}) \Big)\Big( \Id + (i v \cdot \xi) \RR(z,\LL) \AA\RR(z,\BB_{\xi}) \Big)^{-1} .
			\end{align*}
			Each term on the right hand side belongs to $\BBB(\Ss)$ for $z \in \Delta_{-\lambda} \cap \Big\{ |z| > c_0^{-1} | \xi | \Big\}$, thus the following localization of the spectrum holds:
			\begin{equation*}  \mathfrak{S}_{\Ss}(\LL_{\xi}) \cap \Delta_{-\lambda} \subset \Big\{z \in \C \; ; \; | z | \le c_0^{-1} | \xi | \Big\}, \qquad \forall |\xi| \le \alpha_{0}.
			\end{equation*}
			More precisely, using the bounds on $\RR(z,\BB_{\xi})$ from \ref{L4} and \eqref{eq:bound_resolvent_L_X_j} together with \eqref{eq:inverseId} we have
			\begin{equation}
				\label{eq:general_bound_L_xi} \| \RR(z,\LL_{\xi}) \|_{\BBB(\Ss)} \lesssim 1 + \frac{1}{|z|}
			\end{equation}
			for any $z \in \Delta_{-\lambda}, ~ | z | > \frac{|\xi|}{c_0},$ with $|\xi|\le \alpha_{0}$.  
			This proves the $\BBB(\Ss)$-bound in \eqref{eq:bound_L_xi}. This concludes this step.
			
			\step{3}{Resolvent bound in $\BBB(\Ss; \Ssp) \cap \BBB(\Ssm ; \Ss)$}
			First of all, note that the following identity for bounded operator $T\::\:\Ss \to \Ssp$ is proved in Appendix \ref{scn:duality} (where the adjoint $T^{\star}$ is considered for the inner product of $\Ss$):
			\begin{equation*}
				\| T^\star \|_{\BBB(\Ssm;\Ss)} = \| T \|_{ \BBB(\Ss;\Ssp) },
			\end{equation*}
			furthermore, using that $\LL_\xi^\star = \LL_{-\xi}$ since $\LL$ is self adjoint, one has
			$$ \RR(z,\LL_{\xi})^{\star}=\RR(\overline{z},\LL_{-\xi})\qquad \forall \xi \in \R^d, ~  z \in \mathfrak{S}(\LL_\xi),$$
			thus it is enough to prove the bound in $\BBB(\Ss; \Ssp)$.
			
			\medskip
			Using the dissipativity estimate for $\LL$ from \ref{L3} and the fact that the multiplication by~$i v \cdot \xi$ is skew-adjoint, we have for any $z > 0$
			\begin{align*}
				\lla (\LL_\xi - z) f, f \rra_{\Ss} & \le - \lambda_\LL \| (\Id - \PP) f \|_{\Ssp}^2 - z \| f \|^2_{\Ss}.
			\end{align*}
			Furthermore, using that $\PP$ is $\Ss$-orthogonal as well as the fact that $\PP^2 = \PP$ and $\| \PP \|_{\BBB(\Ss;\Ssp)} \le M$ for some $M > 0$
			\begin{align*}
				\lla (\LL_\xi - z) f, f \rra_{\Ss}  & \le - \lambda_\LL \| (\Id - \PP) f \|_{\Ssp}^2 - z \| \PP f \|^2_{\Ss} \\
				&  \le - \lambda_\LL \| (\Id - \PP) f \|_{\Ssp}^2 - z  {M^{-2}} \| \PP f \|^2_{\Ssp}.
			\end{align*}
			The term $\| (\Id - \PP) f \|_{\Ssp}^2$ can be estimated using the polar identity and Young's inequality (note that $\PP$ may not be $\Ssp$-orthogonal):
			\begin{align}
				\notag
				\| (\Id - \PP) f \|_{\Ssp}^2 & = \frac{1}{2} \| (\Id - \PP) f \|_{\Ssp}^2 + \frac{1}{2} \Big(\| f \|^2_{\Ssp} - \| \PP f \|_{\Ssp}^2 - 2 \lla (\Id - \PP) f, \PP f \rra_{\Ssp}\Big) \\
				\label{eq:degenerate_pythagora_X_star}
				& \ge \frac{1}{2} \| f \|^2_{\Ssp} - \| \PP f \|_{\Ssp}^2,
			\end{align}
			therefore we have
			\begin{align*}
				\lla (\LL_\xi - z) f, f \rra_{\Ss} \le - \frac{\lambda_\LL}{2} \| f \|_{\Ssp}^2 - \left(z  {M^{-2}} - \lambda_\LL \right) \| \PP f \|^2_{\Ssp}.
			\end{align*}
			We conclude that for $z_0 = {\lambda_\LL \,M^{2}}$, we have
			$$\forall f \in \dom(\LL_{\xi}), \quad \lla (\LL_\xi - z_0) f, f \rra_{\Ss} \le - \frac{\lambda_\LL}{2} \| f \|_{\Ssp}^2.$$ This, together with the comparison $\|\cdot\|_{\Ss} \leq \|\cdot\|_{\Ssp}$, implies  the resolvent bound
			\begin{equation}
				\label{eq:bound_L_xi_X_X_star_z_0}
				\| \RR(z_{0},\LL_{\xi})\|_{\BBB(\Ss;\Ssp)} \le \frac{2}{\lambda_\LL}.
			\end{equation}
			Using the resolvent identity
			\begin{equation*}
				\RR(z,\LL_{\xi}) = \RR(z_{0},\LL_{\xi})+ (z_0 - z) \RR(z_{0},\LL_{\xi})\RR(z,\LL_{\xi}),
			\end{equation*}
			we can combine \eqref{eq:general_bound_L_xi} and \eqref{eq:bound_L_xi_X_X_star_z_0} so as to obtain, for $|\xi| \le \alpha_{0}$,
			\begin{equation}
				\| \RR(z,\LL_{\xi}) \|_{\BBB(\Ss;\Ssp)}
				\lesssim 1 + |z| + \frac{1}{|z|}, \qquad \forall z \in \Delta_{-\lambda}, ~ |z| > c_0^{-1}|\xi|,
			\end{equation}
			from which we deduce the $\BBB(\Ss;\Ssp)$-bound of \eqref{eq:bound_L_xi}. This concludes the proof.
		\end{proof}
		
		\begin{figure}[h]
			
			\centering
			
				\begin{tikzpicture}[scale=1.8]
					% Spectre cinétique
					\fill [pattern=north west lines, pattern color=red] (-3, -1.5) rectangle (-1.5, 1.5);
					\draw [color=red] (-1.5, -1.5) -- (-1.5, 1.5);
					\node[red] at (-2.3, -1.8) {$\Re z \le - \lambda $};

					% Seuils cinétique
					\draw[ultra thick, dash pattern=on 5pt, color=red] (-2.4, -1.5) -- (-2.4, 1.5);
					\node[red] at (-2.7, 1.7) {$\Re z = - \lambda_\BB$};
					\draw[ultra thick, dash pattern=on 5pt, color=red] (-1.7, -1.5) -- (-1.7, 1.5);
					\node[red] at (-1.4, 1.7) {$\Re z = - \lambda_\LL$};
					
					% Spectre hydrodynamique
					\fill [pattern=north west lines, pattern color=blue] (0, 0) circle (1);
					\draw [color=blue] (0, 0) circle (1);
					\node[blue] at (1.5, 1.3) {$|z| \le C_0 | \xi |$};
					
					% Encerclement
					\draw[dash pattern=on 5pt] (0, 0) circle (1.2);
					\node at (1.3, -1.3) {$|z| = C_0 \alpha_0$};
					
					% Repère
					\draw[-stealth] (-3, 0) -- (2, 0);
					\draw[-stealth] (0, -1.5) -- (0, 1.5);
				\end{tikzpicture}
			
			\caption{Localization of the spectrum from Lemma \ref{lem:localization_spectrum}. The hatched blue part contains the ‘‘pseudo hydrodynamic part'' of the spectrum of $\LL_\xi$ (i.e. the perturbation of the macroscopic eigenvalue of $\LL$, that is to say $0$). The hatched red part contains the ‘‘pseudo kinetic part'' of the spectrum of $\LL_\xi$ (i.e. the perturbation of the microscopic part of the spectrum of $\LL$,   that is to say $\mathfrak{S}(\LL) \setminus \{0\}$).}
		\end{figure}
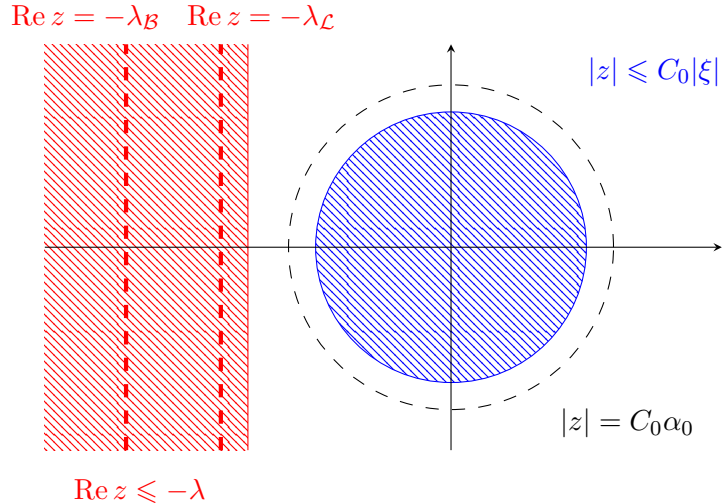

		\begin{lem}[\textit{\textbf{Expansion of the total projector}}]
			\label{lem:expansion_projection}
			With the notations of Lemma \ref{lem:localization_spectrum}, considering some $0 < \lambda < \lambda_\LL$, for any $|\xi| \le \alpha_{0}$, the spectral projector $\PP(\xi)$ associated with the $0$-group of Lemma \ref{lem:localization_spectrum} and defined as
			$$\PP(\xi) = \frac{1}{2 i \pi} \oint_{|z| = r } \RR(z,\LL_{\xi}) \d z, \qquad \forall | \xi | \le \alpha_0,$$
			where the integration along the circle $\{ |z| = r \}$ is counterclockwise, has the following  first order expansion in $\BBB(\Ssm ; \Ssp )$:
			\begin{equation*}
				\PP(\xi) = \PP + i \xi \cdot \Big( \PP v\mathsf{R}_{0} +\mathsf{R}_{0} v \PP \Big) + \mathsf{S}(\xi), \qquad \mathsf{R}_{0}:=\RR(0,\LL)\left(\Id-\PP\right)
			\end{equation*}
			where $\mathsf{S}(\xi) \in \BBB(\Ssm;\Ssp)$ with $\|\mathsf{S}(\xi)\|_{\BBB(\Ssm;\Ssp)} \lesssim  | \xi |^2$.
		\end{lem}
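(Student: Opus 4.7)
The starting point is the Riesz representation $\PP(\xi)=\frac{1}{2i\pi}\oint_{|z|=r}\RR(z,\LL_{\xi})\,dz$, which is a bona fide projector-valued integral thanks to the spectral localization from Lemma \ref{lem:localization_spectrum}: for $|\xi|\leq\alpha_{0}$, the part of $\mathfrak{S}(\LL_{\xi})$ inside $\Delta_{-\lambda}$ lies inside $\{|z|\leq C_{0}|\xi|\}\subset\{|z|<r\}$. I then substitute the second-order Neumann expansion \eqref{eq:factorization_L_ivxi_left} with $N=2$,
$$\RR(z,\LL_{\xi})=\RR(z,\LL)+\RR(z,\LL)(-iv\cdot\xi)\RR(z,\LL)+\RR(z,\LL_{\xi})\bigl[(-iv\cdot\xi)\RR(z,\LL)\bigr]^{2},$$
and integrate term by term, producing the decomposition $\PP(\xi)=\PP+\mathsf{T}(\xi)+\mathsf{S}(\xi)$.

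The first-order term $\mathsf{T}(\xi)$ is obtained by residue calculus using the Laurent expansion \eqref{eq:pseudo_Laurent_expansion}--\eqref{eq:pseudo_Laurent_expansion1}, $\RR(z,\LL)=z^{-1}\PP+\RR^{\perp}(z)$, with $\RR^{\perp}$ holomorphic on a neighborhood of the closed disk $\overline{D}(0,r)$ and $\RR^{\perp}(0)=\mathsf{R}_{0}$. Expanding the product $\RR(z,\LL)(-iv\cdot\xi)\RR(z,\LL)$ yields four contributions: the double pole $z^{-2}\PP(v\cdot\xi)\PP$, whose Cauchy integral over $|z|=r$ vanishes; the two cross terms $z^{-1}\PP(v\cdot\xi)\RR^{\perp}(z)$ and $z^{-1}\RR^{\perp}(z)(v\cdot\xi)\PP$, whose residues at $z=0$ produce respectively $\PP(v\cdot\xi)\mathsf{R}_{0}$ and $\mathsf{R}_{0}(v\cdot\xi)\PP$; and the holomorphic term $\RR^{\perp}(z)(v\cdot\xi)\RR^{\perp}(z)$ whose integral vanishes by Cauchy's theorem. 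Bookkeeping the $-i$ prefactor and reinstating the sum over the components of $\xi$ recovers exactly the linear contribution $i\xi\cdot(\PP v\mathsf{R}_{0}+\mathsf{R}_{0}v\PP)$ stated in the lemma.

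The crux of the proof is then the estimate $\|\mathsf{S}(\xi)\|_{\BBB(\Ssm;\Ssp)}\lesssim|\xi|^{2}$, where
$$\mathsf{S}(\xi)=\frac{1}{2i\pi}\oint_{|z|=r}\RR(z,\LL_{\xi})\bigl[(-iv\cdot\xi)\RR(z,\LL)\bigr]^{2}\,dz.$$
The two explicit factors of $\xi$ provide the desired quadratic smallness, but each internal multiplication by $v$ is unbounded on $\Ss$ and cannot be absorbed by naively chaining the resolvent bounds \eqref{eq:bound_L_xi}. The strategy is to trade each factor $v$ for a factor of $\AA$ via the shrinkage identity \eqref{eq:shrinkage_A_B} used iteratively, typically writing $\RR(z,\LL)=\RR(z,\BB)+\RR(z,\BB)\AA\,\RR(z,\LL)$ and its right-sided analogue. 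Combined with hypothesis \ref{assumption_multi-v} (multiplication by $v$ shifts down by one step in the hierarchy $\Ss_{2}\hookrightarrow\Ss_{1}\hookrightarrow\Ss$) and hypothesis \ref{assumption_bounded_A} (the operator $\AA$ shifts up by one step), the unbounded factors $v$ end up acting on intermediate spaces where they remain bounded; the uniform bounds on $\RR(z,\BB_{\xi})$ provided by \ref{assumption_dissipative} then close the estimates. The outermost $\RR(z,\LL_{\xi})$ supplies the boundedness $\Ss\to\Ssp$ directly from \eqref{eq:bound_L_xi}, while the duality identity used in Step~3 of Lemma \ref{lem:localization_spectrum}, combined with $\LL^{\star}=\LL$, yields the corresponding $\Ssm\to\Ss$ bound for the innermost $\RR(z,\LL)$. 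Since the contour has fixed finite length $2\pi r$ and every factor is uniformly bounded on it, the integral defining $\mathsf{S}(\xi)$ inherits the quadratic control $|\xi|^{2}$, which is where the main technical work concentrates.
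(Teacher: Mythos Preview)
Your first-order computation via residues is correct and matches the paper's argument. The gap is in your treatment of the second-order remainder $\mathsf{S}(\xi)$.

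You propose to bound $\RR(z,\LL_{\xi})\,v\,\RR(z,\LL)\,v\,\RR(z,\LL)$ directly in $\BBB(\Ssm;\Ssp)$ by combining the $\Ssm\to\Ss$ and $\Ss\to\Ssp$ resolvent bounds with shrinkage. But once the innermost $\RR(z,\LL)$ has taken you from $\Ssm$ into $\Ss$, you must apply $v$ starting from $\Ss$, and the only control on $v$ in \ref{L4} is $v:\Ss_{j+1}\to\Ss_{j}$. There is no assumption linking $\Ss$ or $\Ssp$ back to $\Ss_{1}$ or $\Ss_{2}$, so you cannot reach the hierarchy where $v$ is tame. Shrinkage does not fix this: writing $\RR(z,\LL)=\RR(z,\BB)+\RR(z,\BB)\AA\RR(z,\LL)+\ldots$ lets $\AA$ climb the $\Ss_{j}$ ladder, but only once you are already in $\Ss_{0}=\Ss$; it does not help you \emph{enter} the ladder at $\Ss_{2}$ from $\Ssm$, nor does it convert a factor of $v$ (which you still have to apply from $\Ss$) into anything bounded. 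The chain you describe therefore does not close.

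The paper circumvents this by never estimating the integrand directly in $\BBB(\Ssm;\Ssp)$. Instead, one first proves the easy bound $\|\PP^{(2)}(\xi)\|_{\BBB(\Ss_{2};\Ssp)}\lesssim 1$ (starting in $\Ss_{2}$, the two factors of $v$ bring you down to $\Ss_{0}$, then $\RR(z,\LL_{\xi}):\Ss\to\Ssp$). The upgrade to $\BBB(\Ssm;\Ssp)$ comes from the algebraic \emph{bootstrap identity}
\[
\PP^{(2)}(\xi)=\PP(\xi)\,\PP^{(2)}(\xi)+\PP^{(1)}(\xi)\otimes\PP_{1}+\PP^{(2)}(\xi)\,\PP,
\]
a consequence of $\PP(\xi)^{2}=\PP(\xi)$. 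Here the total projector $\PP=\PP(0)$ supplies the missing regularization $\Ssm\to\Ss_{2}$ (via the representation $\PP(\xi)=\frac{1}{2i\pi}\oint(\RR(z,\BB_{\xi})\AA)^{2}\RR(z,\LL_{\xi})\,dz$, which lands in $\Ss_{2}$ thanks to the two $\AA$'s), and the term $\PP(\xi)\,\PP^{(2)}(\xi)$ is handled by duality. This bootstrap is the key idea you are missing.
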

		
		\begin{proof} For a fixed $0 < \lambda <  \lambda_{\LL}$, let $\alpha_{0}=\alpha_{0}(\lambda)$ be provided by Lemma \ref{lem:localization_spectrum}. Since
			$$\PP(\xi)=\frac{1}{2i\pi}\oint_{|z|=r}\RR(z,\LL_{\xi}) \d z,$$
			we deduce directly from the bound \eqref{eq:bound_L_xi} that
			$$\| \PP(\xi) \|_{\BBB(\Ss;\Ssp)} + \| \PP(\xi) \|_{\BBB(\Ssm;\Ss)} \lesssim 1,$$
			and using that $\PP(\xi)^2 = \PP(\xi)$, we actually deduce
			\begin{equation}
				\label{eq:reg_P_xi_order_0}
				\| \PP(\xi) \|_{\BBB(\Ssm;\Ssp)} \lesssim 1, \quad \PP(0) = \PP \in \BBB(\Ssm ; \Ssp).
			\end{equation}
			We will refine this information and prove a second order Taylor expansion using the bootstrap formulae from Appendix \ref{scn:boostrap_projectors}.  Before doing so, we observe that the following factorization holds true
			\begin{equation*}
				\RR(z,\LL_{\xi}) =\RR(z,\BB_{\xi})+  \big( \RR(z,\BB_{\xi}) \AA \big)^2 \RR(z,\BB_{\xi}) + \big(\RR(z,\BB_{\xi}) \AA \big)^2 \RR(z,\LL_{\xi}),
			\end{equation*}
			where the first two terms are actually  $\BBB(\Ss)$-valued holomorphic in $z \in \Delta_{-\lambda}$. Therefore, $\PP(\xi)$ can be written equivalently as
			$$\PP(\xi)=\frac{1}{2i\pi}\oint_{|z|=r} \big(\RR(z,\BB_{\xi})\AA\big)^{2}\RR(z,\LL_{\xi})\d z.$$
			Since, according to the regularization properties \ref{assumption_bounded_A} of $\AA$ together with the resolvent bounds \eqref{eq:bound_L_xi} and \eqref{eq:ResBB_xi} for $\LL_\xi$ and $\BB_\xi$ respectively, it holds, uniformly in $|z|=r$ and $| \xi | \le \alpha_0$
			\begin{align*}
				\| \RR(z, \BB_\xi) \AA \|_{ \BBB(\Ss ; \Ss_1 ) } + \| \RR(z, \BB_\xi) \AA \|_{ \BBB(\Ss_1 ; \Ss_2 ) } + \| \RR(z, \LL_\xi) \|_{ \BBB(\Ssm ; \Ss ) } \lesssim 1,
			\end{align*}
			and thus, using $\Ss_{2} \hookrightarrow \Ss_1$
			\begin{equation*}
				\left\| \big(\RR(z,\BB_{\xi})\AA\big)^{2}\RR(z,\LL_{\xi}) \right\|_{ \BBB(\Ssm ; \Ss_2) } \lesssim 1,
			\end{equation*}
			which once integrated along $| z | = r$, gives
			\begin{equation}
				\label{eq:regularization_P_xi}  \| \PP(\xi) \|_{\BBB(\Ssm;\Ss_{j})} \lesssim 1, \qquad j=0,1,2, \qquad   | \xi | \le \alpha_0\,.
			\end{equation}
			\step{1}{First order expansion}
			A representation of the first order Taylor expansion is given by integrating \eqref{eq:factorization_L_ivxi_left} or \eqref{eq:factorization_L_ivxi_right} with $N = 1$ yielding
			$$		\PP(\xi) = \PP + \xi \cdot \PP^{(1)}(\xi), \qquad | \xi | \le \alpha_0,$$
			where $\PP^{(1)}(\xi)$ has the integral representation
			$$		\PP^{(1)}(\xi) := \frac{1}{2 i \pi} \oint_{|z| = r} \RR(z,\LL_{\xi}) (- i v) \RR(z,\LL) \d z = \frac{1}{2 i \pi} \oint_{|z| = r} \RR(z,\LL) (- i v) \RR(z,\LL_{\xi}) \d z.$$	
			We know that $\PP^{(1)}(\xi) \in \BBB(\Ssm ; \Ssp)$ (since $\xi\cdot \PP^{(1)}(\xi)=\PP(\xi)-\PP$) let us prove that this holds uniformly in $\xi$. We begin with the following estimate in $\BBB(\Ss_{1};\Ssp)$ which is made possible since the multiplication by $v$ is bounded from $\Ss_{1}$ to $\Ss$. Namely, thanks to \ref{assumption_multi-v} and the resolvent bounds \eqref{eq:bound_L_xi} and \eqref{eq:bound_resolvent_L_X_j}, we have uniformly in $|\xi| \le \alpha_0$
			\begin{equation}
				\label{eq:bound_PP1_H1X*}
				\begin{split}
					\| \PP^{(1)}(\xi) \|_{\BBB(\Ss_{1};\Ssp)} & \le \frac{1}{2 \pi} \oint_{|z|=r} \| \RR(z,\LL_{\xi}) v \RR(z,\LL) \|_{\BBB(\Ss_{1};\Ssp) } \d |z| \\
					& \lesssim \oint_{|z|=r} \| \RR(z,\LL_{\xi}) \|_{\BBB(\Ss;\Ssp) } \| \RR(z,\LL) \|_{\BBB(\Ss_{1})} \d |z|\lesssim 1.
				\end{split}
			\end{equation}
			Let us now explain how to extend such an estimate to $\BBB(\Ssm;\Ss)$. Using the formula \eqref{eq:bootstrap_P_1_A}, we have, for $|\xi| \le \alpha_0$
			$$\PP^{(1)}(\xi) = \PP(\xi) \PP^{(1)}(\xi) + \PP^{(1)}(\xi) \PP,$$
			and therefore
			\begin{align*}
				\| \PP^{(1)}(\xi) \|_{\BBB(\Ssm;\Ssp)} & \le \| \PP(\xi) \PP^{(1)}(\xi) \|_{\BBB(\Ssm;\Ssp)} + \| \PP^{(1)}(\xi) \PP \|_{\BBB(\Ssm;\Ssp)}\\
				& \le  \| \PP^{(1)}(\xi)^{\star} \PP(\xi)^{\star} \|_{\BBB(\Ssm;\Ssp)} + \| \PP^{(1)}(\xi) \PP \|_{\BBB(\Ssm;\Ssp)}
			\end{align*}
			where we used the adjoint identity \eqref{eq:twisted_adjoint_identity} (where the adjoint is considered for $\la \cdot , \cdot \ra_{\Ss}$). Since we have
			$	\PP(\xi)^\star = \PP(-\xi)$ and $\PP^\star = \PP,$
			one has $\PP^{(1)}(\xi)^{\star}=-\PP^{(1)}(-\xi)$. Therefore
			\begin{align*}
				\| \PP^{(1)}(\xi) \|_{\BBB(\Ssm;\Ssp)} & \le \| \PP^{(1)}(-\xi) \PP(-\xi) \|_{\BBB(\Ssm;\Ssp)} + \| \PP^{(1)}(\xi) \PP \|_{\BBB(\Ssm;\Ssp)} \\
				& \le \| \PP^{(1)}(-\xi) \|_{\BBB(\Ss_{1};\Ssp)} \| \PP(-\xi) \|_{\BBB(\Ssm;\Ss_{1}) } + \| \PP^{(1)}(\xi) \|_{\BBB(\Ss_{1};\Ssp)} \| \PP \|_{\BBB(\Ssm;\Ss_{1}) } \\
				& \lesssim \| \PP^{(1)}(-\xi) \|_{\BBB(\Ss_{1};\Ssp)} + \| \PP^{(1)}(\xi) \|_{\BBB(\Ss_{1};\Ssp)},
			\end{align*}
			where we used the regularizing property \eqref{eq:regularization_P_xi} of $\PP(\xi)$ in the last line (recall that $\PP=\PP(0)$). Using the above estimates \eqref{eq:bound_PP1_H1X*}, we deduce that
			$$\| \PP^{(1)}(\xi) \|_{\BBB(\Ssm;\Ssp)} \lesssim 1, \qquad |\xi| \le \alpha_0,$$
			which concludes this step. Notice that a clear implication is that, for any $|\xi| \le \alpha_0$
			$$\|\PP(\xi)-\PP\|_{\BBB(\Ssm;\Ssp)} \lesssim |\xi|.$$
			
			\step{2}{Second order expansion} In a similar fashion, we obtain a second order expansion integrating once again \eqref{eq:factorization_L_ivxi_left} or \eqref{eq:factorization_L_ivxi_right} with~$N = 2$:
			\begin{gather*}
				\PP(\xi) = \PP + i \xi \cdot \PP_{1} + \xi \otimes \xi : \PP^{(2)}(\xi), \qquad | \xi | \le \alpha_0,
			\end{gather*}
			where the first and second order terms are defined as
			\begin{gather*}
				\PP_{1} = - \frac{1}{2 i \pi} \oint_{|z| = r} \RR(z,\LL) v \RR(z,\LL) \d z,\\
				\PP^{(2)}(\xi) := - \frac{1}{2 i \pi} \oint_{ | z | = r } \Big( \RR(z,\LL) v \Big)^{\otimes 2} \RR(z,\LL_{\xi}) \d z = - \frac{1}{2 i \pi} \oint_{ | z | = r } \RR(z,\LL_{\xi}) \Big(v \RR(z,\LL)\Big)^{\otimes 2} \d z.
			\end{gather*}
			The first order term  is explicitly computable using the Laurent series expansion \eqref{eq:pseudo_Laurent_expansion}:
			\begin{equation*}
				%		\label{eq:def_P_1}
				\PP_{1} = - \frac{1}{2 i \pi} \oint_{|z| = r} \RR(z,\LL) v \RR(z,\LL) \d z = \mathsf{R}_{0} v \PP + \PP v \mathsf{R}_{0},
			\end{equation*}
			so in particular, one checks directly that
			$$		\PP_{1} = \PP \PP_{1} + \PP_{1} \PP.$$
			We use now the formula \eqref{eq:bootstrap_P_2_A}, which gives
			\begin{align*}
				\| \PP^{(2)}(\xi) \|_{\BBB(\Ssm;\Ssp)} \le \| \PP(\xi) \PP^{(2)}(\xi) \|_{\BBB(\Ssm;\Ssp)}  + \| \PP^{(1)}(\xi) \otimes \PP_{1} \|_{\BBB(\Ssm;\Ssp)}  + \| \PP^{(2)}(\xi) \PP \|_{\BBB(\Ssm;\Ssp)}.
			\end{align*}
			We use the bound from \textit{Step 1} for the second term, and, after using duality as in \textit{Step 1} (for the first term), we use the regularization property \eqref{eq:regularization_P_xi} of $\PP(\xi)$:
			\begin{align*}
				\| \PP^{(2)}(\xi) \|_{\BBB(\Ssm;\Ssp)} & \lesssim 1 + \| \PP^{(2)}(-\xi) \PP(-\xi) \|_{\BBB(\Ssm;\Ssp)}
				+ \| \PP^{(2)}(\xi) \PP \|_{\BBB(\Ssm;\Ssp)} \\
				& \lesssim  1 + \| \PP^{(2)}(-\xi) \|_{\BBB(\Ss_{2},\Ssp)}
				+ \| \PP^{(2)}(\xi) \|_{\BBB(\Ss_{2},\Ssp)}.
			\end{align*}
			As previously, using the fact that the multiplication by $v$ is bounded from $\Ss_{j}$ to $\Ss_{j+1}$ for $j=0,1$, we show using the resolvent bounds \eqref{eq:bound_L_xi} for $\LL_\xi$ and \eqref{eq:bound_resolvent_L_X_j} for $\LL$ that
			\begin{align*}
				\| \PP^{(1)}(\xi) \|_{\BBB(\Ss_{2};\Ssp)} & \le \frac{1}{2 \pi}  \oint_{|z|=r} \left\| \RR(z,\LL_{\xi}) \Big(v \RR(z,\LL)\Big)^{\otimes 2} \right\|_{\BBB(\Ss_{2};\Ssp) } \d|z |\\
				& \lesssim  \oint_{|z|=r} \| \RR(z,\LL_{\xi}) \|_{\BBB(\Ss;\Ssp) } \| \RR(z,\LL) \|_{\BBB(\Ss_{1})} \| \RR(z,\LL) \|_{\BBB(\Ss_{2})} \d| z|\lesssim 1.
			\end{align*}
			This concludes the proof, setting $\mathsf{S}(\xi) := \xi \otimes \xi : \PP^{(2)}(\xi)$.
		\end{proof}
		
		Following Kato's reduction process from \cite[Section I-4.6, pp. 32--34]{K1995}, we introduce for any $| \xi | \le \alpha_0$ a ``rectified'' version of $\LL_\xi$ in which we cut off any spectral points that is not a small eigenvalue. Precisely, following \cite[Section I-4.6]{K1995} and since 
		$$\|\PP(\xi)-\PP\|_{\BBB(\Ssm,\Ssp)} \lesssim |\xi| \qquad \forall | \xi | \le \alpha_0, \quad $$
		according to the previous Lemma and the injection $\Ssp \hookrightarrow \Ss \hookrightarrow \Ssm$, we deduce that for $\alpha_0$ small enough 
		$$ \|\left(\PP(\xi)-\PP\right)^{2}\|_{\BBB(\Ss)} < 1, \qquad \forall | \xi | \le \alpha_0,$$
		where we recall that $\Ssp  \hookrightarrow \Ss\hookrightarrow \Ssm.$
		In particular (see \cite[Eqs. (4.36)--(4.39), p. 33]{K1995}), we  can define
		\begin{align*}
			\UU_\xi &= \Big( \PP(\xi) \PP + (\Id - \PP(\xi))(\Id - \PP) \Big) \Big(\Id -(\PP(\xi) - \PP)^2 \Big)^{-\frac{1}{2}} \\
			&= \Big(\Id -(\PP(\xi) - \PP)^2 \Big)^{-\frac{1}{2}} \Big( \PP(\xi) \PP + (\Id - \PP(\xi))(\Id - \PP) \Big), \qquad |\xi| \le \alpha_0
		\end{align*}
		where we used the definition $(\Id - T)^{-\frac{1}{2}} = \sum_{n= 0}^\infty \binom{-\frac{1}{2}}{n} (-T)^n$ for any $T \in \BBB(\Ss)$. With such a definition, $\UU_{\xi}$ mapping isomorphically the null-space of $\LL$ onto the eigenspaces corresponding to the small eigenvalues of $\LL_\xi$:
		\begin{equation*}
			\nul(\LL) = \range(\PP) \xrightarrow{\UU_{\xi}} \range\big(\PP(\xi) \big),
		\end{equation*}
		we define the \emph{finite dimensional} rectified operator on $\nul(\LL)$:
		\begin{equation*} \quad \rectL_{\xi} := \Big( \UU_{\xi}^{-1} \, \LL_{\xi} \, \UU_{\xi} \Big)_{| \nul(\LL) } \in \BBB\left( \nul(\LL) \right), \qquad \forall | \xi | \le \alpha_0, \quad 
		\end{equation*}
		whose spectrum is related to that of $\LL_{\xi}$ by
		%\begin{equation*}
		$	\mathfrak{S}(\LL_{\xi}) \cap \Delta_{-\lambda} = \mathfrak{S}\left(\rectL_{\xi}\right).$
		%	\end{equation*}
		In the rest of this section, we will study the structure of $\rectL_{ \xi}$ so as to reduce its diagonalization to the perturbation of a diagonalizable matrix with simple eigenvalues. Using (Kato's) classical perturbation theory for matrices, this will provide expansion of the eigenvalues and eigenfunctions of $\rectL_\xi$, and in turn those of $\LL_{\xi}$.
		
		\begin{lem}[\textit{\textbf{Properties of the rectified operator}}]
			\label{lem:rectified_operator}
			The rectified operator $\rectL_{ \xi}$ has the following properties:
			\begin{enumerate}
				\item $\rectL_{ \xi}$ is compatible with any orthogonal $d \times d$ matrix:
				\begin{equation*}
					\Theta \rectL_{\Theta \xi} = \rectL_{\xi} \Theta,
				\end{equation*}
				in particular, $\rectL_{\xi}$ commutes with orthogonal matrices that fix $\xi$, and it preserves eveness and oddity in directions orthogonal to $\xi$:
				\begin{gather*}
					\Theta \xi = \xi \Longrightarrow \Theta \rectL_{\xi} = \rectL_\xi \Theta,\\
					\Big( u \perp \xi \text{ and } \varphi(-u) = \pm \varphi(u) \Big) \Longrightarrow \Big( \rectL_{\xi} \varphi \Big)(-u) = \pm \Big( \rectL_{\xi} \varphi \Big)(u).
				\end{gather*}
				\item $\rectL_{\xi}$ has the following second order expansion in $\BBB\left( \nul(\LL) \right)$
				\begin{equation*}
					\rectL_{\xi} =  -i \PP ( v \cdot \xi) + \PP (v \cdot \xi) \mathsf{R}_{0} (v \cdot \xi) + \mathbf{r}_0
					(\xi)			\end{equation*}
				where $\mathbf{r}_0(\xi) \in \BBB(\nul(\LL))$ is such that $\|\mathbf{r}_0(\xi)\|_{\BBB( \nul(\LL) ) } \lesssim |\xi|^3.$
			\end{enumerate}
		\end{lem}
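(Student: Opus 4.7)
The plan is to prove part (1) by propagating the covariance $\Theta\LL_{\Theta\xi} = \LL_\xi\Theta$ through every ingredient in the construction of $\rectL_\xi$, and to prove part (2) via Kato's reduction formula combined with a residue computation.

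For (1), the strategy is as follows. Since $\mu$ is radial by \ref{L2}, $\Theta$ acts unitarily on $\Ss$, and \ref{L1} then reads $\Theta\LL = \LL\Theta$. A direct change of variables gives
\[
\Theta\bigl[(v\cdot\Theta\xi)f\bigr](v) = (\Theta v\cdot\Theta\xi)f(\Theta v) = (v\cdot\xi)(\Theta f)(v),
\]
so that $\Theta(v\cdot\Theta\xi) = (v\cdot\xi)\Theta$ as operators, and therefore $\Theta\LL_{\Theta\xi} = \LL_\xi\Theta$. This covariance is inherited by the resolvent $\RR(z,\LL_\xi)$, by the spectral projector $\PP(\xi)$ through its contour integral representation, and by Kato's isomorphism $\UU_\xi$ through its defining series in $\PP(\xi) - \PP$ (using also $\Theta\PP = \PP\Theta$, which follows from the $\Theta$-invariance of $\nul(\LL)$). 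Composing these gives $\Theta\rectL_{\Theta\xi} = \rectL_\xi\Theta$. The two specialized consequences follow by applying this identity respectively with an orthogonal $\Theta$ fixing $\xi$, and with the hyperplane reflection across $u^\perp$ for $u\perp\xi$ (which fixes $\xi$ and flips $u\mapsto -u$).

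For (2), I would start from Kato's identity $\rectL_\xi = \PP\UU_\xi^{-1}\,T(\xi)\,\UU_\xi\PP|_{\nul(\LL)}$, where $T(\xi) := \LL_\xi\PP(\xi) = \frac{1}{2i\pi}\oint_{|z|=r} z\,\RR(z,\LL_\xi)\,\d z$, and apply the Neumann-type factorization \eqref{eq:factorization_L_ivxi_left} with $N=3$ and $V := -i(v\cdot\xi)$. Inserting the Laurent expansion \eqref{eq:pseudo_Laurent_expansion}--\eqref{eq:pseudo_Laurent_expansion1} for $\RR(z,\LL)$ and computing residues at $z=0$ yields
\[
T(\xi) = \PP V\PP + \bigl(\PP V\PP V\mathsf R_0 + \PP V\mathsf R_0 V\PP + \mathsf R_0 V\PP V\PP\bigr) + R(\xi),
\]
where $R(\xi)$ is a cubic-order remainder. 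Projecting by $\PP$ on both sides and using the key identities $\PP\mathsf R_0 = \mathsf R_0\PP = 0$, the two outer quadratic terms vanish, leaving $\PP V\PP + \PP V\mathsf R_0 V\PP$. For the conjugation by $\UU_\xi = \mathrm{Id} + \mathsf U_1(\xi) + \OO(|\xi|^2)$, with $\mathsf U_1(\xi)$ obtained explicitly from Lemma \ref{lem:expansion_projection}, the cross terms $\PP\,T^{(1)}\mathsf U_1\PP - \PP\mathsf U_1\,T^{(1)}\PP$ are \emph{a priori} of order $|\xi|^2$; a short computation using once more $\PP\mathsf R_0 = \mathsf R_0\PP = 0$ shows that both vanish identically. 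Substituting $V = -i(v\cdot\xi)$ yields the claimed expansion up to the remainder $\mathbf{r}_0(\xi)$.

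The main obstacle is the cubic bound $\|\mathbf{r}_0(\xi)\|_{\BBB(\nul(\LL))} \lesssim |\xi|^3$, since naive estimation of $(V\RR(z,\LL))^3$ requires three descending steps in the hierarchy $\Ss_2\hookrightarrow\Ss_1\hookrightarrow\Ss$ of \ref{assumption_multi-v}, whereas only two are available. The resolution will exploit the identity $\PP\RR(z,\LL) = \RR(z,\LL)\PP = z^{-1}\PP$ (again a consequence of $\PP\mathsf R_0 = \mathsf R_0\PP = 0$) together with the resolvent identity $\PP\RR(z,\LL_\xi) = z^{-1}\PP + z^{-1}\PP V\RR(z,\LL_\xi)$, which allow the outermost factor of $V$ to be absorbed as the operator $\PP V: \Ss \to \Ss$. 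This operator is bounded with norm $\lesssim|\xi|$ thanks to the finite-dimensionality of $\nul(\LL)$ and the inclusion $\nul(\LL)\subset\Ss_2$ from Remark \ref{rem:KernelH2}. The two remaining factors of $V$ in the middle then descend through the hierarchy $\Ss_2\to\Ss_1\to\Ss$, the resolvent bounds \eqref{eq:bound_L_xi} and \eqref{eq:bound_resolvent_L_X_j} being uniform on the fixed contour $|z| = r$, yielding the desired $|\xi|^3$ decay.
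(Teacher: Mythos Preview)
Your treatment of part (1) matches the paper's: both propagate the covariance $\Theta\LL_{\Theta\xi}=\LL_\xi\Theta$ through the resolvent, $\PP(\xi)$, and $\UU_\xi$ to conclude.

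For part (2) your route is genuinely different. The paper does \emph{not} use a contour representation of $T(\xi)=\LL_\xi\PP(\xi)$; instead it expands $\PP\UU_\xi^{-1}$ and $\UU_\xi\PP$ to first order (via Lemma~\ref{lem:expansion_projection}) and applies $\LL_\xi=\LL - i(v\cdot\xi)$ \emph{directly}. The decisive observation is that when $\LL$ hits the second-order remainder $r_6(\xi)\PP$ in $\UU_\xi\PP$, the resulting $r_8(\xi)=\LL r_6(\xi)\PP$ is annihilated on the left by $\PP$ (since $\PP\LL=0$), so the only surviving contribution at this order is $-i(v\cdot\xi)r_6(\xi)\PP$, which is manifestly $\OO(|\xi|^3)$. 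This avoids your third-order resolvent expansion and the attendant $(V\RR)^3$ difficulty entirely. Your computation of $T(\xi)$ via residues and the vanishing of the $\mathsf U_1$-cross terms are both correct, so the two approaches agree on the explicit first- and second-order coefficients.

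There is, however, a gap in your remainder argument. Your identity $\PP\RR(z,\LL_\xi)=z^{-1}\PP+z^{-1}\PP V\RR(z,\LL_\xi)$ correctly handles the \emph{first} piece $z^{-1}\PP$: writing $(V\RR(z,\LL))^3\PP f=z^{-1}(V\RR(z,\LL))^2 Vf$ with $Vf\in\Ss_1$, two descents give an element of $\Ss$, and $\PP V:\Ss\to\nul(\LL)$ absorbs the last factor. But the \emph{second} piece $z^{-1}\PP V\RR(z,\LL_\xi)$ leaves you with $\RR(z,\LL_\xi)$ applied to $V\RR(z,\LL)h$ for some $h\in\Ss$, and $V\RR(z,\LL)h\notin\Ss$ in general --- so this term is not covered by the bounds \eqref{eq:bound_L_xi}. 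The fix is the algebraic identity
\[
\RR(z,\LL_\xi)\,V\,\RR(z,\LL)=\RR(z,\LL_\xi)-\RR(z,\LL),
\]
which collapses that problematic product into a difference of operators bounded on $\Ss$; applied once, it reduces the second piece to $\PP V[\RR(z,\LL_\xi)-\RR(z,\LL)]h$ with $h\in\Ss$, which is well-defined and carries the required $|\xi|^3$. Either patch your argument this way, or adopt the paper's direct expansion, which sidesteps the issue via $\PP\LL=0$.
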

		
		\begin{proof}
			\medskip
			As $\Theta \LL_{\Theta \xi} = \LL_\xi \Theta$ holds for any orthogonal matrix $\Theta$, it is clear that the resolvent satisfies the same relation $\Theta \RR(z, \LL_{\Theta \xi})= \RR(z,\LL_\xi)\Theta$, and thus $\Theta \PP(\Theta \xi) = \PP(\xi) \Theta$. Using the above definition of $\UU_{\xi}$, this implies $\Theta \UU_{\Theta \xi} = \UU_{\xi} \Theta$ and thus the first point of this lemma. We only need to check the expansion for $\rectL_{ \xi}$. According to Lemma \ref{lem:expansion_projection}, there holds
			\begin{gather*}
				\PP(\xi) \PP = \PP +i \mathsf{R}_{0} (v \cdot \xi) \PP + r_{1}(\xi), \\
				(\Id - \PP(\xi))(\Id - \PP) =  \Id - \PP - i \PP (v \cdot \xi) \mathsf{R}_{0}  + r_{2}(\xi),
			\end{gather*}
			and
			$$				(\PP(\xi) - \PP)^2=r_{3}(\xi),
			$$
			where the \emph{remainder operators} $r_{k}$ are such that
			$$\left\|r_{k}(\xi)\right\|_{\BBB(\Ssm;\Ssp)} \lesssim |\xi|^{2}, \qquad k=1,2,3,$$
			for $|\xi| \le \alpha_0$ with $\alpha_0$ small enough. Inserting this in the definition of $\UU_{\xi}$, there exist additional remainder operators $r_{k}(\xi) \in \BBB(\Ssm,\Ssp)$ \footnote{with for instance $r_{4}(\xi)=r_{1}(\xi)+r_{2}(\xi)$ and $r_{5}(\xi)=\sum_{n=1}^{\infty}{-\frac{1}{2} \choose n}(-r_{3}(\xi))^{n}.$} such that
			\begin{equation*}
				\label{eq:expansion_Kato_isomorphism}\begin{split}
					\UU_\xi & = \Big( \Id - i \PP (v \cdot \xi)\mathsf{R}_{0} + i\mathsf{R}_{0} (v \cdot \xi) \PP + r_{4}(\xi) \Big) \Big( \Id + r_{5}(\xi) \Big) \\
					& = \Id - i \PP (v \cdot \xi)\mathsf{R}_{0} + i\mathsf{R}_{0} (v \cdot \xi) \PP + r_{6}(\xi),
				\end{split}
			\end{equation*}
			where $\|r_{k}(\xi)\|_{\BBB(\Ssm,\Ssp)} \lesssim |\xi|^{2}$ for $k=1,\ldots,6.$ Note that there holds as well
			\begin{equation*}
				\label{eq:expansion_Kato_isomorphism_inverse}
				\UU^{-1}_\xi = \Id - i\mathsf{R}_{0} (v \cdot \xi) \PP + i \PP (v \cdot \xi)\mathsf{R}_{0} + r_{7}(\xi),
			\end{equation*}
			with $\|r_{7}(\xi)\|_{\BBB(\Ssm;\Ssp)} \lesssim |\xi|^{2}.$ Notice that in the above decompositions of $\UU_{\xi}$ and $\UU_{\xi}^{-1}$, all terms (except $\Id$) are in $\BBB\left( \Ssm ; \Ssp \right)$. We thus compute the second order expansion
			$$\rectL_\xi =\PP \rectL_\xi \PP=\PP \Big(\UU^{-1}_\xi \LL_\xi \UU_\xi\Big) \PP $$
			as follows. Observe that $\PP \mathsf{R}_{0}=\mathsf{R}_{0}\PP=0$ so that
			\begin{equation}
				\label{eq:expansion_Kato_isomorphismPP}
				\PP\UU^{-1}_\xi=\PP + i \PP (v \cdot \xi)\mathsf{R}_{0} + \PP\, r_{7}(\xi), \qquad \UU_{\xi}\PP=\PP +
				i\mathsf{R}_{0}(v\cdot\xi)\PP+r_{6}(\xi)\PP\,,
			\end{equation}
			and that $\range(r_{6}(\xi)\PP) \subset \dom(\LL)$ so that $r_{8}(\xi):=\LL\,r_{6}(\xi)\PP$ is well-defined and in the end we have $\|r_{8}(\xi) \|_{\BBB(\Ssm,\Ssp)} \lesssim |\xi|^{2}$. Therefore, using also that $\LL \PP=0$ while $\LL\mathsf{R}_{0}=\Id-\PP$, we deduce that
			\begin{equation*}\begin{split}
					\rectL_\xi & = \Big( \PP + i \PP (v \cdot \xi)\mathsf{R}_{0} + \PP r_{7}(\xi) \Big) (\LL - i v \cdot \xi) \Big( \PP + i\mathsf{R}_{0} (v \cdot \xi) \PP + r_{6}(\xi)\PP\Big) \\
					&= \Big( \PP + i \PP (v \cdot \xi)\mathsf{R}_{0} + \PP r_{7}(\xi) \Big) \Big(- i \PP (v \cdot \xi) \PP  + (v \cdot\xi) \mathsf{R}_{0}  (v \cdot \xi) \PP + r_{8}(\xi) + \tilde{r}_{1}(\xi)\Big) \\
					&= - i \PP (v \cdot \xi) \PP + \PP ( v \cdot \xi)\mathsf{R}_{0} (v \cdot \xi) \PP + \tilde{r}_{2}(\xi),
			\end{split}\end{equation*}
			where $\|\tilde{r}_{k}(\xi)\|_{\BBB(\Ssm,\Ssp)} \lesssim |\xi|^{3}$ for $k=1,2$ because the second order remainder term $\PP r_{8}(\xi)$ vanishes since $\PP\LL=0$. The lemma is proved.
		\end{proof}
		
		\begin{rem}
			Note that in order to compute a second order expansion of the rectified operator, only a first order expansion of the spectral projector is needed.
		\end{rem}
		
		\begin{lem}[\textit{\textbf{Block matrix representation of the rectified operator}}]
			\label{lem:rectified_block_matrix}
			For any non-zero $|\xi| \le \alpha_0$, the rectified operator $\rectL_\xi$ writes, along the $\Ss$-orthogonal decomposition
			\begin{equation*}
				\nul(\LL) = \Big\{ u \cdot v \mu ~ ; ~ u \perp \xi \Big \} \oplus \Span\left( \psi_{\Bou}, \psi_{+\Wave}\left( \widetilde{\xi} \right), \psi_{-\Wave}\left( \widetilde{\xi} \right) \right), \quad \widetilde{\xi}=\frac{\xi}{|\xi|},  
			\end{equation*}
			as a block matrix:
			\begin{equation}\label{eq:LLXIMXI}\rectL_{\xi} = \left(
				\begin{matrix}
					\lambda_\Inc(\xi) \, \Id_{(d-1) \times (d-1)} & 0_{(d-1) \times 3} \\
					0_{3 \times (d-1)} & \rectsL(\xi) \\
				\end{matrix}
				\right), \qquad \rectsL(\xi) \in \mathscr{M}_{3\times3}(\R)\end{equation}
			where the ``incompressible'' eigenvalue $\lambda_\Inc(\xi)$ can be expressed for any normalized $u \perp \xi$ as
			\begin{equation*}
				\lambda_\Inc(\xi) = \frac{d}{E} \left \langle \rectL_\xi (u \cdot v \mu), u \cdot v \mu \right \rangle_{\Ss}.
			\end{equation*}
		\end{lem}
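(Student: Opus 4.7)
\medskip
\noindent\textit{Proof plan.} The strategy is to exploit the isotropy property of $\rectL_\xi$ established in Lemma \ref{lem:rectified_operator}, namely that $\Theta\rectL_\xi = \rectL_\xi \Theta$ for every orthogonal $\Theta$ satisfying $\Theta \xi = \xi$, to obtain the claimed block structure via a Schur-type argument. First, I would verify that the decomposition of $\nul(\LL)$ given in the statement is $\Ss$-orthogonal: using \ref{L2} and the radial symmetry of $\mu$, any $u \cdot v \mu$ with $u \perp \xi$ is orthogonal to $\mu$, to $\widetilde{\xi} \cdot v \mu$, and to $(|v|^2 - E)\mu$; since $\psi_{\Bou}$ and $\psi_{\pm \Wave}(\widetilde{\xi})$ lie in $\Span\{\mu, \widetilde\xi \cdot v\mu, |v|^2\mu\}$, this gives the desired orthogonality. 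I would also record the explicit normalization
\begin{equation*}
\la u \cdot v \mu , u \cdot v \mu \ra_{\Ss} = |u|^2 \int_{\R^d} \frac{|v|^2}{d} \mu(v) \d v = \frac{E}{d}|u|^2,
\end{equation*}
which uses only \ref{L2}.

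Next, I would let $G_\xi := \{\Theta \in O(d)\::\: \Theta \xi = \xi\}$, which via its action on $\xi^\perp$ is isomorphic to $O(d-1)$. The second step is to identify how $G_\xi$ acts on each summand of the decomposition. For the second (three-dimensional) summand, each of the functions $\psi_{\Bou}$, $\psi_{+ \Wave}(\widetilde\xi)$, $\psi_{-\Wave}(\widetilde\xi)$ depends on $v$ only through $|v|$ and $\widetilde\xi \cdot v$ (cf.\ \eqref{eq:def_psi_wave}--\eqref{eq:def_psi_Bou}), and is therefore \emph{pointwise} invariant under every $\Theta \in G_\xi$ (i.e., $\Theta$ acts as the identity on this subspace). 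For the first summand, the map $u \mapsto u \cdot v \mu$ intertwines the standard action of $G_\xi \simeq O(d-1)$ on $\xi^\perp$ with the action $f \mapsto f(\Theta^{-1} \cdot)$ on $\nul(\LL)$, so $G_\xi$ acts through its standard representation on $\R^{d-1}$, which is (absolutely) irreducible.

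Third, I would combine these observations with the commutation relation $\Theta \rectL_\xi = \rectL_\xi \Theta$ for $\Theta \in G_\xi$. Since $G_\xi$ acts trivially on $\Span(\psi_\Bou, \psi_{\pm \Wave}(\widetilde\xi))$ and absolutely irreducibly on $\{u \cdot v \mu\::\: u \perp \xi\}$, a Schur-type argument shows that $\rectL_\xi$ cannot carry vectors from one summand into the other (otherwise the image would be both an invariant subspace of a trivial representation and of an irreducible one of different dimension, or \emph{vice versa}). Hence $\rectL_\xi$ is block-diagonal in the decomposition. Moreover, on the first summand $\rectL_\xi$ commutes with the full irreducible action of $O(d-1)$, so by Schur's lemma it must be a real scalar multiple of the identity; denoting this scalar by $\lambda_\Inc(\xi)$ gives the claimed structure. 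The final step is the formula for $\lambda_\Inc(\xi)$: given any normalized $u \perp \xi$, the definition of the scalar combined with the normalization $\la u\cdot v\mu, u\cdot v\mu\ra_{\Ss} = E/d$ yields
\begin{equation*}
\lambda_\Inc(\xi) = \frac{\la \rectL_\xi (u \cdot v\mu), u \cdot v \mu\ra_\Ss}{\la u\cdot v\mu, u \cdot v\mu\ra_\Ss} = \frac{d}{E}\la \rectL_\xi(u\cdot v\mu), u \cdot v\mu\ra_\Ss,
\end{equation*}
which is the announced identity.

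The main subtlety is the Schur step: one has to make sure that the isotropy group truly is the full $O(d-1)$ (so reflections are allowed, guaranteeing absolute irreducibility of the standard representation—this matters in low dimension, in particular $d=3$), and that no invariant complement argument can fail because $\rectL_\xi$ is only a real operator, not necessarily self-adjoint. Once reflections are included, however, the scalar commutant is $\R$ rather than $\C$ or $\HH$, and the block decomposition and scalar action follow at once.
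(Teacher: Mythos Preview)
Your proposal is correct and rests on exactly the same input as the paper's proof---the commutation $\Theta\rectL_\xi=\rectL_\xi\Theta$ for $\Theta\in O(d)$ fixing $\xi$---but you package it more abstractly via Schur's lemma, whereas the paper argues by hand with specific elements of $O(d-1)$. Concretely, the paper obtains the off-diagonal vanishing by observing that $\rectL_\xi(u\cdot v\mu)$ is odd in the direction $u$ while $\mu,\,\xi\cdot v\mu,\,|v|^2\mu$ are even (this is your reflection $\Theta$ sending $u\mapsto -u$), and then shows $\rectsL'(\xi)$ is scalar by checking, for an orthogonal pair $u,u'\perp\xi$, that $\langle\rectL_\xi(u\cdot v\mu),u'\cdot v\mu\rangle_\Ss=0$ (parity again) and that the diagonal entries agree by conjugating with the $\Theta$ swapping $u\leftrightarrow u'$. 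Your Schur argument subsumes both steps at once and is arguably cleaner conceptually; the paper's version has the advantage of being completely elementary and self-contained, requiring no representation-theoretic background. Your care about including reflections so that the commutant is $\R$ (absolute irreducibility) is exactly the right point, and is implicitly what makes the paper's parity argument work as well.
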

		
		\begin{proof}
			Fix some non-zero $| \xi | \le \alpha_0$ and consider some $u \perp \xi$, using the first point of Lemma \ref{lem:rectified_operator}, the functions $\rectL_{\xi} ( u \cdot v \mu)$ and~$\left(\mu, \xi \cdot v \mu, |v|^2 \mu\right)$ are respectively odd and even in the direction $u$, and thus $\Ss$-orthogonal. Similarly, $\rectL_\xi \left(\mu, \xi \cdot v \mu, |v|^2 \mu\right)$ and $u \cdot v \mu$ are also~$\Ss$-orthogonal. Thus, one may write
			$$\rectL_{\xi} = \left(
			\begin{matrix}
				\rectsL'(\xi) & 0_{(d-1) \times 3} \\
				0_{3 \times (d-1)} & \rectsL(\xi) \\
			\end{matrix}
			\right),$$
			for some $(d-1) \times (d-1)$ matrix $\rectsL'(\xi)$, because $\left\{\left(a + b \xi + c | v |^2\right)\mu ~ | ~ a, b, c \in \R \right\}$ is spanned by $\psi_{\Bou}, \psi_{- \Wave}\left( \widetilde{\xi} \right)$ and $\psi_{+\Wave}\left( \widetilde{\xi} \right)$.
			
			\medskip
			In the case $d \ge 3$, we still have to show that $\rectsL'(\xi)$ is a multiplication matrix. To do so, consider a pair of $\Ss$-orthogonal functions $\varphi = u \cdot v \mu$ and $\varphi' = u' \cdot v \mu$ for some vectors~$u, u' \in \R^d$ such that~$(u, u', \xi)$ is an orthogonal triple. From the first point of Lemma~\ref{lem:rectified_operator}, the function $\rectL_{\xi} \varphi$ is odd in the direction $u$ and even in the direction $u'$, thus
			\begin{gather*}
				\left\langle \rectL_\xi (u \cdot v \mu), u' \cdot v \mu \right\rangle_{\Ss} = \left\langle \rectL_\xi \left(u' \cdot v \mu\right), u \cdot v \mu \right\rangle_{\Ss} = 0.
			\end{gather*}
			Furthermore, choosing an orthogonal matrix $\Theta$ mapping $(\xi, u, u')$ onto $(\xi, u', u)$, we have
			\begin{align*}
				\langle \rectL_\xi (u \cdot v \mu), u \cdot v \mu \rangle_{\Ss}  = \langle \rectL_\xi (\Theta u) \cdot v \mu, (\Theta u) \cdot v \mu \rangle_{\Ss} = \left \langle \rectL_\xi \left(u' \cdot \mu\right), u' \cdot v \mu \right \rangle_{\Ss}.
			\end{align*}
			We conclude by applying these two relations to any orthogonal basis of $\Big\{ u \cdot v \mu ~| ~ u \perp \xi \Big \}$.
		\end{proof}
		
		\begin{lem}[\textit{\textbf{Expansion in matrix form}}]
			\label{lem:expansion_rectified_matrix}
			Recall that $\kappa_\star$ and $c$ are defined in Theorem \ref{thm:spectral_study}. With the notations of Lemma \ref{lem:rectified_block_matrix},
			the ``incompressible'' eigenvalue expands~as
			\begin{equation*}
				\lambda_\Inc(\xi) = - \kappa_\Inc | \xi |^2 + \OO\left( | \xi |^3 \right).
			\end{equation*}
			Furthermore, the matrix $\rectsL(\xi)$ in \eqref{eq:LLXIMXI} can be written in the basis $\psi_{\Bou}, \psi_{- \Wave}\left( \widetilde{\xi} \right), \psi_{ + \Wave}\left( \widetilde{\xi} \right)$ as  
			\begin{equation}
				\label{eq:expansion_rectified_matrix}
				\rectsL(\xi) = i | \xi |
				\left(
				\begin{matrix}
					0 & 0 & 0 \\
					0 & c & 0 \\
					0 & 0 & - c
				\end{matrix}
				\right) + | \xi |^2 \left(
				\begin{matrix}
					- \kappa_\Bou & * & * \\
					* & - \kappa_{\Wave} & * \\
					* & * & - \kappa_{\Wave}
				\end{matrix}
				\right) + \OO\left( | \xi |^3 \right).
			\end{equation}
		\end{lem}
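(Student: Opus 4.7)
The plan is to plug the second-order expansion
\[
\rectL_{\xi} = -i\,\PP(v\cdot\xi) + \PP(v\cdot\xi)\,\mathsf{R}_{0}(v\cdot\xi) + \mathbf{r}_{0}(\xi), \qquad \|\mathbf{r}_0(\xi)\|\lesssim |\xi|^{3},
\]
from Lemma \ref{lem:rectified_operator} into the block decomposition of Lemma \ref{lem:rectified_block_matrix} and to compute the relevant matrix entries directly, using the rotational invariance \ref{L1} to reduce all second-order terms to isotropic contractions involving the Burnett functions $\BurA$ and $\BurB$ defined in \eqref{eq:burnett}.

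For the incompressible eigenvalue, I would use the formula $\lambda_\Inc(\xi)=\frac{d}{E}\langle\rectL_\xi(u\cdot v\mu),u\cdot v\mu\rangle_{\Ss}$ with a fixed unit vector $u\perp\xi$. The first-order contribution vanishes by oddness in $v$. For the second-order contribution, the key identity is
\[
(v\cdot\xi)(u\cdot v)\mu \;=\; \sqrt{\tfrac{E}{d}}\,\xi\otimes u:\BurA(v) + \tfrac{1}{d}(\xi\cdot u)\,|v|^{2}\mu,
\]
in which the second summand drops by $\xi\perp u$; since $\BurA\in\nul(\LL)^{\perp}$ by construction, applying $\mathsf{R}_{0}=-\LL^{-1}(\Id-\PP)$ and pairing produces a quadratic form in $(\xi,u)$ governed by the tensor $\langle\LL^{-1}\BurA_{ij},\BurA_{kl}\rangle_{\Ss}$. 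Rotational invariance together with the trace-free symmetry of $\BurA$ forces this tensor to take the isotropic form $\alpha(\delta_{ik}\delta_{jl}+\delta_{il}\delta_{jk}-\tfrac{2}{d}\delta_{ij}\delta_{kl})$, with $\alpha$ identified from $\langle\LL^{-1}\BurA,\BurA\rangle_{\Ss}$ by taking a suitable trace. Contracting against $\xi\otimes u\otimes\xi\otimes u$ with $\xi\perp u$ and $|u|=1$ then produces exactly the announced coefficient $-\kappa_\Inc|\xi|^{2}$ up to order $|\xi|^{3}$.

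For the $3\times 3$ block $\rectsL(\xi)$ in the basis $(\psi_\Bou,\psi_{-\Wave}(\widetilde{\xi}),\psi_{+\Wave}(\widetilde{\xi}))$, only the diagonal entries need to be computed (off-diagonal second-order entries are marked $*$ in \eqref{eq:expansion_rectified_matrix} and are irrelevant to the present statement). At first order, the entry on $\psi_\Bou$ vanishes by parity, while a direct Gaussian-moment computation on the explicit form \eqref{eq:def_psi_wave}, using $\int(\omega\cdot v)^{2}\mu\,\d v=E/d$ and $\int(\omega\cdot v)^{2}|v|^{2}\mu\,\d v=KE^{2}/d$, gives $\langle(v\cdot\xi)\psi_{\pm\Wave},\psi_{\pm\Wave}\rangle_{\Ss}=\pm c|\xi|$ with $c=\sqrt{KE/d}$, which produces the imaginary diagonal $i|\xi|\,\mathrm{diag}(0,c,-c)$. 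At second order, the $\psi_\Bou$ entry is handled by observing that $(v\cdot\xi)\psi_\Bou=\xi\cdot\BurB$ is already in $\nul(\LL)^{\perp}$; combined with the isotropic identity $\langle\LL^{-1}\BurB_i,\BurB_j\rangle_{\Ss}=\tfrac{1}{d}\langle\LL^{-1}\BurB,\BurB\rangle_{\Ss}\,\delta_{ij}$, this produces $-\kappa_\Bou|\xi|^{2}$. The heart of the argument is the decomposition, obtained by expanding \eqref{eq:def_psi_wave} and removing the $\nul(\LL)$-part of each piece using the identities $(\Id-\PP)(\omega\cdot v)^{2}\mu=\sqrt{E/d}\,\omega\otimes\omega:\BurA$ and $v(|v|^{2}-E)\mu = -E\sqrt{K(K-1)}\BurB + E(K-1)v\mu$:
\[
(\Id-\PP)(v\cdot\xi)\psi_{\pm\Wave} \;=\; \pm\frac{|\xi|}{\sqrt{2}}\,\omega\otimes\omega:\BurA \;-\; |\xi|\sqrt{\tfrac{K-1}{2}}\,\omega\cdot\BurB, \qquad \omega=\widetilde{\xi}.
\]
Pairing this object with itself through $\mathsf{R}_{0}$, the cross terms between $\BurA$ (even in $v$) and $\BurB$ (odd in $v$) vanish by parity, so only the $\BurA$-$\BurA$ and $\BurB$-$\BurB$ pieces survive; applying the isotropy contractions $\omega_i\omega_j\omega_k\omega_l\langle\LL^{-1}\BurA_{ij},\BurA_{kl}\rangle_{\Ss}$ and $\omega_i\omega_j\langle\LL^{-1}\BurB_i,\BurB_j\rangle_{\Ss}$ and invoking the definitions \eqref{eq:kappa} yields exactly the announced diagonal entry $-\kappa_\Wave|\xi|^{2}$, which naturally splits into its incompressible and Boussinesq contributions.

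The main technical obstacle is precisely the Burnett-basis decomposition of $(\Id-\PP)(v\cdot\xi)\psi_{\pm\Wave}$ above: identifying the $\BurA$-piece requires recognizing $(\omega\cdot v)^{2}\mu$ modulo the hydrodynamic polynomials $\mu$ and $|v|^{2}\mu$, while the $\BurB$-piece requires the nontrivial rewriting of $v(|v|^{2}-E)\mu$ in terms of $\BurB$ and $v\mu\in\nul(\LL)$. Once this decomposition is in place, the opposite parities of $\BurA$ and $\BurB$ produce a clean decoupling of the two contributions to $\kappa_\Wave$, and the rest is bookkeeping of isotropic contractions.
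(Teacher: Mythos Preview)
Your proposal is correct and follows essentially the same route as the paper: plug the second-order expansion of $\rectL_\xi$ into the block decomposition, compute the first-order diagonal by direct moment calculations, and extract the second-order diagonal entries by rewriting $(\Id-\PP)(v\cdot\xi)\psi_\star$ in terms of the Burnett functions and exploiting parity to kill the $\BurA$--$\BurB$ cross terms. Your key decomposition of $(\Id-\PP)(v\cdot\xi)\psi_{\pm\Wave}$ is exactly what the paper uses (implicitly) in Step~3.

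The only genuine methodological difference is in the identification of $\kappa_\Inc$. You argue abstractly that the rank-four tensor $\langle\LL^{-1}\BurA_{ij},\BurA_{kl}\rangle_\Ss$ is isotropic, symmetric, and trace-free in each pair, hence proportional to $\delta_{ik}\delta_{jl}+\delta_{il}\delta_{jk}-\tfrac{2}{d}\delta_{ij}\delta_{kl}$, and then contract. The paper instead works hands-on: it first writes $\kappa_\Inc=-\langle\LL^{-1}\BurA\omega\cdot\sigma,\BurA\omega\cdot\sigma\rangle_\Ss$ for any orthonormal pair $(\omega,\sigma)$, then applies the specific rotation $(\omega,\sigma)\mapsto\big(\tfrac{u-u'}{\sqrt2},\tfrac{u+u'}{\sqrt2}\big)$ and averages over an orthonormal frame $u'_1,\dots,u'_{d-1}$ of $u^\perp$ to exploit the trace-free condition and reach the Hilbert--Schmidt form $-\tfrac{1}{(d-1)(d+1)}\langle\LL^{-1}\BurA,\BurA\rangle_\Ss$. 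Your representation-theoretic shortcut is cleaner and generalizes more readily; the paper's manipulation is more explicit and self-contained. One small omission: you should also note (by the same parity arguments) that the first-order \emph{off-diagonal} entries of $\rectsL(\xi)$ vanish, so that the first-order matrix is genuinely diagonal.
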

		
		\begin{proof}
			Straightforward calculations show that, in the basis $\psi_{\Bou}, \psi_{- \Wave}\left( \widetilde{\xi} \right), \psi_{+\Wave}\left( \widetilde{\xi} \right)$, the first order coefficient is diagonal:
			\begin{equation*}
				\rectsL(\xi) = -i \PP (v \cdot \xi) + \OO\left( | \xi |^2 \right) = i | \xi |
				\left(
				\begin{matrix}
					0 & 0 & 0 \\
					0 & c & 0 \\
					0 & 0 & - c
				\end{matrix}
				\right) + \OO\left( | \xi |^2 \right).
			\end{equation*}
			We then turn to the second order coefficient $ \left( v \cdot \widetilde{\xi} \right)\mathsf{R}_{0} \left( v \cdot \widetilde{\xi} \right)$.
			
			\step{1}{The diffusion coefficient $\kappa_\Inc$}
			In this step, we will use the following identity:
			\begin{equation}
				\label{eq:invariance_L_inv_A}
				\left\la \LL^{-1}\BurA u \cdot u', \BurA   u \cdot u' \right\ra_{\Ss} = \left\la \LL^{-1}\BurA (\Theta u) \cdot (\Theta u'),\BurA  (\Theta u) \cdot (\Theta u') \right\ra_{\Ss},
			\end{equation}
			which holds for any $u, u' \in \R^d$ and any orthogonal matrix $\Theta$, and is a consequence of the identities (where we recall $\mathsf{R}_0 = -\LL^{-1} (\Id - \PP)$ )
			$$\Theta\mathsf{R}_{0} =\mathsf{R}_{0} \Theta, \qquad \BurA (v) u \cdot u' = \BurA (\Theta v) (\Theta u) \cdot (\Theta u').$$
			The diffusion coefficient $\kappa_\Inc$ is given for any $\sigma \in \S^{d-1}$ orthogonal to $\widetilde{\xi}$ by
			$$
			\kappa_\Inc = \frac{ d}{E} \left\langle\mathsf{R}_{0} \left( v \cdot \widetilde{\xi} \right)  (v \cdot \sigma) \mu, \left( v \cdot \widetilde{\xi} \right) \left(v \cdot \sigma\right) \mu \right\rangle_{\Ss} = \left\langle \LL^{-1}\BurA \widetilde{\xi} \cdot \sigma, \BurA  \widetilde{\xi} \cdot \sigma \right\rangle_{\Ss},
			$$	which, for any orthogonal pair $\omega, \sigma \in \S^{d-1}$, rewrites using \eqref{eq:invariance_L_inv_A} as
			\begin{equation}
				\label{eq:rel_non_diag}
				\kappa_\Inc = -  \left\langle \LL^{-1}\BurA \omega \cdot \sigma,  \BurA  \omega \cdot \sigma \right\rangle_{\Ss}.
			\end{equation}
			Choosing in particular $\omega = \frac{1}{\sqrt{2}}(u - u')$ and $\sigma = \frac{1}{\sqrt{2}}(u + u')$, where $u, u' \in \S^{d-1}$ are orthogonal, we have
			$$\kappa_{\Inc}=- \frac{1}{2} \left\langle \LL^{-1} \big(\BurA u \cdot u - \BurA  u' \cdot u'\big),  \BurA u \cdot u - \BurA  u' \cdot u' \right\rangle_{\Ss}.$$
			%	\begin{align*}
			%		\kappa_\Inc & = - \frac{1}{2} \left\langle \LL^{-1}\BurA (u-u') \cdot (u+u'),  A (u-u') \cdot (u+u') \right\rangle_{\Ss} \\
			%		&=- \frac{1}{2} \left\langle \LL^{-1} \big(Au \cdot u - A u' \cdot u'\big),  Au \cdot u - A u' \cdot u' \right\rangle_{\Ss},
			%	\end{align*}
			where we used the fact that $A$ is symmetric. Consequently, we have by \eqref{eq:invariance_L_inv_A}
			\begin{align*}
				\kappa_\Inc  = & - \left\langle \LL^{-1} \BurA u \cdot u,  \BurA u \cdot u \right\rangle_{\Ss} + \left\langle \LL^{-1} \BurA u \cdot u,  \BurA  u' \cdot u' \right\rangle_{\Ss},
			\end{align*}
			and since $\BurA$ is trace-free, one can get rid of the term involving $u'$ by averaging over some orthonormal family~$u' = u_1', \dots, u_{d-1}' \in \left( \R u \right)^\perp$:
			\begin{align*}
				\kappa_\Inc  = & - \left\langle \LL^{-1} \BurA u \cdot u,  \BurA u \cdot u \right\rangle_{\Ss} + \frac{1}{d-1} \left\langle \LL^{-1} \BurA u \cdot u , \sum_{i=1}^{d-1} \BurA u'_i \cdot u'_i \right\rangle_{\Ss} \\
				= & - \left\langle \LL^{-1} \BurA u \cdot u,  \BurA u \cdot u \right\rangle_{\Ss} - \frac{1}{d-1} \left\langle \LL^{-1} \BurA u \cdot u , \BurA  u \cdot u \right\rangle_{\Ss},
			\end{align*}
			and therefore
			\begin{equation}
				\label{eq:rel_diag}
				\kappa_\Inc = - \frac{d}{d-1} \left\langle \LL^{-1} \BurA u \cdot u,  \BurA u \cdot u \right\rangle_{\Ss}.
			\end{equation}
			Summing \eqref{eq:rel_non_diag} and \eqref{eq:rel_diag} over all pairs of vectors in the canonical basis of $\R^d$, we rewrite the coefficient $\kappa_\Inc$ as a Hilbert-Schmidt norm of matrices:
			\begin{equation*}
				\kappa_\Inc = -\frac{1}{(d-1)(d+1)} \left\langle \LL^{-1}\BurA , \BurA  \right\rangle_{\Ss}.
			\end{equation*}
			\step{2}{The diffusion coefficient $\kappa_\Bou$}
			The coefficient $\kappa_\Bou$ writes
			\begin{align*}
				\kappa_\Bou & = -  \left \langle\mathsf{R}_{0} \left( v \cdot \widetilde{\xi} \right)  \psi_{\Bou} , \left( v \cdot \widetilde{\xi} \right) \psi_{\Bou} \right\rangle_{\Ss}= - \left \langle \LL^{-1}\BurB \cdot \widetilde{\xi}, \BurB  \cdot \widetilde{\xi} \right \rangle_{\Ss} = - \frac{1}{d} \left \langle \LL^{-1} \BurB , \BurB  \right \rangle_{\Ss},
			\end{align*}
			where we used the invariance of $\LL^{-1}$ again, as well as the identity $\BurB  (v) \cdot u = \BurB  (\Theta v) \cdot (\Theta u)$, allowing to sum over $\widetilde{\xi}$ taken in the canonical basis of $\R^d$.
			
			\step{3}{The diffusion coefficient $\kappa_{\Wave}$}
			The coefficient $\kappa_{\Wave}$ writes
			\begin{align*}
				\kappa_\Wave & = - \left \langle\mathsf{R}_{0} \left( v \cdot \widetilde{\xi} \right)  \psi_{\pm \Wave} \left(\widetilde{\xi}\right), \left( v \cdot \widetilde{\xi} \right) \psi_{\pm \Wave}\left(\widetilde{\xi}\right) \right\rangle_{\Ss} \\
				& = - \frac{1}{2} \left \langle \LL^{-1}\BurA \widetilde{\xi} \cdot \widetilde{\xi}, \BurA   \widetilde{\xi} \cdot \widetilde{\xi} \right \rangle_{\Ss} - \frac{E^2(K-1)}{2} \left \langle \LL^{-1}\BurB \cdot \widetilde{\xi}, \BurB   \cdot \widetilde{\xi} \right \rangle_{\Ss},
			\end{align*}
			where we have used the fact that $\BurA   \widetilde{\xi} \cdot \widetilde{\xi}$, and thus  $\LL^{-1}\BurA \widetilde{\xi} \cdot \widetilde{\xi}$ again by the invariance of~$\LL^{-1}$, is even in the direction $\widetilde{\xi}$, whereas $B \cdot \widetilde{\xi}$ is odd in this direction, and therefore these functions are $\Ss$-orthogonal. Using the results of the previous steps, we actually have
			$$
			\kappa_\Wave = \frac{d-1}{2 d} \kappa_\Inc + \frac{E^2(K-1)}{2} \kappa_\Bou.$$
			The lemma is proved.
		\end{proof}
		
		\newcommand{\rectP}{\mathbb{P}}
		\begin{lem}[\textit{\textbf{Second order diagonalization and decomposition of the rectified operator}}]
			\label{lem:diagonalization_rectified} With the notations of Lemma \ref{lem:rectified_block_matrix},
			the rectified operator $\rectL_\xi$ has four distinct eigenvalues which expand as
			\begin{gather*}
				\lambda_\Inc(\xi) = - \kappa_\Inc | \xi |^2 + \OO\left( |\xi|^3 \right), \\
				\lambda_\Bou(\xi) = - \kappa_\Bou | \xi |^2 + \OO\left( |\xi|^3 \right), \quad
				\lambda_{\pm \Wave}(\xi) = \pm i c | \xi| - \kappa_\Wave |\xi|^2 + \OO\left( |\xi|^3 \right)\,.
			\end{gather*}
			Furthermore, the ``incompressible'' eigenvalue is associated with the following spectral projector:
			\begin{equation*}
				\rectP_\Inc\left( \xi \right) \left[ a \mu + u \cdot v \mu + c |v|^2 \mu \right] =  {\left(\Pi_{\widetilde{\xi}}u\cdot v\right)} \mu,
			\end{equation*}
			and the spectral projectors associated with the ``Boussinesq'' and ``waves'' eigenvalues expand in the basis $\psi_{\Bou}, \psi_{- \Wave}\left( \widetilde{\xi} \right), \psi_{+ \Wave}\left( \widetilde{\xi} \right)$ as
			\begin{equation}\label{eq:textbook_expansion_projector}\rectP_{\star}(\xi)=\rectP_{\star}^{(0)} + |\xi|\rectP^{(1)}_{\star}+\OO\left(|\xi|^{2}\right), \qquad \star=\Bou,\pm\Wave\end{equation}
			where
			\begin{equation}\label{eq:textbook_rectP0}	
				\rectP_{\Bou}^{(0)}= \left(\begin{matrix}
					1 & 0 & 0 \\ 0 & 0 & 0 \\ 0 & 0 & 0
				\end{matrix}\right), \quad \rectP_{-\Wave}^{(0)} = \left(\begin{matrix}
					0 & 0 & 0 \\ 0 & 1 & 0 \\ 0 & 0 & 0
				\end{matrix}\right), \quad \rectP_{\Wave}^{(0))} = \left(\begin{matrix}
					0 & 0 & 0 \\ 0 & 0 & 0 \\ 0 & 0 & 1
				\end{matrix}\right)\end{equation}
			and $\rectP_{\star}^{(1)}$ is a constant $3\times 3$-matrix for $\star=\Bou,\pm\Wave$. 
		\end{lem}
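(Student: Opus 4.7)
The plan is to exploit the block-diagonal structure of $\rectL_\xi$ established in Lemma \ref{lem:rectified_block_matrix}. The upper $(d-1)\times(d-1)$ block is a scalar multiple of the identity, so it contributes the single eigenvalue $\lambda_\Inc(\xi)$ whose second-order expansion is already supplied by Lemma \ref{lem:expansion_rectified_matrix}, and its spectral projector is simply the canonical $\Ss$-orthogonal projection of $\nul(\LL)$ onto the subspace $\{u\cdot v\mu\,;\,u\perp \xi\}$. Reading this projection through the macroscopic representation of $\PP$ from Proposition \ref{prop:representation_P}, the $\mu$-component and $|v|^2\mu$-component are annihilated while the $v\mu$-component is filtered by $\Pi_{\widetilde{\xi}}$, giving exactly the claimed formula for $\rectP_\Inc(\xi)$.

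All the work thus concentrates on the $3\times 3$ block $\rectsL(\xi)$. The key observation is that, in the expansion \eqref{eq:expansion_rectified_matrix}, every term is proportional to a positive power of $|\xi|$, which suggests factoring out $|\xi|$ and setting
\begin{gather*}
\widetilde{\rectsL}(|\xi|) := \frac{1}{|\xi|}\rectsL(\xi) = i D_0 + |\xi|\, D_1 + \OO(|\xi|^2), \qquad D_0 := \mathrm{diag}(0, c, -c),
\end{gather*}
so that the perturbation becomes regular in the \emph{real scalar} parameter $|\xi|$. Since $iD_0$ has three \emph{simple} eigenvalues $\{0, ic, -ic\}$, classical finite-dimensional perturbation theory (e.g. \cite[Chapter II, \S 2]{K1995}) applies: there exist functions $\mu_\star$ and projectors $\rectP_\star(\xi)$ holomorphic in $|\xi|$ at $0$, with spectrum $\{\mu_\Bou(|\xi|), \mu_{\pm\Wave}(|\xi|)\}$ and the zeroth-order projectors being the canonical diagonal ones in the basis $\bigl(\psi_\Bou, \psi_{-\Wave}(\widetilde{\xi}), \psi_{+\Wave}(\widetilde{\xi})\bigr)$, which is precisely \eqref{eq:textbook_rectP0}. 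The eigenvalues of $\rectsL(\xi)$ are recovered as $\lambda_\star(\xi) = |\xi|\mu_\star(|\xi|)$, and their associated spectral projectors coincide with those of $\widetilde{\rectsL}(|\xi|)$.

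The second-order expansions then fall out of the standard first-order perturbation formula for a simple eigenvalue: the coefficient in front of $|\xi|$ in $\mu_\star(|\xi|)$ equals the diagonal entry of $D_1$ indexed by the corresponding eigenvector of $iD_0$. Reading these diagonal entries directly off \eqref{eq:expansion_rectified_matrix} yields $-\kappa_\Bou, -\kappa_\Wave, -\kappa_\Wave$ for the Boussinesq and the two wave modes respectively; multiplying by $|\xi|$ produces the stated expansions $\lambda_\Bou(\xi) = -\kappa_\Bou|\xi|^2 + \OO(|\xi|^3)$ and $\lambda_{\pm\Wave}(\xi) = \pm i c|\xi| - \kappa_\Wave|\xi|^2 + \OO(|\xi|^3)$. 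The expansion \eqref{eq:textbook_expansion_projector} of $\rectP_\star(\xi)$ is then an immediate consequence of the $|\xi|$-holomorphy of these projectors. Distinctness of the four eigenvalues for small $|\xi|\ne 0$ follows from the fact that $\lambda_{\pm\Wave}(\xi)$ have nonzero imaginary part of order $|\xi|$ while $\lambda_\Inc(\xi)$ and $\lambda_\Bou(\xi)$ are real of order $|\xi|^2$, and these latter two carry different diffusion coefficients in general. The main conceptual obstacle — that perturbation in the vector $\xi$ is not analytic — is thus sidestepped cleanly by the scalar rescaling above, and the remainder of the argument is standard matrix perturbation theory applied inside each block.
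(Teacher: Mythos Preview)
Your proof is correct and follows essentially the same approach as the paper: factor out $|\xi|$ from the $3\times 3$ block $\rectsL(\xi)$ to obtain a regular perturbation of the diagonal matrix $iD_0=\mathrm{diag}(0,ic,-ic)$ with three simple eigenvalues, then invoke Kato's finite-dimensional perturbation theory (the paper cites \cite[Chapter II, Theorem 5.4 and (5.9)--(5.12)]{K1995}) to read off the eigenvalue and projector expansions from the diagonal of the first-order coefficient. One small imprecision: you claim the eigenvalues and projectors are \emph{holomorphic} in $|\xi|$, but the remainder in the expansion of $\rectsL(\xi)$ is only $\OO(|\xi|^3)$, not analytic; what you actually need (and what Kato's formulae give) is the existence of expansions to the relevant order, which suffices for the conclusion.
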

		
		\begin{proof}
			Because the operator $\rectsL(\xi)$ has the asymptotic expansion
			$$\rectsL(\xi) = |\xi| \Big( \rectsL^{(0)}\left( \widetilde{\xi} \right) + | \xi | \rectsL^{(1)}\left( \widetilde{\xi} \right) + \OO\left( |\xi|^2 \right) \Big)$$
			where $\rectsL^{(0)}\left( \widetilde{\xi} \right)$ has three distinct eigenvalues by Lemma \ref{lem:expansion_rectified_matrix}, we know from \cite[Chapter 2, Theorem 5.4]{K1995} that $\rectsL(\xi)$ has three distinct eigenvalues for $\xi$ small, and thus admits the following spectral decomposition:
			$$\rectsL(\xi) = \lambda_\Bou(\xi) \rectP_\Bou(\xi) +  \lambda_{+ \Wave}(\xi) \rectP_{+\Wave}(\xi) +  \lambda_{- \Wave}(\xi) \rectP_{-\Wave}(\xi).$$
			The expansions of the eigenvalues is given by \cite[Chapter II-(5.12)]{K1995} applied to \eqref{eq:expansion_rectified_matrix}:
			\begin{gather*}
				\lambda_\Inc(\xi) = - \kappa_\Inc | \xi |^2 + \OO\left( |\xi|^3 \right), \quad \lambda_\Bou(\xi) = - \kappa_\Bou | \xi |^2 + \OO\left( | \xi |^2 \right),\\
				\lambda_{\pm \Wave}(\xi) = \pm i c | \xi| - \kappa_{\Wave} | \xi |^2 + \OO \left( | \xi |^3 \right),
			\end{gather*}
			and the expansions of the spectral projectors are given by \cite[Chapter II-(5.9)]{K1995} applied to~\eqref{eq:expansion_rectified_matrix} yielding
			\eqref{eq:textbook_expansion_projector} and \eqref{eq:textbook_rectP0} and we point out that the first order coefficients $\rectP^{(1)}_{\star}$ in matrix form has coefficients independent of $\widetilde{\xi}$ and can be explicitly computed, although their expression will not be needed.
		\end{proof}

		\begin{figure}[h]
			\centering
			
			\begin{tikzpicture}
				% Spectre cinétique
				\fill [pattern=north west lines, pattern color=red] (-6, -2.5) rectangle (-2.7, 2.5);
				\draw [color=red] (-2.7, -2.5) -- (-2.7, 2.5);
				\node[red] at (-4, -3) {$\Re z \le - \lambda$};
				
				% Seuils cinétique
				\draw[ultra thick, dash pattern=on 5pt, color=red] (-3.4, -2.5) -- (-3.4, 2.5);
				\node[red] at (-5.2, 3) {$\Re z = - \lambda_\BB$};
				\draw[ultra thick, dash pattern=on 5pt, color=red] (-4.7, -2.5) -- (-4.7, 2.5);
				\node[red] at (-3, 3) {$\Re z = - \lambda_\LL$};

				% Valeurs propres ondes
				\draw[black, variable=\y, dash pattern=on 5pt] plot[smooth,domain=-2.25:2.25] ( {-0.5 * (\y)^2 }, \y );
				\node[blue] at ( {-0.5 * (1.7)^2} , {1.7 + 0.7} ) {$\lambda_{+ \Wave}(\xi)$};
				\fill [color=blue] ( {-0.5 * (2.2)^2} , 2.2) circle (0.05);
				\node[blue] at ( {-0.5 * (1.7)^2} , {-1.7 - 0.7} ) {$\lambda_{- \Wave}(\xi)$};
				\fill [color=blue] ( {-0.5 * (2.2)^2} , -2.2) circle (0.05);
				
				% Valeur propre Boussinesq
				\fill [color=blue] (-1.9, 0) circle (0.05);
				\node[blue] at (-1.9, -0.35) {$\lambda_\Bou(\xi)$};
				
				% Valeur propre incompressible
				\fill [color=blue] (-1.1, 0) circle (0.05);
				\node[blue] at (-1, 0.4) {$\lambda_\Inc(\xi)$};
				
				% Repère
				\draw[-stealth] (-6, 0) -- (1, 0);
				\draw[-stealth] (0, -2.5) -- (0, 2.5);
				
			\end{tikzpicture}
			
			\caption{Localization of the spectrum of $\LL - i (v \cdot \xi) $ for $| \xi | \le \alpha_0 $.}
			
		\end{figure}
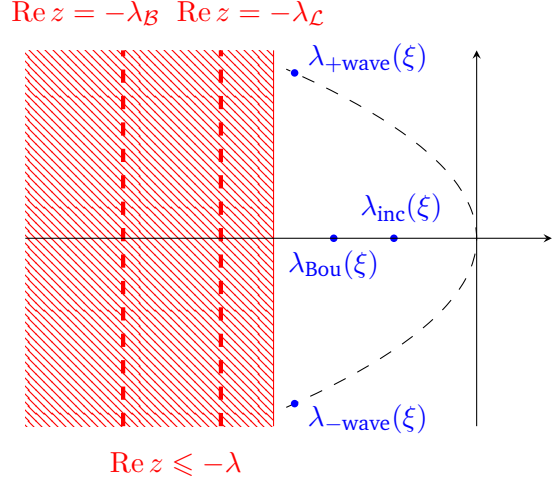
		
		\begin{lem}[\textit{\textbf{Expansion of the spectral projectors}}]
			\label{lem:expansion_projectors}
			For $\alpha_{0} >0$ small enough, the following spectral decomposition holds for any $| \xi | \le \alpha_{0}$:
			\begin{equation*}\begin{split}
					\LL_\xi \PP(\xi) &= \PP(\xi) \LL_\xi = \lambda_{\Bou}(\xi)\PP_{\Bou}(\xi)+\lambda_{\Inc}(\xi)\PP_{\Inc}(\xi)\\
					&\phantom{++++} +\lambda_{+\Wave}(\xi)\PP_{+\Wave}(\xi)+\lambda_{-\Wave}(\xi)\PP_{-\Wave}(\xi)
			\end{split}\end{equation*}
			where the projector operators $\PP_\star(\xi)$ $(\star = \Bou, \Inc, \pm\Wave)$  expand in $\BBB(\Ssm ; \Ssp)$ as \eqref{eq:PPstar}
			with the zeroth order coefficients being defined in Theorem \ref{thm:spectral_study}.
			%	\begin{gather*}
			%		\PP_\Inc^{(0)}\left( \omega \right) \varphi = \frac{d}{E} \left(\Id - \omega \otimes \omega\right) \langle \varphi, v \mu \rangle_{\Ss} \cdot v \mu,\\
			%		\PP_\star(\omega) = \langle \varphi, \psi_\star(\omega) \rangle_{\Ss} \psi_\star(\omega), \quad \star = \Bou, \pm \Wave,
			%	\end{gather*}
			%	and the first order terms are defined as
			%	\begin{gather*}
			%		\PP^{(1)}_\Inc(\omega) \varphi = \sqrt{\frac{d}{E}} \left\langle \varphi, (\Id - \omega \otimes \omega) \LL^{-1}\BurA \omega \right\rangle_{\Ss} \cdot v \mu,\\
			%		\PP^{(1)}_{\Bou}(\omega) \varphi = - \langle \varphi, \LL^{-1} \BurB \omega \rangle_{\Ss} \psi_\Bou,\\
			%		\PP^{(1)}_{\pm \Wave}(\omega) \varphi = \left(\mp \frac{1}{\sqrt{2}  } \langle \varphi , \LL^{-1}  A(v) \omega \cdot \omega \rangle_{\Ss} - E \sqrt{\frac{K-1}{2}} \langle \varphi, \LL^{-1}  B(v) \cdot \omega \rangle_{\Ss} \right) \psi_{\pm \Wave}(\omega).
			%	\end{gather*}
		\end{lem}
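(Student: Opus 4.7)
The plan is to transfer the finite-dimensional spectral decomposition of the rectified operator $\rectL_\xi$ obtained in Lemma \ref{lem:diagonalization_rectified} back to the original operator $\LL_\xi$ via Kato's conjugation $\UU_\xi$. Concretely, for $\star = \Inc, \Bou, \pm\Wave$, I will define
\begin{equation*}
\PP_\star(\xi) := \UU_\xi\, \rectP_\star(\xi)\, \UU_\xi^{-1} \PP(\xi),
\end{equation*}
noting that $\UU_\xi^{-1} \PP(\xi) = \PP\, \UU_\xi^{-1} \PP(\xi)$ by construction, so that $\rectP_\star(\xi)$ can be meaningfully applied. Using the identity $\LL_\xi \PP(\xi) = \UU_\xi \rectL_\xi \UU_\xi^{-1} \PP(\xi)$ together with the decomposition from Lemma \ref{lem:diagonalization_rectified} immediately yields the spectral decomposition claimed in the statement; the eigenvalues $\lambda_\star(\xi)$ are inherited without change and the expansions of $\PP_\star(\xi)$ then reduce to combining expansions of three factors.

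Next, I would insert the expansions of $\UU_\xi$ and $\UU_\xi^{-1}$ from \eqref{eq:expansion_Kato_isomorphism}--\eqref{eq:expansion_Kato_isomorphism_inverse}, namely
\begin{equation*}
\UU_\xi = \Id - i\PP(v\cdot\xi)\mathsf{R}_0 + i\mathsf{R}_0(v\cdot\xi)\PP + r_6(\xi), \qquad \UU_\xi^{-1} = \Id + i\PP(v\cdot\xi)\mathsf{R}_0 - i\mathsf{R}_0(v\cdot\xi)\PP + r_7(\xi),
\end{equation*}
with $\|r_6(\xi)\|_{\BBB(\Ssm;\Ssp)} + \|r_7(\xi)\|_{\BBB(\Ssm;\Ssp)} \lesssim |\xi|^2$, together with the expansion \eqref{eq:textbook_expansion_projector} for $\rectP_\star(\xi)$. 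Since $\rectP_\star^{(0)}$ takes values in $\nul(\LL)$ and has finite rank, each product belongs to $\BBB(\Ssm;\Ssp)$ and the remainders are $O(|\xi|^2)$ in that norm. Collecting terms, one obtains
\begin{equation*}
\PP_\star(\xi) = \rectP_\star^{(0)} + i\mathsf{R}_0(v\cdot\xi)\rectP_\star^{(0)} - i \rectP_\star^{(0)}(v\cdot\xi)\mathsf{R}_0 + |\xi|\, \rectP_\star^{(1)} + S_\star(\xi),
\end{equation*}
which is exactly of the form \eqref{eq:PPstar} once the leading term is identified with $\PP_\star^{(0)}(\widetilde{\xi})$ and the order-one term with $i\xi\cdot\PP_\star^{(1)}(\widetilde{\xi})$.

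The final and most delicate step is the explicit identification of the coefficients $\PP_\star^{(0)}(\omega)$ and $\PP_\star^{(1)}(\omega)$ with the formulas stated in Theorem \ref{thm:spectral_study}. For the zeroth-order term, using the matrix representations \eqref{eq:textbook_rectP0} of $\rectP_\star^{(0)}$ in the basis $\psi_\Bou, \psi_{\pm\Wave}(\widetilde{\xi})$ (and the fact, from Lemma \ref{lem:rectified_block_matrix}, that $\rectP_\Inc$ acts as the projection $\Pi_\omega$ on the velocity part of $\nul(\LL)$), one reads off the advertised formulas $\PP_\star^{(0)}(\omega) f = \la f, \psi_\star(\omega) \ra_\Ss \psi_\star(\omega)$ for $\star = \Bou, \pm\Wave$ and the $\Pi_\omega$-expression for $\star = \Inc$. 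The first-order coefficient $\PP_\star^{(1)}(\omega)$ involves two contributions: (i) the term $i\mathsf{R}_0(v\cdot\xi)\rectP_\star^{(0)} - i\rectP_\star^{(0)}(v\cdot\xi)\mathsf{R}_0$ coming from $\UU_\xi$; and (ii) the explicit first-order perturbation of $\rectP_\star$ given by the general perturbation formula applied to the $3\times 3$ matrix $\rectsL(\xi)$ via \cite[Chapter II-(5.9)]{K1995}. The main obstacle here is the bookkeeping: one must rewrite $\mathsf{R}_0(v\cdot\omega)\psi_\star(\omega)$ and $\mathsf{R}_0(v\cdot\omega)v\mu$ in terms of $\LL^{-1}\BurA$ and $\LL^{-1}\BurB$, exploiting the definitions $\BurA = (\Id-\PP)[v\otimes v\mu]$ and $\BurB = (\Id-\PP)[v\psi_\Bou]$ from Proposition \ref{prop:representation_P}, and combine them with the off-diagonal second-order entries of \eqref{eq:expansion_rectified_matrix} (computed in the proof of Lemma \ref{lem:expansion_rectified_matrix}) to recover the coefficients $\sqrt{d/E}$, $\pm 1/\sqrt{2}$ and $E\sqrt{(K-1)/2}$ appearing in the statement. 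Once these identifications are made, the remainder $S_\star(\xi) \in \BBB(\Ssm;\Ssp)$ with $\|S_\star(\xi)\|_{\BBB(\Ssm;\Ssp)} \lesssim |\xi|^2$ is immediate from the uniform bounds on $r_6, r_7$ and on the second-order remainder in \eqref{eq:textbook_expansion_projector}.
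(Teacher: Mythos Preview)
Your approach is essentially the paper's: conjugate the finite-dimensional spectral decomposition of $\rectL_\xi$ back via $\UU_\xi$, then expand. Two points deserve comment.

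First, a sign slip: multiplying out $\UU_\xi\,\rectP_\star^{(0)}\,\UU_\xi^{-1}$ (or equivalently $(\UU_\xi\PP)\,\rectP_\star^{(0)}\,(\PP\UU_\xi^{-1})$) gives
\[
\rectP_\star^{(0)} \;+\; i\mathsf{R}_0(v\cdot\xi)\rectP_\star^{(0)} \;+\; i\,\rectP_\star^{(0)}(v\cdot\xi)\mathsf{R}_0 \;+\; O(|\xi|^2),
\]
with a \emph{plus} sign on the second first-order term, not the minus you wrote. This matches the paper's formula $\PP_\star^{(1)}(\widetilde{\xi}) = \PP_\star^{(0)}(\widetilde{\xi})\, v\, \mathsf{R}_0 + \mathsf{R}_0\, v\, \PP_\star^{(0)}(\widetilde{\xi}) - i\widetilde{\xi}\,\PP\,\rectP_\star^{(1)}(\widetilde{\xi})\,\PP$.

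Second, and more substantively, you are overcomplicating the identification of $\PP_\star^{(1)}$. The formulas in Theorem~\ref{thm:spectral_study} are stated \emph{only for} $f\in\nul(\LL)^\perp$. On that subspace both $\mathsf{R}_0\,v\,\PP_\star^{(0)}(\widetilde{\xi})$ and $\PP\,\rectP_\star^{(1)}(\widetilde{\xi})\,\PP$ vanish identically, since each has a factor of $\PP$ acting on the right. Hence for $\varphi\in\nul(\LL)^\perp$ one has simply
\[
\PP_\star^{(1)}(\omega)\,\varphi \;=\; \PP_\star^{(0)}(\omega)\, v\, \mathsf{R}_0\,\varphi,
\]
and the explicit formulas follow directly from the definitions of $\PP_\star^{(0)}$, $\BurA$, $\BurB$ and $\mathsf{R}_0=-\LL^{-1}(\Id-\PP)$. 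There is no need to compute the off-diagonal $*$ entries of \eqref{eq:expansion_rectified_matrix} or to invoke the full Kato formula for $\rectP_\star^{(1)}$; those contributions are irrelevant on $\nul(\LL)^\perp$, and the paper never computes them.
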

		
		\begin{proof}
			Recall that $\rectL_\xi$ and $\LL_\xi$ are related through
			$\rectL_\xi = \left(\UU_\xi^{-1} \LL_{\xi} \UU_\xi\right)_{| \nul(\LL)},$
			thus, using the fact that $\UU_\xi \PP= \PP(\xi) \UU_{\xi}$, we deduce that
			\begin{equation*}
				\PP(\xi) \LL_{\xi} = \LL_\xi \PP(\xi)= \UU_\xi \rectL_{\xi} \UU_\xi^{-1}.
			\end{equation*}
			This lemma is therefore a lifted version of Lemma \ref{lem:diagonalization_rectified}, and the corresponding projectors are related through
			\begin{equation*}
				\PP_\star(\xi) = \UU_{\xi} \rectP_\star(\xi) \UU_\xi^{-1} = \big(\UU_{\xi} \PP\big) \rectP_\star(\xi) \big(\PP \UU_\xi^{-1}\big).
			\end{equation*}
			We then deduce the expansion of each $\PP_\star(\xi)$ from those of $\rectP_\star(\xi)$ in $\BBB\left( \nul(\LL) \right)$ established in Lemma~\ref{lem:diagonalization_rectified} and those of $\UU_\xi \PP$ and $\PP \UU_\xi^{-1}$ in $\BBB(\Ssm ; \Ssp)$ from \eqref{eq:expansion_Kato_isomorphismPP}:
			$$
			\PP_\star(\xi) =  \Big( \PP + i \xi\cdot \mathsf{R}_{0}  v \PP + \PP r_{7}(\xi) \Big) 
			\Big( \rectP_\star^{(0)}\left( \widetilde{\xi} \right) + |\xi| \rectP^{(1)} \left( \widetilde{\xi} \right) + \mathbb{S}(\xi)\Big)\Big( \PP + i \xi \cdot\PP v\mathsf{R}_{0} + r_{6}(\xi)\PP \Big)
			$$
			where we recall that $\|r_{j}(\xi)\|_{\BBB(\Ssm;\Ssp)} \lesssim |\xi|^{2}$ while $\mathbb{S}(\xi) \in \MMM_{(d+2)\times (d+2)}(\R)$ with norm of order~$\OO\left( |\xi|^2 \right).$ We can expand further to deduce
			%\begin{align*}
			%	\PP_\star(\xi) = & \rectP^{(0)}\left( \widetilde{\xi} \right) + i \xi \cdot \rectP^{(0)} \left( \widetilde{\xi} \right) v \mathsf{R}_{0} 
			%	+ \Big( | \xi | \rectP^{(1)}\left( \widetilde{\xi} \right) + i \xi \cdot\mathsf{R}_{0}  v \rectP^{(0)}\left( \widetilde{\xi} \right) \Big)
			%	+ \OO(| \xi |^2 ) \\
			%	& =: \PP_\star^{(0)}\left(\widetilde{\xi}\right) + i \xi \cdot \PP^{(1)}_\star\left(\widetilde{\xi}\right) + | \xi |\,S_{\star}^{(0)}(\xi) + |\xi|^{2}S_{\star}^{(1)}(\xi),
			%\end{align*}
			$$\PP_\star(\xi) =: \PP_\star^{(0)}\left(\widetilde{\xi}\right) + i \xi \cdot \PP^{(1)}_\star\left(\widetilde{\xi}\right)  + \mathsf{S}_{\star}(\xi),$$
			where we have denoted \footnote{Of course, $\PP\rectP^{(0)}_{\star}(\cdot)\PP$ can be identified with $\rectP^{(0)}_{\star}(\cdot)$ but we make here the (slight) distinction  between operators defined on the finite dimensional space  and the associated matrices.}
			\begin{gather*}%\begin{cases}
				\PP^{(0)}_\star\left( \widetilde{\xi} \right) := \PP\rectP^{(0)}_\star\left( \widetilde{\xi} \right)\PP,		
				\qquad
				\PP_\star^{(1)}\left( \widetilde{\xi} \right) := \PP^{(0)}_\star(\widetilde{\xi}) v \mathsf{R}_{0} +\mathsf{R}_{0}v \PP^{(0)}_{\star}\left(\widetilde{\xi}\right)-i\widetilde{\xi}\PP\,\rectP^{(1)}_{\star}(\widetilde{\xi})\PP\,.%\end{cases}
			\end{gather*}
			We notice that both $\mathsf{R}_{0}v\PP^{(0)}_{\star}(\widetilde{\xi})$ and $\PP\,\rectP^{(1)}_{\star}(\widetilde{\xi})\PP$ vanish on $\nul(\LL)^\perp$:
			$$\varphi \in \nul(\LL)^\perp \Rightarrow \PP^{(1)} \left( \widetilde{\xi} \right) \varphi = \PP^{(0)} \left( \widetilde{\xi} \right) v \mathsf{R_{0}} \varphi =  \PP^{(0)} \left( \widetilde{\xi} \right) v \varphi.$$
			Therefore, the first order term of the projector associated with the ``incompressible'' eigenvalue writes explicitly for any~$\varphi\in \nul(\LL)^{\perp}$, and $\omega \in \S^{d-1}$
			\begin{align*}
				\PP^{(1)}_\Inc(\omega) \varphi
				& = \frac{d}{E} \Bigg( \Big[ \left(\Id - \omega \otimes \omega \right) \left\la v_j \mathsf{R}_0 \varphi , v \mu \right\ra_{\Ss} \Big] \cdot v \mu \Bigg)_{j=1}^d \\
				&= \frac{d}{E} \Bigg(  \left\la v_j \mathsf{R}_0 \varphi , v \mu \right\ra_{\Ss} \cdot \Big[ \left(\Id - \omega \otimes \omega \right) v \Big] \mu \Bigg)_{j=1}^d \\
				& = \sqrt{\frac{d}{E}} \left\la \varphi , \LL^{-1}\BurA \right\ra_{\Ss} \Big[ \left(\Id - \omega \otimes \omega \right) v \Big] \mu,
			\end{align*}
			and, in particular,
			\begin{align*}
				\omega \cdot \PP^{(1)}_\Inc(\omega) \varphi
				& = \frac{d}{E} \Big[ \left(\Id - \omega \otimes \omega \right) \left\la v \cdot \omega \mathsf{R}_0 \varphi , v \mu \right\ra_{\Ss} \Big] \cdot v \mu \\
				& = \sqrt{\frac{d}{E}} \Big[ \left(\Id - \omega \otimes \omega \right) \left\la \varphi , \LL^{-1}\BurA \omega \right\ra_{\Ss} \Big] \cdot v \mu,
			\end{align*}
			the one associated with the ``Boussinesq'' eigenvalue writes for any $\varphi \in \nul(\LL)^\perp$
			$$
			\PP^{(1)}_\Bou(\omega) \varphi = \left \langle \mathsf{R}_{0} \varphi, v \psi_\Bou \right \rangle_{\Ss} \psi_\Bou= \langle \varphi, \LL^{-1} \BurB \rangle_{\Ss} \psi_\Bou,
			$$ 
			and the ones associated with the ``waves'' eigenvalues write, for $\varphi\in \nul(\LL)^{\perp}$,
			\begin{align*}
				\PP^{(1)}_{\pm \Wave}(\omega) \varphi&= \left \langle v \mathsf{R}_{0}  \varphi, \psi_{\pm \Wave}(\omega) \right \rangle_{\Ss} \psi_{\pm \Wave}(\omega)\\
				&= \left(\pm \frac{1}{\sqrt{2}  } \left\langle \varphi , \LL^{-1}  A(v) \omega \right\rangle_{\Ss} + E \sqrt{\frac{K-1}{2}} \left\langle \varphi, \LL^{-1}  B(v) \right\rangle_{\Ss} \right) \psi_{\pm \Wave}(\omega).
			\end{align*}
			This concludes the proof.
		\end{proof}
		We recall now the following \emph{hypocoercivity} result extracted from \cite{D2011}:
		\begin{lem}[\textit{\textbf{Hypocoercivity  \cite[Lemma 4.1]{D2011}}}]\label{lem:lemhypocoercivity}
			Assume that $\LL\::\:\dom(\LL) \subset \Ss \to \Ss$ satisfies Assumptions \ref{L1}--\ref{L4}. Then, for any $\xi \in\R^{d}$, there exists some~$\xi$-dependent bilinear symmetric form $\Phi_\xi[\cdot, \cdot]\::\:\Ss\times \Ss \to \R^{d}$ defined through
			$$\Phi_\xi\left[f, f\right] = \frac{\xi}{1 + | \xi |^2} \cdot \Big\la \PP f, \, \TT_1 \PP f + \TT_2 \PP v \left( \Id - \PP \right) f \Big\ra_{\Ss},$$
			where $\TT_1 \in \BBB\left( \nul(\LL) ; \R^d \right)$ and $\TT_2 \in \BBB\Big( \big(\nul(\LL)\big)^d ; \R^d \Big)$,
			such that there holds for some $c > 0$
			\begin{equation}
				\label{eq:hypocoercivity}
				\Phi_\xi\Big[\LL_{\xi} f, f\Big] \le - \frac{ c | \xi |^2 }{1 + | \xi |^2 } \| \PP f \|^2_{\Ss} + \frac{1}{c} \| (\Id - \PP) f \|_{\Ss}^2,
			\end{equation}
			uniformly in $\xi \in \R^d$ and $f \in \dom(\LL_\xi)$.
		\end{lem}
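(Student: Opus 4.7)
The plan is to invoke the Dolbeault--Mouhot--Schmeiser hypocoercivity construction in the Fourier variable $\xi$, verifying that Assumptions \ref{L1}--\ref{L4} provide exactly the ingredients required by \cite[Lemma 4.1]{D2011}. Specifically, the microscopic dissipation \ref{L3}, the self-adjointness and isotropy \ref{L1}--\ref{L2}, together with the finite-dimensionality of $\nul(\LL)$ and the controlled growth of $v$-moments coming from $\nul(\LL) \subset \Ss_2$ (Remark \ref{rem:KernelH2}), are precisely the four assumptions of the DMS scheme. The operators $\TT_1$ and $\TT_2$ are then read off from the construction below.

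First, I would introduce the skew-adjoint multiplication operator $T := i(v\cdot\xi)$ and the auxiliary bounded operator on $\Ss$
\[
\mathsf{A}_\xi := \bigl(\Id + (T\PP)^*(T\PP)\bigr)^{-1}(T\PP)^*,
\]
which is the standard DMS corrector. Since $\PP$ projects onto a finite-dimensional subspace spanned by polynomials of degree at most $2$ times $\mu$, and since $T\PP$ multiplies those polynomials by $v\cdot\xi$, the operator $(T\PP)^*(T\PP) = \PP(v\cdot\xi)^2\PP$ is a finite-dimensional positive operator on $\nul(\LL)$, explicitly a symmetric tensor in $\xi$ computable from the moments of $\mu$ listed in \ref{L2}. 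Writing $\mathsf{A}_\xi f = \frac{\xi}{1+|\xi|^2} \cdot \bigl(\TT_1 \PP f + \TT_2 \PP v(\Id-\PP)f\bigr)$ then identifies $\TT_1$ and $\TT_2$ as bounded linear maps into $\R^d$, uniformly in $\xi$, the scaling factor $\xi/(1+|\xi|^2)$ ensuring both the vanishing at $\xi = 0$ (consistent with the absence of coercivity on $\nul(\LL)$ when $\xi=0$) and uniform boundedness for $|\xi|\to\infty$.

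Second, I would compute $\Phi_\xi[\LL_\xi f, f]$ using the symmetrized form. Writing $\LL_\xi = \LL - T$ and exploiting $\PP\LL = \LL\PP = 0$, the expression splits into four pieces: (i) the \emph{macroscopic coercivity} term arising from $\mathsf{A}_\xi T\PP f$, which by the defining identity $\mathsf{A}_\xi T\PP = (\Id+(T\PP)^*(T\PP))^{-1}(T\PP)^*T\PP$ produces a contribution of the form $-\frac{|\xi|^2}{1+|\xi|^2}\langle \mathsf{M}_\xi \PP f, \PP f\rangle_\Ss$ with $\mathsf{M}_\xi$ uniformly positive definite on $\nul(\LL)^{d+2}$; (ii) a boundary term $\langle \mathsf{A}_\xi T(\Id-\PP)f,f\rangle$; (iii) a coupling term $\langle \mathsf{A}_\xi \LL(\Id-\PP)f,f\rangle$; and (iv) its symmetric counterpart. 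The key positivity step is showing that $\mathsf{M}_\xi \gtrsim \Id$ uniformly in $\xi$, which follows from the strict positivity of $\PP(v\cdot\omega)^2\PP$ for every $\omega \in \S^{d-1}$: indeed, if $g \in \nul(\LL)$ satisfies $(v\cdot\omega)g(v) = 0$ for a.e.\ $v$, then $g = 0$ since $\nul(\LL)$ consists of polynomials times the strictly positive weight $\mu$.

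Third, the cross-terms (ii)--(iv) are estimated by Cauchy--Schwarz in $\Ss$, using that $\|\mathsf{A}_\xi\|_{\BBB(\Ss)} \lesssim (1+|\xi|)^{-1}$, that $v(\Id-\PP)f$ is controlled by $\|(\Id-\PP)f\|_\Ss$ thanks to Remark \ref{rem:KernelH2} and \ref{assumption_multi-v} applied to the finite-dimensional range of $\PP v$, and that $\AA \in \BBB(\Ss)$ from \ref{L4}. Young's inequality with an appropriately small parameter absorbs a fraction of $\frac{|\xi|^2}{1+|\xi|^2}\|\PP f\|^2_\Ss$ while generating a bounded multiple of $\|(\Id-\PP)f\|^2_\Ss$, yielding the claimed estimate \eqref{eq:hypocoercivity} for some $c > 0$ independent of $\xi$. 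The principal technical obstacle is the uniform-in-$\xi$ control of the corrector $\mathsf{A}_\xi$ together with the uniform positivity of $\mathsf{M}_\xi$, both of which are by now standard consequences of the DMS machinery once the rotational invariance \ref{L1}--\ref{Bisotrop} is invoked to reduce the computation to a one-parameter family indexed by $|\xi|$.
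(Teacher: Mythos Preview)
The paper does not prove this lemma at all: it is stated as a direct citation of \cite[Lemma~4.1]{D2011} and immediately used in the subsequent Lemma~\ref{lem:spectral_decay}. So there is no ``paper's proof'' to compare against beyond the reference to Duan's construction.

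Your sketch follows instead the Dolbeault--Mouhot--Schmeiser corrector $\mathsf{A}_\xi = (\Id + (T\PP)^*(T\PP))^{-1}(T\PP)^*$. This is a legitimate route to a hypocoercivity estimate of the same flavour, but it does \emph{not} produce the specific form of $\Phi_\xi$ claimed in the statement. The issue is your identification step: the DMS operator involves the \emph{matrix} inverse $(\Id + \PP(v\cdot\xi)^2\PP)^{-1}$ on the $(d+2)$-dimensional space $\nul(\LL)$, and this does not factor as the \emph{scalar} $(1+|\xi|^2)^{-1}$ times a $\xi$-independent map. The operators $\TT_1, \TT_2$ in the lemma are asserted to be fixed (independent of $\xi$), with all the $\xi$-dependence carried by the prefactor $\xi/(1+|\xi|^2)$; this structure is exactly what Duan's macro--micro/Kawashima-type construction in \cite{D2011} gives, where one builds correctors by hand from mixed moment pairings (e.g.\ $\langle \widehat{\varrho}, i\xi\cdot\widehat{u}\rangle$, $\langle \widehat{u}, i\xi\widehat{\theta}\rangle$, etc.) normalised by $(1+|\xi|^2)^{-1}$, rather than via an operator inverse.

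If your goal is only the estimate \eqref{eq:hypocoercivity} for \emph{some} admissible $\Phi_\xi$, the DMS approach works and your outline of steps (i)--(iv) with the positivity of $\PP(v\cdot\omega)^2\PP$ on $\nul(\LL)$ is correct in spirit. But if you are claiming to recover the \emph{stated} $\Phi_\xi$ with $\xi$-independent $\TT_1, \TT_2$, you would need to either follow Duan's explicit construction or argue separately that the DMS functional can be rewritten in that form, which you have not done.
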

		With this at hands, we may turn to the proof of the decay estimates of Theorem \ref{thm:spectral_study}. 
		\begin{lem}[\textit{\textbf{Resolvent bounds and decay estimates of the semigroup}}]  
			\label{lem:spectral_decay} With the notations of Lemma \ref{lem:localization_spectrum}, let $0 < \lambda < \lambda_\LL$. 
			There exist  some constants $C, \gamma > 0$ such that, for any $|\xi| \ge \alpha_{0}$, the spectrum is localized as follows:
			$$\mathfrak{S}_{\Ss}(\LL_\xi) \cap \Delta_{-\gamma} = \varnothing,$$
			and the resolvent satisfies
			$$ \sup_{z \in \Delta_{-\gamma}} \| \RR(z,\LL_{\xi}) \|_{\BBB(\Ss)} \le C.$$
			Furthermore, for any $0 < \sigma < \sigma_0 := \min\{\lambda, \gamma\}$, the decay estimates
			$$
			\sup_{t \ge 0} \,e^{2 \sigma_0 t} \left\| U_{\xi}(t) {\left( \Id - \PP(\xi) \right)} f \right\|^2_{\Ss} + \int_0^\infty e^{2 \sigma t} \left\| U_{\xi}(t) {\left( \Id - \PP(\xi) \right)}f \right\|_{\Ssp}^2 \, \d t \le C_{\sigma}\| \left( \Id - \PP(\xi) \right)f \|^2_{\Ss}$$
			and
			$$\int_{0}^{\infty} e^{2\sigma t} \left\|U_{\xi}(t) {\left(\Id-\PP(\xi)\right)}f\right\|_{\Ss}^{2}\d t \le C_{\sigma}\|{\left(\Id-\PP(\xi)\right)}f\|_{\Ssm}^{2}$$
			hold uniformly in $\xi \in \R^{d}$ and $f \in {\Ss}$, where $\left(U_{\xi}(t)\right)_{t\geq0}$ denotes the $C^{0}$-semigroup in $\Ss$ generated by $(\LL_{\xi},\dom(\LL_{\xi}))$. 
			%Moreover
			%$$\int_{0}^{\infty}e^{2\sigma t}\left\|U_{\xi}(t)\textcolor{blue}{\left(\Id-\PP(\xi)\right)}f\right\|_{\Ss}^{2}\d t \le C_{\sigma}\|\textcolor{blue}{\left(\Id-\PP(\xi)\right)}f\|_{\Ssm}^{2}$$	
		\end{lem}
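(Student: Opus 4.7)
The strategy splits naturally according to the regime of $|\xi|$ and then combines the resulting estimates.

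\textbf{Step 1: Large frequencies $|\xi|\ge \alpha_0$ via hypocoercivity.} The plan is to apply the hypocoercivity result Lemma~\ref{lem:lemhypocoercivity} together with the dissipation estimate provided by \ref{L3}. Writing $f(t)=U_\xi(t)f_0$ and using \ref{L1}, one has
$\tfrac{d}{dt}\|f(t)\|_\Ss^{2}=2\langle \LL f,f\rangle_\Ss \le -2\lambda_\LL\|(\Id-\PP)f\|_{\Ssp}^{2}\le -2\lambda_\LL\|(\Id-\PP)f\|_{\Ss}^{2}.$
Introducing the modified functional $\mathcal{E}_\xi[f]:=\|f\|_\Ss^{2}+\eta\,\Phi_\xi[f,f]$ for some small $\eta>0$ and combining the previous inequality with \eqref{eq:hypocoercivity}, one obtains, for $\eta$ small enough,
\[
\tfrac{d}{dt}\mathcal{E}_\xi[f(t)] \le -(2\lambda_\LL-\tfrac{2\eta}{c})\|(\Id-\PP)f\|_\Ss^{2}-\tfrac{2\eta c|\xi|^{2}}{1+|\xi|^{2}}\|\PP f\|_\Ss^{2}.
\]
For $|\xi|\ge \alpha_0$ the coefficient in front of $\|\PP f\|_\Ss^{2}$ is bounded below by a positive constant depending only on $\alpha_0$. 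Since $\mathcal{E}_\xi$ is equivalent to $\|\cdot\|_\Ss^{2}$ uniformly in $\xi$ for $\eta$ small, this yields some $\gamma>0$ such that $\|U_\xi(t)\|_{\BBB(\Ss)}\le Ce^{-\gamma t}$ uniformly in $|\xi|\ge \alpha_0$. Gearhart--Pr\"uss on the Hilbert space $\Ss$ then delivers both the localization $\mathfrak{S}_\Ss(\LL_\xi)\cap\Delta_{-\gamma}=\varnothing$ and the uniform resolvent bound on $\Delta_{-\gamma}$.

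\textbf{Step 2: $\Ss$-decay of $U_\xi(t)(\Id-\PP(\xi))$ uniformly in $\xi$.} For $|\xi|\ge\alpha_0$ we have $\PP(\xi)=0$ (by Lemma~\ref{lem:localization_spectrum}), so the above directly gives the pointwise $e^{-\gamma t}$ decay. For $|\xi|\le\alpha_0$, Lemma~\ref{lem:expansion_projectors} shows that $\LL_\xi$ commutes with $\PP(\xi)$, and on $\range(\Id-\PP(\xi))$ the spectrum avoids $\Delta_{-\lambda}$ with resolvent bound uniform in $\xi$ by \eqref{eq:bound_L_xi}. A second application of Gearhart--Pr\"uss, combined with the hypocoercivity-type energy method applied to $(\Id-\PP(\xi))f$ (which belongs to the range of $(\Id-\PP(\xi))$, stable under $U_\xi(t)$), yields $\|U_\xi(t)(\Id-\PP(\xi))f\|_\Ss\le Ce^{-\sigma_0 t}\|(\Id-\PP(\xi))f\|_\Ss$ with $\sigma_0=\min(\lambda,\gamma)$. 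This is where the mismatch between $\PP$ (orthogonal projector onto $\nul(\LL)$) and $\PP(\xi)$ (perturbed spectral projector) has to be handled carefully; the key observation is that $\PP(\xi)-\PP=\OO(|\xi|)$ in $\BBB(\Ssm;\Ssp)$ by Lemma~\ref{lem:expansion_projection}, so the finite-dimensional part $\PP f$ and $\PP(\xi) f$ differ by a controlled term.

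\textbf{Step 3: Integrated $\Ssp$ regularization.} To upgrade the pointwise $\Ss$-decay into the integrated $\Ssp$-bound, I would integrate the dissipation identity $\tfrac{d}{dt}\|f(t)\|_\Ss^{2}\le -2\lambda_\LL\|(\Id-\PP)f(t)\|_{\Ssp}^{2}$ multiplied by $e^{2\sigma t}$; this produces, after absorbing the $2\sigma\|f(t)\|_\Ss^{2}$ boundary term using the Step 2 decay,
\[
\int_0^\infty e^{2\sigma t}\|(\Id-\PP)U_\xi(t)(\Id-\PP(\xi))f\|_{\Ssp}^{2}\,\d t \lesssim \|(\Id-\PP(\xi))f\|_\Ss^{2}.
\]
To convert the $(\Id-\PP)$ on the left into a full $\Ssp$-norm, it suffices to notice that $\PP$ has finite-dimensional range contained in $\Ssp$ (Remark~\ref{rem:KernelH2} plus $\nul(\LL)\subset \Ss_2\hookrightarrow \Ssp$), so $\|\PP g\|_{\Ssp}\lesssim \|g\|_\Ss$, allowing to close the estimate using the Step~2 bound on $\|U_\xi(t)(\Id-\PP(\xi))f\|_\Ss$.

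\textbf{Step 4: $\Ssm$ initial datum by duality.} Since $\LL_\xi^\star=\LL_{-\xi}$ (by \ref{L1} and the skew-adjointness of $iv\cdot\xi$), one has $U_\xi(t)^\star=U_{-\xi}(t)$ and $\PP(\xi)^\star=\PP(-\xi)$. By the general duality identity $\|T^\star\|_{\BBB(\Ssm;\Ss)}=\|T\|_{\BBB(\Ss;\Ssp)}$ used already in the proof of Lemma~\ref{lem:localization_spectrum}, the $\Ssm\to\Ss$ estimate \eqref{eq:decay_Hh'} follows from the $\Ss\to\Ssp$ estimate \eqref{eq:decay} applied with $\xi$ replaced by $-\xi$.

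The main obstacle I anticipate is Step~2: reconciling the two regimes and handling the subtle difference between $\PP$ (which provides the $\LL$-dissipation) and $\PP(\xi)$ (which provides spectral stability of the complement), so as to produce a single estimate with the sharp rate $\sigma_0$. The energy method of Step~1 has to be reperformed on $(\Id-\PP(\xi))f$ using the $\OO(|\xi|)$ smallness of $\PP(\xi)-\PP$ to control the error terms uniformly in the small-frequency regime.
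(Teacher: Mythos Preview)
Your proposal is essentially correct and follows the same architecture as the paper: hypocoercivity for $|\xi|\ge\alpha_0$, Gearhart--Pr\"uss on $\range(\Id-\PP(\xi))$ for $|\xi|\le\alpha_0$, an energy identity to upgrade to the integrated $\Ssp$-bound, and duality for the $\Ssm$-estimate.

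Two remarks. First, your anticipated ``main obstacle'' in the final paragraph is a non-issue, and attempting to resolve it via the $\OO(|\xi|)$ smallness of $\PP(\xi)-\PP$ would be the wrong route. For $|\xi|\le\alpha_0$ the paper does \emph{not} rerun the hypocoercivity energy method on $(\Id-\PP(\xi))f$; it simply observes that the resolvent of $\LL_\xi$ restricted to $\range(\Id-\PP(\xi))$ equals $\RR(z,\LL_\xi)-\sum_\star(z-\lambda_\star(\xi))^{-1}\PP_\star(\xi)$, which is holomorphic on all of $\Delta_{-\lambda}$ and bounded there by \eqref{eq:bound_L_xi} together with the maximum principle (needed to cover the disc $|z|\le r$ where \eqref{eq:bound_L_xi} does not apply directly). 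Gearhart--Pr\"uss then gives the $e^{-\lambda t}$ decay with no reference to $\PP$ at all. The projector $\PP$ only reappears in Step~3, exactly as you describe: the energy identity produces $\|(\Id-\PP)f(t)\|_{\Ssp}^2$, and one recovers the full $\Ssp$-norm via $\|\PP g\|_{\Ssp}\lesssim\|g\|_\Ss$ combined with the already-established $\Ss$-decay.

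Second, your Step~4 is morally correct but the operator-norm duality identity you cite does not directly give an $L^2_t$-in-time estimate. The paper makes this precise by testing against step functions $\phi(t)=\mathbf{1}_{[t_1,t_2]}\phi_0$, moving the semigroup to the other side, and applying Cauchy--Schwarz in $t$ together with the $\Ss\to\Ssp$ integral bound for $U_{-\xi}(t)(\Id-\PP(-\xi))$.
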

		
		\begin{proof}
			In a first step, we prove resolvent bounds using the above hypocoercivity result  as well as the uniform decay estimate in $\Ss$. In a second step, we prove the $\Ssp-\SS$ integral decay estimate using an energy method, from which we deduce the $\Ss-\Ssm$ one in a third step. Let us fix $0<\lambda < \lambda_\LL$. 
			
			\step{1}{Resolvent bounds and uniform decay estimate} Using the above Lemma \ref{lem:lemhypocoercivity}, we define, for any $|\xi| \ge \alpha_{0}$  the equivalent inner product on $\Ss$
			$$( \cdot , \cdot )_{\Ss, \xi} := \la \cdot, \cdot \ra_{\Ss} + \eta \Phi_\xi[\cdot, \cdot]$$
			with some small $\eta > 0$. By combining the control of $(\Id - \PP)f$ from \ref{L3} and the control of $\PP f$ from \eqref{eq:hypocoercivity}, we have for any $| \xi | \ge \alpha_0$
			\begin{align*}
				\left( \LL_\xi f, f \right)_{\Ss, \xi} & \le \left(\frac{\eta}{c} - \lambda_\LL\right) \| (\id - \PP) f \|_{\Ss}^2 - \frac{c  |\xi|^2}{1 +  |\xi| ^2} \| \PP f \|^2_{\Ss} \\
				& \le -\min\left\{ \frac{\lambda_\LL}{2}, \frac{c  \alpha_{0} ^2 }{1 + \alpha_{0} ^2} \right\} \| f \|^2_{\Ss},
			\end{align*}
			where we chose $\eta \le \frac{c}{2} \lambda_\LL$. Assuming $\eta$ small enough so that the norm induced by~$( \cdot, \cdot )_{\Ss, \xi}$ is equivalent to $\| \cdot \|_{\Ss}$ uniformly in $\xi \in \R^d$, we have for some $\gamma > 0$
			\begin{equation*}
				( \LL_\xi f, f )_{\Ss, \xi} \le -\gamma \| f \|_{\Ss}^2, \qquad 
				\forall | \xi | \ge \alpha_0, ~ f \in \dom(\LL_{\xi})\,. 		\end{equation*}
			We thus deduce that for $| \xi | \ge \alpha_{0}$
			\begin{equation*}
				\| U_{\xi}(t) \|_{\BBB(\Ss)} \lesssim e^{-\gamma t},
			\end{equation*}
			as well as (up to a reduction  of $\gamma$ for the resolvent bound)
			\begin{equation*}
				\mathfrak{S}(\LL_{\xi}) \cap \Delta_{-\gamma} = \varnothing, \quad \sup_{z \in \Delta_{-\gamma}} \| \RR(z,\LL_{\xi}) \|_{\BBB(\Ss)} \lesssim 1.
			\end{equation*}
			On the other hand, for $|\xi| \le \alpha_{0}$, the resolvent of $(\Id - \PP(\xi)) \LL_\xi$ is given for $z \in \Delta_{-\lambda}$ by
			$$\RR(z,\LL_{\xi}(\Id-\PP(\xi)))=\RR(z,\LL_{\xi}) - \sum_{\star = \Inc, \Bou, \pm \Wave}(z-\lambda_{\star}(\xi))^{-1} \PP_\star(\xi) $$
			which is therefore holomorphic in $z \in \Delta_{-\lambda}$ and thus can be explicitly bounded using the bound \eqref{eq:bound_L_xi} and the maximum principle. We deduce that the semigroup it generates is bounded by the Gearhart-Pruss theorem \cite[Theorem V.1.10]{engel}, i.e.
			\begin{equation*}
				\left\| U_{\xi}(t)\left(\Id-\PP(\xi)\right) f \right\|_{\BBB(\Ss)} \lesssim e^{-\lambda t} \| f \|_{\Ss} \qquad \quad
				\forall | \xi | \le \alpha_0, ~ f \in \Ss\,.
			\end{equation*}
			To sum up, putting together both decay estimates and denoting $\sigma_0 := \min\{ \lambda, \gamma\}$, there holds
			\begin{equation*}
				\sup_{\xi \in \R^d}\left\| U_{\xi}(t)\left(\Id-\PP(\xi)\right)f \right\|_{\BBB(\Ss)} \lesssim e^{-\sigma_0 t} \| f \|_{\Ss}, \qquad \forall f \in H\,,
			\end{equation*}
			where we recall that $\PP(\xi) = 0$ for $|\xi| > \alpha_0$. This concludes this step.
			
			\step{2}{Integral decay estimates}
			Let us prove both integral decay estimates.
			
			\step{2a}{The $\Ssp-\Ss$-integral decay estimate}
			Let $f \in \range (\Id - \PP(\xi)) {\cap \dom(\LL_{\xi})}$ and denote~$f(t) =U_{\xi}(t)f$ the unique solution to
			$$
			\begin{cases}
				\partial_{t} f(t)=\LL_{\xi}f(t)=\LL_{\xi}\left(\Id-\PP(\xi)\right)f(t),\\
				f(0) = f.
			\end{cases}
			$$
			Using that $\LL$ is self-adjoint in $\Ss$ and that the multiplication by $i v \cdot \xi$ is skew-adjoint, we have the energy estimate
			$$\frac{\d}{\d t} \| f(t) \|^2_{\Ss} = 2 \Re \lla \LL_\xi f(t) , f(t) \rra_{\Ss} = 2 \lla \LL f(t) , f(t) \rra_{\Ss}.$$
			Furthermore, using the dissipativity estimate of $\LL$ from \ref{L3}, we get
			$$\frac{\d}{\d t} \| f(t) \|^2_{\Ss} + 2 \lambda_\LL \| (\Id - \PP) f(t) \|^2_{\Ssp} \le 0,$$
			which we complete using \eqref{eq:degenerate_pythagora_X_star} and $\PP \in \BBB(\Ss; \Ssp)$ as
			\begin{equation}
				\label{eq:EnergyHH_Hh}
				\frac{\d}{\d t} \| f(t) \|^2_{\Ss} + \lambda_\LL \| f(t) \|^2_{\Ssp}   \lesssim \| \PP f(t) \|_{\Ssp}^2 \lesssim \| f(t) \|_{\Ss}^2 \lesssim e^{-2 \sigma_0 t} \| f \|_{\Ss}^2,
			\end{equation}
			where we also used the decay estimate established in the previous step. Multiplying this by~$e^{2 \sigma t}$ and integrating, one easily deduces
			%	\begin{align*}
			%		\frac{\d}{\d t} \Big( e^{2 \sigma t} \| f(t) \|^2_{\Ss} \Big) + \lambda_\LL \, e^{2 \sigma t } \| f(t) \|^2_{\Ssp} & \lesssim \lambda_\LL e^{-2 (\sigma_0 - \sigma) t} \| f \|_{\Ss}^2 + \sigma e^{2 \sigma t} \| f(t) \|_{\Ss}^2 \\
			%		& \lesssim (\lambda_\LL + \sigma) e^{-2 (\sigma_0 - \sigma) t} \| f \|^2_{\Ss},
			%	\end{align*}
			%	which once integrated yields
			\begin{equation*}
				\sup_{t \ge 0} e^{2 \sigma t} \| f(t) \|_{\Ss}^2 + \lambda_\LL \int_0^\infty e^{2 \sigma t} \| f(t) \|_{\Ssp}^2 \d t \lesssim \| f \|^2_{\Ss}.
			\end{equation*}
			This identity holds for any $f \in \left(\Id-\PP(\xi)\right) \cap \dom(\LL_{\xi})$ and we conclude the proof using that~$\dom(\LL_{\xi})$ is dense in $\Ss$.
			
			\step{2b}{The $\Ss-\Ssm$-integral decay estimate} As before, we assume now $f \in \left(\Id-\PP(\xi)\right) \cap \Ssm$ and set $f(t)=U_{\xi}(t)f$ for any $t\geq0.$
			We use a duality argument together with a density argument and prove that
			$$\la e^{\sigma t} f(t) , \phi \ra_{L^2_t(\Ss)} \lesssim \| f\|_{\Ssm} \| \phi \|_{L^2_t(\Ss)}.$$
			To perform the duality argument, we need to point out that $\LL$ is self-adjoint, thus
			$$\left(U_{\xi}(t) \left( \Id - \PP(\xi) \right) \right)^\star = U_{-\xi}(t) \left( \Id - \PP(-\xi) \right).$$
			Since the step functions span a dense subspace of $L^2_t(\Ss)$, it is enough to check that the dual estimate holds for such a following function:
			\begin{gather*}
				\phi(t) = \begin{cases}
					\phi_0 \in \Ss, & t \in [t_1 , t_2] ,\\
					0, & t \notin [t_1, t_2],
				\end{cases}, \quad \| \phi \|_{L^2_t(\Ss)} = \sqrt{t_2 - t_1} \| \phi_0 \|_{\Ss}.
			\end{gather*}
			For such a step function, the inner product writes explicitly as
			\begin{align*}
				\la e^{\sigma t} f(t) , \phi \ra_{L^2_t(\Ss)}  = \int_0^\infty \big\la e^{\sigma t} f(t) , \phi(t) \big\ra_{\Ss} \d t= \int_{t_1 }^{ t_2 } \left\la e^{\sigma t} U_{\xi}(t)\left( \Id - \PP(\xi) \right) f , \phi_0 \right\ra_{\Ss} \d t.
			\end{align*}
			We then get by duality and using Cauchy-Schwarz's inequality
			\begin{equation*}\begin{split}
					\la e^{\sigma t} f(t) , \phi \ra_{L^2_t(\Ss)}  & = \int_{t_1 }^{ t_2 } \big\la f , e^{\sigma t} U_{-\xi}(t) \left( \Id - \PP(-\xi) \right) \phi_0 \big\ra_{\Ss} \d t \\
					& \le \| f \|_{\Ssm} \, \int_{ t_1 }^{ t_2 } e^{\sigma t} \| U_{-\xi}(t) \left( \Id - \PP(-\xi) \right) \phi_0 \|_{\Ssp} \d t \\
					& \le \| f \|_{\Ssm} \, \sqrt{t_2-t_1}  \left(\int_0^\infty e^{2 \sigma t} \| U_{-\xi}(t) \left( \Id - \PP(-\xi) \right) \phi_0 \|_{\Ssp}^2 \d t\right)^{\frac{1}{2}}
			\end{split}\end{equation*}
			thus, using the $\Ssp-\Ss$ estimate from \textit{Step 2a}
			\begin{align*}
				\la e^{\sigma t} f(t) , \phi \ra_{L^2_t(\Ss)} & \lesssim \| f  \|_{\Ssm} \sqrt{t_2-t_1} \| \phi_0 \|_{\Ss} \lesssim \| f \|_{\Ssm} \| \phi \|_{L^2_t(\Ss)}.
			\end{align*}
			\color{black}
			This concludes the proof.\end{proof}

		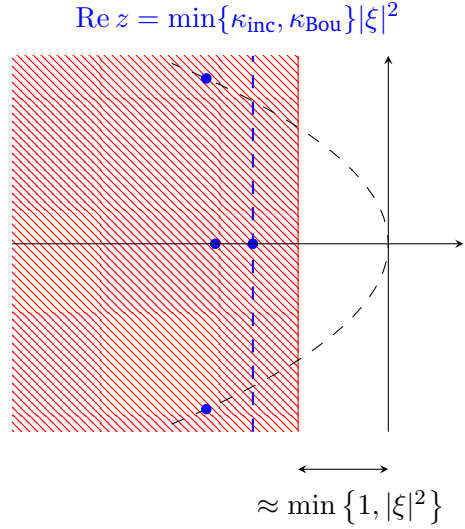
\begin{figure}[h]
			\centering
			
			\begin{tikzpicture}
				% Spectre cinétique
				\fill [pattern=north west lines, pattern color=red] (-5, -2.5) rectangle (-1.2, 2.5);
				\draw [color=red] (-1.2, -2.5) -- (-1.2, 2.5);
				\draw[stealth-stealth] (-1.2, -3) -- (0, -3);
				\node at (-0.5, -3.5) {$\approx \min\left\{ 1, |\xi|^2 \right\}$};
				
				% Seuil hydrodynamique
				\draw [color=blue, dash pattern=on 5pt, thick] (-1.8, -2.5) -- (-1.8, 2.5);
				\node [color=blue] at (-2, 3) {$\Re z = \min\{ \kappa_\Inc, \kappa_\Bou \} | \xi |^2$};
				
				% Valeurs propres ondes
				\draw[black, variable=\y, dash pattern=on 5pt] plot[smooth,domain=-2.4:2.4] ( {-0.5 * (\y)^2 }, \y );
				\fill [color=blue] ( {-0.5 * (2.2)^2} , 2.2) circle (0.07);
				\fill [color=blue] ( {-0.5 * (2.2)^2} , -2.2) circle (0.07);
				
				% Valeur propre Boussinesq
				\fill [color=blue] (-2.3, 0) circle (0.07);
				
				% Valeur propre incompressible
				\fill [color=blue] (-1.8, 0) circle (0.07);
				
				% Repère
				\draw[-stealth] (-5, 0) -- (1, 0);
				\draw[-stealth] (0, -2.5) -- (0, 2.5);
				
			\end{tikzpicture}
			
			\caption{Localization of the spectrum of $\LL_\xi $ provided by the hypocoercivity lemma \ref{lem:lemhypocoercivity} for $\xi \in \R^d$, compared with the localization of the hydrodynamic eigenvalues defined for $|\xi| \le \alpha_0$. }
			
		\end{figure}
		
		\subsection{Proof of Theorem \ref{thm:enlarged_thm}}
		
		We present here the full proof a the ‘‘enlarged'' version of Theorem \ref{thm:spectral_study} as provided in  Theorem \ref{thm:enlarged_thm}. We present in a first step how to extend the resolvent bounds, in a second step how to extend the decay estimate. In a third step, we  extend the projector bounds from Lemma \ref{lem:expansion_projection} as this is enough to deduce the same bounds on the expansion of $\UU_\xi$ and in turn of $\PP_\star(\xi)$.
		
		\step{1}{Resolvent bounds}
		Using the factorization formulae
		\begin{equation}
			\label{eq:shrinkage_Y}
			\begin{split}
				\RR(z,\LL_{\xi}) & = \RR(z,\BB^{(0)}_{\xi}) + \RR(z,\LL_{\xi}) \AA^{(0)} \RR(z,\BB^{(0)}_{\xi}) \\ & =\RR(z,\BB^{(0)}_{\xi})+ \RR(z,\BB^{(0)}_{\xi}) \AA^{(0)} \RR(z,\LL_{\xi})
			\end{split}
		\end{equation}
		and the fact that the function $z \in \Delta_{-\lambda} \mapsto \RR(z,\BB^{(0)}_{\xi}) \in  \BBB(\Ss) \cap \BBB(\Sl)$ is holomorphic, as well as $\Ss \hookrightarrow \Sl$ and $\AA^{(0)}\in \BBB(\Sl;\Ss)$,
		we deduce that, for any $z \in \Delta_{-\lambda}$
		$$ \RR(z,\LL_{\xi}) \in \BBB\left( \Sl \right) \iff \RR(z,\LL_{\xi}) \in \BBB(\Ss)\,,$$
		or in other words, the spectrum of $\LL_\xi$ in $\Delta_{-\lambda}$ does not depend on the space $\Ss$ or $\Sl$:
		$$\mathfrak{S}_{\Ss}(\LL_{\xi}) \cap \Delta_{-\lambda} = \mathfrak{S}_{\Sl}(\LL_\xi) \cap \Delta_{-\lambda}.$$
		More precisely, since $\RR\left( z, \BB^{(0)}_\xi \right) \in \BBB(\Ss) \cap \BBB(\Sl)$ uniformly in $\xi \in \R^d$ and $z \in \Delta_{-\lambda}$, there holds for any $z \in \Delta_{-\lambda} \setminus \mathfrak{S}(\LL_\xi)$
		$$1 + \|\RR(z,\LL_{\xi})\|_{\BBB(\Ss)} \lesssim 1 + \|\RR(z,\LL_{\xi})\|_{\BBB(\Sl)} \lesssim 1 +  \|\RR(z,\LL_{\xi})\|_{\BBB(\Ss)}.$$
		This implies that the bounds in $\BBB(\Ss)$ on the resolvent from Theorem \ref{thm:spectral_study} also hold in~$\BBB\left( \Sl \right)$. Actually, also the bounds \eqref{eq:bound_L_xi} in Lemma \ref{lem:localization_spectrum} can be refined for $|\xi| \le \alpha_{0}$ small enough as
		\begin{equation}
			\label{eq:bound_L_xi_Y}
			\sup_{ | z | = r } \| \RR(z,\LL_{\xi}) \|_{\BBB(\Sl;\Slp)}  + \sup_{ | z | = r } \| \RR(z,\LL_{\xi}) \|_{\BBB(\Slm;\Sl)} 
			+ \sup_{z\in\Omega} \| \RR(z,\LL_{\xi}) \|_{\BBB(\Sl)} \le C_1,
		\end{equation}
		where $r$ is small enough and $\Omega := \Delta_{-\lambda} \cap \{ | z | \ge r \}$.  Indeed, starting from the dissipativity estimate involving $\Slp$ and $\AA^{(0)} \in \BBB(\Sl)$ from \ref{LE}:
		\begin{align*}
			\Re \left \la (\LL_\xi - z) f, f \right\ra_{\Sl} & = \Re \left \la \BB^{(0)} f, f \right\ra_{\Sl} + \Re \left \la \AA^{(0)} f, f \right\ra_{\Sl} - z \| f \|^2_{\Sl} \\
			& \le - \lambda_\BB \| f \|^2_{\Slp} + \left(\| \AA^{(0)}\|_{\BBB(\Sl)} - z \right)\| f \|_{\Sl}^2
		\end{align*}
		which gives for some $z_0 \ge \| \AA^{(0)} \|_{\BBB(\Sl)}$
		$$\| \RR(z_{0},\LL_{\xi})\|_{\BBB(\Sl,\Slp)} \le \lambda_\BB^{-1}.$$
		Performing the same computations with the decomposition
		$$\LL^\star_{\xi} = (\BB^{(0)})^{\star} + i v \cdot \xi + (\AA^{(0)})^{\star},$$
		where $(\BB^{(0)})^{\star}$ satisfies the same dissipativity as $\BB^{(0)}$, we get $\|\RR(z_{0},\LL_{\xi})^{\star} \|_{\BBB(\Sl;\Slp)} \le \lambda_\BB^{-1}$ and thus by the adjoint identity \eqref{eq:twisted_adjoint_identity_one_sided}
		$$\| \RR(z_{0},\LL_{\xi})\|_{\BBB(\Slm;\Sl)} \le \lambda_\BB^{-1}.$$ Using the resolvent identity as in the proof of Lemma \ref{lem:localization_spectrum}, we deduce \eqref{eq:bound_L_xi_Y}.
		
		\step{2}{Decay estimates}
		We first prove the uniform estimates, and then the integral ones.
		
		\step{2a}{The uniform decay estimate}
		To improve the decay estimate of \eqref{eq:decay} to \eqref{eq:decay-EE}, we apply the Duhamel formula to the decomposition $\LL_\xi = \BB_{\xi}^{(0)}+ \AA^{(0)}$
		$$U_{\xi}(t) = V_{\xi}^{(0)}(t) + \int_0^t U_{\xi}(t-\tau)\AA^{(0)} V_{\xi}^{(0)}(\tau)\d \tau,$$
		where $\left(V_{\xi}^{(0)}(t)\right)_{t\geq0}$ denotes the $C^{0}$-semigroup in $\Sl$ generated by $(\BB^{(0)}_{\xi},\dom(\LL_{\xi}))$. After 
		%	$$U_{\xi}(t) = V_{\xi}^{(0)}(t) + \int_0^t \mathcal{V}_{\xi}(t-\tau)\AA^{(0)} V_{\xi}^{(0)}(\tau)\d \tau.$$
		composing with $\Id - \PP(\xi)$ from the left, we get
		\begin{align*}
			\left( \Id - \PP(\xi) \right)U_{\xi}(t) = & \left( \Id - \PP(\xi) \right)V_{\xi}^{(0)}(t)  + \int_0^t \left( \Id - \PP(\xi) \right) U_{\xi}(t-\tau)\AA^{(0)} V_{\xi}^{(0)}(\tau) \d \tau.
		\end{align*}
		Since $\| \PP(\xi) \|_{\BBB(\Sl)} \lesssim 1$ from \eqref{eq:bound_L_xi_Y}, and using $\AA^{(0)} \in \BBB( \Sl ; \Ss )$, we have
		\begin{align*}
			\big\| ( \Id  - \PP(\xi) ) U_{\xi}(t) \big\|_{\BBB(\Sl)} 
			\lesssim   \left\|V_{\xi}^{(0)}(t)\right\|_{\BBB(\Sl)} 
			+ \int_0^t \left\| \left( \Id - \PP(\xi) \right) U_{\xi}(t - \tau) \right\|_{\BBB(\Ss)} \left\| V_{\xi}^{(0)}(\tau)\right\|_{\BBB(\Sl)} \d \tau,
		\end{align*}
		and using the decay estimate of $\left( \Id - \PP(\xi) \right)U_{\xi}(t)$ in $\BBB(\Ss)$ from Theorem \ref{thm:spectral_study}, as well as the dissipativity hypothesis for $\BB^{(0)}_{\xi}$ from \ref{LE}, we then get (recall that $\sigma_0 \le \lambda$)
		\begin{equation*}
			\left\| \left( \Id - \PP(\xi) \right) U_{\xi}(t)\right\|_{\BBB(\Sl)} \lesssim e^{-\lambda\,t}
			+ \int_0^t e^{-\sigma_0 (t - \tau)} e^{-\lambda \tau} \d \tau \lesssim e^{-\sigma_0 t}.
		\end{equation*}
		This proves the uniform in time decay. 
		
		\step{2b}{The integral $\Slp-\Sl$ and $\Sl-\Slm$ decay estimates}
		From then on, the proof of the integral decay estimate follows the same strategy as the one adopted for the proof of Lemma \ref{lem:spectral_decay}, starting from the decomposition $\LL = \BB^{(0)} + \AA^{(0)}$ and, resuming the computations of Lemma \ref{lem:spectral_decay}. Typically, estimate \eqref{eq:EnergyHH_Hh} can be adapted to give now
		\begin{align*}
			\frac{\d}{\d t} \| f(t) \|^2_{\Sl} + 2 \lambda_\BB \| f(t) \|^2_{\Slp} & \lesssim \| \AA^{(0)} \|_{\BBB(\Sl)} \| f(t) \|_{\Sl}^2 \lesssim \| \AA^{(0)} \|_{\BBB(\Sl)} e^{-2 \sigma_0 t} \| f \|^2_{\Sl}.
		\end{align*}
		After integration, one obtains easily \eqref{eq:decay-EE} as in Lemma \ref{lem:spectral_decay}, as well as the corresponding estimate for $\Big(U_\xi(t) (\Id - \PP(\xi)) \Big)^\star$, from which we deduce \eqref{eq:decay_Ee'} by a similar duality argument.
		
		\step{3}{Expansion of the projectors}
		To establish  the uniform bounds in $\BBB(\Slm ;\Ssp)$ on the expansion of the spectral projectors
		$$\PP_\star(\xi) = \PP^{(0)}_\star\left( \widetilde{\xi} \right) + i \xi \cdot \PP^{(1)}_\star\left( \widetilde{\xi} \right) + S_{\star}\left(  \xi \right),$$
		we use a similar bootstrap strategy as in the proofs of Lemma \ref{lem:expansion_projection} and Theorem~\ref{thm:regularized_thm}. More precisely, in each step, we will prove uniform bounds in $\BBB\left(\Slm ; \Sl \right)$ and then combine with those in $\BBB\left( \Ss ; \Ssp\right)$ from Lemma \ref{lem:expansion_projection} to conclude. Similarly, we will need the following regularization properties for $\PP(\xi)$:
		\begin{equation}
			\label{eq:regularization_P_Y}
			\| \PP(\xi) \|_{ \BBB(\Slm ; \Sl_2 ) } + \| \PP(\xi) \|_{ \BBB(\Slm ; \Ssp ) }  \lesssim 1,
		\end{equation}
		which, as in the original proof, comes from combining the identity $\PP(\xi)^2 = \PP(\xi)$ with the resolvent bound \eqref{eq:bound_L_xi_Y} (for the $\Slm-\Sl$ bound), with Lemma \ref{lem:expansion_projection} (for the $\Ss-\Ssp$ bound), and with the regularization hypothesis \ref{assumption_large_bounded_A} for $\AA^{(0)}$ (for the $\Sl-\Sl_2$ and $\Sl-\Ss$ bounds) applied to the representation
		\begin{gather*}
			\PP(\xi) = \frac{1}{2 i \pi} \oint_{ | z | = r } \left( \RR\left( z, \BB^{(0)}_\xi \right) \AA^{(0)} \right)^2 \RR\left( z , \LL_\xi \right) \d z.
		\end{gather*}
		Similarly, we will need the regularization properties
		\begin{equation}
			\label{eq:regularization_P_Y_adjoint}
			\| \PP(\xi)^\star \|_{ \BBB(\Slm ; \Sl_j ) } \lesssim 1, \quad j = 1, 2,
		\end{equation}
		which comes from the regularization hypothesis \ref{assumption_large_bounded_A} for $\left( \AA^{(0)} \right)^\star$ applied to the representation
		$$\PP(\xi)^\star = \frac{1}{2 i \pi} \oint_{ | z | = r } \left[ \RR\left( z, \BB^{(0)}_\xi \right)^\star \left(\AA^{(0)}\right)^\star \right]^j \left(\RR\left( z , \LL_\xi \right)\right)^\star \d z.$$
		Finally, we will the need the resolvent bounds
		\begin{equation}
			\label{eq:bound_resolvent_Y_j}  \| \RR(z,\LL) \|_{\BBB(\Sl_{j})} \lesssim 1 + \frac{1}{|z|}, \quad j = 0,1,2, \qquad z \in \Delta_{-\lambda} \setminus \{0\}, 
		\end{equation}
		which also come from a factorization strategy as in the original proof.
		
		\step{3a}{First order expansion of $\PP(\xi)$}
		We use the the bootstrap formula \eqref{eq:bootstrap_P_1_A}:
		$$\PP^{(1)}(\xi) = \PP(\xi) \PP^{(1)}(\xi) + \PP^{(1)}(\xi) \PP,$$
		and  the adjoint identity $\| T \|_{\BBB(\Slm;\Sl)} = \| T^\star \|_{\BBB(\Sl;\Slp)}$ from \eqref{eq:twisted_adjoint_identity_one_sided} to establish the bound
		\begin{align*}
			\| \PP^{(1)}(\xi) \|_{\BBB(\Slm;\Sl)}
			&\le \| \PP^{(1)}(\xi)^\star \PP(\xi)^\star \|_{\BBB(\Sl;\Slp)} + \| \PP^{(1)}(\xi) \PP \|_{\BBB(\Slm;\Sl)} \\
			&\le  \| \PP^{(1)}(\xi)^\star \|_{\BBB(\Sl_1;\Slp)} \| \PP(\xi)^\star \|_{\BBB(\Sl;\Sl_{1})} +  \| \PP^{(1)}(\xi) \|_{\BBB(\Sl_1;\Sl)} \| \PP \|_{\BBB(\Slm;\Sl_1)}\\
			&\lesssim \| \PP^{(1)}(\xi)^\star \|_{\BBB(\Sl_1;\Slp)} +  \| \PP^{(1)}(\xi) \|_{\BBB(\Sl_1;\Sl)}
		\end{align*}
		where we used the the regularization property \eqref{eq:regularization_P_Y} for $\PP=\PP(0)$ and \eqref{eq:regularization_P_Y_adjoint} for $\PP^\star(\xi)$ in the last estimate. We now turn to the first term $\| \PP^{(1)}(\xi)^\star \|_{\BBB(\Sl_{1};\Slp)}$:
		\begin{align*}
			\| \PP^{(1)}(\xi)^\star \|_{\BBB(\Sl_{1};\Slp)} & \le \frac{1}{2 \pi} \oint_{|z|=r} \left\| \Big(\RR(z,\LL) v \RR(z,\LL_{\xi})\Big)^\star \right\|_{\BBB(\Sl_{1};\Slp) } \d |z| \\
			&\le\frac{1}{2 \pi} \oint_{|z|=r} \left\| \RR(z,\LL_{\xi})^\star v^{\star}\RR(z,\LL)^\star  \right\|_{\BBB(\Sl_{1};\Slp) } \d |z| \\
			& \lesssim \oint_{|z|=r} \| \RR(z,\LL_{\xi})^\star \|_{\BBB(\Sl;\Slp)} \| \RR(z,\LL)^\star \|_{\BBB(\Sl_{1}) } \d |z|,
		\end{align*}
		where we used the fact that the adjoint of the multiplication by $v$ is in $\BBB(\Sl_1 ; \Sl)$ according to \ref{assumption_large_multi-v}. We can rewrite this estimate without the adjoints and estimate it using the resolvent bounds \eqref{eq:bound_L_xi_Y} and \eqref{eq:bound_resolvent_Y_j}:
		\begin{align*}
			\| \PP^{(1)}(\xi)^\star \|_{\BBB(\Sl_{1};\Slp)} \lesssim \oint_{|z|=r} \| \RR(z,\LL_{\xi}) \|_{\BBB(\Slm;\Sl) } \| \RR(z,\LL) \|_{\BBB(\Sl_{1})} \d |z|
			\lesssim 1.
		\end{align*}
		The second term $\| \PP^{(1)}(\xi)\|_{\BBB(\Sl_1;\Sl)} \lesssim 1$ is estimated in the same way, thus we obtain
		$$\|\PP^{(1)}(\xi)\|_{\BBB(\Slm;\Sl)} \lesssim 1.$$
		Integrating the formula \eqref{eq:shrinkage_Y} and combining with the $\Ss-\Ssp$ resolvent bound \eqref{eq:bound_L_xi}, one proves the estimate
		$$\|\PP(\xi)\|_{\BBB(\Sl;\Ssp)} \lesssim 1,$$
		which then allows to perform another simpler (duality-free) bootstrap argument by combining the above estimates with the bounds of Lemma \ref{lem:expansion_projection}:
		\begin{align*}
			\| \PP^{(1)}(\xi) \|_{ \BBB(\Slm ; \Ssp ) } & \le \| \PP(\xi) \PP^{(1)}(\xi) \|_{ \BBB(\Slm ; \Ssp ) } + \| \PP^{(1)}(\xi) \PP \|_{ \BBB(\Slm ; \Ssp ) } \\
			& \le \| \PP(\xi) \|_{ \BBB(\Sl ; \Ssp ) } \| \PP^{(1)}(\xi) \|_{ \BBB(\Slm ; \Sl ) }  + \| \PP^{(1)}(\xi) \|_{ \BBB(\Sl ; \Ssp ) } \| \PP \|_{ \BBB(\Slm ; \Sl ) } \\
			& \lesssim 1.
		\end{align*}
		This concludes this step.
		
		\step{3b}{Second order expansion}
		We use this time the bootstrap formula \eqref{eq:bootstrap_P_2_A} and the first order estimates, together with the duality identity \eqref{eq:twisted_adjoint_identity_one_sided}
		\begin{align*}
			\| \PP^{(2)}(\xi) \|_{\BBB(\Slm;\Sl)} & \lesssim  1 
			+ \| \PP \PP^{(2)}(\xi) \|_{\BBB(\Slm;\Sl) } + \| \PP^{(2)}(\xi) \PP(\xi) \|_{\BBB(\Slm;\Sl)} \\
			& \lesssim 1 
			+ \| \PP^{(2)}(\xi)^\star \PP^\star \|_{\BBB(\Sl;\Slp) } + \| \PP^{(2)}(\xi) \PP(\xi) \|_{\BBB(\Slm;\Sl)}.
		\end{align*}
		Using the regularization estimate \eqref{eq:regularization_P_Y} on $\PP$, we obtain
		\begin{align*}
			\| \PP^{(2)}(\xi) \|_{\BBB(\Slm;\Sl)} \lesssim 1 & + \| \PP^{(2)}(\xi)^\star \|_{\BBB(\Sl_2;\Slp)} \| \PP^\star \|_{\BBB(\Sl;\Sl_{2})}  + \| \PP^{(2)}(\xi) \|_{\BBB(\Sl_{2};\Sl) } \| \PP \|_{\BBB(\Slm;\Sl_2) }  \\
			\lesssim 1 & + \| \PP^{(2)}(\xi)^\star \|_{\BBB(\Sl_2;\Slp)} + \| \PP^{(2)}(\xi) \|_{\BBB(\Sl_{2};\Sl) }.
		\end{align*}
		We conclude as in the previous step that $\| \PP^{(2)}(\xi) \|_{\BBB(\Slm;\Sl)} \lesssim 1$, and then perform a second bootstrap to deduce $\| \PP^{(2)}(\xi) \|_{\BBB(\Slm;\Ssp)} \lesssim 1$ from the estimates of Lemma \ref{lem:expansion_projection}. This concludes the proof.
		%\end{proof}
		
		\begin{rem}
			Notice that, since 
			$$\left\|\left[\left(\Id-\PP(\xi)\right)U_{\xi}(t)\right]^{\star}\right\|_{\BBB(\Sl)}=\left\|\left(\Id-\PP(\xi)\right)U_{\xi}(t)\right\|_{\BBB(\Sl)}$$
			the decay estimates \eqref{decay-semigroup-EE} extends easily to the adjoint $ U_{\xi}(t)^{\star} {\left( \Id - \PP(\xi) \right)^{\star}}$.
		\end{rem}
		
		\subsection{Regularized version of the spectral result}
		\label{sec:LR}
		We present here yet another improved version of Theorem \ref{thm:spectral_study}, taking now advantage of possible alternative splittings of the linearized operator $\LL$. In order to prove a ``regularized'' version of our main result, we will need the following extra assumption.
		\begin{enumerate}[label=\hypst{LR}]
			\item \label{LR} 
			Besides Assumptions \ref{L1}--\ref{L4},  assume that the operator can be decomposed in a way $\LL = \BB^{(1)} + \AA^{(1)}$ compatible with a hierarchy of \textbf{Banach} spaces $\left(\Sr_j\right)_{j=-\ell}^{2}$, where $\ell \ge 0$, such that
			\begin{enumerate}
				\item the spaces $\Sr_j$ embed into one another and the regular space embeds into the original space:
				$$\Sr_2 \hookrightarrow \Sr_1 \hookrightarrow \Sr = \Sr_{0} \hookrightarrow \Sr_{-1} \hookrightarrow \dots \hookrightarrow \Sr_{-\ell}, \quad \Sr \hookrightarrow \Ss,$$
				\item the multiplication by $v$ is bounded from $\Sr_{j}$ to $\Sr_{j-1}$ for some $j$:
				$$\|vf\|_{\Sr_{j}} \lesssim \|f\|_{\Sr_{j+1}}, \quad \quad j=0,1,$$
				\item \label{ass:reg_A_reg} the operator $\AA^{(1)}$ is bounded from $\Sr_{j}$ to $\Sr_{j+1}$ and from $\Ss$ to $\Sr_{-\ell}$:
				$$\AA^{(1)} \in \BBB(\Sr_{j};\Sr_{j+1}) \cap \BBB(\Ss;\Sr_{-\ell}), \qquad j=1,\ldots,-\ell,$$
				\item \label{ass:reg_diss} the part $\BB^{(1)}_{\xi}$ is dissipative on $\Sg = \Sr_{-\ell}, \dots, \Sr_2, \Ss$ in the sense that
				$$\mathfrak{S}_{\Sg}(\BB^{(1)}_{\xi}) \cap \Delta_{-\lambda_{\BB}} = \varnothing, \qquad \sup_{\xi \in \R^d}\left\|\RR(z,\BB^{(1)}_{\xi})\right\|_{\BBB(\Sg)} \lesssim | \re z + \lambda_\BB|^{-1},$$
				uniformly in $z \in \Delta_{-\lambda_{\BB}}$.
			\end{enumerate}	
		\end{enumerate}
		Under this new set of Assumptions, we derive the following version of Theorem \ref{thm:spectral_study}:	
		\begin{theo}[\textit{\textbf{Regularized result}}]
			\label{thm:regularized_thm}
			If Assumptions \ref{LR} are in force, then the spectral projectors from Theorem \ref{thm:spectral_study} are regularizing in the sense that in the decomposition \eqref{eq:PPstar}
			\begin{equation*}
				\PP_\star(\xi) = \PP^{(0)}_\star\left( \widetilde{\xi} \right) + i \xi \cdot \PP_\star^{(1)}\left( \widetilde{\xi} \right)  + \mathsf{S}_{\star}(\xi),
			\end{equation*}
			each term belongs to $\BBB\left(\Ssm ; \Sr \right)$ uniformly in $|\xi| \le \alpha_{0}$, and $\| \mathsf{S}_\star(\xi) \|_{ \BBB(\Ssm ; \Sr) } \lesssim | \xi |^2$.
		\end{theo}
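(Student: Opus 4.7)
The plan is to adapt the bootstrap strategy of Lemma \ref{lem:expansion_projection} and of the enlarged Theorem \ref{thm:enlarged_thm} to the new splitting $\LL = \BB^{(1)} + \AA^{(1)}$ furnished by \ref{LR}. The objective is to upgrade the codomain of each term in the spectral expansion from $\Ssp$ to the more regular Banach space $\Sr$, while preserving $\Ssm$ as the source.

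The first step is to establish uniform bounds $\|\PP(\xi)\|_{\BBB(\Ssm; \Sr_j)} \lesssim 1$ for $j = 0, 1, 2$ and $|\xi| \le \alpha_0$. Starting from the Dunford integral
$$\PP(\xi) = \frac{1}{2\pi i}\oint_{|z|=r} \RR(z, \LL_\xi)\,\d z$$
and iterating the resolvent identity $\RR(z, \LL_\xi) = \RR(z, \BB^{(1)}_\xi) + \RR(z, \BB^{(1)}_\xi)\AA^{(1)}\RR(z, \LL_\xi)$, the first $N-1$ terms of the expansion are holomorphic across the contour and vanish upon integration, leaving
$$\PP(\xi) = \frac{1}{2\pi i}\oint_{|z|=r} \bigl[\RR(z, \BB^{(1)}_\xi)\AA^{(1)}\bigr]^N \RR(z, \LL_\xi)\,\d z.$$
The rightmost factor sends $\Ssm$ into $\Ssp \hookrightarrow \Ss$ by \eqref{eq:bound_L_xi}; the first occurrence of $\AA^{(1)}$ then lands in $\Sr_{-\ell}$ by the special bound $\AA^{(1)} \in \BBB(\Ss; \Sr_{-\ell})$ from \ref{ass:reg_A_reg}; and each subsequent pair $\RR(z, \BB^{(1)}_\xi)\AA^{(1)}$ climbs one rung of the hierarchy, combining \ref{ass:reg_diss} with \ref{ass:reg_A_reg}. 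Choosing $N = \ell + j + 1$ reaches $\Sr_j$.

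The expansion $\PP(\xi) = \PP + i\xi \cdot \PP^{(1)}(\xi) + \xi \otimes \xi : \PP^{(2)}(\xi)$ is then treated as in Lemma \ref{lem:expansion_projection}, with all intermediate bounds upgraded via the first step. The direct estimate $\|\PP^{(1)}(\xi)\|_{\BBB(\Sr_1; \Sr)} \lesssim 1$ follows from the integral representation, the boundedness of the multiplication by $v$ from $\Sr_1$ into $\Sr$, and the resolvent bounds. Then the bootstrap formula \eqref{eq:bootstrap_P_1_A}, namely $\PP^{(1)}(\xi) = \PP(\xi)\PP^{(1)}(\xi) + \PP^{(1)}(\xi)\PP$, combined with the duality argument of Lemma \ref{lem:expansion_projection} and with the regularization $\PP, \PP(\xi) \in \BBB(\Ssm; \Sr_j)$ from the first step, yields $\|\PP^{(1)}(\xi)\|_{\BBB(\Ssm; \Sr)} \lesssim 1$. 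An analogous argument based on \eqref{eq:bootstrap_P_2_A} gives $\|\mathsf{S}(\xi)\|_{\BBB(\Ssm; \Sr)} \lesssim |\xi|^2$. These estimates transfer to each $\PP_\star(\xi)$ through the Kato isomorphism $\UU_\xi$, whose expansion \eqref{eq:expansion_Kato_isomorphism} retains its validity in $\BBB(\Ssm; \Sr)$ by the same machinery, since the correction operators there are built from $\PP v \mathsf{R}_0$ and $\mathsf{R}_0 v \PP$.

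The main obstacle lies in the interplay between the hierarchy $(\Sr_j)$, the duality step needed to swap source and target spaces, and the fact that multiplication by $v$ only maps $\Sr_{j+1}$ into $\Sr_j$. Each $v$ appearing in the Taylor expansion thus costs one degree of regularity and must be compensated by one extra iteration of the resolvent factorization, so $N$ must be chosen strictly larger than $\ell + 1$ when estimating $\PP^{(1)}$ and $\mathsf{S}$. Equally delicate is checking that the explicit zeroth- and first-order eigenfunctions ($\mu, v\mu, \psi_\Bou, \psi_{\pm\Wave}, \LL^{-1}\BurA, \LL^{-1}\BurB$) truly belong to $\Sr$; this is automatic once $\range(\PP) \subset \Sr$ is in hand, the inverse images under $\LL$ being controlled by \eqref{eq:shrinkage_A_B} specialised to the new splitting $\BB^{(1)} + \AA^{(1)}$.
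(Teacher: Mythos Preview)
Your overall bootstrap strategy is right, and your Step~1 (regularization of the full projector $\PP(\xi)$ into the hierarchy via the iterated factorization $[\RR(z,\BB^{(1)}_\xi)\AA^{(1)}]^N \RR(z,\LL_\xi)$) matches the paper exactly. The problem is your invocation of ``the duality argument of Lemma~\ref{lem:expansion_projection}'' for the first term $\PP(\xi)\PP^{(1)}(\xi)$ in the bootstrap formula. That argument hinged on the Hilbert-space identity $\|T\|_{\BBB(\Ssm;\Ssp)} = \|T^\star\|_{\BBB(\Ssm;\Ssp)}$ together with $\PP(\xi)^\star = \PP(-\xi)$, which let one swap $\PP(\xi)\PP^{(1)}(\xi)$ for $\PP^{(1)}(-\xi)\PP(-\xi)$. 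But \ref{LR} explicitly allows the $\Sr_j$ to be \emph{Banach} spaces (the motivating example is $\Sr_j = L^\infty(\mu^{-1/2}\la v\ra^{j+\ell})$), so no such adjoint identity is available, and no hypothesis on adjoints of $\AA^{(1)}$ or $\BB^{(1)}$ is made.

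The paper handles this differently: it explicitly says the bootstrap ``is simpler because no duality argument is involved, in exchange we replace the use of adjoint operators by estimates in $\Sr_{-1}$ and $\Sr_{-2}$.'' Concretely, it extends the resolvent bounds to $\BBB(\Sr_j)$ for $j = -2, \dots, 2$ and obtains the two-sided regularization $\PP(\xi) \in \BBB(\Sr;\Sr_2) \cap \BBB(\Sr_{-2};\Sr)$; the negative-index spaces in \ref{LR} are precisely what substitutes for the adjoint trick. Your proposal never touches these spaces, which is why you are forced back to duality. A clean fix, compatible with what you wrote, is to note that Lemma~\ref{lem:expansion_projection} already gives $\PP^{(1)}(\xi) \in \BBB(\Ssm;\Ssp)$ (it used duality internally, but in the legitimate Hilbert setting), and then for the circular term one writes
\[
\PP(\xi)\PP^{(1)}(\xi): \Ssm \xrightarrow{\PP^{(1)}(\xi)} \Ssp \hookrightarrow \Ss \xrightarrow{\PP(\xi)} \Sr,
\]
with the last arrow coming from your Step~1. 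This avoids any new duality and makes your sketch go through; the same trick handles $\PP^{(2)}(\xi)$.
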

		
		\begin{rem}
			Once again, we illustrate this set of assumptions in the case of the Boltzmann equation for hard spheres. In this context the hierarchy of spaces can be taken to be
			$$\Sr_j = L^\infty\left( \mu^{-1/2} \la v \ra^{j + \ell} \d v \right), \qquad j=2,\ldots,-\ell\,,$$
			for some integer $\ell > \frac{d}{2}$, and the splitting is also Grad's splitting (see Remark \ref{rem:L1B1}).
		\end{rem}
		
		\begin{proof}[Proof of Theorem \ref{thm:regularized_thm}]
			Let us prove that the coefficients of the expansion
			$$\PP(\xi) = \PP + \xi \cdot \PP^{(1)} + \xi \otimes \xi : \PP^{(2)}(\xi)$$
			belong to $\BBB(\Ssm;\Sr)$ uniformly in $\xi$ small enough. As pointed out in the proof of Theorem \ref{thm:enlarged_thm}, this will be enough to deduce it also holds for $\PP_\star(\xi)$.
			
			\step{1}{Estimate for the resolvent in the regular space $\Sr$}
			Starting from the factorization formula
			\begin{equation}
				\label{eq:factorization_L_xi_reg}
				\RR(z,\LL_{\xi}) = \sum_{n=0}^{\ell-1} \left( \RR\left(z,\BB^{(1)}_{\xi} \right) \AA^{(1)} \right)^n \RR\left(z,\BB_{\xi}^{(1)}\right) + \left(\RR\left(z,\BB^{(1)}_{\xi} \right) \AA^{(1)} \right)^{\ell} \RR(z,\LL_{\xi}),
			\end{equation}
			one gets from \eqref{eq:bound_L_xi}, the embedding $\Sr \hookrightarrow \Ss$, as well as the bounds \ref{ass:reg_diss} on $\RR\left(z,\BB_{\xi}^{(1)}\right)$ and the regularization hypothesis \ref{ass:reg_A_reg} for $\AA^{{(1)}}$ that, for any $0 < \lambda < \lambda_\LL$, there are some $\alpha_{0},r >0$ small enough such that
			\begin{equation}
				\label{eq:bound_L_xi_reg}
				\sup_{ z \in \Omega } \| \RR(z,\LL_{\xi}) \|_{\BBB(\Sr)} \le C, \qquad |\xi| \le \alpha_0
			\end{equation}
			with, as before, $\Omega=\Delta_{-\lambda}\cap \{ |z|\geq r\}$.
%			One also shows
%			\begin{equation*}
%				\label{eq:bound_resolvent_L_X_j_reg}
%				\| \RR(z,\LL) \|_{\BBB(\Sr_{j})} \lesssim 1 + \frac{1}{|z|}, \quad j = -\ell, \dots, 2,  \quad z \in \Delta_{-\lambda} \setminus \{0\}\,. 
%			\end{equation*}
			
			\step{2}{Behavior of the spectral projector as $\xi \to 0$}
			We use a similar bootstrap strategy. It is simpler because no duality argument is involved, in exchange we replace the use of adjoint operators by estimates in $\Sr_{-1}$ and $\Sr_{-2}$.
			
			The first step is to extend the bound \eqref{eq:bound_L_xi_reg} from $\Sr$ to $\Sr_j$ for $j=-2, \dots, 2$ using similar factorization arguments as in the previous proofs.
			
			The second step is then to deduce, using the bounds \ref{ass:reg_A_reg}--\ref{ass:reg_diss} and the representation formula
			\begin{equation*}
				\PP(\xi) = \frac{1}{2 i \pi} \int_{ | z | = r } \left( \RR\left(z,\BB^{(1)}_{\xi} \right) \AA^{(1)} \right)^{2} \RR(z,\LL_{\xi}) \d z
			\end{equation*}
			the regularization bounds
			$$\| \PP(\xi) \|_{ \BBB\left( \Sr ; \Sr_2 \right)  } + \| \PP(\xi) \|_{ \BBB\left( \Sr_{-2} ; \Sr \right)  } \lesssim 1, \quad | \xi | \le \alpha_0.$$
			From then, we follow a simplified version of the bootstrap procedures used in the proofs of Lemma \ref{lem:expansion_projection} and Theorem \ref{thm:enlarged_thm}. This concludes this proof.
		\end{proof}
		
		\section{Properties of the linearized semigroup in the physical space}
		
		\label{scn:study_physical_space}
		
		If one assumes only \ref{L1}--\ref{L4}, we denote in this section $\Sl = \Ss$. Under the extra assumption \ref{LE}, $\Sl$ is the space from \ref{LE}.
		
		\medskip
		In this section, we exploit the spectral description of the previous Section to study the main properties of the semigroup $(U^\eps(t))_{t\geq0}$.
		We adopt the notations and definitions introduced in Section \ref{sec:detail}. We only recall that 
		$$U^\eps(t)=U^\eps_\kin(t)+ U^\eps_\hyd(t), \qquad U^\eps_\hyd(t)=U^\eps_\ns(t) + U^\eps_\Wave(t)$$
		where the various semigroups are defined in Definitions \ref{def:hydro_semigroups} and \ref{def:kinetic_semigroups}. As explained in the Introduction and in Section \ref{sec:detail}, it is important for the definition of the various stiff terms $\Psi^\eps_\hyd[f,g]$ and $\Psi^\eps_\kin[f,g]$ to study suitable bounds on the semigroups $U^\eps_\kin(t) $ and $U^\eps_\hyd(t) $ as well as their convolution with suitable time-dependent functions.
		
		We begin with   the following uniform estimates on $U^{\eps}_{\hyd}(\cdot) $
		\begin{lem}[\textit{\textbf{Bounds for the hydrodynamic semigroup}}]
			\label{lem:decay_semigroups_hydro}
			For $\star=\ns,\Wave,\disp$, the hydrodynamic semigroups~$U^\eps_\star(\cdot)$ are bounded from $\SSl$ to $\SSSs$:
			\begin{equation}
				\label{eq:decay_semigroups_hydro}
				\Nt U^\eps_\star(\cdot) g \Nt_\SSSs \lesssim \| g \|_{\SSl} +\|g\|_{\dot{\mathbb{H}}_x^{-\alpha}(\Slm)}.
			\end{equation}
			Furthermore, if $\varphi \in L^2\left( [0, T) ; \SSlm \right)$ where $T \in (0, \infty]$ is such that $\PP \varphi(t) = 0$, then for~$\star=\ns, \Wave$ and thus $\star=\hyd$
			\begin{subequations}
				\label{eq:decay_semigroups_hydro_orthogonal}
				\begin{equation}
					\label{eq:decay_semigroups_hydro_orthogonal_negative}
					\frac{1}{\eps} \Nt U^\eps_\star(\cdot) * \varphi \Nt_\SSSs \lesssim \sup_{0 \le t < T} \left\{w_{\phi, \eta}(t) \left(\int_0^t \|  \varphi(\tau) \|_{  \SSlm \cap \dot{\mathbb{H}}^{-\alpha}_x (\Slm_v) }^2 \d \tau \right)^{\frac{1}{2}} \right\},
				\end{equation}
				and
				\begin{equation}
					\label{eq:decay_semigroups_hydro_orthogonal_positive}
					\frac{1}{\eps} \Nt U^\eps_\star(\cdot) * \varphi \Nt_\SSSs \lesssim \sup_{0 \le t < T} \left\{w_{\phi, \eta}(t) \left(\int_0^t \|  \varphi(\tau) \|_{  \SSlm }^{ \frac{2}{1+\alpha} } \d \tau \right)^{ \frac{1+\alpha}{2} } \right\},
				\end{equation}
			\end{subequations}
		\end{lem}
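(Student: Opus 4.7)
The plan is to pass to the Fourier variables and reduce every estimate to a pointwise multiplier estimate in $\xi$, exploiting the fact that $\PP_\star(\eps\xi)$ is supported on $\{|\eps\xi|\le\alpha_0\}$. On this set, Theorems \ref{thm:spectral_study} and \ref{thm:enlarged_thm} supply three ingredients that will be used repeatedly: the asymptotic $\mathrm{Re}\,\lambda_\star(\eps\xi) \le -c_0|\eps\xi|^2$ so that the symbol satisfies $|e^{-\eps^{-2}t\lambda_\star(\eps\xi)}| \lesssim e^{-c_0 t |\xi|^2}$ uniformly in $\eps$; the uniform projector bound $\|\PP_\star(\eps\xi)\|_{\BBB(\Slm;\Ssp)} \lesssim 1$; and the first-order expansion $\PP_\star(\eps\xi) = \PP_\star^{(0)}(\widetilde{\xi}) + i\eps\xi\cdot\PP_\star^{(1)}(\widetilde{\xi}) + S_\star(\eps\xi)$ with error $O(|\eps\xi|^2)$ in $\BBB(\Slm;\Ssp)$. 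For $U^\eps_\disp$ the same bounds apply with the truncated symbol $e^{\pm i c t |\xi|/\eps - t \kappa_\Wave |\xi|^2}$, whose modulus is again $\lesssim e^{-\kappa_\Wave t|\xi|^2}$.

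For the first bound \eqref{eq:decay_semigroups_hydro}, I combine Plancherel with the pointwise multiplier estimate. The $\sup_t$ part is immediate because $|e^{-\eps^{-2}t\lambda_\star(\eps\xi)}| \le 1$ and the projectors are bounded $\Slm \to \Ssp$; using $\|\cdot\|_{\Slm}\le \|\cdot\|_{\Sl}$ one controls $\sup_t \|U^\eps_\star(t)g\|_{\rSSsp{s}}$ by $\|g\|_{\SSl}$. The $L^2_t$ part produces the factor $\int_0^\infty |\xi|^{2(1-\alpha)} e^{-2c_0 t|\xi|^2}\d t \lesssim |\xi|^{-2\alpha}$, which in high frequencies is absorbed by $\langle\xi\rangle^{2s}$ and gives $\|g\|_{\SSl}$, while in low frequencies (only relevant for $d=2$, $\alpha>0$) it forces the additional term $\|g\|_{\dot{\mathbb{H}}_x^{-\alpha}(\Slm_v)}$ on the right-hand side. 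The weight $w_{\phi,\eta} \le 1$ only helps.

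For the convolution estimates \eqref{eq:decay_semigroups_hydro_orthogonal}, the key observation is that $\PP \varphi(t)=0$ forces the zeroth-order term in the expansion to vanish: since $\PP_\star^{(0)}(\widetilde{\xi})$ takes values in $\nul(\LL)$ and factors through $\PP$ on its left factor (as can be read from the explicit formulas in Theorem \ref{thm:spectral_study}), one has $\PP_\star^{(0)}(\widetilde{\xi})\widehat{\varphi}=0$. Hence $\|\PP_\star(\eps\xi)\widehat\varphi(\tau,\xi)\|_{\Ssp}\lesssim \eps|\xi|\|\widehat\varphi(\tau,\xi)\|_{\Slm}$, and this $\eps|\xi|$ gain absorbs the prefactor $1/\eps$. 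The remaining $\tau$-convolution with the heat-like kernel $|\xi|\,e^{-c_0(t-\tau)|\xi|^2}$ is treated by Young's inequality: for \eqref{eq:decay_semigroups_hydro_orthogonal_negative} a Cauchy-Schwarz bound on the kernel gives $\int_0^t e^{-2c_0(t-\tau)|\xi|^2}\d\tau\lesssim |\xi|^{-2}$, cancelling the $|\xi|$ exactly; the resulting symbol is $O(1)$ and Plancherel closes the estimate in $\rSSs{s}$; for the $L^2_t$ dissipation part one applies Young's inequality with $1/p+1/q'=3/2$, chosen so that $q'=2/(2-\alpha)$ and $p=2/(1+\alpha)$, the kernel's $L^{q'}_\tau$ norm being $\lesssim |\xi|^{-(2-\alpha)}$ which combines with the premultiplier $|\xi|^{1-\alpha}\cdot|\xi|$ to yield a bounded symbol; a final Minkowski exchange (valid since $p\le 2$) produces the mixed norm $\|\varphi\|_{L^p_t \SSlm}$ on the right.

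The main obstacle lies in the bookkeeping of the $d=2$ case with $\alpha\in(0,\tfrac12)$: one must carefully select the Young exponents in the $\tau$-convolution so that, simultaneously, the $L^\infty_t$ part of $\Nt\cdot\Nt_\SSSs$ and the weighted $L^2_t\,|\nabla_x|^{1-\alpha}$-dissipation part close without introducing any extra spatial regularity on $\varphi$. The second estimate \eqref{eq:decay_semigroups_hydro_orthogonal_positive} is at the critical endpoint for which Young's inequality is saturated, and the use of Minkowski's integral inequality in the reverse direction (crucially requiring $p\le 2$) is what allows to exchange the $L^p_\tau$ and $L^2_\xi$ norms; in dimension $d\ge 3$ one takes $\alpha=0$ and the two estimates collapse to the same $L^2_t$ statement, which is much simpler. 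The treatment of $U^\eps_\Wave$ and $U^\eps_\disp$ is identical to $U^\eps_\ns$ at the level of $L^2$ estimates since $|e^{\pm ict|\xi|/\eps}|=1$, so no new difficulty arises there.
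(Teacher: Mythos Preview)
Your proposal is correct and follows essentially the same route as the paper: pass to Fourier, use the heat-like decay $|e^{\eps^{-2}t\lambda_\star(\eps\xi)}|\lesssim e^{-c_0t|\xi|^2}$ together with the uniform bound $\PP_\star(\eps\xi)\in\BBB(\Slm;\Ssp)$ for \eqref{eq:decay_semigroups_hydro}, and for the convolution estimates exploit the expansion of $\PP_\star$ so that $\PP\varphi=0$ yields the crucial gain $\|\PP_\star(\eps\xi)\widehat\varphi\|_{\Ssp}\lesssim\eps|\xi|\|\widehat\varphi\|_{\Slm}$, followed by Young's inequality in $\tau$ and Minkowski. The only wrinkle is organizational: your paragraph on the convolution estimates treats the $\sup_t$ part of \eqref{eq:decay_semigroups_hydro_orthogonal_negative} via Cauchy--Schwarz and then jumps to the Young exponents $p=2/(1+\alpha)$, $q'=2/(2-\alpha)$, which are those of \eqref{eq:decay_semigroups_hydro_orthogonal_positive}; the $L^2_t\,|\nabla_x|^{1-\alpha}$ part of \eqref{eq:decay_semigroups_hydro_orthogonal_negative} is not stated explicitly. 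The paper handles it with the simpler Young pairing $L^1_\tau * L^2_\tau \hookrightarrow L^2_\tau$ (kernel $|\xi|^2 e^{-c_0 t|\xi|^2}\in L^1_t$ uniformly), leaving a residual $|\xi|^{-2\alpha}$ weight that produces the $\dot{\mathbb{H}}^{-\alpha}_x(\Slm_v)$ contribution---exactly the mechanism you already identified for \eqref{eq:decay_semigroups_hydro}, so no new idea is missing.
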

		\begin{rem} The above estimates still hold  true for $\star=\hyd$ since $$U^\eps_\hyd(t)=U^\eps_\Wave(t)+U^\eps_\ns(t).$$ 
			Notice also that \eqref{eq:decay_semigroups_hydro_orthogonal_positive} shows that, with respect to \eqref{eq:decay_semigroups_hydro_orthogonal_negative}, no use of Sobolev space of negative order is required under the stronger integrability assumption $\varphi  \in L^{\frac{2}{1+\alpha}}\left( [0, T) ; \SSlm \right).$\end{rem}
		
		\begin{proof}
			Let us fix $\star=\Bou,\Inc,\pm\Wave$. First of all, notice that for $\eps | \xi | \le \alpha_{0}$, where we recall from \eqref{eq:lambda_star} that $\alpha_{0}$ can be taken small enough
			$$\re \left(\eps^{-2} \lambda_\star(\eps \xi) \right) = - \kappa_\star | \xi |^2 + \OO\left( \eps | \xi |^3 \right) \le - \frac{\kappa_\star}{2} | \xi |^2,$$
			and thus, the following estimate holds:
			$$\exp\left(\eps^{-2} t \lambda_\star(\eps \xi) \right) \le \exp(- t \kappa_\star \frac{|\xi|^2}{2}).$$
			Let us prove in the first step \eqref{eq:decay_semigroups_hydro} and in the second step \eqref{eq:decay_semigroups_hydro_orthogonal}.
			
			\step{1}{Proof of \eqref{eq:decay_semigroups_hydro}}
			Using the Fourier representation from Definition \ref{def:hydro_semigroups} of $U^\eps_\star f$, together with the boundedness $\| \PP_\star(\eps \xi) \|_{\BBB( \Slm ; \Ssp )}$ of Theorems \ref{thm:spectral_study}  and \ref{thm:enlarged_thm}, we easily have for $d \ge 2$
			\begin{equation*}\begin{split}
					\| U^\eps_\star(t) g \|_{\SSsp}^2 &=\int_{\R^d}\left\|\FF_x\left[U^{\eps}_\star(t)g\right](\xi)\right\|_{\Ssp}^2\,\langle \xi\rangle^{2s}\d \xi\\
					&\lesssim \int_{\R^{d}} e^{- t \kappa_\star | \xi |^2 } \| \widehat{g}(\xi) \|_{\Slm}^2 \la \xi \ra^{2s} \d \xi \lesssim \| g \|_{\SSlm}^2,  
				\end{split}
			\end{equation*}
			as well as
			\begin{equation*}\begin{split}
					\int_0^T \left\| |\nabla_x|^{1-\alpha} U^\eps_\star(t) g \right\|_{\SSsp}^2 \d t &=\int_0
					^T\d t \int_{\R^d}|\xi|^{2-2\alpha}\left\|\FF_x\left[U^\eps_\star(t) g\right](\xi)\right\|^2_{\Ssp}\langle\xi\rangle^{2s}\d \xi\\
					&\lesssim \int_0^T \int_{\R^{d}} | \xi |^{2-2\alpha} e^{- t \kappa_\star | \xi |^2  } \| \widehat{g}(\xi) \|_{\Slm}^2 \la \xi \ra^{2s} \d \xi \d t \\
					& \lesssim \int_{\R^{d}} \| \widehat{g}(\xi) \|_{\Slm}^2 \la \xi \ra^{2s} \left(\int_0^T | \xi |^{2 - 2\alpha} e^{- t \kappa_\star \frac{|\xi|^2}{2} } \d t\right) \d \xi\,.
			\end{split}\end{equation*}
			Consequently,
			\begin{equation}\label{eq:graU_*}
				\int_0^T \left\| |\nabla_x|^{1-\alpha} U^\eps_\star(t) g \right\|_{\SSsp}^2 \d t\lesssim \int_{\R^{d}} \| \widehat{g}(\xi) \|_{\Slm}^2 | \xi |^{-2\alpha} \la \xi \ra^{2s} \left(\int_0^T | \xi |^{2} e^{- t \kappa_\star \frac{|\xi|^2}{2} } \d t\right) \d \xi.\end{equation}
			Using that
			\begin{equation}\label{eq:XIALPHA}
				|\xi|^{-2\alpha}\langle \xi\rangle^{2s} \lesssim \langle \xi\rangle^{2s}\mathbf{1}_{|\xi|\ge 1} +|\xi|^{-2\alpha}\mathbf{1}_{|\xi|\le 1} \qquad \text{ and } \quad \int_0^T | \xi |^{2} e^{- t \kappa_\star \frac{|\xi|^2}{2} } \d t \lesssim 1\end{equation}
			we deduce that
			$$\int_0^T \left\| |\nabla_x|^{1-\alpha} U^\eps_\star(t) g \right\|_{\SSsp}^2 \d t \lesssim \|g\|_{\SSlm}^{2}+\| g \|_{\dot{\mathbb{H}}^{-\alpha}_x \left(\Slm_v\right)}^2.$$
			This concludes this step thanks to \eqref{eq:roughNt}.
			
			\step{2}{Proof of \eqref{eq:decay_semigroups_hydro_orthogonal}}
			Recall from the expansion \eqref{eq:PPstar} of $\PP_\star$  that
			\begin{equation}
				\label{eq:overPP_star}
				\begin{split}
					\PP_\star(\eps \xi) &= \PP_\star^{(0)}\left( \widetilde{\xi} \right) +i \eps \xi \cdot \PP_\star^{(1)}( \widetilde{\xi} ) + S_{\star}(\eps \xi)\\
					&=:\PP_{\star}^{(0)}\left(\widetilde{\xi}\right) + \eps \xi \cdot \overline{\PP}_{\star}^{(1)}(\eps \xi),
				\end{split}
			\end{equation}
			where the remainder $\overline{\PP}_{\star}^{(1)}(\eps \xi)$ satisfies 
			$$\sup_{\eps|\xi| \le \alpha_0}\|\overline{\PP}_{\star}^{(1)}(\eps \xi)\|_{\BBB(\Slm;\Ssp)} \lesssim 1$$ by virtue of  Theorems \ref{thm:spectral_study} and \ref{thm:enlarged_thm} whereas $\PP_{\star}^{(0)}\left( \widetilde{\xi} \right)$ is an $\Ss$-orthogonal projection on a subspace of $\nul(\LL)$. In particular, we have
			\begin{equation}
				\label{eq:KernelP}   
				\PP_\star(\eps \xi) \widehat{\varphi}(t, \xi) = \eps \xi \cdot \overline{\PP}_\star^{(1)}(\eps \xi) \widehat{\varphi}(t, \xi)\end{equation}
			and thus there holds, for any $t\ge 0,\tau \ge 0$
			\begin{align*}
				\left\| \exp\left( \eps^{-2} t \lambda_\star(\eps \xi) \right) \PP_\star(\eps \xi) \, \widehat{\varphi}(\tau, \xi) \right\|_{ \Ssp }
				& \le \eps |\xi| e^{- t \kappa_\star \frac{|\xi|^2}{2}} \| \overline{\PP}^{(1)}_\star(\eps \xi) \, \widehat{\varphi}(\tau,\xi) \|_{\Ssp } \\
				& \lesssim \eps |\xi| e^{- t \kappa_\star \frac{|\xi|^2}{2}} \| \widehat{\varphi}(\tau,\xi) \|_{\Slm }.
			\end{align*}
			Therefore, 
			\begin{equation}\label{eq:convVarphi}\begin{split}
					\frac{1}{\eps^2} \| U^\eps_\star(\cdot) * \varphi(t) \|_{\SSsp}^2 &=\frac{1}{\eps^2}\int_{\R^d}\langle \xi\rangle^{2s}\left\|\int_0^t \FF_x\left[U^\eps_\star(t-\tau)\varphi(\tau)\right](\xi)\d \tau\right\|_{\Ssp}^2\d \xi\\
					&   \leq \frac{1}{\eps^2}\int_{\R^d}\langle \xi\rangle^{2s}\left(\int_0^t \eps|\xi|e^{- (t-\tau) \kappa_\star \frac{|\xi|^2}{2}} \| \widehat{\varphi}(\tau,\xi)\|_{\Slm }\d \tau\right)^2\d \xi.
			\end{split}\end{equation}
			Using Cauchy-Schwarz's inequality to estimate the integral over $[0,t]$, we have
			\begin{align*}
				\frac{1}{\eps^2} \| U^\eps_\star(\cdot) * \varphi(t) \|_{\SSsp}^2 & \lesssim \int_{\R^{d}} \la \xi \ra^{2 s}  \left(\int_0^t \left[| \xi | e^{- (t-\tau) \kappa_\star \frac{|\xi|^2}{2} } \right]^2\d\tau\right)\left(\int_0^t \| \widehat{\varphi}(\tau, \xi) \|_{\Slm}^2 \d \tau\right) \d \xi \\
				& \lesssim \int_{\R^{d}} \la \xi \ra^{2 s} \d\xi \int_0^t \| \widehat{\varphi}(\tau, \xi) \|_{\Slm}^2 \d \tau    \lesssim \int_0^t \| \varphi(\tau) \|_{\SSlm}^2 \d \tau.
			\end{align*}
			In the same way,
			\begin{multline}\label{eq:younIne}
				\frac{1}{\eps^2}  \int_0^T \| |\nabla_x|^{1-\alpha} U^\eps_\star(\cdot)  * \varphi(t) \|_{\SSsp}^2 \d t 
				\\
				\lesssim \frac{1}{\eps^2}\int_0^T\d t\int_{\R^d}\langle \xi\rangle^{2s}|\xi|^{2-2\alpha}
				\left(\int_0^t \eps|\xi|e^{-(t-\tau)\kappa_{\star}\frac{|\xi|^2}{2}}\|\widehat{\varphi}(\tau,\xi)\|_{\Slm}\d\tau\right)^2\d\xi\\
				\lesssim \int_{\R^d}\langle \xi\rangle^{2s}|\xi|^{-2\alpha}\d \xi\int_0^T \left(\int_0^t |\xi|^2e^{-(t-\tau)\kappa_{\star}\frac{|\xi|^2}{2}}\|\widehat{\varphi}(\tau,\xi)\|_{\Slm}\d\tau\right)^2\d t.
			\end{multline}
			Recalling \eqref{eq:XIALPHA} and using  Young's convolution inequality in the form $L^1\left( [0, T] \right) \ast L^2\left([0, T]\right) \hookrightarrow  L^2\left([0, T]\right)$ we deduce that
			\begin{align*}
				\frac{1}{\eps^2}  \int_0^T \| |\nabla_x|^{1-\alpha} U^\eps_\star(t) * \varphi \|_{\SSsp}^2 \d t & \lesssim \int_{\R^{d}} \la \xi \ra^{2 s} | \xi |^{-2\alpha} \int_0^T \| \widehat{\varphi}(t, \xi) \|_{\Slm}^2 \d t \d \xi \\
				& \lesssim \int_0^T \| \varphi(t) \|_{\SSlm}^2 \d t + \int_0^T \| \varphi(t) \|_{ \dot{\mathbb{H}}^{-\alpha}_x \left(\Slm_v\right) }^2 \d t\,
			\end{align*}
			which easily prove \eqref{eq:decay_semigroups_hydro_orthogonal_negative}. To prove \eqref{eq:decay_semigroups_hydro_orthogonal_positive}, we rewrite \eqref{eq:younIne} as
			\begin{align*}
				\frac{1}{\eps^2}  \int_0^T \| |\nabla_x|^{1-\alpha} & U^\eps_\star(\cdot)  * \varphi(t) \|_{\SSsp}^2 \d t 
				\\
				& \lesssim   \int_{\R^d}\langle \xi\rangle^{2s}\d \xi\int_0^T \left(\int_0^t |\xi|^{2-\alpha}e^{-(t-\tau)\kappa_{\star}\frac{|\xi|^2}{2}}\|\widehat{\varphi}(\tau,\xi)\|_{\Slm}\d\tau\right)^2\d t.
			\end{align*}
			Using now  Young's convolution inequality in the form  $L^{\frac{2}{2-\alpha}}\left( [0, T] \right) \ast L^{\frac{2}{1+\alpha}}\left([0, T]\right) \hookrightarrow L^2\left( [0, T] \right)$ we deduce that
			\begin{align*}
				\frac{1}{\eps^2}  \int_0^T \| |\nabla_x|^{1-\alpha} U^\eps_\star(\cdot) * \varphi(t) \|_{\SSsp}^2 \d t 
				& \lesssim \int_{\R^{d}} \la \xi \ra^{2 s} \left(\int_0^T \| \widehat{\varphi}(t, \xi) \|_{\Slm}^{\frac{2}{1+\alpha}} \d t\right)^{1+\alpha} \d \xi \\
				& \lesssim \left(\int_0^T \| \varphi(t) \|_{\SSlm}^{\frac{2}{1+\alpha}} \d t\right)^{1+\alpha},
			\end{align*}
			where we used Minkowski's integral inequality for the last estimate. Since the estimates established are uniform in $T$, this concludes the proof.
		\end{proof}
		
		We now make precise the asymptotic equivalence between the semigroup  $\left(U^{\eps}_{\Wave}(t)\right)_{t\ge 0}$ and its leading order $\left(U^{\eps}_{\disp}(t)\right)_{t\geq0}$.
		\begin{lem}[\textit{\textbf{Asymptotic equivalence of the oscillating semigroups}}]
			\label{lem:asymptotic_equiv_oscillating_semigroup} Given $s \ge0$ and  some regularity parameter $r \in (s, s+1]$,  it holds
			\begin{equation}
				\label{eq:asymptotic_equiv_oscillating_reg}
				\Nt U^\eps_\Wave(\cdot) f - U^\eps_\disp(\cdot) f \Nt_{\rSSSs{s}} \lesssim \eps^{r-s} \left(\| f \|_{\rSSlm{r}} + \| f \|_{\dot{\mathbb{H}}^{-\alpha}_x (\Slm_v) } \right),
			\end{equation}
			for any $f \in \rSSlm{r} \cap \dot{\mathbb{H}}_x^{-\alpha}(\Slm_v)$ while  there holds
			\begin{equation}
				\label{eq:asymptotic_equiv_oscillating_general}
				\lim_{\eps \to 0}\Nt U^\eps_\Wave(\cdot) f - U^\eps_\disp(\cdot) f \Nt_{\SSSs} =0
			\end{equation}
			for any $f \in \SSlm \cap \dot{\mathbb{H}}^{-\alpha}_x (\Slm_v),$ i.e. whenever $r=s$.
		\end{lem}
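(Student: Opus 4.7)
The plan is to work in Fourier variables, exploit the spectral expansions of Theorem \ref{thm:spectral_study} (respectively Theorem \ref{thm:enlarged_thm} when \ref{LE} is in force), and split the frequency domain into a low-frequency region $\eps|\xi|\le\alpha_0$—where both semigroups are close—and a high-frequency region $\eps|\xi|>\alpha_0$, where $U^\eps_\Wave(t)f$ vanishes by definition of the pseudo-hydrodynamic projector while only $U^\eps_\disp(t)f$ remains. Since the time weight $w_{\phi,\eta}$ in $\Nt\cdot\Nt_{\rSSSs{s}}$ is bounded by one, it will be discarded throughout.

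In the low-frequency regime, set $\Lambda_\pm:=\eps^{-2}\lambda_{\pm\Wave}(\eps\xi)$, $\Lambda_\pm^0:=\pm ic\eps^{-1}|\xi|-\kappa_\Wave|\xi|^2$ and $\delta_\pm:=\Lambda_\pm-\Lambda_\pm^0$; by \eqref{eq:lambda_star}, $|\delta_\pm|\lesssim\eps|\xi|^3$, and by reducing $\alpha_0$ one can ensure $|\Re\delta_\pm|\le\kappa_\Wave|\xi|^2/2$. Decomposing the Fourier symbol as
\begin{equation*}
e^{t\Lambda_\pm}\PP_{\pm\Wave}(\eps\xi)-e^{t\Lambda_\pm^0}\PP^{(0)}_{\pm\Wave}\bigl(\widetilde{\xi}\bigr) = e^{t\Lambda_\pm^0}\bigl(e^{t\delta_\pm}-1\bigr)\PP_{\pm\Wave}(\eps\xi) + e^{t\Lambda_\pm^0}\Bigl(\PP_{\pm\Wave}(\eps\xi)-\PP^{(0)}_{\pm\Wave}\bigl(\widetilde{\xi}\bigr)\Bigr),
\end{equation*}
and using the identity $e^a-e^b=(a-b)\int_0^1 e^{b+s(a-b)}\d s$ together with the elementary bound $(t|\xi|^2)e^{-t\kappa_\Wave|\xi|^2/4}\lesssim 1$ to absorb the time factor $t\eps|\xi|^3$ coming from $\delta_\pm$, combined with the projector expansion \eqref{eq:PPstar}, one obtains the key symbol estimate
\begin{equation*}
\Bigl\|e^{t\Lambda_\pm}\PP_{\pm\Wave}(\eps\xi)-e^{t\Lambda_\pm^0}\PP^{(0)}_{\pm\Wave}\bigl(\widetilde{\xi}\bigr)\Bigr\|_{\BBB(\Slm;\Ssp)}\lesssim \eps|\xi|\,e^{-t\kappa_\Wave|\xi|^2/4}.
\end{equation*}

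Squaring this bound, multiplying by $\langle\xi\rangle^{2s}$ or by $|\xi|^{2-2\alpha}\langle\xi\rangle^{2s}$, and integrating in $\xi$ produces the low-frequency contribution. The prefactor $\eps^2|\xi|^2$ is converted to $\eps^{2(r-s)}$ as follows: for $|\xi|\ge 1$, the hypothesis $r-s\le 1$ together with $\eps|\xi|\le\alpha_0$ gives $(\eps|\xi|)^2\le(\eps|\xi|)^{2(r-s)}\le\eps^{2(r-s)}\langle\xi\rangle^{2(r-s)}$, yielding the $\rSSlm{r}$ contribution; for $|\xi|\le 1$, one uses $\eps^2|\xi|^{2-2\alpha}\le\eps^{2(r-s)}|\xi|^{-2\alpha}$, which fits exactly into the multiplier defining the $\dot{\mathbb{H}}^{-\alpha}_x(\Slm_v)$ norm on the right-hand side (needed only in dimension $d=2$). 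The time integral for the $|\nabla_x|^{1-\alpha}$ part is handled by $\int_0^T e^{-t\kappa_\Wave|\xi|^2/2}\d t\lesssim|\xi|^{-2}$, which absorbs the weight $|\xi|^{2-2\alpha}$. In the high-frequency region $|\xi|\ge\alpha_0/\eps$, only $U^\eps_\disp(t)f$ survives; the heat-type decay $e^{-t\kappa_\Wave|\xi|^2}$ supplies all the required integrability in time, and the regularity loss is converted into $\eps^{r-s}$ via the elementary inequality $\langle\xi\rangle^{2s}\le(\eps/\alpha_0)^{2(r-s)}\langle\xi\rangle^{2r}$, valid on $|\xi|\ge\alpha_0/\eps$ whenever $r\ge s$. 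This yields \eqref{eq:asymptotic_equiv_oscillating_reg}. The limiting statement \eqref{eq:asymptotic_equiv_oscillating_general} in the borderline case $r=s$, where the prefactor degenerates, follows from the same pointwise bounds by dominated convergence in $\xi$: the low-frequency integrand tends to zero pointwise thanks to the factor $\eps|\xi|$, and the high-frequency region $\{|\xi|\ge\alpha_0/\eps\}$ shrinks to $\varnothing$ as $\eps\to 0$.

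The main obstacle is that the raw symbol estimate only supplies $\eps|\xi|$, which is two orders of $|\xi|$ worse than what the target norm $\rSSsp{s}$ on the left-hand side can absorb: one order must be converted into $\eps^{r-s}$ through the regularity gap (forcing the restriction $r\le s+1$), and the other must be absorbed either by the residual smallness $\eps|\xi|\le\alpha_0$ in the low-frequency regime or by the homogeneous negative-order Sobolev norm when $|\xi|\le 1$ in dimension two. Juggling these different regimes while simultaneously controlling both the uniform-in-time term and the time-integrated $|\nabla_x|^{1-\alpha}$ term in $\Nt\cdot\Nt_{\rSSSs{s}}$ is the technical heart of the proof.
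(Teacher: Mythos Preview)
Your proof is correct and follows essentially the same strategy as the paper: Fourier-side symbol analysis, a frequency splitting at $\eps|\xi|=\alpha_0$, the same key bound $\|\text{symbol}\|_{\BBB(\Slm;\Ssp)}\lesssim \eps|\xi|\,e^{-t\kappa_\Wave|\xi|^2/4}$ in the low-frequency regime, and then conversion of $\eps|\xi|$ into $\eps^{r-s}$ via the regularity gap. The only cosmetic differences are that the paper regroups the symbol as $[e^{t\Lambda_\pm}-e^{t\Lambda_\pm^0}]\PP^{(0)}_\pm + \eps\,e^{t\Lambda_\pm}\xi\cdot\overline{\PP}^{(1)}_\pm$ rather than your $e^{t\Lambda_\pm^0}(e^{t\delta_\pm}-1)\PP_\pm + e^{t\Lambda_\pm^0}(\PP_\pm-\PP^{(0)}_\pm)$, unifies both frequency regimes through the single inequality $u\mathbf{1}_{u\le\alpha_0}+\mathbf{1}_{u>\alpha_0}\lesssim u^{r-s}$ instead of your explicit case split, and deduces the borderline case \eqref{eq:asymptotic_equiv_oscillating_general} by density in $\rSSlm{r}$ rather than by dominated convergence.
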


		\begin{proof}
			We start by expanding the symbol of $U^\eps_{\pm \Wave}(t)$ using the decomposition of $\PP_{\pm \Wave}(\eps \xi)$ from \emph{Step 2} of the proof of Lemma \ref{lem:decay_semigroups_hydro}, we obtain
			\begin{equation*}\begin{split}
					\exp\left( \eps^{-2} t \lambda_{\pm \Wave}(\eps \xi) \right)&\PP_{\pm \Wave}(\eps \xi) =  \exp\left( \eps^{-2} t \lambda_{\pm \Wave}(\eps \xi) \right)\left( \PP_{\pm \Wave}^{(0)}\left( \widetilde{\xi} \right)  + i\eps \xi \cdot \overline{\PP}^{(1)}_{\pm \Wave}(\eps \xi) \right)\\
					= & \exp\left( \pm i c \eps^{-1}t| \xi |  - t \kappa_\Wave | \xi |^2 \right) \PP_{\star}^{(0)}\left( \widetilde{\xi} \right) \\
					& + \Bigg[ \exp\left( \eps^{-2} t \lambda_{\star}(\eps \xi) \right) - \exp\left( \pm i c \eps^{-1}t| \xi | - t \kappa_\Wave | \xi |^2 \right) \Bigg] \PP_{\pm \Wave}^{(0)}\left( \widetilde{\xi} \right) \\
					& + \eps  \exp\left( \eps^{-2} t \lambda_{\star}(\eps \xi) \right) \xi \cdot  \overline{\PP}^{(1)}_{\pm \Wave}(\eps \xi).
			\end{split}\end{equation*}
			Thus, the symbol of the difference $U^\eps_\Wave(t) - U^\eps_\disp(t)$ writes, for $\eps |\xi| \le \alpha_{0},$ as the sum of the two terms (corresponding to $\star=\pm\Wave$):
			\begin{align*}
				\Big[\exp\left( \eps^{-2} t \lambda_{\star}(\eps \xi) \right)   - \exp\left( \pm i c \eps^{-1}t | \xi | - t \kappa_\Wave | \xi |^2 \right) \Big]& \PP_{\star}^{(0)}\left( \widetilde{\xi} \right)\\
				+ \eps \exp\left( \eps^{-2} t \lambda_{\star}(\eps \xi) \right) \xi \cdot  & \overline{\PP}^{(1)}_{\star}(\eps \xi).
			\end{align*}
			On the one hand, when $\eps |\xi| > \alpha_{0}$, since $\overline{\PP}^{(1)}_{\star}(\eps \xi)$ is supported in $\{ \eps |\xi | \le \alpha_{0} \}$, the symbol reduces to 
			$$-  \exp\left(  i c \eps^{-1}t | \xi | - t \kappa_\Wave | \xi |^2 \right)  \PP_\Wave^{(0)}\left( \widetilde{\xi} \right)-\exp\left( - i c \eps^{-1}t | \xi | - t \kappa_\Wave | \xi |^2 \right)  \PP_{-\Wave}^{(0)}\left( \widetilde{\xi} \right).$$
			On the other hand, when $\eps | \xi | \le \alpha_{0}$, we estimate the difference of exponentials using the inequality $\left|1 - e^a\right| \le a e^{|a|}$ as well as the expansion \eqref{eq:lambda_star} of 
			$\lambda_{\pm\Wave}(\xi)$:
			\begin{align*}
				\Big| \exp\left( \eps^{-2} t \lambda_{\star}(\eps \xi) \right) & - \exp\left( \pm i c \eps^{-1} |\xi| - t \kappa_\Wave | \xi |^2 \right) \Big| \\
				& = \left| \exp\left( \pm i c t \eps^{-1}|\xi| - t \kappa_\Wave | \xi |^2 \right) \right| \Big| \exp \left( \OO\left( t \eps | \xi |^3 \right) \right) -1 \Big| \\
				& \lesssim \left(\eps | \xi|\right) \left(t | \xi |^2\right) \exp\left( - t \kappa_\Wave | \xi |^2 \right) \exp\Big( \OO\left( t \eps | \xi |^3 \right) \Big),
			\end{align*}
			thus, using $re^{-r} \lesssim e^{-\frac{1}{2}r}$ and assuming $\alpha_{0}$ small enough so that $\OO\left( \eps | \xi |^3 \right) \le \frac{1}{4}\kappa_\Wave | \xi |^2$,  we obtain
			\begin{align*}
				\Big| \exp\left( \eps^{-2} t \lambda_{\star}(\eps \xi) \right) & - \exp\left( \pm i c t \eps^{-1}|\xi| - t \kappa_\Wave | \xi |^2 \right) \Big|\\
				& \lesssim \eps | \xi | \exp\left( - \frac{t}{2} \kappa_\Wave | \xi |^2  \right) \exp\Big( \OO\left( t \eps  | \xi |^3 \right) \Big) \\
				& \lesssim \eps | \xi | \exp\left( - \frac{t}{4} \kappa_\Wave | \xi |^2 \right).
			\end{align*}
			Putting together the previous estimates, we then bound the operator norm in $\BBB(\Slm ;\Ssp )$ of the symbol of the difference~$U^\eps_\Wave(t) - U^\eps_\disp(t)$. It is controlled by
			\begin{align*}
				\mathbf{1} _{\eps | \xi | \le \alpha_0} \eps | \xi | \exp\left( - \frac{t}{4} \kappa_\Wave | \xi |^2 \right) & +  \mathbf{1} _{\eps | \xi | > \alpha_0} \exp\left( - \frac{t}{4} \kappa_\Wave | \xi |^2 \right) \\
				& \lesssim \left(\eps | \xi |\right)^{r - s} \exp\left( - \frac{t}{4} \kappa_\Wave | \xi |^2 \right),
			\end{align*}
			where we used the comparison $u\mathbf{1}_{u\le \alpha_{0}} + \mathbf{1}_{u >\alpha_{0}} \lesssim u^{r-s}$ for any $u \geq0$ since $r-s \in [0, 1]$.
			As in the proof of Lemma \ref{lem:decay_semigroups_hydro}, such an estimate on the symbol of $U^\eps_\Wave(t) - U^\eps_\disp(t)$ yields the controls \eqref{eq:asymptotic_equiv_oscillating_reg}, from which we deduce \eqref{eq:asymptotic_equiv_oscillating_general} by density.
		\end{proof}
		
		A similar result holds for the difference between $U^{\eps}_{\ns}(t)-U_{\ns}(t)$.
		\begin{lem}[\textit{\textbf{Asymptotic equivalence of the Navier-Stokes semigroup}}]
			\label{lem:asymptotic_equiv_NS_semigroup}
			Given $s \ge0$ and consider some regularity parameter $r \in (s, s+1]$, the part $U^\eps_\ns(\cdot)$ of the hydrodynamic semigroup is such that
			\begin{equation}
				\label{eq:asymptotic_equiv_NS_reg}
				\Nt U^\eps_\ns(\cdot) f - U_\ns(\cdot) f \Nt_{\rSSSs{s}} \lesssim \eps^{r - s} \left(\| f \|_{\rSSlm{r}} + \| f \|_{ \dot{\mathbb{H}}^{-\alpha}_x (\Slm_v) } \right),
			\end{equation}
			for any $f \in \rSSlm{r} \cap \dot{\mathbb{H}}^{-\alpha}_x(\Slm_v)$, while, for $f \in \SSlm \cap \dot{\mathbb{H}}^{-\alpha}_x (\Slm_v)$ (i.e. $r=s$), there holds
			\begin{equation}
				\label{eq:asymptotic_equiv_NS_general}
				\lim_{\eps \to 0}\Nt U^\eps_\ns(\cdot)  f - U_\ns(\cdot)  f \Nt_\SSSs = 0.
			\end{equation}
			Furthermore, if $\varphi \in L^2\left( [0, T) ; \SSlm \right)$ where $T \in (0, \infty]$ is such that $\PP \varphi(t) = 0$, then
			\begin{equation}
				\label{eq:asymptotic_equiv_NS_orthogonal_reg}
				\frac{1}{\eps} \Nt U^\eps_\ns(\cdot)  * \varphi - \eps \nabla_x \cdot V_\ns(\cdot)  * \varphi \Nt_{\rSSSs{s}} \lesssim \eps^{r - s} \sup_{0 \le t < T} \left\{w_{\phi, \eta}(t) \left(\int_0^t \|  \varphi(\tau) \|_{  \rSSlm{r} \cap \dot{\mathbb{H}}^{-\alpha}_x (\Slm_v) }^2 \d \tau \right)^{\frac{1}{2}} \right\}.
			\end{equation}
		\end{lem}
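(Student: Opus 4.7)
The plan is to follow closely the strategy of Lemma \ref{lem:asymptotic_equiv_oscillating_semigroup}: express $U^\eps_\ns(t)-U_\ns(t)$ and the analogous convolution difference as Fourier multipliers, estimate the $\BBB(\Slm;\Ssp)$-norm of each symbol, and integrate in $\xi$ exactly as in the proof of Lemma \ref{lem:decay_semigroups_hydro}. The limit \eqref{eq:asymptotic_equiv_NS_general} will then follow from \eqref{eq:asymptotic_equiv_NS_reg} by a density argument. Only two new ingredients are needed compared with the wave case: the diffusive exponentials are easier to compare than the dispersive ones, and the convolution version forces us to exploit the orthogonality assumption $\PP\widehat\varphi=0$ to cancel the zeroth-order term of $\PP_\star(\eps\xi)$.

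For \eqref{eq:asymptotic_equiv_NS_reg}, fix $\star\in\{\Inc,\Bou\}$ and split, using the expansion \eqref{eq:PPstar},
$$e^{\eps^{-2}t\lambda_\star(\eps\xi)}\PP_\star(\eps\xi)-e^{-t\kappa_\star|\xi|^2}\PP_\star^{(0)}(\widetilde\xi)=\bigl[e^{\eps^{-2}t\lambda_\star(\eps\xi)}-e^{-t\kappa_\star|\xi|^2}\bigr]\PP_\star^{(0)}(\widetilde\xi)+e^{\eps^{-2}t\lambda_\star(\eps\xi)}\bigl[i\eps\xi\cdot\PP_\star^{(1)}(\widetilde\xi)+S_\star(\eps\xi)\bigr].$$
The second summand is bounded in $\BBB(\Slm;\Ssp)$ by $\eps|\xi|\exp(-ct|\xi|^2)$ thanks to the uniform bounds on $\PP_\star^{(1)}$ and $S_\star$ from Theorems \ref{thm:spectral_study}--\ref{thm:enlarged_thm}. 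For the first summand, the expansion $\eps^{-2}\lambda_\star(\eps\xi)=-\kappa_\star|\xi|^2+\OO(\eps|\xi|^3)$ from \eqref{eq:lambda_star}, combined with $|1-e^a|\le|a|e^{|a|}$ and the elementary bound $t|\xi|^2e^{-ct|\xi|^2}\lesssim 1$, produces the same dominating quantity, provided $\alpha_0$ is small enough so that the cubic remainder is absorbed into half of the diffusion. When $\eps|\xi|>\alpha_0$, the projector $\PP_\star(\eps\xi)$ vanishes by \eqref{eq:splitPP}, so the difference reduces to $-e^{-t\kappa_\star|\xi|^2}\PP_\star^{(0)}(\widetilde\xi)$, whose norm is $\lesssim e^{-ct|\xi|^2}$. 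Interpolating both regimes by $u\,\mathbf{1}_{u\le\alpha_0}+\mathbf{1}_{u>\alpha_0}\lesssim u^{r-s}$ with $u=\eps|\xi|$ and $r-s\in[0,1]$, we get the uniform symbol bound $(\eps|\xi|)^{r-s}\exp(-ct|\xi|^2)$. Integrating in $\xi$ against $\langle\xi\rangle^{2s}$ and $|\xi|^{2-2\alpha}\langle\xi\rangle^{2s}$, and splitting $\langle\xi\rangle^{2s}|\xi|^{2(r-s)-2\alpha}\lesssim\langle\xi\rangle^{2r}\mathbf{1}_{|\xi|\ge1}+|\xi|^{-2\alpha}\mathbf{1}_{|\xi|\le1}$ exactly as at the end of the proof of Lemma \ref{lem:decay_semigroups_hydro}, produces \eqref{eq:asymptotic_equiv_NS_reg}; \eqref{eq:asymptotic_equiv_NS_general} then follows from a standard density argument, letting $r\to s^+$ on a dense subspace.

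For \eqref{eq:asymptotic_equiv_NS_orthogonal_reg}, the key observation is that the assumption $\PP\widehat\varphi(\tau,\xi)=0$ forces $\PP_\star^{(0)}(\widetilde\xi)\widehat\varphi(\tau,\xi)=0$, since by Theorem \ref{thm:spectral_study} each $\PP_\star^{(0)}$ takes values in $\nul(\LL)$ and is $\Ss$-orthogonal to its complement on that subspace. Hence, as already exploited in \eqref{eq:KernelP},
$$\frac{1}{\eps}\PP_\star(\eps\xi)\widehat\varphi=i\xi\cdot\PP_\star^{(1)}(\widetilde\xi)\widehat\varphi+\frac{1}{\eps}S_\star(\eps\xi)\widehat\varphi,$$
so that the symbol of $\tfrac{1}{\eps}U^\eps_\ns(t-\tau)-\nabla_x\cdot V_\ns(t-\tau)$ acting on $\widehat\varphi(\tau,\xi)$ becomes
$$i\xi\cdot\PP_\star^{(1)}(\widetilde\xi)\bigl[e^{\eps^{-2}(t-\tau)\lambda_\star(\eps\xi)}-e^{-(t-\tau)\kappa_\star|\xi|^2}\bigr]+\frac{1}{\eps}e^{\eps^{-2}(t-\tau)\lambda_\star(\eps\xi)}S_\star(\eps\xi).$$
Using the same exponential difference estimate as above and $\|S_\star(\eps\xi)\|_{\BBB(\Slm;\Ssp)}\lesssim\eps^2|\xi|^2$, both contributions are bounded by $\eps|\xi|^2e^{-c(t-\tau)|\xi|^2}$ for $\eps|\xi|\le\alpha_0$, and by $|\xi|e^{-c(t-\tau)|\xi|^2}$ for $\eps|\xi|>\alpha_0$, whence a joint symbol bound $(\eps|\xi|)^{r-s}|\xi|e^{-c(t-\tau)|\xi|^2}$. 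Applying Cauchy--Schwarz in $\tau$ together with $\int_0^T|\xi|^2e^{-c(t-\tau)|\xi|^2}d\tau\lesssim 1$, the same high/low-frequency splitting as in Step 1, and finally inserting the time weight $w_{\phi,\eta}$ as in the derivation of \eqref{eq:decay_semigroups_hydro_orthogonal_negative}, yields \eqref{eq:asymptotic_equiv_NS_orthogonal_reg}.

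The main technical difficulty is the bookkeeping of the $\eps$ and $|\xi|$ powers: the prefactor $\tfrac{1}{\eps}$ in front of the convolution is exactly offset by one power of $\eps$ extracted from $\PP_\star(\eps\xi)$ thanks to the orthogonality $\PP\widehat\varphi=0$, so that no gain is visible at leading order; all of the $\eps^{r-s}$ gain must be harvested either from the cubic remainder in $\lambda_\star(\eps\xi)$ (in the first summand) or from $S_\star(\eps\xi)$ (in the second summand). A minor but genuine subtlety specific to $d=2$ is the low-frequency singularity $|\xi|^{-2\alpha}$ produced by the factor $|\nabla_x|^{1-\alpha}$ in the $\rSSSs{s}$ norm; this is exactly the reason why the $\dot{\mathbb{H}}^{-\alpha}_x(\Slm_v)$ norm of the data appears in the final bound.
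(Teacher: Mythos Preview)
Your proposal is correct and follows essentially the same approach as the paper's proof: both expand the Fourier symbol of $U^\eps_\ns(t)-U_\ns(t)$ (and, under the orthogonality assumption, of $\tfrac{1}{\eps}U^\eps_\ns-\nabla_x\cdot V_\ns$) via \eqref{eq:PPstar}, estimate the exponential difference through $|1-e^a|\le|a|e^{|a|}$ and the expansion \eqref{eq:lambda_star}, combine the regimes $\eps|\xi|\le\alpha_0$ and $\eps|\xi|>\alpha_0$ through $u\mathbf{1}_{u\le\alpha_0}+\mathbf{1}_{u>\alpha_0}\lesssim u^{r-s}$, and then integrate in $\xi$ exactly as in Lemma~\ref{lem:decay_semigroups_hydro}. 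The only cosmetic difference is that the paper packages $i\PP_\star^{(1)}+\xi^{-1}S_\star$ into a single remainder $\overline{\PP}_\star^{(1)}$ in Step~1, while you keep the two pieces separate; your more explicit handling of the $S_\star$ term in the convolution estimate is in fact slightly clearer.
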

		
		\begin{proof}
			Let us fix $\star=\Bou,\Inc$. As in the previous proof, we start by expanding the symbol of $U^\eps_\ns(t)$ so as to compare it with those of $U_\ns(t)$ and $V_\ns(t)$. We first prove \eqref{eq:asymptotic_equiv_NS_reg} and \eqref{eq:asymptotic_equiv_NS_general}, and then \eqref{eq:asymptotic_equiv_NS_orthogonal_reg}.
			
			\step{1}{Proof of \eqref{eq:asymptotic_equiv_NS_reg} and \eqref{eq:asymptotic_equiv_NS_general}}
			For $\eps | \xi | \le \alpha_0$, using the decomposition \eqref{eq:overPP_star}, there holds
			\begin{align*}
				\exp\left( \eps^{-2} t \lambda_\star(\eps \xi) \right) \PP_\star(\eps \xi)
				= & \exp\left( - t \kappa_\star | \xi |^2 \right) \PP_\star^{(0)}\left( \widetilde{\xi} \right) + \eps \exp\left( \eps^{-2} t \lambda_\star(\eps \xi) \right)\xi \cdot   \overline{\PP}_\star^{(1)}(\eps \xi) \\
				& + \Big[\exp\left( \eps^{-2} t \lambda_\star(\eps \xi) \right) - \exp\left( - t \kappa_\star | \xi |^2 \right) \Big] \PP^{(0)}_\star\left( \widetilde{\xi} \right)
			\end{align*}
			whereas, for $\eps | \xi| > \xi_0$, since $\PP_\star(\eps \xi)$ vanishes, the symbol of the difference $U^\eps_\ns(t) - U_\ns(t)$ reduces to that of $-U_\ns(t)$ given by
			$$-\exp\left( - t \kappa_\Bou | \xi |^2 \right) \PP_\Bou^{(0)}\left( \widetilde{\xi} \right) - \exp\left(-t\kappa_{\Inc}|\xi|^{2}\right)\PP_{\Inc}^{(0)}\left(\widetilde{\xi}\right).$$
			To sum up, the symbol of the difference $U_\ns^\eps(t) - U_\ns (t)$ writes as the sum over $\star=\Bou,\Inc$ of the symbols
			\begin{align*}
				\mathbf{1}_{\eps | \xi | \le \alpha_0} \Big( \eps \exp\left( \eps^{-2} t \lambda_\star(\eps \xi) \right)\xi \cdot  \overline{\PP}_\star^{(1)}(\eps \xi) & + \Big[\exp\left( \eps^{-2} t \lambda_\star(\eps \xi) \right) - \exp\left( - t \kappa_\star | \xi |^2 \right) \Big] \PP^{(0)}_\star\left( \widetilde{\xi} \right) \Big) \\
				& - \mathbf{1}_{\eps | \xi | > \alpha_0} \exp\left( - t \kappa_\star | \xi |^2 \right) \PP_\star^{(0)}\left( \widetilde{\xi} \right),
			\end{align*}
			and its operator norm in $\BBB( \Slm ; \Ssp )$ is controlled as in the proof of Lemma \ref{lem:decay_semigroups_hydro} by
			$$\eps | \xi | \exp\left( - t \kappa_\star \frac{| \xi |^2}{4} \right).$$
			We then deduce \eqref{eq:asymptotic_equiv_NS_reg} as well as \eqref{eq:asymptotic_equiv_NS_general} by density as in the proof of Lemma \ref{lem:asymptotic_equiv_oscillating_semigroup}.
			
			\step{2}{Proof of \eqref{eq:asymptotic_equiv_NS_orthogonal_reg}}
			In the case $\PP \varphi(t)=0$, the projector $\PP^{(0)}_\star \varphi(t)$ vanishes, and we use the second order expansion of $\PP_{\star}(\eps \xi)$ provided in \eqref{eq:PPstar} in  Theorem \ref{thm:spectral_study}:
			$$\PP_{\star}(\eps \xi) =   i\eps \xi \cdot \PP^{(1)}\left( \widetilde{\xi} \right) + S_{\star}(\eps \xi),$$
			where we recall that $\| S_\star(\eps \xi) \|_{ \BBB(\Slm ; \Ssp) } \lesssim \eps^2 | \xi|^2$ uniformly in $\eps | \xi | \le \alpha_0$.
			Similarly, the symbol of $U^\eps_\ns(t) - \eps \nabla_x \cdot V_\ns(t)$ restricted to $\nul(\PP)$ then writes
			\begin{align*}
				& \mathbf{1}_{\eps | \xi | \le \alpha_0} \Big( \eps^2 \exp\left( \eps^{-2} t \lambda_\star(\eps \xi) \right) |\xi|^{2} S_\star(\eps \xi)   + i\eps  \Big[\exp\left( \eps^{-2} t \lambda_\star(\eps \xi) \right) - \exp\left( - t \kappa_\star | \xi |^2 \right) \Big] \xi \cdot \PP^{(1)}_\star\left( \widetilde{\xi} \right) \Big) \\
				& \phantom{++++} - \mathbf{1}_{\eps | \xi | > \alpha_0} i\eps \exp\left( - t \kappa_\star | \xi |^2 \right)  \xi \cdot  \PP_\star^{(1)}\left( \widetilde{\xi} \right),
			\end{align*}
			which is similarly controlled by
			$$\eps | \xi | (\eps | \xi |)^{r-s} \exp\left( - t \kappa_\star \frac{| \xi |^2}{4} \right).$$
			These representations allow to proceed as in the proofs of Lemmas \ref{lem:decay_semigroups_hydro} and \ref{lem:asymptotic_equiv_oscillating_semigroup} to get the desired conclusion.
		\end{proof}
		We present now a dispersive estimate for the semigroup $\left(U_{\disp}^{\eps}(t)\right)_{t\geq0}$ which is deduced from a general result about the decay rate for solutions to the wave equation.
		\begin{lem}[\textit{\textbf{Dispersive estimate}}]
			The part $U^\eps_\disp$ of the hydrodynamic semigroup satisfies the dispersive estimate
			\begin{gather}
				\label{eq:dispersive}
				\left\| U^\eps_\disp(t) g \right\|_{ W^{s, \infty}_x \left( \Ssp_v \right) } \lesssim \left(\frac{\eps}{t}\right)^{\frac{d-1}{2}} \| g \|_{ \dot{\mathbb{B}}^{ \frac{d+1}{2}+ s }_{1,1} \left( \Slm_v \right) }.
			\end{gather}
		\end{lem}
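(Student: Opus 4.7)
\medskip

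\noindent\textit{Proof proposal.} The plan is to combine a Littlewood-Paley decomposition in the $x$ variable with the classical Littman--type dispersive estimate for the half-wave propagator $e^{i\tau|\nabla_x|}$, treating $\PP^{(0)}_{\pm\Wave}(\widetilde{\xi})$ as a smooth $0$-homogeneous operator-valued symbol and absorbing the factor $e^{-t\kappa_\Wave|\xi|^2}$ into the amplitude (where it is a bounded, harmless high-frequency cutoff).

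First, I would write $g=\sum_{n\in\Z}\dot\Delta_n g$ so that $\widehat{\dot\Delta_n g}$ is supported in an annulus $\{|\xi|\sim 2^n\}$, and localize the definition of $U^\eps_\disp$ on each block. The key ingredient is the frequency-localized bound
$$
\Big\| \FF_\xi^{-1}\Big[ e^{\pm ic\eps^{-1}t|\xi|}\,\chi_n(\xi)\,\PP^{(0)}_{\pm\Wave}(\widetilde{\xi})\,\widehat{h}(\xi)\Big] \Big\|_{L^\infty_x(\Ssp_v)}
\lesssim 2^{n\frac{d+1}{2}} \left(\frac{\eps}{t}\right)^{\frac{d-1}{2}} \|h\|_{L^1_x(\Slm_v)},
$$
where $\chi_n$ is a smooth annular cutoff at scale $2^n$. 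This is the vector-valued version of the classical dispersive estimate for the half-wave equation: after the change of variables $\xi=2^n\zeta$, the phase becomes $2^n c\eps^{-1}t|\zeta|$, and one applies stationary phase on the unit sphere (Littman's lemma), using that $|\zeta|$ has non-degenerate Hessian of rank $d-1$ on the sphere to gain the $t^{-(d-1)/2}$ factor. The scaling yields the $2^{n(d+1)/2}$ loss. The operator $\PP^{(0)}_{\pm\Wave}(\widetilde{\xi})\in\BBB(\Slm;\Ssp)$ acts as a smooth angular amplitude (see the explicit formulas involving $\psi_{\pm\Wave}(\omega,v)$ in \eqref{eq:def_psi_wave}), and its boundedness in $\BBB(\Slm;\Ssp)$ uniformly in $\widetilde{\xi}$ together with its $C^\infty$-regularity on $\S^{d-1}$ let the integration-by-parts be performed pointwise in $v$ with constants independent of $v$. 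The Gaussian factor $e^{-t\kappa_\Wave|\xi|^2}\le 1$ can only help and is absorbed into $\chi_n$.

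Setting $h=\dot\Delta_n g$ in the bound above and taking the $s$-th derivative (which produces a factor $\sim 2^{ns}$ on the $n$-th block), I would then conclude by summation:
$$
\|U^\eps_\disp(t) g\|_{W^{s,\infty}_x(\Ssp_v)} \lesssim \sum_{n\in\Z} 2^{ns}\, \|U^\eps_\disp(t)\dot\Delta_n g\|_{L^\infty_x(\Ssp_v)} \lesssim \left(\frac{\eps}{t}\right)^{\frac{d-1}{2}}\sum_{n\in\Z} 2^{n\left(s+\frac{d+1}{2}\right)} \|\dot\Delta_n g\|_{L^1_x(\Slm_v)},
$$
which is exactly $(\eps/t)^{(d-1)/2}\|g\|_{\dot{\mathbb{B}}^{s+(d+1)/2}_{1,1}(\Slm_v)}$ by definition of the homogeneous Besov norm.

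The main technical obstacle is the rigorous justification of the vector-valued Littman estimate with the operator-valued, $0$-homogeneous symbol $\PP^{(0)}_{\pm\Wave}(\widetilde{\xi})$: one must check that the integration-by-parts on $\S^{d-1}$ produces $\BBB(\Slm;\Ssp)$-bounded amplitudes with bounds that are uniform in $v$. This follows from the explicit expression of $\PP^{(0)}_{\pm\Wave}$ as a rank-one operator $f\mapsto \langle f,\psi_{\pm\Wave}(\omega)\rangle_\Ss\,\psi_{\pm\Wave}(\omega)$, which shows $C^\infty$-dependence on $\omega\in\S^{d-1}$ of $\psi_{\pm\Wave}(\omega)\in\Ssp$, so all angular derivatives remain bounded in $\BBB(\Slm;\Ssp)$; up to this point, the argument reduces to a scalar stationary-phase computation parameterized by $v$, which is standard.
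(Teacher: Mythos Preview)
Your argument is correct and ultimately rests on the same ingredients as the paper's, but the packaging is different. The paper's proof is a one-liner: it invokes the macroscopic representation of $\PP^{(0)}_{\pm\Wave}$ (Proposition~\ref{prop:macro_representation_spectral}) to write $U^\eps_\disp(t)g$ as a finite linear combination of fixed $\Ssp_v$-functions ($\mu$, $v\mu$, $|v|^2\mu$) multiplied by scalar functions of $x$ obtained by applying $e^{\pm ic\eps^{-1}t|\nabla_x|}e^{t\kappa_\Wave\Delta_x}$ and Riesz-type multipliers to the macroscopic moments $\varrho[g],u[g],\theta[g]$. The heat factor is dropped by $L^1$-contractivity, and the remaining scalar estimate is exactly the separately-proved Lemma~\ref{lem:wave-equation} (the half-wave dispersive bound in $\dot{\mathbb{B}}^{(d+1)/2+s}_{1,1}\to W^{s,\infty}$), whose proof is the Littlewood-Paley plus stationary-phase argument you wrote out. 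Your approach instead keeps the operator-valued symbol $\PP^{(0)}_{\pm\Wave}(\widetilde{\xi})$ throughout and only invokes its rank-one structure at the very end to justify the vector-valued stationary phase; this is legitimate and amounts to the same reduction, but the paper's factorization via Proposition~\ref{prop:macro_representation_spectral} makes the decoupling of the $v$-structure from the dispersive analysis explicit from the start, so that no ``vector-valued Littman'' needs to be argued at all---everything is scalar once the macroscopic coefficients are extracted, and boundedness of Riesz transforms on $\dot{\mathbb{B}}^s_{1,1}$ handles the angular dependence cleanly.
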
	
		\begin{proof}
			In virtue of the macroscopic representation of $U^\eps_\disp$ from Proposition \ref{prop:macro_representation_spectral} and the continuity of the heat semigroup on $L^1$, we can deduce \eqref{eq:dispersive} directly from Lemma \ref{lem:wave-equation}.\end{proof}
		
		\begin{lem}[\textit{\textbf{Vanishing estimate for the convoluted oscillating semigroup}}]
			\label{lem:convolution_wave}
			Suppose $\varphi \in L^{\infty}([0,T)\,;\,\SSlm)$ is such that $|\nabla_{x}|^{1-\alpha}\varphi \in L^{2}([0,T)\;;\;\SSlm)$ and
			$\PP \varphi(t) = 0$ for any $t\ge0$ together with
			$$\partial_t\varphi \in  L^2 \cap L^{ \frac{2}{1+\alpha} } \left( [0, T) ; \rSSlm{s-1}\right) \bigcap  L^{\frac{4}{3}} \cap L^{\frac{4}{3 + 2 \alpha}} \left( [0, T) ; \dot{\mathbb{H}}^{-\frac{1}{2}}_x \left(\Slm_v\right) \right).$$
			Then,  there holds
			\begin{equation*}
				\begin{aligned}
					\frac{1}{\eps^2} \Nt U^\eps_\Wave(\cdot) * \varphi \Nt_{\SSSs} \lesssim \| \varphi(0) \|_{ \dot{\mathbb{H}}^{-\alpha}_x \left( \Slm_v \right) } & + \| \varphi \|_{ L^\infty \left( [0, T) ; \SSlm \right) } + \| | \nabla_x|^{1 - \alpha} \varphi \|_{ L^2 \left( [0, T) ; \SSlm \right) } \\
					& + \| \partial_t \varphi \|_{ \left(L^2 \cap L^{ \frac{2}{1+\alpha} }\right) \left( [0, T) ; \rSSlm{s-1} \right) } \\
					& + \| \partial_t \varphi \|_{ \left(L^{ \frac{4}{3} } \cap L^{\frac{4}{3 + 2 \alpha}}\right) \left( [0, T) ; \dot{\mathbb{H}}^{-\frac{1}{2}}_x \left(\Slm_v\right) \right)  }.
				\end{aligned}
			\end{equation*}
		\end{lem}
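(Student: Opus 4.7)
The strategy is a non-stationary phase argument: integrate by parts in time inside the convolution to exploit the high-frequency oscillations $e^{\pm i c (t-\tau) |\xi|/\eps}$ of the wave semigroup, thereby trading one factor of $\eps$ for one derivative in $\tau$ and one inverse power of $|\xi|$. Once we have done this, the resulting three types of terms (boundary contributions at $\tau = 0$, boundary contributions at $\tau = t$, and interior contributions involving $\partial_\tau \varphi$ and $|\xi|^2 \varphi$) are all handled by the same methodology as in Lemma \ref{lem:decay_semigroups_hydro}, namely $L^2$-in-$\xi$ estimates combined with Young's convolution inequality in the time variable.

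The first step is to write, on $\nul(\PP)^\perp$, the symbol of $U^\eps_{\pm \Wave}(t - \tau)$ as
$$ \exp\!\Big(\eps^{-2} (t - \tau) \lambda_{\pm\Wave}(\eps \xi)\Big) \PP_{\pm\Wave}(\eps \xi) = \eps \exp\!\Big(\pm i c (t-\tau) |\xi|/\eps\Big)\, e^{-(t-\tau) \kappa_\Wave |\xi|^2}\, \xi \cdot \overline{\PP}^{(1)}_{\pm \Wave}(\eps \xi) + R_\eps(t-\tau, \xi), $$
where $R_\eps$ collects the cubic remainders already absorbed in the proof of Lemma \ref{lem:decay_semigroups_hydro}, and then use the identity
$$ \exp\!\Big(\pm i c (t-\tau) |\xi|/\eps\Big) = \mp \frac{\eps}{i c |\xi|}\, \partial_\tau \exp\!\Big(\pm i c(t-\tau) |\xi|/\eps\Big) $$
to integrate by parts. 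This produces an explicit factor $\eps/(c|\xi|)$ in front of three terms: the boundary term at $\tau = 0$, which contains $\widehat{\varphi}(0, \xi)$ and is responsible for the $\dot{\mathbb{H}}^{-\alpha}_x$ norm of $\varphi(0)$; the boundary term at $\tau = t$, which contains $\widehat{\varphi}(t, \xi)$ and yields the $L^\infty_t \SSlm$ norm of $\varphi$; and an interior integral against $\partial_\tau h$, where $h$ is either the heat-kernel factor (producing a $|\xi|^2 \varphi$ term identical to the one in Lemma \ref{lem:decay_semigroups_hydro}, after cancellation with the $|\xi|^{-1}$) or the test function (producing $\partial_\tau \varphi$).

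The $\SSSs$-norm estimate is then reduced, via Plancherel in $\xi$, to bounding integrals of the form
$$ \int_{\R^d} \la \xi \ra^{2s} \big(1 + |\xi|^{2 - 2\alpha}\big) \left( \eps \int_0^t \frac{1}{|\xi|}\, e^{-(t-\tau) \kappa_\Wave |\xi|^2/2}\, \|\widehat{\partial_\tau \varphi}(\tau, \xi)\|_{\Slm}\, \d \tau \right)^2 \d \xi, $$
and the analogous ones with boundary data. The factor $|\xi|^{-1}$ is what introduces the homogeneous Sobolev space $\dot{\mathbb{H}}^{-\frac12}_x$: splitting $\la \xi\ra^{2s} |\xi|^{-2}$ as $\la \xi \ra^{2(s-1)}$ at high frequency and $|\xi|^{-1}\cdot |\xi|^{-1}$ at low frequency, then applying Young's convolution inequality in the forms $L^1 * L^2 \hookrightarrow L^2$ and $L^{4/3} * L^{4/3} \hookrightarrow L^2$ (respectively $L^{2/(2-\alpha)} * L^{2/(1+\alpha)} \hookrightarrow L^2$ and $L^{4/(3-2\alpha)} * L^{4/(3+2\alpha)} \hookrightarrow L^2$ for the $|\nabla_x|^{1-\alpha}$ part) accounts for each of the five norms of $\partial_t \varphi$ appearing in the statement.

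The main obstacle is the low-frequency behaviour, i.e. the singularity $|\xi|^{-1}$ generated by the integration by parts. At high frequency, the heat-type decay $e^{-(t-\tau) \kappa_\Wave |\xi|^2}$ dominates and everything behaves as in Lemma \ref{lem:decay_semigroups_hydro}; but at low frequency, one cannot use the $\SSlm$-norm alone to close the estimate, which forces the inclusion of the $\dot{\mathbb{H}}^{-\alpha}_x$ norm for the initial datum $\varphi(0)$ and of the $\dot{\mathbb{H}}^{-\frac12}_x$-based Strichartz-type norms for $\partial_t \varphi$. The choice of exponents $\frac{4}{3}$ and $\frac{4}{3+2\alpha}$ (together with the dual pair $\frac{2}{1+\alpha}$ already present in \eqref{eq:decay_semigroups_hydro_orthogonal_positive}) is dictated exactly by the constraints of Young's inequality at the level of the $L^2_t$-in-time norm of the $\SSsp$-valued output combined with the $|\xi|^{-1}$ factor, so tracking these exponents carefully through the splitting $\mathbf{1}_{|\xi|\le 1} + \mathbf{1}_{|\xi|>1}$ is the delicate point of the argument.
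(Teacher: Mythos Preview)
Your strategy matches the paper's: integrate by parts in $\tau$ to trade the oscillation for a factor $\eps^2$, producing two boundary terms and an interior $\partial_\tau\varphi$ term, then close by a high/low-frequency split with matched Young/H\"older exponents. Two points deserve correction, though.

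First, the paper integrates by parts against the \emph{full} exponent $\eps^{-2}\lambda_{\pm\Wave}(\eps\xi)$ rather than only the leading phase $\pm ic|\xi|/\eps$. Since $|\lambda_{\pm\Wave}(\eps\xi)|\approx\eps|\xi|$, one still obtains the overall factor $\eps^3\xi/\lambda_{\pm\Wave}(\eps\xi)\approx\eps^2$, but this avoids both your remainder $R_\eps$ and the extra $|\xi|^2\varphi$ term from differentiating the heat factor. Your variant works as well, but those additional terms are \emph{not} ``already absorbed in Lemma~\ref{lem:decay_semigroups_hydro}'' (that lemma treats $\eps^{-1}$, not $\eps^{-2}$); you would have to control them directly by $\|\varphi\|_{L^\infty_t\SSlm}$ and $\||\nabla_x|^{1-\alpha}\varphi\|_{L^2_t\SSlm}$.

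Second, your displayed integral and the claim that ``the factor $|\xi|^{-1}$ is what introduces $\dot{\mathbb{H}}^{-1/2}_x$'' misplace the source of the negative regularity. The $\eps/(c|\xi|)$ from the phase cancels \emph{exactly} against the $\eps|\xi|$ coming from $\PP_{\pm\Wave}(\eps\xi)\widehat\varphi=\eps\xi\cdot\overline{\PP}^{(1)}_{\pm\Wave}\widehat\varphi$ (this is precisely where $\PP\varphi=0$ is used), leaving a clean $\eps^2$ and \emph{no} $|\xi|^{-1}$ at this stage. The correct preparatory bound reads
\[
\tfrac{1}{\eps^2}\big\|\FF_x[U^\eps_\Wave\!*\!\varphi](t,\xi)\big\|_{\Ssp}\lesssim\int_0^t e^{-\frac{\tau}{2}\kappa_\Wave|\xi|^2}\|\partial_\tau\widehat\varphi(t-\tau,\xi)\|_{\Slm}\,\d\tau+\|\widehat\varphi(t,\xi)\|_{\Slm}+e^{-\frac{t}{2}\kappa_\Wave|\xi|^2}\|\widehat\varphi(0,\xi)\|_{\Slm}.
\]
The spaces $\dot{\mathbb{H}}^{-1/2}_x$ and $H^{s-1}_x$ appear only \emph{afterwards}, from the time-integrability of the heat kernel $e^{-\tau\kappa_\Wave|\xi|^2/2}$: for $|\xi|\le 1$ its $L^4_\tau$-norm is $\approx|\xi|^{-1/2}$ (H\"older with $L^{4/3}$, hence $\dot{\mathbb{H}}^{-1/2}_x$), while for $|\xi|\ge 1$ one borrows one power of $|\xi|$ from $\la\xi\ra^s$ to make it $L^2_\tau$-bounded (Cauchy--Schwarz, hence $H^{s-1}_x$). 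Your Young pairs for the $|\nabla_x|^{1-\alpha}$ part are then the right ones once this mechanism is clear.
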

		
		\begin{rem}
			Note that if $T < \infty$, we have $L^{\frac{4}{3}} \cap L^{\frac{4}{3+2\alpha}}=L^{\frac{4}{3}}$ and $L^{\frac{2}{1+\alpha}}\cap L^2=L^2$.
		\end{rem}

		\begin{proof}
			In the first step, we establish a preparatory estimate for any $\xi \in \R^d$ satisfying $\eps | \xi | \le \alpha_0$, which we will use in the following step to prove the lemma. Since $w_{\phi, \eta} \le 1$, we neglect it for the estimates on $\partial_t \varphi$.
			
			\step{1}{Preparatory estimate}
			Recall that $\| U^{\eps}_{\Wave}(\cdot) * \varphi(t)\|_{\SSsp}^{2}$ is given by \eqref{eq:convVarphi} which allows us to
			%$$\| U^{\eps}_{\Wave}(\cdot) * \varphi(t)\|_{\SSsp}^{2}= \int_{\R^{d}}\la \xi\ra^{2s} %\left\|\int_{0}^{t} \FF [U^{\eps}_{\Wave}(t)\varphi(t-\tau)](\xi) \d \tau\right\|_{\Ssp}^{2}\d\xi$$
			work, as in the proof of Lemma \ref{lem:asymptotic_equiv_oscillating_semigroup},  on the two parts of the symbol of $U^{\eps}_{\Wave}(t).$ Recalling \eqref{eq:KernelP}, for any fixed $t \ge0$ and any $\tau \in [0,t]$ one has
			\begin{equation*}\begin{split}
					\exp\left( \eps^{-2} \tau \lambda_{\pm \Wave}(\eps \xi) \right) & \PP_{\pm \Wave}(\eps \xi) \widehat{\varphi}(t-\tau, \xi) \\
					& = \eps\exp\left( \eps^{-2} \tau \lambda_{\pm \Wave}(\eps \xi) \right) \xi \cdot   \overline{\PP}^{(1)}_{\pm \Wave}(\eps \xi) \widehat{\varphi}(t-\tau, \xi) \\
					& = \eps\exp\left( \eps^{-2} \tau \lambda_{\pm \Wave}(\eps \xi) \right)  \xi \cdot  {\phi^{\pm}
					}(\tau, \xi)
			\end{split}\end{equation*}
			where we denoted ${\phi^{\pm}}(\tau, \xi) := \overline{\PP}^{(1)}_{\pm \Wave}(\eps \xi) \widehat{\varphi}(t-\tau, \xi)$. We now integrate with respect to $\tau \in [0, t]$ using integration by parts:
			\begin{align*}
				\int_0^t \exp\left( \eps^{-2} \tau \lambda_{\pm \Wave}(\eps \xi) \right) & \PP_{\pm \Wave}(\eps \xi) \widehat{\varphi}(t-\tau, \xi) \d \tau\\
				= & \eps \xi \cdot \int_0^t \exp\left( \eps^{-2} \tau \lambda_{\pm \Wave}(\eps \xi) \right) \phi^{\pm}(\tau, \xi) \d \tau \\
				= & - \frac{\eps^3 \xi }{\lambda_{\pm \Wave}(\eps \xi)} \cdot \int_0^t \exp\left( \eps^{-2} \tau\lambda_{\pm \Wave}(\eps \xi) \right) \partial_\tau {\phi}^{\pm}(\tau, \xi) \d \tau \\
				& + \frac{\eps^3 \xi }{\lambda_{\pm \Wave}(\eps \xi)} \cdot \left[ {\phi}^{\pm}(t, \xi)\exp\left( \eps^{-2} t \lambda_{\pm \Wave}(\eps \xi ) \right) 
				- {\phi}^{\pm}(0, \xi)\right].
			\end{align*}
			As in Lemma \ref{lem:asymptotic_equiv_oscillating_semigroup}, we can choose $\alpha_0$ small enough so that
			$$|\lambda_{\pm \Wave}(\eps \xi)| \approx \eps | \xi |, \qquad \re\left( \eps^{-2} \lambda_{\pm \Wave} (\eps \xi) \right) \le - \frac{1}{2}\kappa_{ \Wave} | \xi |^2,$$
			uniformly in $|\xi| \le \alpha_{0}$, and one notices
			$$
			\|\phi^{\pm}(t,\xi)\|_{\Ssp}=\|\overline{\PP}^{(1)}_{\pm \Wave}(\eps \xi) \widehat{\varphi}(0, \xi)\|_{\Ssp} \lesssim \|\widehat{\varphi}(0,\xi)\|_{\Slm},$$
			while, in the same way,
			$$\|\phi^{\pm}(0,\xi)\|_{\Ssp}\lesssim \|\widehat{\varphi}(t,\xi)\|_{\Slm},\qquad 
			\|\partial_{\tau}\phi^{\pm}(\tau,\xi)\|_{\Ssp}
			\lesssim \|\partial_{\tau}\widehat{\varphi}(t-\tau,\xi)\|_{\Slm}\,.$$
			Those considerations lead to		
			\begin{equation*}\begin{split}
					\frac{1}{\eps^2} \Big\|\int_0^t & \exp\left( \eps^{-2} \tau \lambda_{\pm \Wave}(\eps \xi) \right) \PP_{\pm \Wave}(\eps \xi) \widehat{\varphi}(t-\tau, \xi) \d \tau\Big\|_{\Ssp}\\
					\le & ~ \int_{0}^{t} \exp\left(- \frac{\tau}{2} \kappa_\Wave |\xi |^2\right) \|\partial_{\tau}\widehat{\varphi}(t-\tau,\xi)\|_{\Slm}\d \tau  + \|\widehat{\varphi}(t,\xi)\|_{\Slm}\\
					&\phantom{+++++} + \|\widehat{\varphi}(0,\xi)\|_{\Slm}\exp\left(-\frac{t}{2}\kappa_{\Wave}|\xi|^{2}\right).
			\end{split}\end{equation*}
			In other words, we have shown that
			\begin{equation}\label{eq:FxUwave}\begin{split}
					\frac{1}{\eps^2} \Big\| \FF_x \big[ & U^\eps_\Wave(\cdot) * \varphi \big](t, \xi) \Big\|_{\Ssp} \le  ~ \int_{0}^{t} \exp\left(- \frac{\tau}{2} \kappa_\Wave |\xi |^2 \right) \|\partial_{\tau}\widehat{\varphi}(t-\tau,\xi)\|_{\Slm}\d \tau 
					\\ &+ \|\widehat{\varphi}(t,\xi)\|_{\Slm}+ \|\widehat{\varphi}(0,\xi)\|_{\Slm}\exp\left(-\frac{t}{2}\kappa_{\Wave}|\xi|^{2}\right).
			\end{split}\end{equation}
			
			\step{2}{Completion of the proof} We first deduce from \eqref{eq:FxUwave} that
			$$\frac{1}{\eps^2} \| U^\eps_\Wave(\cdot) * \varphi(t) \|_{\SSsp} \lesssim  \sup_{0 \le \tau \le t} \| \varphi(\tau) \|_{ \SSlm } + \left\| \la \xi \ra^{s} \int_0^t e^{- \tau \kappa_{\Wave} \frac{|\xi|^2}{2}} \| \partial_\tau \widehat{\varphi}(t-\tau, \xi) \|_{\Slm} \d \tau \right\|_{ L^2_\xi }.$$
			For notations simplicity, we call $J=J(t,\varphi)$ the above $L^2_\xi$-norm and split it according to $|\xi| \leq 1$ or $|\xi| >1$, i.e. $J=J_1+J_2$
			where
			$$J_1^2=\int_{|\xi| \leq 1}\langle \xi\rangle^{2s}\left(\int_0^t e^{- \tau \kappa_{\Wave} \frac{|\xi|^2}{2}} \| \partial_\tau \widehat{\varphi}(t-\tau, \xi) \|_{\Slm} \d \tau \right)^2 \d\xi$$
			and
			\begin{multline*}
				J_2^2=\int_{|\xi| \geq 1}\langle \xi\rangle^{2s}\left(\int_0^t e^{- \tau \kappa_{\Wave} \frac{|\xi|^2}{2}} \| \partial_\tau \widehat{\varphi}(t-\tau, \xi) \|_{\Slm} \d \tau \right)^2 \d\xi\\
				\leq \int_{|\xi| \geq 1}\langle \xi\rangle^{2s-2}\left(\int_0^t \left[|\xi|\,e^{- \tau \kappa_{\Wave} \frac{|\xi|^2}{2}}\right] \| \partial_\tau \widehat{\varphi}(t-\tau, \xi) \|_{\Slm} \d \tau \right)^2 \d\xi.    \end{multline*}
			On the one hand, using Cauchy-Schwarz inequality and the second estimate in \eqref{eq:XIALPHA}, one has
			$$J_2^2 \lesssim \int_{\R^d}\langle\xi\rangle^{2s-2}\d\xi\int_0^t\|\partial_\tau\widehat{\varphi}(\tau,\xi)\|_{\Slm}^2\d\tau=\|\partial_t \varphi\|_{L^2_t H^{s-1}_x(\Slm_v)}^2.$$
			On the other hand,
			invoking H\"older's inequality (with exponents $p=4,q=\frac{4}{3}$) to estimate the time integral, we deduce that
			\begin{equation*}\begin{split}
					J_1^2 &\lesssim  \int_{|\xi| \leq 1} \left(\int_0^t\left[e^{- \tau \kappa_{\Wave} \frac{|\xi|^2}{2}}\right]^4\d \tau\right)^{\frac{1}{2}}\,\left(\int_0^t\| \partial_\tau \widehat{\varphi}(t-\tau, \xi) \|_{\Slm}^{\frac{4}{3}}\d\tau\right)^{\frac{3}{2}}\d\xi \\
					&\lesssim  \int_{\R^d} |\xi|^{-1}\left(\int_0^t\| \partial_\tau \widehat{\varphi}(\tau, \xi) \|_{\Slm}^{\frac{4}{3}}\d\tau\right)^{\frac{3}{2}}\d\xi 
			\end{split}\end{equation*}
			which, thanks to Minkowski's integral inequality, yields
			$$J_1^{\frac{4}{3}}\lesssim \int_{0}^t\left(\int_{\R^d} |\xi|^{-1}\| \partial_\tau \widehat{\varphi}(\tau, \xi) \|_{\Slm}^{2}\d\xi\right)^{\frac{2}{3}}\d\tau=\int_0^t  \|\partial_t \varphi(t)\|_{\dot{\mathbb{H}}^{-\frac{1}{2}}_x(\Slm_v)}^{\frac{4}{3}}\d\tau.$$
			Therefore,
			$$J \lesssim \|\partial_t \varphi\|_{L^2_t H^{s-1}_x(\Slm_v)} + \| \partial_t \varphi \|_{ L^{\frac{4}{3}}_t \dot{\mathbb{H}}^{-\frac{1}{2}}_x \Slm_v } $$
			i.e.
			$$\frac{1}{\eps^2} \| U^\eps_\Wave(\cdot) * \varphi(t) \|_{\SSsp} \lesssim \sup_{0 \le \tau \le t} \| \varphi(\tau) \|_{ \SSlm } + \| \partial_t \varphi \|_{ L^{ \frac{4}{3} }_t \dot{\mathbb{H}}^{-\frac{1}{2}}_x \Slm_v } + \| \partial_t \varphi \|_{ L^2_t H^{s-1}_x \Slm_v }.$$
			Furthermore, coming back to \eqref{eq:FxUwave}, 
			\begin{multline*}
				\frac{1}{\eps^4} \int_0^T | \xi |^{2-2\alpha} \Big\| \FF_x \big[ U^\eps_\Wave(\cdot) * \varphi \big](t, \xi) \Big\|_{ \Ssp }^2 \d t \\
				\lesssim  \int_0^T \left(\int_{0}^{t} \left[ | \xi |^{1-\alpha} e^{-\frac{\tau}{2} \kappa_\Wave | \xi |^2} \right] \|\partial_{\tau}\widehat{\varphi}(t-\tau,\xi)\|_{\Slm} \d \tau\right)^2 \d t\\
				+ \int_0^T | \xi |^{2-2\alpha} \big\| \widehat{\varphi}(t, \xi) \big\|_{\SSlm}^2 \d t  +  \| \widehat{\varphi}(0, \xi) \|_{\Slm}^2  \left(\int_0^T |\xi|^{2-2\alpha} e^{-t \kappa_{\Wave}|\xi|^{2}} \d t\right)
			\end{multline*}
			Using now Young's convolution inequality in the form $L^{\frac{2p}{3 p - 2}}\left([0, T]\right) \ast  L^p\left( [0, T] \right) \hookrightarrow L^2\left([0, T]\right)$ ($p \in [1, 2]$) in the first time integral, 
			we deduce that
			\begin{multline*}
				\frac{1}{\eps^4} \int_0^T | \xi |^{2-2\alpha} \Big\| \FF_x \big[ U^\eps_\Wave(\cdot) * \varphi \big](t, \xi) \Big\|_{ \Ssp }^2 \d t \lesssim \int_0^T | \xi |^{2-2\alpha} \big\| \widehat{\varphi}(t, \xi) \big\|_{\Slm}^2 \d t  + | \xi |^{-2\alpha} \| \widehat{\varphi}(0, \xi) \|_{\Slm}^2 \\
				+  \left( \int_0^T \left(| \xi |^{-\left(2 + \alpha - \frac{2}{p}\right)} \|\partial_{\tau}\widehat{\varphi}(\tau,\xi)\|_{\Slm}\right)^p \d \tau\right)^{\frac{2}{p}}.
			\end{multline*}		
			We integrate this inequality against $\la \xi \ra^{2s}$ with the choice $p=\frac{2}{1+\alpha} \in \left[\frac{4}{3}, 2\right]$ on the region $|\xi| \geq 1$ and with~$p=\frac{4}{3 + 2\alpha } \in \left[1, \frac{4}{3} \right]$ on the region $|\xi| \le 1$, to obtain, after a simple use of Minkowski's integral inequality, 
			\begin{align*}
				\frac{1}{\eps^2} \Bigg(\int_0^T \| |\nabla_x|^{1-\alpha} U^\eps_\Wave(\cdot) * \varphi(t) \|_{\SSsp}^2 \d t\Bigg)^{\frac{1}{2}}
				\lesssim &
				\left\| | \nabla_x|^{1-\alpha} \varphi \right\|_{ L^2\left( [0, T) ; \SSlm \right) } + \| \varphi(0) \|_{ \dot{\mathbb{H}}^{-\alpha}_x \left( \Slm_v \right) } \\
				& + \| \partial_t \varphi \|_{ L^{\frac{4}{3 + 2 \alpha}}_t \dot{\mathbb{H}}^{-\frac{1}{2}}_x \Slm_v }
				+ \| \partial_t \varphi \|_{ L^{ \frac{2}{1+\alpha} }_t H^{s-1}_x \Slm_v }.
			\end{align*}
			Since the estimates established are uniform in $T$ and $w_{\phi, \eta} \le 1$, this concludes the proof.
		\end{proof}
		
		The decay and regularization estimates for $\left(U^\eps_\kin(t)\right)_{t\ge0}$ are given by scaling the estimates from Theorem \ref{thm:spectral_study}, or under the enlargement assumptions \ref{LE}, Theorem \ref{thm:enlarged_thm}.
		\begin{lem}[\textit{\textbf{Decay and regularization of the kinetic semigroup}}]
			\label{lem:decay_regularization_kinetic_semigroup}
			For any fixed decay rate~$\sigma \in (0, \sigma_0)$, the kinetic part~$\left(U^\eps_\kin(t)\right)_{t\ge0}$ of the semigroup satisfies the decay and regularization estimates
			$$		\sup_{t \ge 0} \, e^{2 \sigma_0 t / \eps^2} \| U^\eps_\kin(t) f \|_{\SSl}^2 + \frac{1}{\eps^2} \int_0^\infty e^{2 \sigma t / \eps^2} \| U^\eps_\kin(t) f \|_{\SSlp}^2 \, \d t \lesssim \| f \|_{\SSl}^2$$
			as well as
			$$		\frac{1}{\eps^2} \int_0^\infty e^{2 \sigma t / \eps^2} \| U^\eps_\kin(t) f \|_{\SSl}^2 \, \d t \lesssim \| f \|^2_{\SSlm},$$
			with exactly the same estimate satisfied by the adjoint $\left((U^{\eps}_{\kin}(t))^{\star}\right)_{t\geq0}$.
		\end{lem}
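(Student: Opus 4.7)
The strategy is to reduce the three estimates to pointwise (in the Fourier variable $\xi$) applications of the decay estimates \eqref{eq:decay-EE} and \eqref{eq:decay_Ee'} from Theorem \ref{thm:enlarged_thm}, the $\eps^{-2}$ prefactor being absorbed by the time rescaling $\tau = t/\eps^2$. Recall that, by construction,
\[
\FF_x\bigl[U^\eps_\kin(t) f\bigr](\xi) = U_{\eps\xi}\bigl(t/\eps^2\bigr) \bigl(\Id - \PP(\eps\xi)\bigr) \widehat{f}(\xi),
\]
so the $\SSl = \mathbb{H}^s_x(\Sl_v)$, $\SSlp$ and $\SSlm$ norms of $U^\eps_\kin(t) f$ are just $L^2_\xi(\langle\xi\rangle^s \, \cdot\,)$ of the $\Sl$, $\Slp$ and $\Slm$ norms of this symbol, respectively. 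The pointwise semigroup bounds provided by Theorem \ref{thm:enlarged_thm} will carry all the information; the only care needed is the uniform (in $\eps$ and $\xi$) control of $\Id - \PP(\eps\xi)$ on each of the spaces involved, which follows from the uniform bounds on $\PP_\star(\xi)$ in $\BBB(\Slm;\Ssp)$ and hence on $\Sl$ and $\Slm$.

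For the uniform-in-time decay, I would apply \eqref{eq:decay-EE} pointwise in $\xi$:
\[
e^{2 \sigma_0 t / \eps^2} \bigl\| U_{\eps\xi}(t/\eps^2) (\Id - \PP(\eps\xi)) \widehat f(\xi) \bigr\|_{\Sl}^2 \leq C_\sigma \bigl\| (\Id - \PP(\eps\xi)) \widehat f(\xi) \bigr\|_{\Sl}^2 \lesssim \|\widehat f(\xi)\|_{\Sl}^2,
\]
uniformly in $t, \xi, \eps$. Integrating against $\langle\xi\rangle^{2s} \d\xi$ yields the first part of the first estimate. For the integrated $\Sl \to \Slp$ part, I perform the change of variable $\tau = t/\eps^2$ (which exactly cancels the $\eps^{-2}$ in front) and then Fubini:
\[
\frac{1}{\eps^2} \int_0^\infty e^{2\sigma t/\eps^2} \|U^\eps_\kin(t)f\|_{\SSlp}^2 \d t = \int_{\R^d} \langle\xi\rangle^{2s} \int_0^\infty e^{2\sigma\tau} \bigl\| U_{\eps\xi}(\tau)(\Id - \PP(\eps\xi)) \widehat f(\xi) \bigr\|_{\Slp}^2 \d\tau \d\xi,
\]
and the inner time integral is bounded by $C_\sigma \|(\Id - \PP(\eps\xi))\widehat f(\xi)\|_\Sl^2 \lesssim \|\widehat f(\xi)\|_\Sl^2$ using the integral bound in \eqref{eq:decay-EE} with the parameter $\eps\xi$. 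Integrating gives $\lesssim \|f\|_\SSl^2$.

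The second estimate is obtained in exactly the same way, substituting \eqref{eq:decay_Ee'} in place of \eqref{eq:decay-EE}; the pointwise inequality $\|(\Id - \PP(\eps\xi)) \widehat f(\xi)\|_{\Slm} \lesssim \|\widehat f(\xi)\|_{\Slm}$ uniformly in $\xi, \eps$, which follows from the uniform boundedness of $\PP(\eps\xi)$ on $\Slm$ (a consequence of the $\BBB(\Slm;\Ssp)$ bounds from Theorem \ref{thm:enlarged_thm} composed with $\Ssp \hookrightarrow \Slm$), is what is needed here. Finally, for the adjoint semigroup we use the observation (noted in the remark following Theorem \ref{thm:enlarged_thm}) that, since $\LL$ is self-adjoint in $\Ss$, one has $\bigl(U_{\xi}(t)(\Id - \PP(\xi))\bigr)^\star = U_{-\xi}(t)(\Id - \PP(-\xi))$, so that the same pointwise decay estimates \eqref{eq:decay-EE} and \eqref{eq:decay_Ee'} apply, and the above argument goes through verbatim after a harmless change of variable $\xi \mapsto -\xi$. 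There is no real obstacle here: once the spectral theorem has been established and the definitions of $U^\eps_\kin(t)$ and the norms $\SSl, \SSlp, \SSlm$ are correctly interpreted via the Fourier transform, the lemma is a routine scaling argument.
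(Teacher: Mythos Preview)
Your proof is correct and is precisely the scaling argument the paper has in mind: it states only that the lemma is ``given by scaling the estimates from Theorem \ref{thm:spectral_study}, or under the enlargement assumptions \ref{LE}, Theorem \ref{thm:enlarged_thm}'' without further detail. Your explicit Fourier representation, change of variable $\tau = t/\eps^2$, and pointwise application of \eqref{eq:decay-EE}--\eqref{eq:decay_Ee'} together with the uniform $\BBB(\Slm)$ and $\BBB(\Sl)$ bounds on $\Id - \PP(\eps\xi)$ are exactly the intended elaboration.
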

		
		As for the hydrodynamic semigroup $U^\eps_\hyd(\cdot)$, we establish now suitable convolution estimates:
		
		\begin{lem}[\textit{\textbf{Decay and regularization of the convoluted kinetic semigroup}}]
			\label{lem:decay_regularization_convolution_kinetic_semigroup}
			Consider $T \in (0, \infty]$. For any $\varphi \in L^2\left( [0, T) ; \SSsm \right)$, there holds uniformly in $\eps$
			\begin{subequations}
				\begin{equation}
					\label{eq:decay_convolution_semigroup_no_exp}
					\begin{aligned}
						\frac{1}{\eps} \Nt U^\eps_\kin(\cdot) * \varphi \Nt_{\SSSm(T, \phi, \eta, \eps)} \lesssim \sup_{0 \le t < T} \left\{w_{\phi, \eta}(t) \left(\int_0^t \|  \varphi(\tau) \|_{  \SSsm }^2 \d \tau \right)^{\frac{1}{2}} \right\}.
					\end{aligned}
				\end{equation}
				Furthermore, consider $\sigma \in [0, \sigma_0)$, there holds uniformly in $\eps$ 
				\begin{equation}
					\label{eq:decay_convolution_semigroup_exp}
					\begin{aligned}
						\frac{1}{\eps} \Nt U^\eps_\kin(\cdot) * \varphi \Nt_{\SSSl(\sigma, \eps)} \lesssim \left(\int_0^T e^{2 	\sigma t / \eps^2 } \| \varphi(t) \|_{\SSlm }^2 \d t\right)^{\frac{1}{2}}
					\end{aligned}
				\end{equation}
				for any $\varphi$ for which the right-hand-side is finite. 
				%			\begin{aligned}
				%				\sup_{0 \le t < T} e^{2 \sigma t / \eps^2} & \| U^\eps_\kin(\cdot) * \varphi (t) \|_{\SSg}^2 \\
				%					& + \int_0^T e^{2 \sigma t / \eps^2} \| U^\eps_\kin(\cdot) * \varphi (t) \|_{\SSgp}^2 \d t \lesssim \eps^2 \int_0^T e^{2 	\sigma t / \eps^2 } \| \varphi(t) \|_{\SSgm }^2 \d t,
				%			\end{aligned}
			\end{subequations}
			%		as well as the corresponding estimate for an algebraic decay of order $\alpha \ge 0$:
			%		\begin{equation}
			%			\label{eq:decay_convolution_semigroup_alg}
			%			\begin{aligned}
			%				\sup_{0 \le t < T} \la t \ra^{2 \alpha} & \| U^\eps_\kin(\cdot) * \varphi (t) \|_{\SSg}^2 \\
			%				& + \int_0^T \la t \ra^{2 \alpha} \| U^\eps_\kin(\cdot) * \varphi (t) \|_{\SSgp}^2 \d t \lesssim \eps^2 \int_0^T \la t \ra^{2 	\alpha} \| \varphi(t) \|_{\SSgm }^2 \d t.
			%			\end{aligned}
			%		\end{equation}
			
		\end{lem}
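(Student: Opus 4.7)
The plan is to derive both estimates from an energy method on the Fourier side applied to $\mathcal{G}(t):=\eps^{-1}(U^\eps_\kin\ast\varphi)(t)$. Taking the spatial Fourier transform, $\widehat{\mathcal{G}}(\cdot,\xi)$ takes values in $\range(\Id-\PP(\eps\xi))$ and satisfies, for each $\xi\in\R^d$, the evolution equation
\begin{equation*}
\partial_t\widehat{\mathcal{G}}(t,\xi) = \frac{1}{\eps^2}\LL_{\eps\xi}\widehat{\mathcal{G}}(t,\xi) + \frac{1}{\eps}(\Id-\PP(\eps\xi))\widehat{\varphi}(t,\xi), \qquad \widehat{\mathcal{G}}(0,\xi)=0,
\end{equation*}
with zero initial datum. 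A direct duality argument only gives the sup-norm parts of both estimates (testing against $f\in\Sl$ or $\Ss$ and exploiting the integral decay estimates of Lemma \ref{lem:decay_regularization_kinetic_semigroup} for $(U^\eps_\kin)^\star$); the nontrivial part is the $L^2$-in-time norms in the dissipation spaces $\SSlp$ or $\SSsp$, which I plan to capture by an energy estimate in Fourier.

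The key ingredient is the existence, for each $\xi\in\R^d$, of a hypocoercive Lyapunov functional $\mathcal{N}_\xi[\cdot]$ equivalent to $\|\cdot\|_\Sl^2$ (resp.\ $\|\cdot\|_\Ss^2$) uniformly in $\xi$, with the property that
\begin{equation*}
\Re\,\la\LL_\xi h,h\ra_{\mathcal{N}_\xi}\le -c\,\|h\|_\Slp^2 \qquad \forall\,h\in\range(\Id-\PP(\xi)),
\end{equation*}
for some $c>0$ (and the analogous statement in $\Ss$). Such a functional can be constructed by combining Lemma \ref{lem:lemhypocoercivity} (handling the region $|\xi|\gtrsim 1$) with the uniform spectral gap on $\range(\Id-\PP(\xi))$ established in Theorem \ref{thm:enlarged_thm}, via a Gearhart--Pruss-type equivalent-norm procedure modeled on the proof of Lemma \ref{lem:spectral_decay}. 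Testing the evolution equation against $\widehat{\mathcal{G}}$ in $\mathcal{N}_\xi$ and absorbing the source via Young's inequality $\eps^{-1}|\la\widehat{\varphi},\widehat{\mathcal{G}}\ra_\Sl|\le \frac{c}{2\eps^2}\|\widehat{\mathcal{G}}\|_\Slp^2+\frac{1}{2c}\|\widehat{\varphi}\|_\Slm^2$ then yields the differential inequality
\begin{equation*}
\frac{d}{dt}\mathcal{N}_\xi[\widehat{\mathcal{G}}(t,\xi)] + \frac{c}{\eps^2}\|\widehat{\mathcal{G}}(t,\xi)\|_\Slp^2 \le C\|\widehat{\varphi}(t,\xi)\|_\Slm^2.
\end{equation*}

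To obtain \eqref{eq:decay_convolution_semigroup_exp}, I would multiply by $e^{2\sigma t/\eps^2}$ for $\sigma<\sigma_0$ small enough so that the $2\sigma/\eps^2$ term produced by $\partial_t(e^{2\sigma t/\eps^2}\mathcal{N}_\xi)$ is absorbed by a fraction of $(c/\eps^2)\|\widehat{\mathcal{G}}\|_\Slp^2$ (using $\|\cdot\|_\Sl\le\|\cdot\|_\Slp$), integrate in $t$, and then integrate in $\xi$ against $\la\xi\ra^{2s}$; using $\widehat{\mathcal{G}}(0,\xi)=0$ this delivers both the sup-part and the $L^2$-part of $\Nt\cdot\Nt_{\SSSl}$. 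For \eqref{eq:decay_convolution_semigroup_no_exp}, I would work in $\Ss$ without exponential weight, integrate the corresponding inequality on $[0,t]$, multiply by $w_{\phi,\eta}(t)^2$, and take the supremum in $t$; since $w_{\phi,\eta}$ depends only on $\phi$ and not on $\varphi$, the right-hand side matches the claimed bound. The main obstacle is the uniformity of the Lyapunov functional $\mathcal{N}_\xi$ when $\widehat{\mathcal{G}}$ lives in the perturbed range $\range(\Id-\PP(\eps\xi))$ rather than in the unperturbed $\range(\Id-\PP)$: this requires combining the expansion of $\PP(\eps\xi)$ from Lemma \ref{lem:expansion_projection} with the coercivity bound and verifying that the resulting constants are uniform in $\eps|\xi|\in[0,\alpha_0]$, as well as checking that the $\BBB(\Sl)$ resolvent bound \eqref{eq:ResolI-PP} gives, after a standard Datko--Pazy reconstruction, the claimed quadratic Lyapunov control in the dissipation norm $\|\cdot\|_\Slp$.
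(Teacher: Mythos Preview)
Your one-step energy approach differs substantially from the paper's argument, and the gap you yourself flag --- construction of the Lyapunov functional $\mathcal{N}_\xi$ with $\Slp$-coercivity on $\range(\Id-\PP(\xi))$ --- is real. Gearhart--Pruss and Datko--Pazy give you exponential decay in $\BBB(\Sl)$ and an equivalent norm for which the semigroup is contractive, but that only yields $\Re\langle\LL_\xi h,h\rangle_{\mathcal{N}_\xi}\le -c\|h\|_\Sl^2$, not the stronger dissipation $\le -c\|h\|_\Slp^2$ you need. The hypocoercivity Lemma~\ref{lem:lemhypocoercivity} is likewise stated in $\Ss$-norm, not $\Ssp$. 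A functional with the property you want \emph{is} constructed later in the paper (Proposition~\ref{prop:dissipative_kinetic_inner_product}), but via a different route: one takes $\delta\|\cdot\|_{\SSh_j}^2$ plus an integral of $\|U^\eps_\kin(t)\cdot\|^2_{\SSh_{j-1}}$ in a \emph{weaker} space, and the $\Slp$-dissipation then comes from the explicit splitting $\LL=\BB+\AA$ with $\AA$ regularizing between the levels of the hierarchy --- not from abstract semigroup theory. Your proposal does not supply this mechanism. There is also a secondary issue: the source term $\langle(\Id-\PP(\eps\xi))\widehat\varphi,\widehat{\mathcal G}\rangle_{\mathcal{N}_\xi}$ must be bounded by $\|\widehat\varphi\|_\Slm\|\widehat{\mathcal G}\|_\Slp$, but the dual pairing $\Slm$--$\Slp$ is defined with respect to $\langle\cdot,\cdot\rangle_\Sl$, not $\langle\cdot,\cdot\rangle_{\mathcal{N}_\xi}$, so this step also requires structure on $\mathcal{N}_\xi$ beyond norm equivalence.

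The paper instead uses a simpler \emph{two-step} argument that avoids constructing any hypocoercive functional. First (Step~1), a duality argument --- testing $e^{\sigma t/\eps^2}U^\eps_\kin\ast\varphi$ against step functions $\phi\in L^2_t(\SSg)$ and using the $\SSgp$--$\SSg$ integral decay of $(U^\eps_\kin)^\star$ from Lemma~\ref{lem:decay_regularization_kinetic_semigroup} --- gives the integral bound in the \emph{base} space $\SSg$ (not $\SSgp$, and not just the sup-norm as you suggest):
\[
\frac{1}{\eps^2}\int_0^T e^{2\sigma t/\eps^2}\|u(t)\|_\SSg^2\,\d t \lesssim \eps^2\int_0^T e^{2\sigma t/\eps^2}\|\varphi(t)\|_\SSgm^2\,\d t.
\]
Second (Step~2), the \emph{naive} energy estimate using only the degenerate dissipativity $\Re\langle\LL f,f\rangle_\SSg+\lambda\|f\|_{\SSgp}^2\lesssim\|f\|_\SSg^2$ (from $\LL=\BB+\AA$ or $\LL=(\LL-\PP)+\PP$) gives, after Young's inequality,
\[
\frac{\d}{\d t}\big(e^{2\sigma t/\eps^2}\|u\|_\SSg^2\big)+\frac{\lambda}{\eps^2}e^{2\sigma t/\eps^2}\|u\|_{\SSgp}^2\lesssim \frac{1}{\eps^2}e^{2\sigma t/\eps^2}\|u\|_\SSg^2+\eps^2 e^{2\sigma t/\eps^2}\|\varphi\|_\SSgm^2.
\]
The ``bad'' term $\eps^{-2}\int e^{2\sigma t/\eps^2}\|u\|_\SSg^2$ on the right is exactly what Step~1 controls, and integrating then yields both the sup and the $L^2_t\SSgp$ parts of $\Nt\cdot\Nt_\SSSl$. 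The estimate \eqref{eq:decay_convolution_semigroup_no_exp} follows by specializing to $\sigma=0$, $\Sg=\Ss$. This decoupling of the $L^2_t\SSg$ control (via duality) from the $\SSgp$-regularization (via basic energy) is the idea your proposal is missing.
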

		
		\begin{proof}
			Denote by $(\Sg, \SSg, \SSgp, \SSgm)$ either $(\Ss, \SSs, \SSsp, \SSsm)$ or $(\Sl, \SSl, \SSlp, \SSlm)$.
			In a first step, we use a duality argument to prove the $\SSg-\SSgm$-integral decay:
			\begin{equation*}
				\frac{1}{\eps^2} \int_0^T e^{2 \sigma t / \eps^2} \| U^\eps_\kin(\cdot) * \varphi (t) \|_{\SSgp}^2 \, \d t \lesssim \eps^2 \int_0^T 	\| \varphi(t) \|_{ \SSgm }^2 \, \d t.
			\end{equation*}
			and we deduce from it the $\SSg-\SSgm$-uniform decay together with the stronger $\SSgp-\SSgm$-integral decay using an energy method in a second~step:
			\begin{equation*}
				\sup_{0 \le t < T} e^{2 \sigma t / \eps^2} \| U^\eps_\kin(\cdot) * \varphi (t) \|_{\SSg}^2 + \frac{1}{\eps^2} \int_0^T e^{2 \sigma t / \eps^2} \| U^\eps_\kin(\cdot) * \varphi (t) \|_{\SSgp}^2 \lesssim \eps^2 \int_0^T \| \varphi(t) \|_{ \SSgm }^2 \, \d t.
			\end{equation*}
			Note that this proves \eqref{eq:decay_convolution_semigroup_exp}, and it is enough to prove \eqref{eq:decay_convolution_semigroup_no_exp} as it follows from the particular case~$\sigma = 0$ and $\Sg = \Ss$.
			
			\step{1}{Integral decay in $\SSg-\SSgm$}
			We will prove the following estimate uniformly in $T \in (0, \infty]$ and~$\phi \in L^2\left( [0, T) ; \SSg\right)$:
			$$\left\la e^{\sigma t / \eps^2 } (U^\eps_\kin(\cdot) * \varphi), \phi \right\ra_{L^2\left( [0, T) ; \SSg\right) } \lesssim \eps^2 \| \phi \|_{L^2\left( [0, T) ; \SSg\right) } \left(\int_0^T e^{2\sigma \tau / \eps^2 } \| \varphi(\tau) \|_{\SSgm}^2 \d \tau\right)^{\frac{1}{2}},$$
			and as in the proof of Lemma \ref{lem:spectral_decay}, it is enough to check that it holds for $\phi$ of the form
			\begin{gather*}
				\phi(t) = \begin{cases}
					\phi_0 \in \SSg, & t \in [t_1 , t_2]\\
					0, & t \notin [t_1, t_2]
				\end{cases}, \qquad \| \phi \|_{ L^2\left( [0, T) ; \SSg \right) } = \sqrt{t_2 - t_1} \| \phi_0 \|_{\SSg}.
			\end{gather*}
			By duality, we have
			\begin{equation*}\begin{split}
					\big\la e^{\sigma t / \eps^2}  & (U^\eps_\kin(\cdot) * \varphi), \phi \big\ra_{ L^2\left( [0, T) ; \SSg \right) } \\
					& = \int_{t_1}^{t_2} \int_0^t \lla e^{\sigma t / \eps^2} U^\eps_\kin(t-\tau) \varphi(\tau) , \phi_0 \rra_{\SSg} \d \tau \, \d t \\
					& = \int_{t_1}^{t_2} \int_0^t \lla e^{\sigma \tau / \eps^2} \varphi(\tau) , e^{\sigma (t-\tau) / \eps^2 } U^\eps_\kin(t-\tau)^\star \, \phi_0 \rra_{\SSg} \d \tau \, \d t  \\
					& \le \int_{t_1}^{t_2} \int_0^t \left\| e^{\sigma \tau / \eps^2 } \varphi(\tau) \right\|_{\SSgm } \left\| e^{\sigma (t-\tau) / \eps^2 } U^\eps_\kin(t-\tau)^\star \, \phi_0 \right\|_{\SSgp} \d \tau \, \d t,
			\end{split}\end{equation*}
			and so, using first Cauchy-Schwarz's inequality and then Young's convolution inequality in the form~$L^2\left( [0, T] \right) \ast  L^1\left([0, T]\right) \hookrightarrow L^2\left([0, T]\right)$, there holds
			\begin{equation*}\begin{split}
					\big\la e^{\sigma t / \eps^2}  & (U^\eps_\kin(\cdot) * \varphi), \phi \big\ra_{ L^2\left( [0, T) ; \SSg \right) } \\
					& \le \sqrt{t_1-t_1} \left( \int_{0}^{T} \left( \int_0^t \left\| e^{\sigma \tau / \eps^2} \varphi(\tau) \right\|_{\SSgm } \left\| e^{\sigma (t-\tau) / \eps^2 } U^\eps_\kin(t-\tau)^\star \phi_0 \right\|_{\SSgp} \d \tau \right)^2 \d \tau \right)^{\frac{1}{2}} \\
					& \le \sqrt{t_1-t_1} \left(\int_0^T \left\| e^{\sigma t / \eps^2 } \varphi(t) \right\|_{\SSgm }^2 \d t\right)^{\frac{1}{2}} \int_0^T \left\| e^{\sigma t / \eps^2 } U^\eps_\kin(t)^\star \phi_0 \right\|_{\SSgp} \d t. 
			\end{split}\end{equation*}
			Furthermore, using Cauchy-Schwarz's inequality, we deduce for some $\sigma' \in (\sigma, \sigma_0)$
			\begin{equation*}\begin{split}
					\la e^{\sigma t / \eps^2}  & (U^\eps_\kin(\cdot) * \varphi), \phi \ra_{ L^2\left( [0, T) ; \SSg \right) } \\
					& \lesssim \eps \sqrt{t_1-t_1} \left(\int_0^T \| e^{\sigma t / \eps^2 } \varphi(t) \|_{\SSgm }^2 \d t\right)^{\frac{1}{2}} \left(\int_0^T \| e^{\sigma' t / \eps^2 } U^\eps_\kin(t)^\star \phi_0 \|_{\SSgp}^2 \d t\right)^{\frac{1}{2}}.
			\end{split}\end{equation*}
			from which, using the $\SSgp-\SSg$-integral estimate for $\left((U^\eps_\kin(t))^\star\right)_{t\geq0}$ obtained in Lemma \ref{lem:decay_regularization_kinetic_semigroup}, we obtain
			\begin{equation*}\begin{split}
					\big\la e^{\sigma t / \eps^2} & (U^\eps_\kin(\cdot) * \varphi), \phi \big\ra_{ L^2\left( [0, T) ; \SSg \right) } \\
					& \lesssim \eps^2 \sqrt{t_1-t_1} \left(\int_0^T \| e^{\sigma t / \eps^2 } \varphi(t) \|_{\SSgm }^2 \d t\right)^{\frac{1}{2}} \| \phi_0 \|_{\SSg}.
			\end{split}\end{equation*}
			This concludes this step.

			\step{2}{Regularized uniform and integral decay}
			Denote $u(t) := U^\eps_\kin * \varphi(t) = U^\eps * \PP^\eps_\kin \varphi(t)$, it satisfies the evolution equation
			$$\partial_t u = \frac{1}{\eps^2}  \left(\LL - \eps v \cdot \nabla_x \right) u + \PP^\eps_\kin \varphi, \quad u(0) = 0.$$
			Note that, considering the decomposition $\LL = \left(\LL - \PP\right) + \PP$ in the case $\Sg = \Ss$ (from Assumption \ref{L1}--\ref{L4}), or $\LL = \BB+ \AA$ in the case $\Sg = \Sl$ ( from Assumption \ref{LE}), the following degenerate dissipativity estimate holds for some $\lambda > 0$:
			$$\Re \la \LL f, f \ra_\SSg + \lambda \| f \|_{\SSgp}^2 \lesssim \| f \|^2_\SSg.$$
			We now write an energy estimate, using the skew-adjointness of $v \cdot \nabla_{x}$:
			$$\frac{1}{2} \frac{\d}{\d t} \| u \|^2_{\SSg} + \frac{\lambda}{\eps^2} \| u \|_{\SSgp}^2 \lesssim \| u \|^2_{\SSg} + \left| \la \PP^\eps_\kin \varphi, u \ra_\SSg \right|.$$
			Recall that $\PP^\eps_\kin = \Id - \PP^\eps_\hyd$, where we know from the spectral analysis performed in Theorems \ref{thm:spectral_study} or \ref{thm:enlarged_thm} that $\PP^\eps_\hyd \in \BBB(\SSgm ; \SSgp) \subset \BBB(\SSgm)$ uniformly in $\eps$ from the embedding $\SSgp \hookrightarrow \SSg \hookrightarrow \SSgm$. Thus, we have $\PP^\eps_\kin \in \BBB(\SSgm)$ uniformly in $\eps$, from which we deduce
			$$\frac{1}{2} \frac{\d}{\d t} \| u \|^2_{\SSg} + \frac{\lambda}{\eps^2} \| u \|_{\SSgp}^2 \lesssim \| u \|^2_{\SSg} + \| \PP^\eps_\kin \varphi \|_{\SSgm} \| u \|_{\SSgp} \lesssim \| u \|^2_{\SSg} + \| \varphi \|_{\SSgm} \| u \|_{\SSgp}.$$
			Therefore, multiplying by $e^{2\sigma t / \eps^2}$, we obtain
			\begin{align*}
				\frac{1}{2} \frac{\d}{\d t} \left(e^{2 \sigma t / \eps^2} \| u \|^2_{\SSg}\right) + & \frac{\lambda}{\eps^2} e^{2 \sigma t / \eps^2} \| u \|_{\SSgp}^2 \\
				& \lesssim \frac{1}{\eps^2} e^{2 \sigma t / \eps^2} \| u \|^2_{\SSg} + \eps \left(e^{\sigma t / \eps^2} \| \varphi \|_{\SSgm}\right) \left( \frac{1}{\eps} e^{\sigma t / \eps^2} \| u \|_{\SSgp}\right),
			\end{align*}
			or more simply by Young's inequality
			\begin{align*}
				\frac{1}{2} \frac{\d}{\d t} \left(e^{2 \sigma t / \eps^2} \| u \|^2_{\SSg}\right) + & \frac{\lambda}{2 \eps^2} e^{2 \sigma t / \eps^2} \| u \|_{\SSgp}^2
				\lesssim \frac{1}{\eps^2} e^{2 \sigma t / \eps^2} \| u \|^2_{\SSg} + \eps^2 e^{2 \sigma t / \eps^2} \| \varphi \|_{\SSgm}^2.
			\end{align*}
			Integrating in time, we finally deduce from the previous step
			$$\Nt u \Nt^2_{\SSSl(\sigma, \eps)} \lesssim \eps^2 \int_0^T e^{2 \sigma t / \eps^2} \| \varphi(t) \|_{\SSgm}^2\d t .$$
			This concludes the proof.
		\end{proof}
		
		\section{Bilinear theory}\label{sec:Bilin}
		
		We come now to the main nonlinear estimates involbing the various stiff terms
		$$\Psi^\eps[f,g]=\frac{1}{\eps^2}U^\eps \ast \QQ^\sym(f,g).$$
		We will exploit the decomposition of $ U^\eps(t) $ given in \eqref{eq:decompUeps} and the associated nonlinear decomposition 
		$$		\Psi^\eps[f,g](t) = \Psi^\eps_\hyd [f, g](t) + \Psi^\eps_\kin [f, g](t),$$
		with
		$$		\Psi^\eps_\star [f, g](t) := \PP^\eps_\star \Psi^\eps[f,g](t) = \frac{1}{\eps} \int_0^t U^\eps_\star(t - \tau) \QQ^\sym (f(\tau), g(\tau)) \d \tau.$$
		We first need the following spatially inhomogeneous nonlinear estimates of $\QQ$.
		\begin{lem}[\textit{\textbf{Nonlinear Sobolev estimates for $\QQ$}}]
			\label{lem:Q_sobolev}
			Denote $\Sg = \Ss$ under assumption \ref{Bbound}, or $\Sg = \Sl$ under assumption~\ref{BE}.
			Consider $s > \frac{d}{2}$ and recall that~$\alpha \in \left(0, \frac{1}{2}\right)$ if $d = 2$, or $\alpha = 0$ if $d \ge 3$. There holds
			\begin{subequations}
				\label{eq:Q_refined_sobolev_algebra}
				\begin{gather}
					\label{eq:Q_refined_sobolev_negative_algebra_inequality}
					\begin{aligned}
						\| \QQ(f, g) \|_{ \dot{\mathbb{H}}^{-\alpha}_x \left( \Sgm_v \right) } & + \| \QQ(f, g) \|_{\rSSgm{s} } \\
						\lesssim& \| f \|_{ \rSSg{s} } \| | \nabla_x |^{1-\alpha} g \|_{ \rSSgp{s-(1-\alpha)} }
						+ \| | \nabla_x |^{1-\alpha} f \|_{ \rSSgp{s-(1-\alpha)} } \| g \|_{ \rSSg{s} },
					\end{aligned} \\
					\label{eq:Q_refined_sobolev_negative_algebra_inequality_same}
					\begin{aligned}
						\| \QQ(f, g) \|_{ \dot{\mathbb{H}}^{-\alpha}_x \left( \Sgm_v \right) } & + \| \QQ(f, g) \|_{\rSSgm{s} } \\
						\lesssim & \| f \|_{ \rSSg{s} } \| | \nabla_x |^{1-\alpha} g \|_{ \rSSgp{s-(1-\alpha)} }
						+ \| f \|_{ \rSSgp{s} } \| | \nabla_x |^{1-\alpha} g \|_{ \rSSg{s- (1-\alpha) } }.
					\end{aligned}
				\end{gather}
			\end{subequations}
			Furthermore, we have the following control when $g \in W^{s', \infty}_x \left( \Sgp_v \right)$ for $s' > s$:
			\begin{equation}
				\label{eq:Q_sobolev_algebra_Holder}
				\| \QQ(f, g) \|_{\rSSgm{s} } \lesssim \| f \|_{ \rSSg{s} } \| g \|_{ W^{s', \infty}_x \left( \Sgp_v \right) } + \| f \|_{ \rSSgp{s} } \| g \|_{ W^{s', \infty}_x \left( \Sg_v \right) }.
			\end{equation}
		\end{lem}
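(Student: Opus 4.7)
Since $\QQ$ acts pointwise in $x$, the dual estimate of \ref{Bbound} (when $\Sg = \Ss$) or \ref{BE} (when $\Sg = \Sl$) yields the pointwise-in-$x$ inequality
\begin{equation*}
\left\| \QQ(f, g)(x, \cdot) \right\|_{\Sgm}
\lesssim \|f(x, \cdot)\|_{\Sg} \|g(x, \cdot)\|_{\Sgp} + \|f(x, \cdot)\|_{\Sgp}\|g(x, \cdot)\|_{\Sg},
\end{equation*}
valid for a.e. $x \in \R^d$. To lift this pointwise estimate to the $\mathbb{H}^s_x$- and $\dot{\mathbb{H}}^{-\alpha}_x$-norms while taking advantage of the gain encoded by $|\nabla_x|^{1-\alpha}$, the plan is to use Bony's paraproduct decomposition (recalled in Appendix \ref{scn:littlewood-paley}) adapted to the bilinear operator $\QQ$:
\begin{equation*}
\QQ(f, g) = T^{\QQ}_f g + T^{\QQ}_g f + R^{\QQ}(f, g),
\end{equation*}
with $T^{\QQ}_f g := \sum_j \QQ(S_{j-1} f, \dot\Delta_j g)$, $T^{\QQ}_g f := \sum_j \QQ(\dot\Delta_j f, S_{j-1} g)$, and $R^{\QQ}(f, g) := \sum_{|j-k|\leq 1} \QQ(\dot\Delta_j f, \dot\Delta_k g)$.

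For each paraproduct, the Fourier support of $\QQ(S_{j-1}f, \dot\Delta_j g)$ sits in an annulus of radius $\sim 2^j$, so almost-orthogonality gives
\begin{equation*}
\|T^{\QQ}_f g\|_{\rSSgm{s}}^2 \sim \sum_j 2^{2 j s} \|\QQ(S_{j-1}f, \dot\Delta_j g)\|_{L^2_x(\Sgm_v)}^2.
\end{equation*}
Applying the dual estimate fiber-wise and invoking the embedding $\mathbb{H}^s_x \hookrightarrow L^\infty_x$ (available since $s > d/2$) to bound $\|S_{j-1}f\|_{L^\infty_x(\Sg)}$ and $\|S_{j-1}f\|_{L^\infty_x(\Sgp)}$ uniformly in $j$ by $\|f\|_{\rSSg{s}}$ and $\|f\|_{\rSSgp{s}}$ respectively, and then splitting $2^{j s} = 2^{j(1-\alpha)} \cdot 2^{j(s - (1-\alpha))}$, one recognizes in the resulting sum the factor $\||\nabla_x|^{1-\alpha} g\|_{\rSSgp{s-(1-\alpha)}}$ (or its analogue with $\Sg$ in place of $\Sgp$). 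The paraproduct $T^{\QQ}_g f$ is treated symmetrically, and the two variants \eqref{eq:Q_refined_sobolev_negative_algebra_inequality} and \eqref{eq:Q_refined_sobolev_negative_algebra_inequality_same} correspond to the two ways of distributing the dissipation weight between the arguments when applying the pointwise dual estimate.

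The main technical obstacle lies in the remainder $R^{\QQ}(f, g)$: unlike a paraproduct, the block $\QQ(\dot\Delta_j f, \dot\Delta_k g)$ is Fourier-supported in a \emph{ball} of radius $\sim 2^j$, so its projection onto $\dot\Delta_n$ can be non-zero for every $n \leq j$, not merely $n \sim j$. Summing the contributions from $|j-k| \leq 1$, $j \gtrsim n$ requires the Bernstein inequalities $\|\dot\Delta_j f\|_{L^\infty_x} \lesssim 2^{j d/2}\|\dot\Delta_j f\|_{L^2_x}$ combined with $s > d/2$; this takes care of high output frequencies in every dimension. At low output frequencies $2^n \ll 1$, however, the factor $|\nabla_x|^{1-\alpha}$ provides no control, and in dimension $d = 2$ this low-frequency sum is only borderline summable: the introduction of the weaker Hilbert norm $\dot{\mathbb{H}}^{-\alpha}_x(\Sgm_v)$ on the left-hand side, with $\alpha > 0$, is precisely what is needed to close the estimate, while in dimension $d \geq 3$ the standard Sobolev embeddings leave enough margin to take $\alpha = 0$. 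Finally, the Hölder-type bound \eqref{eq:Q_sobolev_algebra_Holder} requires no paraproduct analysis: the $L^\infty_x$ control on $g$ permits a direct pointwise application of the dual estimate, together with classical Moser-type product estimates in $\mathbb{H}^s_x$ ($s > d/2$) to handle the $x$-derivatives of $f$.
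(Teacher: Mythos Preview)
Your proposal is correct and follows essentially the same route as the paper: define the $\QQ$-adapted paraproducts $\dot T^{\QQ}_f g = \sum_j \QQ(\dot S_{j-1}f,\dot\Delta_j g)$ and remainder $\dot R^{\QQ}(f,g)$, observe that the pointwise dual estimate \ref{Bbound}/\ref{BE} gives them the same block-by-block bounds as the scalar versions, and then invoke the $\QQ$-analogues of the standard product rules (Propositions \ref{prop:product_homogeneous_sobolev} and \ref{prop:product_sobolev_holder}). The one minor difference in execution is that for the $\rSSgm{s}$ part the paper bypasses the full Littlewood--Paley analysis in favor of a direct Fourier-side argument: write $\la\xi\ra^s \lesssim |\xi-\zeta|^{1-\alpha}\la\xi-\zeta\ra^{s-(1-\alpha)} + \la\zeta\ra^s$ inside the convolution, apply Young $L^2_\xi * L^1_\xi \hookrightarrow L^2_\xi$, and control the $L^1_\xi$ factors via Cauchy--Schwarz using $\la\xi\ra^{-s}\in L^2_\xi$ and $|\xi|^{-(1-\alpha)}\la\xi\ra^{-(s-(1-\alpha))}\in L^2_\xi$; this is shorter than tracking the remainder through Bernstein but yields the same bound.
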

	\begin{rem} Because the estimates provided in the Lemma are involving fractional Sobolev spaces in the variable $x$, and due to the locality in $x$ of the bilinear operator $\QQ$, the proof requires some estimates reminiscent to paradifferential calculus. This is not the case when dealing with mere $\mathbb{H}^{k}_{x}(X_{v})$ spaces with $k \in \N$ as in Lemma \ref{lem72} where $L^{p}$-estimates and Sobolev embeddings will allow to recover the needed estimates.\end{rem}		
\begin{proof} As just said, due to the locality in $x$ of the bilinear operator $\QQ$, one can adapt classical results from paradifferential calculus, replacing the multiplication $(u, v) \mapsto uv$ (resp. the modulus $|\cdot|$) by the collision operator $(u, v) \mapsto \QQ(u, v)$ (resp. the $\Sgm$-norm). In particular, we redefine the homogeneous paraproduct and remainder (see Appendix \ref{scn:littlewood-paley}) as
			$$\dot{T}_u v= \sum_{j} \QQ\left( \dot{S}_{j-1} u , \dot{\Delta}_j v \right), \qquad \dot{R}(u, v) = \sum_{|j - k| \le 1} \QQ\left( \dot{\Delta}_k u , \dot{\Delta}_j v \right),$$
			which satisfy, under the assumption \ref{Bbound} or \ref{BE}, the estimates
			$$  \left\| \QQ\left( \dot{S}_{j-1} u , \dot{\Delta}_j v \right) \right\|_{ L^p_x \left( \Sgm_v \right) } \lesssim
			\left\| \| \dot{S}_{j-1} u \|_{ \Sgp_v } \| \dot{\Delta}_j v \|_{ \Sg_v } \right\|_{ L^p_x } +
			\left\| \| \dot{S}_{j-1} u \|_{ \Sg_v } \| \dot{\Delta}_j v \|_{ \Sgp_v } \right\|_{ L^p_x },$$
			$$	
			\left\| \QQ\left( \dot{\Delta}_k u , \dot{\Delta}_j v \right) \right\|_{ L^p_x \left( \Sgm_v \right) } \lesssim
			\left\| \| \dot{\Delta}_k u \|_{ \Sgp_v } \| \dot{\Delta}_j v \|_{ \Sg_v } \right\|_{ L^p_x } +
			\left\| \| \dot{\Delta}_k u \|_{ \Sg_v } \| \dot{\Delta}_j v \|_{ \Sgp_v } \right\|_{ L^p_x }.
			$$
			One then checks that \eqref{eq:Q_sobolev_algebra_Holder} is the $\QQ$-version of Proposition \ref{prop:product_sobolev_holder}. Furthermore, denoting for compactness $\boldsymbol{\alpha} = 1-\alpha \in \left(0, \frac{d}{2}\right)$, one gets from the $\QQ$-version of Proposition~\ref{prop:product_homogeneous_sobolev} with $s_1 = \frac{d}{2} -1 \in \big[ 0, \frac{d}{2} \big)$ and $s_2 = \boldsymbol{\alpha}$, so that $s_1 + s_2 - \frac{d}{2} = - \alpha$:
			\begin{equation*}\begin{split}
					\| \QQ(f, g) \|_{\dot{\mathbb{H}}^{-\alpha}_x \left( \Sgm_v \right) }
					&\lesssim   \| f \|_{ \dot{\mathbb{H}}_x^{\frac{d}{2} - 1} \left( \Sg_v \right) } \left\| | \nabla_{x} |^{\boldsymbol{\alpha}} g \right\|_{ L^2_x \left( \Sgp_v \right) }
					+  \| | \nabla_x |^{\boldsymbol{\alpha}} f \|_{ L^2_x\left( \Sgp_v \right) } \|  g \|_{ \dot{\mathbb{H}}^{\frac{d}{2} - 1}_x \left( \Sg_v \right) } \\
					&\lesssim  \| f \|_{ \mathbb{H}^s_x \left( \Sg_v \right) } \left\| | \nabla_{x} |^{\boldsymbol{\alpha}} g \right\|_{ L^2_x \left( \Sgp_v \right) }
					+ \left\| | \nabla_{x} |^{\boldsymbol{\alpha}} f \right\|_{ L^2_x \left( \Sgp_v \right) } \| g \|_{ \mathbb{H}^s_x \left( \Sg_v \right) }.
			\end{split}\end{equation*}
			
			We now turn to the estimate in $\rSSgm{s} = \mathbb{H}^s_x \left( \Sgm_v \right)$. Using \ref{Bbound} or \ref{BE}, we have
			\begin{align*}
				\| \QQ(f, g) \|_{ \mathbb{H}^s_x \left( \Sgm_v \right) } = & \left\| \la \xi \ra^s \int_{\R^{d}} \QQ\left( \widehat{f}(\xi - \zeta) , \widehat{g}(\zeta) \right) \d \zeta \right\|_{ L^2_\xi \left( \Sgm_v \right) } \\
				\lesssim & \left\| \la \xi \ra^s \int_{\R^{d}} \| \widehat{f}(\xi - \zeta) \|_{ \Sgp_v } \| \widehat{g}(\zeta) \|_{ \Sg_v } \d \zeta \right\|_{ L^2_\xi } \\
				& + \left\| \la \xi \ra^s \int_{\R^{d}} \| \widehat{f}(\xi - \zeta) \|_{ \Sg_v } \| \widehat{g}(\zeta) \|_{ \Sgp_v } \d \zeta \right\|_{ L^2_\xi } =: I_1 + I_2.
			\end{align*}
			We split the frequency weight as
			$$\la \xi \ra^s \approx 1 + | \xi |^s \lesssim 1 + | \xi - z |^s + | \zeta |^s \lesssim | \xi - z|^{\boldsymbol{\alpha}} \la \xi - z \ra^{s - \boldsymbol{\alpha}} + \la z\ra^s,$$
			since $s-\boldsymbol{\alpha} >0,$
			which allows to control the term $I_1$ as follows:
			\begin{equation*}\begin{split}
					I_1 &\lesssim  \left\|   \int_{\R^{d}} \Big[ | \xi - z |^{\boldsymbol{\alpha}} \la \xi - z \ra^{s - \boldsymbol{\alpha}} \| \widehat{f}(\xi -z) \|_{ \Sgp_v } \Big] \| \widehat{g}(z) \|_{ \Sg_v } \d z \right\|_{ L^2_\xi } \\
					&\phantom{+++++*} +  \left\|  \int_{\R^{d}} \| \widehat{f}(\xi -z) \|_{ \Sgp_v } \Big[\la \zeta \ra^s \| \widehat{g}(z) \|_{ \Sg_v }\Big] \d z \right\|_{ L^2_\xi }.
			\end{split}\end{equation*}
			Using Young's convolution inequality $L^2_\xi  \ast  L^1_\xi \hookrightarrow L^2_\xi$   
			we deduce that
			\begin{equation*}\begin{split}
					I_1 &\lesssim \left\|\,|\xi|^{\boldsymbol{\alpha}}\langle\xi\rangle^{\boldsymbol{\alpha}}\,
					\|\widehat{f}\|_{\Sgp_v}\,\right\|_{L^2_\xi}
					\left\|\,\|\widehat{g}(\xi)\|_{\Sg_v}\right\|_{L^1_\xi}+
					\left\|\,\|\widehat{f}(\xi)\|_{\Sgp_v}\right\|_{L^1_\xi}\,\left\|\,\langle\xi\rangle^{s}\,\|\widehat{g}\|_{\Sg_v}\right\|_{L^2_\xi}\\
					&\lesssim \left\|\,|\nabla_x|^{\boldsymbol{\alpha}}\,f\right\|_{H^{s-\boldsymbol{\alpha}}_x(\Sgp_v)}\,\|\widehat{g}\|_{L^1_\xi(\Sg_v)}+\left\|\widehat{f}\right\|_{L^1_\xi(\Sgp_v)}\,\left\|g\right\|_{\mathbb{H}^s_x(\Sg_v)}.
			\end{split}\end{equation*}
			Using the fact that~$\la \xi \ra^{ -s } \in L^2_\xi$ (resp. $| \xi |^{ -\boldsymbol{\alpha} } \la \xi \ra^{-(s - \boldsymbol{\alpha}) } \in L^2_\xi$), a simple use of Cauchy-Schwarz's inequality allows to estimate $L^1_\xi$-norms with weighted $L^2_\xi$-norms resulting in		
			$$ I_1 	\lesssim  \| | \nabla_x |^{\boldsymbol{\alpha}} f \|_{ H^{ s-\boldsymbol{\alpha} }_x \left( \Sgp_v \right) } \| g \|_{ \mathbb{H}^s_x \left( \Sg_v \right) }.$$
			We prove in the exact same way that
			$$I_2 \lesssim   \| f \|_{ \mathbb{H}^s_x \left( \Sg_v \right) } \| | \nabla_x |^{\boldsymbol{\alpha}} g \|_{ H^{ s-\boldsymbol{\alpha} }_x \left( \Sgp_v \right) },$$
			thus \eqref{eq:Q_refined_sobolev_negative_algebra_inequality} is proved, and the proof of \eqref{eq:Q_refined_sobolev_negative_algebra_inequality_same} is similar. The proofs of \eqref{eq:Q_sobolev_algebra_degenerate} and \eqref{eq:Q_sobolev_algebra_degenerate_closed} are also similar. This concludes the proof.
		\end{proof}
		
		\subsection{Bilinear and linear hydrodynamic estimates}
		We have all in hands to estimate the bilinear ‘‘hydrodynamic'' operator $\Psi^\eps_\hyd(f,g)=\frac{1}{\eps}U_\hyd^\eps(\cdot) \ast \QQ^\sym(f,g)$. \textbf{The results of this section hold under any assumption \ref{Bbound}, \ref{BE} or \ref{BED}}. They are based upon the above properties of $\QQ$ as well as the results of Section \ref{scn:study_physical_space} on the various semigroups involved:
		\begin{prop}[\textit{\textbf{General bilinear hydrodynamic estimates}}]
			\label{prop:bilinear_hydrodynamic}
			The bilinear operator~$\Psi^\eps_\hyd$ satisfies the following continuity estimates in $\SSSs$ when at least one argument is in $\SSSs$:
			\begin{subequations}
				\label{eq:bilinear_hyd_hyd}
				\begin{gather}
					\label{eq:bilinear_hyd_hyd_hyd}
					\Nt \Psi^\eps_\hyd [f, g] \Nt_\SSSs \lesssim w_{\phi, \eta}(T)^{-1} \Nt f \Nt_\SSSs \Nt g \Nt_\SSSs,\\
					\label{eq:bilinear_hyd_hyd_mix}
					\Nt \Psi^\eps_\hyd [f, g] \Nt_\SSSs \lesssim \eps w_{\phi, \eta}(T)^{-1} \Nt f \Nt_\SSSs \Nt g \Nt_\SSSm,\\
					\label{eq:bilinear_hyd_hyd_kin}
					\Nt \Psi^\eps_\hyd [f, g] \Nt_\SSSs \lesssim \eps \Nt f \Nt_\SSSs \Nt g \Nt_\SSSl,
				\end{gather}
			\end{subequations}
			as well as as the following ones when at least one argument is in $\SSSm$:
			\begin{subequations}
				\label{eq:bilinear_hyd_mix}
				\begin{gather}
					\label{eq:bilinear_hyd_mix_mix}
					\Nt \Psi^\eps_\hyd [f, g] \Nt_\SSSs \lesssim \eps w_{\phi, \eta}(T)^{-1} \Nt f \Nt_\SSSm \Nt g \Nt_\SSSm,\\
					\label{eq:bilinear_hyd_mix_kin}
					\Nt \Psi^\eps_\hyd [f, g] \Nt_\SSSs \lesssim \eps \Nt f \Nt_\SSSm \Nt g \Nt_\SSSl,
				\end{gather}
			\end{subequations}
			and the following one when both arguments are in $\SSSl$:
			\begin{equation}
				\label{eq:bilinear_hyd_kin_kin}
				\Nt \Psi^\eps_\hyd [f, g] \Nt_\SSSs \lesssim \eps \Nt f \Nt_\SSSl \Nt g \Nt_\SSSl.
			\end{equation}
			Furthermore, it is strongly continuous at $t=0$:
			$$\lim_{t \to 0}\| \Psi_\hyd^\eps(f, g)(t) \|_{\SSs}=0$$
			in all cases considered above.
		\end{prop}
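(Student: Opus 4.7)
My plan is to start from the Duhamel representation $\Psi^\eps_\hyd[f,g] = \frac{1}{\eps}\,U^\eps_\hyd \ast \QQ^\sym(f,g)$ and apply the convolution estimate \eqref{eq:decay_semigroups_hydro_orthogonal_negative} of Lemma~\ref{lem:decay_semigroups_hydro} with $\star=\hyd$ and $\varphi=\QQ^\sym(f,g)$. The orthogonality hypothesis $\PP\varphi(t)=0$ needed there is exactly the local conservation assumption~\ref{Bortho}. This reduces all six estimates \eqref{eq:bilinear_hyd_hyd_hyd}--\eqref{eq:bilinear_hyd_kin_kin} to controlling, for $\Sg=\Ss$ (under \ref{Bbound}) or $\Sg=\Sl$ (under \ref{BE}, and similarly with \ref{BED} and an appropriate index choice in \eqref{eq:BEDNon}), the quantity $\sup_{0\le t<T} w_{\phi,\eta}(t)^2 \int_0^t \|\QQ^\sym(f,g)(\tau)\|^2_{\SSgm \cap \dot{\mathbb{H}}^{-\alpha}_x(\Sgm_v)}\,\d\tau$, with the integrand estimated through Lemma~\ref{lem:Q_sobolev}.

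The unifying observation is that the embedding $\||\nabla_x|^{1-\alpha}h\|_{\rSSgp{s-(1-\alpha)}}\lesssim\|h\|_\SSgp$ (a one-line Littlewood--Paley split at $|\xi|=1$) together with $\|\cdot\|_\SSg \le \|\cdot\|_\SSgp$ reduces \eqref{eq:Q_refined_sobolev_negative_algebra_inequality} to the pointwise-in-$\tau$ estimate
\[
\|\QQ^\sym(f,g)(\tau)\|_{\SSgm\cap\dot{\mathbb{H}}^{-\alpha}_x(\Sgm_v)} \lesssim \|f(\tau)\|_\SSgp\,\|g(\tau)\|_\SSgp.
\]
The factor of $\eps$ appearing in \eqref{eq:bilinear_hyd_hyd_mix}--\eqref{eq:bilinear_hyd_kin_kin} is then built into the definitions of $\SSSm$ and $\SSSl$: any $g\in\SSSm$ satisfies $\int_0^t\|g(\tau)\|_\SSsp^2\,\d\tau \le \eps^2\, w_{\phi,\eta}(t)^{-2}\,\Nt g\Nt_\SSSm^2$, and any $g\in\SSSl$ satisfies $\int_0^T e^{2\sigma\tau/\eps^2}\|g(\tau)\|_\SSlp^2\,\d\tau \le \eps^2\Nt g\Nt_\SSSl^2$. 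Consequently, in each case one places one dissipation factor in $L^\infty_t$ (with weight $w_{\phi,\eta}^{-1}$, or the exponential weight $e^{-\sigma\tau/\eps^2}$ when a $\SSSl$-argument sits there) and the other inside the time integral, where it produces the $\eps$ precisely when its argument lies in $\SSSm$ or $\SSSl$. For instance, \eqref{eq:bilinear_hyd_hyd_mix} follows from
\[
\int_0^t\|\QQ^\sym(f,g)\|^2\,\d\tau \le \sup_{\tau\le t}\|f(\tau)\|_\SSsp^2 \cdot \int_0^t\|g(\tau)\|_\SSsp^2\,\d\tau \le w_{\phi,\eta}(t)^{-2}\,\Nt f\Nt_\SSSs^2 \cdot \eps^2\,w_{\phi,\eta}(t)^{-2}\,\Nt g\Nt_\SSSm^2,
\]
which once multiplied by $w_{\phi,\eta}(t)^2$ and supremized in $t$ gives exactly $\eps^2\,w_{\phi,\eta}(T)^{-2}\Nt f\Nt_\SSSs^2\Nt g\Nt_\SSSm^2$. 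The remaining five estimates are obtained by the same splitting, with integration against $e^{-2\sigma\tau/\eps^2}\in L^1_t$ generating the $\eps$ whenever the $\SSSl$-argument sits in $L^\infty_t$-position.

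The only subtlety, which accounts for the prefactor $w_{\phi,\eta}(T)^{-1}$ appearing in \eqref{eq:bilinear_hyd_hyd_hyd}, \eqref{eq:bilinear_hyd_hyd_mix} and \eqref{eq:bilinear_hyd_mix_mix}, is the monotonicity of $w_{\phi,\eta}$: since it is decreasing, the bound $w_{\phi,\eta}(t)^2\sup_{\tau\le t}\|\cdot\|^2\le\sup_\tau w_{\phi,\eta}(\tau)^2\|\cdot\|^2$ is free of loss, whereas dominating $w_{\phi,\eta}(t)^2\int_0^t\|\cdot\|^2\,\d\tau$ by the weighted integral appearing in $\Nt\cdot\Nt_\SSSs$ or $\Nt\cdot\Nt_\SSSm$ costs an additional factor $w_{\phi,\eta}(t)^{-2}\le w_{\phi,\eta}(T)^{-2}$; estimates \eqref{eq:bilinear_hyd_hyd_kin}, \eqref{eq:bilinear_hyd_mix_kin} and \eqref{eq:bilinear_hyd_kin_kin} avoid this loss because the exponential $\SSSl$-weight plays the role of $w_{\phi,\eta}$. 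Finally, strong continuity at $t=0$ is immediate from the Duhamel formula: restricting the preceding estimates to $[0,t]$, the time integrals vanish with the length of the interval while $w_{\phi,\eta}(t)^{-1}\to 1$, so that $\|\Psi^\eps_\hyd[f,g](t)\|_\SSs\to 0$ as $t\to 0$.
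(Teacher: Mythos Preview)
Your approach is correct for estimates \eqref{eq:bilinear_hyd_hyd_mix}--\eqref{eq:bilinear_hyd_kin_kin}, but it fails for \eqref{eq:bilinear_hyd_hyd_hyd}. The issue is your ``unifying observation'': the embedding $\||\nabla_x|^{1-\alpha}h\|_{\rSSgp{s-(1-\alpha)}}\lesssim\|h\|_\SSgp$ is true, but using it to reduce \eqref{eq:Q_refined_sobolev_negative_algebra_inequality} to $\|\QQ^\sym(f,g)\|\lesssim\|f\|_\SSsp\|g\|_\SSsp$ throws away exactly the structure needed to close the $\SSSs\times\SSSs$ case. Indeed, following your template you would need to bound
\[
\sup_{0\le t<T} w_{\phi,\eta}(t)^2\int_0^t \|f(\tau)\|_\SSsp^2\,\|g(\tau)\|_\SSsp^2\,\d\tau,
\]
and placing $\|f(\tau)\|_\SSsp$ in $L^\infty_\tau$ leaves you with $\int_0^t\|g(\tau)\|_\SSsp^2\,\d\tau$. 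But the $\SSSs$-norm does \emph{not} control this integral: by Definition~\ref{defi:NORMSPACES} its integral part is $w_{\phi,\eta}(t)^2\int_0^t\||\nabla_x|^{1-\alpha}g(\tau)\|_\SSsp^2\,\d\tau$, which says nothing about the low-frequency contribution to $\|g(\tau)\|_\SSsp$. (In the $\SSSm$ and $\SSSl$ cases this problem does not arise precisely because those norms \emph{do} contain an unweighted $L^2_t\SSsp$-piece, which is why your argument goes through there and produces the factor $\eps$.)

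The paper's proof of \eqref{eq:bilinear_hyd_hyd_hyd} keeps the full form of \eqref{eq:Q_refined_sobolev_negative_algebra_inequality}: the term $\|f(\tau)\|_{\rSSs{s}}\,\||\nabla_x|^{1-\alpha}g(\tau)\|_{\rSSsp{s-(1-\alpha)}}$ is handled by putting the first factor in $L^\infty_\tau$ (controlled by $\Nt f\Nt_\SSSs$ via $\SSsp\hookrightarrow\SSs$) and the second factor in $L^2_\tau$, which is now exactly the integral part of $\Nt g\Nt_\SSSs$ up to the loss $w_{\phi,\eta}(T)^{-1}$ from \eqref{eq:roughNt}. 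So for \eqref{eq:bilinear_hyd_hyd_hyd} you must retain the $|\nabla_x|^{1-\alpha}$ in the nonlinear estimate rather than absorbing it.
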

		
		\begin{proof}
			We recall the definition of $\Psi^\eps_\hyd$ and the orthogonality property of $\QQ$:
			$$\Psi^\eps_\hyd [f, g](t) = \frac{1}{\eps} \int_0^t U^\eps_\hyd(t-\tau) \QQ(f(\tau), g(\tau)) \d \tau, \qquad \PP \QQ = 0,$$
			thus, denoting for compactness $w = w_{\phi, \eta}$, the convolution estimate \eqref{eq:decay_semigroups_hydro_orthogonal} gives
			\begin{equation}
				\label{eq:pre_estimate_hyd}
				\Nt \Psi^\eps_\hyd [f, g] \Nt_{\rSSSs{s}}^2 \lesssim \sup_{0 \le t < T} \left\{w(t)^2 \int_0^t \| \QQ(f(\tau), g(\tau) ) \|_{  \rSSgm{s} \cap \dot{\mathbb{H}}^{-\alpha}_x \left(\Sgm_v \right) }^2 \d \tau\right\},
			\end{equation}
			where $(\SSg, \Sg) = (\SSs, \Ss), (\SSl, \Sl)$ or $(\SSl_{-1}, \Sl_{-1})$, and we recall that $w$ is non-increasing and bounded from above and below:
			\begin{equation}
				\label{eq:bound_w}
				\forall 0 \leq t_1 \le t_2 < T, \quad 0 < w(T) \le w(t_2) \le w(t_1) \le 1.
			\end{equation}
			The continuity at $t = 0$ will be an easy consequence of the estimate \eqref{eq:pre_estimate_hyd}.
			
			\step{1}{Proof of \eqref{eq:bilinear_hyd_hyd} for $f \in \SSSs$}
			When $g \in \SSSs$, we combine \eqref{eq:pre_estimate_hyd} with the bilinear estimate~\eqref{eq:Q_refined_sobolev_negative_algebra_inequality} for $\QQ$, to deduce the following, where $\boldsymbol{\alpha} := 1 - \alpha$
			\begin{align*}
				\Nt \Psi^\eps_\hyd [f, g] \Nt_\SSSs^2
				\lesssim & \sup_{ 0 \le t < T }\left\{w(t)^2\int_0^t   \Big[ \|f(\tau) \|_{ \rSSs{s} }^2 \left\| |\nabla_x|^{\boldsymbol{\alpha}} g(\tau) \right\|_{\rSSsp{s-{\boldsymbol{\alpha}}}}^2  \right.\\
				& \phantom{+++++} \left.+ \left\| |\nabla_x|^{\boldsymbol{\alpha}} f(\tau) \right\|_{\rSSsp{s-{\boldsymbol{\alpha}}}}^2 \| g(\tau) \|_{ \rSSs{s} }^2 \Big] \d \tau\right\}\\
				\lesssim & \sup_{ 0 \le t < T }\int_0^t  \Big[w^2(\tau)\|f(\tau) \|_{ \rSSs{s} }^2 \left\| |\nabla_x|^{\boldsymbol{\alpha}} g(\tau) \right\|_{\rSSsp{s-{\boldsymbol{\alpha}}}}^2 \\
				&\phantom{+++++} +\left\| |\nabla_x|^{\boldsymbol{\alpha}} f(\tau) \right\|_{\rSSsp{s-{\boldsymbol{\alpha}}}}^2 w(\tau)^2\| g(\tau) \|_{ \rSSs{s} }^2 \Big] \d \tau
			\end{align*}
			where we used \eqref{eq:bound_w} in the second inequality. Recalling that
			$$\Nt h \Nt_{\rSSSs{s}}^2 := \sup_{0 \le t < T} \left\{ w(t)^2 \| h(t) \|_{\rSSsp{s}}^2 + w(t)^2 \int_0^t \left\| |\nabla_x|^{1-\alpha} h( \tau ) \right\|_{\rSSsp{s}}^2 \d \tau \right\},$$
			and using $\SSsp \hookrightarrow \SSs$ and \eqref{eq:bound_w}, we deduce
			\begin{equation*}\begin{split}
					\Nt \Psi^\eps_\hyd(f,g) \Nt_\SSSs^2 &\lesssim
					\Nt f \Nt_{ \SSSs }^2 \int_0^T \left\| |\nabla_x|^{\boldsymbol{\alpha}} g(t) \right\|_{\rSSsp{s-{\boldsymbol{\alpha}}}}^2 \d t
					+ \Nt g \Nt_{ \SSSs }^2 \int_0^T \left\| |\nabla_x|^{\boldsymbol{\alpha}} f(t) \right\|_{\rSSsp{s-{\boldsymbol{\alpha}}}}^2 \d t\\
					& \lesssim w(T)^{-2} \Nt f \Nt_\SSSs^2 \Nt g \Nt_\SSSs^2
			\end{split}\end{equation*}
			which is exactly ~\eqref{eq:bilinear_hyd_hyd_hyd}.
			When $g \in \SSSm$, using  furthermore $\SSsp \hookrightarrow \SSs$, we similarly have \eqref{eq:bilinear_hyd_hyd_mix}:
			\begin{equation*}\begin{split}
					\Nt \Psi^\eps_\hyd [f, g] \Nt_\SSSs^2  &\lesssim \eps^2 \int_0^T \left( w(\tau)\|f(\tau) \|_{ \SSsp } \right)^2  \left(\frac{1}{\eps} \| g(\tau) \|_{\SSsp}\right)^2 \d \tau \\
					&\lesssim  \eps^2 \Nt f \Nt_\SSSs^2 \int_0^T \left(\frac{1}{\eps} \| g(\tau) \|_{\SSsp}\right)^2 \d \tau\lesssim  \eps^2 w(T)^{-2} \Nt f \Nt_\SSSs^2 \Nt g \Nt_\SSSm^2,
			\end{split}\end{equation*}
			which gives now \eqref{eq:bilinear_hyd_hyd_mix}. In the same way, using also $\SSsp \hookrightarrow \SSlp$, we have \eqref{eq:bilinear_hyd_hyd_kin}.
			
			\step{2}{Proof of \eqref{eq:bilinear_hyd_mix} and \eqref{eq:bilinear_hyd_kin_kin}}
			When $f \in \SSSm$ and $g \in \SSSl$, we combine~\eqref{eq:pre_estimate_hyd} with the bilinear estimate \eqref{eq:Q_refined_sobolev_negative_algebra_inequality} for $\QQ$. Using that $\SSsp \hookrightarrow \SSlp$ and the property \eqref{eq:bound_w} of~$w$, we have:
			\begin{equation*}\begin{split}
					\Nt \Psi^\eps_\hyd (f, g) \Nt_\SSSs^2  
					\lesssim &  \eps^2 \sup_{0 \le t < T} \Bigg\{w(t)^2 \int_0^t \left[ \| f(\tau) \|_{ \SSs }^2 \left( \frac{1}{\eps} \| g(\tau) \|_{\SSlp}\right)^2 \right. \\
					& \left.\phantom{+++++++++}  + \left( \frac{1}{\eps} \| f(\tau) \|_{\SSsp} \right)^2 \| g(\tau) \|_{ \SSl }^2 \, \right] \d \tau \Bigg\} \\
					\lesssim & \eps^2 \int_0^T w(\tau)^2 \|f(\tau) \|_{ \SSs }^2 \left( \frac{1}{\eps} \| g(\tau) \|_{\SSlp}\right)^2 \d t\\
					& + \eps^2 \sup_{0 \le t < T} \Bigg\{w(t)^2 \int_0^t \left( \frac{1}{\eps} \| f(\tau) \|_{\SSsp} \right)^2 \| g(\tau) \|_{ \SSl }^2 \d \tau \Bigg\} \\
					\lesssim & \eps^2 \Nt f \Nt_\SSSm^2 \Nt g \Nt_\SSSl^2.
			\end{split}\end{equation*}
			This proves  ~\eqref{eq:bilinear_hyd_mix_kin}
			The proofs of \eqref{eq:bilinear_hyd_mix_mix} and \eqref{eq:bilinear_hyd_kin_kin} are similar and omitted. 
		\end{proof}
		
		\begin{prop}[\textit{\textbf{Special bilinear hydrodynamic estimates}}]
			\label{prop:special_bilinear_hydrodynamic}
			When $f \in \SSSs$ and $\phi$ is the parameter defining the $\SSSs$-norm
			\begin{equation}
				\label{eq:bilinear_hyd_phi}
				\Nt \Psi^\eps_\hyd[f, \phi] \Nt_\SSSs \lesssim \eta \Nt f \Nt_\SSSs\,.
			\end{equation}
			Furthermore, when $g^\eps_\disp = U^\eps_\disp(\cdot) g$ where $g = \PP_{\Wave} g \in \rSSs{s} \cap \dot{\mathbb{H}}^{-\alpha}_x \left( \Ss_v \right)$, there holds
			\begin{equation}
				\label{eq:bilinear_hyd_disp}
				\Nt \Psi^\eps_\hyd[ f , g^\eps_\disp] \Nt_\SSSs \lesssim  {\beta_{\disp}}(g, \eps) \Nt f \Nt_{ \SSSs }, \qquad \lim_{\eps \to 0} {\beta_{\disp}}(g, \eps)=0,
			\end{equation}
			and in the case $d \ge 3$, the rate of convergence is explicit assuming $g \in \dot{\mathbb{B}}^{s' + (d+1)/2}_{1, 1} \left( \Ss_v \right) \cap \rSSs{s}$ for some $s'>s$:
			\begin{equation*}
				 {\beta_{\disp}}(g, \eps) \lesssim \sqrt{\eps}
				\left( \| g \|_{ \rSSs{s} } + \| g \|_{ \dot{\mathbb{B}}^{s' + (d+1)/2 }_{1, 1} \left( \Ss_v \right) }\right).
			\end{equation*}
		\end{prop}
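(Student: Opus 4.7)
The two estimates exploit complementary mechanisms. For \eqref{eq:bilinear_hyd_phi} the weight $w_{\phi, \eta}$ was designed precisely so that \eqref{eq:NS_exponential_weight} absorbs $|\nabla_x|^{1-\alpha} \phi$ with an $\eta$-small factor; for \eqref{eq:bilinear_hyd_disp} the dispersive decay of $U^\eps_\disp$ encapsulated in \eqref{eq:dispersive} provides smallness in $\eps$.

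To prove \eqref{eq:bilinear_hyd_phi}, I would start from the pre-estimate \eqref{eq:pre_estimate_hyd} specialised to $g=\phi$ and invoke the \emph{symmetric} bilinear control \eqref{eq:Q_refined_sobolev_negative_algebra_inequality_same}, which crucially places $|\nabla_x|^{1-\alpha}$ on $\phi$ in \emph{both} terms. Using the continuous embeddings $\rSSsp{r} \hookrightarrow \rSSs{r}$ and $\rSSsp{s-(1-\alpha)} \hookrightarrow \rSSsp{s}$, the integrand is bounded by $\|f(\tau)\|_{\rSSsp{s}}^2 \| |\nabla_x|^{1-\alpha}\phi(\tau)\|_{\rSSsp{s}}^2$. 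Inserting the neutral factor $w_{\phi, \eta}(\tau)^2 / w_{\phi, \eta}(\tau)^2$ inside the time integral and exploiting the monotonicity $w_{\phi, \eta}(t) \le w_{\phi, \eta}(\tau)$ for $\tau \le t$, I would factor out the supremum $\sup_\tau w_{\phi, \eta}(\tau)^2 \|f(\tau)\|_{\rSSsp{s}}^2 \le \Nt f \Nt_\SSSs^2$. What remains is exactly the square of the left-hand side of \eqref{eq:NS_exponential_weight}, hence bounded by $\eta^2$.

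To prove \eqref{eq:bilinear_hyd_disp}, I would apply the convolution bound \eqref{eq:decay_semigroups_hydro_orthogonal_positive} together with the pointwise-in-$x$ bilinear estimate \eqref{eq:Q_sobolev_algebra_Holder}, absorbing the $f$-factor into $\Nt f \Nt_\SSSs$ by the same monotonicity argument. This reduces the task to showing that $\|g^\eps_\disp\|_{L^{2/(1+\alpha)}_t([0,T]; W^{s',\infty}_x(\Ssp_v))}$ vanishes as $\eps \to 0$. For $d \ge 3$ (hence $\alpha = 0$) and $g \in \dot{\mathbb{B}}^{s'+(d+1)/2}_{1,1}(\Ss_v) \cap \rSSs{s}$, the dispersive estimate \eqref{eq:dispersive} yields $\|g^\eps_\disp(t)\|_{W^{s',\infty}_x(\Ssp_v)} \lesssim (\eps/t)^{(d-1)/2} \|g\|_{\dot{\mathbb{B}}^{s'+(d+1)/2}_{1,1}}$, while a uniform-in-$t$ bound by $\|g\|_{\rSSs{s}} + \|g\|_{\dot{\mathbb{B}}^{s'+(d+1)/2}_{1,1}}$ (via Bernstein inequalities for a low/high frequency split of $g^\eps_\disp$) handles the behaviour near $t=0$. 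Splitting the integral at $t=\eps$ then gives $\int_0^T \min\{1, (\eps/t)^{d-1}\}\, dt \lesssim \eps$, whence the claimed $\sqrt{\eps}$-rate. For general $g \in \rSSs{s} \cap \dot{\mathbb{H}}^{-\alpha}_x(\Ss_v)$ (including the case $d=2$), I would approximate $g$ by a sequence $g_\delta$ in the above regular subspace, control the remainder by combining \eqref{eq:bilinear_hyd_hyd_hyd} with the continuity estimate \eqref{eq:decay_semigroups_hydro} for $U^\eps_\disp$, and conclude via a standard $\eps/\delta$ argument that $\beta_\disp(g, \eps) \to 0$.

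The main obstacle is the two-dimensional case. There the dispersive exponent $(d-1)/2 = 1/2$ is too small for $(\eps/t)^{1/2}$ to be square-integrable at either endpoint of $[0, T]$, so the $L^2_t$-version \eqref{eq:decay_semigroups_hydro_orthogonal_negative} of the convolution estimate is inapplicable. The weaker $L^{2/(1+\alpha)}_t$-version \eqref{eq:decay_semigroups_hydro_orthogonal_positive} with $\alpha \in (0, 1/2)$ is the correct substitute: the exponent $p = 2/(1+\alpha) < 2$ renders $(\eps/t)^{p/2}$ integrable near $t = 0$ at the cost of a $T$-dependent constant, which can only be absorbed through the density argument rather than through an explicit power of $\eps$.
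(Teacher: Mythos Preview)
Your argument for \eqref{eq:bilinear_hyd_phi} is correct and coincides with the paper's: both combine the pre-estimate \eqref{eq:pre_estimate_hyd} with the asymmetric bilinear bound \eqref{eq:Q_refined_sobolev_negative_algebra_inequality_same} so that both occurrences of $|\nabla_x|^{1-\alpha}$ fall on $\phi$, and then invoke \eqref{eq:NS_exponential_weight}. (The embedding you wrote as ``$\rSSsp{s-(1-\alpha)} \hookrightarrow \rSSsp{s}$'' is backwards, but the inequality you actually use is the correct one.)

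For \eqref{eq:bilinear_hyd_disp} with $d\ge 3$ your scheme is essentially the paper's. One caveat: the uniform-in-$t$ bound $\|g^\eps_\disp(t)\|_{W^{s',\infty}_x(\Ssp_v)} \lesssim \|g\|_{\rSSs{s}} + \|g\|_{\dot{\mathbb{B}}^{s'+(d+1)/2}_{1,1}}$ that you claim via Bernstein is not immediate, since the Besov index $s'+(d+1)/2$ is $(d-1)/2$ derivatives short of the $L^1\to L^\infty$ embedding into $W^{s',\infty}$ and $U^\eps_\disp$ is not obviously bounded on $L^1_x$-based spaces. The paper sidesteps this by pairing the dispersive bound \eqref{eq:dispersive} with \eqref{eq:Q_sobolev_algebra_Holder} only for the $(\eps/t)^{(d-1)/2}$ regime, while for the ``$1$'' part of $1\wedge(\eps/t)^{\cdots}$ it reverts to the Sobolev-type estimate \eqref{eq:Q_refined_sobolev_algebra} together with the $\SSSs$-boundedness \eqref{eq:decay_semigroups_hydro} of $U^\eps_\disp$. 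This is an easy fix.

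The real gap is the case $d=2$ with $T=\infty$, which is precisely the relevant one since two-dimensional Navier--Stokes--Fourier solutions are global. Your $L^{2/(1+\alpha)}_t$ computation gives, for $t\ge \eps$, the contribution $\int_\eps^T (\eps/t)^{1/(1+\alpha)}\,\d t \approx \eps^{1/(1+\alpha)} T^{\alpha/(1+\alpha)}$, which diverges as $T\to\infty$ since $1/(1+\alpha)<1$. Hence even for regular approximants $g_\delta$ the bound fails and the density argument cannot close. The paper's remedy is to enrich the dense subspace by also requiring $\xi\mapsto\|\widehat g(\xi)\|_{\Ssm_v}$ to be supported in $\{|\xi|\ge\nu\}$ for some $\nu>0$, and to split the time integral at an intermediate level $R\in(0,T)$. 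On $[R,T)$ one uses the continuity estimate \eqref{eq:continuity_hyd_disp}, whose factor $\sup_{|\xi|\ge\nu}\int_R^T |\xi|^2 e^{-\kappa t|\xi|^2}\,\d t\lesssim e^{-\kappa R\nu^2}$ vanishes as $R\to\infty$; on $[0,R]$ the $L^{2/(1+\alpha)}_t$ estimate applies with $R$ finite and yields a quantity tending to zero with $\eps$. Sending first $\eps\to 0$ and then $R\to T$ gives $\beta_\disp\to 0$ on this dense subclass, and the uniform bound \eqref{eq:continuity_hyd_disp} transfers the conclusion to general $g$.
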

		
		\begin{proof}
			We start by proving \eqref{eq:bilinear_hyd_phi} and then prove \eqref{eq:bilinear_hyd_disp}. We use again the shorthand notation~$w = w_{\phi, \eta}$ and recall that, besides \eqref{eq:pre_estimate_hyd}, the convolution estimate
			\eqref{eq:decay_semigroups_hydro_orthogonal} also leads to
			\begin{equation}
				\label{eq:pre_estimate_2_pos}
				\Nt \Psi^\eps_\hyd [f, g] \Nt_{\rSSSs{s}}^2 \lesssim \sup_{0 \le t < T} \left\{w(t)^2 \left(\int_0^t \|  \QQ(f(\tau), g(\tau) ) \|_{  \SSsm }^{ \frac{2}{1+\alpha} } \d \tau \right)^{  1+\alpha  } \right\},
			\end{equation}	
			
			%\begin{equation}
			%	\label{eq:bound_w_2}
			%	\forall t_1 \le t_2, \quad 0 < w(T) \le w(t_2) \le w(t_1) %\le 1.
			%	\end{equation}
			
			\step{1}{Proof of \eqref{eq:bilinear_hyd_phi}}
			We combine the convolution estimate \eqref{eq:pre_estimate_hyd} with the nonlinear bound \eqref{eq:Q_refined_sobolev_negative_algebra_inequality_same}, and use $\SSsp \hookrightarrow \SSs$ to obtain (where we denote for compactness ${\boldsymbol{\alpha}} = 1-\alpha$)
			\begin{equation*}\begin{split}
					\Nt \Psi^\eps_\hyd [f, \phi] \Nt_\SSSs^2 
					&\lesssim  \sup_{0 \le t < T} \left\{ w(t)^2 \int_0^t \Big[w(\tau)\|  f(\tau) \|_{ \rSSsp{s} }\Big]^2 \Big[w(\tau)^{-1} \| |\nabla_x|^{\boldsymbol{\alpha}} \phi(\tau) \|_{\rSSsp{s-{\boldsymbol{\alpha}}}}\Big]^2 \d \tau \right\}\\
					& \lesssim \Nt f \Nt_\SSSs^2 \sup_{0 \le t < T} \Bigg\{ w(t)^2 \int_0^t \Big[w(\tau)^{-1} \| |\nabla_x|^{\boldsymbol{\alpha}} \phi(\tau) \|_{\rSSsp{s-\boldsymbol{\alpha}}}\Big]^2 \d \tau \Bigg\}\,
			\end{split}\end{equation*}
			and then, using \eqref{eq:NS_exponential_weight}, we finally get
			$
			\Nt \Psi^\eps_\hyd (f, \phi) \Nt_\SSSs^2
			\lesssim \eta^2 \Nt f \Nt_\SSSs^2.
			$
			which is exactly \eqref{eq:bilinear_hyd_phi}.	
			
			\step{2}{Proof of \eqref{eq:bilinear_hyd_disp}}
			As in the previous step, but using the nonlinear bound~\eqref{eq:Q_refined_sobolev_negative_algebra_inequality_same} for $\QQ$, one has
			\begin{align*}
				\Nt \Psi^\eps_\hyd[ f , g^\eps_\disp]   \Nt^2_\SSSs & \lesssim \Nt f \Nt_{ \SSSs}^2 \int_0^T \left\| | \nabla_{x} |^{\boldsymbol{\alpha}} g^\eps_\disp(t) \right\|^2_{ \rSSsp{s-{\boldsymbol{\alpha}}} } \d t,
			\end{align*}
			which, according to \eqref{eq:graU_*} and \eqref{eq:XIALPHA}, satisfies for some universal $\kappa > 0$
			\begin{equation}
				\label{eq:continuity_hyd_disp}
				\Nt \Psi^\eps_\hyd[ f , g^\eps_\disp]  \Nt^2_\SSSs \lesssim \Nt f \Nt_{ \SSSs}^2 \left( \| g \|_{ \rSSs{s} } + \| g \|_{ \dot{\mathbb{H}}^{-\alpha}_x \left( \Ss_v \right) } \right)^2 \sup_{\xi \in \supp \| \widehat{g} \|_{\Ssm_v} } \int_0^T | \xi |^2 e^{-t \kappa | \xi |^2 } \d t.
			\end{equation}
			Since the supremum term is bounded uniformly in $T \in (0, \infty)$ and $g \in \rSSsm{s} \cap \dot{\mathbb{H}}^{-\alpha}_x \left( \Ss_v \right)$, it is enough to prove \eqref{eq:bilinear_hyd_disp} in the case where $g \in \rSSsm{s} \cap \dot{\mathbb{B}}^{s' + (d+1)/2 }_{1, 1} \left( \Ssm_v \right)$ for some $s' > s$ and~$\xi \mapsto \| \widehat{g}(\xi) \|_{ \Ssm_v }$ is supported away from $0$, as it will allow to conclude by a density argument. 
			We assume therefore there is some $\nu > 0$ such that
			$$\forall | \xi | \le \nu, \quad \widehat{g}(\xi) = 0,$$
			and we point out that when $T < \infty$, using $e^{-r} \lesssim r^{-1}$, for any $R > 0$
			$$\sup_{|\xi | \ge \nu} \int_R^T | \xi|^2 e^{-\kappa t | \xi |^2} \d t \lesssim \int_{ R }^{ T } \frac{\d t}{t} = \log(T) - \log(R),$$
			and when $T = \infty$
			$$\sup_{|\xi | \ge \nu} \int_R^\infty | \xi|^2 e^{-\kappa t | \xi |^2} \d t = \sup_{|\xi | \ge \nu } \int_{ R | \xi |^2 }^\infty e^{- \kappa t } \d t \lesssim \exp\left( - R \nu^2 \right),$$
			so that, in both cases, we have
			$$C_\nu(R, T) := \sup_{|\xi | \ge \nu } \int_R^T | \xi|^2 e^{-\kappa t | \xi |^2} \d t \xrightarrow[]{R \to T} 0.$$
			We split for some $0 < R < T$ the nonlinear term:
			$$\QQ(g^\eps_\disp, f) = \mathbf{1}_{0 \le t \le R} \QQ(g^\eps_\disp, f) + \mathbf{1}_{R \le t < T} \QQ(g^\eps_\disp, f) =: \varphi^-(t) + \varphi^+(t),$$
			so that, using an estimate analogous to \eqref{eq:continuity_hyd_disp} we have
			\begin{align*}
				\left(\frac{1}{\eps} \Nt U^\eps_\hyd(\cdot) * \varphi^+ \Nt_\SSSs\right)^2 \lesssim & 
				\Nt f \Nt_{ \SSSs}^2 \left( \| g \|_{ \rSSs{s} } + \| g \|_{ \dot{\mathbb{H}}^{-\alpha}_x \left( \Ss_v \right) } \right)^2 	C_\nu(R, T).
			\end{align*}
			Furthermore, combining this time \eqref{eq:pre_estimate_2_pos} with the boundedness estimate \eqref{eq:decay_semigroups_hydro} (resp. the dispersive estimate \eqref{eq:dispersive}) $U^\eps_\disp(\cdot)$, together with the corresponding nonlinear bound \eqref{eq:Q_refined_sobolev_algebra} (resp.~\eqref{eq:Q_sobolev_algebra_Holder}) for $\QQ$, we have for $\varphi^-(t)$
			\begin{equation*}
				\label{eq:continuity_hyd_disp_vanish}
				\left(\frac{1}{\eps} \Nt U^\eps_\hyd(\cdot) * \varphi^- \Nt_\SSSs \right)^2\lesssim \Nt f \Nt_{\SSSs}^2 \left( \| g \|_{ \rSSsm{s} }^2 + \| g \|_{ \dot{\mathbb{B}}^{s + (d+1)/2 }_{1, 1} \left( \Ssm_v \right) }^2 \right) \left(\int_0^R 1 \land \left(\frac{\eps}{t}\right)^{\frac{d-1}{1+\alpha}} \d t\right)^{1+\alpha}.
			\end{equation*}
			Put together, the two previous controls yield uniformly in $R \in (0, T)$
			\begin{align*}
				\Nt \Psi^\eps_\hyd[ f , g^\eps_\disp]  \Nt_\SSSs^2 \lesssim & 
				\Nt f \Nt_{\SSSs}^2 \left( \| g \|_{ \rSSsm{s} }^2 + \| g \|_{ \dot{\mathbb{B}}^{s + (d+1)/2 }_{1, 1} \left( \Ssm_v \right) }^2 \right) \left(\int_0^{R} 1 \land \left(\frac{\eps}{t}\right)^{\frac{d-1}{1+\alpha}} \d t\right)^{1+\alpha} \\
				& + \Nt f \Nt_{ \SSSs}^2 \left( \| g \|_{ \rSSs{s} } + \| g \|_{ \dot{\mathbb{H}}^{-\alpha}_x \left( \Ss_v \right) } \right)^2 C_\nu(R, T).
			\end{align*}
			Letting $\eps \to 0$ and then $R \to T$, one deduces \eqref{eq:bilinear_hyd_disp} for $g \in \rSSsm{s} \cap \dot{\mathbb{B}}^{s' + (d+1)/2 }_{1, 1} \left( \Ssm_v \right)$ with~$s' > s$ and whose Fourier transform is supported away from $0$. We conclude to the general case of~$g \in \rSSsm{s}$ by density thanks to \eqref{eq:continuity_hyd_disp}. Note that in dimension $d \ge 3$, there holds $\alpha = 0$, thus one has
			$$\int_0^T 1 \land \left(\frac{\eps}{t}\right)^{d-1} \d t =\int_0^\eps \d t+ \eps^{d-1}\int_{\eps}^T t^{1-d}\d t \lesssim \eps.$$
			This concludes the proof.
		\end{proof}
		
		\subsection{Bilinear kinetic and mixed estimates}
		The results of this section hold assuming \ref{Bbound}. We point out that only one estimate, namely \eqref{eq:bilinear_kin_kin_kin}, holds assuming \ref{Bbound} or \ref{BE} \textbf{but not \ref{BED}}, which is why it has to be treated using the alternative strategy of Section \ref{scn:hydrodynamic_limit_BED}.
		We have the analogue of Proposition \ref{prop:bilinear_hydrodynamic} for the kinetic bilinear operator $\Psi^\eps_\kin(f, g) = \frac{1}{\eps} U^\eps_\kin \ast \QQ^\sym(f, g)$.
		\begin{prop}[\textit{\textbf{General bilinear kinetic and mixed estimates}}]
			\label{prop:bilinear_kinetic}
			The bilinear operator $\Psi^\eps_\kin$ satisfies the following continuity estimates in the mixed space $\SSSm$ when at least one argument is in $\SSSm$:
			\begin{gather}
				\label{eq:bilinear_mix_mix_mix}
				\Nt \Psi^\eps_\kin  [f, g] \Nt_\SSSm \lesssim \eps w_{\phi, \eta}(T)^{-1} \Nt f \Nt_\SSSm \min\left\{\Nt g \Nt_\SSSm\,,\,\Nt g \Nt_\SSSs\right\}\,,
			\end{gather}
			and the following one when $f, g \in \SSSs$ (note the absence of a factor $\eps$):
			\begin{equation}
				\label{eq:bilinear_mix_hyd_hyd}
				\Nt \Psi^\eps_\kin  [f, g] \Nt_\SSSm \lesssim w_{\phi, \eta}(T)^{-1} \Nt f \Nt_\SSSs \Nt g \Nt_\SSSs.
			\end{equation}
			Furthermore, considering $\Sl = \Ss$ in the definition of $\SSSl$ under Assumption \ref{Bbound}, or considering $\Sl$ to be the space from \ref{BE} under this assumption, there holds in the kinetic space $\SSSl$ when at least one argument is in $\SSSl$
			\begin{subequations}
				\label{eq:bilinear_kin}
				\begin{gather}
					\label{eq:bilinear_kin_kin_kin}
					\Nt \Psi^\eps_\kin [f, g] \Nt_\SSSl \lesssim \eps \Nt f \Nt_\SSSl \Nt g \Nt_\SSSl,\\
					\label{eq:bilinear_kin_kin_mix}
					\Nt \Psi^\eps_\kin  [f, g] \Nt_\SSSl \lesssim \eps w_{\phi, \eta}(T)^{-1} \Nt f \Nt_\SSSl \min\left\{ \Nt g \Nt_\SSSm\,,\,\Nt g \Nt_\SSSs\right\}.
				\end{gather}
			\end{subequations}
			Finally, it is strongly continuous at $t = 0$ in the sense that in the corresponding cases
			$$\lim_{t\to 0}\| \Psi^\eps_\kin [f, g](t) \|_\SSl= 0, \qquad \lim_{t\to 0}\| \Psi^\eps_\kin [f, g](t) \|_\SSs = 0.$$
		\end{prop}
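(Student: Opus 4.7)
The strategy is to combine the convolution estimates for the kinetic semigroup established in Lemma \ref{lem:decay_regularization_convolution_kinetic_semigroup} with the nonlinear bounds for $\QQ$ from Lemma \ref{lem:Q_sobolev}. Throughout, write $w=w_{\phi,\eta}$ and $\boldsymbol{\alpha}=1-\alpha$, and recall that $\PP\QQ=0$ so the orthogonality assumption in the convolution lemma is automatic.

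For the bounds in the mixed space $\SSSm$, the relevant convolution estimate is \eqref{eq:decay_convolution_semigroup_no_exp} applied with $\Sg=\Ss$, which yields
\[
\Nt \Psi^\eps_\kin[f,g]\Nt_{\SSSm}^{2}\lesssim \sup_{0\le t<T}\Big\{w(t)^{2}\int_{0}^{t}\|\QQ^{\sym}(f(\tau),g(\tau))\|_{\SSsm}^{2}\d\tau\Big\}.
\]
To prove \eqref{eq:bilinear_mix_hyd_hyd} when $f,g\in\SSSs$, I insert the refined nonlinear bound \eqref{eq:Q_refined_sobolev_negative_algebra_inequality}, which spreads the $|\nabla_x|^{\boldsymbol{\alpha}}$ onto exactly one factor, and use $\SSsp\hookrightarrow\SSs$ plus the monotonicity $w(\tau)\ge w(t)\ge w(T)$ on $\tau\le t$; the resulting product of a $\sup_\tau (w\|\cdot\|_{\SSsp})$ factor and a $\int w^{2}\||\nabla_x|^{\boldsymbol{\alpha}}\cdot\|_{\SSsp}^{2}$ factor is exactly $w(T)^{-2}\Nt f\Nt_{\SSSs}^{2}\Nt g\Nt_{\SSSs}^{2}$. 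For \eqref{eq:bilinear_mix_mix_mix} I instead use the dual bound in \ref{Bbound} directly, i.e.\ $\|\QQ(f,g)\|_{\SSsm}\lesssim \|f\|_{\SSs}\|g\|_{\SSsp}+\|f\|_{\SSsp}\|g\|_{\SSs}$, and then control $\|f\|_{\SSs}$ by $w(\tau)^{-1}\Nt f\Nt_{\SSSm}$ while the $\eps^{-2}\int \|g\|_{\SSsp}^{2}$ part of the $\SSSm$-norm (or the $\int\||\nabla_x|^{\boldsymbol{\alpha}}g\|_{\SSsp}^{2}$ part of $\SSSs$-norm) provides the extra $\eps$; the two cases of the $\min$ correspond to whether $g$ is placed in $\SSSm$ or $\SSSs$.

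For the bounds in the kinetic space $\SSSl$, I invoke instead \eqref{eq:decay_convolution_semigroup_exp}, which gives
\[
\Nt \Psi^\eps_\kin[f,g]\Nt_{\SSSl}^{2}\lesssim \int_{0}^{T}e^{2\sigma t/\eps^{2}}\|\QQ^{\sym}(f(t),g(t))\|_{\SSlm}^{2}\d t.
\]
Here I must use \eqref{eq:Q_refined_sobolev_negative_algebra_inequality} for the space $\Sg=\Sl$ (which is legitimate since we are explicitly in the setting where this $\Sl$ coincides with either $\Ss$ or an enlarged space where \ref{Bbound}, stated in $\Ss$, extends by the arguments of the paper to give the corresponding dual bound in $\Sl$; this is the content of the sentence ``considering $\Sl = \Ss$ \dots or \dots from \ref{BE}'' in the statement). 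For \eqref{eq:bilinear_kin_kin_kin} with $f,g\in\SSSl$, I split the exponential weight as $e^{2\sigma t/\eps^{2}}=e^{2\sigma_{1}t/\eps^{2}}\cdot e^{2\sigma_{2}t/\eps^{2}}$ with $\sigma_{1}+\sigma_{2}=\sigma<\sigma_{0}$, put one half on each factor, and then use $\sup_{t}(e^{\sigma_{1}t/\eps^{2}}\|f\|_{\SSl})\le\Nt f\Nt_{\SSSl}$ for one factor and $\frac{1}{\eps^{2}}\int e^{2\sigma_{2}t/\eps^{2}}\|g\|_{\SSlp}^{2}\le\Nt g\Nt_{\SSSl}^{2}$ for the other; the $\eps$-prefactor comes from $\eps^{2}\cdot\frac{1}{\eps}=\eps$ after extracting one $\frac{1}{\eps^{2}}$. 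For \eqref{eq:bilinear_kin_kin_mix} I keep the full exponential weight on the $\SSSl$-factor (which dominates $w(\tau)^{-1}$ pointwise up to $w(T)^{-1}$), and then handle the other factor in $\SSSm$ or $\SSSs$ as in the mixed case above; the $\min$ again reflects the two admissible placements of $g$.

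The main technical obstacle is \eqref{eq:bilinear_kin_kin_kin} and \eqref{eq:bilinear_kin_kin_mix}, where one must reconcile three distinct time weights---the exponential $e^{\sigma t/\eps^{2}}$ coming from the kinetic dissipation, the parabolic weight $w_{\phi,\eta}$ arising from the hydrodynamic space, and the small parameter $\eps$---in a way that distributes the integrability correctly between an $L^{\infty}_{t}$ factor and an $L^{2}_{t}$ factor. This is exactly where the splitting of the exponential weight described above is essential, and where the fact that $w$ is bounded above by $1$ and the kinetic weight grows is used to exchange one for the other. Finally, continuity at $t=0$ follows in every case from the dominated convergence theorem applied to the time integrals that bound $\|\Psi^\eps_\kin[f,g](t)\|_{\SSg}$, once one observes that $U^\eps_\kin(\cdot-\tau)$ is strongly continuous in $\BBB(\SSgm;\SSg)$-averaged sense and the relevant nonlinear integrand is in $L^{1}_{t}$ near $0$ by the same bounds; the argument is parallel to the one sketched at the end of Proposition \ref{prop:bilinear_hydrodynamic}.
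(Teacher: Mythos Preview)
Your approach is correct and follows the same overall strategy as the paper: combine the convolution estimates \eqref{eq:decay_convolution_semigroup_no_exp}--\eqref{eq:decay_convolution_semigroup_exp} with the nonlinear bounds of Lemma~\ref{lem:Q_sobolev}, then distribute the factors between the $L^\infty_t$ and $L^2_t$ parts of the relevant norms.

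There is one point where you over-complicate matters. For \eqref{eq:bilinear_kin_kin_kin} you propose to split the exponential weight as $e^{2\sigma t/\eps^2}=e^{2\sigma_1 t/\eps^2}e^{2\sigma_2 t/\eps^2}$ and call this ``essential'' and the ``main technical obstacle''. It is neither. The paper simply keeps the \emph{entire} weight $e^{2\sigma t/\eps^2}$ on the $\SSlp$-factor (absorbed by the integral part of $\Nt\cdot\Nt_{\SSSl}$) and bounds the remaining $\SSl$-factor by the trivial inequality $\|g(t)\|_{\SSl}\le e^{-\sigma t/\eps^2}\Nt g\Nt_{\SSSl}\le \Nt g\Nt_{\SSSl}$, discarding the decay there. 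Your splitting works (since $\sigma_1,\sigma_2\le\sigma$) but is unnecessary; in particular, no ``reconciliation of three distinct time weights'' is required for \eqref{eq:bilinear_kin_kin_kin}, as the weight $w_{\phi,\eta}$ does not even appear in that estimate. Two minor remarks: Lemma~\ref{lem:decay_regularization_convolution_kinetic_semigroup} carries no orthogonality hypothesis, so your comment about $\PP\QQ=0$ is irrelevant here (it matters only for the hydrodynamic convolution in Lemma~\ref{lem:decay_semigroups_hydro}); and for \eqref{eq:bilinear_mix_hyd_hyd} the paper invokes \eqref{eq:Q_refined_sobolev_negative_algebra_inequality_same} rather than \eqref{eq:Q_refined_sobolev_negative_algebra_inequality}, though either works.
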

		
		\begin{proof}
			Recall the definition of $\Psi^\eps_\kin$:
			$$\Psi^\eps_\kin [f, g](t) = \frac{1}{\eps} \int_0^t U^\eps_\kin(t-\tau)  \QQ(f(\tau), g(\tau)) \d \tau,$$
			thus denoting for compactness $w_{\phi, \eta}(t) = w(t)$, the convolution estimates \eqref{eq:decay_convolution_semigroup_exp} and \eqref{eq:decay_convolution_semigroup_no_exp} give respectively
			\begin{equation}
				\label{eq:pre_estimate_kin}
				\Nt \Psi^\eps_\kin [f, g] \Nt_{ \SSSl }^2 \lesssim \int_0^T e^{2\sigma t / \eps^2} \left\| \QQ( f(t), g(t) ) \right\|^2_{ \SSlm } \d t,
			\end{equation}
			and
			\begin{equation}			\label{eq:pre_estimate_mix}
				\Nt \Psi^\eps_\kin [f, g] \Nt_{ \SSSm }^2 \lesssim \sup_{0 \le t < T} \left\{ w(t)^2 \int_0^T \left\| \QQ( f(\tau), g(\tau) ) \right\|^2_{ \SSsm } \d t \right\}.
			\end{equation}
			We also recall the bound \eqref{eq:bound_w} for $w$. The continuity at $t = 0$ will be immediate from the estimates below by letting $T \to 0$. For the reader convenience, we also recall the definitions of $\Nt\cdot\Nt_{\SSSl}$ and $\Nt \cdot \Nt_{\SSSm}$:
			$$	\Nt f \Nt_{ \SSSl}^2 := \sup_{0 \le t < T} \, e^{2 \sigma t / \eps^2 } \| f(t) \|_{\SSl}^2 + \frac{1}{\eps^2} \int_0^T e^{2 \sigma t / \eps^2} \| f(t) \|_{\SSlp}^2 \d t,$$
			$$\Nt g \Nt_{\SSSm}^2 := \sup_{0 \le t < T } \left\{ w(t)^2 \| g(t) \|_{\SSs}^2 + \frac{w(t)^2}{\eps^2}   \int_0^t \| g(\tau) \|_{\SSsp}^2 \d \tau \right\}.$$
			\step{1}{Proof of \eqref{eq:bilinear_kin} for $f \in \SSSl$}
			On the one hand, if $g \in \SSSl$, combining the estimate \eqref{eq:pre_estimate_kin} with the bilinear estimate \eqref{eq:Q_refined_sobolev_negative_algebra_inequality} for $\QQ$, one has:
			\begin{equation*}\begin{split}
					\Nt \Psi_\kin^\eps [f, g] \Nt_\SSSl^2
					& \lesssim \eps^2 \int_0^T \left\{ \left( \frac{1}{\eps} e^{\sigma t / \eps^2} \| f(t) \|_{\SSlp}\right)^2 \| g(t) \|_{ \SSl }^2 + \| f(t) \|_{ \SSl }^2 \left( \frac{1}{\eps} e^{\sigma t / \eps^2} \| g(t) \|_{\SSlp}\right)^2 \right\} \d t \\
					& \lesssim \eps^2 \Nt f \Nt_{\SSSl}^2 \Nt g \Nt_{\SSSl}^2,
			\end{split}\end{equation*}
			which is ~\eqref{eq:bilinear_kin_kin_kin}. Similarly, when $g \in \SSSm$, we have
			\begin{equation*}\begin{split}
					\Nt \Psi_\kin^\eps [f, g] \Nt_\SSSl^2 
					&\lesssim \eps^2 \int_0^T \left\{ \left( \frac{1}{\eps} e^{\sigma t / \eps^2} \| f(t) \|_{\SSlp}\right)^2 \| g(t) \|_{ \SSs }^2 + \left( e^{\sigma t / \eps^2}  \| f(t) \|_{ \SSl } \right)^2 \left( \frac{1}{\eps} \| g(t) \|_{\SSsp}\right)^2 \right\} \d t \\
					&\lesssim \eps^2 w(T)^{-2} \Nt f \Nt_{\SSSl}^2 \Nt g \Nt_{\SSSs}^2\,,	
			\end{split}\end{equation*}
			where we used \eqref{eq:bound_w}. This proves \eqref{eq:bilinear_kin_kin_mix} for $g \in \SSsp$. On the other hand, if $g \in \SSSs$, using furthermore $\SSlp \hookrightarrow \SSl$ and $\SSsp \hookrightarrow \SSlp$, we have 
			$$
			\Nt \Psi_\kin^\eps [f, g] \Nt_\SSSl^2
			\lesssim \eps^2\int_0^T \left( \frac{1}{\eps} e^{\sigma t / \eps^2} \| f(t) \|_{\SSlp}\right)^2 \| g(t) \|_{ \SSsp }^2 \d t \lesssim \eps^2 w(T)^{-2} \Nt f \Nt_{\SSSl}^2 \Nt g \Nt_{\SSSs}^2
			$$  which gives \eqref{eq:bilinear_kin_kin_mix}.

			\step{2}{Proof of \eqref{eq:bilinear_mix_mix_mix} for $f \in \SSSm$}
			In the case $f, g \in \SSSm$, combining the estimate \eqref{eq:pre_estimate_mix} with the bilinear estimate \eqref{eq:Q_refined_sobolev_negative_algebra_inequality}, and using the bound \eqref{eq:bound_w} for $w$, we have
			\begin{align*}
				\Nt & \Psi_\kin^\eps [f, g] \Nt_\SSSm^2 \\
				& \lesssim \eps^2 \int_0^T \left\{ \left(\frac{1}{\eps} \| f(t) \|_{\SSsp}\right)^2 \Big[ w(t) \|g(t) \|_{ \SSs }\Big]^2 + \Big[w(t)\| f(t) \|_{ \SSs }\Big]^2 \left(\frac{1}{\eps} \| g(t) \|_{\SSsp}\right)^2 \right\}  \d t 
			\end{align*}
			which readily gives  \eqref{eq:bilinear_mix_mix_mix} for $f,g\in \SSSm.$ 
			In the case $f \in \SSSm$ and $g \in \SSSs$, using furthermore $\SSsp \hookrightarrow \SSs$, we have  
			$$
			\Nt \Psi_\kin^\eps [f, g] \Nt_\SSSm^2 \lesssim  \eps^2 \int_0^T \left(\frac{1}{\eps} \| f(t) \|_{\SSsp}\right)^2 \| w(t) g(t) \|_{ \SSsp }^2 \d t 
			\lesssim \eps^2 w(T)^{-2} \Nt f \Nt_{\SSSm}^2 \Nt g \Nt_{\SSSs}^2.$$
			This shows \eqref{eq:bilinear_mix_mix_mix}.
			Similarly, using the nonlinear estimate~\eqref{eq:Q_refined_sobolev_negative_algebra_inequality_same} for $\QQ$, denoting for compactness ${\boldsymbol{\alpha}} = {1-\alpha}$, we have  
			$$
			\Nt \Psi_\kin^\eps [f, g]\Nt_\SSSm^2
			\lesssim  \int_0^T \| w(t) f(t) \|_{\SSsp}^2 \left\| |\nabla_x|^{\boldsymbol{\alpha}} g(t) \right\|_{ \rSSsp{s-{\boldsymbol{\alpha}}} }^2 \d t \\
			\lesssim  w(T)^{-2} \| f \|_{\SSSs}^2 \| g \|_{\SSSs}^2.
			$$
			This proves \eqref{eq:bilinear_mix_hyd_hyd} and concludes the proof.
		\end{proof}
		
		This next proposition is proved as Proposition \ref{prop:special_bilinear_hydrodynamic} and its proof is omitted.
		\begin{prop}[\textit{\textbf{Special bilinear mixed estimates}}]
			\label{prop:special_bilinear_mix}
			When $f \in \SSSs$ and $\phi$ is the parameter defining the $\SSSs$-norm
			\begin{equation}
				\label{eq:bilinear_mix_phi}
				\Nt \Psi^\eps_\kin [f, \phi]  \Nt_\SSSm \lesssim \eta \Nt f \Nt_\SSSs,
			\end{equation}
			furthermore, when $g^\eps_\disp = U^\eps_\disp(\cdot) g$ where $g = \PP_{\disp} g \in \rSSs{s} \cap \dot{\mathbb{H}}^{-\alpha}_x \left( \Ss_v \right)$, there holds
			\begin{equation}
				\label{eq:bilinear_mix_disp}
				\Nt \Psi^\eps_\kin [f,  g^\eps_\disp] \Nt_\SSSm \lesssim {\beta_{\disp}}(g , \eps) \Nt f \Nt_{ \SSSs }, \qquad \lim_{\eps\to0} {\beta_{\disp}}(g, \eps)=0,
			\end{equation}
			and in the case $d \ge 3$, the rate of convergence is explicit assuming $g \in \dot{\mathbb{B}}^{s' + (d+1)/2}_{1, 1} \left( \Ssm_v \right) \cap \rSSsm{s}$ for some $s'>s$:
			\begin{equation*}
				 {\beta_{\disp}}(g, \eps) \lesssim \sqrt{\eps} \left( \| g \|_{ \rSSs{s} } + \| g \|_{ \dot{\mathbb{B}}^{s' + (d+1)/2 }_{1, 1} \left( \Ss_v \right) }\right).
			\end{equation*}
		\end{prop}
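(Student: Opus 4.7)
The plan is to mirror the two-step argument of Proposition \ref{prop:special_bilinear_hydrodynamic}, replacing the hydrodynamic convolution estimate by the kinetic-to-mixed convolution estimate \eqref{eq:decay_convolution_semigroup_no_exp}. The starting point in both cases is the pre-estimate \eqref{eq:pre_estimate_mix}, and the structure of the nonlinear bounds \eqref{eq:Q_refined_sobolev_algebra} together with the embeddings $\SSsp \hookrightarrow \SSs \hookrightarrow \SSsm$ allow the same algebra to go through, with one notable simplification: since \eqref{eq:decay_convolution_semigroup_no_exp} delivers directly the $\SSsm$ norm of the source, no $\dot{\mathbb{H}}^{-\alpha}_x$ piece appears in the preparatory estimates (in contrast with \eqref{eq:pre_estimate_hyd}).

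For the bound \eqref{eq:bilinear_mix_phi}, I would apply \eqref{eq:Q_refined_sobolev_negative_algebra_inequality_same} to $\QQ(f,\phi)$ so that $|\nabla_x|^{1-\alpha}$ falls on $\phi$, and use $\SSsp \hookrightarrow \SSs$ together with $\rSSsp{s} \hookrightarrow \rSSsp{s-(1-\alpha)}$ to collect
\[
\| \QQ(f,\phi)\|_{\SSsm} \lesssim \|f\|_{\SSsp}\, \bigl\| |\nabla_x|^{1-\alpha} \phi \bigr\|_{\rSSsp{s}}.
\]
Plugging this into \eqref{eq:pre_estimate_mix}, bounding $\|f(\tau)\|_{\SSsp}^2 \le w_{\phi,\eta}(\tau)^{-2}\Nt f\Nt_\SSSs^2$ by definition of the $\SSSs$-norm, and invoking the key weight identity \eqref{eq:NS_exponential_weight} to estimate
\[
\sup_{0\le t<T}\Bigl\{ w_{\phi,\eta}(t)^2 \int_0^t w_{\phi,\eta}(\tau)^{-2}\bigl\||\nabla_x|^{1-\alpha}\phi(\tau)\bigr\|^2_{\rSSsp{s}} \d\tau \Bigr\} \le \eta^2,
\]
gives \eqref{eq:bilinear_mix_phi} at once.

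For the bound \eqref{eq:bilinear_mix_disp}, the same algebra applied with $\phi$ replaced by $g^\eps_\disp$ (and using only that $w_{\phi,\eta}$ is non-increasing and bounded by $1$, so that $w_{\phi,\eta}(t)^2 w_{\phi,\eta}(\tau)^{-2}\le 1$ for $\tau\le t$) yields
\[
\Nt \Psi^\eps_\kin[f,g^\eps_\disp]\Nt_\SSSm^2 \;\lesssim\; \Nt f\Nt_\SSSs^2 \int_0^T \bigl\||\nabla_x|^{1-\alpha} g^\eps_\disp(t) \bigr\|^2_{\rSSsp{s-(1-\alpha)}}\d t.
\]
At this point the right-hand side is exactly the integral already analysed in Step~2 of the proof of Proposition \ref{prop:special_bilinear_hydrodynamic}: a direct Fourier computation using Definition \ref{def:hydro_semigroups} bounds it by $(\|g\|_{\rSSs{s}}+\|g\|_{\dot{\mathbb{H}}^{-\alpha}_x(\Ss_v)})^2\sup_{\xi}\int_0^T|\xi|^2 e^{-\kappa t|\xi|^2}\d t$, which is finite uniformly in $\eps$, $T$, and $g$, and enables the density reduction to $g$ with Fourier support in $\{|\xi|\ge \nu\}$.

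The main technical step — and the one to transcribe carefully from the hydrodynamic proof — is the vanishing $\eps$-rate on this dense subclass. I would split $\QQ(f,g^\eps_\disp)=\varphi^-+\varphi^+$ via $\mathbf{1}_{[0,R]}$ and $\mathbf{1}_{[R,T)}$: on $\varphi^+$ the integral is controlled by $C_\nu(R,T):=\sup_{|\xi|\ge\nu}\int_R^T|\xi|^2 e^{-\kappa t|\xi|^2}\d t$, which $\to 0$ as $R\to T$ by a direct bound on the tail of $e^{-\kappa t|\xi|^2}$; on $\varphi^-$ I would use the dispersive estimate \eqref{eq:dispersive} together with the nonlinear bound \eqref{eq:Q_sobolev_algebra_Holder} (with $W^{s',\infty}_x$ regularity on $g^\eps_\disp$) to obtain a control by $\bigl(\int_0^R 1\wedge (\eps/t)^{(d-1)/(1+\alpha)}\d t\bigr)^{1+\alpha}$. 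Taking $\eps\to 0$ first and then $R\to T$ yields $\beta_\disp(g,\eps)\to 0$, with the explicit $\sqrt{\eps}$ rate in dimension $d\ge 3$ coming from $\int_0^R 1\wedge(\eps/t)^{d-1}\d t\lesssim \eps$. The strict continuity at $t=0$ follows in every case by letting $T\to 0^+$ in the pre-estimates. No step is expected to be genuinely harder than in Proposition \ref{prop:special_bilinear_hydrodynamic}; the only point requiring care is keeping the correct $\Nt\cdot\Nt_\SSSm$-type supremum outside versus the time integral inside.
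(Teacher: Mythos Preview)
Your proposal is correct and follows exactly the approach the paper intends: the paper's own proof is omitted with the remark that the proposition ``is proved as Proposition \ref{prop:special_bilinear_hydrodynamic}'', and you have carried out precisely that transcription, replacing \eqref{eq:pre_estimate_hyd}/\eqref{eq:pre_estimate_2_pos} by the kinetic pre-estimate \eqref{eq:pre_estimate_mix}. One cosmetic point: since \eqref{eq:pre_estimate_mix} is an $L^2$ (not $L^{2/(1+\alpha)}$) estimate, the $\varphi^-$ bound should read $\int_0^R 1\wedge(\eps/t)^{d-1}\,\d t$ rather than $\bigl(\int_0^R 1\wedge(\eps/t)^{(d-1)/(1+\alpha)}\,\d t\bigr)^{1+\alpha}$; this is in fact simpler and yields the same vanishing and the same $\sqrt{\eps}$ rate for $d\ge 3$.
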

		
		%	\section{Main tools for the hydrodynamic limit}
		
		\section{Proof of Theorems \ref{thm:hydrodynamic_limit} and \ref{thm:hydrodynamic_limit-gen_symmetric}}
		
		\label{scn:proof_hydrodynamic_limit_symmetrizable}
		
		In this section, we denote $\Sl = \Ss$ under the sole assumptions \ref{L1}--\ref{L4} and \ref{Bortho}--\ref{Bbound}, and $\Sl$ is the space from assumptions \ref{LE} and \ref{BE} under these extra assumptions.
		\medskip
		
		In this section, we construct a solution of the perturbed equation and then show it must be unique. We follow the approach described in Section \ref{sec:detail}. We refer more specifically to Section \ref{sec:detail-sum} that we briefly resume here. Recall that we look for a solution of the form
		\begin{equation}\label{eq:ansatz}\begin{split}
				f^\eps(t)&=f^\eps_\kin(t)+f^\eps_\mix(t)+f^\eps_\hyd(t)\\
				&=f^\eps_\kin(t)+f^\eps_\mix(t)+f^\eps_\disp(t)+ f_\ns(t)+g^\eps(t)
			\end{split}
		\end{equation}  
		where $f_\ns(\cdot)$ as well as $f_\ini$ (and thus $f^\eps_\disp(t) = U^\eps_\disp(t) f_\ini$) are functions to be considered as fixed parameters  since they depend only on the initial datum $f_\ini$ (and $\eps$). We point out that by Lemma \ref{lem:decay_regularization_kinetic_semigroup}, Lemma \ref{lem:decay_semigroups_hydro} and Lemma \ref{lem:NS_parabolic_space} respectively
		$$\Nt U^\eps_\kin(\cdot) f_\ini \Nt_{\SSSl} \lesssim 1, \qquad \Nt f^\eps_\disp \Nt_{\SSSs} \lesssim 1, \qquad \Nt f_\ns \Nt_{\SSSs} \lesssim 1,$$
	without those quantities being necessarily small. The smallness necessary to our fixed-point argument is coming from Proposition \ref{prop:special_bilinear_hydrodynamic} which yields
		$$\| \Psi^\eps_\hyd(f_\ns, \cdot) \|_{ \BBB\left( \SSSs  \right) } + \| \Psi^\eps_\kin(f_\ns, \cdot) \|_{ \BBB\left( \SSSs ; \SSSm \right) } \lesssim \eta,$$
		and from Proposition \ref{prop:special_bilinear_mix} which yields
		$$\lim_{\eps \to 0} \left( \| \Psi^\eps_\hyd(f_\disp^\eps, \cdot) \|_{ \BBB\left( \SSSs  \right) } + \| \Psi^\eps_\kin(f_\disp^\eps, \cdot) \|_{ \BBB\left( \SSSs ; \SSSm \right) } \right) = 0.$$
		
		From now on, we work with the space $\SSSm$ and $\SSSs$ associated to the solution $f_\ns$ which corresponds, in Eq. \eqref{eq:wphieta}, to the choice of the weight function 
		$$w_{f_\ns,\eta}(t)=\exp\left( \frac{1}{2 \eta^2} \int_0^t\| |\nabla_x|^{1-\alpha} f_\ns(\tau) \|_{\rSSsp{s}}^2 \d \tau \right) \qquad t \ge0,$$
		with $\eta >0$ still to be chosen.  
		
		We recall that we showed in Section \ref{sec:detail-sum} that solving equation
		$$f^\eps(t)=U^\eps(t)f_\ini+\Psi^\eps[f^\eps,f^\eps](t)$$
		can be reformulated, under the above \emph{ansatz}, into the system of coupled nonlinear equations
		\begin{equation}\label{eq:systemKinMixG-1}
			\begin{cases}
				f^\eps_\kin(t) &= U^\eps_\kin(t) f_\ini + \Psi^\eps_\kin \left[ f^\eps_\kin, f^\eps_\kin \right](t) + 2 \Psi^\eps_\kin\left[ f^\eps_\kin, f^\eps_\hyd + f^\eps_\mix\right](t),\\
				\\
				f^\eps_\mix(t)    &=\Psi^\eps_\kin\left[\left(f_\ns + f^\eps_\disp\right)+ f_\mix^\eps+g^\eps\,;\,\left(f_\ns + f^\eps_\disp\right)+ f_\mix^\eps+g^\eps \right](t),\\
				\\	
				g^\eps(t) &= \Phi^\eps[ f^\eps_\kin, f^\eps_\mix ](t) g^\eps(t)+
				\Psi^\eps_\hyd\left[ g^\eps, g^\eps\right] + \SS^\eps(t),
			\end{cases}
		\end{equation}
		where the source term $\SS^\eps(t)$ is defined through \eqref{eq:source}. We construct a solution $\left( f^\eps_\kin, f^\eps_\mix, g^\eps \right)$ of this system in the space
		$ \SSSl \times \SSSm \times \SSSs $	and more specifically, in a product of the following balls for some small radii $ c_2, c_3 >0$:
		\begin{equation*}\begin{split}
				B_1 := \bigg\{ U^\eps_\kin(\cdot) f_\ini + \phi \, : \, &\Nt \phi  \Nt_{\SSSl} \le 1 \bigg\}, \qquad
				B_2 := \bigg\{  \varphi \in \SSSm \, : \, \| \varphi \Nt_{\SSSm} \le c_2 \bigg\}, \\ 
				B_3 &:= \bigg\{ \psi \in \SSSs \, : \, \| \psi \Nt_{\SSSs} \le c_3 \bigg\},
		\end{split}\end{equation*}
		where $\SSSs=\SSSs(T, f_\ns, \eta)$ with $T$ being the lifespan of $f_\ns$. To do so, we reformulate the system \eqref{eq:systemKinMixG-1} as a fixed point problem of the type
		$$(f^\eps_\kin, f^\eps_\mix, g^\eps)=\bm{\Xi}\left[f^\eps_\kin, f^\eps_\mix, g^\eps\right]$$
		where the mapping $\bm{\Xi}=\left( \Xi_1, \Xi_2, \Xi_3 \right)$
		\begin{equation*}%\begin{cases}
			\bm{\Xi}  \, :  \,  B_1\times B_2 \times B_3 \longrightarrow B_1\times B_2\times B_3
		\end{equation*}
		is defined through its components:
		\begin{equation}\label{eq:systemXIi}\begin{cases}
				\Xi_1\left[\phi, \varphi, \psi \right]  &= U^\eps_\kin(\cdot) f_\ini + \Psi^\eps_\kin\left[\phi, \phi\right]  +2 \Psi^\eps_\kin\left[\phi, f_\ns + f^\eps_\disp\right]+ 2 \Psi^\eps_\kin\left[\phi, \psi + \varphi\right],\\
				\\
				\Xi_2\left[\phi, \varphi, \psi\right]  &= \Psi^\eps_\kin\left[\left(f_\ns + f^\eps_\disp\right) + \varphi   + \psi , \left(f_\ns + f^\eps_\disp\right)  + \varphi  + \psi \right] ,\\
				\\
				\Xi_3\left[\phi, \varphi, \psi\right] &=  \Phi^\eps[\phi,\varphi]\psi   + \Psi^\eps_\hyd\left[\psi , \psi\right] + \SS^\eps[\phi, \varphi]\,,
		\end{cases}\end{equation}
		for any
		$$(\phi,\varphi,\psi) \in \bm{B} := B_1\times B_2 \times B_3 \subset  \SSSl \times \SSSm \times \SSSs , $$
		where we recall that $f_\ns$ and $f_\disp^\eps$ are \emph{fixed} parameters for this problem and thus, with a slight  abuse of notation,  the source term $\SS^\eps[\phi,\varphi](t)$ writes
		$$\SS^\eps[\phi,\varphi](t)=\SS_1^\eps(t)+\SS^\eps_2(t)+\SS^\eps_3[\phi,\varphi](t)$$
		where we recall that $\SS_1^\eps(t)$ depends only on $f_\ns(t)$ and $f_\ini$ while $\SS_2^\eps(t)$ depends only on $f_\disp^\eps$ and $f_\ns$. We also defined, for $(\phi,\varphi) \in  \SSSl \times \SSSm$, the linear operator $\Phi^\eps[\phi,\varphi]$ on $\SSSs$ as
		$$\Phi^\eps\left[\phi,\varphi\right]h=2\Psi_\hyd^\eps\left[h,\left(f_\ns+f_\disp^\eps\right)+\varphi+\phi\right], \qquad \quad \forall h\in \SSSs.$$
		\subsection{Linear estimates and source terms estimates}
		The source term $\SS^\eps$ and the linear terms involved in the above system \eqref{eq:systemKinMixG-1} can be estimated with a simple use of the results of Section \ref{sec:Bilin}.   In particular, we recall that the norm $\SSSs$ depends on a parameter $\eta >0$ which can be chosen freely.
		\begin{prop}[\textit{\textbf{Linear hydrodynamic estimate}}]
			\label{prop:linear_hydrodynamic}
			With the notation ${\beta_{\disp}}$ of Proposition~\ref{prop:special_bilinear_hydrodynamic} and assuming that $f_\ini \in \SSl$ and $f^\eps_\disp = U^\eps_\disp f_\ini$ are given, the following continuity estimate holds in $\SSSs=\SSSs(T, f_\ns, \eta)$:
			\begin{align*}
				\big\| \Psi^\eps_\hyd\big[ h, & f_\ns + f^\eps_\disp + g_\mix + g_\kin\big] \big\|_{ \SSSs  } \\
				& \lesssim \left(\eta + {\beta_{\disp}}(\PP_\disp f_\ini, \eps) + \eps w_{f_\ns, \eta}(T)^{-1} \Nt g_\mix \Nt_{\SSSm} + \eps \Nt g_\kin \Nt_\SSSl\right) \Nt h \Nt_\SSSs ,
			\end{align*}
			as well as the following stability estimate:
			\begin{align*}
				\Nt \Psi^\eps_\hyd[ h, f_\ns & + f^\eps_\disp + g _\mix + g _\kin]  - \Psi^\eps_\hyd\left[ h', f_\ns + f^\eps_\disp + g_\mix '+ g_\kin'\right] \Nt_\SSSs \\
				\lesssim & \Nt h - h' \Nt_\SSSs \left( \eta + \beta_{g}(\eps) + \eps w_{f_\ns, \eta}(T)^{-1} \Nt g_\mix \Nt_{\SSSm} + \eps \Nt g_\kin \Nt_\SSSl \right) \\
				& + \Nt h \Nt_\SSSs \left( \eps w_{f_\ns, \eta}(T)^{-1} \Nt g_\mix - g_\mix' \Nt_\SSSm + \eps \Nt g_\kin - g_\kin' \Nt_\SSSl \right).
			\end{align*}
		\end{prop}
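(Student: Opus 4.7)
The plan is to decompose the argument of $\Psi^\eps_\hyd$ into its four constituents and to apply the bilinear estimates of Section \ref{sec:Bilin} to each piece, exploiting the bilinearity (and hence symmetry in the symmetrized version) of $\Psi^\eps_\hyd$. More precisely, write
\begin{equation*}
\Psi^\eps_\hyd\bigl[h, f_\ns + f^\eps_\disp + g_\mix + g_\kin\bigr]
= \Psi^\eps_\hyd[h, f_\ns] + \Psi^\eps_\hyd[h, f^\eps_\disp] + \Psi^\eps_\hyd[h, g_\mix] + \Psi^\eps_\hyd[h, g_\kin].
\end{equation*}
The first term is controlled by Proposition \ref{prop:special_bilinear_hydrodynamic}, estimate \eqref{eq:bilinear_hyd_phi}, which exploits precisely the fact that the weight $w_{f_\ns,\eta}$ entering the definition of $\Nt\cdot\Nt_{\SSSs}$ was built from $f_\ns$; this yields the factor $\eta$. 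The second term is controlled by the dispersive estimate \eqref{eq:bilinear_hyd_disp}, giving the factor ${\beta_{\disp}}(\PP_\disp f_\ini,\eps)$ (recall that $f^\eps_\disp = U^\eps_\disp f_\ini = U^\eps_\disp \PP_\disp f_\ini$).

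For the remaining two terms, I invoke the general bilinear hydrodynamic estimates \eqref{eq:bilinear_hyd_hyd_mix} and \eqref{eq:bilinear_hyd_hyd_kin} from Proposition \ref{prop:bilinear_hydrodynamic}, applied with $f=h\in \SSSs$ and $g=g_\mix \in \SSSm$, respectively $g=g_\kin \in \SSSl$, which produce the factors $\eps\,w_{f_\ns,\eta}(T)^{-1}\Nt g_\mix\Nt_{\SSSm}$ and $\eps\Nt g_\kin\Nt_{\SSSl}$. Summing the four contributions yields the first assertion. Note that strictly speaking one should work with the symmetrized $\QQ^\sym$, but since the bilinear estimates of Section \ref{sec:Bilin} are insensitive to ordering of arguments, this causes no difficulty.

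The stability estimate is then a direct consequence by bilinearity: for any two triples $(h,g_\mix,g_\kin)$ and $(h',g'_\mix,g'_\kin)$, writing
\begin{equation*}
\begin{split}
\Psi^\eps_\hyd\bigl[h,\, & f_\ns+f^\eps_\disp+g_\mix+g_\kin\bigr] - \Psi^\eps_\hyd\bigl[h',\, f_\ns+f^\eps_\disp+g'_\mix+g'_\kin\bigr] \\
&= \Psi^\eps_\hyd\bigl[h-h',\, f_\ns+f^\eps_\disp+g_\mix+g_\kin\bigr] + \Psi^\eps_\hyd\bigl[h',\, (g_\mix-g'_\mix)+(g_\kin-g'_\kin)\bigr],
\end{split}
\end{equation*}
one applies the first assertion to the first summand (with $h$ replaced by $h-h'$) and, for the second summand, invokes again \eqref{eq:bilinear_hyd_hyd_mix} and \eqref{eq:bilinear_hyd_hyd_kin} with $f=h'$ and $g=g_\mix-g'_\mix$, respectively $g=g_\kin-g'_\kin$. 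Gathering the resulting bounds produces exactly the claimed stability inequality. There is no genuine obstacle here: the proof is a bookkeeping exercise in which the main conceptual input—namely the fact that the term $\Psi^\eps_\hyd[h,f_\ns]$ contributes only a small factor $\eta$ rather than the a priori large $\Nt f_\ns\Nt_{\SSSs}$—has already been established in Proposition \ref{prop:special_bilinear_hydrodynamic} thanks to the tailored weight $w_{f_\ns,\eta}$.
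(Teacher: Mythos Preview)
Your proposal is correct and follows essentially the same approach as the paper: decompose by bilinearity and invoke Propositions \ref{prop:bilinear_hydrodynamic} and \ref{prop:special_bilinear_hydrodynamic} term by term. The only cosmetic difference is that in the stability decomposition you place $h'$ in the second summand while the paper writes $h$ (and a stray factor of $2$); both variants are valid rearrangements of the same bilinear identity.
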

		
		\begin{proof}
			The two estimates are direct consequences of Propositions \ref{prop:bilinear_hydrodynamic} and \ref{prop:special_bilinear_hydrodynamic} since \begin{align*}
				\Psi^\eps_\hyd\left[ h, f_\ns + f^\eps_\disp + g _\mix + g _\kin\right]  & - \Psi^\eps_\hyd\left[ h', f_\ns + f^\eps_\disp + g_\mix '+ g_\kin'\right]\\
				= & \Psi^\eps_\hyd\left[h- h', f_\ns + f^\eps_\disp + g_\mix + g_\kin\right] \\
				& + 2 \Psi^\eps_\hyd\left[h, g_\mix-g_\mix'\right] + 2 \Psi^\eps_\hyd\left[h, g_\kin-g_\kin'\right]\,.
			\end{align*}
			This proves the result.
		\end{proof}
		
		The first part $\SS_1^\eps$ of the source term $\SS^\eps$ which depends only on the initial data $f_\ini$ and the Navier-Stokes solution $f_\ns$ (but not on the partial solutions $f^\eps_\kin$, $f^\eps_\mix$ or $g^\eps$) is estimated in this next lemma.
		
		\begin{lem}[\textit{\textbf{Estimate of the first source term $\SS^\eps_1$}}]\label{lem:SourceS1}
			Consider some $f_\ini \in \SSl$. The source term $\SS^\eps_1$ satisfies
			$$\Nt \SS^\eps_1 \Nt_\SSSs \le \beta_\ns(f_\ns, f_\ini, \eps), \qquad \lim_{\eps \to 0}\beta_\ns(f_\ns, f_\ini, \eps)=0.$$
			If we assume additionally that the initial data $f_\ini$ lies in $\rSSl{s+\delta} \cap \dot{\mathbb{H}}^{-\alpha}_x \left( \Sl_v \right)$ for some $\delta \in (0, 1]$, then the rate of convergence can be made explicit as
			\begin{align*}
				\beta_\ns(f_\ns, f_\ini, \eps) \lesssim \eps^{\delta} \Big( & 1 + \| f_\ini \|_{\rSSl{s+\delta}} + \| f_\ini \|_{ \dot{\mathbb{H}}^{-\alpha}_x \left( \Sl_v \right) } \\
			 & +  \| f_\ns \|_{ L^\infty\left( [0, T) ; \rSSs{s+\delta} \right) } + \| \nabla_x f_\ns \|_{ L^2\left( [0, T) ; \rSSs{s+\delta} \right) } \Big)^3.
			\end{align*}
		\end{lem}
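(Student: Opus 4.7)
The source term decomposes naturally as $\SS^\eps_1 = L^\eps + N^\eps$, where using $U^\eps_\hyd = U^\eps_\ns + U^\eps_\Wave$,
$$L^\eps(t) := \bigl(U^\eps_\ns(t) - U_\ns(t)\bigr) f_\ini + \bigl(U^\eps_\Wave(t) - U^\eps_\disp(t)\bigr) f_\ini, \qquad N^\eps(t) := \Psi^\eps_\hyd[f_\ns, f_\ns](t) - \Psi_\ns[f_\ns, f_\ns](t).$$
The linear piece $L^\eps$ is treated by direct application of the asymptotic-equivalence Lemmas \ref{lem:asymptotic_equiv_oscillating_semigroup} and \ref{lem:asymptotic_equiv_NS_semigroup}: taking the regularity parameter $r = s + \delta \in (s, s+1]$ in the quantitative case yields
$$\Nt L^\eps \Nt_{\SSSs} \lesssim \eps^{\delta} \bigl( \|f_\ini\|_{\rSSl{s+\delta}} + \|f_\ini\|_{\dot{\mathbb{H}}^{-\alpha}_x(\Sl_v)} \bigr),$$
while the qualitative case follows from \eqref{eq:asymptotic_equiv_NS_general} and \eqref{eq:asymptotic_equiv_oscillating_general}.

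For the bilinear piece $N^\eps$, we use the integral representation $\Psi_\ns = \nabla_x \cdot V_\ns * \QQ^\sym$ from Definition \ref{def:U_NS_V_NS} together with $\Psi^\eps_\hyd = \eps^{-1}(U^\eps_\ns + U^\eps_\Wave) * \QQ^\sym$ to decompose
$$N^\eps(t) = \frac{1}{\eps}\bigl( U^\eps_\ns - \eps \nabla_x \cdot V_\ns \bigr) * \QQ^\sym(f_\ns, f_\ns)(t) + \frac{1}{\eps}\, U^\eps_\Wave * \QQ^\sym(f_\ns, f_\ns)(t).$$
Assumption \ref{Bortho} ensures $\PP\QQ^\sym(f_\ns,f_\ns)=0$, so both \eqref{eq:asymptotic_equiv_NS_orthogonal_reg} and Lemma \ref{lem:convolution_wave} apply to $\varphi := \QQ^\sym(f_\ns, f_\ns)$. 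The first piece is estimated by \eqref{eq:asymptotic_equiv_NS_orthogonal_reg} with $r = s+\delta$, producing a bound of order $\eps^\delta \|\varphi\|_{L^2_t(\rSSlm{s+\delta} \cap \dot{\mathbb{H}}^{-\alpha}_x(\Slm_v))}$, which the bilinear estimate \eqref{eq:Q_refined_sobolev_algebra} converts into a product of $\|f_\ns\|_{L^\infty_t \rSSs{s+\delta}}$ and $\||\nabla_x|^{1-\alpha} f_\ns\|_{L^2_t \rSSsp{s+\delta - (1-\alpha)}}$; both factors are absorbed by the NSF norms hypothesized on $f_\ns$.

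The main obstacle, and the technical crux of the proof, lies in bounding the wave convolution $\eps^{-1} U^\eps_\Wave * \QQ^\sym(f_\ns, f_\ns)$ via Lemma \ref{lem:convolution_wave}. Beyond the norms of $\varphi$ just handled, one must also control $\partial_t \varphi = 2\QQ^\sym(\partial_t f_\ns, f_\ns)$ in the mixed-exponent spaces $(L^2 \cap L^{2/(1+\alpha)})_t \rSSlm{s-1}$ and $(L^{4/3} \cap L^{4/(3+2\alpha)})_t \dot{\mathbb{H}}^{-1/2}_x(\Slm_v)$. The strategy is to replace $\partial_t f_\ns$ using the NSF system \eqref{eq:main-NSF}: modulo the incompressibility and Boussinesq constraints, $\partial_t f_\ns$ is a linear combination of the diffusion terms $\Delta_x u, \Delta_x \theta$ (linear in $f_\ns$ but with two $x$-derivatives) and of the transport terms $u \cdot \nabla_x u, u \cdot \nabla_x \theta$ (quadratic in $f_\ns$ with one $x$-derivative), each paired against functions of $v\mu$. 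Inserting these into $\QQ^\sym(\partial_t f_\ns, f_\ns)$ and applying again \eqref{eq:Q_refined_sobolev_algebra} together with Sobolev product estimates from Appendix \ref{scn:littlewood-paley} bounds $\partial_t \varphi$ in the required norms by expressions of the form $\|f_\ns\|_{L^\infty_t \rSSs{s+\delta}}^k \|\nabla_x f_\ns\|_{L^2_t \rSSsp{s+\delta}}^{3-k}$ for $k \in \{1,2,3\}$; the loss of one $x$-derivative is harmless since Lemma \ref{lem:convolution_wave} operates at regularity $s-1$, and the overall cubic dependence produces the third power appearing in the stated bound.

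For the qualitative statement (only $f_\ini \in \SSl$), the bounds on $N^\eps$ above are already quantitative since they depend only on the assumed NSF regularity of $f_\ns$; the convergence $\|L^\eps\|_\SSSs \to 0$ follows from the general (non-quantitative) cases of Lemmas \ref{lem:asymptotic_equiv_oscillating_semigroup} and \ref{lem:asymptotic_equiv_NS_semigroup}. Combining these estimates yields $\Nt \SS^\eps_1 \Nt_\SSSs \le \beta_\ns(f_\ns, f_\ini, \eps)$ with $\beta_\ns \to 0$, and with the explicit cubic rate when the extra regularity is assumed.
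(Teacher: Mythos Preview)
Your approach is essentially the same as the paper's: the same four-piece decomposition (two linear terms via Lemmas~\ref{lem:asymptotic_equiv_oscillating_semigroup}--\ref{lem:asymptotic_equiv_NS_semigroup}, the wave convolution via Lemma~\ref{lem:convolution_wave} together with the estimates of Lemma~\ref{lem:estimates_derivative_Q_navier_stokes} on $\partial_t\varphi$, and the NS-difference via \eqref{eq:asymptotic_equiv_NS_orthogonal_reg}). The quantitative statement is handled correctly.

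There is, however, a gap in your treatment of the qualitative case. You assert that ``the bounds on $N^\eps$ above are already quantitative since they depend only on the assumed NSF regularity of $f_\ns$'', but this is false for the piece $\eps^{-1}(U^\eps_\ns - \eps\nabla_x\cdot V_\ns)*\QQ^\sym(f_\ns,f_\ns)$. Your own estimate for it, via \eqref{eq:asymptotic_equiv_NS_orthogonal_reg} with $r=s+\delta$, requires control of $\|f_\ns\|_{L^\infty_t\rSSs{s+\delta}}$ and $\|\nabla_x f_\ns\|_{L^2_t\rSSs{s+\delta}}$, which are \emph{not} part of the hypothesis when only $f_\ini\in\SSl$ is assumed (and taking $\delta=0$ gives no decay in $\eps$). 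The paper closes this gap by a density argument: Lemma~\ref{lem:NS_parabolic_space} shows that $f_\ns$ can be approximated in $\rSSSs{s}$ by elements of $\rSSSs{s+1}$, and one then splits $\Psi^\eps_\ns[f_\ns,f_\ns]-\Psi_\ns[f_\ns,f_\ns]$ using these approximants. You need to insert this step; the wave-convolution piece, by contrast, genuinely requires only $\rSSs{s}$ regularity and produces a clean factor of $\eps$, so your claim is correct for that half of $N^\eps$.
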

		\begin{proof} Recalling that $U_\hyd^{\eps}(t)=U^{\eps}_{\ns}(t)+U_{\Wave}^{\eps}(t)$, we write the source term $\SS^\eps_1(t)$ as
			\begin{align*}
				\SS^\eps_1(t) = & \left(U^\eps_\Wave(t)f_{\ini} - U^\eps_\disp(t)f_{\ini}\right) + \left(U^\eps_\ns(t)f_{\ini} - U_\ns(t)f_{\ini}\right)  \\
				& + \Psi^\eps_\Wave\left[f_\ns, f_\ns\right](t) + \left(\Psi^\eps_\ns\left[f_\ns, f_\ns\right](t)- \Psi_\ns\left[f_\ns, f_\ns\right](t)\right) .
			\end{align*}
			Using Lemmas \ref{lem:asymptotic_equiv_oscillating_semigroup} and \ref{lem:asymptotic_equiv_NS_semigroup}, we have for a smooth initial data $f_\ini$
			\begin{equation*}\begin{split} 
					\Nt U^\eps_\hyd(\cdot)f_\ini &- U_\ns(\cdot)f_\ini - U^\eps_\disp(\cdot) f_\ini  \Nt_\SSSs \\
					& \le \Nt  U^\eps_\Wave(\cdot) f_\ini - U^\eps_\disp f_\ini \Nt_\SSSs + \Nt  U^\eps_\ns(\cdot) f_\ini - U_\ns(\cdot) f_\ini \Nt_\SSSs \\
					& \lesssim  \eps^{\delta} \left( \| f_\ini \|_{\rSSlm{s+\delta}} + \| f_\ini \|_{ \dot{\mathbb{H}}^{-\alpha}_x \left( \Slm_v \right) } \right),
			\end{split}\end{equation*}
			and in general, by a limiting argument
			$$\lim_{\eps\to0}\Nt U^\eps_\hyd(\cdot)f_\ini - U_\ns(\cdot)f_\ini - U^\eps_\disp(\cdot) f_\ini  \Nt_\SSSs=0.$$
			Furthermore, using the estimate of Lemma \ref{lem:convolution_wave} with $\varphi = \QQ(f_\ns, f_\ns)$, and where we point out that, for $d=2$, we have $0 < \alpha < \frac{1}{2}$ and thus
			$$ \frac{4}{3 + 2 \alpha} \in \left(1, \frac{4}{3}\right) , \qquad   \frac{2}{1+\alpha} \in \left(\frac{4}{3}, 2\right),$$
			whereas, for $d \geq3$, we have $\alpha=0$ and thus
			$$\frac{4}{3 + 2 \alpha} = \frac{4}{3}, \qquad \frac{2}{1+\alpha} = 2,$$
			we estimate $ \| \varphi(0) \|_{ \dot{\mathbb{H}}_x^{-\alpha} \left( \Ss_v \right) }$ thanks to \eqref{eq:Q_refined_sobolev_negative_algebra_inequality},  and the other ones using using Lemmas \ref{lem:estimates_derivative_Q_navier_stokes}--\ref{lem:NS_parabolic_space} to deduce that		
			\begin{equation*}
				\Nt \Psi^\eps_\Wave\left[f_\ns, f_\ns\right] \Nt_{\SSSs} \lesssim \eps \Big( 1 + \| f_\ns(0) \|_{ \dot{\mathbb{H}}^{-\alpha}_x \left( \Ss_v \right) } + \| f_\ns \|_{ L^\infty\left( [0, T) ;\rSSs{s} \right) }  + \| \nabla_x f_\ns \|_{ L^2\left( [0, T) ; \rSSs{s} \right) }\Big)^3.
			\end{equation*} 
			Finally, one proves as for \eqref{eq:bilinear_hyd_hyd} using this time \eqref{eq:asymptotic_equiv_NS_orthogonal_reg}, that for any $\delta \in [0, 1]$
			$$\Nt  \Psi^\eps_\ns\left[f_\ns, f_\ns\right] - \Psi_\ns\left[f_\ns, f_\ns\right] \Nt_{ \rSSSs{s} } \lesssim \eps^{\delta} \Nt f_\ns \Nt_{\rSSSs{s+\delta}}^2,$$
			which, on the one hand, implies by Lemma \ref{lem:NS_parabolic_space}
			\begin{align*}
				\Nt  \Psi^\eps_\ns & \left[f_\ns, f_\ns\right]  - \Psi_\ns\left[f_\ns, f_\ns\right] \Nt_{ \rSSSs{s} } \\
				& \lesssim \eps^{\delta} \Big( \| f_\ns(0) \|_{ \dot{\mathbb{H}}^{-\alpha}_x \left( \Ss_v \right) } + \| f_\ns \|_{ L^\infty\left( [0, T) ;\rSSs{s+\delta} \right) }  + \| \nabla_x f_\ns \|_{ L^2\left( [0, T) ; \rSSs{s+\delta} \right) }\Big)^2,
			\end{align*} 
			and on the other hand, since $f_\ns$ can be approximated by elements of $\rSSSs{s+1}$ by Lemma \ref{lem:NS_parabolic_space}, we have in general
			$$\lim_{\eps\to0}\Nt  \Psi^\eps_\ns\left[f_\ns, f_\ns\right] - \Psi_\ns\left[f_\ns, f_\ns\right] \Nt_{ \rSSSs{s} }=0.$$
			This concludes the proof.
		\end{proof}
		
		\begin{prop}[\textit{\textbf{Estimate for the source term $\SS^\eps$}}]
			\label{prop:source_term}
			Consider some $f_\ini \in \SSl$ and denote $f^\eps_\disp = U^\eps_\disp(\cdot) f_\ini$, the source term~$\SS^\eps$ satisfies in $\rSSSs{s} = \rSSSs{s}(T, f_\ns, \eta)$
			\begin{equation*}\begin{split}
				\Nt \SS^\eps[  g_\kin, g_\mix] \Nt_\SSSs \lesssim  & {\beta_{\disp}}(\PP_\disp f_\ini, \eps) \left(\Nt f_\ns \Nt_\SSSs + \| g \|_{\SSs}\right) + \beta_\ns(f_\ns, f_\ini, \eps) \\
				& + \eps w_{f_\ns, \eta}(T)^{-1} \left( \Nt g_\kin \Nt_\SSSl + \Nt g_\mix \Nt_\SSSm \right) \\
				& \quad \times \left( \Nt g_\kin \Nt_\SSSl + \Nt g_\mix \Nt_\SSSm + \Nt f_\ns \Nt_\SSSs + \| \PP_\disp f_\ini \|_{ \SSs } + \| \PP_\disp f_\ini \|_{ \dot{\mathbb{H}}^{-\alpha}_x \left( \Ss_v \right) }\right),
			\end{split}\end{equation*}
			where there holds
			$$\displaystyle \lim_{\eps\to0} {\beta_{\disp}}( \PP_\disp f_\ini, \eps) =\lim_{\eps\to0}\beta_\ns(f_\ns, f_\ini, \eps)=0.$$
			The rate of convergence of the term~$\beta_\ns(f_\ns, f_\ini, \eps)$ can be made explicit if the initial data $f_\ini$ lies in $\rSSl{s+\delta} \cap \dot{\mathbb{H}}^{-\alpha}_x \left( \Sl_v \right)$ for some $\delta \in (0, 1]$:
			\begin{align*}
				\beta_\ns(f_\ns, f_\ini, \eps) \lesssim \eps^{\delta} \Big( & 1 + \| f_\ini \|_{\rSSl{s+\delta}} + \| f_\ini \|_{ \dot{\mathbb{H}}^{-\alpha}_x \left( \Sl_v \right) } \\
				& +  \| f_\ns \|_{ L^\infty\left( [0, T) ; \rSSs{s+\delta} \right) } + \| \nabla_x f_\ns \|_{ L^2\left( [0, T) ; \rSSs{s+\delta} \right) } \Big)^3.
			\end{align*}
			and if $d \ge 3$, the rate of convergence of $ {\beta_{\disp}}(f_\ini, \eps)$ is explicit if $\PP_\disp f_\ini \in \dot{\mathbb{B}}^{s' + (d+1)/2}_{1, 1} \left( \Ss_v \right) \cap \rSSs{s}$ for some $s'>s$:
			\begin{equation*}
				 {\beta_{\disp}}(f_\ini, \eps) \lesssim \sqrt{\eps} \left( \| \PP_\disp f_\ini \|_{ \rSSs{s} } + \| \PP_\disp f_\ini \|_{ \dot{\mathbb{B}}^{s' + (d+1)/2 }_{1, 1} \left( \Ss_v \right) }\right).
			\end{equation*}
			Furthermore, the source term $\SS^\eps$ satisfies the stability estimate
			\begin{align*}
				\Nt \SS^\eps_3[g_\kin, & g_\mix] - \SS^\eps_3[g_\kin', g_\mix']  \Nt_\SSSs \\
				\lesssim \eps & w_{f_\ns, \eta}(T)^{-1} \left( \Nt g_\kin - g_\kin' \Nt_\SSSl + \Nt g_\mix - g_\mix' \Nt_\SSSm \right) \\
				& \times \left( \Nt g_\kin + g_\kin' \Nt_\SSSl + \Nt g_\mix +  g_\mix' \Nt_\SSSm + \Nt f_\ns \Nt_\SSSs + \| \PP_\disp f_\ini \|_{\SSs} + \| \PP_\disp f_\ini \|_{ \dot{\mathbb{H}}^{-\alpha}_x \left( \Ss_v \right) } \right)\,.
			\end{align*}
		\end{prop}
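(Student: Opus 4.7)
The plan is to use the explicit splitting $\SS^\eps = \SS^\eps_1 + \SS^\eps_2 + \SS^\eps_3[g_\kin, g_\mix]$ from \eqref{eq:source} and to estimate each piece separately by reducing it to the bilinear toolbox of Section \ref{sec:Bilin}. The first piece $\SS^\eps_1$ depends only on $f_\ini$ and $f_\ns$ and is handled directly by Lemma \ref{lem:SourceS1}, which already produces both the vanishing rate $\beta_\ns(f_\ns,f_\ini,\eps)\to 0$ and its explicit quantitative version under the extra regularity $f_\ini \in \rSSl{s+\delta}\cap \dot{\mathbb{H}}^{-\alpha}_x(\Sl_v)$.

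For the ``dispersive'' piece $\SS^\eps_2(t) = \Psi^\eps_\hyd[f^\eps_\disp, 2f_\ns + f^\eps_\disp](t)$, the idea is to apply the special dispersive bilinear bound \eqref{eq:bilinear_hyd_disp} from Proposition~\ref{prop:special_bilinear_hydrodynamic}, noting that $f^\eps_\disp = U^\eps_\disp(\cdot)(\PP_\disp f_\ini)$. By the symmetry of $\Psi^\eps_\hyd$ through $\QQ^\sym$, this yields
$$\Nt \SS^\eps_2 \Nt_\SSSs \lesssim \beta_\disp(\PP_\disp f_\ini, \eps)\bigl(\Nt f_\ns \Nt_\SSSs + \Nt f^\eps_\disp \Nt_\SSSs\bigr),$$
and Lemma \ref{lem:decay_semigroups_hydro} applied to $U^\eps_\disp$ controls $\Nt f^\eps_\disp \Nt_\SSSs$ by $\|\PP_\disp f_\ini\|_\SSs + \|\PP_\disp f_\ini\|_{\dot{\mathbb{H}}^{-\alpha}_x(\Ss_v)}$. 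The explicit rate when $d\ge 3$ is simply the corresponding quantitative bound stated within Proposition~\ref{prop:special_bilinear_hydrodynamic}.

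For the coupling piece $\SS^\eps_3[g_\kin, g_\mix] = \Psi^\eps_\hyd[g_\kin + g_\mix,\, g_\kin + g_\mix + f_\ns + f^\eps_\disp]$, I expand by bilinearity into five cross-terms and match each one to the sharpest estimate available in Proposition~\ref{prop:bilinear_hydrodynamic}: the purely kinetic term by \eqref{eq:bilinear_hyd_kin_kin}, the mixed--mixed and mixed--kinetic ones by \eqref{eq:bilinear_hyd_mix_mix}--\eqref{eq:bilinear_hyd_mix_kin}, and the crossings with $f_\ns\in \SSSs$ by \eqref{eq:bilinear_hyd_hyd_kin}--\eqref{eq:bilinear_hyd_hyd_mix}. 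The crossings involving $f^\eps_\disp$ are bounded by the same hyd--kin / hyd--mix estimates after absorbing $\Nt f^\eps_\disp\Nt_\SSSs$ as before. All these bounds carry a common factor $\eps\, w_{f_\ns,\eta}(T)^{-1}$, which yields the announced estimate.

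The stability estimate is obtained from the bilinearity and symmetry of $\Psi^\eps_\hyd$, writing
$$\SS^\eps_3[g_\kin, g_\mix] - \SS^\eps_3[g_\kin', g_\mix'] = \Psi^\eps_\hyd[\Delta,\, g_\kin + g_\mix + f_\ns + f^\eps_\disp] + \Psi^\eps_\hyd[g_\kin' + g_\mix',\, \Delta]$$
with $\Delta = (g_\kin - g_\kin') + (g_\mix - g_\mix')$, and applying the same bilinear estimates term by term. I expect the only real obstacle to be bookkeeping: one must carefully assign to each cross-term the correct bilinear estimate among those of Proposition~\ref{prop:bilinear_hydrodynamic}, so that every contribution acquires exactly the prefactor $\eps\, w_{f_\ns,\eta}(T)^{-1}$ (rather than a merely $\OO(1)$ prefactor, which would prevent the smallness required by the subsequent fixed point argument). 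This in particular dictates that whenever two arguments involve $g_\kin$, $g_\mix$ or $f_\ns$, the ``better-regularity'' slot must be filled by $f_\ns$ or $g_\mix$ so as to consume either the $\SSSs$-dissipation weight or the $\eps^{-1}$ gain of $\SSSm$.
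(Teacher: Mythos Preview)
Your proposal is correct and follows essentially the same route as the paper: split $\SS^\eps=\SS^\eps_1+\SS^\eps_2+\SS^\eps_3$, invoke Lemma~\ref{lem:SourceS1} for $\SS^\eps_1$, Proposition~\ref{prop:special_bilinear_hydrodynamic} together with Lemma~\ref{lem:decay_semigroups_hydro} for $\SS^\eps_2$, and the catalogue of Proposition~\ref{prop:bilinear_hydrodynamic} for $\SS^\eps_3$ and its stability. The only cosmetic difference is in the polarization identity for the stability estimate: the paper writes $\SS^\eps_3[g_\kin,g_\mix]-\SS^\eps_3[g_\kin',g_\mix']=\Psi^\eps_\hyd[\Delta,\Sigma]+\Psi^\eps_\hyd[\Delta,f_\ns+f^\eps_\disp]$ with $\Sigma=(g_\kin+g_\kin')+(g_\mix+g_\mix')$, whereas you use the equivalent asymmetric split $\Psi^\eps_\hyd[\Delta,g_\kin+g_\mix+f_\ns+f^\eps_\disp]+\Psi^\eps_\hyd[g_\kin'+g_\mix',\Delta]$; both yield the same bound.
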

		
		\begin{rem}
			Note that the terms $\beta_{g}$ and $\beta_\ns$ depend respectively on $g$, which stands for~$\PP_\disp f_\ini$, and $f_\ns$ which are to be considered as fixed data of the problem, thus the lack of uniform estimate for their convergence is not an issue for the iterative scheme.
		\end{rem}
		\begin{proof}
			Recalling the definition of $\SS^\eps_2$:
			$$
			\SS^\eps_2 = \Psi^\eps_\hyd\left[ g^\eps_\disp, 2 f_\ns + f^\eps_\disp \right]
			$$
			we easily have thanks to Eq. \eqref{eq:bilinear_hyd_disp} in Proposition \ref{prop:special_bilinear_hydrodynamic} and Lemma \ref{lem:decay_semigroups_hydro}
			$$\Nt \SS^\eps_2 \Nt_\SSSs \lesssim \beta_\disp( \PP_\disp f_\ini, \eps) \Nt 2f_\ns + f^\eps_\disp\Nt_\SSSs \lesssim \beta_\disp( \PP_\disp f_\ini, \eps) \left(\Nt f_\ns \Nt_\SSSs + \| g \|_{\SSs}\right).$$
			Furthermore, recalling the definition of $\SS^\eps_3$:
			$$
			\SS^\eps_3[g_\kin, g_\mix] = \Psi^\eps_\hyd\left[ g_\kin + g_\mix , g_\kin + g_\mix + f_\ns + f^\eps_\disp \right],
			$$
			we easily have thanks to the various estimates of Proposition \ref{prop:bilinear_hydrodynamic} and Lemma \ref{lem:decay_semigroups_hydro} together with the bilinearity of $\Psi^\eps_\hyd$
			\begin{align*}
				\Nt \SS^\eps_3[g_\kin, g_\mix] \Nt_{ \SSSs} 
				\lesssim \eps & w_{\phi, \eta}(T)^{-1} \left( \Nt g_\kin \Nt_\SSSl + \Nt g_\mix \Nt_\SSSm \right) \\
				& \times \left( \Nt g_\kin \Nt_\SSSl + \Nt g_\mix \Nt_\SSSm + \Nt f_\ns \Nt_\SSSs + \| \PP_\disp f_\ini \|_{ \SSs } + \| \PP_\disp f_\ini \|_{ \dot{\mathbb{H}}^{-\alpha}_x \left( \Ss_v \right) } \right).
			\end{align*}
			The stability estimate comes from the identity
			\begin{align*}
				\SS^\eps_3[g_\kin, g_\mix] - & \SS^\eps_3[g_\kin', g_\mix'] \\
				= & \Psi^\eps_\hyd\left[ g_\kin - g_\kin' + g_\mix - g_\mix' , g_\kin + g_\kin' + g_\mix + g_\mix' \right] \\
				& + \Psi^\eps_\hyd\left[ g_\kin - g_\kin' + g_\mix - g_\mix' , f_\ns + f_\disp^\eps \right]
			\end{align*}
			which we control using the same estimates. This concludes the proof.
		\end{proof}

		\subsection{The mapping is a contraction}
		\label{scn:mapping_contraction}
		In what follows, we will simplify some estimates by using the fact that
		$$c_2, c_3, \eta, \eps \lesssim 1, \qquad 1 \le w_{f_\ns, \eta}(T)^{-1}$$
		and that $\beta(\eps) =  {\beta_{\disp}}(\PP_\disp f_\ini, \eps) + \beta_{\ns}(f_\ns, f_\ini, \eps)$ (see Proposition \ref{prop:source_term}) can be assumed to vanish at a slower rate that $\eps$:
		$$\eps \lesssim \beta(\eps).$$ 
		
		To prove the existence and uniqueness of a fixed point for $\bm{\Xi}$, we need to check that $\bm{\Xi}$ is a a contraction on $\bm{B}$. We begin by showing that $\bm{B}$ is stable under the action of $\bm{\Xi}$  under suitable smallness assumption on $\eps,c_3,\eta,c_2$:
		\begin{lem} For a suitable choice of 
			$$\eps \ll c_3 \ll \eta \ll c_2 \ll 1$$
			the mapping $\bm{\Xi}$ is well-defined on $\bm{B}$ and
			$\bm{\Xi}(\bm{B}) \subset \bm{B}.$
		\end{lem}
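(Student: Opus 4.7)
The plan is to treat the three components $\Xi_1,\Xi_2,\Xi_3$ of $\bm{\Xi}$ separately, applying the bilinear estimates of Propositions \ref{prop:bilinear_hydrodynamic}--\ref{prop:linear_hydrodynamic} together with the source-term bound of Proposition \ref{prop:source_term}, and then choosing the parameters $c_2,c_3,\eta,\eps$ in the correct order so that each contribution fits into its prescribed ball. Throughout I would use that $\Nt U^\eps_\kin(\cdot) f_\ini\Nt_\SSSl$, $\Nt f_\ns\Nt_\SSSs$ and $\Nt f^\eps_\disp\Nt_\SSSs$ are all bounded by a constant $C_0$ depending only on $f_\ini$; in particular, any $\phi\in B_1$ satisfies $\Nt\phi\Nt_\SSSl\le C_0+1$.

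For $\Xi_1$, each of the three kinetic bilinear terms in its definition inherits a factor $\eps$ from \eqref{eq:bilinear_kin_kin_kin}--\eqref{eq:bilinear_kin_kin_mix}, the remaining norms being $\lesssim 1$ uniformly over $\bm B$. This yields $\Nt\Xi_1[\phi,\varphi,\psi]-U^\eps_\kin(\cdot)f_\ini\Nt_\SSSl\lesssim \eps(1+c_2+c_3)$, which is $\le 1$ as soon as $\eps$ is small enough, independently of $c_2,c_3,\eta$. For $\Xi_2$, I would expand $\Xi_2=\Psi^\eps_\kin[h,h]$ with $h=(f_\ns+f^\eps_\disp)+\varphi+\psi$ into ten bilinear pieces grouped according to which pair among $\{f_\ns,f^\eps_\disp,\varphi,\psi\}$ appears. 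The purely ``background'' pieces $\Psi^\eps_\kin[f_\ns,f_\ns]$, $\Psi^\eps_\kin[f_\ns,f^\eps_\disp]$ and $\Psi^\eps_\kin[f^\eps_\disp,f^\eps_\disp]$ are absorbed by the special estimates \eqref{eq:bilinear_mix_phi}--\eqref{eq:bilinear_mix_disp} into $\eta$ and $\beta_\disp(\PP_\disp f_\ini,\eps)$ respectively. The cross terms $\Psi^\eps_\kin[f_\ns+f^\eps_\disp,\psi]$ give $\lesssim w_{f_\ns,\eta}(T)^{-1}c_3$ via \eqref{eq:bilinear_mix_hyd_hyd}, \emph{without} any $\eps$--smallness, while every remaining piece involves at least one argument in $\SSSm$ and carries a factor $\eps$ through \eqref{eq:bilinear_mix_mix_mix}. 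Summing yields
\[
\Nt\Xi_2\Nt_\SSSm \;\lesssim\; \eta+\beta_\disp(\PP_\disp f_\ini,\eps)+ w_{f_\ns,\eta}(T)^{-1}c_3+\eps\bigl(c_2+c_2^2+c_2c_3+c_3^2\bigr),
\]
which is $\le c_2$ as soon as $\eta\ll c_2$, $c_3\ll c_2$, and $\eps$ is finally chosen small enough; this is the step forcing $c_3\ll\eta\ll c_2$.

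For $\Xi_3$, combining Proposition \ref{prop:linear_hydrodynamic} for $\Phi^\eps[\phi,\varphi]\psi$, the estimate \eqref{eq:bilinear_hyd_hyd_hyd} for $\Psi^\eps_\hyd[\psi,\psi]$ and Proposition \ref{prop:source_term} for $\SS^\eps$, I would obtain
\[
\Nt\Xi_3\Nt_\SSSs \;\lesssim\; \bigl(\eta+\beta(\eps)+\eps c_2+\eps\bigr)c_3 + w_{f_\ns,\eta}(T)^{-1}c_3^2 +\beta(\eps)+\eps,
\]
with $\beta(\eps):=\beta_\disp(\PP_\disp f_\ini,\eps)+\beta_\ns(f_\ns,f_\ini,\eps)\to 0$ as $\eps\to 0$. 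The main obstacle of the whole lemma lies precisely here: unlike for $\Xi_1$ and $\Xi_2$, the last piece $\beta(\eps)+\eps$ carries no factor of $c_3$ or any other smallness parameter, so the \emph{only} mechanism available to fit it inside $c_3$ is to take $\eps$ small enough that $\beta(\eps)+\eps\le c_3/2$. Accordingly, I would fix parameters in the order $c_2$ first, then $\eta\ll c_2$, then $c_3\ll\eta$ small enough to also absorb $w_{f_\ns,\eta}(T)^{-1}c_3^2$ and to make the bracket in front of $c_3$ at most $1/2$, and only then pick $\eps\ll c_3$ (with $\beta(\eps)\ll c_3$). This produces exactly the chain $\eps\ll c_3\ll\eta\ll c_2\ll 1$ announced in the statement and yields simultaneously the three inclusions $\Xi_j(\bm B)\subset B_j$; well-definedness of $\bm\Xi$ on $\bm B$ is then a byproduct, as every bilinear and source contribution has been shown to lie in the relevant space with finite norm.
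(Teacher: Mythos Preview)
Your proposal is correct and follows essentially the same route as the paper: estimate each component of $\bm{\Xi}$ separately via the bilinear bounds of Propositions~\ref{prop:bilinear_hydrodynamic}--\ref{prop:special_bilinear_mix} and the source-term estimate of Proposition~\ref{prop:source_term}, then fix the parameters in the order $c_2,\eta,c_3,\eps$.

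One minor bookkeeping slip: in your $\Xi_2$ estimate you assert that ``every remaining piece involves at least one argument in $\SSSm$'', but the piece $\Psi^\eps_\kin[\psi,\psi]$ has both arguments in $\SSSs$, so it is controlled by \eqref{eq:bilinear_mix_hyd_hyd} (yielding $w_{f_\ns,\eta}(T)^{-1} c_3^2$ with no factor $\eps$) rather than by \eqref{eq:bilinear_mix_mix_mix}. This is harmless, since $w^{-1}c_3^2\le w^{-1}c_3$ is already present in your bound. The paper differs only in that it bounds the cross terms $\Psi^\eps_\kin[f_\ns,\psi]$ and $\Psi^\eps_\kin[f^\eps_\disp,\psi]$ via the sharper special estimates \eqref{eq:bilinear_mix_phi}--\eqref{eq:bilinear_mix_disp}, obtaining $\eta c_3$ and $\beta_\disp(\eps)c_3$ instead of your $w^{-1}c_3$; either choice closes the argument.
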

		\begin{proof} Let us check that the first component $\Xi_1$ is well defined and take values in $B_1$. We assume $(\phi,\varphi,\psi) \in \bm{B}$
			\begin{align*}
				\Nt \Xi_1\left[\phi, \varphi, \psi\right] - U^\eps_\kin(\cdot) f_\ini \Nt_{\SSSl} \le &  \Nt \Psi^\eps_\kin\left[\phi, \phi\right] \Nt_{\SSSl} + 2 \Nt \Psi^\eps_\kin\left[\phi, f_\ns + f^\eps_\disp \right] \Nt_{\SSSl} \\
				& + 2 \Nt \Psi^\eps_\kin\left[\phi, \psi\right] \Nt_{ \SSSl } + 2 \Nt \Psi^\eps_\kin\left[\phi, \varphi\right] \Nt_{ \SSSl }.
			\end{align*}
			Using \eqref{eq:bilinear_kin}, we have the estimates
			$$\Nt \Psi^\eps_\kin\left[ \phi, \phi\right] \Nt_{\SSSl} \lesssim \eps \Nt \phi\Nt_{\SSSl}^2 \lesssim \eps,$$
			\begin{align*} 
				2 \Nt \Psi^\eps_\kin \left[ \phi, \varphi\right] \Nt_{ \SSSl } & \lesssim \eps  w_{f_\ns, \eta }(T)^{-1}\Nt \phi\Nt_{\SSSl}\,\Nt \psi\Nt_{\SSSm}  \lesssim \eps w_{f_\ns, \eta }(T)^{-1},
			\end{align*}
			as well as 
			\begin{equation*} 	\begin{split}	
					2 \Nt \Psi^\eps_\kin\left[\phi, f_\ns + f^\eps_\disp \right] \Nt_{\SSSl} &+ 2 \Nt \Psi^\eps_\kin\left[\phi, \psi\right] \Nt_{ \SSSl }\\ 
					&\lesssim \eps w_{f_\ns,\eta}(T)^{-1}\Nt \phi\Nt_{\SSSl}\left(\Nt f_\ns +f^\eps_\disp \Nt_{\SSSs} + \Nt \psi \Nt_{\SSSs} \right)
					\\ 
					&\lesssim \eps w_{f_\ns, \eta}(T)^{-1}\,.
			\end{split}\end{equation*}
			Consequently
			$$\Nt \Xi_1\left[\phi, \varphi, \psi\right] - U^\eps_\kin(\cdot) f_\ini \Nt_{\SSSl} \lesssim \eps w_{f_\ns, \eta}(T)^{-1},$$
			thus, considering $\eps \ll \eta$, we conclude that $\Xi_1[\phi,\varphi,\psi] \in B_1$.
			
			\medskip
			The second component $\Xi_2$ is also well-defined. We have
			\begin{equation*}\begin{split}
					\Nt \Xi_2\big[\phi, &\varphi, \psi\big] \Nt_{ \SSSm }  \\
					&\le  \Nt \Psi^\eps_\kin\left[ \varphi , \varphi \right] \Nt_{ \SSSm } + \Nt \Psi^\eps_\kin\left[ f_\ns , f_\ns \right] \Nt_{ \SSSm } + \Nt \Psi^\eps_\kin\left[f^\eps_\disp , f^\eps_\disp\right] \Nt_{ \SSSm } + \Nt \Psi^\eps_\kin\left[ \psi , \psi \right] \Nt_{ \SSSm } \\
					& + 2 \Nt \Psi^\eps_\kin\left[ \varphi , f_\ns \right]  \Nt_{ \SSSm } + 2 \Nt \Psi^\eps_\kin\left[ \varphi , f_\disp^\eps \right] \Nt_{ \SSSm } + 2 \Nt \Psi^\eps_\kin\left[ \varphi , \psi \right] \Nt_{ \SSSm } \\
					& + 2 \Nt \Psi^\eps_\kin\left[ f_\ns , f^\eps_\disp \right] \Nt_{ \SSSm } + 2 \Nt \Psi^\eps_\kin\left[ f_\ns , \psi \right] \Nt_{ \SSSm }  + 2 \Nt \Psi^\eps_\kin\left[ f^\eps_\disp , \psi \right] \Nt_{ \SSSm }\,.
			\end{split}\end{equation*}
			Using \eqref{eq:bilinear_mix_mix_mix}, \eqref{eq:bilinear_mix_phi}, \eqref{eq:bilinear_mix_disp} and \eqref{eq:bilinear_mix_hyd_hyd} respectively, we have
			\begin{equation*}\begin{split}
					\Nt \Psi^\eps_\kin\left[ \varphi , \varphi \right] \Nt_{ \SSSm }& + \Nt \Psi^\eps_\kin\left[ f_\ns , f_\ns \right] \Nt_{ \SSSm } + \Nt \Psi^\eps_\kin\left[f^\eps_\disp , f^\eps_\disp\right] \Nt_{ \SSSm } + \Nt \Psi^\eps_\kin\left[ \psi , \psi \right] \Nt_{ \SSSm }  \\
					&\lesssim \eps w_{f_\ns, \eta}(T)^{-1}\Nt \varphi\Nt_{\SSSm}^2 + \eta \Nt f_\ns \Nt_{\SSSs} +\beta(\eps)\Nt f_\disp^\eps\Nt_{\SSSs}+w_{f_\ns, \eta}(T)^{-1}\Nt \psi\Nt_{\SSSs}^2\\
					& \lesssim \eps c_2^2 w_{f_\ns, \eta}(T)^{-1} + \eta + \beta(\eps) + c_3^2 w_{f_\ns, \eta}(T)^{-1} \\ 
					&\lesssim \left(\beta(\eps) + c_3\right) w_{f_\ns, \eta}(T)^{-1} + \eta\,.
			\end{split}\end{equation*}
			Furthermore, using \eqref{eq:bilinear_mix_mix_mix}, we have
			\begin{equation*}\begin{split}
					2 \Nt \Psi^\eps_\kin\left[ \varphi , f_\ns\right] \Nt_{ \SSSm }  &+ 2 \Nt \Psi^\eps_\kin\left[\varphi , f_\disp^\eps\right] \Nt_{ \SSSm }  + 2 \Nt \Psi^\eps_\kin\left[ \varphi , \psi\right]\Nt_{ \SSSm }\\ 
					&\lesssim \eps w_{f_\ns, \eta}(T)^{-1}  \Nt \varphi \Nt_{\SSSm}\bigg(\Nt f_\ns\Nt_{\SSSs} 
					+ \Nt f_\disp^\eps \Nt_{\SSSs} + \Nt \psi \Nt_{\SSSs}\bigg)\\
					&\lesssim \eps c_2 ( 1 + c_3) w_{f_\ns, \eta}(T)^{-1}  \lesssim \eps w_{f_\ns, \eta}(T)^{-1},
			\end{split}\end{equation*}
			whereas,  \eqref{eq:bilinear_mix_phi} and \eqref{eq:bilinear_mix_disp} give respectively
			\begin{align*}
				2 \Nt \Psi^\eps_\kin\left[ f_\ns , f^\eps_\disp \right] \Nt_{ \SSSm }  + 2 \Nt \Psi^\eps_\kin\left[ f_\ns , \psi \right] \Nt_{ \SSSm }  & \lesssim \eta\left(\Nt f^\eps_\disp\Nt_{\SSSs} + \Nt \psi \Nt_{\SSSs}\right) \\
				&\lesssim\eta (1+c_3) \lesssim \eta,
			\end{align*}
			and
			\begin{align*}
				2 \Nt \Psi^\eps_\kin\left[ f^\eps_\disp , \psi \right] \Nt_{ \SSSm }   \lesssim \beta(\eps)\Nt \psi\Nt_{\SSSs}  
				\lesssim \beta(\eps) c_3 \lesssim \beta(\eps).
			\end{align*}
			Gathering these estimates yield
			$$\Nt \Xi_2\left[\phi, \varphi, \psi\right] \Nt_{\SSSl} \lesssim \left(\beta(\eps) + c_3\right) w_{f_\ns, \eta}(T)^{-1} + \eta.$$
			We deduce for $\max\{ \eps , c_3 \} \ll \eta \ll c_2$ that $\Xi_2$ takes value in $B_2$.
			
			\medskip
			Finally, the third component $\Xi_3$ is well defined. We have
			$$\Nt \Xi_3\left[\phi, \varphi, \psi\right]\Nt_{\SSSs} \le \Nt \Phi^\eps[ \phi , \varphi ] \psi \Nt_{\SSSs} + \Nt \Psi^\eps_\hyd [\psi , \psi] \Nt_{\SSSs} + \Nt \SS^\eps[\phi, \varphi] \Nt_{\SSSs}.$$
			By Proposition \ref{prop:linear_hydrodynamic}
			\begin{align*}
				\Nt \Phi^\eps[ \phi , \varphi ] \psi \Nt_{\SSSs} & \lesssim \left(\eta + \beta(\eps) + \eps c_2 w_{f_\ns, \eta}(T)^{-1} + \eps \right) c_3  \lesssim \beta(\eps)  w_{f_\ns, \eta}(T)^{-1} + \eta c_3,
			\end{align*}
			while, from \eqref{eq:bilinear_hyd_hyd} and  Proposition \ref{prop:source_term}
			$$
			\Nt \Psi^\eps_\hyd [\psi, \psi] \Nt_{\SSSs} \lesssim c_3^2 w_{f_\ns, \eta}(T)^{-1}, \qquad \Nt \SS^\eps[\phi, \varphi] \Nt_{ \SSSs } \lesssim \beta(\eps).$$
			All these estimates gathered together give
			\begin{align*}
				\Nt \Xi_3[\phi, \varphi, \psi] \Nt_{\SSSs} \lesssim \beta(\eps) w_{ f_\ns, \eta }(T)^{-1} + \left( \eta + c_3 w_{ f_\ns, \eta }(T)^{-1} \right) c_3
			\end{align*}
			thus $\Xi_3$ takes value in $B_3$ by taking $\eps \ll c_3 \ll \eta \ll 1$.
			This completes the proof.\end{proof}

		We show now that, up to reducing further the parameters $\eps,c_2,c_3,\eta$, the mapping $\bm{\Xi}$ is a contraction on $\bm{B}$.
		\begin{prop} Under the smallness assumption
			$$\max\{ \eps , c_3 \} \ll \eta \ll 1\,,$$
			the mapping $\bm{\Xi}\::\:\bm{B} \to \bm{B} \subset \SSSl\times \SSSm \times \SSSs$ is a contraction. 
		\end{prop}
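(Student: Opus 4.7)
The plan is to estimate $\bm\Xi[\phi,\varphi,\psi]-\bm\Xi[\phi',\varphi',\psi']$ componentwise in the natural norm of $\bm B\subset\SSSl\times\SSSm\times\SSSs$ by systematically exploiting the bilinearity of $\Psi^\eps_\kin$ and $\Psi^\eps_\hyd$ together with the estimates already collected in Propositions \ref{prop:bilinear_hydrodynamic}, \ref{prop:special_bilinear_hydrodynamic}, \ref{prop:bilinear_kinetic}, \ref{prop:special_bilinear_mix}, \ref{prop:linear_hydrodynamic} and \ref{prop:source_term}. The linear part $U^\eps_\kin(\cdot)f_\ini$ cancels in $\Xi_1-\Xi_1'$; after regrouping, the remaining terms are all bilinear expressions in which at least one argument is a difference such as $\phi-\phi'$, $\varphi-\varphi'$ or $\psi-\psi'$. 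The same is true for $\Xi_2-\Xi_2'$ and, thanks to the decomposition $\SS^\eps=\SS^\eps_1+\SS^\eps_2+\SS^\eps_3[\phi,\varphi]$ in which only $\SS^\eps_3$ depends on $(\phi,\varphi)$ (and is itself a bilinear functional), also for $\Xi_3-\Xi_3'$ once the linear operator $\Phi^\eps[\phi,\varphi]$ is expanded by bilinearity.

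Concretely, for $\Xi_1$ I would write
\begin{align*}
\Xi_1[\phi,\varphi,\psi]-\Xi_1[\phi',\varphi',\psi']=&\ \Psi^\eps_\kin[\phi-\phi',\phi+\phi']+2\Psi^\eps_\kin[\phi-\phi',f_\ns+f^\eps_\disp]\\
&+2\Psi^\eps_\kin[\phi-\phi',\psi+\varphi]+2\Psi^\eps_\kin[\phi',(\psi-\psi')+(\varphi-\varphi')],
\end{align*}
and apply \eqref{eq:bilinear_kin_kin_kin}, \eqref{eq:bilinear_kin_kin_mix} and \eqref{eq:bilinear_mix_phi}--\eqref{eq:bilinear_mix_disp} exactly as in the invariance proof. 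Each term carries either a factor $\eps$, a factor $\eta$, a factor $ {\beta_{\disp}}(\PP_\disp f_\ini,\eps)$ (from the interaction with $f_\ns+f^\eps_\disp$), or a factor of the size of one of the balls $c_2,c_3$; consequently the contribution to $\Nt\Xi_1-\Xi_1'\Nt_\SSSl$ is bounded by a constant times
$$\big(\eps+\eta+\beta(\eps)\big)\big(\Nt\phi-\phi'\Nt_\SSSl+\Nt\varphi-\varphi'\Nt_\SSSm+\Nt\psi-\psi'\Nt_\SSSs\big).$$
The component $\Xi_2$ is treated in the same fashion: writing $u=(f_\ns+f^\eps_\disp)+\varphi+\psi$ and $u'=(f_\ns+f^\eps_\disp)+\varphi'+\psi'$ we have $\Xi_2-\Xi_2'=\Psi^\eps_\kin[u-u',u+u']$, and the estimates \eqref{eq:bilinear_mix_mix_mix}, \eqref{eq:bilinear_mix_hyd_hyd}, \eqref{eq:bilinear_mix_phi} and \eqref{eq:bilinear_mix_disp} again yield a Lipschitz constant bounded by $C(\eps+\eta+\beta(\eps))$.

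The component I expect to be the technical heart is $\Xi_3$. Expanding
$$\Phi^\eps[\phi,\varphi]\psi-\Phi^\eps[\phi',\varphi']\psi'=\Phi^\eps[\phi,\varphi](\psi-\psi')+2\Psi^\eps_\hyd[\psi',(\phi-\phi')+(\varphi-\varphi')],$$
the first piece is controlled by the stability estimate of Proposition \ref{prop:linear_hydrodynamic}, which gives a factor $\eta+ {\beta_{\disp}}(\PP_\disp f_\ini,\eps)+\eps w_{f_\ns,\eta}(T)^{-1}c_2+\eps$ in front of $\Nt\psi-\psi'\Nt_\SSSs$, while the second piece is bounded using \eqref{eq:bilinear_hyd_hyd_mix}--\eqref{eq:bilinear_hyd_hyd_kin} and carries a factor $\eps w_{f_\ns,\eta}(T)^{-1}\Nt\psi'\Nt_\SSSs\lesssim \eps c_3 w_{f_\ns,\eta}(T)^{-1}$. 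The remaining nonlinear term is handled by $\Psi^\eps_\hyd[\psi,\psi]-\Psi^\eps_\hyd[\psi',\psi']=\Psi^\eps_\hyd[\psi-\psi',\psi+\psi']$, yielding a factor $c_3 w_{f_\ns,\eta}(T)^{-1}$ via \eqref{eq:bilinear_hyd_hyd_hyd}, and finally $\SS^\eps_3[\phi,\varphi]-\SS^\eps_3[\phi',\varphi']$ is controlled by the stability estimate of Proposition \ref{prop:source_term}, which provides a factor $\eps w_{f_\ns,\eta}(T)^{-1}$ (times the radii) in front of $\Nt\phi-\phi'\Nt_\SSSl+\Nt\varphi-\varphi'\Nt_\SSSm$. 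The obstacle here is purely bookkeeping: one must check that every factor multiplying a difference can be made arbitrarily small. Under the chain $\eps\ll c_3\ll\eta\ll c_2\ll 1$ (already enforced in the invariance step), all the coefficients appearing above are bounded by a small multiple of $\eta+\beta(\eps)+c_3 w_{f_\ns,\eta}(T)^{-1}$, which is in turn smaller than any fixed constant strictly less than $1$ provided $\eps$ is taken sufficiently small. Summing the three component estimates therefore yields a global Lipschitz constant strictly less than $1$, so $\bm\Xi$ is a contraction on $\bm B$ and Banach's fixed point theorem furnishes the unique solution $(f^\eps_\kin,f^\eps_\mix,g^\eps)$.
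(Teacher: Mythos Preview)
Your proposal is correct and follows essentially the same approach as the paper: componentwise Lipschitz estimates obtained by writing each difference as a sum of bilinear terms in which one argument is $\phi-\phi'$, $\varphi-\varphi'$ or $\psi-\psi'$, then invoking Propositions \ref{prop:bilinear_hydrodynamic}--\ref{prop:special_bilinear_mix}, \ref{prop:linear_hydrodynamic} and \ref{prop:source_term}. One small imprecision: for $\Xi_1$ the relevant estimates are those of \eqref{eq:bilinear_kin} in $\SSSl$ (not \eqref{eq:bilinear_mix_phi}--\eqref{eq:bilinear_mix_disp}, which live in $\SSSm$), and every term there actually carries a factor $\eps w_{f_\ns,\eta}(T)^{-1}$ rather than the looser $\eps+\eta+\beta(\eps)$ you record---but this does not affect the argument.
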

		\begin{proof} Let us fix $(\phi,\varphi,\psi) \in \bm{B},$ $(\phi',\varphi',\psi') \in \bm{B}$. We prove that each component of $\bm{\Xi}$ is contractive.  We have
			\begin{equation*}\begin{split}
					\Nt \Xi_1[\phi, \varphi, \psi]  & - \Xi_1[\phi', \varphi', \psi' ] \Nt_{ \SSSl } \\
					\le & \Nt \Psi^\eps_\kin[\phi - \phi', \phi + \phi'] \Nt_{\SSSl}  + 2 \Nt \Psi^\eps_\kin[\phi - \phi', f_\ns + f^\eps_\disp] \Nt_{ \SSSl } \\
					& + 2 \Nt \Psi^\eps_\kin[\phi - \phi', \psi] \Nt_{ \SSSl } + 2 \Nt \Psi^\eps_\kin[\phi', \psi - \psi'] \Nt_{ \SSSl } \\
					& + 2 \Nt \Psi^\eps_\kin[\phi - \phi', \varphi] \Nt_{ \SSSl } + 2 \Nt \Psi^\eps_\kin[\phi', \varphi - \varphi']\Nt_{ \SSSl }
			\end{split}\end{equation*}
			As in the previous proof, using \eqref{eq:bilinear_kin} we have
			\begin{equation*}\begin{split}
					\Nt \Psi^\eps_\kin[\phi - \phi', \phi + \phi'] \Nt_{\SSSl}  & + 2 \Nt \Psi^\eps_\kin[\phi - \phi', f_\ns + f^\eps_\disp] \Nt_{ \SSSl } \\
					& \lesssim \eps \left(1 + w_{f_\ns, \eta}(T)^{-1} \right) \Nt \phi - \phi' \Nt_{ \SSSl } \\
					& \lesssim \eps w_{f_\ns, \eta}(T)^{-1} \Nt \phi - \phi' \Nt_{ \SSSl },
			\end{split}\end{equation*}
			and
			\begin{equation*}\begin{split}
					2 \Nt \Psi^\eps_\kin[\phi - \phi', \psi] \Nt_{ \SSSl } &+ 2 \Nt \Psi^\eps_\kin[\phi', \psi - \psi'] \Nt_{ \SSSl } \\
					&\lesssim  \eps c_3 w_{f_\ns, \eta}(T)^{-1} \Nt \phi - \phi' \Nt_{ \SSSl }
					+ \eps w_{f_\ns, \eta}(T)^{-1} \Nt \psi - \psi' \Nt_{ \SSSs } \\
					&\lesssim  \eps w_{ f_\ns, \eta }(T)^{-1} \Nt (\phi, \psi) - (\phi', \psi') \Nt_{ \SSSl \times \SSSs },\end{split}\end{equation*}
			as well as
			\begin{align*}
				2 \Nt \Psi^\eps_\kin[\phi - \phi', \varphi] \Nt_{ \SSSl } & + 2 \Nt \Psi^\eps_\kin[\phi', \varphi - \varphi'] \Nt_{ \SSSl } \\
				\lesssim & \eps c_2 w_{ f_\ns, \eta }(T)^{-1} \Nt \phi - \phi' \Nt_{ \SSSl } + \eps  w_{ f_\ns, \eta }(T)^{-1} \Nt \varphi - \varphi' \Nt_{ \SSSm } \\
				\lesssim & \eps w_{ f_\ns, \eta }(T)^{-1} \Nt (\phi, \varphi) - (\phi', \varphi') \Nt_{ \SSSl \times \SSSm }.
			\end{align*}
			This shows that
			$$\Nt  \Xi_1[\phi, \varphi, \psi]    - \Xi_1[\phi', \varphi', \psi' ] \Nt_{ \SSSl } \lesssim \eps w_{ f_\ns, \eta }(T)^{-1} \Nt (\phi, \varphi, \psi) - (\phi', \varphi', \psi') \Nt_{ \SSSl \times \SSSm \times \SSSs }.$$
			Thus, taking $\eps \ll \eta$, the first component $\Xi_1$ is indeed a contraction. We argue in the same way for the second component. It holds
			\begin{equation*}\begin{split}
					\Nt \Xi_2[\phi, \varphi, \psi] - \Xi_2[\phi', \varphi', \psi' ] \Nt_{ \SSSm }
					&\le  \Nt \Psi^\eps_\kin[\varphi - \varphi' , \varphi + \varphi ] \Nt_{ \SSSm }  \\
					& + 2 \Nt \Psi^\eps_\kin [\varphi - \varphi' , f_\ns + f^\eps_\disp]  \Nt_{ \SSSm } \\
					& + 2 \Nt \Psi^\eps_\kin[ \varphi - \varphi', \psi ] \Nt_{ \SSSm } + \Nt \Psi^\eps_\kin [ \varphi' , \psi - \psi' ] \Nt_{ \SSSm } \\
					& + 2 \Nt \Psi^\eps_\kin [f_\ns , \psi - \psi' ] \Nt_{ \SSSm } + 2 \Nt \Psi^\eps_\kin[  f^\eps_\disp , \psi - \psi'  ] \Nt_{ \SSSm }.
			\end{split}\end{equation*}
			As in the previous proof, resorting to  \eqref{eq:bilinear_mix_mix_mix}, one deduces that
			$$
			\Nt \Psi^\eps_\kin [  \varphi - \varphi' , \varphi + \varphi ] \Nt_{ \SSSm }  \lesssim \eps c_2 w_{f_\ns, \eta}(T)^{-1} \Nt \varphi - \varphi' \Nt_{\SSSm} \\
			\lesssim \eps w_{f_\ns, \eta}(T)^{-1} \Nt \varphi - \varphi' \Nt_{\SSSm}$$
			and
			$$2 \Nt \Psi^\eps_\kin [\varphi - \varphi' , f_\ns + f^\eps_\disp] \Nt_{ \SSSm } \lesssim \eps w_{f_\ns, \eta}(T)^{-1} \Nt \varphi - \varphi' \Nt_{ \SSSm },$$
			whereas \begin{equation*}\begin{split}
					2 \Nt \Psi^\eps_\kin [  \varphi - & \varphi', \psi ] \Nt_{ \SSSm } + \Nt \Psi^\eps_\kin [ \varphi' , \psi - \psi' ] \Nt_{ \SSSm } \\
					& \lesssim \eps c_3 w_{f_\ns, \eta}(T)^{-1} \| \varphi - \varphi' \|_{ \SSSm } + \eps c_2 w_{f_\ns, \eta}(T)^{-1} \| \psi - \psi' \|_{ \SSSs } \\
					& \lesssim \eps w_{f_\ns, \eta}(T)^{-1} \Nt (\varphi, \psi) - (\varphi', \psi' ) \Nt_{ \SSSm \times \SSSs }\,.
			\end{split}\end{equation*}
			Finally, using \eqref{eq:bilinear_mix_phi} and \eqref{eq:bilinear_mix_disp}, one has as previously
			$$2 \Nt \Psi^\eps_\kin [ f_\ns , \psi - \psi' ] \Nt_{ \SSSm }  + 2 \Nt \Psi^\eps_\kin[ f^\eps_\disp , \psi - \psi' ] \Nt_{ \SSSm }  \lesssim \left(\eta + \beta(\eps)\right) \Nt \psi - \psi' \Nt_{ \SSSs }\,$$
			which, together with the previous estimates, yields
			\begin{align*}
				\Nt \Xi_2[\phi, & \varphi, \psi]  - \Xi_2[\phi', \varphi', \psi' ] \Nt_{ \SSSm } \\
				& \lesssim \left[\eps w_{ f_\ns, \eta }(T)^{-1} +\eta + \beta(\eps)\right]\Nt (\phi, \varphi, \psi) - (\phi', \varphi', \psi') \Nt_{ \SSSl \times \SSSm \times \SSSs },
			\end{align*}
			thus, taking $\eps \ll \eta$, the second component $\Xi_2$ is also a contraction. As far as the third component is concerned, one has
			\begin{equation*}\begin{split}
					\Nt \Xi_3[\phi, \varphi, \psi] & - \Xi_3[\phi', \varphi', \psi'] \Nt_{ \SSSs } \\
					& \le \Nt \Phi^\eps[\phi, \varphi] \psi - \Phi^\eps[\phi', \varphi'] \psi' \Nt_{ \SSSs } + \Nt \Psi^\eps_\hyd[\psi - \psi', \psi + \psi'] \Nt_{ \SSSs }\\
					&\phantom{++++} +  \Nt \mathcal{S}^\eps[\phi-\phi',\varphi]\Nt_{\SSSs} + \Nt \mathcal{S}^\eps[\phi',\varphi-\varphi']\Nt_{\SSSs}.
			\end{split}\end{equation*}
			Now, using Proposition \ref{prop:linear_hydrodynamic},
			\begin{equation*}\begin{split}
					\Nt \Phi^\eps[\phi, \varphi] \psi & - \Phi^\eps[\phi', \varphi'] \psi' \Nt_{ \SSSs } \\
					\lesssim & \left( \eta + \beta(\eps) + \eps c_2 w_{ f_\ns, \eta }(T)^{-1} + \eps\right) \Nt \psi - \psi' \Nt_{ \SSSs } \\
					& + c_3 \eps w_{ f_\ns, \eta }(T)^{-1} \Nt \varphi - \varphi' \Nt_{ \SSSm }
					+ c_3 \eps \Nt \phi - \phi' \Nt_{ \SSSl } \\
					\lesssim & \left( \eta + \beta(\eps) + \eps c_2 w_{ f_\ns, \eta }(T)^{-1} + \eps \right) \Nt (\phi , \varphi, \psi) - (\phi' , \varphi', \psi') \Nt_{ \SSSl \times \SSSm \times \SSSs } \\
					\lesssim & \left( \eta + \beta(\eps) w_{ f_\ns, \eta }(T)^{-1} \right) \Nt (\phi , \varphi, \psi) - (\phi' , \varphi', \psi') \Nt_{ \SSSl \times \SSSm \times \SSSs },
			\end{split}\end{equation*}
			while \eqref{eq:bilinear_hyd_hyd_hyd} yields
			$$\Nt \Psi^\eps_\hyd[\psi - \psi' , \psi + \psi' ] \Nt_{ \SSSs } \lesssim w_{f_\ns, \eta}(T)^{-1} c_3 \Nt \psi - \psi' \Nt_{ \SSSs }.$$
			Finally, Proposition \ref{prop:source_term} easily gives
			\begin{align*}
				\Nt \mathcal{S}^\eps[\phi,\varphi] -\mathcal{S}^\eps[\phi',\varphi']\Nt_{\SSSs}
				& \leq \Nt \mathcal{S}_3^\eps[\phi-\phi',\varphi]\Nt_{\SSSs} + \Nt \mathcal{S}_3^\eps[\phi',\varphi-\varphi']\Nt_{\SSSs} \\
				& \lesssim  \eps\,\bigg(\Nt \phi-\phi'\Nt_{\SSSl} + \Nt \varphi-\varphi'\Nt_{\SSSm}\bigg)\,. 
			\end{align*}
			All these estimates yield
			\begin{align*}
				\Nt  \Xi_3[\phi, \varphi, \psi] & - \Xi_3[\phi', \varphi', \psi'] \Nt_{ \SSSs } \\
				& \lesssim \Big[{\eps +} \eta + \left( \beta(\eps) + c_3 \right) w_{ f_\ns, \eta }(T)^{-1} \Big] \Nt (\phi , \varphi, \psi) - (\phi' , \varphi', \psi') \Nt_{ \SSSl \times \SSSm \times \SSSs },
			\end{align*}
			thus, taking $\max\{ \eps, c_3 \} \ll \eta \ll 1$, the third component $\Xi_3$ is indeed a contraction.
		\end{proof}	
		\bigskip
		
		\subsection{Proof of Theorem \ref{thm:hydrodynamic_limit}: existence and convergence}
		
		We have established that $\bm{\Xi}$ is a well-defined contraction on $\bm{B}=B_1 \times B_2 \times B_3$ under the smallness assumption
		$$\eps \ll c_3 \ll \eta \ll c_2 \ll 1,$$
		thus it admits a unique fixed point denoted $(f^\eps_\kin, f^\eps_\mix, g^\eps)$. The part $g^\eps$ satisfies (for some sufficiently small $c > 0$)
		\begin{equation*}
			\Nt g^\eps \Nt_{\SSSs} \lesssim c \Nt g^\eps \Nt_{ \SSSs } + \Nt \SS^\eps \Nt_{ \SSSs },
		\end{equation*}
		and therefore, considering $c$ small enough, we have:
		\begin{equation*}
			\Nt g^\eps \Nt_{\SSSs} \lesssim \Nt \SS^\eps \Nt_{\SSSs} \lesssim \beta(\eps).
		\end{equation*}
		The part $f^\eps_\mix$ satisfies the equation
		\begin{equation*}\begin{split}
			f^\eps_\mix - \Psi^\eps_\kin[ f_\ns ,  f_\ns]
			&=   \Psi^\eps_\kin[f^\eps_\mix , f^\eps_\mix]  +  \Psi^\eps_\kin[ g^\eps , g^\eps ]  +  \Psi^\eps_\kin[ f^\eps_\disp , f^\eps_\disp ]  \\
			& + 2  \Psi^\eps_\kin[ f^\eps_\mix , f_\ns]  + 2  \Psi^\eps_\kin[ f^\eps_\mix , f_\disp^\eps ]  + 2  \Psi^\eps_\kin[ f^\eps_\mix , g^\eps ]  \\
			& + 2  \Psi^\eps_\kin[ f_\ns , f^\eps_\disp ]  + 2  \Psi^\eps_\kin[ f_\ns , g^\eps ]  + 2  \Psi^\eps_\kin[ f^\eps_\disp , g^\eps ],
		\end{split}\end{equation*}
		therefore, from the computations of Section \ref{scn:mapping_contraction}, we have
		$$\Nt f^\eps_\mix - \Psi^\eps_\kin[f_\ns , f_\ns ]  \Nt_{ \SSSm } \lesssim \Nt g^\eps \Nt_{ \SSSs }   + \beta(\eps) \lesssim \beta(\eps).$$
		Furthermore, by a duality argument similar to the one from the proof of \eqref{eq:decay_convolution_semigroup_exp}
		\begin{align*}
			\int_{t_1}^{t_2} \big\la   \Psi^\eps_\kin[f_\ns, f_\ns](t) , \phi_0 \big\ra_{\SSs} \d t 
			& = \frac{1}{\eps} \int_{t_1}^{t_2} \int_0^t \left\la \QQ\left( f_\ns(\tau) , f_\ns(\tau) \right) , U^\eps_\kin(t-\tau)^\star \phi_0 \right\ra_{\SSs} \d \tau \, \d t \\
			& \lesssim \frac{1}{\eps} \| f_\ns \|_{ L^\infty\left( [0, T) ; \SSsp \right) } (t_2 - t_1) \int_0^\infty \left\| U^\eps_\kin(t)^\star \phi_0 \right\|_{ \SSsp } \d t \\
			& \lesssim (t_2 - t_1) \left(\int_0^\infty e^{2 \sigma t / \eps^2} \left\| U^\eps_\kin(t)^\star \phi_0 \right\|_{ \SSsp }^2 \d t\right)^{\frac{1}{2}} \lesssim \eps (t_2 - t_1) \| \phi_0 \|_{\SSs},
		\end{align*}
		where we used that $\| f_\ns \|_{ L^\infty\left( [0, T) ; \SSsp \right) } \lesssim 1$. Thus, we deduce
		$$\left\| \Psi^\eps_\kin[f_\ns, f_\ns] \right\|_{ L^\infty\left( [0, T) ; \SSs \right) } \lesssim \eps,$$
		from which we conclude that
		$\left\| f^\eps_\mix + g^\eps \right\|_{ L^\infty\left( [0, T) ; \SSs \right) } \lesssim \beta(\eps).$
		We conclude to Theorem \ref{thm:hydrodynamic_limit} by letting
		\begin{gather*}
			f^\eps_\err := g^\eps + f^\eps_\mix.
		\end{gather*}
		This concludes the proof.
		
		\subsection{Proof of Theorem \ref{thm:hydrodynamic_limit}: uniqueness}
		\label{scn:uniqueness_simmetrizable}
		
		Consider another solution associated with the same initial data $f_\ini$:
		$$\overline{f}^\eps \in L^\infty\left( [0, T) ; \SSl \right) \cap L^2_{\text{loc}}\left( [0, T) ; \SSlp \right)$$
		satisfying for some universal small $c > 0$ the bound (note that the same bound holds for $f^\eps$ since $\| f^\eps \|_{L^\infty_t (\SSl)} \lesssim 1$)
		$$\| \overline{f}^\eps \|_{ L^\infty \left( [0, T) ; \SSl \right) } \le \frac{c}{\eps}.$$
		Define the difference of solutions
		$$h^\eps = f^\eps - \overline{f}^\eps$$
		and observe it satisfies the  equation
		$$\partial_t h^\eps = \frac{1}{\eps^2} \left( \LL - \eps v \cdot \nabla_x \right) h^\eps + \frac{1}{\eps} \QQ \left( h^\eps, f^\eps + \overline{f}^\eps \right), \quad h^\eps(0) = 0.$$
		We write an energy estimate for $h^\eps$ (see \textit{Step 2} of the proof of Lemma \ref{lem:decay_regularization_convolution_kinetic_semigroup} for the dissipative part):
		$$\frac{1}{2} \frac{\d}{\d t} \| h^\eps \|^2_{ \SSl } + \frac{\lambda}{\eps^2} \| h^\eps \|^2_{ \SSlp } \lesssim \frac{1}{\eps^2} \| h^\eps \|^2_{ \SSl } + \frac{1}{\eps} \| h^\eps \|_{ \SSlp }^2 \| f^\eps + \overline{f}^\eps \|_{ \SSl } + \frac{1}{\eps} \| h^\eps \|_{ \SSl } \| h^\eps \|_{ \SSlp } \| f^\eps + \overline{f}^\eps \|_{ \SSlp }, $$ 
		which gives after integrating on $[0, t]$ in the space $\SSSl(\sigma=0, t, \eps)$:
		\begin{align*}
			\Nt h^\eps \Nt_{ \SSSl }^2 \lesssim \frac{t}{\eps^2} \Nt h^\eps \Nt_{ \SSSl }^2 + c \Nt h^\eps \Nt_{ \SSSl }^2 + \Nt h^\eps \Nt_{ \SSSl }^2 \left(\int_0^t \| f^\eps(\tau) + \overline{f}^\eps(\tau) \|^2_{\SSlp} \d \tau\right)^{1/2}.
		\end{align*}
		Thus, since $c$ is supposed to be small, taking $t$ close enough to $0$ yields (for instance)
		$$\Nt h^\eps \Nt_{ \SSSl } \le \frac{1}{2} \Nt h^\eps \Nt_{ \SSSl },$$
		which in turn implies $h(\tau) = 0$, or equivalently $f^\eps(\tau) = \overline{f}^\eps(\tau)$ for any $\tau \in [0, t]$. Repeating this argument yields the uniqueness of the solution.

		\section{Proof of Theorem \ref{thm:hydrodynamic_limit-gen_degenerate}}
		\label{scn:hydrodynamic_limit_BED}
		
		We prove here Theorem~\ref{thm:hydrodynamic_limit-gen_degenerate} under the assumption \ref{BED}. Note that the following strategy can also be seen as an alternative proof under assumption \ref{BE}.
		
		\subsection{Modification of the strategy}
		Under the assumption \ref{BED}, the arguments of $\QQ(f, g)$ in $\Sl$ no longer play symmetric roles, so \textbf{we no longer consider $\QQ$ in its symmetrized form}. This does not induce any change for the parts $f^\eps_\hyd$ and $f^\eps_\mix$ of the solution since they are constructed using assumption \ref{Bbound} and not \ref{BED}, for which both arguments play symmetric roles. The only modification is therefore the  need to adjust the iterative scheme constructing $f^\eps_\kin$ as well as its space-velocity functional space so as to take into account the assumptions \ref{BED} following the strategy adopted in \cite{GMM2017, CTW2016, CM2017, HTT2020, CG2022}. We detail below this new strategy.
		\begin{itemize}
			\item We no longer consider the equation on $f^\eps_\kin$ in integral form
			$$f^\eps_\kin(t) = U^\eps_\kin(t) f_\ini + \Psi^\eps_\kin \left[ f^\eps_\kin, f^\eps_\kin \right](t)+ 2 \Psi^\eps_\kin\left[f^\eps_\kin, f^\eps_\hyd + f^\eps_\mix\right](t),$$
			but we study the evolution of $f_\kin^\eps$ it in its differential form
			\begin{equation}\label{eq:diffFkin}\begin{split}
					\partial_t f^\eps_\kin = \frac{1}{\eps^2} \left(\LL - \eps v \cdot \nabla_x\right) f^\eps_\kin &+ \frac{1}{\eps} \PP^\eps_\kin \QQ(f^\eps_\kin, f^\eps_\kin)  \\ \phantom{+++}&+ \frac{2}{\eps} \PP^\eps_\kin \QQ^\sym(f^\eps_\kin, f^\eps_\hyd + f^\eps_\mix).
			\end{split}\end{equation}
			This equation can be studied through  a suitable energy method so as to be able to use the ‘‘closing estimate'' of \ref{BED} (which does not translate in integral form).
			\item Since the roles played by both arguments of $\QQ(f, g)$ in $\Sl$ under the assumption \ref{BED} are different, we do not construct $f^\eps_\kin$ using Banach's theorem, which, as far as \eqref{eq:diffFkin} is concerned, would correspond to the convergence an iterative scheme of the form
			\begin{equation*}\begin{split}  \partial_t f^\eps_{\kin, N} = \frac{1}{\eps^2} \left( \LL - \eps v \cdot \nabla_x\right) f^\eps_{\kin, N} &+ \frac{1}{\eps} \PP^\eps_\kin \QQ(f^\eps_{\kin, N-1}, f^\eps_{\kin, N-1}) \\ \phantom{+++}&+ \frac{2}{\eps} \PP^\eps_\kin \QQ^\sym(f^\eps_{\kin, N-1} f^\eps_{\hyd, N-1}  + f^\eps_{\mix,N-1})\end{split}\end{equation*}
			but using a variation of such a scheme which allows to use the the "closing estimate'' of \ref{BED}. Namely, we prove the stability of the scheme \eqref{eq:scheme} hereafter.
			\item We define a new hierarchy of spaces $(\SSh_j)_{j=-2-s}^1$ of the form
			$$\SSh_j = L^2_x \left( \Sh_j \right) \cap \dot{\mathbb{H}}^s_x \left( \Sh_{j-s} \right)$$
			which allows to prove spatially inhomogeneous counterparts of the estimates of \ref{BED}. Notice here that we assume our ``regularity parameter'' $s$ to be integer $s \in \N$ and it is now assumed an additional role in the hierarchy of spaces $\SSh_{-2-s},\ldots,\SSh_{1}.$ 
			\item The operator $\LL-\eps v \cdot \nabla_x$ is not dissipative for the inner product of $\Sh$, but it is \emph{hypo-dissipative} on $\range\left( \PP^\eps_\kin \right)$, so, we introduce an equivalent inner product of the form
			\begin{equation*}
				\dlla f, g \drra_{\SSh_j, \eps} := \delta \la f, g \ra_{\SSh_j} + \frac{1}{\eps^2} \int_0^\infty \lla U^\eps_\kin(t) f, U^\eps_\kin(t) g \rra_{\SSh_{j-1}} \d t,
			\end{equation*}
			for which $\LL-\eps v \cdot \nabla_x$ is dissipative and $\QQ$ satisfies the same estimates as \ref{BED}.
		\end{itemize}
		To summarize, our approach will be aimed at constructing a solution $(f^\eps_\kin, f^\eps_\mix, g^\eps)$ as the limit of a sequence of approximate solutions $\left\{( f^\eps_{\kin, N}, f^\eps_{\mix, N}, g^\eps_N )\right\}_{N \geq 0}$, where the first component $f_{\kin,N}^\eps$  is constructed inductively by solving the following differential equation:
		\begin{equation}\label{eq:scheme}
			\begin{cases}
				\displaystyle
				\begin{aligned}
					\partial_t f^\eps_{\kin, N} = \frac{1}{\eps^2} \left(\LL - \eps v \cdot \nabla_x\right) f^\eps_{\kin, N} & + \frac{1}{\eps} \PP^\eps_\kin \QQ(f^\eps_{\kin, N-1}, f^\eps_{\kin, N})  \\
					& + \frac{2}{\eps} \PP^\eps_\kin \QQ^\sym(f^\eps_{\kin, N}, f^\eps_{\hyd, N-1} + f^\eps_{\mix, N-1})
				\end{aligned}\\
				f^\eps_{\kin, N}(0) = \PP^\eps_\kin f_\ini,\\
				f^\eps_{\kin, 0} = 0,
			\end{cases}
		\end{equation}
		where we naturally denoted $f^\eps_{\hyd, N} = f_\ns + f^\eps_\disp + g^\eps_N$, and the other parts are still constructed as in Section \ref{scn:proof_hydrodynamic_limit_symmetrizable}:
		\begin{equation*}
			\begin{cases}
				g^\eps_N =  \Phi^\eps[f^\eps_{\kin, N-1} , f^\eps_{\mix, N-1} ] g^\eps_{N-1} + \Psi^\eps_\hyd[g^\eps_{N-1}, g^\eps_{N-1}]
				+ \SS^\eps[f^\eps_{\kin, N-1} , f^\eps_{\mix, N-1}], \\
				\\
				f^\eps_{\mix, N} = \Psi^\eps_\kin\left[ f^\eps_{\mix,N-1} + f^\eps_{\kin, N-1} , f^\eps_{\mix,N-1} + f^\eps_{\kin, N-1} \right], \\
				\\
				g^\eps_0 = 0, \qquad f^\eps_{\mix,0} = 0.
			\end{cases}
		\end{equation*}
	
		Let us define the new functional space we will use in this section.
		\begin{defi}
			Suppose $s \in \N$ and satisfies $s \ge 3$ if $d = 2$ or $s \ge \frac{d}{2} + 1$ if $d \ge 3$, we define
			\begin{equation*}
				\| f \|_{ \SSh_j^s } := \| f \|_{ L^2_x \left( \Sh_j \right) } + \| \nabla_x^s f \|_{ L^2_x \left( \Sh_{j-s} \right) },
			\end{equation*}
			and note that, since $\Sh_k \hookrightarrow \Sh_j$ as soon as $j \leq k$, the following equivalence of norms holds:
			\begin{equation*}
%				\label{eq:equivalence_degenerate_norm}
				\| f \|_{ \SSh_j^s } \approx \sum_{k = 0}^{s} \| f \|_{ \mathbb{H}^k_x \left( \Sh_{j-k} \right) } \approx \sum_{k = 0}^{s} \| f \|_{ \SSh_j^k }.
			\end{equation*}
			In particular, this hierarchy of spaces is decreasing in both indexes:
			\begin{equation}
				\label{eq:hierarchy_degenerate_spaces}
				s_1 \le s_2 ~ \text{ and } ~ j_1 \le j_2 \Longrightarrow \SSh_{j_2}^{s_2} \hookrightarrow \SSh_{j_1}^{s_1}.
			\end{equation}
		\end{defi}
	
		\begin{lem}\label{lem72}
			The bilinear operator $\QQ$ satisfies in $\SSh_j$ the estimates
			\begin{gather}
				\label{eq:Q_sobolev_algebra_degenerate}
				\left\| \QQ(f, g) \right\|_{ \SShm_j } \lesssim \| f \|_{ \SSh_j } \| g \|_{ \SShp_{j+1} } + \| f \|_{ \SShp_j } \| g \|_{ \SSh_{j+1} }, \\
				\label{eq:Q_sobolev_algebra_degenerate_closed}
				\la \QQ(f, g), g \ra_{ \SSh_j } \lesssim \| f \|_{ \SSh_j } \| g \|_{ \SShp_j }^2 +  \| f \|_{ \SShp_j } \| g \|_{ \SSh_j } \| g \|_{ \SShp_j }.
			\end{gather}
		\end{lem}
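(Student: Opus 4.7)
The plan is to reduce both spatially inhomogeneous estimates in $\SSh_j$ to the velocity-only estimates provided by \ref{BED} (i.e.\ \eqref{eq:BEDNon}--\eqref{eq:BEDclosed}) through a Leibniz-type expansion. Throughout I exploit the equivalence of norms
\[
\|f\|_{\SSh^s_j}\;\approx\;\sum_{k=0}^{s}\|f\|_{\mathbb{H}^k_x(\Sh_{j-k})},
\]
which follows from \eqref{eq:hierarchy_degenerate_spaces}, together with the fact that $\QQ$ is local in $x$, so that
\[
\partial_x^\alpha\QQ(f,g)=\sum_{\beta\leq\alpha}\binom{\alpha}{\beta}\QQ(\partial_x^\beta f,\partial_x^{\alpha-\beta}g).
\]
The key extra ingredient is the Sobolev embedding $\mathbb{H}^{s-d/2-\epsilon}_x\hookrightarrow L^\infty_x$ which, under the assumption $s\geq d/2+1$ (or $s\geq 3$ if $d=2$), will let me put the factor carrying fewer derivatives into $L^\infty_x$ while keeping the other one in $L^2_x$.

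For the non-closed estimate \eqref{eq:Q_sobolev_algebra_degenerate}, I estimate separately the $L^2_x(\Shm_j)$-part and the $\mathbb{H}^s_x(\Shm_{j-s})$-part. The $L^2_x$-part follows by applying \eqref{eq:BEDNon} pointwise at level $j$, then using Minkowski and Hölder's inequalities in $x$, placing the factor with fewer $x$-derivatives into $L^\infty_x$. For the $\mathbb{H}^s_x$-part, I use the Leibniz expansion above with $|\alpha|=s$, apply \eqref{eq:BEDNon} at level $j-s$ to each term, and use the hierarchy $\Sh_{j-|\beta|}\hookrightarrow\Sh_{j-s}$ to absorb weights; finally, for each term I put the factor with fewer derivatives in $L^\infty_x$ (controlled by $\mathbb{H}^s_x$ via Sobolev), and the one with more derivatives in $L^2_x$. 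Grouping the resulting sums recovers the norms $\|f\|_{\SSh_j}\|g\|_{\SShp_{j+1}}+\|f\|_{\SShp_j}\|g\|_{\SSh_{j+1}}$.

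For the closed estimate \eqref{eq:Q_sobolev_algebra_degenerate_closed}, I split
\[
\langle\QQ(f,g),g\rangle_{\SSh_j}
=\langle\QQ(f,g),g\rangle_{L^2_x(\Sh_j)}
+\langle\partial_x^s\QQ(f,g),\partial_x^s g\rangle_{L^2_x(\Sh_{j-s})}.
\]
The $L^2_x$ piece is obtained by integrating the closed velocity-level bound \eqref{eq:BEDclosed} in $x$ and bounding the $x$-weights via Hölder/Sobolev. For the high-order piece, only the ``diagonal'' term $\langle \QQ(f,\partial_x^s g),\partial_x^s g\rangle_{L^2_x(\Sh_{j-s})}$ is treated directly through \eqref{eq:BEDclosed}, giving the dissipation $\|f\|_{L^\infty_x(\Sh_{j-s})}\|\partial_x^s g\|^2_{\Shp_{j-s}}$ which yields the first term on the right-hand side after Sobolev embedding. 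All off-diagonal terms $\langle \QQ(\partial_x^\beta f,\partial_x^{\alpha-\beta}g),\partial_x^\alpha g\rangle$ with $0<|\beta|<s$, as well as the boundary term $|\beta|=s$, are estimated by duality: bound $\QQ(\cdot,\cdot)$ in $\Shm_{j-s}$ using \eqref{eq:BEDNon}, then pair with $\partial_x^s g\in\Shp_{j-s}$, and finally use Sobolev to absorb the ``lost'' $x$-derivative into $\mathbb{H}^s_x$.

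The main obstacle is the bookkeeping for the closed estimate: only a single term benefits from the genuinely \emph{closing} velocity-level bound \eqref{eq:BEDclosed}, while every other combinatorial configuration must be closed through the non-closed bound \eqref{eq:BEDNon} and spatial Sobolev embedding. Verifying that each configuration produces at most the required homogeneity on the right-hand side—namely two $\Shp$-norms and one $\Sh$-norm, with the derivative count compatible with $\SSh^s_j$—relies crucially on the assumption $s\geq d/2+1$, which ensures that whichever factor ``loses'' derivatives can always be absorbed in $L^\infty_x$; no Gagliardo--Nirenberg interpolation beyond the endpoint $L^\infty_x$--$L^2_x$ is actually needed.
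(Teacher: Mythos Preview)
Your overall architecture is right—Leibniz expansion, use the closed velocity estimate \eqref{eq:BEDclosed} on the $L^2_x(\Sh_j)$ piece and on the diagonal term $\langle\QQ(f,\partial_x^\alpha g),\partial_x^\alpha g\rangle_{L^2_x(\Sh_{j-s})}$, and handle all off-diagonal terms via the non-closed bound \eqref{eq:BEDNon} paired against $\partial_x^\alpha g$. That is exactly what the paper does. The gap is in your last claim, that only the endpoint H\"older pattern $L^\infty_x\times L^2_x\times L^2_x$ is needed.

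Consider $d=2$, $s=3$, $|\alpha|=s$, $|\beta|=s-1=2$ (so $|\alpha-\beta|=1$). After \eqref{eq:BEDNon} at level $j-s$ you must bound
\[
\int_{\R^d}\|\partial_x^2 f(x)\|_{\Sh_{j-3}}\,\|\partial_x g(x)\|_{\Shp_{j-2}}\,\|\partial_x^3 g(x)\|_{\Shp_{j-3}}\,\d x.
\]
Placing $\partial_x g$ in $L^\infty_x(\Shp_{j-2})$ requires $g\in H^{2+\epsilon}_x(\Shp_{j-2})$, but the norm $\|g\|_{\SShp_j}$ only controls $H^2_x(\Shp_{j-2})$; the stronger $H^3$ information is only available with the \emph{weaker} weight $\Shp_{j-3}$, which is useless here. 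Placing instead $\partial_x^2 f$ in $L^\infty_x(\Sh_{j-3})$ requires $f\in H^{3+\epsilon}_x(\Sh_{j-3})$, and again only $H^3_x(\Sh_{j-3})$ is available. Both options are exactly borderline and fail, so the pure $L^\infty$--$L^2$ scheme does not close. The same obstruction appears for $d\ge 3$ at the analogous split (e.g.\ $d=3$, $s=3$, $|\beta|=2$).

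The fix in the paper is to use intermediate Lebesgue exponents: for $d=2$ one takes $L^4_x\times L^4_x$ via $H^1_x\hookrightarrow L^4_x$ when $|\beta|=s-1$, and for $d\ge 3$ one takes $L^p_x\times L^q_x$ with $1/p=1/2-(s-|\beta|)/d$ and $1/q=1/2-((s-1)-|\alpha-\beta|)/d$, which gives $1/r=1/p+1/q\le 1/2$, and then interpolates against the trivial $L^1$ bound to land in $L^2$. So the combinatorics you describe is correct, but the spatial H\"older step genuinely needs non-endpoint Sobolev embeddings.
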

	
		\begin{proof}
			The general non-closed control \eqref{eq:Q_sobolev_algebra_degenerate} is easily obtained from its spatially homogeneous counterpart of \ref{BED} so we only prove the closed control \eqref{eq:Q_sobolev_algebra_degenerate_closed}.
			
			The inner product writes according to Leibniz's formula and the locality of $\QQ$ as
			\begin{equation*}\begin{split}
				\left\la \QQ(f, g), h \right\ra_{ \SSh_j } = & \left\la \QQ(f, g), h \right\ra_{ L^2_x \left( \Sh_{j} \right) } + \sum_{| \beta | = s } \left\la \partial_x^\beta \QQ(f, g), \partial_x^\beta h \right\ra_{ L^2_x \left( \Sh_{j-s} \right) } \\ = & \left\la \QQ(f, g), h \right\ra_{ L^2_x \left( \Sh_{j} \right) } + {\sum_{|\gamma|+|\beta-\gamma|=|\beta|=s}} \left\la \QQ\left( \partial_x^\gamma f, \partial_x^{\beta-\gamma} g \right), \partial_x^{\beta} h \right\ra_{ L^2_x \left( \Sh_{j-s} \right) }.
			\end{split}\end{equation*}
			We first look at the terms which can be controlled using the closed estimate \eqref{eq:BEDclosed} of \ref{BED}.  Using H\"older's inequality in $L^\infty_x \times L^2_x \times L^2_x$ (or some appropriate permutation) together with the embeddings $\mathbb{H}^s_x \hookrightarrow L^\infty_x$ and $\Sh_{j-s} \hookrightarrow \Sh_{-1-s}$ and therefore $\mathbb{H}^s_x \left( \Sh_{j-s} \right) \hookrightarrow L^\infty_x \left( \Sh_{-1-s} \right)$, we immediately have the estimate
			\begin{equation*}
				\begin{split}
					\la \QQ(f, g), g \ra_{ L^2_x \left( \Sh_{j} \right) }
					\lesssim & \sum_{ \{ a, b, c \} = \{j, j, -1-s \} } \left( \int_{\R^d} \| f \|_{\Sh_{ a }} \| g \|_{\Shp_{b}} \| g \|_{\Shp_{c}} \d x + \int_{\R^{d}}\| f \|_{\Shp_{a}} \| g \|_{\Sh_{b}} \| g \|_{\Shp_{c}} \d x \right)\\
					\lesssim & \| f \|_{ \SSh_j } \| g \|_{ \SShp_j }^2 +  \| f \|_{ \SShp_j } \| g \|_{ \SSh_j } \| g \|_{ \SShp_j }
				\end{split}
			\end{equation*}
			\color{black}
			as well as the terms associated with $| \beta | = s$ and $|\gamma| = 0$:
			$$\left\la \QQ\left(f, \partial_x^\beta g\right), \partial_x^\beta g \right\ra_{ L^2_x \left( \Sh_{j-s} \right) } \lesssim \| f \|_{ \SSh_j } \| g \|_{ \SShp_j }^2 +  \| f \|_{ \SShp_j } \| g \|_{ \SSh_j } \| g \|_{ \SShp_j }.$$
			We are thus left with the terms associated with $|\beta|=s$ and  $|\gamma| \ge 1$ which have to be controlled starting from the non-closed estimate \eqref{eq:BEDNon} of \ref{BED}:
			\begin{equation}\label{eq:QQgammabeta}
				\begin{split}
					\Big\la \QQ\big( \partial_x^\gamma f, & \partial_x^{\beta-\gamma} g\big), \partial_x^\beta g \Big\ra_{ L^2_x \left( \Sh_{j-s} \right) } \\
					\lesssim & \int_{\R^{d}} \| \partial_x^\gamma f(x) \|_{ \Sh_{j-s} } \| \partial_x^{\beta-\gamma} g(x) \|_{ \Shp_{j - (s-1) } } \| \partial_x^\beta g(x) \|_{ \Shp_{j - s } }  \d x \\
					& + \int_{\R^{d}} \| \partial_x^\gamma f(x) \|_{ \Shp_{j-s} } \| \partial_x^{\beta-\gamma} g(x) \|_{ \Sh_{j - (s-1) } } \| \partial_x^\beta g(x) \|_{ \Shp_{j - s } }  \d x \, ,
				\end{split}
			\end{equation}
			and we will show in each case that
\begin{equation}
				\label{eq:degenerate_bilinear_goal}
				\begin{split}
\left\|\| \partial_x^\gamma f(x) \|_{ \Sh_{j-s} } \| \partial_x^{\beta-\gamma} g(x) \|_{ \Shp_{j - (s-1) } } \right\|_{L^{2}_{x}} &\lesssim \|f\|_{\SSh_{j}}\|g\|_{\SShp_{j}}\\
\left\|\| \partial_x^\gamma f(x) \|_{ \Shp_{j-s} } \| \partial_x^{\beta-\gamma} g(x) \|_{ \Sh_{j - (s-1) } } \right\|_{L^{2}_{x}} &\lesssim \|f\|_{\SShp_{j}}\|g\|_{\SSh_{j}}\,.\end{split}
\end{equation}

			\step{1}{The case $d = 2$}
			When $| \gamma | = s$, we have $|\beta - \gamma| = 0$, thus, using the fact that $\mathbb{H}^{s-1}_x \hookrightarrow L^\infty_x$ since $s \ge 3$, followed by \eqref{eq:hierarchy_degenerate_spaces}
			$$\| \partial_x^{\beta-\gamma} g \|_{ L^\infty_x \left( \Sh_{j- (s-1) } \right) } \lesssim \| g \|_{ \mathbb{H}^{s-1}_x \left( \Sh_{j- (s-1) } \right) } \lesssim \| g \|_{ \SSh_j }, $$
			thus, \eqref{eq:degenerate_bilinear_goal} holds. When $| \gamma | = s-1$, using the injection $\mathbb{H}^1_x \hookrightarrow L^4_x$, we have
			$$\| \partial_x^\gamma f \|_{ L^4_x \left( \Sh_{j-s} \right) } \lesssim \|  f \|_{ \mathbb{H}^{ s }_x \left( \Sh_{j-s} \right) } \lesssim \| f \|_{ \SSh_j },$$
			similarly, since $| \beta - \gamma | = 1 \le s - 2$, we also have
			$$\| \partial_x^{\beta-\gamma} g \|_{ L^4_x \left( \Sh_{j - (s-1) } \right) } \lesssim \|  f \|_{ \mathbb{H}^{ s-1 }_x \left( \Sh_{j-(s-1) } \right) } \lesssim \| f \|_{ \SSh_j },$$
			thus,  \eqref{eq:degenerate_bilinear_goal} holds. When $| \gamma | \le s-2$, we have
			$$\| \partial_x^\gamma f \|_{ L^\infty_x \left( \Sh_{j-s} \right) } \lesssim \| f \|_{ \mathbb{H}^s_x \left( \Sh_{j-s} \right) }, \lesssim \| f \|_{ \SSh_j }$$
			and similarly, since $| \beta - \gamma | \le s-1$ (recall that $|\beta|=s$ and $| \gamma \ge 1$)
			$$\| \partial_x^{\beta-\gamma} g \|_{ L^2_x \left( \Sh_{j-(s-1)} \right) } \lesssim \| f \|_{ \mathbb{H}^{s-1}_x \left( \Sh_{j-(s-1)} \right) } \lesssim \| f \|_{ \SSh_j },$$
			thus \eqref{eq:degenerate_bilinear_goal} holds. This concludes this step.
			
			\step{2}{The case $d = 3$}
			First, a simple use of Cauchy-Schwarz inequality yields
$$\big\| \| \partial_x^\gamma f \|_{ \Sh_{j-s} } \| \partial_x^{\beta-\gamma} g \|_{ \Shp_{j- (s-1) }  } \big \|_{L^1_x} \leq \| \partial_x^\gamma f \|_{ L^2_x \left( \Sh_{j-s} \right) }	\| \partial_x^{\beta-\gamma}g \|_{ L^2_x \left( \Shp_{j- (s-1) } \right) }$$
and, since $|\beta-\gamma|+|\gamma| \leq s$, we deduce that
\begin{equation}\label{productL1}
\big\| \| \partial_x^\gamma f \|_{ \Sh_{j-s} } \| \partial_x^{\beta-\gamma} g \|_{ \Shp_{j- (s-1) }  } \big \|_{L^1_x} \lesssim \|f \|_{\SSh_j}\,\|g \|_{\SShp_j}.\end{equation}
Now, using Sobolev embeddings, we have
			$$\| \partial_x^\gamma f \|_{ L^p_x \left( \Sh_{j-s} \right) } \lesssim \| f \|_{ \mathbb{H}^s_x \left( \Sh_{j-s} \right) } \lesssim \|f \|_{\SSh_j}, \qquad \frac{1}{p} = \frac{1}{2} - \frac{s - | \gamma| }{d}$$
			as well as
			$$\| \partial_x^{\beta-\gamma}g \|_{ L^q_x \left( \Shp_{j- (s-1) } \right) } \lesssim \| g \|_{ \mathbb{H}^{s-1}_x \left( \Shp_{j-(s-1) } \right) } \lesssim \|g \|_{\SShp_j}, \qquad \frac{1}{q} = \frac{1}{2} - \frac{ (s-1) - | \beta - \gamma| }{d}.$$
			Since $| \beta | = s$ and $s \ge \frac{d}{2} + 1$,  H\"older's inequality implies
			$$\big\|  \partial_x^\gamma f \|_{ \Sh_{j-s} } \| \partial_x^{\beta-\gamma} g \|_{ \Shp_{j- (s-1) }  } \big \|_{L^r_x} \lesssim \| f \|_{\SSh_j} \| g \|_{ \SShp_j }, \qquad \frac{1}{r} = \frac{1}{p} + \frac{1}{q} = 1 - \frac{s - 1}{d} \le \frac{1}{2}\,.$$
			Using \eqref{productL1} and simple
 interpolation, we deduce that
			$$\big\| \| \partial_x^\gamma f \|_{ \Sh_{j-s} } \| \partial_x^{\beta-\gamma} g \|_{ \Sh_{j- (s-1) }  } \big \|_{  L^2_x} \lesssim \| f \|_{\SSh_j} \| g \|_{ \SShp_j },$$
this proves the first estimate in \eqref{eq:degenerate_bilinear_goal} and the second one is done in the same way.
			Combining then  \eqref{eq:degenerate_bilinear_goal}  with \eqref{eq:QQgammabeta} and Cauchy-Schwarz inequality yields
			\begin{equation*}
\Big\la \QQ\big( \partial_x^\gamma f,   \partial_x^{\beta-\gamma} g\big), \partial_x^\beta g \Big\ra_{ L^2_x \left( \Sh_{j-s} \right) }  					\lesssim \| f \|_{ \SSh_j } \| g \|_{ \SShp_j }^2 +  \| f \|_{ \SShp_j } \| g \|_{ \SSh_j } \| g \|_{ \SShp_j }.
			\end{equation*}
			This concludes this step and the proof.
		\end{proof}
	
		As stated above, we will study the equation \eqref{eq:scheme} using an equivalent inner product. The next proposition defines it and presents its properties.
		
		\begin{prop}[\textit{\textbf{Kinetic dissipative inner product}}]
			\label{prop:dissipative_kinetic_inner_product}
			Let $j = -1, 0$ and $\sigma \in (0, \sigma_0)$. For some $\delta > 0$  small enough, the inner product defined for any $f, g \in \range\left( \PP^\eps_\kin \right) \cap \SSh_j$ as
			\begin{equation*}
				\dlla f, g \drra_{\SSh_{j}, \eps} := \delta \la f, g \ra_{\SSh_{j}} + \frac{1}{\eps^2} \int_0^\infty e^{2 \sigma t / \eps^2} \lla U^\eps_\kin(t) f, U^\eps_\kin(t) g \rra_{ \SSh_{j-1} } \d t,
			\end{equation*}
			induces a norm equivalent to that of $\SSh_{j}$ (uniformly in $\eps$), i.e. there exists $C >0$ independent of $\eps$ such that
			\begin{equation}
				\label{eq:equivE}
				\frac{1}{C} \| f \|_{\SSh_{j}} \le \| f \|_{\SSh_{j}, \eps} \le C \| f \|_{\SSh_{j}}.
			\end{equation}
			Moreover, there is $\mu >0$ such that
			$$\re \dlla (\LL - \eps v \cdot \nabla_x)  f, f \drra_{\SSh_{j}, \eps } \le - \sigma \| f \|_{\SSh_{j}, \eps}^2 - \mu \| f \|^2_{\SShp_{j}},$$
			and the nonlinear estimates for $\PP^\eps_\kin \QQ$ are the same as those from \eqref{eq:Q_sobolev_algebra_degenerate} and \eqref{eq:Q_sobolev_algebra_degenerate_closed}: 
			\begin{align*}
				\dlla \PP^\eps_\kin \QQ(f, g) , h \drra_{\SSh_{j} , \eps } \lesssim & \| h \|_{\SShp_{j}} \left( \| f \|_{\SShp_{j}} \| g \|_{\SSh_{j+1}} + \| f \|_{\SSh_{j}} \| g \|_{\SShp_{j+1}} \right),
			\end{align*}
			and
			\begin{equation*}
				\dlla \PP^\eps_\kin \QQ(f, g) , g \drra_{\SSh_{j} , \eps } \lesssim \| f \|_{\SSh_{j}} \| g \|_{\SShp_{j}}^2 + \| g \|_{ \SSh_{j} } \| g \|_{\SShp_{j}} \| f \|_{ \SShp_j }.
			\end{equation*}
		\end{prop}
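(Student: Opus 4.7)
The proof decomposes naturally into the three assertions of the proposition, each of which rests on a different piece of the machinery developed earlier in the paper.

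\textbf{Norm equivalence.} The lower bound $\|f\|_{\SSh_j} \leq \delta^{-1/2}\|f\|_{\SSh_j,\eps}$ is immediate from the first term in the definition. For the upper bound, since $f \in \range(\PP^\eps_\kin)$, I apply the $\SSh-\SShp$ integral decay estimate of Lemma \ref{lem:decay_regularization_kinetic_semigroup} (adapted to $\SSh_{j-1}$ via the enlarged setup of \ref{BED}) to obtain
\begin{equation*}
\frac{1}{\eps^2}\int_0^\infty e^{2\sigma t/\eps^2}\|U^\eps_\kin(t) f\|_{\SSh_{j-1}}^2\,\d t \lesssim \|f\|_{\SSh_{j-1}}^2 \lesssim \|f\|_{\SSh_j}^2,
\end{equation*}
using $\SSh_j \hookrightarrow \SSh_{j-1}$.

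\textbf{Dissipativity.} The key idea is the semigroup identity: by the substitution $\tau \mapsto t-s$ and the semigroup property $U^\eps_\kin(\tau)U^\eps_\kin(s) = U^\eps_\kin(\tau+s)$, the convolution part $N(f)$ satisfies $N(U^\eps_\kin(s)f) = e^{-2\sigma s/\eps^2}\frac{1}{\eps^2}\int_s^\infty e^{2\sigma t/\eps^2}\|U^\eps_\kin(t)f\|_{\SSh_{j-1}}^2\,\d t$. Differentiating at $s = 0$ and comparing with the chain rule yields
\begin{equation*}
\re\dlla (\LL - \eps v\cdot\nabla_x) f, f\drra_{\SSh_j,\eps} = \delta\,\re\la (\LL - \eps v\cdot\nabla_x) f, f\ra_{\SSh_j} - \sigma\bigl(\|f\|_{\SSh_j,\eps}^2 - \delta\|f\|_{\SSh_j}^2\bigr) - \tfrac{1}{2}\|f\|_{\SSh_{j-1}}^2.
\end{equation*}
Using skew-adjointness of $v\cdot\nabla_x$ in $L^2_x$, the decomposition $\LL = \BB + \AA$ provided by \ref{LE}, together with property 4 of \ref{BED} giving $\la \AA f,f\ra_{\SSh_j} \lesssim \|f\|_{\SSh_{j-1}}^2$, we obtain $\re\la \LL f,f\ra_{\SSh_j} \leq -\lambda_\BB\|f\|_{\SShp_j}^2 + C\|f\|_{\SSh_{j-1}}^2$. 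Choosing $\delta$ small enough that $C\delta \leq 1/4$ absorbs the negative-index bad term against $-\tfrac{1}{2}\|f\|_{\SSh_{j-1}}^2$, and exploiting $\|f\|_{\SSh_j} \leq \|f\|_{\SShp_j}$ together with the strict inequality $\sigma < \sigma_0 \leq \lambda_\BB$ closes the estimate with $\mu := \delta(\lambda_\BB - \sigma)$.

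\textbf{Nonlinear estimates.} The $\delta$ contribution is controlled directly by the dual estimate \eqref{eq:Q_sobolev_algebra_degenerate} (resp. closed estimate \eqref{eq:Q_sobolev_algebra_degenerate_closed}) of \ref{BED} applied in $\SSh_j$, using boundedness of $\PP^\eps_\kin$ on $\SShm_j$ from the enlarged spectral theorem. For the convolution part, Cauchy-Schwarz in both the $\SSh_{j-1}$ pairing and the time integral, combined with the $\SSh-\SShm$ integral decay $\eps^{-2}\int e^{2\sigma t/\eps^2}\|U^\eps_\kin(t)u\|_{\SSh_{j-1}}^2\,\d t \lesssim \|u\|_{\SShm_{j-1}}^2$, gives
\begin{equation*}
\frac{1}{\eps^2}\int_0^\infty e^{2\sigma t/\eps^2}\bigl|\la U^\eps_\kin(t)\PP^\eps_\kin\QQ(f,g), U^\eps_\kin(t) h\ra_{\SSh_{j-1}}\bigr|\,\d t \lesssim \|\QQ(f,g)\|_{\SShm_{j-1}}\|h\|_{\SShm_{j-1}}.
\end{equation*}
The dual estimate for $\QQ$ in $\SSh_{j-1}$ together with the embeddings $\SSh_j \hookrightarrow \SSh_{j-1}$, $\SShp_{j+1}\hookrightarrow \SShp_j$ and $\SSh_{j+1}\hookrightarrow\SSh_j$ converts this into the desired non-closed form. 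Taking $h = g$ and using $\|g\|_{\SShm_{j-1}} \lesssim \|g\|_{\SSh_j} \leq \|g\|_{\SShp_j}$ produces the stated closed estimate after absorbing the pure $\|g\|_{\SSh_j}^2$ term into $\|g\|_{\SSh_j}\|g\|_{\SShp_j}$.

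\textbf{Main obstacle.} The most delicate step is justifying that the $\SSh-\SShm$ and $\SSh-\SShp$ integral decay estimates of the enlarged spectral theorem indeed transfer to the spaces $\SSh_{j-1}$ appearing in the definition of the new inner product; \ref{BED} directly provides \ref{LE} only on $\Sh_{-2-s},\dots,\Sh_{-2}$, and a careful factorization argument based on the splitting $\LL = \BB^{(0)} + \AA^{(0)}$ of a larger space (together with property 4 of \ref{BED}) is needed to propagate the estimates to the intermediate scales, mirroring the bootstrap used in the proof of Theorem \ref{thm:enlarged_thm}.
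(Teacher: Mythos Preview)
Your proof is essentially correct and follows the same three-step structure as the paper's own argument: norm equivalence via the integral decay of $U^\eps_\kin$, dissipativity via the semigroup differentiation identity combined with the splitting $\LL=\BB+\AA$ and the choice $\delta\le(2K)^{-1}$ yielding $\mu=\delta(\lambda_\BB-\sigma)$, and the nonlinear bounds via Cauchy--Schwarz plus the integral decay estimates applied to the convolution part. Your derivation of the dissipativity identity is exactly the computation the paper carries out, just phrased more compactly.

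One small point deserves tightening. For the closed nonlinear estimate on the $\delta$-part you invoke \eqref{eq:Q_sobolev_algebra_degenerate_closed} ``directly'', but that estimate controls $\la\QQ(f,g),g\ra_{\SSh_j}$, not $\la\PP^\eps_\kin\QQ(f,g),g\ra_{\SSh_j}$. Boundedness of $\PP^\eps_\kin$ on $\SShm_j$ gives you the non-closed bound, not the closed one. The paper handles this by writing $\PP^\eps_\kin=\Id-\PP^\eps_\hyd$: the $\Id$ part is exactly $\la\QQ(f,g),g\ra_{\SSh_j}$, and the $\PP^\eps_\hyd$ correction is controlled via $\PP^\eps_\hyd\in\BBB(\SShm_{j-1};\SShp_j)$ together with the non-closed estimate $\|\QQ(f,g)\|_{\SShm_{j-1}}\lesssim\|f\|_{\SShp_j}\|g\|_{\SSh_j}+\|f\|_{\SSh_j}\|g\|_{\SShp_j}$, which already sits inside the target right-hand side. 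With this splitting your argument is complete, and the ``pure $\|g\|_{\SSh_j}^2$ term'' you mention does not in fact arise. Your remark about transferring the decay estimates to $\SSh_{j-1}$ is a valid technical point that the paper takes for granted.
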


		\begin{proof}
			
			\step{1}{Proof of the equivalence of norms}
			The norm $\| \cdot \|_{\SSh_{j}, \eps }$ writes
			$$\| f \|_{ \SSh_{j} , \eps }^2  = \delta \| f \|^2_{ \SSh_j} + \frac{1}{\eps^2} \int_0^\infty e^{2 \sigma t / \eps^2 } \left\| U^\eps_\kin(t) f \right\|^2_{\SSh_{j-1}} \d t,$$
			and thus, using $\SSh_j \hookrightarrow \SSh_{j-1}$ and the decay estimate for the semigroup $U^\eps_\kin(t)$ from Lemma~\ref{lem:decay_regularization_kinetic_semigroup}, we have for some $C > 0$
			$$\delta \| f \|_{\SSh_j }^2 \le \| f \|_{\SSh_j , \eps }^2 \le (\delta + C) \| f \|^2_{ \SSh_j}.$$
			This proves \eqref{eq:equivE}.
			
			\step{2}{Proof of the dissipative estimate}
			Using the decomposition $\LL = \BB + \AA$ coming from \ref{LE} in the space $\Sh_{j}$  (recall that, from \ref{BED}, the dissipativity estimate of \ref{LE} is valid in $\Sh_{j}$) together with the estimate for $\AA$ from \ref{BED}, we have for some~$K > 0$
			\begin{equation*}\begin{split}
				\re & \dlla (\LL - \eps v \cdot \nabla_x)  f, f \drra_{\SSh_j , \eps } \\
				&=  \delta \re \lla (\LL - \eps v \cdot \nabla_x)  f , f \rra_{\SSh_j} \\
				& + \re \int_0^\infty e^{2 \sigma t / \eps^2} \lla U^\eps_\kin(t) \left\{ \frac{1}{\eps^2} \left( \LL - \eps v \cdot \nabla_x\right) f \right\} , U^\eps_\kin(t) f \rra_{\SSh_{j-1}} \d t \\
				&\le   - \delta \lambda_\BB \| f \|_{ \SSh_j}^2 + \delta K \| f \|_{ \SSh_{j-1}}^2 \\
				&  -\frac{\sigma}{\eps^2} \int_0^\infty e^{2 \sigma t / \eps^2} \left\| U^\eps_\kin(t) f \right\|_{\SSh_{j-1}}^2 \d t + \frac{1}{2} \int_0^\infty \frac{\d}{\d t} \left[ e^{2 \sigma t / \eps^2} \| U^\eps_\kin(t) f \|^2_{\SSh_{j-1} } \right] \, \d t,
			\end{split}\end{equation*}
			and thus, since $\lim_{t\to\infty}\| U^\eps(t) f \|_{\SSh_{j-1}}=0$ and $\| \cdot \|_{ \SShp_{j}} \ge \| \cdot \|_{\SSh_j}$, we have
			\begin{equation*}\begin{split} 
				\re \dlla (\LL - \eps v \cdot \nabla_x)  f, f \drra_{\SSh_j, \eps }
				\le&   - \delta \lambda_\BB \| f \|_{ \SShp_{j} }^2 + \delta K \| f \|_{ \SSh_{j-1}}^2 \\
				&-\frac{\sigma}{\eps^2} \int_0^\infty e^{2 \sigma t / \eps^2} \left\| U^\eps_\kin(t) f \right\|_{\SSh_{j-1}}^2 \d t - \frac{1}{2} \| f \|^2_{ \SSh_{j-1}} \\
				\le& 
				- \sigma \left( \delta \| f \|^2_{\SSh_j} + \frac{1}{\eps^2} \int_0^\infty e^{2 \sigma t / \eps^2} \left\| U^\eps_\kin(t) f \right\|_{\SSh_{j-1}}^2 \d t  \right) \\
				& - \delta (\lambda_\BB - \sigma )  \| f \|^2_{ \SShp_{j} } - \left( \frac{1}{2} - \delta K \right) \| f \|^2_{ \mathbb{H}^s_x(\Shp_{j-1}) }.
			\end{split}\end{equation*}
			We finally deduce, considering $\delta \le (2K)^{-1}$ and letting $\mu = \delta(\lambda_\BB - \sigma) > 0$
			$$\re \dlla (\LL - \eps v \cdot \nabla_x)  f, f \drra_{\SSh_j , \eps } \le - \sigma \| f \|^2_{ \SSh_j, \eps } - \mu \| f \|_{\SShp_{j}}^2.$$
			This concludes this step.
			
			\step{3}{Proof of the nonlinear estimates}
			Using the definition $\PP^\eps_\kin = \Id - \PP^\eps_\hyd$, we have
			$$
			\dlla \PP^\eps_\kin \QQ(f, g) , h \drra_{\SSh_j, \eps} = \la \QQ(f, g), h \ra_{\SSh_j} + R(f, g, h),$$
			where $$
			R(f, g, h) := - \left\la \PP^\eps_\hyd \QQ(f, g), h \right\ra_{\SSh_j }  + \frac{1}{\eps^2} \int_0^\infty e^{2 \sigma t / \eps^2} \left\la U^\eps_\kin(t) \QQ(f, g),  U^\eps_\kin(t) h \right\ra_{ \SSh_{j-1}} \, \d t.$$
			On the one hand, the boundedness $\PP^\eps_\hyd \in \BBB\left( \SShm_{j-1} ; \SSs \right) \subset \BBB\left( \SShm_{j-1} ;\SShm_{j} \right)$ implies
			\begin{equation*}\begin{split}				\left|\, \la \PP^\eps_\hyd \QQ(f, g), h \ra_{\SSh_j} \right| & \lesssim \| h \|_{\SShp_{j} } \| \PP^\eps_\hyd \QQ(f, g) \|_{ \SShm_{j}} \\
				& \lesssim \| h \|_{ \SShp_{j}} \| \QQ(f, g) \|_{ \SShm_{j-1}},
			\end{split}\end{equation*}
			and on the other hand, using Cauchy-Schwarz inequality and the integral estimates of Lemma~\ref{lem:decay_regularization_kinetic_semigroup}:
			\begin{equation*}\begin{split}
					\frac{1}{\eps^2} \int_0^\infty e^{2 \sigma t / \eps^2 } & \la U^\eps_\kin(t) \QQ(f, g),  U^\eps_\kin(t) h \ra_{ \SSh_{j-1}} \, \d t \\
					& \le \frac{1}{\eps^2} \int_0^\infty \left( e^{\sigma t / \eps^2} \| U^\eps_\kin(t) \QQ(f, g) \|_{ \SSh_{j-1}} \right) \left( e^{\sigma t / \eps^2} \| U^\eps_\kin(t) h \|_{ \SSh_{j-1}} \right) \d t \\
					& \lesssim \| \QQ(f, g) \|_{ \SShm_{j-1}} \| h \|_{  \SSh_{j-1} }  \lesssim \| \QQ(f, g) \|_{ \SShm_{j-1} } \| h \|_{ \SShp_{j-1} }.
			\end{split}\end{equation*}
			Combining  these estimates and using the injection $ \SShp_{j}\hookrightarrow  \SSh_{j-1}$, we deduce that
			$$R(f, g, h) \lesssim \| h \|_{  \SShp_{j} } \| \QQ(f, g) \|_{  \SShm_{j-1} }.$$
			Recalling that $ \SSh_{j-1}$ is defined so that the general non-closed estimate of $\QQ$ in $\la \cdot , \cdot \ra_{\SShp_{j-1} }$ only involves the norms of the space $ \Sh_{j} \hookrightarrow  \Sh_{j-1}$ and $\Shp_{j} \hookrightarrow \Sh_{j-1}$ (but  not~$\Sh_{j+1}$ nor $\Shp_{j+1}$), that is to say, 
			$$ \| \QQ(f, g) \|_{ \SShm_{j-1} } \lesssim \| f \|_{ \SShp_{j} } \| g \|_{ \SSh_j} + \| f \|_{  \SSh_j} \| g \|_{  \SShp_{j}},$$
			we finally end up with
			$$
				R(f, g, h)  \lesssim \| h \|_{ \SShm_{j}  } \left( \| f \|_{  \SShp_{j} } \| g \|_{  \SSh_j} + \| f \|_{  \SSh_j } \| g \|_{  \SShp_{j} } \right).
		$$	This allows to conclude this step and the proof.
		\end{proof}
		
		\subsection{Stability estimates} 
		
		We study here the scheme \eqref{eq:scheme} in the  kinetic-type time-position-velocity space $\SSSh_j$ which corresponds to  the space $\SSSl$ introduced in Definition \ref{defi:NORMSPACES} with $E$ replaced with $\Sh_{j}$, i.e $\SSSh_j$ is characterized by the norm:
		\begin{equation}
			\label{eq:defiTildeSSSl}
			\Nt f \Nt_{ \SSSh_j}^2 := \sup_{0 \le t < T} \, e^{2 \sigma t / \eps^2 } \| f(t) \|_{\SSh_j}^2 + \frac{1}{\eps^2} \int_0^T e^{2 \sigma t / \eps^2} \| f(t) \|_{\SShp_{j}}^2 \d t.
		\end{equation}
		We have the following estimates, valid for any $N\geq0$:
		\begin{lem}\label{lem:stable-scheme} With the notations of Section \ref{scn:proof_hydrodynamic_limit_symmetrizable}, one can choose
			$$\eps \ll c_3 \ll \eta \ll c_2 \ll 1, \qquad C_1 \approx 1$$
			such that, for any $N \ge 0$ and $j = -1, 0$:
			\begin{equation}
				\label{eq:inductive_stability}
				\Nt f^\eps_{\kin, N} \Nt_{\SSSh_j} \le C_1 \| \PP^\eps_\kin f_\ini \|_{   \SSh_j} , \qquad \Nt f^\eps_{\mix,N} \Nt_{\SSSm} \le c_2, \qquad \Nt g^\eps_N \Nt_{\SSSs} \le c_3\,.
			\end{equation}
		\end{lem}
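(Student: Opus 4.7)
The plan is to argue by induction on $N$. The base case $N=0$ is trivial since all three components vanish. For the inductive step, assuming \eqref{eq:inductive_stability} at rank $N-1$, I would treat the three partial solutions separately. The iterate $g^\eps_N$ is defined by the same integral formula as in Section \ref{scn:proof_hydrodynamic_limit_symmetrizable}; since the hydrodynamic bilinear bounds of Proposition \ref{prop:bilinear_hydrodynamic} and the source-term estimate of Proposition \ref{prop:source_term} hold under \ref{BED}, the contraction argument there carries over to yield $\Nt g^\eps_N \Nt_{\SSSs} \le c_3$. For the mixed part $f^\eps_{\mix, N}$, I would combine the convolution bound of Lemma \ref{lem:decay_regularization_convolution_kinetic_semigroup} with the non-closed estimate \eqref{eq:BEDNon} applied at an index $j$ whose dual space embeds into $\SSsm$, producing $\Nt f^\eps_{\mix, N} \Nt_{\SSSm} \le c_2$ under the cascade $\eps \ll c_3 \ll \eta \ll c_2 \ll 1$.

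The heart of the argument, and the step where genuine new input is required, is the estimate on $f^\eps_{\kin, N}$ in $\SSSh_j$ for $j \in \{-1, 0\}$. Since $\QQ$ need not satisfy a symmetric dual bound, I would abandon the Duhamel formulation and run an energy method in the equivalent inner product $\dlla \cdot , \cdot \drra_{\SSh_j, \eps}$ constructed in Proposition \ref{prop:dissipative_kinetic_inner_product}. Testing \eqref{eq:scheme} against $f^\eps_{\kin, N}$ in this product and invoking the dissipativity
\begin{equation*}
\re \dlla (\LL - \eps v \cdot \nabla_x) f^\eps_{\kin, N}, f^\eps_{\kin, N} \drra_{\SSh_j, \eps} \le -\sigma \| f^\eps_{\kin, N} \|_{\SSh_j, \eps}^2 - \mu \| f^\eps_{\kin, N} \|_{\SShp_j}^2
\end{equation*}
yields a differential inequality whose two nonlinear contributions are to be handled separately. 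The first, $\frac{1}{\eps} \dlla \PP^\eps_\kin \QQ(f^\eps_{\kin, N-1}, f^\eps_{\kin, N}), f^\eps_{\kin, N} \drra_{\SSh_j, \eps}$, has precisely the \emph{closing} structure $\la \QQ(f,g), g\ra$ and can be estimated by the trilinear bound of Proposition \ref{prop:dissipative_kinetic_inner_product}; combined with Young's inequality and the induction hypothesis $\Nt f^\eps_{\kin, N-1} \Nt_{\SSSh_j} \le C_1 \|\PP^\eps_\kin f_\ini\|_{\SSh_j}$, it will be absorbed by the dissipation $\frac{\mu}{\eps^2} \|f^\eps_{\kin, N}\|_{\SShp_j}^2$ provided $\|\PP^\eps_\kin f_\ini\|_{\SSh_j}$ is small compared to $1/\eps$, which is consistent with the \textit{a priori} bound $c_0/\eps$. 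The second term, coming from $\QQ^\sym(f^\eps_{\kin, N}, f^\eps_{\hyd, N-1} + f^\eps_{\mix, N-1})$, splits into the contribution with $f^\eps_{\kin, N}$ in the second slot (again closed by the trilinear estimate) and the contribution with $f^\eps_{\kin, N}$ in the first slot, for which I would use the non-closed dual bound of Proposition \ref{prop:dissipative_kinetic_inner_product}. The latter demands control of $f^\eps_{\hyd, N-1}$ and $f^\eps_{\mix, N-1}$ in $\SSh_{j+1}$ and $\SShp_{j+1}$, which is supplied by the embeddings $\SSs \hookrightarrow \SSh_1$, $\SSsp \hookrightarrow \SShp_1$ together with the \emph{a priori} bounds on $f_\ns$ and $f^\eps_\disp$ and the induction hypotheses on $g^\eps_{N-1}$ and $f^\eps_{\mix, N-1}$.

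Integrating against $e^{2 \sigma t / \eps^2}$ and taking the supremum in $t \in [0, T)$ would then close the estimate in the norm \eqref{eq:defiTildeSSSl} with a constant $C_1$ that can be made as close to $1$ as desired. The hard part will be the careful orchestration of the smallness parameters: the closure of the kinetic energy estimate imposes constraints that involve the bounds on the hydrodynamic and mixed parts (controlled through $c_3$ and $c_2$), while those bounds themselves require the smallness conditions inherited from Section \ref{scn:proof_hydrodynamic_limit_symmetrizable}. The correct order in which to fix the constants is $\eta$ first, then $c_2$, then $c_3$ and $C_1$, and finally the universal threshold $\eps_0$, with an additional smallness of $c_0$ guaranteeing the absorption of the stiff nonlinearity in the energy estimate. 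Since the scheme \eqref{eq:scheme} is linear in its unknown $f^\eps_{\kin, N}$, existence of each iterate follows from standard linear semigroup theory in $\SSh_{-1}$; only the uniform-in-$N$ bound \eqref{eq:inductive_stability} is at stake in the present lemma.
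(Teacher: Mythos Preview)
Your sketch follows essentially the same route as the paper: induction on $N$, an energy estimate for $f^\eps_{\kin,N}$ in the dissipative inner product $\dlla\cdot,\cdot\drra_{\SSh_j,\eps}$ of Proposition \ref{prop:dissipative_kinetic_inner_product}, the closed trilinear bound for $\QQ(f^\eps_{\kin,N-1},f^\eps_{\kin,N})$ tested against $f^\eps_{\kin,N}$, and a combination of the closed and non-closed bounds for the coupling terms with $f^\eps_{\hyd,N-1}+f^\eps_{\mix,N-1}$. Two minor remarks: for $f^\eps_{\mix,N}$ and $g^\eps_N$ there is no need to invoke \ref{BED} at all---the paper simply reuses the estimates of Section \ref{scn:proof_hydrodynamic_limit_symmetrizable}, which rely only on \ref{Bbound} in the small space $\Ss$; and the parameter $c_0$ plays no role in the closure of the kinetic energy estimate, since $\|\PP^\eps_\kin f_\ini\|_{\SSh_j}$ is bounded in terms of the fixed data $f_\ini$ and the absorption condition is just $\eps\ll 1$.
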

		\begin{proof} The proof of the Lemma is made by induction over $N$. 
			Thanks to \eqref{eq:equivE}, it will turn useful to estimate the various norms rather with $\|\cdot\|_{ \SSh_j, \eps}$. 
			
			The estimates \eqref{eq:inductive_stability} are satisfied for $N = 0$. Let us assume they hold at rank $N-1$ for some~$N \ge 1$ and deduce them at rank $N$. 
			
			\medskip
			We recall that $f^\eps_{\kin,N}$ is a solution to \eqref{eq:scheme} where, thanks to Proposition \ref{prop:dissipative_kinetic_inner_product},  it holds
			\begin{gather*}
				\frac{1}{\eps^2} \dlla (\LL - \eps v \cdot \nabla_x) f^\eps_{\kin, N}, f^\eps_{\kin, N} \drra_{  \SSh_j, \eps} \le - \frac{\mu}{\eps^2} \| f^\eps_{\kin, N} \|^2_{  \SShp_{j}} - \frac{\sigma}{\eps^2} \| f^\eps_{\kin, N} \|^2_{  \SSh_j, \eps}\,,
			\end{gather*}
			as well as
			\begin{equation*}\begin{split}
				\dlla \PP^\eps_\kin &\QQ(  f^\eps_{\kin, N-1} , f^\eps_{\kin, N}), f^\eps_{\kin, N} \drra_{  \SSh_j, \eps} \\
				& \lesssim \| f^\eps_{\kin, N} \|_{  \SShp_{j}} \left(  \| f^\eps_{\kin, N} \|_{  \SShp_{j}} \| f^\eps_{\kin, N-1} \|_{  \SSh_{j-1}} + \| f^\eps_{\kin, N} \|_{  \SSh_j} \| f^\eps_{\kin, N-1} \|_{  \SShp_{j}} \right).
			\end{split}\end{equation*}
			Still using Proposition \ref{prop:dissipative_kinetic_inner_product}, the coupling term is estimated thanks to the non-closed estimate combined with the injections $\SSsp \hookrightarrow \SShp_{j+1}$ and $\SSs \hookrightarrow  \SSh_{j+1}$, and the closed estimate respectively:
			\begin{equation*}\begin{split}
					\dlla \PP^\eps_\kin \QQ( & f^\eps_{\kin, N} , f^\eps_{\hyd, N-1} + f^\eps_{\mix, N-1}), f^\eps_{\kin, N} \drra_{ \SSh_j, \eps} \\
					&\lesssim  \| f^\eps_{\kin, N} \|_{ \SShp_{j}}^2 \left( \| f^\eps_{\hyd, N-1} \|_{\SSs} + \| f^\eps_{\mix, N-1} \|_{\SSs} \right) \\
					& + \| f^\eps_{\kin, N} \|_{ \SShp_{j}} \| f^\eps_{\kin, N} \|_{ \SSh_j} \left( \| f^\eps_{\hyd, N-1} \|_{\SSsp} + \| f^\eps_{\mix, N-1} \|_{\SSsp} \right),
			\end{split}\end{equation*}
			and
			\begin{equation*}\begin{split}
					\dlla \PP^\eps_\kin \QQ( & f^\eps_{\hyd, N-1} + f^\eps_{\mix, N-1} , f^\eps_{\kin, N}), f^\eps_{\kin, N} \drra_{ \SSh_j, \eps} \\
					\lesssim & \| f^\eps_{\kin, N} \|_{ \SShp_{j}}^2 \left( \| f^\eps_{\hyd, N-1} \|_{\SSs} + \| f^\eps_{\mix, N-1} \|_{\SSs} \right) \\
					& + \| f^\eps_{\kin, N} \|_{ \SSh_j} \| f^\eps_{\kin, N} \|_{ \SShp_{j}} \left( \| f^\eps_{\hyd, N-1} \|_{\SSsp} + \| f^\eps_{\mix, N-1} \|_{\SSsp} \right).
			\end{split}\end{equation*}
			Note that the second part of both previous estimates coincide. Put together, and multiplying by $e^{2 \sigma t / \eps^2}$, we have the energy estimate
			\begin{equation}\begin{split}\label{eq:I_1--I_4}
					\frac{1}{2} \frac{\d}{\d t} \left( e^{2 \sigma t / \eps^2} \| f^\eps_{\kin, N} \|_{ \SSh_j, \eps}^2 \right)   + \frac{\mu}{\eps^2} e^{2 \sigma t / \eps^2} \| f^\eps_{\kin, N} \|^2_{ \SShp_{j}}\lesssim \mathscr{I}_{1}(t)+\mathscr{I}_2(t)+\mathscr{I}_3(t)+\mathscr{I}_4(t)\end{split}\end{equation}
			where we introduced  
			\begin{equation*}\begin{split}
					\mathscr{I}_1&=\frac{1}{\eps} e^{2 \sigma t / \eps^2} \| f^\eps_{\kin, N} \|_{ \SShp_{j}}\bigg(  \| f^\eps_{\kin, N} \|_{ \SShp_{j}} \| f^\eps_{\kin, N-1} \|_{\SSh_j} + \| f^\eps_{\kin, N} \|_{\SSh_j} \| f^\eps_{\kin, N-1} \|_{\SShp_{j}} \bigg) \\
					\mathscr{I}_2&= \frac{1}{\eps} e^{2 \sigma t / \eps^2} \| f^\eps_{\kin, N} \|_{\SShp_{j}} \| f^\eps_{\kin, N} \|_{\SSh_j} \left( \| f^\eps_{\hyd, N-1} \|_{\SSsp} + \| f^\eps_{\mix, N-1} \|_{\SSsp} \right) \\
					\mathscr{I}_3&=\frac{1}{\eps} e^{2 \sigma t / \eps^2} \| f^\eps_{\kin, N} \|_{\SShp_{j}}^2 \left( \| f^\eps_{\hyd, N-1} \|_{\SSs} + \| f^\eps_{\mix, N-1} \|_{\SSs} \right) \\
					\mathscr{I}_4&=\frac{1}{\eps} e^{2 \sigma t / \eps^2} \| f^\eps_{\kin, N} \|_{\SSh_j} \| f^\eps_{\kin, N} \|_{\SShp_{j}} \left( \| f^\eps_{\hyd, N-1} \|_{\SSsp} + \| f^\eps_{\mix, N-1} \|_{\SSsp} \right).
			\end{split}\end{equation*}
			One easily sees that
			$$\int_0^T\mathscr{I}_1(t) \d t \lesssim \eps \Nt f_{\kin,N}^\eps\Nt_{\SSSh_j}^2\,\Nt f_{\kin,N-1}^\eps\Nt_{\SSSh_j}$$
			where we recall the definition \eqref{eq:defiTildeSSSl} and the fact that $\|\cdot\|_{\SSh_j} \lesssim \|\cdot\|_{\SShp_{j}}$.
			We write the time integral of the second term as follows
			\begin{multline*}
				\int_0^T \mathscr{I}_2(t)\d t=\eps \int_0^T \left[ \frac{1}{\eps} e^{\sigma t / \eps^2} \| f^\eps_{\kin, N} \|_{\SShp_{j}} \right] \left[ \frac{1}{\eps} e^{\sigma t / \eps^2 } \| f^\eps_{\kin, N} \|_{\SSh_j} \right]  \| f^\eps_{\hyd, N-1} \|_{\SSsp} \d t\\
				+\eps \int_0^T \left[ \frac{1}{\eps} e^{\sigma t / \eps^2} \| f^\eps_{\kin, N} \|_{\SShp_{j}} \right]\left[ \frac{1}{\eps} \| f^\eps_{\mix, N-1} \|_{\SSsp} \right]  \left[ e^{\sigma t / \eps^2 } \| f^\eps_{\kin, N} \|_{\SSh_j} \right]   \d t 
			\end{multline*}
			where, in each integral, the first two terms in brackets belong to $L^2(0,T)$ whereas the third one belongs to $L^\infty(0,T)$. Then, it is easy to deduce that
			$$\int_0^T \mathscr{I}_2(t) \d t\lesssim \eps w_{ f_\ns, \eta }(T)^{-1} \Nt f^\eps_{\kin, N} \Nt^2_{\SSSh_j} \left( \Nt f^\eps_{\hyd, N-1} \Nt_\SSSs + \Nt f^\eps_{\mix, N-1} \Nt_\SSSm \right)\,
			$$
			where we used again that $\|\cdot\|_{\SSh_j} \lesssim \|\cdot\|_{\SShp_{j}}$. One also sees that
			\begin{multline*}
				\int_0^T\mathscr{I}_3(t)\d t \lesssim    
				\eps \int_0^T  \left[ \frac{1}{\eps} e^{\sigma t / \eps^2} \| f^\eps_{\kin, N} \|_{\SShp_{j}} \right]^2 \left( \| f^\eps_{\hyd, N-1} \|_{\SSs} + \| f^\eps_{\mix, N-1} \|_{\SSs} \right) \d t \\
				\lesssim \eps w_{ f_\ns, \eta }(T)^{-1} \Nt f^\eps_{\kin, N} \Nt^2_{\SSSh_j} \left( \Nt f^\eps_{\hyd, N-1} \Nt_{\SSSs} + \Nt f^\eps_{\mix, N-1} \Nt_{\SSSm} \right)\,.\end{multline*} 
			Finally, the term involving $\mathscr{I}_4$ is dealt with as the one involving $\mathscr{I}_2$ writing
			\begin{multline*}
				\int_0^T \mathscr{I}_4(t)\d t = 	\eps \int_0^T \left[ \frac{1}{\eps} e^{\sigma t / \eps^2} \| f^\eps_{\kin, N} \|_{\SSh_j} \right] \left[ \frac{1}{\eps} e^{\sigma t / \eps^2} \| f^\eps_{\kin, N} \|_{\SShp_{j}} \right] \| f^\eps_{\hyd, N-1} \|_{\SSsp} \d t \\
				+ \eps \int_0^T  \left[ \frac{1}{\eps} e^{\sigma t / \eps^2} \| f^\eps_{\kin, N} \|_{\SShp_{j}} \right] \left[ \frac{1}{\eps} \| f^\eps_{\mix, N-1} \|_{\SSsp} \right] \left[ e^{\sigma t / \eps^2} \| f^\eps_{\kin, N} \|_{\SSh_j} \right]\d t 
			\end{multline*}
			where for both integrals, the first two terms in brackets belong to $L^2(0,T)$ whereas the third one belongs to $L^\infty(0,T)$. This gives easily
			$$\int_0^T \mathscr{I}_4(t)\d t \lesssim \eps w_{ f_\ns, \eta }(T)^{-1} \Nt f^\eps_{\kin, N} \Nt^2_{\SSSh_j} \left( \Nt f^\eps_{\hyd, N-1} \Nt_{\SSSs} + \Nt f^\eps_{\mix, N-1} \Nt_{\SSSm} \right).$$
			Coming back to \eqref{eq:I_1--I_4}, we finally deduce the recursive estimate
			\begin{equation*}\begin{split}
				\Nt f^\eps_{\kin, N} \Nt^2_{\SSSh_j} \lesssim  \eps & \Nt f^\eps_{\kin, N} \Nt^2_{ \SSSh_j} \Nt f^\eps_{\kin, N-1} \Nt_{ \SSSh_j}  + \| \PP^\eps_\kin f_\ini \|^2_{\mathbb{H}^s_x(\Sh_j)}\\
				&+ \eps w_{ f_\ns, \eta }(T)^{-1} \Nt f^\eps_{\kin, N} \Nt^2_{\SSSh_j} \left( \Nt f^\eps_{\hyd, N-1} \Nt_{\SSSs} + \Nt f^\eps_{\mix, N-1} \Nt_{\SSSm} \right),
			\end{split}\end{equation*} 
			and using the inductive hypothesis \eqref{eq:inductive_stability}
			\begin{equation*}
				\Nt f^\eps_{\kin, N} \Nt^2_{\SSSh_j} \lesssim   \eps \Big[ C_1 + w_{ f_\ns, \eta }(T)^{-1} \left( 1 + c_2 + c_3 \right)  \Big] \Nt f^\eps_{\kin, N} \Nt^2_{ \SSSh_j} + \| \PP^\eps_\kin f_\ini \|^2_{\tilde{\SSl}_j},
			\end{equation*}
			so that, for $\eps$ small enough, there holds
			$$
			\Nt f^\eps_{\kin, N} \Nt_{\SSSh_j} \lesssim  \| \PP^\eps_\kin f_\ini \|_{\SSSh_j},
			$$
			which allows to deduce the stability estimate at rank $N$ for $f^\eps_{\kin, N}$. 
			
			\medskip
			The proof of the stability estimates for $f^\eps_{\mix, N}$ and $g^\eps_N$ is the same as in Section \ref{scn:proof_hydrodynamic_limit_symmetrizable}. This concludes the proof.\end{proof}
		
		\subsection{Convergence of the scheme}
		
		The convergence will be proved in the larger space $\SSSh_{-1}$ using, of course, the stability estimate in $\SSSh_{-1} $, but also those in $\SSSh_0$ because of the non-closed estimates. To do so, we denote the difference and sum of successive approximate solutions as
		\begin{gather*}
			d^\eps_{\kin, N} := f^\eps_{\kin, N} - f^\eps_{\kin, N-1}, \qquad d^\eps_{\mix, N} := f^\eps_{\mix, N} - f^\eps_{\mix, N-1}\\
			d^\eps_{\hyd, N} := f^\eps_{\hyd, N} - f^\eps_{\hyd, N-1} = g^\eps_N - g^\eps_{N-1}\,. 
		\end{gather*}
		One has the following recursive estimate.
		\begin{prop} With the notations of Lemma \ref{lem:stable-scheme}, up to reducing again 
			$$\eps \ll c_3 \ll \eta \ll c_2 \ll 1,$$
			the following estimate  
			\begin{equation*}\begin{split}
				\Nt d^\eps_{\kin, N} \Nt_{ \SSSh_{-1}} & + \Nt d^\eps_{\hyd, N} \Nt_{ \SSSs } + \Nt d^\eps_{\mix, N} \Nt_{ \SSSm } \\
				&\le  \frac{1}{2} \Bigg( \Nt d^\eps_{\kin, N-1} \Nt_{\SSSh_{-1}} + \Nt d^\eps_{\hyd, N-1} \Nt_{\SSSs} + \Nt d^\eps_{\mix, N-1} \Nt_{\SSSm}\Bigg),
			\end{split}\end{equation*}
			holds for any $N\ge0.$
		\end{prop}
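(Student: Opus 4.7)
The plan is to derive evolution equations (or integral equations) for each of the three differences $d^\eps_{\kin,N}$, $d^\eps_{\mix,N}$, $d^\eps_{\hyd,N}$, and then estimate each in the right space using the ingredients already assembled: the stability bounds from Lemma~\ref{lem:stable-scheme}, the dissipative inner product of Proposition~\ref{prop:dissipative_kinetic_inner_product}, the bilinear estimates of Section~\ref{sec:Bilin}, and the source-term bounds of Section~\ref{scn:proof_hydrodynamic_limit_symmetrizable}. The smallness $\eps\ll c_3\ll\eta\ll c_2\ll 1$ will be used to absorb all coupling terms into a prefactor $\tfrac12$.

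For $d^\eps_{\kin,N}$, subtracting the equations at rank $N$ and $N-1$ in \eqref{eq:scheme} and using the bilinearity of $\QQ$ to split
\[
\QQ(f^\eps_{\kin,N-1},f^\eps_{\kin,N})-\QQ(f^\eps_{\kin,N-2},f^\eps_{\kin,N-1}) = \QQ(d^\eps_{\kin,N-1},f^\eps_{\kin,N}) + \QQ(f^\eps_{\kin,N-2},d^\eps_{\kin,N}),
\]
together with the analogous splitting for the coupling term, I obtain an equation for $d^\eps_{\kin,N}$ of the form
\[
\partial_t d^\eps_{\kin,N} = \frac{1}{\eps^2}(\LL-\eps v\cdot\nabla_x)d^\eps_{\kin,N} + \frac{1}{\eps}\PP^\eps_\kin\bigl[\QQ(f^\eps_{\kin,N-2},d^\eps_{\kin,N}) + \cdots\bigr],
\]
where the dots gather terms in $d^\eps_{\kin,N-1}$, $d^\eps_{\hyd,N-1}$, $d^\eps_{\mix,N-1}$ paired with one of the bounded backgrounds. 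The key step is an energy estimate in the larger space $\SSSh_{-1}$ using the equivalent inner product $\dlla\cdot,\cdot\drra_{\SSh_{-1},\eps}$ of Proposition~\ref{prop:dissipative_kinetic_inner_product}. The terms quadratic in $d^\eps_{\kin,N}$ — namely $\QQ(f^\eps_{\kin,N-2},d^\eps_{\kin,N})$ and $\QQ^{\sym}(d^\eps_{\kin,N},f^\eps_{\hyd,N-2}+f^\eps_{\mix,N-2})$ tested against $d^\eps_{\kin,N}$ — will be treated via the closed estimate of Proposition~\ref{prop:dissipative_kinetic_inner_product}, exactly as in the proof of Lemma~\ref{lem:stable-scheme}, with prefactors $\eps\,\|f^\eps_{\kin,N-2}\|_{\SSSh_{-1}}$ and $\eps w_{f_\ns,\eta}(T)^{-1}(\Nt f^\eps_{\hyd,N-2}\Nt_\SSSs+\Nt f^\eps_{\mix,N-2}\Nt_\SSSm)$, hence absorbable by the dissipation provided $\eps,c_2,c_3,\eta$ are small enough. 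The cross terms linear in the differences, namely $\QQ(d^\eps_{\kin,N-1},f^\eps_{\kin,N})$, $\QQ^{\sym}(f^\eps_{\kin,N-1},d^\eps_{\hyd,N-1}+d^\eps_{\mix,N-1})$, will be treated via the non-closed dual estimate, which produces factors $\|f^\eps_{\kin,N}\|_{\SSh_0}$, $\|f^\eps_{\kin,N-1}\|_{\SShp_0}$ etc. Crucially, these require control of the backgrounds in the \emph{stronger} space $\SSSh_0$ — which is exactly what Lemma~\ref{lem:stable-scheme} provides uniformly in $N$. After integration in time against $e^{2\sigma t/\eps^2}$ and application of Cauchy-Schwarz as in the $\mathscr{I}_1,\ldots,\mathscr{I}_4$ splitting of the previous proof, this yields
\[
\Nt d^\eps_{\kin,N}\Nt_{\SSSh_{-1}} \lesssim \bigl(\eps + \beta(\eps) + c_2 + c_3 + \eta\bigr)\Bigl(\Nt d^\eps_{\kin,N-1}\Nt_{\SSSh_{-1}} + \Nt d^\eps_{\hyd,N-1}\Nt_\SSSs + \Nt d^\eps_{\mix,N-1}\Nt_\SSSm\Bigr).
\]

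For $d^\eps_{\mix,N}$ and $d^\eps_{\hyd,N}$, the strategy is that of Section~\ref{scn:proof_hydrodynamic_limit_symmetrizable}. Writing
\[
d^\eps_{\mix,N} = \Psi^\eps_\kin\bigl[d^\eps_{\mix,N-1}+d^\eps_{\kin,N-1}\,,\,\Sigma_{N-1}+\Sigma_{N-2}\bigr],
\]
with $\Sigma_M := f^\eps_{\mix,M}+f^\eps_{\kin,M}$ uniformly bounded in $\SSSm+\SSSl$ by the stability lemma, and applying \eqref{eq:bilinear_mix_mix_mix} together with the stability bounds yields $\Nt d^\eps_{\mix,N}\Nt_\SSSm \lesssim \eps w_{f_\ns,\eta}(T)^{-1}(\Nt d^\eps_{\kin,N-1}\Nt_{\SSSh_{-1}}+\Nt d^\eps_{\mix,N-1}\Nt_\SSSm)$, using $\SSSh_{-1}\hookrightarrow \SSSl$ at the level of the appropriate norm comparisons. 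For $d^\eps_{\hyd,N}$, subtracting the equations for $g^\eps_N$ and $g^\eps_{N-1}$ gives
\[
d^\eps_{\hyd,N} = \Phi^\eps[f^\eps_{\kin,N-1},f^\eps_{\mix,N-1}]d^\eps_{\hyd,N-1} + \Psi^\eps_\hyd[g^\eps_{N-1}+g^\eps_{N-2},d^\eps_{\hyd,N-1}] + \text{(source differences)},
\]
where the source differences $\SS^\eps[f^\eps_{\kin,N-1},f^\eps_{\mix,N-1}]-\SS^\eps[f^\eps_{\kin,N-2},f^\eps_{\mix,N-2}]$ are controlled by the stability estimate of Proposition~\ref{prop:source_term}, yielding an $\eps w_{f_\ns,\eta}(T)^{-1}$ factor times $\Nt d^\eps_{\kin,N-1}\Nt_{\SSSh_{-1}} + \Nt d^\eps_{\mix,N-1}\Nt_\SSSm$. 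The linear operator $\Phi^\eps$ is estimated via Proposition~\ref{prop:linear_hydrodynamic} with a prefactor $\eta+\beta(\eps)+\eps w_{f_\ns,\eta}(T)^{-1}(c_2+c_3)+\eps$.

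Summing the three estimates, I obtain a combined inequality
\[
\sum_\star \Nt d^\eps_{\star,N}\Nt \le C\bigl(\eta+\beta(\eps)+c_2+c_3+\eps w_{f_\ns,\eta}(T)^{-1}\bigr)\sum_\star \Nt d^\eps_{\star,N-1}\Nt,
\]
and choosing $\eps \ll c_3\ll \eta\ll c_2\ll 1$ sufficiently small so that the prefactor on the right does not exceed $\tfrac12$ concludes the proof. The main obstacle is the first step, where the loss of regularity/weight in the non-closed bilinear estimate for $\QQ$ forces us to measure the differences in the enlarged space $\SSSh_{-1}$ while needing the backgrounds controlled in $\SSSh_0$; this is precisely why the stability Lemma~\ref{lem:stable-scheme} was established in both indices $j=-1$ and $j=0$ simultaneously, and why the modified inner product of Proposition~\ref{prop:dissipative_kinetic_inner_product} is indispensable in order to retain a dissipation term in $\|d^\eps_{\kin,N}\|_{\SShp_{-1}}$ that can absorb the non-closed right-hand side.
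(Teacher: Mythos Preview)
Your plan is essentially the paper's own proof: derive the equation for $d^\eps_{\kin,N}$, run the energy method in $\SSh_{-1}$ with the dissipative inner product of Proposition~\ref{prop:dissipative_kinetic_inner_product}, use the closed estimate on the terms quadratic in $d^\eps_{\kin,N}$ and the non-closed one on the cross terms (controlling the backgrounds in the stronger space $\SSh_0$ via Lemma~\ref{lem:stable-scheme}), then handle $d^\eps_{\mix,N}$ and $d^\eps_{\hyd,N}$ exactly as in Section~\ref{scn:mapping_contraction}. Your algebraic splitting $\QQ(f^\eps_{\kin,N-1},f^\eps_{\kin,N})-\QQ(f^\eps_{\kin,N-2},f^\eps_{\kin,N-1}) = \QQ(d^\eps_{\kin,N-1},f^\eps_{\kin,N}) + \QQ(f^\eps_{\kin,N-2},d^\eps_{\kin,N})$ differs from the paper's $\QQ(f^\eps_{\kin,N-1},d^\eps_{\kin,N}) + \QQ(d^\eps_{\kin,N-1},f^\eps_{\kin,N-1})$ only by which background index appears, and both work equally well since all backgrounds are uniformly bounded.

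One point to correct: you write ``using $\SSSh_{-1}\hookrightarrow \SSSl$'' for the $d^\eps_{\mix,N}$ step, but this embedding goes the wrong way ($\SSSl=\SSSh_0\hookrightarrow\SSSh_{-1}$, not the reverse). The correct justification is that assumption~\ref{BED}(a) ensures \ref{LE} holds in the enlarged space $\Sh_{-1}$, so Theorem~\ref{thm:enlarged_thm} and all the bilinear estimates of Propositions~\ref{prop:bilinear_hydrodynamic}--\ref{prop:bilinear_kinetic} (except~\eqref{eq:bilinear_kin_kin_kin}) apply directly with $\SSSl$ replaced by $\SSSh_{-1}$. Also, your prefactor for the kinetic difference is looser than needed: the paper obtains simply $\eps\,w_{f_\ns,\eta}(T)^{-1}$ rather than $\eps+\beta(\eps)+c_2+c_3+\eta$; the $\eta$ and $\beta(\eps)$ contributions only enter through the hydrodynamic and mixed parts.
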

		\begin{proof} As for the proof of  Lemma \ref{lem:stable-scheme}, the difficulty lies in estimating $d^\eps_{\kin,N}$ which solves
			\begin{equation*}
				\begin{cases}
					\displaystyle
					\begin{aligned}
						\partial_t d^\eps_{\kin, N} = & \frac{1}{\eps^2} \big(\LL - \eps v \cdot \nabla_x\big) d^\eps_{\kin, N} \\
						& + \frac{1}{\eps} \PP^\eps_\kin \QQ(f^\eps_{\kin, N-1}, d^\eps_{\kin, N}) + \frac{1}{\eps} \PP^\eps_\kin \QQ(d^\eps_{\kin, N-1}, f^\eps_{\kin, N-1}) \\
						& + \frac{2}{\eps} \PP^\eps_\kin \QQ^\sym(f^\eps_{\hyd, N-1} + f^\eps_{\mix, N-1} , d^\eps_{\kin, N}) \\
						&+ \frac{2}{\eps} \PP^\eps_\kin \QQ^\sym(f^\eps_{\kin, N-1}, d^\eps_{\hyd, N-1} + d^\eps_{\mix, N-1} ), 
					\end{aligned}\\
					d^\eps_{\kin, N}(0) = 0.
				\end{cases}
			\end{equation*}
			We use as previously the equivalent norms $\|\cdot\|_{\SSh_{-1}, \eps}$ which allows the use of  dissipativity:
			\begin{gather*}
				\frac{1}{\eps^2} \dlla (\LL - \eps v \cdot \nabla_x) d^\eps_{\kin, N}, d^\eps_{\kin, N}  \drra_{\SSh_{-1}, \eps} \le - \frac{\sigma}{\eps^2} \| d^\eps_{\kin, N} \|^2_{\SSh_{-1}, \eps} - \frac{\mu}{\eps^2} \| d^\eps_{\kin, N} \|^2_{\SShp_{-1}},
			\end{gather*}
			as well as the closed estimate:
			\begin{equation*}\begin{split}
					\dlla \PP^\eps_\kin \QQ( f^\eps_{\kin, N-1} , d^\eps_{\kin, N}), d^\eps_{\kin, N} \drra_{\SSh_{-1}, \eps}
					&\lesssim  \| d^\eps_{\kin, N} \|_{\SShp_{-1}}^2 \| f^\eps_{\kin, N-1} \|_{\SSh_{-1}} \\
					&\phantom{++++}  + \| d^\eps_{\kin, N} \|_{\SShp_{-1}} \| d^\eps_{\kin, N} \|_{\SSh_{-1}} \| f^\eps_{\kin, N-1} \|_{\SShp_{-1}},
			\end{split}\end{equation*}
			and the non-closed estimate involving the $\SSh_{0} $ and $\SShp_{0}$--norms of $f^\eps_{\kin, N-1}$:
			\begin{equation*}\begin{split}
					\dlla \PP^\eps_\kin \QQ( d^\eps_{\kin, N-1} &, f^\eps_{\kin, N-1}), d^\eps_{\kin, N} \drra_{\SSh_{-1}, \eps}\\
					&\lesssim  \| d^\eps_{\kin, N} \|_{\SShp_{-1}}  \| d^\eps_{\kin, N-1} \|_{\SSh_{-1}} \| f^\eps_{\kin, N-1} \|_{\SShp_{0}} \\
					& \phantom{++++} + \| d^\eps_{\kin, N} \|_{\SShp_{-1}}  \| d^\eps_{\kin, N-1} \|_{\SShp_{-1}} \| f^\eps_{\kin, N-1} \|_{\SSh_{0}}.
			\end{split}\end{equation*}
			The coupling term involving $d^\eps_{\kin, N}$ is estimated using both the closed and non-closed estimates, together with the injections $\SSs \hookrightarrow \SSh_{0}$ and $\SSsp \hookrightarrow \SShp_{0}$:
			\begin{equation*}\begin{split}
					\dlla \PP^\eps_\kin \QQ^\sym&\left( f^\eps_{\hyd, N-1} + f^\eps_{\mix, N-1} , d^\eps_{\kin, N} \right), d^\eps_{\kin, N} \drra_{\SSh_{-1}, \eps} \\
					&\lesssim  \| d^\eps_{\kin, N} \|_{ \SShp_{-1}}^2 \left( \| f^\eps_{\hyd, N-1} \|_{ \SSs } + \| f^\eps_{\mix, N-1} \|_{ \SSs } \right) \\
					& + \| d^\eps_{\kin, N} \|_{ \SSh_{-1} } \| d^\eps_{\kin, N} \|_{\SSh_{-1} } \left( \| f^\eps_{\hyd, N-1} \|_{ \SSsp } + \| f^\eps_{\mix, N-1} \|_{ \SSsp } \right),
			\end{split}\end{equation*}
			and the coupling term involving $f^\eps_{\kin, N-1}$ using the non-closed estimate, together with the injections $\SSs \hookrightarrow \SSh_{0} \hookrightarrow \SSh_{-1}$ and $\SSsp \hookrightarrow \SShp_{0} \hookrightarrow \SShp_{-1}$:
			\begin{equation*}\begin{split}
					\dlla \PP^\eps_\kin \QQ^\sym&\left( d^\eps_{\hyd, N-1} + d^\eps_{\mix, N-1} , f^\eps_{\kin, N-1} \right), d^\eps_{\kin, N} \drra_{\SSh_{-1}, \eps} \\
					&\lesssim  \| d^\eps_{\kin, N} \|_{\SShp_{-1} } \| f^\eps_{\kin, N-1} \|_{ \SShp_{0} } \left( \| d^\eps_{\hyd, N-1} \|_{ \SSs } + \| d^\eps_{\mix, N-1} \|_{ \SSs } \right) \\
					&\phantom{+++} + \| d^\eps_{\kin, N} \|_{\SShp_{-1} } \| f^\eps_{\kin, N-1} \|_{\SSh_{0} } \left( \| d^\eps_{\hyd, N-1} \|_{ \SSsp } + \| d^\eps_{\mix, N-1} \|_{ \SSsp } \right)\,.
			\end{split}\end{equation*}
			Multiplying these estimates by $e^{2 \sigma t / \eps^2}$, we get the energy estimate
			\begin{equation*}\begin{split}
				\frac{1}{2} \frac{\d }{\d t} \Big( e^{2 \sigma t / \eps^2}  & \| d^\eps_{\kin, N} \|_{\SSh_{-1}, \eps}^2 \Big) + \frac{\mu}{\eps^2} e^{2 \sigma t / \eps^2} \| d^\eps_{\kin, N} \|^2_{\SShp_{-1}} \\
				\lesssim & \frac{1}{\eps} e^{2 \sigma t / \eps^2} \| d^\eps_{\kin, N} \|_{\SShp_{-1}}^2 \| f^\eps_{\kin, N-1} \|_{\SSh_{-1}}  \\
				&+ \frac{1}{\eps} e^{2 \sigma t / \eps^2} \| d^\eps_{\kin, N} \|_{\SShp_{-1}} \Bigg(\| d^\eps_{\kin, N} \|_{\SSh_{0}} \| f^\eps_{\kin, N-1} \|_{\SSh_{-1}} \\
				&\phantom{++++} + \| d^\eps_{\kin, N-1} \|_{\SSh_{-1}} \| f^\eps_{\kin, N-1} \|_{\SShp_{0}} 
			+ \| d^\eps_{\kin, N-1} \|_{\SShp_{-1}} \| f^\eps_{\kin, N-1} \|_{\SShp_{1}} \Bigg)\\
				& + \frac{1}{\eps} e^{2 \sigma t / \eps^2} \| d^\eps_{\kin, N} \|_{ \SShp_{-1} }^2 \left( \| f^\eps_{\hyd, N-1} \|_{ \SSs } + \| f^\eps_{\mix, N-1} \|_{ \SSs } \right) \\
				& + \frac{1}{\eps} e^{2 \sigma t / \eps^2} \| d^\eps_{\kin, N} \|_{ \SSh_{-1} } \| d^\eps_{\kin, N} \|_{ \SSh_{-1} } \left( \| f^\eps_{\hyd, N-1} \|_{ \SSsp } + \| f^\eps_{\mix, N-1} \|_{ \SSsp } \right) \\
				& + \frac{1}{\eps} e^{2 \sigma t / \eps^2} \| d^\eps_{\kin, N} \|_{\SShp_{-1} } \| f^\eps_{\kin, N-1} \|_{ \SShp_{0} } \left( \| d^\eps_{\hyd, N-1} \|_{ \SSs } + \| d^\eps_{\mix, N-1} \|_{ \SSs } \right) \\
				& + \frac{1}{\eps} e^{2 \sigma t / \eps^2} \| d^\eps_{\kin, N} \|_{ \SShp_{-1} } \| f^\eps_{\kin, N-1} \|_{ \SSh_{0} } \left( \| d^\eps_{\hyd, N-1} \|_{ \SSsp } + \| d^\eps_{\mix, N-1} \|_{ \SSsp } \right).
			\end{split}\end{equation*}
			Arguing as in the proof of Lemma \ref{lem:stable-scheme}, we then obtain
			\begin{align*}
				\Nt d^\eps_{\kin, N} \Nt^2_{ \SSSh_{-1} } &\lesssim  \eps \Nt d^\eps_{\kin, N} \Nt_{\SSSh_{-1}}^2 \Nt f^\eps_{\kin, N-1} \Nt_{\SSSh_{-1} }   + \eps \Nt d^\eps_{\kin, N} \Nt_{\SSSh_{-1}} \Nt d^\eps_{\kin, N-1} \Nt_{\SSSh_{-1}} \Nt f^\eps_{\kin, N-1} \Nt_{ \SSSh_{0} } \\
				& + \eps w_{ f_\ns, \eta }(T)^{-1} \Nt d^\eps_{\kin, N} \Nt_{\SSSh_{-1}}^2 \left( \Nt f^\eps_{\hyd, N-1} \Nt_\SSSs + \Nt f^\eps_{\mix, N-1} \Nt_\SSSm \right) \\
				& + \eps w_{ f_\ns, \eta }(T)^{-1} \Nt d^\eps_{\kin, N} \Nt_{\SSSh_{-1}} \Nt f^\eps_{\kin, N-1} \Nt_{\SSSh_{0}} \left( \Nt d^\eps_{\hyd, N-1} \Nt_\SSSs + \Nt d^\eps_{\mix, N-1} \Nt_\SSSm \right),
			\end{align*}
			and thus using the stability estimates \eqref{eq:inductive_stability} 
			$$\Nt d^\eps_{\kin, N} \Nt_{ \SSSh_{-1} } \lesssim   \eps w_{ f_\ns, \eta }(T)^{-1} \left( \Nt d^\eps_{\kin, N-1} \Nt_{ \SSSh_{-1}} + \Nt d^\eps_{\hyd, N-1} \Nt_{\SSSs} + \Nt d^\eps_{\mix, N-1} \Nt_{\SSSm} \right)$$
			i.e., and assuming $\eps$ small enough,
			$$\Nt d^\eps_{\kin, N} \Nt_{ \SSSh_{-1} } \le  \frac{1}{4} \left( \Nt d^\eps_{\kin, N-1} \Nt_{ \SSSh_{-1}} + \Nt d^\eps_{\hyd, N-1} \Nt_{\SSSs} + \Nt d^\eps_{\mix, N-1} \Nt_{\SSSm} \right).$$
			Arguing in the very same way as in Section \ref{scn:mapping_contraction}, one also shows
			\begin{align*}
				\Nt d^\eps_{\hyd, N} \Nt_{ \SSSs } + \Nt d^\eps_{\mix, N} \Nt_{ \SSSm } \le & \frac{1}{4} \bigg( \Nt d^\eps_{\kin, N-1} \Nt_{\SSSh_{-1}} + \Nt d^\eps_{\hyd, N-1} \Nt_{\SSSs} + \Nt d^\eps_{\mix, N-1} \Nt_{\SSSm} \bigg)\,.
			\end{align*}
			Summing up the last two estimates yields the result.  
		\end{proof}
		The above result allows to prove in a standard way the convergence of the approximate solutions $\left\{( f^\eps_{\kin, N} , f^\eps_{\mix, N} , g^\eps_N )\right\}_N$ to some limit $( f^\eps_{\kin} , f^\eps_{\mix} , g^\eps )$ solving the system
		\begin{equation*}
			\begin{cases}
				\displaystyle 
				\begin{aligned}
					\partial_t f^\eps_\kin = \frac{1}{\eps^2} \left(\LL - \eps v \cdot \nabla_x\right) f^\eps_\kin & + \frac{1}{\eps} \PP^\eps_\kin \QQ(f^\eps_\kin, f^\eps_\kin)  \\
					& + \frac{2}{\eps} \PP^\eps_\kin \QQ^\sym(f^\eps_\kin, f^\eps_\hyd + f^\eps_\mix), \quad f^\eps_\kin(0) = \PP^\eps_\kin f_\ini,
				\end{aligned} \\
				f^\eps_\mix = \Psi^\eps_\kin\left[f^\eps_\mix + f^\eps_\hyd , f^\eps_\mix + f^\eps_\hyd \right],\\
				g^\eps = \Phi^\eps[ f^\eps_\kin, f^\eps_\mix ] g^\eps + \Psi^\eps_\hyd(g^\eps, g^\eps) + \SS^\eps.
			\end{cases}
		\end{equation*}
		
		A similar argument as the one from Section \ref{scn:uniqueness_simmetrizable} performed in $\SSSh_{-1}$ yields the uniqueness, which implies the uniqueness in $\SSSh_{0} = \SSSl$ and achieves the proof of Theorem \ref{thm:hydrodynamic_limit} under Assumptions \ref{BED}.

		\appendix 
		\section{About Assumptions \ref{L1}--\ref{L4} and \ref{Bortho}-\ref{Bbound}}
		\label{sec:Landau-Boltz}
		
		The various assumptions \ref{L1}--\ref{L4} and \ref{Bortho}--\ref{Bisotrop}, as well as the ``enlargement ones'' \ref{LE}, \ref{BE} and \ref{BED} were identified as being the properties shared by the Boltzmann and Landau equations in a close to equilibrium setting. We specify in this section that the aforementioned assumptions are proven in the literature, with precise references.
		
		\subsection{The case of the classical Boltzmann equation}
		
		Let us recall that, in the case of the classical Boltzmann equations, the distribution $\mu$ can be taken assumed to be the centered maxwellian:
		$$\mu(v) := (2 \pi)^{-d/2} \exp\left( - \frac{|v|^2}{2} \right), \qquad E = d, \quad K = 1 + \frac{2}{d} \, .$$
		The linear operator $\LL$ is the linearized nonlinear operator $\QQ$ around $\mu$:
		$$\LL f := \QQ(\mu, f) + \QQ(\mu, f)$$
		where $\QQ$ is defined as
		$$\QQ(f, f) := \int_{\R^3_{v_*} \times \S^{d-1}_\sigma } | v - v_* |^\gamma b(\cos \theta)  \Big( f(v') f(v'_*) - f(v) f(v_*) \Big) \d \sigma \d v_* \, ,$$
		for some parameter $\gamma \in (-3, 1)$ and some positive function $b$ smooth on $(0, 1]$, and the pre-collisional velocities $v', v_*'$ as well as the deviation angle are defined as
		$$v' = \frac{v+v_*}{2} + \sigma \frac{|v - v_* |}{2}, \qquad v' = \frac{v+v_*}{2} - \sigma \frac{|v - v_* |}{2}, \qquad \cos \theta := \sigma \cdot \frac{v-v_*}{ | v - v_* | }$$
		and illustrated in Figure \ref{fig:collisional geometry}.
		
		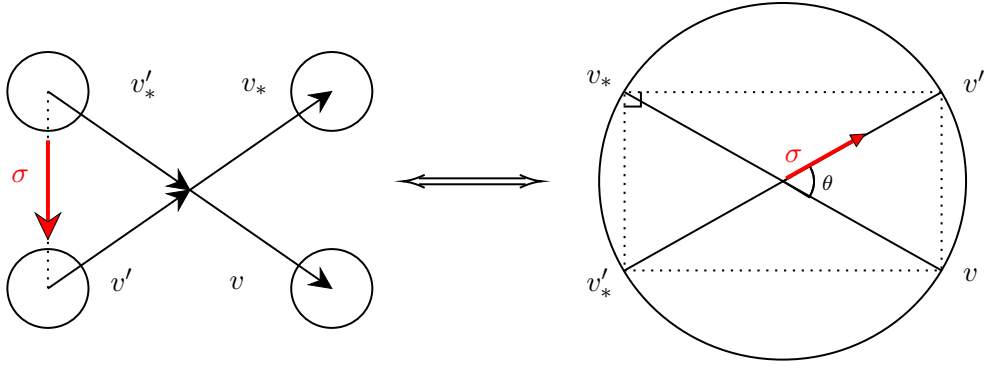
\begin{figure}[h]
			\label{fig:collisional geometry}

			\tikzset{every picture/.style={line width=0.75pt}} %set default line width to 0.75pt        
			
			\begin{tikzpicture}[x=0.75pt,y=0.75pt,yscale=-1,xscale=1]
				%uncomment if require: \path (0,256); %set diagram left start at 0, and has height of 256
				
				%Shape: Arc [id:dp08086903276301782] 
				\draw  [draw opacity=0] (424.93,82.17) .. controls (426.46,84.49) and (427.33,87.25) .. (427.29,90.19) .. controls (427.26,93.08) and (426.35,95.76) .. (424.81,98.02) -- (411.68,90) -- cycle ; \draw   (424.93,82.17) .. controls (426.46,84.49) and (427.33,87.25) .. (427.29,90.19) .. controls (427.26,93.08) and (426.35,95.76) .. (424.81,98.02) ;  
				%Shape: Ellipse [id:dp33313376884218915] 
				\draw   (318.91,90) .. controls (318.91,40.29) and (360.33,0) .. (411.44,0) .. controls (462.54,0) and (503.96,40.29) .. (503.96,90) .. controls (503.96,139.71) and (462.54,180) .. (411.44,180) .. controls (360.33,180) and (318.91,139.71) .. (318.91,90) -- cycle ;
				%Shape: Rectangle [id:dp39177993456247306] 
				\draw  [dash pattern={on 0.84pt off 2.51pt}] (331.73,45) -- (491.63,45) -- (491.63,135) -- (331.73,135) -- cycle ;
				%Straight Lines [id:da16398714686909577] 
				\draw    (331.73,45) -- (491.63,135) ;
				%Straight Lines [id:da976177831632348] 
				\draw    (491.63,45) -- (331.73,135) ;
				%Straight Lines [id:da9993825881026177] 
				\draw [color={rgb, 255:red, 255; green, 0; blue, 0 }  ,draw opacity=1 ][line width=1.5]    (413.26,88.5) -- (449.59,68.08) ;
				\draw [shift={(453.07,66.13)}, rotate = 150.66] [fill={rgb, 255:red, 255; green, 0; blue, 0 }  ,fill opacity=1 ][line width=0.08]  [draw opacity=0] (6.97,-3.35) -- (0,0) -- (6.97,3.35) -- cycle    ;
				%Shape: Right Angle [id:dp7010173847068983] 
				\draw   (331.25,52.27) -- (339.98,52.27) -- (339.98,44.72) ;
				%Shape: Ellipse [id:dp8957805779859759] 
				\draw   (20.47,44.64) .. controls (20.47,33.66) and (29.63,24.75) .. (40.92,24.75) .. controls (52.22,24.75) and (61.37,33.66) .. (61.37,44.64) .. controls (61.37,55.63) and (52.22,64.53) .. (40.92,64.53) .. controls (29.63,64.53) and (20.47,55.63) .. (20.47,44.64) -- cycle ;
				%Shape: Ellipse [id:dp14649222028588338] 
				\draw   (20.47,144.1) .. controls (20.47,133.11) and (29.63,124.21) .. (40.92,124.21) .. controls (52.22,124.21) and (61.37,133.11) .. (61.37,144.1) .. controls (61.37,155.08) and (52.22,163.99) .. (40.92,163.99) .. controls (29.63,163.99) and (20.47,155.08) .. (20.47,144.1) -- cycle ;
				%Shape: Ellipse [id:dp37696478075353723] 
				\draw   (163.62,44.64) .. controls (163.62,33.66) and (172.78,24.75) .. (184.07,24.75) .. controls (195.37,24.75) and (204.52,33.66) .. (204.52,44.64) .. controls (204.52,55.63) and (195.37,64.53) .. (184.07,64.53) .. controls (172.78,64.53) and (163.62,55.63) .. (163.62,44.64) -- cycle ;
				%Shape: Ellipse [id:dp17801263892278962] 
				\draw   (163.62,144.1) .. controls (163.62,133.11) and (172.78,124.21) .. (184.07,124.21) .. controls (195.37,124.21) and (204.52,133.11) .. (204.52,144.1) .. controls (204.52,155.08) and (195.37,163.99) .. (184.07,163.99) .. controls (172.78,163.99) and (163.62,155.08) .. (163.62,144.1) -- cycle ;
				%Straight Lines [id:da5673059339208225] 
				\draw    (40.92,44.64) -- (110.03,92.66) ;
				\draw [shift={(112.5,94.37)}, rotate = 214.79] [fill={rgb, 255:red, 0; green, 0; blue, 0 }  ][line width=0.08]  [draw opacity=0] (10.72,-5.15) -- (0,0) -- (10.72,5.15) -- (7.12,0) -- cycle    ;
				%Straight Lines [id:da16503831114169865] 
				\draw    (40.92,144.1) -- (110.03,96.08) ;
				\draw [shift={(112.5,94.37)}, rotate = 145.21] [fill={rgb, 255:red, 0; green, 0; blue, 0 }  ][line width=0.08]  [draw opacity=0] (10.72,-5.15) -- (0,0) -- (10.72,5.15) -- (7.12,0) -- cycle    ;
				%Straight Lines [id:da19452873521396308] 
				\draw    (112.5,94.37) -- (181.61,142.39) ;
				\draw [shift={(184.07,144.1)}, rotate = 214.79] [fill={rgb, 255:red, 0; green, 0; blue, 0 }  ][line width=0.08]  [draw opacity=0] (10.72,-5.15) -- (0,0) -- (10.72,5.15) -- (7.12,0) -- cycle    ;
				%Straight Lines [id:da945316616273675] 
				\draw    (112.5,94.37) -- (181.61,46.35) ;
				\draw [shift={(184.07,44.64)}, rotate = 145.21] [fill={rgb, 255:red, 0; green, 0; blue, 0 }  ][line width=0.08]  [draw opacity=0] (10.72,-5.15) -- (0,0) -- (10.72,5.15) -- (7.12,0) -- cycle    ;
				%Straight Lines [id:da16010344955841704] 
				\draw [line width=0.75]    (228,88.5) -- (281.84,88.5)(228,91.5) -- (281.84,91.5) ;
				\draw [shift={(289.84,90)}, rotate = 180] [color={rgb, 255:red, 0; green, 0; blue, 0 }  ][line width=0.75]    (10.93,-3.29) .. controls (6.95,-1.4) and (3.31,-0.3) .. (0,0) .. controls (3.31,0.3) and (6.95,1.4) .. (10.93,3.29)   ;
				\draw [shift={(220,90)}, rotate = 0] [color={rgb, 255:red, 0; green, 0; blue, 0 }  ][line width=0.75]    (10.93,-3.29) .. controls (6.95,-1.4) and (3.31,-0.3) .. (0,0) .. controls (3.31,0.3) and (6.95,1.4) .. (10.93,3.29)   ;
				%Straight Lines [id:da7601680116162869] 
				\draw  [dash pattern={on 0.84pt off 2.51pt}]  (40.92,44.64) -- (40.92,144.1) ;
				%Straight Lines [id:da44585102363180307] 
				\draw [color={rgb, 255:red, 255; green, 0; blue, 0 }  ,draw opacity=1 ][line width=1.5]    (40.92,69.37) -- (40.92,115.37) ;
				\draw [shift={(40.92,119.37)}, rotate = 270] [fill={rgb, 255:red, 255; green, 0; blue, 0 }  ,fill opacity=1 ][line width=0.08]  [draw opacity=0] (13.4,-6.43) -- (0,0) -- (13.4,6.44) -- (8.9,0) -- cycle    ;
				
				% Text Node
				\draw (311,32.4) node [anchor=north west][inner sep=0.75pt]    {$v_{*}$};
				% Text Node
				\draw (501,132.4) node [anchor=north west][inner sep=0.75pt]    {$v$};
				% Text Node
				\draw (501,32.4) node [anchor=north west][inner sep=0.75pt]    {$v'$};
				% Text Node
				\draw (311,132.4) node [anchor=north west][inner sep=0.75pt]    {$v'_{*}$};
				% Text Node
				\draw (411,72.4) node [anchor=north west][inner sep=0.75pt]  [color={rgb, 255:red, 255; green, 0; blue, 0 }  ,opacity=1 ]  {$\sigma $};
				% Text Node
				\draw (429.97,85.46) node [anchor=north west][inner sep=0.75pt]  [font=\scriptsize]  {$\theta $};
				% Text Node
				\draw (137,36.4) node [anchor=north west][inner sep=0.75pt]    {$v_{*}$};
				% Text Node
				\draw (131,136.4) node [anchor=north west][inner sep=0.75pt]    {$v$};
				% Text Node
				\draw (81,32.4) node [anchor=north west][inner sep=0.75pt]    {$v'_{*}$};
				% Text Node
				\draw (71,132.4) node [anchor=north west][inner sep=0.75pt]    {$v'$};
				% Text Node
				\draw (21,82.4) node [anchor=north west][inner sep=0.75pt]  [color={rgb, 255:red, 255; green, 0; blue, 0 }  ,opacity=1 ]  {$\sigma $};

			\end{tikzpicture}

			\caption{On the left, the representation of a binary collision. On the right, an illustration of the so-called \emph{collisional geometry}, representing the distribution of the pre-collisional velocities $(v', v'_*)$ and the post-collisional ones $(v, v_*)$  on the circle centered about (the conserved mean velocity) $\frac{v + v_*}{2}$ of radius (the conserved relative velocity) $|v-v_*|$ in the plane $\Span(v-v_*, \sigma)$. This representation allows to visualize the deviation angle $\theta$.}
			
		\end{figure}
		
		\subsubsection{The cutoff case}
		One talks about an \emph{angular cutoff assumption} when the function $b$ is assumed to be well-behaved enough close to $\theta = 0$ to satisfy some integrability property. Concerning our basics assumptions \ref{L1}--\ref{L4} and \ref{Bortho}--\ref{Bisotrop}, one considers hard or Maxwell potentials under a mild cutoff assumption:
		$$\gamma \ge 0, \qquad \sin(\theta) b(\cos \theta) \in L^1_\theta\left( [-1, 1] \right) .$$
		The hierarchy of spaces $\Ss_j$ and the dissipation space $\Ssp$ are defined as
		$$\Ss_j := L^2\left( \mu^{-1} \la v \ra^{2j} \d v \right), \quad \Ssp := L^2\left( \mu^{-1} \la v \ra^{\gamma/2} \d v\right),$$
		and the dissipation estimate is given by the interpolation of the following two estimates coming respectively from Hilbert-Grad's splitting (\cite{H1912, G1963}) and the spectral gap estimate \cite[Theorem 1.1]{BM2005}:
		$$\la \LL g, g \ra_\Ss \le - c \| g \|^2_{\Ssp} + C \| g \|_{\Ss}^2, \qquad \la \LL g, g \ra_\Ss \le - c \| g \|^2_{\Ss}.$$
		The splitting we consider is
		$$\BB g := \LL g- M \mathbf{1}_{| v | \le M} g, \qquad \AA g := M \mathbf{1}_{| v | \le M} g,$$
		for a large enough $M > 0$, and it satisfies \ref{L4} thanks to the following weighted estimates coming from \cite[Proof of Lemma 3.3 for $\beta = 0$]{G2006}:
		$$\la \LL g, g \ra_{\Ss_j} \le - c \| \la v \ra^{\gamma/2} g \|^2_{\Ss_j} + C \| \la v \ra^{\gamma / 2} f \|_{\Ss_0}^2,$$
		for some $c, C > 0$. The nonlinear estimate \ref{Bbound} is given by \cite[Lemma 4.1]{G2004}.
		
		\medskip
		The ``enlarged'' assumptions \ref{LE} and \ref{BE} are satisfied for some restricted versions of the cutoff model. The linear assumptions \ref{LE} are proved under the strong cutoff assumption:
		$$b =b(\cos \theta) \in \CC^1\left( [-1, 1] \right)$$
		in the larger energy space $\Sl$ and dissipation space $\Slp$ defined as
		$$\Sl := L^2\left( \la v \ra^k \d v \right), \quad \Slp := L^2\left( \la v \ra^{k+\gamma/2} \d v\right), \quad k > 2.$$
		The splitting considered is of the form
		\begin{gather*}
			\LL = (\LL - \AA) + \AA =: \BB+ \AA,\\
			\AA g(v) := \int_{\R^d} a(v, v_*) g(v_*) \d v_*, \quad a \in \CC^\infty_c\left( \R^d_v \times \R^d_{v_*} \right),
		\end{gather*}
		and the dissipation estimate is given in \cite[Lemma 3.4]{BMM2019}, whereas the regularization estimate follows from the form of $\AA$. The nonlinear estimate \ref{BE} is given by \cite[Lemma 4.4]{BMM2019}.
		
		\subsubsection{The non cutoff case}
		One talks about a \emph{non cutoff case} when the function $b$ has a non-integrable singularity close to $\theta = 0$, and more precisely when it behaves as follows:
		$$\sin (\theta)^{d-2} b(\cos \theta) \approx \theta^{-2 s}, \quad s \in (0, 1).$$
		In this situation, the angular singularity improves the dissipation in the sense that $\LL$ presents a spectral gap even for some negative values of $\gamma$, at least in the suitable space $\Ss$. The basics assumptions \ref{L1}--\ref{L4} are satisfied for Maxwel, hard and moderately soft potentials:
		$$0 < s < 1, \qquad \gamma + 2 s \ge 0.$$
		The energy space is defined as in the cutoff case, whereas this time, the dissipation space $\Ssp$ is defined as (see \cite{AMUXY2011, AHL2019}, we also mention \cite{GS2011})
			\begin{align*}
				\| g \|^2_{\Ssp} & \approx \| \la v \ra^{\gamma/2 + s} g \|_{\Ss}^2 + \int_{\R^6_{v, v_*} \times \S^{d-1}_\sigma } \la v_* \ra^{\gamma} \mu_* \left( g(v') - g(v) \right)^2 \d \sigma \d v_* \d v \\
				& \approx \| \la v \ra^{\gamma/2 + s} g \|_{\Ss}^2 + \left\| \la v \ra^{\gamma/2} | v \land \nabla_v |^s g \right\|^2_{\Ss},
			\end{align*}
			where $| v \land \nabla_v |^s$ is defined as a pseudo-differential operator. The dissipativity estimate \ref{L4} is given by \cite[Lemma 2.6]{AMUXY2011}, and the nonlinear one \ref{Bbound} is given by~\cite[Theorem 2.1]{GS2011}. 	The intermediate spaces $\Ss_j$ are defined as in the cutoff case, but this time the associated splitting is
			$$\BB g := \LL g- M \chi\left( \frac{|v|}{M} \right) g, \qquad \AA g := M \chi\left( \frac{|v|}{M} \right) g,$$
			where $\chi$ is some smooth bump function and $M$ is large enough. This splitting satisfies \ref{L4} thanks to the following weighted estimates from \cite[Lemmas 2.4-2.5]{GS2011}:
			$$\la \LL g, g \ra_{\Ss_j} \le - c \| g \|^2_{\Ss_j} + C \| \mathbf{1}_{|v| \le C} f \|_{\Ss_j}^2.$$
		The enlargement assumptions are known to hold only for hard and Maxwell potentials ($\gamma \ge 0$) and the enlarged spaces are defined as
		$$\Sl_j := L^2\left( \la v \ra^{2k+2j} \d v \right), \quad k > 3 + \gamma/2 + 2 s,$$
		and the dissipation space $\Slp$ is defined in a similar fashion as in the Gaussian case. The decomposition from \ref{LE} is defined as in the Gaussian case and the dissipativity estimates follow from the weighted estimates
		\begin{gather*}
			\la \LL g, g \ra_{\Sl_j} \le - c \| g \|^2_{\Sl_j} + C \| f \|_{L^2}^2,
\quad 			\la \LL g, g \ra_{\Sl} \le - c \| g \|^2_{\Slp} + C \| f \|_{L^2}^2,
		\end{gather*}	
		which are proved in \cite[Lemma 4.2]{HTT2020}. The nonlinear estimates are proved in \cite[Lemma 2.12]{CDL2022} (see also \cite{HTT2020, AMSY2021}).
		
		\subsection{The case of the classical Landau equation}
		
		The Landau equation corresponds in some sense to the non-cutoff Boltzmann equation for $s = 1$. This time, the nonlinear operator $\QQ$ is defined as
		\begin{equation*}
			\QQ(f, f) = \nabla_v \cdot \int_{\R^3_{v_*} \times \S^{d-1}_\sigma } | v - v_* |^{\gamma+2} \Pi(v-v_*) \Big( f(v_*) \nabla_v f(v) - \nabla_{v_*} f(v_*) f(v) \Big) \d v_*,
		\end{equation*}
		where $\Pi(z) = \Id - |z|^{-2} z \otimes z$ is the orthogonal projection onto $z^\perp$. The basics assumptions \ref{L1}--\ref{L4} and \ref{Bortho}--\ref{Bisotrop} are satisfied for hard, Maxwell and moderately soft potentials ($\gamma + 2 \ge 0$) and the energy space remains the same as for the Boltzmann equation, but the dissipation space is defined as
		\begin{align*}
			\| g \|^2_{\Ssp} & \approx \| \la v \ra^{\gamma/2 + 1} g \|_\Ss^2 + \| \la v \ra^{\gamma/2} (v \land \nabla_v) g \|^2_\Ss \\
			& \approx \| \la v \ra^{\gamma/2 + 1} g \|_\Ss^2 + \| \la v \ra^{\gamma/2} \nabla_v g \|^2_\Ss + \| \la v \ra^{\gamma / 2 + 1} \Pi(v) \nabla_v g \|_\Ss^2.
		\end{align*}
		The dissipativity estimate \ref{L4} is given by \cite[(24)]{G2002}, and the nonlinear one \ref{Bbound} is given by~\cite[Lemma 2.2]{R2021}. The intermediate spaces $\Ss_j$ and splitting are the same as for the non-cutoff Boltzmann equation, which satisfies \ref{L4} using this time the weighted estimates which can be proved as \cite[Lemma 6]{G2002}:
		$$\la \LL g, g \ra_{\Ss_j} \le - c \| g \|^2_{\Ss_j} + C \| \mu f \|_{\Ss}^2.$$
		The enlarged assumptions \ref{LE} and \ref{BED} hold for Maxwell and hard potentials ($\gamma \ge 0$) and the enlarged spaces are defined as
		$$\Sl_j = L^2\left( \la v \ra^{2k + 2 j} \d v \right), \quad k > \gamma + 17/2,$$
		and the dissipation space is defined as
		\begin{align*}
			\| g \|^2_{\Slp} = \| \la v \ra^{\gamma/2 + 1} g \|_\Sl^2 + \| \la v \ra^{\gamma/2} \nabla_v g \|^2_\Sl + \| \la v \ra^{\gamma / 2 + 1} \Pi(v) \nabla_v g \|_\Sl^2.
		\end{align*}
		The decomposition from \ref{LE} is defined as in the gaussian case and the dissipativity is proved in \cite[(2.22)-(2.23)]{CTW2016}. The nonlinear bounds \ref{BED} are proved in \cite[Lemma 3.5]{CTW2016}.

		\subsection{Towards more general collision operators}
		\label{scn:general_collisions}
		The method tailored in the present contribution should be robust enough to deal with more general models. As in \cite{BaGoLe1}, one can tackle the derivation of the Navier-Stokes-Fourier system from a generic collisional kinetic equation conserving mass, momentum and energy, and dissipating entropy of the form
		\begin{equation}
			\label{eq:Kin-general}
			( \partial_t F + v \cdot \nabla_x ) F(t, x, v) = \CC[ F(t, x, \cdot) ](v) \, .
		\end{equation}
		Neglecting for a while any functional analytic issues, let us  explain formally how our framework would adapt to such model.
	 
\subsection*{General collisional equation.}
		The macroscopic conservation property reads
		$$\int_{\R^d} \CC[F](v) \left( \begin{array}{c}
			1 \\ v \\ |v|^2
		\end{array} \right) \d v = \left( \begin{array}{c}
			0 \\ 0 \\ 0
		\end{array} \right) \,$$
while we assume there exists some convex $\Phi\::\:[0,\infty) \to\R^{+}$ such that the following dissipation property (H-theorem) holds
		$$\DD[F] = \int_{ \R^d } \Phi(F(v)) \CC[F](v) \d v \le 0$$
		together with the equivalence for some universal profile $\MM : \R^{d+2}_{\bm m} \times \R^d_v \to [0, \infty)$
		$$\CC[F] = 0 \iff \DD[F] = 0 \iff F(v) = \MM \left( {\bm m} ; v \right)$$
		where the macroscopic quantities ${\bm m} = (R_F, U_F, T_F) \in \R^{d+2}$ are defined as
		$$R_F = \int_{ \R^d } F(v) \d v, \qquad  U_F = \frac{1}{R_F} \int_{\R^d} v F(v) \d v, \qquad T_F = \frac{1}{d R_F} \int_{ \R^d } |v-U_F|^2 F(v) \d v \, .$$
		Let us also assume that $\MM$ is related to $\Phi$ through
		\begin{equation}\label{eq:MMstar}
		\Phi(\MM(\bm{m} ;  v)) = a_{\bm m} + b_{\bm m} \cdot v + c_{\bm m} | v |^2 \quad \text{for some} \quad (a_{\bm m}, c_{\bm m}) \in \R^{2}\,,\,\:b_{\bm m} \in \R^{d} .\end{equation}	and that, denoting ${\bm m}_\star = (1,0,1)$ and $\MM_\star(v) = \MM({\bm m}_\star ; v)$, the macroscopic fluctuations can be factored as
		$$\MM\left( {\bm m}_\star + \eps (\rho, u, \theta) ; v \right) = \MM_\star(v) + \eps \left( \rho + u \cdot v + \frac{ \theta }{2} (|v|^2 - E) \right) \mu(v) + o(\eps) \, ,$$
		where we denoted
		$$\mu(v) := \frac{1}{ \Phi'(\MM_\star(v)) } \, .$$
		\noindent
\subsection*{The hydrodynamic limit problem} With the above premises, the scaling leading to \eqref{eq:Kin-Intro} is then
		$$F(t, x, v) = \MM_\star(v) + \eps f^\eps(\eps^2 t, \eps x, v)$$
		which, plugged in \eqref{eq:Kin-general}, yields
		$$\partial_t f^\eps + \frac{1}{\eps} v \cdot \nabla_x f^\eps = \frac{1}{\eps^2} \LL f^\eps + \frac{1}{\eps} \QQ(f^\eps, f^\eps) + \frac{1}{\eps} \RR[\eps f^\eps]$$
		where the operators $\LL$, $\QQ$ and $\RR^\eps$ are related to $\CC$ through its Taylor expansion about $\MM_\star$:
		$$\CC\left[ \MM_\star + \eps g \right] = \eps \LL g + \eps^2 \QQ(g, g) + \eps^2 \RR[\eps g]\quad \text{where} \quad \lim\limits_{\eps \to 0} \RR[\eps g] = 0 \, .$$
		The assumptions \ref{LE} and \ref{BE} are partially satisfied at a formal level. Denote the spaces
		$$N := \Span( \mu, v_1 \mu, \dots, v_d \mu, |v|^2 \mu ) , \quad H := L^2(\mu(v)^{-1} \d v )\quad \text{where} \quad \mu(v) := \frac{1}{\Phi'(\MM_\star(v))} \, .$$
		The macroscopic conservation property implies the orthogonality properties for $\LL$ and $\QQ$:
		\begin{equation}\label{eq:Nvarphi}
			\la \LL f , \varphi \ra_H = \la \QQ(f, g) , \varphi \ra_H = 0 \, , \qquad \forall \varphi \in N,
	\end{equation}
		and the H-theorem implies, at the quadratic order, the dissipativity of $\LL$ for the inner product of $H$:
		$$\lim_{\eps \to0}\frac{1}{\eps^{2}} \DD[\MM_{\star}+\eps g] = \la \LL f , f \ra_H \le 0  \,$$
		thanks to \eqref{eq:MMstar}--\eqref{eq:Nvarphi}. Finally, the assumption made on the form of the Maxwellian fluctuations implies the inclusion
		$$N \subset \nul(\LL) \, $$
		and the reverse inclusion holds as soon as the dissipativity inequality is strict for $f \perp_H N$.
		
		\medskip
		To handle the presence of the remainder $\RR$ taking into account how $\CC$ fails to be quadratic, note that it satisfies the macroscopic conservation property
		$$\la \RR[g] , \varphi \ra_H = 0 \, , \qquad \forall \varphi \in N \, .$$
		As a consequence, we expect it can be handled in a similar way as $\QQ$, thus it is enough to prove uniform bounds $ \RR [\eps f^\eps] = o(1)$ in $\mathbb{H}^s_x( H^{\circ}_v )$.

\subsection*{Examples.}
		The above general set of assumptions holds for the classical Boltzmann or Landau equation, as well as the BGK or non-linear Fokker-Planck models with
		$$\MM(R, U, T ; v) = \frac{R}{ (2 \pi T)^{d/2} } \exp\left( - \frac{|v-U|^2}{2 T} \right) \quad \text{and} \quad \Phi(r) = \log(r) \, .$$
		The above formalism allows to also encompass other models of physical interest. For instance, the BGK operator, defined as
		$$\CC[F](v) = \MM(R_F, U_F, T_F ; v) -F(v),$$
	or the nonlinear Fokker-Planck operator (see \cite[Section 1.6]{V2002}) as defined as
		$$\CC[F](v) = \nabla_v \cdot (T_F \nabla_v + (v - U_F) F) \, .$$
		Another set of examples are given by the quantum Boltzmann or Landau equations for which
		$$a_{\bm m} \exp\left( -\frac{|v-b_{\bm m} |^2}{c_{\bm m}} \right) = \frac{\MM( {\bm m} , v )}{ \FF[ \MM( {\bm m}  ; v ) ] } \quad \text{and} \quad \Phi(r) = \log\left( \frac{r}{\FF[r] } \right) \, .$$
		In the case of Fermi-Dirac ($\delta > 0$) or Bose-Einstein ($\delta < 0$) statistics (see \cite{D1994, ABL}), the function $\FF$ is given by
		$$\FF[r] = 1 -\delta r \, ,$$
		and in the case of Haldane statistics (see \cite{AN2015}), $\FF$ is given by
		$$\FF[r] = (1-\alpha r)^{\alpha} \left( 1 + (1-\alpha) r\right)^{1-\alpha} \quad \text{for some} \quad \alpha \in (0, 1) \, .$$
		The quantum Boltzmann operator is then given by
		$$\CC[F](v) = \int_{\R^{d}\times\S^{d-1}}B(|v-v_{*}|,\sigma) \Big\{  f(v')f(v_{*}') \FF[f(v)] \FF[f(v_*)] -f(v)f(v_{*}) \FF[f(v')] \FF[f(v_*')] \Big\} \d v_{*}\d\sigma \, ,$$
		whereas the quantum Landau operator is given by
		$$\CC[F](v) = 
		\nabla_{v}\cdot \int_{\R^{d}} |v-v_{*}|^{\gamma+2} \, \Pi_{v-v_{*}}
		\Big\{f(v_{*}) \FF[f(v_*)] \nabla_{v} f(v) - f(v) \FF[f(v)]{\nabla_{v_{*}} f}(v_{*}) \Big\}
		\, \d v_{*} \, .$$
		
		At the linear level, the quantum Boltzmann and Landau equations fit within our framework. The various assumptions \ref{L1}--\ref{L4} have   been shown to hold for the Boltzmann-Bose-Einstein equation $(\delta < 0)$ in \cite{YZ22, Z22} in the case of very soft potentials ($\gamma + 2 s < 0$) for which there does not hold $\| \cdot \|_{\Ssp} \ge \| \cdot \|_\Ss$, but is more intricate than the one for which \ref{L4} would be satisfied ($\gamma + 2 s \ge 0$). 
		
		For Fermi-Dirac statistics, a spectral gap estimate can be found in \cite{jiang} in the case  $\gamma = b(\cos \theta) = 1$. Concerning the Landau equation in the case $\delta > 0$, the linear assumptions \ref{L1}--\ref{L4} as well as \ref{LE} have been partially checked in \cite{ABL}. We refer the reader to the work in preparation \cite{GL2023} for more details on the quantum Boltzmann equation.
		\color{black}
		
		\section{Technical toolbox}\label{sec:toolbox}
		
		\subsection{Littlewood-Paley theory}
		\label{scn:littlewood-paley}
		For some appropriate $\varphi \in \CC^\infty\left( \R^d \right)$ supported in an annulus centered about $0$ and $\chi \in \CC^\infty\left( \R^d \right)$ supported in a ball centered about $0$ such that
		$$0 \le \varphi, \chi \le 1, \qquad \chi(\xi) + \sum_{j=0}^\infty \varphi\left( 2^{-j} \xi \right) = \sum_{j=-\infty}^\infty \varphi\left( 2^{-j} \xi \right) = 1,$$
		one defines the \textit{homogeneous Littlewood-Paley projectors} for any $j \in \Z$ (see \cite[Section 2.2]{BCD2011}):
		\begin{gather*}
			\dot{\Delta}_j u := \FF^{-1}_\xi \left[ \varphi\left( 2^{-j} \xi \right) \widehat{u}(\xi) \right], \qquad \dot{S}_j u := \FF^{-1}_\xi \left[ \chi\left( 2^{-j} \xi \right) \widehat{u}(\xi) \right],
		\end{gather*}
		as well as \textit{Bony's homogeneous decomposition} (see \cite[Section 2.6.1]{BCD2011}):
		$$u v = \dot{T}_u v + \dot{T}_v u + \dot{R}(u, v),$$
		where the homogeneous paraproduct $\dot{T}_f g$ and the homogeneous remainder $\dot{R}(f, g)$ are defined as
		$$\dot{T}_f g := \sum_{j=-\infty}^\infty \dot{S}_{j-1} f \dot{\Delta}_j g, \qquad \dot{R}(f, g) := \sum_{|j - k| \le 1} \dot{\Delta}_j f \dot{\Delta}_k g.$$
		This decomposition allows to prove the following product rule, which follows from the combination of \cite[Corollary 2.55]{BCD2011} and the embedding $\dot{\mathbb{B}}^s_{2, 1} \hookrightarrow \dot{\mathbb{B}}^s_{2, 2} = \dot{\mathbb{H}}^s$.
		\begin{prop}
			\label{prop:product_homogeneous_sobolev}
			For any $s_1, s_2 \in \left( -\frac{d}{2} , \frac{d}{2} \right)$ such that $s_1 + s_2 > 0$, there holds
			$$\| u v \|_{ \dot{\mathbb{H}}^{-\frac{d}{2} + s_1 + s_2} } \lesssim \| u \|_{ \dot{\mathbb{H}}^{s_1} } \| v \|_{ \dot{\mathbb{H}}^{s_2} }.$$
		\end{prop}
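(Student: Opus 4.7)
The plan is to reduce the homogeneous Sobolev estimate to the corresponding Besov estimate at regularity $s := -\tfrac{d}{2} + s_1 + s_2$, exploiting the embedding $\dot{\mathbb{B}}^{s}_{2,1} \hookrightarrow \dot{\mathbb{B}}^{s}_{2,2} = \dot{\mathbb{H}}^{s}$ recalled right after the statement. Accordingly, I would apply Bony's decomposition
\begin{equation*}
u v = \dot{T}_u v + \dot{T}_v u + \dot{R}(u, v),
\end{equation*}
and estimate each of the three pieces in $\dot{\mathbb{B}}^{s}_{2,1}$, using that $\dot{\mathbb{H}}^{s_i} = \dot{\mathbb{B}}^{s_i}_{2,2}$ on the inputs.

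The two paraproduct terms are symmetric and ought to be handled by the same mechanism. For $\dot{T}_u v$ I would use Bernstein's inequality to pass from $\dot{S}_{j-1} u \in L^2$ to an $L^\infty$ control: $\|\dot{S}_{j-1} u\|_{L^\infty} \lesssim \sum_{k \le j-2} 2^{k d/2} \|\dot{\Delta}_k u\|_{L^2}$. Combining this with $\| \dot{T}_u v \|_{ \dot{\mathbb{B}}^{s}_{2,1} } \approx \sum_j 2^{js} \|\dot{S}_{j-1} u \, \dot{\Delta}_j v\|_{L^2}$, I would reorganize the double sum via a standard convolution-in-frequency argument, producing an estimate of the form $\|\dot{T}_u v\|_{\dot{\mathbb{B}}^{s}_{2,1}} \lesssim \|u\|_{\dot{\mathbb{H}}^{s_1}} \|v\|_{\dot{\mathbb{H}}^{s_2}}$. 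The constraint $s_1 < \tfrac{d}{2}$ is what guarantees that the Bernstein sum defining $\|\dot{S}_{j-1} u\|_{L^\infty}$ converges, and symmetrically $s_2 < \tfrac{d}{2}$ for $\dot{T}_v u$.

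The remainder $\dot{R}(u,v) = \sum_{|j-k|\le 1} \dot{\Delta}_j u \, \dot{\Delta}_k v$ is where the condition $s_1 + s_2 > 0$ becomes essential, and I expect this to be the main technical obstacle. The quantity $\|\dot{\Delta}_j u\, \dot{\Delta}_k v\|_{L^1}$ is bounded by $\|\dot{\Delta}_j u\|_{L^2}\|\dot{\Delta}_k v\|_{L^2}$, so after applying Bernstein once more to pass from $L^1$ to $L^2$ at the appropriate frequency scale, I obtain a geometric-series factor $\sum_{|j-k|\le 1} 2^{-j s_1 - k s_2}$ which converges precisely when $s_1 + s_2 > 0$. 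The lower bounds $s_1, s_2 > -\tfrac{d}{2}$ then ensure that the resulting sum defines a function in $\dot{\mathbb{B}}^{s}_{2,1}$, rather than merely a distribution modulo polynomials; this is the subtle point where one has to be careful that the low-frequency part of $\dot{R}$ actually makes sense in a homogeneous Besov space.

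Finally, combining the three bounds and invoking $\dot{\mathbb{B}}^{s}_{2,1} \hookrightarrow \dot{\mathbb{H}}^{s}$ gives the desired inequality. I would cite \cite[Corollary~2.55]{BCD2011} to avoid reproducing the detailed paraproduct and remainder calculations, since the only work specific to our setting is checking that the endpoint conditions on $(s_1, s_2)$ in the statement are exactly those required by the standard paradifferential machinery.
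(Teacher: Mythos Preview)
Your proposal is correct and matches the paper's approach exactly: the paper's entire proof is the one-line observation that the result follows from \cite[Corollary~2.55]{BCD2011} combined with the embedding $\dot{\mathbb{B}}^{s}_{2,1} \hookrightarrow \dot{\mathbb{B}}^{s}_{2,2} = \dot{\mathbb{H}}^{s}$. Your additional sketch of the Bony decomposition mechanism is a faithful unpacking of what that corollary contains, but since you end by citing the same reference, the two arguments are identical.
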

		One also defines the \textit{inhomogeneous Littlewood-Paley projectors} for $j \ge -1$ (see \cite[Section 2.2]{BCD2011}):
		$$\Delta_j u := \begin{cases}
			\FF^{-1}_\xi \left[ \chi(\xi) \widehat{u}(\xi) \right], & j = -1, \\
			\\
			\dot{\Delta}_j u, & j \ge 0,
		\end{cases} \qquad S_j := \sum_{k = -1}^{j-1} \Delta_k u,$$
		as well as Bony's inhomogeneous decomposition (see \cite[Section 2.8.1]{BCD2011}):
		$$uv = T_u v + T_v u + R(u, v),$$
		where the inhomogeneous paraproduct $T_f g$ and remainder $R(f, g)$ are defined as
		$$T_f g := \sum_{j=-1}^\infty S_{j-1} f \Delta_j g, \qquad R(f, g) := \sum_{|j - k| \le 1} \Delta_j f \Delta_k g.$$
		This decomposition allows to prove the following product rule, which follows from the combination of \cite[Corollary 2.86]{BCD2011} and the embedding $H^{\sigma} = B^{\sigma}_{2, 2} \hookrightarrow L^\infty$ whenever $\sigma > d/2$.
		\begin{prop}
			For any $s > 0$ and $\sigma > d/2$, there holds
			$$\| u v \|_{H^s} \lesssim \| u \|_{H^s} \| v \|_{H^\sigma} + \| u \|_{H^\sigma} \| v \|_{H^s}.$$
		\end{prop}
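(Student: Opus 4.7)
The plan is to deduce this classical tame product estimate from Bony's inhomogeneous decomposition
$$uv = T_u v + T_v u + R(u,v)$$
which has just been recalled, together with the embedding $H^{\sigma} \hookrightarrow L^{\infty}$ valid for $\sigma > d/2$. The strategy is to estimate each of the three pieces in $H^s$ separately, exploiting the asymmetry of the paraproduct and the smoothing nature of the remainder when $s>0$.

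For the two paraproducts, I would invoke the standard continuity estimate
$$\| T_f g \|_{H^s} \lesssim \| f \|_{L^{\infty}} \| g \|_{H^s},$$
which follows from the frequency localisation properties of the partial sums $S_{j-1}f$ and the dyadic blocks $\Delta_j g$ together with the almost-orthogonality of the summands (Bernstein's inequalities plus a finite-overlap argument on the supports of $\widehat{S_{j-1} f \Delta_j g}$). Applied with the roles of $u,v$ swapped, this gives
$$\| T_u v \|_{H^s} \lesssim \| u \|_{L^{\infty}} \| v \|_{H^s} \lesssim \| u \|_{H^{\sigma}} \| v \|_{H^s},$$
and similarly $\| T_v u \|_{H^s} \lesssim \| v \|_{H^{\sigma}} \| u \|_{H^s}$, where the final inequality in each line is precisely the Sobolev embedding $H^{\sigma} \hookrightarrow L^{\infty}$ for $\sigma > d/2$.

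The remainder $R(u,v)=\sum_{|j-k|\le 1}\Delta_j u\,\Delta_k v$ is where the assumption $s>0$ will enter. The dyadic block $\Delta_j u\,\Delta_k v$ with $|j-k|\le 1$ has Fourier support contained in a ball of radius $\lesssim 2^{j}$; so, applying a Bernstein-type estimate and then Cauchy--Schwarz on the $\ell^2$ indexing of dyadic blocks, one gets
$$\| R(u,v) \|_{H^s} \lesssim \Bigl\| \bigl( 2^{js} \| \Delta_j u \|_{L^{\infty}} \| \Delta_j v \|_{L^2} \bigr)_j \Bigr\|_{\ell^2} + \Bigl\| \bigl( 2^{js} \| \Delta_j u \|_{L^2} \| \Delta_j v \|_{L^{\infty}} \bigr)_j \Bigr\|_{\ell^2},$$
and then $\| u \|_{L^{\infty}} \lesssim \| u \|_{H^{\sigma}}$ together with $s > 0$ (which is exactly what allows the summation of $2^{js}$ against square-summable sequences to close) yields the bound $\| u \|_{H^{\sigma}} \| v \|_{H^s} + \| u \|_{H^s} \| v \|_{H^{\sigma}}$. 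I expect no serious obstacle: the only subtle point is the remainder estimate, where one must be slightly careful that $s>0$ (and not $s>-d/2$) is actually what is needed here because we work in $\ell^2$ of $2^{js}$-weighted blocks without an extra low-frequency compensation; if one wanted the sharper $s>-d/2$ version one would have to separate low and high frequencies, but this is not required for our statement.
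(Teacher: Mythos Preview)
Your proposal is correct and follows exactly the route the paper has in mind: the paper does not give a self-contained proof here but simply invokes \cite[Corollary 2.86]{BCD2011} together with the embedding $H^\sigma = B^\sigma_{2,2} \hookrightarrow L^\infty$ for $\sigma > d/2$, and your argument via Bony's decomposition with the paraproduct estimate $\|T_f g\|_{H^s} \lesssim \|f\|_{L^\infty}\|g\|_{H^s}$ and the remainder estimate (using $s>0$ for the $\ell^1$-summability of $2^{(\ell-j)s}$ over $j\ge \ell-c$) is precisely the standard proof of that corollary.
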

		We also present the following Sobolev-H\"older product rule.
		\begin{prop}
			\label{prop:product_sobolev_holder}
			For any $s' > s > 0$, there holds
			$$\| u v \|_{H^s} \lesssim \| u \|_{H^s} \| v \|_{W^{s', \infty}}.$$
		\end{prop}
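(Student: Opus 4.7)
My plan is to apply Bony's inhomogeneous paraproduct decomposition recalled just above the statement, $uv = T_u v + T_v u + R(u,v)$, and estimate each of the three pieces separately, exploiting the asymmetric roles of $u \in H^s$ and $v \in W^{s',\infty}$. The key structural input is that $W^{s',\infty}$ embeds continuously into the Besov space $B^{s'}_{\infty,\infty}$, which translates into the dyadic decay $\|\Delta_j v\|_{L^\infty} \lesssim 2^{-s'j}\|v\|_{W^{s',\infty}}$ for every $j \ge -1$.

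The paraproduct $T_v u$ is the easiest piece: since each block $S_{j-1}v\,\Delta_j u$ inherits a Fourier support in an annulus around $2^j$ from the localization of $\Delta_j u$, the near-orthogonality of these contributions in $L^2$ and the bound $\|S_{j-1}v\|_{L^\infty} \lesssim \|v\|_{L^\infty}$ give
\begin{equation*}
\|T_v u\|_{H^s}^2 \lesssim \sum_j 2^{2sj} \|S_{j-1}v\|_{L^\infty}^2 \|\Delta_j u\|_{L^2}^2 \lesssim \|v\|_{L^\infty}^2 \|u\|_{H^s}^2,
\end{equation*}
and this piece is therefore controlled purely by $\|v\|_{L^\infty} \le \|v\|_{W^{s',\infty}}$, with no need for the extra regularity $s'$.

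For the paraproduct $T_u v$, I would use the same frequency-localization argument together with the dyadic decay on $\Delta_j v$, and the crude bound $\|S_{j-1}u\|_{L^2} \le \|u\|_{L^2}$. This yields
\begin{equation*}
\|T_u v\|_{H^s}^2 \lesssim \|u\|_{L^2}^2 \|v\|_{W^{s',\infty}}^2 \sum_j 2^{2(s-s')j},
\end{equation*}
and the geometric sum converges thanks to the strict inequality $s' > s$. The main (mild) obstacle is the remainder $R(u,v) = \sum_{|j-k|\le 1}\Delta_j u\,\Delta_k v$, whose summands are no longer spectrally localized near a single dyadic scale but only supported in a ball of size $\lesssim 2^j$, so each summand can contribute to $\Delta_l R(u,v)$ only for $l \le j + N$ with some fixed $N$. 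Setting $c_j = 2^{sj}\|\Delta_j u\|_{L^2}$, so that $(c_j) \in \ell^2$ with norm $\|u\|_{H^s}$, I would combine the $L^2 \times L^\infty$ product bound with the decay estimate on $\Delta_j v$ to obtain
\begin{equation*}
2^{sl}\|\Delta_l R(u,v)\|_{L^2} \lesssim \|v\|_{W^{s',\infty}}\sum_{j \ge l - N}2^{s(l-j)}\,2^{-s'j} c_j.
\end{equation*}
The right-hand side is a discrete convolution of the $\ell^2$ sequence $(2^{-s'j}c_j)_j$ (which is $\ell^2$ because $s' > 0$) with the one-sided geometric kernel $k(m) = 2^{sm}\mathbf{1}_{m \le N}$, and this kernel lies in $\ell^1(\Z)$ precisely because $s > 0$. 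Applying Young's convolution inequality $\ell^1 \ast \ell^2 \hookrightarrow \ell^2$ then closes the estimate and gives $\|R(u,v)\|_{H^s} \lesssim \|u\|_{H^s}\|v\|_{W^{s',\infty}}$. Summing the three contributions yields the claim; both hypotheses $s > 0$ and $s' > s$ are used minimally, respectively to sum the convolutive remainder and to sum the geometric series for $T_u v$.
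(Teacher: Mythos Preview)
Your proof is correct and follows essentially the same route as the paper: Bony's inhomogeneous decomposition $uv = T_u v + T_v u + R(u,v)$, with the embedding $W^{s',\infty} \hookrightarrow B^{s'}_{\infty,\infty}$ supplying the dyadic decay $2^{s'j}\|\Delta_j v\|_{L^\infty} \lesssim \|v\|_{W^{s',\infty}}$. The only differences are presentational: the paper first disposes of the case $s \in \N$ by Leibniz's rule and H\"older, and in the non-integer case cites \cite[Theorems 2.82 and 2.85]{BCD2011} for $T_v u$ and $R(u,v)$ rather than redoing them by hand, reserving the explicit computation for $T_u v$ (which is exactly your geometric-sum argument using $s' > s$). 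Your treatment is more self-contained and handles all $s > 0$ uniformly; the paper's is shorter by outsourcing two of the three pieces.
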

		\begin{proof}
			When $s \in \N$, the estimate follows from the combination of Leibniz's formula and H\"older's inequality in $L^2 \times L^\infty$: for any $\sigma \le s$
			\begin{align*}
				\| \nabla^\sigma (u v) \|_{L^2}  \lesssim \sum_{r = 0}^\sigma \| \nabla^r u \nabla^{\sigma - r} v \|_{L^2}  
				 \lesssim \sum_{r = 0}^\sigma \| \nabla^r u \|_{L^2} \| \nabla^{\sigma - r} v \|_{L^\infty} \lesssim \| u \|_{H^\sigma} \| v \|_{W^{\sigma, \infty}},
			\end{align*}
			and thus $\| uv \|_{H^s} \lesssim \| u \|_{H^s} \| v \|_{W^{s, \infty}}$.
			
			When $s \notin \N$, then $W^{s, \infty} = B^{s}_{\infty, \infty}$, and we rely on Bony's inhomogeneous decomposition. On the one hand, \cite[Theorem 2.82]{BCD2011} and \cite[Theorem 2.85]{BCD2011} yield
			$$\| T_v u \|_{H^s} + \| R(u, v) \|_{H^s} \lesssim \| u \|_{H^s} \| v \|_{W^{s, \infty}},$$
			so we are left with estimating $T_u v$. Following the proof of \cite[Theorem 2.82]{BCD2011} (or more precisely \cite[Theorem 2.82]{BCD2011}), there holds
			\begin{equation*}\begin{split}
				\| T_u v \|_{H^s}^2 & \lesssim \sum_{j = -1}^\infty 2^{2 j s} \| S_{j-1} u \Delta_j v \|_{L^2}^2 \\
				& \lesssim \sum_{j = -1}^\infty 2^{-2 j (s'-s)} \left( 2^{ j s'} \| \Delta_j v \|_{L^\infty} \right)^2 \| S_{j-1} u \|_{L^2}^2 \\
				& \lesssim \| u \|_{W^{s', \infty}}^2 \| u \|_{L^2}^2 ,
			\end{split}\end{equation*}
			where we used the definition of the norm $B^{s}_{\infty, \infty}=W^{s, \infty}$ and the fact that $\| S_{j-1} u \|_{L^2} \lesssim \| u \|_{L^2}$.
		\end{proof}
		
		\subsection{About the wave equation} On the basis of the above Littlewood-Paley decomposition, we also establish the following decay in time of the wave semigroup. Such a result is very likely to be part of the folklore knowledge for dispersive equations and we give a full proof here:
		\begin{lem}\label{lem:wave-equation} Given $u \in \dot{\mathbb{B}}_{1,1}^{\frac{d+1}{2}}(\R^d)$ one has
			$$\|e^{it|D_x|}u\|_{L^\infty} \lesssim t^{-\frac{d-1}{2}}\|u\|_{\dot{\mathbb{B}}_{1,1}^{\frac{d+1}{2}}}, \qquad t >0$$
			whereas, for $s \geq 0$ and $u \in \dot{\mathbb{B}}_{1,1}^{\frac{d+1}{2}+s}$,
			$$\|e^{it|D_x|}u\|_{W^{s,\infty}}\lesssim t^{-\frac{d-1}{2}}\|u\|_{\dot{\mathbb{B}}^{\frac{d+1}{2}+s}_{1,1}} \qquad t >0.$$
		\end{lem}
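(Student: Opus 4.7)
\medskip

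My plan is to reduce the problem to a kernel estimate for the frequency-localized half-wave propagator, and then sum over dyadic blocks using the definition of the Besov norm. First, I would write $u = \sum_{j \in \Z} \dot{\Delta}_j u$ in terms of the Littlewood-Paley decomposition from Section \ref{scn:littlewood-paley}. Since $e^{it|D_x|}$ commutes with each $\dot{\Delta}_j$, it suffices, for the first estimate, to bound each $e^{it|D_x|}\dot{\Delta}_j u$ in $L^\infty$ and then sum. Using a slightly enlarged Fourier cutoff $\widetilde{\varphi} \in \CC^\infty_c$ equal to $1$ on the support of $\varphi$, one has $\dot{\Delta}_j u = \widetilde{\dot{\Delta}}_j \dot{\Delta}_j u$, so that
\begin{equation*}
e^{it|D_x|}\dot{\Delta}_j u \;=\; K_{j,t} * \dot{\Delta}_j u, \qquad K_{j,t}(x) := \int_{\R^d} e^{i(x\cdot \xi + t|\xi|)} \widetilde{\varphi}(2^{-j}\xi)\,\d\xi.
\end{equation*}
Young's convolution inequality then gives $\|e^{it|D_x|}\dot{\Delta}_j u\|_{L^\infty} \lesssim \|K_{j,t}\|_{L^\infty} \|\dot{\Delta}_j u\|_{L^1}$.

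The main obstacle is the kernel bound $\|K_{j,t}\|_{L^\infty} \lesssim 2^{j(d+1)/2}\,t^{-(d-1)/2}$, which is exactly the classical dispersive estimate for the half-wave equation. After rescaling $\xi = 2^j \eta$, one has $K_{j,t}(x) = 2^{jd} K_{0,2^j t}(2^j x)$, so it reduces to showing $\|K_{0,\tau}\|_{L^\infty} \lesssim \tau^{-(d-1)/2}$ for $\tau \geq 1$. This follows from the method of stationary phase applied to the oscillatory integral with phase $\Phi_{x,\tau}(\eta) = x\cdot\eta + \tau |\eta|$: on the annular support of $\widetilde{\varphi}$, the Hessian of $\Phi_{x,\tau}/\tau$ in $\eta$ has rank $d-1$ (the tangential directions to the sphere) with non-degenerate tangential Hessian uniformly on the support, while the radial variable can be integrated against the stationary-phase formula on the sphere, giving the $(d-1)/2$ decay in $\tau$. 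The small time regime $\tau \le 1$ is trivial since the kernel is uniformly bounded. Collecting:
\begin{equation*}
\|K_{j,t}\|_{L^\infty} = 2^{jd}\|K_{0,2^j t}\|_{L^\infty} \lesssim 2^{jd}(2^j t)^{-(d-1)/2} = 2^{j(d+1)/2}\,t^{-(d-1)/2}.
\end{equation*}

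Combining with Young's inequality and summing the geometric estimates gives
\begin{equation*}
\|e^{it|D_x|}u\|_{L^\infty} \;\leq\; \sum_{j\in\Z}\|e^{it|D_x|}\dot{\Delta}_j u\|_{L^\infty} \;\lesssim\; t^{-\frac{d-1}{2}} \sum_{j\in\Z} 2^{j\frac{d+1}{2}}\|\dot{\Delta}_j u\|_{L^1} \;=\; t^{-\frac{d-1}{2}}\|u\|_{\dot{\mathbb{B}}^{(d+1)/2}_{1,1}},
\end{equation*}
which is the first claimed estimate. For the second estimate with derivatives, I would argue the same way after noting that $\|v\|_{W^{s,\infty}}$ is controlled by $\sum_j 2^{js}\|\dot{\Delta}_j v\|_{L^\infty}$ (using the equivalence with the inhomogeneous Besov norm $B^s_{\infty,1}$ and the fact that all low frequencies can be absorbed in the scale $2^{js}$ when $s \geq 0$); applying the above block-wise estimate to $v = e^{it|D_x|}u$ yields an extra factor $2^{js}$ inside the sum, producing the norm $\|u\|_{\dot{\mathbb{B}}^{(d+1)/2+s}_{1,1}}$ in the right-hand side. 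This concludes the plan.
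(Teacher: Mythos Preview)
Your argument for the first estimate is essentially the same as the paper's: decompose via Littlewood--Paley, reduce each block to the frequency-localized dispersive bound by scaling, and sum. The only difference is cosmetic: you sketch the stationary-phase proof of the kernel estimate $\|K_{0,\tau}\|_{L^\infty}\lesssim\tau^{-(d-1)/2}$, whereas the paper directly invokes \cite[Proposition~8.15]{BCD2011} for the localized dispersion $\|e^{i\tau|D_x|}\dot\Delta_0 v\|_{L^\infty}\lesssim\tau^{-(d-1)/2}\|v\|_{L^1}$.

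For the second estimate your reduction has a gap at low frequencies. The claim that $\|v\|_{W^{s,\infty}}$ is controlled by the homogeneous quantity $\sum_{j\in\Z}2^{js}\|\dot\Delta_j v\|_{L^\infty}$ is false for $s>0$: the $L^\infty$ part of the $W^{s,\infty}$ norm needs $\sum_j\|\dot\Delta_j v\|_{L^\infty}$, and for $j<0$ one has $1>2^{js}$, so the low-frequency contribution is \emph{not} ``absorbed in the scale $2^{js}$''. The paper instead uses the equivalence $\|v\|_{W^{s,\infty}}\approx\|v\|_{L^\infty}+\sup_{j\ge0}2^{js}\|\dot\Delta_j v\|_{L^\infty}$, bounds the first piece by the already-proved first estimate (producing $\|u\|_{\dot{\mathbb B}^{(d+1)/2}_{1,1}}$), and handles the high-frequency sup blockwise. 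Its final line, absorbing $\|u\|_{\dot{\mathbb B}^{(d+1)/2}_{1,1}}$ into $\|u\|_{\dot{\mathbb B}^{(d+1)/2+s}_{1,1}}$, carries the same low-frequency defect; in the paper's applications this is immaterial because the relevant data always lie simultaneously in $\mathbb{H}^s_x$, which controls the low modes.
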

		\begin{proof}
			For any $j \in \Z$, we denote $u_j(x) = u\left( 2^j x \right)$ and notice
			$$e^{i t | D_x | } \dot{\Delta}_j u = \left(e^{i 2^j t | D_x | } \dot{\Delta}_0 u_{-j} \right)_j, \qquad \dot{\Delta}_0 u_{-j} = \left(\dot{\Delta}_j u \right)_{-j}.$$
			Using the scaling properties of the $L^1$ and $L^\infty$-norms, we deduce
			$$\| e^{i t | D_x | } \dot{\Delta}_j u \|_{L^\infty} = \| e^{i 2^j t | D_x | } \dot{\Delta}_0 u_{-j} \|_{L^\infty}, \qquad \| \dot{\Delta}_0 u_{-j} \|_{L^1_x} = 2^{j d } \| \dot{\Delta}_j u \|_{L^1_x}, $$
			and thus, using the dispersive estimate for functions whose frequencies are localized in an annulus (see \cite[Proposition 8.15]{BCD2011})
			\begin{equation*}\begin{split}
				\| e^{i t | D_x | } u \|_{L^\infty_x} & \le \sum_{j \in \Z} \| e^{i t | D_x | } \dot{\Delta}_j u \|_{L^\infty_x}  \lesssim \sum_{j \in \Z} \left(2^j t\right)^{-\frac{d-1}{2}} \| \dot{\Delta_0} u_{-j} \|_{L^1_x} \\
				& \lesssim t^{-\frac{d-1}{2}} \sum_{j \in \Z} \left(2^j \right)^{\frac{d+1}{2}} \| \Delta_j u \|_{L^1_x} = t^{-\frac{d-1}{2}} \| u \|_{ \dot{\mathbb{B}}^{(d+1)/2}_{1,1} }.
			\end{split}\end{equation*}
			Furthermore, there holds
			\begin{align*}
				\| e^{i t | D_x | } u \|_{W^{s, \infty}_x} & \approx \| e^{i t | D_x | } u \|_{L^\infty_x} + \sup_{j \ge 0} 2^{j s} \| e^{i t | D_x | } \dot{\Delta}_j u \|_{L^\infty_x} \\
				& \lesssim t^{-\frac{d-1}{2}} \left( \| u \|_{ \dot{\mathbb{B}}^{(d+1)/2}_{1,1} } + \sup_{j \ge 0} 2^{j \left(s+\frac{d+1}{2}\right) } \| \dot{\Delta}_j u \|_{L^1_x}\right) \\
				& \lesssim t^{-\frac{d-1}{2}} \| u \|_{ \dot{\mathbb{B}}^{(d+1)/2 + s }_{1,1} }.
			\end{align*}
			This concludes the proof.
		\end{proof}
		
		\subsection{Duality}
		\label{scn:duality}
		
		Consider some surjective isometry $\Lambda : \Sgp \to \Sg$, that is to say
		$$\| f \|_{\Sgp} = \| \Lambda f \|_{\Sg}.$$
		Its adjoint $\Lambda^\star : \Sg \to \Sgm$ for the inner product of $\Sg$ is then an isometry as well:
		\begin{equation*}
			\| \Lambda^\star f \|_{\Sgm} = \sup_{\| g \|_{\Sgp} = 1} \la f, \Lambda g \ra_{\Sg} = \sup_{\| g' \|_{\Sg} = 1} \la f, g' \ra_{\Sg}  = \| f \|_{\Sg},
		\end{equation*}
		and it extends naturally to a surjective isometry since $\Sgm$ was defined as the completion of $\Sg$.
		This allows to write the~$\BBB( \Sgm ; \Sgp )$-norm in term of the $\BBB(\Sg)$-norm as well as the isometries $\Lambda$ and $\Lambda^\star$:
		$$\| T \|_{ \Sgm \to \Sgp } = \| \Lambda T \Lambda^\star \|_{ \Sg \to \Sg},$$
		and thus deduce the identity
		\begin{equation}
			\label{eq:twisted_adjoint_identity}
			\| T \|_{ \Sgm \to \Sgp } = \| T^\star \|_{ \Sgm \to \Sgp },
		\end{equation}
		from the classical one $\| S \|_{\Sg \to \Sg} = \| S^\star \|_{\Sg \to \Sg}$ and $(\Lambda^\star)^\star=\Lambda$. Similarly, we have
		\begin{equation}
			\label{eq:twisted_adjoint_identity_one_sided}
			\| T \|_{\Sg \to \Sgp } = \| T^\star \|_{ \Sgm \to \Sg },
		\end{equation}
		\subsection{Bootstrap formula for projectors}
		\label{scn:boostrap_projectors}
		
		We present some formulas relating the remainder of Taylor expansions for projectors with the lower order terms and remainders. This will allow to prove inductively regularizing properties on each term of said expansion.
		
		\begin{lem}
			Consider a projector $P(r) \in \BBB(E)$ depending on a parameter $r \in [0, 1]$ and its Taylor expansion at order $N \ge 0$:
			\begin{gather*}
				P(r) = \sum_{n=0}^{N-1} r^n P^{(n)} + r^N P^{(N)}(r),
			\end{gather*}
			whose (constant) coefficients belong to $\BBB(E)$ and satisfy the identities
			\begin{equation}
				\label{eq:convolution_polynom_projector}
				\sum_{n=0}^{M} P^{(n)} P^{(M-n)} =P^{(M)}, \quad 0 \le M \le N-1,
			\end{equation}
			then the remainder satisfies a similar one:
			\begin{align}
				\label{eq:bootstrap_P_N_A}
				P^{(N)}(r) & = P(r) P^{(N)}(r) + \sum_{n=1}^{N} P^{(n)} (r) P^{(N-n)} \\
				\label{eq:bootstrap_P_N_B}
				& = P^{(N)}(r) P(r) + \sum_{n=1}^{N} P^{(N-n)} P^{(n)} (r).
			\end{align}
		\end{lem}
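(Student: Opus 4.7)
The proof rests entirely on the idempotency relation $P(r)^2=P(r)$ combined with the prescribed convolution identities \eqref{eq:convolution_polynom_projector}. The strategy is to single out the remainder $P^{(N)}(r)$ by writing $P(r)=Q(r)+r^N P^{(N)}(r)$ where $Q(r):=\sum_{n=0}^{N-1} r^n P^{(n)}$, expanding $P(r)^2=P(r)$, and grouping terms in a way that reconstructs the right-hand side of \eqref{eq:bootstrap_P_N_A}. Throughout, the remainders at lower orders $n\le N$ will be used through the consistent identity
\begin{equation*}
P^{(n)}(r)=\sum_{j=n}^{N-1} r^{j-n}P^{(j)}+r^{N-n}P^{(N)}(r), \qquad 0\le n\le N,
\end{equation*}
which is just the definition of the remainder at order $n$ in terms of the higher-order Taylor data; in particular the sum $\sum_{n=1}^{N} P^{(n)}(r)P^{(N-n)}$ mixes the ``true'' remainder $P^{(N)}(r)$ with the lower-order ones viewed as (constant) Taylor coefficients.

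\textbf{Step 1: expanding the idempotency.} Writing $P(r)=Q(r)+r^N P^{(N)}(r)$ and squaring gives
\begin{equation*}
Q(r)^2+r^N Q(r)P^{(N)}(r)+r^N P^{(N)}(r)Q(r)+r^{2N}P^{(N)}(r)^2=Q(r)+r^N P^{(N)}(r).
\end{equation*}
Using \eqref{eq:convolution_polynom_projector} for each $0\le M\le N-1$ collapses the low-order contributions of $Q(r)^2$ and yields $Q(r)^2=Q(r)+\sum_{M=N}^{2N-2} r^M S_M$ with $S_M:=\sum_{\substack{n+m=M\\0\le n,m\le N-1}} P^{(n)}P^{(m)}$. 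Dividing by $r^N$ produces
\begin{equation*}
P^{(N)}(r)=Q(r)P^{(N)}(r)+r^N P^{(N)}(r)^2+P^{(N)}(r)Q(r)+\sum_{j=0}^{N-2} r^{j}S_{N+j}.
\end{equation*}
The first two terms combine as $\big(Q(r)+r^N P^{(N)}(r)\big)P^{(N)}(r)=P(r)P^{(N)}(r)$, so we are reduced to identifying
\begin{equation*}
P^{(N)}(r)Q(r)+\sum_{j=0}^{N-2} r^{j}S_{N+j}=\sum_{n=1}^{N} P^{(n)}(r)P^{(N-n)}.
\end{equation*}

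\textbf{Step 2: combinatorial identification.} Substitute the formula for $P^{(n)}(r)$ recalled above into the right-hand side and split into the contribution of the ``constant part'' $\sum_{j=n}^{N-1} r^{j-n}P^{(j)}$ and that of the ``remainder part'' $r^{N-n}P^{(N)}(r)$. After the change of index $m=N-n$, the remainder contributions sum exactly to $P^{(N)}(r)Q(r)$. For the constant contributions, reorder the resulting double sum by setting $k=j-n$ as the outer index, so that $n$ runs over $\{1,\dots,N-1-k\}$ and $j=n+k$; reindexing $n':=j$ matches the inner sum with $S_{N+k}=\sum_{n'=k+1}^{N-1} P^{(n')}P^{(N+k-n')}$, yielding precisely $\sum_{k=0}^{N-2} r^{k}S_{N+k}$. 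This closes the identification and proves \eqref{eq:bootstrap_P_N_A}.

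\textbf{Step 3: the dual identity.} Formula \eqref{eq:bootstrap_P_N_B} is obtained by a symmetric argument: instead of writing $P(r)^2=Q(r)P(r)+r^N P^{(N)}(r)P(r)$ and extracting $P(r)P^{(N)}(r)$ on the right, one writes $P(r)^2=P(r)Q(r)+r^N P(r)P^{(N)}(r)$ and extracts $P^{(N)}(r)P(r)$ on the left; the exact same algebra, with the roles of the two factors swapped, produces the claimed identity.

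\textbf{Expected main obstacle.} The algebraic core of the proof is quite straightforward once the decomposition $P(r)=Q(r)+r^N P^{(N)}(r)$ is used; the only real subtlety is the combinatorial identification of Step 2. The appearance of $P^{(n)}(r)$ (a remainder) together with $P^{(N-n)}$ (a pure Taylor coefficient) inside the sum is unusual, and one must be careful to use the recursive definition of the remainders at lower orders. Performing the double-sum reindexation correctly and recognizing that the diagonal terms produced by the ``remainder part'' reassemble into $P^{(N)}(r)Q(r)$ while the ``constant part'' reproduces the Cauchy convolutions $S_{N+k}$ is the unique bookkeeping difficulty of the proof.
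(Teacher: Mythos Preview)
Your proof is correct and complete: the expansion of $P(r)^2=P(r)$ via the splitting $P(r)=Q(r)+r^N P^{(N)}(r)$, the use of \eqref{eq:convolution_polynom_projector} to collapse the low-degree part of $Q(r)^2$, and the double-sum reindexation in Step~2 all check out (in particular your identification of $S_{N+k}$ with $\sum_{n'=k+1}^{N-1}P^{(n')}P^{(N+k-n')}$ is exactly the constraint $0\le a,b\le N-1$, $a+b=N+k$).

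The paper, however, proceeds differently: it argues by induction on $N$, the base case $N=0$ being the idempotency $P(r)=P(r)^2$ itself, and passes from order $N$ to order $N+1$ by injecting the one-step expansions $P^{(n)}(r)=P^{(n)}+rP^{(n+1)}(r)$ into the identity at order $N$ and then invoking \eqref{eq:convolution_polynom_projector} only once, for $M=N$. Your approach is more direct and makes the role of the full convolution hypothesis transparent in a single step, at the cost of the slightly heavier combinatorial bookkeeping you flagged; the paper's inductive argument is lighter on index manipulation but hides the global structure behind the recursion. Both routes are equally valid for the lemma as stated.
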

		
		\begin{proof}
			We only take care of \eqref{eq:bootstrap_P_N_A}. Note that when $N=0$, this reduces to $P(r) = P(r) P(r)$, which is true since $P(r)$ is a projector. We prove the case $N \ge 1$ by induction. We start from the induction hypothesis at order $N$:
			$$P^{(N)}(r) = P(r) P^{(N)}(r) + \sum_{n=1}^{N} P^{(n)} (r) P^{(N-n)},$$
			and inject the expansions $P^{(N)}(r) = P^{(N)} + r P^{(N+1)}(r)$ and $P^{(n)}(r) = P^{(n)} + r P^{(n+1)}(r)$:
			\begin{equation*}\begin{split}
				P^{(N)}(r) = & P(r) \Big( P^{(N)} + r P^{(N+1)}(r) \Big) + \sum_{n=1}^{N} \Big(P^{(n)} + r P^{(n+1)}(r)\Big) P^{(N-n)} \\
				=&  P(r) P^{(N)} + \sum_{n=1}^{N} P^{(n)} P^{(N-n)} + r \Big( P(r) P^{(N+1)}(r)  +  \sum_{n=1}^{N} P^{(n+1)}(r) P^{(N-n)} \Big).
			\end{split}\end{equation*}
			Next, we expand $P(r) = P^{(0)} + r P^{(1)}(r)$ in the first line:
			\begin{align*}
				P^{(N)}(r) =&  \Big( P^{(0)} + r P^{(1)}(r) \Big) P^{(N)} + \sum_{n=1}^{N} P^{(n)} P^{(N-n)} \\
				&+ r \Big( P(r) P^{(N+1)}(r)  +  \sum_{n=1}^{N} P^{(n+1)}(r) P^{(N-n)} \Big) \\
				= & \sum_{n=0}^{N} P^{(n)} P^{(N-n)} + r \Big( P(r) P^{(N+1)}(r)  +  \sum_{n=0}^{N} P^{(n+1)}(r) P^{(N-n)} \Big).
			\end{align*}
			Since we assumed \eqref{eq:convolution_polynom_projector}, we replace the first term and thus have
			$$P^{(N)}(r) = P^{(N)} + r \Big( P(r) P^{(N+1)}(r)  +  \sum_{n=0}^{N} P^{(n+1)}(r) P^{(N-n)} \Big),$$
			from which we conclude using $P^{(N)}(r) = P^{(N)} + r P^{(N+1)}(r)$.
		\end{proof}
		
		In the case of the spectral projector from Lemma \ref{lem:expansion_projection}, we use the following corollary.
		\begin{cor}
			The following identities hold for $N=1$:
			\begin{align}
				\label{eq:bootstrap_P_1_A}
				\PP^{(1)}(\xi) & = \PP(\xi) \PP^{(1)}(\xi) + \PP^{(1)}(\xi) \PP,\\
				\label{eq:bootstrap_P_1_B}
				& = \PP^{(1)}(\xi) \PP(\xi) + \PP \PP^{(1)}(\xi),
			\end{align}
			and, assuming $\PP^{(1)} = \PP \PP^{(1)} + \PP^{(1)} \PP$, for $N=2$:
			\begin{align}
				\label{eq:bootstrap_P_2_A}
				\PP^{(2)}(\xi) & = \PP(\xi) \PP^{(2)}(\xi) + \PP^{(1)}(\xi) \otimes \PP^{(1)} + \PP^{(2)}(\xi) \PP,\\
				\label{eq:bootstrap_P_2_B}
				& = \PP^{(2)}(\xi) \PP(\xi) + \PP^{(1)} \otimes \PP^{(1)}(\xi) + \PP \PP^{(2)}(\xi).
			\end{align}
		\end{cor}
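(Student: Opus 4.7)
The strategy is to deduce the corollary from the preceding Lemma by reducing the vector parameter $\xi \in \R^d$ to a scalar one. Concretely, for any fixed direction $\xi$, set $P(r) := \PP(r\xi)$ for $r \in [0, 1]$. Since each $\PP(r\xi)$ is a spectral projector, $P(r)$ is a scalar family of projectors ($P(r)^2 = P(r)$), so the Lemma applies once I match its notation to the Taylor expansion of $\PP(\cdot)$ contracted with $\xi$.

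For the case $N=1$, the expansion reads $P(r) = \PP + r\, \xi \cdot \PP^{(1)}(r\xi)$, so $P^{(0)} = \PP$ and the remainder is $P^{(1)}(r) = \xi \cdot \PP^{(1)}(r\xi)$. The only compatibility condition of the Lemma, namely $P^{(0)} P^{(0)} = P^{(0)}$, reduces to $\PP^2 = \PP$ which holds since $\PP$ is a projector. Applying \eqref{eq:bootstrap_P_N_A} at $r=1$ gives
\begin{equation*}
\xi \cdot \PP^{(1)}(\xi) = \PP(\xi)\,\bigl(\xi \cdot \PP^{(1)}(\xi)\bigr) + \bigl(\xi \cdot \PP^{(1)}(\xi)\bigr)\,\PP,
\end{equation*}
and since $\xi$ is arbitrary, I read off \eqref{eq:bootstrap_P_1_A} componentwise. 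Identity \eqref{eq:bootstrap_P_1_B} follows symmetrically from \eqref{eq:bootstrap_P_N_B}.

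For $N=2$, the expansion reads $P(r) = \PP + r\,\xi \cdot \PP^{(1)} + r^2\,(\xi \otimes \xi : \PP^{(2)}(r\xi))$, so $P^{(0)} = \PP$, $P^{(1)} = \xi \cdot \PP^{(1)}$, and the remainder is $P^{(2)}(r) = \xi \otimes \xi : \PP^{(2)}(r\xi)$. The Lemma's compatibility condition at order $M=1$ is $\sum_{n=0}^{1} P^{(n)} P^{(1-n)} = P^{(1)}$, i.e.
\begin{equation*}
\xi \cdot \PP^{(1)} = \PP\,\bigl(\xi \cdot \PP^{(1)}\bigr) + \bigl(\xi \cdot \PP^{(1)}\bigr)\,\PP,
\end{equation*}
which is precisely the $\xi$-contraction of the assumed relation $\PP^{(1)} = \PP\,\PP^{(1)} + \PP^{(1)}\,\PP$. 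Hence the Lemma applies and \eqref{eq:bootstrap_P_N_A} at $r=1$ produces
\begin{equation*}
\xi \otimes \xi : \PP^{(2)}(\xi) = \PP(\xi)\,\xi \otimes \xi : \PP^{(2)}(\xi) + \bigl(\xi \cdot \PP^{(1)}(\xi)\bigr)\bigl(\xi \cdot \PP^{(1)}\bigr) + \xi \otimes \xi : \PP^{(2)}(\xi)\,\PP.
\end{equation*}
Identifying $(\xi \cdot \PP^{(1)}(\xi))(\xi \cdot \PP^{(1)})$ with $\xi \otimes \xi : (\PP^{(1)}(\xi) \otimes \PP^{(1)})$ and using the arbitrariness of $\xi$, I obtain \eqref{eq:bootstrap_P_2_A}; formula \eqref{eq:bootstrap_P_2_B} follows from \eqref{eq:bootstrap_P_N_B} in the same manner.

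No substantive obstacle is anticipated: the proof is a direct specialization of the preceding Lemma and the only care needed is the bookkeeping that turns the scalar one-parameter bootstrap into its vector-valued counterpart by recognizing tensor contractions with $\xi$.
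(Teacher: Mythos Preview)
Your reduction $P(r) := \PP(r\xi)$ is the natural way to invoke the Lemma, and it correctly delivers, for each fixed $\xi$, the \emph{contracted} identity
\[
\xi \cdot \PP^{(1)}(\xi) \;=\; \PP(\xi)\bigl[\xi \cdot \PP^{(1)}(\xi)\bigr] + \bigl[\xi \cdot \PP^{(1)}(\xi)\bigr]\PP,
\]
together with its second-order analogue after contracting with $\xi\otimes\xi$. The gap is the step ``since $\xi$ is arbitrary, I read off componentwise'': from $\xi \cdot F(\xi)=0$ for all $\xi$ one \emph{cannot} conclude $F(\xi)=0$ when $F$ itself depends on $\xi$ --- take for instance $F(\xi)=(\xi_2,-\xi_1)$ in $\R^2$. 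The same obstruction blocks recovering the tensor identity \eqref{eq:bootstrap_P_2_A} from its $\xi\otimes\xi$-contraction.

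If the Corollary is to be read only after contraction (which is all the scalar Lemma can produce), your argument is complete; simply drop the ``componentwise'' claim. If the uncontracted vector and tensor identities are genuinely intended, they \emph{do} hold for the specific remainders of Lemma~\ref{lem:expansion_projection}, but this requires their integral representation rather than the bare projector relation $\PP(\xi)^2=\PP(\xi)$: from
\[
\PP^{(1)}_j(\xi) = \frac{1}{2i\pi}\oint_{|z|=r}\RR(z,\LL_\xi)(-iv_j)\,\RR(z,\LL)\,\d z
\]
one checks that both $\PP(\xi)\,\PP^{(1)}_j(\xi)\,\PP$ and $(\Id-\PP(\xi))\,\PP^{(1)}_j(\xi)\,(\Id-\PP)$ vanish by deforming the contour (outward to infinity for the first, inward to $0$ for the second, each integrand being holomorphic in the relevant region). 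These two vanishings are together equivalent to the $j$-th component of \eqref{eq:bootstrap_P_1_A}. This extra input is not supplied by the scalar Lemma alone.
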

		
		\section{Properties of the Navier-Stokes equations}
		\label{sec:N-S}
		
		We recall here some classical results on the Navier-Stokes equations and refer to \cite{R2016} and references therein.
		
		\begin{theo}[\textit{\textbf{Cauchy theory for Navier-Stokes}}]
			\label{thm:cauchy_NSF}
			Let $s \ge \frac{d}{2} - 1$ and consider a triple of initial conditions $(\varrho_\ini, u_\ini, \theta_\ini) \in \mathbb{H}^s_x$ satisfying
			$$\nabla_x (\varrho_\ini + \theta_\ini) = 0, \qquad \nabla_x \cdot u_\ini = 0.$$
			There exists a unique maximal lifespan $T_* \in (0, \infty]$ such that, for any $T < T_*$, the initial data~$(\varrho_\ini, u_\ini, \theta_\ini)$ generates a unique solution
			$$(\varrho, u, \theta) \in \CC\left( [0, T] ; \mathbb{H}^s_x\right) \cap L^2\left( [0, T] ; H^{s+1}_x \right)$$
			to the incompressible Navier-Stokes-Fourier system
			\begin{equation}
				\label{eq:NSF}
				\begin{cases}
					\partial_t u + u \cdot \nabla_x u = \kappa_\Inc \Delta_x u - \nabla_x p, & \nabla_x \cdot u = 0,\\
					\partial_t \theta +  u \cdot \nabla_x \theta = \kappa_\Bou \Delta_x \theta, & \nabla_x (\varrho + \theta) = 0,
				\end{cases}
			\end{equation}
			and it satisfies for some universal constant $C >0$
			\begin{align*}
				\| (\varrho, u, \theta) \|_{ L^\infty \left( [0, T] ; H^s \right) } + \| \nabla_x & (\varrho, u, \theta) \|_{ L^2 \left( [0, T] ; H^s \right) } \\
				& \le C \| (\varrho_\ini, u_\ini, \theta_\ini) \|_{\mathbb{H}^s_x } \exp\left( C \| \nabla_x u \|_{ L^2 \left( [0, T] ; H^{\frac{d}{2}-1}_x \right) } \right).
			\end{align*}
			If the solution is global (i.e. $T_* = \infty$), the solution vanishes for large times:
			$$\lim_{t \infty} \left\| (\rho(t), u(t), \theta(t)) \right\|_{\mathbb{H}^s} = 0,$$
			this is the case if $d = 2$, or if $d \ge 3$ and $\| u \|_{ \mathbb{H}^{\frac{d}{2} - 1}_x }$ is small.
		\end{theo}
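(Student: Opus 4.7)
The plan is to decouple the system into a standard incompressible Navier-Stokes equation for $u$ and a linear transport-diffusion equation for $\theta$, with $\varrho$ recovered algebraically, thereby reducing the problem to known Cauchy theory.

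First, I would exploit the Boussinesq constraint $\nabla_x(\varrho + \theta) = 0$. Since both $\varrho$ and $\theta$ lie in $L^2(\R^d)$, the sum $\varrho + \theta$ is a spatial constant lying in $L^2(\R^d)$ and must therefore vanish identically; equivalently $\varrho \equiv -\theta$. This constraint is preserved by the flow because $\partial_t(\varrho + \theta)$ is forced to vanish (using the momentum continuity and temperature equations together with $\nabla_x \cdot u = 0$). The system thus reduces to the incompressible Navier-Stokes equation
\begin{equation*}
\partial_t u + u \cdot \nabla_x u = \kappa_\Inc \Delta_x u - \nabla_x p, \qquad \nabla_x \cdot u = 0,
\end{equation*}
coupled to the convection-diffusion equation
\begin{equation*}
\partial_t \theta + u \cdot \nabla_x \theta = \kappa_\Bou \Delta_x \theta,
\end{equation*}
with $\varrho = -\theta$ determined algebraically.

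Second, I would invoke the classical Fujita-Kato theory for the incompressible Navier-Stokes equation in $\R^d$ with initial data in $\mathbb{H}^s_x$ for $s \geq d/2 - 1$. This gives a maximal lifespan $T_* \in (0,\infty]$ and a unique solution $u \in \CC([0,T];\mathbb{H}^s_x) \cap L^2([0,T];H^{s+1}_x)$ for any $T < T_*$, together with the energy estimate of the exponential form stated (obtained by testing the equation against $(-\Delta_x)^s u$, using the divergence-free condition to eliminate the pressure, and controlling the nonlinear term by $\|\nabla_x u\|_{H^{d/2-1}_x}\|u\|_{H^s_x}\|u\|_{H^{s+1}_x}$ followed by Young's inequality and Gr\"onwall's lemma). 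With $u$ in hand, the equation for $\theta$ is linear and parabolic with a divergence-free drift of the required regularity; standard energy methods yield existence, uniqueness and the same type of estimate in $L^\infty_t \mathbb{H}^s_x \cap L^2_t H^{s+1}_x$, with lifespan at least $T_*$. Combining both, $(\varrho, u, \theta)$ belongs to the claimed space with the claimed estimate.

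Third, for the global and large-time behavior, I would rely on well-established results: in dimension $d=2$, both the Navier-Stokes equation and the linear convection-diffusion equation admit global-in-time solutions in the Sobolev scale, and the $L^2$-decay together with enstrophy control yields $\|u(t)\|_{\mathbb{H}^s} \to 0$, from which the same conclusion for $\theta$ follows. In dimension $d \geq 3$, smallness of $\|u_\ini\|_{\mathbb{H}^{d/2-1}_x}$ combined with the Fujita-Kato smallness principle yields a global solution that decays to zero, and the $\theta$ equation inherits this decay through the maximum principle and dissipation of the $L^2$-norm.

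The main obstacle is the endpoint regularity $s = d/2 - 1$, where the nonlinear estimate on $u \cdot \nabla_x u$ is critical and requires a careful use of paraproduct techniques à la Bony (see Appendix \ref{scn:littlewood-paley}) rather than the naive Sobolev multiplication, as well as a careful bootstrap to propagate higher regularity from this critical level when $s > d/2 - 1$. A secondary technical point is ensuring continuity in time at the endpoint $s = d/2 - 1$ (which is of Besov type for $d$ odd), typically resolved by approximation of the initial data and a stability estimate for the difference of two solutions in a slightly weaker norm.
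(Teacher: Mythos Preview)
Your proposal is correct and outlines the standard approach. However, the paper does not actually prove this theorem: it is stated as a classical result at the beginning of Appendix~\ref{sec:N-S} with the sentence ``We recall here some classical results on the Navier-Stokes equations and refer to \cite{R2016} and references therein,'' and no proof is given. Your decoupling strategy (reduce to $\varrho=-\theta$, invoke Fujita--Kato theory for $u$, then solve the linear advection-diffusion equation for $\theta$) is exactly the standard route one finds in the cited literature, so there is nothing to compare against in the paper itself.
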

		
		Note that, on the one hand, $\varrho(t), \theta(t) \in L^2_x$ thus the Boussinesq condition $\nabla_x (\varrho + \theta) = 0$ is equivalent to $\varrho + \theta = 0$, and on the other hand, since $u$ is incompressible, the pressure (which is to be interpreted as a Lagrange multiplier) can be eliminated using Leray's projector $\mathbb{P}$ on incompressible fields:
		\begin{equation}
			\label{eq:NSF_1}
			\begin{cases}
				\partial_t u + \mathbb{P} \left( u \cdot \nabla_x u \right) = \kappa_\Inc \Delta_x u,\\
				\partial_t \theta = \kappa_\Bou \Delta_x \theta + u \cdot \nabla_x \theta, \\
				\varrho =- \theta,
			\end{cases}
		\end{equation}
		or, equivalently,
		\begin{equation}
			\label{eq:NSF_2}
			\begin{cases}
				\partial_t u + \mathbb{P} \big[ \nabla_x \cdot (u \otimes u) \big] = \kappa_\Inc \Delta_x u,\\
				\partial_t \theta = \kappa_\Bou \Delta_x \theta + \nabla_x \cdot (u \theta) , \\
				\varrho = -\theta.
			\end{cases}
		\end{equation}
	 
		The next two results detail in what sense the Navier-Stokes-Fourier system is equivalent to \eqref{eq:reduction_NS_kin}, proving Proposition \ref{prop:equivalence_kinetic_hydrodynamic_INSF}.
		\begin{lem}
			The following identities hold.
			\label{lem:Q_A}
			\begin{enumerate} 
				\item For the Burnett function $\BurA $, one has
				\begin{gather*}
					\left\la \QQ^\sym( v_i \mu, v_j \mu), \LL^{-1}\BurA \right\ra_{\Ss} = \frac{\vartheta_1}{2} \left( E_{i, j} + E_{j, i} - \frac{2}{d} \delta_{i, j} \Id \right),
				\end{gather*}
				where $(E_{i, j})_{i, j = 1}^d$ is the canonical basis of $\mathscr{M}_{d\times d}$, and the coefficient $\vartheta_1$ is defined as
				\begin{equation*}
					\vartheta_1 := - d \sqrt{ \dfrac{d}{E} } \left\la \QQ^\sym(v_1 \mu, v_1 \mu), \LL^{-1} \left( \Id - \PP \right) v_2^2 \mu \right\ra_{\Ss}.
				\end{equation*}
				Moreover, there holds for $\varphi = \mu, (|v|^2-E) \mu$
				\begin{equation*}
					\left\la \QQ^\sym( \varphi , v \mu ) , \LL^{-1}\BurA \right\ra_{\Ss} = \left\la \QQ^\sym( \varphi , \varphi ) , \LL^{-1}\BurA \right\ra_{\Ss} = 0.
				\end{equation*}
				\item Regarding the Burnett function $\BurB $, one has
				\begin{gather*}
					\left\la \QQ^\sym( v_i \mu, \mu ), \LL^{-1}\BurB \right\ra_{\Ss} = \vartheta_2 \, \mathbf{e}_i, \\
					\left\la \QQ^\sym( v_i \mu,  \left( |v|^2 - E \right) \mu ), \LL^{-1}\BurB \right\ra_{\Ss} = \vartheta_3 \, \mathbf{e}_i,
				\end{gather*}
				where $(\bm{e}_i)_{i=1}^d$ is the canonical basis of $\R^d$ and the coefficients $\vartheta_i$ are defined as
				\begin{equation*}\begin{cases}
						\vartheta_2 &:= - \dfrac{1}{E \sqrt{K (K-1)}} \left\la \QQ^\sym(v_1 \mu, \mu), \LL^{-1} \left( \Id - \PP \right) v_1 |v|^2 \mu \right\ra_{\Ss},\\
						\\
						\vartheta_3 &:= - \dfrac{1}{E \sqrt{K (K-1)}} \left\la \QQ^\sym(v_1 \mu, \left( |v|^2 - E \right) \mu), \LL^{-1} \left( \Id - \PP \right) v_1 |v|^2 \mu \right\ra_{\Ss}\,.
				\end{cases}\end{equation*}
				Furthermore, for $\varphi, \psi = \mu, \, v \mu, \, \left( |v|^2 - E \right) \mu$, one has
				\begin{equation*}
					\left\la \QQ^\sym( \varphi , \psi  ), \LL^{-1}\BurB \right\ra_{\Ss} = 0 \qquad \text{ and } \quad
					\left\la \QQ^\sym( v_i \mu , v_j \mu ), \LL^{-1}\BurB \right\ra_{\Ss} = 0.
				\end{equation*}
			\end{enumerate}
		\end{lem}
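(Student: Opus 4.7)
The plan is to derive all identities from the $O(d)$-invariance transferred from $\QQ$ and $\LL$ (Assumptions \ref{Bisotrop}, \ref{L1}) to $\QQ^\sym$ and $\LL^{-1}$, combined with classical invariant theory for the orthogonal group. Each inner product $\langle \QQ^\sym(\varphi, \psi), \LL^{-1}\BurX\rangle_\Ss$ with $\BurX \in \{\BurA, \BurB\}$ is to be viewed as a tensor whose rank and symmetries are inherited from the indices carried by $\varphi, \psi$ and $\BurX$. Crucially, $\LL^{-1}$ preserves tensorial structure and trace-free properties (being itself $O(d)$-equivariant), so $\LL^{-1}\BurA$ remains symmetric and trace-free in its matrix indices while $\LL^{-1}\BurB$ remains vector-valued, and each pairing is automatically $O(d)$-invariant.

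The vanishing identities then all follow from elementary dimension counting. In any dimension $d$, every $O(d)$-invariant tensor of odd total rank vanishes (apply $\Theta = -\Id$), so pairings like $\langle \QQ^\sym(\varphi, v\mu), \LL^{-1}\BurA\rangle_\Ss$ (rank three) and $\langle \QQ^\sym(\varphi, \psi), \LL^{-1}\BurB\rangle_\Ss$ for scalar $\varphi, \psi$ (rank one) are zero, and similarly $\langle \QQ^\sym(v_i\mu, v_j\mu), \LL^{-1}\BurB\rangle_\Ss$ (rank three, symmetric in $(i,j)$) vanishes. Rank-2 tensors $\langle \QQ^\sym(\varphi, \varphi), (\LL^{-1}\BurA)_{kl}\rangle_\Ss$ for scalar $\varphi$ must equal $c\delta_{kl}$ by invariance, but the trace-free property of $\LL^{-1}\BurA$ in $(k,l)$ forces $c=0$.

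For the non-vanishing cases, the same invariance principle pins down the tensorial form. The rank-4 tensor $\langle \QQ^\sym(v_i\mu, v_j\mu), (\LL^{-1}\BurA)_{kl}\rangle_\Ss$ is symmetric in $(i,j)$, symmetric and trace-free in $(k,l)$; an enumeration of $O(d)$-invariants with these symmetries (using that such tensors are spanned by products of Kronecker deltas) shows the class is one-dimensional, spanned by $\delta_{ik}\delta_{jl}+\delta_{il}\delta_{jk}-\tfrac{2}{d}\delta_{ij}\delta_{kl}$, which is exactly $E_{ij}+E_{ji}-\tfrac{2}{d}\delta_{ij}\Id$. Analogously, the rank-2 pairings involving $\BurB$ must be proportional to $\delta_{ij}$, yielding the $\vartheta_2 \mathbf{e}_i$ and $\vartheta_3 \mathbf{e}_i$ structures. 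The constants are then extracted by evaluating at a convenient choice of indices: for $\vartheta_1$, I will contract with $E_{22}$ at $(i,j)=(1,1)$ and use $(\BurA)_{22}=\sqrt{d/E}(v_2^2-|v|^2/d)\mu$ together with the identity $\LL^{-1}=\LL^{-1}(\Id-\PP)$ valid on $\nul(\LL)^\perp$ (applicable since $\QQ^\sym$ takes values in $\nul(\LL)^\perp$ by \ref{Bortho}); this produces the stated formula for $\vartheta_1$. The formulas for $\vartheta_2, \vartheta_3$ follow analogously from the $\BurB$-identities.

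There is no genuine obstacle in this proof; the whole argument is driven by rotational invariance and a classification of invariant tensors of small rank. The only real care required is in the bookkeeping of the tensor structure --- correctly identifying the rank and symmetries of each pairing, using the orthogonality $\langle \QQ^\sym(\cdot,\cdot), \varphi\rangle_\Ss = 0$ for $\varphi\in\nul(\LL)$ to discard the $\PP$-component when inverting $\LL$, and keeping track of the trace-free condition on $\BurA$ --- so as to arrive cleanly at the precise normalization of each $\vartheta_k$.
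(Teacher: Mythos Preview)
Your proposal is correct and rests on the same underlying principle as the paper's proof---the $O(d)$-equivariance of $\QQ^\sym$, $\LL^{-1}$, and the Burnett functions---but you deploy it in a more abstract fashion. The paper proceeds by explicit computation: it applies specific orthogonal transformations (sign flips to exploit parity, coordinate swaps $v_i \leftrightarrow v_j$, and the $45^\circ$ rotation $(v_i,v_j)\mapsto \big(\tfrac{v_1+v_2}{\sqrt2},\tfrac{v_1-v_2}{\sqrt2}\big)$) to reduce every entry of the rank-4 tensor to a single scalar, and uses the identity $\sum_j \mathsf{R}_0[v_j^2\mu]=\mathsf{R}_0[|v|^2\mu]=0$ to eliminate the trace part. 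You instead invoke the classification of $O(d)$-invariant tensors directly: odd-rank invariants vanish, and rank-4 invariants symmetric in each pair and trace-free in one pair form a one-dimensional space, which pins down the tensorial form without any case analysis. Your route is cleaner and more systematic, at the cost of relying on a (standard but unstated) structure theorem for isotropic tensors; the paper's route is more self-contained and concrete. Both extract the normalizations $\vartheta_k$ by the same evaluation at specific indices, using $\QQ^\sym(\cdot,\cdot)\in\nul(\LL)^\perp$ to replace $\LL^{-1}$ by $\LL^{-1}(\Id-\PP)$.
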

		
		\begin{proof}
			We recall the notation from Section \ref{scn:spectral_study}
			$$\mathsf{R}_0 := \LL^{-1} \left( \Id - \PP\right) \quad \text{ as  well as the identity } \quad \LL^{-1}\BurA = \sqrt{ \frac{d}{E} } \mathsf{R}_0 \left[ v \otimes v \mu \right].$$
			We also recall that both $\LL$ and $\QQ^\sym$ commute with orthogonal matrices, and in particular preserve the evenness/oddity.
			
			In this proof, we only prove the first identity which is the most intricate, that is we compute for all $i, j, k, \ell$
			$$\left\la \QQ^\sym(v_i \mu, v_j \mu ), \mathsf{R}_0 \left[ v_k v_\ell \mu \right] \right\ra.$$
			The other identities are proved in a similar yet simpler manner.
			
			\step{1}{The case $i \neq j$}
			If $\{ i, j \} \neq \{ k, \ell \}$, then $\QQ^\sym(v_i \mu, v_j \mu )$ is odd in the variables $v_i$ and $v_j$, however~$\mathsf{R}_0 \big[ v_k v_\ell \mu \big]$ is even in at least one of these variables, thus
			$$\left\la \QQ^\sym(v_i \mu, v_j \mu) , \mathsf{R}_0 \big[ v_k v_\ell \mu \big] \right\ra = 0.$$
			If $\{ i, j \} = \{ k, \ell \}$, we use the isometric change of variables $(v_i, v_j) \rightarrow \left(\frac{v_1 + v_2}{\sqrt{2} }, \frac{v_1 - v_2}{\sqrt{2} } \right)$, which is compatible with the invariance of $\LL$ and $\QQ^\sym$: 
			\begin{align*}
				\big\la \QQ^\sym(v_i \mu, v_j \mu) , \mathsf{R}_0 \big[ v_i v_j \mu \big] \big\ra
				&= \frac{1}{4} \left\la \QQ^\sym( (v_1 + v_2) \mu, (v_1 - v_2) \mu) , \mathsf{R}_0 \big[ (v_1-v_2) (v_1 + v_2) \mu \big] \right\ra \\
				&= \frac{1}{4} \left\langle \QQ^\sym( v_1 \mu, v_1 \mu) - \QQ^\sym( v_2 \mu, v_2 \mu) , \mathsf{R}_0 \big[ v_1^2 \mu \big] - \mathsf{R}_0 \big[ v_2^2 \mu \big] \right\rangle \\
				&= \frac{1}{2} \left( \left\la \QQ^\sym( v_1 \mu, v_1 \mu) , \mathsf{R}_0 \big[ v_1^2 \mu \big] \right\ra - \left\la \QQ^\sym( v_1 \mu, v_1 \mu) , \mathsf{R}_0 \big[ v_2^2 \mu \big] \right\ra\right)
			\end{align*}
			where we used the change of variables $(v_1, v_2) \leftrightarrow (v_2, v_1)$ in the last identity. Using that
			$$\mathsf{R}_0 \big[ v_1^2 \mu \big] + \sum_{j = 2}^d \mathsf{R}_0 \big[ v_j^2 \mu \big] = \mathsf{R}_0 \left[ |v|^2 \mu \right] = 0$$
			together with the change of variables $v_j \rightarrow v_2$, we can rewrite the previous identity as
			\begin{equation*}\begin{split}
				\left\la \QQ^\sym(v_i \mu, v_j \mu) , \mathsf{R}_0 \big[ v_i v_j \mu \big] \right\ra & = \frac{1}{2} \Bigg( - \sum_{j = 2}^d \left\la \QQ^\sym( v_1 \mu, v_1 \mu) , \mathsf{R}_0 \big[ v_j^2 \mu \big] \right\ra \\
				&\phantom{+++} - \left\la \QQ^\sym( v_1 \mu, v_1 \mu) , \mathsf{R}_0 \big[ v_2^2 \mu \big] \right\ra\Bigg) \\
				& = - \frac{d}{2} \left\la \QQ^\sym( v_1 \mu, v_1 \mu) , \mathsf{R}_0 \big[ v_2^2 \mu \big] \right\ra.
			\end{split}\end{equation*}
			To sum up, if $i \neq j$, we have 
			$\displaystyle \left\la \QQ^\sym(v_i \mu, v_j \mu) , \LL^{-1}\BurA \right\ra = \frac{ \vartheta_1 }{2} \left( E_{i, j} + E_{j, i} \right).$
			
			\step{2}{The case $i = j$}
			If $k \neq \ell$, then $\mathsf{R}_0 \big[ v_ k v_\ell \mu \big]$ is odd in both $v_k$ and $v_\ell$, whereas $\QQ^\sym(v_i \mu, v_i \mu)$ is even in all directions, thus
			$$\left\la \QQ^\sym(v_i \mu, v_i \mu ) , \mathsf{R}_0\big[ v_k v_\ell \mu \big] \right\ra = 0.$$
			When $k = \ell$, arguing as in \textit{Step 1}, we have
			$$\left\la \QQ^\sym(v_i \mu, v_i \mu), \mathsf{R}_0 \big[ v_k^2 \mu \big] \right\ra = 
			\begin{cases}
				-(d-1) \left\la \QQ^\sym(v_1 \mu, v_1 \mu), \mathsf{R}_0 \big[ v_1^2 \mu \big] \right\ra, & k = i, \\
				\left\la \QQ^\sym(v_1 \mu, v_1 \mu), \mathsf{R}_0 \big[ v_2^2 \mu \big] \right\ra, & k \neq i.
			\end{cases}$$
			To sum up, when $i = j$, we have
			$\displaystyle \left\la \QQ^\sym(v_i \mu, v_i \mu), \LL^{-1}\BurA \right\ra = \vartheta_1 \left( E_{i, i} - \frac{1}{d} \Id \right)$ and this proves the result.
			
			\step{3}{Comments on the other coefficients}
			Regarding the Burnett function $\BurA $, one proves similarly for $\varphi = \mu, (|v|^2 - E) \mu$
			$$\left\la \QQ^\sym( \varphi, \varphi ) , \LL^{-1}\BurA \right\ra_{\Ss} = \sqrt{ \dfrac{d}{E} } \left\la \QQ^\sym(\varphi, \varphi), \LL^{-1} \left( \Id - \PP \right) v_1^2 \mu \right\ra_{\Ss} \Id,$$
			and observing that the $\QQ(\varphi, \varphi)$ is radial and that $|v|^2 \mu = \sum_{i=1}^d v_i^2 \mu \in \nul(\LL)$, we deduce using the change of variable $v_1 \rightarrow v_i$ that these coefficients vanish.
			
			\medskip
			Regarding the Burnett function $\BurB $, one proves similarly for $\varphi = \mu, (|v|^2 - E) \mu$
			$$\left\la \QQ^\sym( v_i \mu, \varphi ), \LL^{-1}\BurB \right\ra_{\Ss} = - \dfrac{1}{E \sqrt{K (K-1)}} \left\la \QQ^\sym(v_1 \mu, \varphi), \LL^{-1} \left( \Id - \PP \right) v_1 |v|^2 \mu \right\ra_{\Ss},$$
			and for $\varphi, \psi = \mu, (|v|^2 - E) \mu$, we have that
			$$\left\la \QQ^\sym( \varphi, \psi ), \LL^{-1}\BurB \right\ra_{\Ss} = - \dfrac{1}{E \sqrt{K (K-1)}} \left\la \QQ^\sym( \varphi, \psi ), \LL^{-1} \left( \Id - \PP \right) v_1 |v|^2 \mu \right\ra_{\Ss} = 0,$$
			where we used that $\QQ(\varphi, \psi)$ is radial and $\LL^{-1} \left( \Id - \PP \right) v_1 |v|^2 \mu$ is odd in $v_1$. Finally, there holds
			$$\la \QQ^\sym(v_i \mu, v_j \mu) , \LL^{-1}\BurB \ra_\Ss = 0$$
			because $\QQ^\sym(v_i \mu, v_j \mu)$ is odd in both $v_i$ and $v_j$ if $i \neq j$, or even in $v_i = v_j$ otherwise, and~$\LL^{-1} B$ is odd. This concludes the proof.
		\end{proof}
		
		\begin{rem}
			Note that in the case of the classical Boltzmann and Landau equations, the operator $\LL$ is related to $\QQ$ through a linearization procedure, and one can show the identity
			$$\forall f \in \nul(\LL), \quad \QQ(f, f) = - \frac{1}{2} \LL \left(f^2 \mu^{-1} \right),$$
			which implies that $\vartheta_2 = 0$ and the coefficients $\vartheta_1$ and $\vartheta_3$ can be computed explicitly.
		\end{rem}

		Thanks to the above result, we are in position to prove the Proposition \ref{prop:equivalence_kinetic_hydrodynamic_INSF}.
		
		\begin{proof}[Proof of Proposition \ref{prop:equivalence_kinetic_hydrodynamic_INSF}]
			We recall the integral formulation of the incompressible Navier-Stokes system:
			$$
			\begin{cases}
				\displaystyle u(t) = e^{ \kappa_\Inc t \Delta_x } \mathbb{P} u_\ini - \vartheta_\Inc \mathbb{P} \int_0^t e^{(t-\tau) \kappa_\Inc \Delta_x} \, \nabla_x \cdot \left( u \otimes u \right) (\tau) \, \d \tau, \\
				\displaystyle \theta(t) = e^{ t \kappa_\Bou \Delta_x} \theta_\ini - \vartheta_\Bou \int_0^t e^{(t-\tau) \kappa_\Bou \Delta_x } \nabla_x \cdot  (u \theta)(\tau) \d \tau, \\
				\varrho = - \theta,
			\end{cases}
			$$
			where we recall that $\mathbb{P}$ is Leray's projector on incompressible fields, and we point out the equivalence between $\nabla_x(\varrho +\theta) = 0$ and $\varrho + \theta = 0$ since $\varrho(t), \theta(t) \in L^2_x$.
			
			\medskip
			Regarding the kinetic integral equation, we recall the definitions of $U_\ns$ and $V_\ns$:
			$$U_\ns(t) = e^{t \kappa_\Inc \Delta_x} \PP^{(0)}_{\Inc} + e^{t \kappa_\Bou \Delta_x} \PP^{(0)}_{\Bou}, \qquad V_\ns(t) = e^{t \kappa_\Inc \Delta_x} \PP^{(1)}_{\Inc} + e^{t \kappa_\Bou \Delta_x} \PP^{(1)}_{\Bou},$$
			and point out the equivalence coming from  Proposition \ref{prop:macro_representation_spectral}:
			\begin{equation}
				\label{eq:inc_bou_conditions}
				\left( \Id - \PP^{(0)}_\Inc \right) f = \left( \Id - \PP_\Bou^{(0)} \right) f = 0 \iff \nabla_x (\varrho + \theta) = 0 ~\text{ and }~ \nabla_x \cdot u = 0,
			\end{equation}
			and since \ref{Bortho} assumes $\QQ(f, f) \perp \nul(\LL)$, we have
			$$\PP^{(0)}_\Inc \PP^{(1)}_\Inc \QQ(f, f) = \PP^{(1)}_\Inc \QQ(f, f), \qquad\PP^{(0)}_\Bou \PP^{(1)}_\Bou \QQ(f, f) = \PP^{(1)}_\Bou \QQ(f, f),$$
			thus \textit{we assume \eqref{eq:inc_bou_conditions} from now on}.
			
			\medskip
			Since $U_\ns$ and $V_\ns$ both take values in macroscopic distributions, it is enough to consider their macroscopic components.
			
			\step{1}{Description of $\PP^{(1)}_\Inc \QQ(f, f)$ and $\PP^{(1)}_\Bou \QQ(f, f)$}
			Plugging the expression \eqref{eq:f-macro} of $f$ into the nonlinearity $\QQ(f, f)$, we have
			\begin{align*}
				\QQ(f, f) = & \varrho^2 \QQ( \mu, \mu ) + \QQ( u \cdot v \mu, u \cdot v \mu ) +   \frac{\theta^2}{E^2(K-1)^2} \QQ\left( \left( |v|^2-E \right) \mu , \left( |v|^2-E \right) \mu \right) \\
				& + 2 \varrho u \cdot  \QQ(\mu, v \mu) +    \frac{2\varrho \theta}{E(K-1)} \QQ\left( \mu, (|v|^2 - E) \mu \right) 
				+  \frac{2\theta u}{E(K-1)} \cdot  \QQ\left( v \mu, (|v|^2 - E) \mu \right).
			\end{align*}
			On the one hand, Lemma \ref{lem:Q_A} yields
			$$\left\la \QQ( f , f ) , \LL^{-1}\BurA \right\ra_{\Ss} = \frac{\vartheta_1}{2} \left( u \otimes u - \frac{2}{d} |u|^2 \Id \right),$$
			and, since for any $g : \R^d \rightarrow \R$, there holds $\nabla_x \cdot (g \Id) = \nabla_x g$ and thus $\mathbb{P} \left( \nabla_x \cdot (g \Id) \right) = 0$, we deduce from Proposition \ref{prop:macro_representation_spectral} that
			\begin{gather*}
				u\left[ \nabla_x \cdot \PP_\Inc^{(1)} \QQ( f , f ) \right]  = \left( \frac{d}{E} \right)^{\frac{3}{2}} \mathbb{P} \left\{ \nabla_x \cdot \left\la \QQ( f , f ) , \LL^{-1}\BurA \right\ra_{\Ss} \right\} 
				= \left( \frac{d}{E} \right)^{\frac{3}{2}} \frac{\vartheta_1}{2} \mathbb{P} \left\{ \nabla_x \cdot \left( u \otimes u \right) \right\}.
			\end{gather*}
			On the other hand, Lemma \ref{lem:Q_A} and \eqref{eq:inc_bou_conditions} yield
			\begin{equation*}
				\la \QQ(f, f) , \LL^{-1}\BurB \ra_{\Ss} = 2 \vartheta_2 u \varrho + \frac{2 \vartheta_3}{E (K-1)} u \theta
				= \left( 2 \vartheta_2 + \frac{2 \vartheta_3}{E (K-1)} \right) u \theta,
			\end{equation*}
			and thus, according to Proposition \ref{prop:macro_representation_spectral},
			\begin{align*}
				\theta\left[ \nabla_x \cdot \PP^{(1)}_{\Bou} \QQ(f, f) \right] = \frac{1}{K \sqrt{K(K-1)}} \left( 2 \vartheta_2 + \frac{2 \vartheta_3}{E (K-1)} \right) \nabla_x \cdot (u \theta).
			\end{align*}
			\step{2}{The integral formulation in macroscopic variables}
			We are only left with checking that the $u$-part of the kinetic integral system satisfies the Navier-Stokes equations, and that the $\theta$-part satisfies the Fourier equation. 
			
			Indeed, by Proposition \ref{prop:macro_representation_spectral} and the previous step, there holds
			\begin{align*}
				u(t) = u\left[ f(t) \right] & = u\left[ U_\ns(t) f_\ini \right] + u\left[ \int_0^t V_\ns(t-\tau) \QQ(f(\tau), f(\tau) ) \right] \\
				& = e^{t \kappa_\Inc \Delta_x} u_\ini - \vartheta_\Inc \mathbb{P} \int_0^t e^{(t-\tau) \kappa_\Inc \Delta_x} \, \nabla_x \cdot \left( u \otimes u \right) (\tau) \, \d \tau,
			\end{align*}
			as well as
			\begin{align*}
				\theta(t) = \theta\left[ f(t) \right] & = \theta\left[ U_\ns(t) f_\ini \right] + \theta\left[ \int_0^t V_\ns(t-\tau) \QQ(f(\tau), f(\tau) ) \right] \\
				& = e^{t \kappa_\Inc \Delta_x} \theta_\ini - \vartheta_\Bou \int_0^t e^{(t-\tau) \kappa_\Bou \Delta_x } \nabla_x \cdot  (u \theta)(\tau) \d \tau.
			\end{align*}
			This concludes the proof.
		\end{proof} 
		The next two lemmas provide estimates related to the kinetic version of the Navier-Stokes-Fourier solution, and the first one is essentially a quantitative version of those proved in \cite{GT2020}.
		\color{black}
		
		\begin{lem}[\textit{\textbf{Estimates for Navier-Stokes squared}}]
			\label{lem:estimates_derivative_Q_navier_stokes}
			Suppose $s > \frac{d}{2}$ and denote the bilinear term $\varphi = \QQ(f, f)$ where~$f$ is defined as
			$$
			f(t, x, v) = \varrho(t, x) \mu(v) + u(t, x) \cdot v \mu(v) + \frac{1}{E(K-1)} \theta(t, x) \left( |v|^2 - E \right) \mu(v),$$
			and the coefficients $(\varrho, u, \theta)$ are a solution to the Navier-Stokes-Fourier equations given by Theorem \ref{thm:cauchy_NSF}. Then $\varphi$ satisfies
			\begin{equation*}\begin{split}
					\| \varphi & \|_{ L^\infty \left( [0, T) ; \mathbb{H}^s_x \left( \Ssm_v \right) \right) } + \| | \nabla_x |^{1-\alpha} \varphi \|_{ L^2 \left( [0, T) ; \mathbb{H}^s_x \left( \Ssm_v \right) \right) } \\
					& \lesssim \Big(1 + \| (\varrho_\ini, u_\ini, \theta_\ini) \|_{ \dot{\mathbb{H}}^{-\alpha} } + \| (\varrho, u, \theta) \|_{ L^\infty\left( [0, T) ; \mathbb{H}^{s}_x \right) } + \| \nabla_x (\varrho, u, \theta) \|_{ L^2\left( [0, T) ; \mathbb{H}^{s}_x \right) }\Big)^2,
			\end{split}\end{equation*}
			and for any $p \in (1, 2]$, where $p = 1$ is allowed for $d \ge 3$, its derivative $\partial_{t} \varphi$ satisfies
			\begin{equation*}\begin{split}
				\| \partial_t \varphi \|_{ L^p\left( [0, T) ; \mathbb{H}^{s-1}_x(\Ssm_v) \right) } & + \| \partial_t \varphi \|_{ L^p\left( [0, T) ; \dot{\mathbb{H}}^{-\frac{1}{2}}_x (\Ssm_v)\right) } \\
				& \lesssim \Big(1 + \| (\varrho, u, \theta) \|_{ L^\infty\left( [0, T) ; \mathbb{H}^{s}_x \right) } + \| \nabla_x (\varrho, u, \theta) \|_{ L^2\left( [0, T) ; \mathbb{H}^{s}_x \right) }\Big)^3.
			\end{split}\end{equation*}
		\end{lem}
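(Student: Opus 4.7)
The plan is to reduce everything to macroscopic Sobolev estimates on $(\varrho,u,\theta)$, exploit the bilinear continuity estimates for $\QQ$ in Lemma \ref{lem:Q_sobolev}, and then use the Navier--Stokes--Fourier equations themselves to handle $\partial_t f$. First I would observe that $f(t,x,\cdot)$ takes values in the finite-dimensional space $\nul(\LL)$ with a Gaussian weight $\mu$, so all the $v$-norms involved (namely $\|\cdot\|_{\Ss}$, $\|\cdot\|_{\Ssp}$, $\|\cdot\|_{\Ssm}$) restricted to this subspace are mutually equivalent. Consequently, one has uniformly in $(t,x)$
\[
\|f(t,x,\cdot)\|_{\Ss}+\|f(t,x,\cdot)\|_{\Ssp}\lesssim |\varrho(t,x)|+|u(t,x)|+|\theta(t,x)|,
\]
and therefore $\|f\|_{\mathbb{H}^{k}_{x}(Y_{v})}\lesssim \|(\varrho,u,\theta)\|_{\mathbb{H}^{k}_{x}}$ and $\|f\|_{\dot{\mathbb{H}}^{-\alpha}_{x}(Y_{v})}\lesssim \|(\varrho_{\ini},u_{\ini},\theta_{\ini})\|_{\dot{\mathbb{H}}^{-\alpha}_{x}}+\cdots$ for $Y_v\in\{\Ss,\Ssp\}$ and any $k$, thanks to the heat-like decay of NSF solutions and the Cauchy theory of Theorem \ref{thm:cauchy_NSF}.

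For the first estimate, I would apply \eqref{eq:Q_refined_sobolev_negative_algebra_inequality} of Lemma \ref{lem:Q_sobolev}: since $\|f\|_{\Ssp}\approx \|f\|_{\Ss}$ on macroscopic functions, it gives
\[
\|\QQ(f,f)\|_{\mathbb{H}^{s}_{x}(\Ssm_{v})}+\|\QQ(f,f)\|_{\dot{\mathbb{H}}^{-\alpha}_{x}(\Ssm_{v})}\lesssim \|f\|_{\mathbb{H}^{s}_{x}(\Ss_{v})}\bigl(\|f\|_{\mathbb{H}^{s}_{x}(\Ss_{v})}+\||\nabla_{x}|^{1-\alpha}f\|_{\mathbb{H}^{s-(1-\alpha)}_{x}(\Ssp_{v})}\bigr).
\]
The $L^{\infty}_{t}$ piece is then immediate from $(\varrho,u,\theta)\in L^{\infty}_{t}\mathbb{H}^{s}_{x}$. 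For the $L^{2}_{t}$ piece with $|\nabla_x|^{1-\alpha}$ outside, a further application of \eqref{eq:Q_refined_sobolev_negative_algebra_inequality} (putting the derivative on one factor) reduces the control to $\||\nabla_{x}|^{1-\alpha}(\varrho,u,\theta)\|_{L^{2}_{t}\mathbb{H}^{s-(1-\alpha)}_{x}}$, which itself interpolates between the energy $(\varrho,u,\theta)\in L^{\infty}_{t}\mathbb{H}^{s}_{x}$ and the dissipation $\nabla_{x}(\varrho,u,\theta)\in L^{2}_{t}\mathbb{H}^{s}_{x}$ provided by Theorem \ref{thm:cauchy_NSF}.

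For the time derivative, I would differentiate $\varphi=\QQ(f,f)$ in time and use bilinearity, $\partial_{t}\varphi=2\QQ^{\sym}(f,\partial_{t}f)$, so that reducing to macroscopic variables, it is enough to control $\partial_{t}(\varrho,u,\theta)$ in the appropriate space-time Sobolev norms. Using the NSF system \eqref{eq:NSF_1} itself, $\partial_t u$ and $\partial_t \theta$ split into a diffusive part ($\kappa_{\Inc}\Delta_{x}u$, $\kappa_{\Bou}\Delta_{x}\theta$) which is in $L^{2}_{t}\mathbb{H}^{s-1}_{x}$ via the dissipation estimate, and a transport part $\mathbb{P}(u\cdot\nabla_{x}u)$, $u\cdot\nabla_{x}\theta$ which is controlled in $L^{2}_{t}\mathbb{H}^{s-1}_{x}$ by the Sobolev algebra estimate (for $s>d/2$). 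Plugging back into \eqref{eq:Q_refined_sobolev_negative_algebra_inequality} with index $s$ replaced by $s-1$ and applying H\"older in time yields $\partial_t\varphi \in L^{p}_{t}\mathbb{H}^{s-1}_{x}(\Ssm_{v})$ for any $p\in(1,2]$.

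The main obstacle is the negative-order estimate $\partial_{t}\varphi\in L^{p}_{t}\dot{\mathbb{H}}^{-1/2}_{x}(\Ssm_{v})$, since products behave badly in negative Sobolev spaces and, in dimension $d=2$, we cannot exploit an optimal algebra for the nonlinear terms. The strategy here is to interpolate the NSF regularity: from $u\in L^{\infty}_{t}\mathbb{H}^{s}_{x}\cap L^{2}_{t}\mathbb{H}^{s+1}_{x}$ one obtains $u\in L^{4}_{t}\mathbb{H}^{s+1/2}_{x}$, so $\Delta_{x}u\in L^{4}_{t}\dot{\mathbb{H}}^{s-3/2}_{x}\hookrightarrow L^{4}_{t}\dot{\mathbb{H}}^{-1/2}_{x}$ (using $s\ge 1$); similarly for the transport part via Proposition \ref{prop:product_homogeneous_sobolev}, and analogously for $\theta$. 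This yields $\partial_{t}(\varrho,u,\theta)\in L^{p}_{t}\dot{\mathbb{H}}^{-1/2}_{x}$ for $p$ in the announced range, with the restriction $p>1$ in $d=2$ stemming from the need to use strict integrability in the product estimates with critical Sobolev exponents. The final bound then follows from a last application of Lemma \ref{lem:Q_sobolev} combined with H\"older in time.
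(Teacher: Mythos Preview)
Your overall strategy---reduce to macroscopic scalars via the finite-dimensional structure of $\nul(\LL)$, then invoke the NSF equations to express $\partial_t(\varrho,u,\theta)$---matches the paper's, and the estimate on $\varphi$ itself is fine. The genuine gap is in the $\dot{\mathbb{H}}^{-1/2}_x$ control of $\partial_t\varphi$.

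First, the embedding $\dot{\mathbb{H}}^{s-3/2}\hookrightarrow\dot{\mathbb{H}}^{-1/2}$ you invoke is false for homogeneous spaces (no low-frequency control). The conclusion $\Delta_x u\in\dot{\mathbb{H}}^{-1/2}$ can be rescued, since $u\in L^2$ handles low frequencies and $u\in H^{s+1/2}$ the high ones for $s\ge 1$; but the more serious problem is the next step. You propose to combine $\partial_t u\in\dot{\mathbb{H}}^{-1/2}$ with $\varrho\in H^s$ via ``a last application of Lemma~\ref{lem:Q_sobolev}'' to obtain $\varrho\,\partial_t u\in\dot{\mathbb{H}}^{-1/2}$. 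But Lemma~\ref{lem:Q_sobolev} gives no $\dot{\mathbb{H}}^{-1/2}$ output, and the natural product rule (Proposition~\ref{prop:product_homogeneous_sobolev}) would force the excluded endpoint $s_1=d/2$. In short, the $\dot{\mathbb{H}}^{-1/2}$ estimate does \emph{not} factor through $\partial_t(\varrho,u,\theta)\in\dot{\mathbb{H}}^{-1/2}$.

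The paper instead estimates the full products $\varrho\,\partial_t u$, $u\otimes\partial_t u$, etc.\ directly, after expanding $\varphi$ as a finite sum $u\otimes u:\varphi_1+u\varrho\cdot\varphi_2+\varrho^2\varphi_3$. The key device you are missing is, for the diffusive contribution, the rewriting
\[
\varrho\,\Delta_x u=\nabla_x\cdot(\varrho\,\nabla_x u)-\nabla_x\varrho\cdot\nabla_x u,
\]
which leaves at most one derivative on each factor and makes Proposition~\ref{prop:product_homogeneous_sobolev} applicable with admissible exponents; for the transport contribution $\varrho\,\mathbb{P}(u\cdot\nabla_x u)$ the paper applies that product rule twice (treating it as a triple product) with $d$-dependent exponents. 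This is also what delivers $p=1$ for $d\ge 3$ (two $L^2_t$ factors in the final bound), which your $L^4_t$-interpolation route does not reach. A smaller point: invoking Lemma~\ref{lem:Q_sobolev} ``with $s$ replaced by $s-1$'' for the $H^{s-1}_x$ bound is not licit unless $s-1>d/2$; the paper sidesteps this by doing the algebra at level $H^s$ on the scalar products and embedding into $H^{s-1}$ only at the end.
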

		
		\begin{proof}
			The function $\varphi(t, x) \in \Ssm$ writes for some $\varphi_j \in \Ssm$ as (note that $\varrho = - \theta$)
			$$\varphi(t, x) = u(t, x) \otimes u(t, x) : \varphi_1 + u(t, x) \varrho(t, x) \cdot \varphi_2 + \varrho^2(t, x) \varphi_3,$$
			and its derivative writes
			\begin{align*}
				\partial_t \varphi = & 2 \left(\partial_t u\right) \otimes u : \varphi_1
				+ \big[\left(\partial_t u\right) \varrho + u \left( \partial_t \varrho \right)\big] \cdot \varphi_2
				+ 2 \varrho \left(\partial_t \varrho\right) \varphi_3
			\end{align*}
			We only prove that the term $\varrho \left(\partial_t u\right)$ satisfies the estimates of the lemma, the other ones being treated the same way. Since $(\varrho, u, \theta)$ is a solution of the Navier-Stokes-Fourier system, using the formulation \eqref{eq:NSF_1}, the term writes (omitting constants)
			$$\varrho \left(\partial_t u\right) = \varrho \Big( \mathbb{P} \left(u \cdot \nabla_x u\right) \Big) + \varrho \Delta_x u.$$
			We will require the product rules recalled in Appendix \ref{scn:littlewood-paley}:
			\begin{gather}
				\label{eq:homogeneous_product}
				\forall s_1, s_2 \in \left(-\frac{d}{2}, \frac{d}{2}\right), \quad s_1 + s_2 > 0, \quad \| g h \|_{ \dot{\mathbb{H}}^{s_1 + s_2 - \frac{d}{2}} } \lesssim \| g \|_{\dot{\mathbb{H}}^{s_1} } \| h \|_{\dot{\mathbb{H}}^{s_2} },\\
				\label{eq:inhomogeneous_product}
				\forall s_1 > 0, ~\forall s_2 > \frac{d}{2}, \quad \| g h \|_{ H^s_1 } \lesssim \| g \|_{\dot{\mathbb{H}}^{s_1} } \| h \|_{\dot{\mathbb{H}}^{s_2} } + \| g \|_{\dot{\mathbb{H}}^{s_2} } \| h \|_{\dot{\mathbb{H}}^{s_1} },
			\end{gather}
			and this last one which can be proved as \eqref{eq:Q_refined_sobolev_negative_algebra_inequality}:
			\begin{equation}
				\label{eq:inhomogeneous_product_2}
				\forall s > \frac{d}{2}, ~\forall r \in \left[0, \frac{d}{2}\right), \quad \| g h \|_{ H^s } \lesssim \| | \nabla_x |^r g \|_{ H^{s-r}_x } \| h \|_{H^{s} } .
			\end{equation}
			\step{1}{The estimates for $\varrho \Big( \mathbb{P} \left(u \cdot \nabla_x u\right) \Big)$}
			Using the algebra structure of $H^s$ and $\mathbb{P} \in \BBB\left( H_x^s \right)$:
			$$\left\| \varrho \Big( \mathbb{P} \left(u \cdot \nabla_x u\right) \Big) \right\|_{H^{s-1}_x} \le \left\| \varrho \Big( \mathbb{P} \left(u \cdot \nabla_x u\right) \Big) \right\|_{H^{s}_x} \lesssim \| \varrho \|_{\mathbb{H}^s_x} \| u \|_{\mathbb{H}^s_x} \| \nabla_x u \|_{\mathbb{H}^s_x} \in L^2_t.$$
			Applying the product rule \eqref{eq:homogeneous_product} a first time with the parameters $(s_1, s_2)= \left( \frac{1}{2}, \frac{d-1}{2} \right)$ and using the boundedness $\mathbb{P} \in \BBB\left( \dot{\mathbb{H}}^{(d-1)/2} \right)$
			\begin{align*}
				\left\| \varrho \Big( \mathbb{P} \left(u \cdot \nabla_x u\right) \Big) \right\|_{ \dot{\mathbb{H}}^{-\frac{1}{2}} } & \lesssim \| \varrho \|_{L^2_x} \left\| \mathbb{P} \left(u \cdot \nabla_x u\right) \right\|_{ \dot{\mathbb{H}}^{(d-1)/2}_x }  \lesssim \| \varrho \|_{L^2_x} \left\| u \cdot \nabla_x u \right\|_{ \dot{\mathbb{H}}^{(d-1)/2}_x },
			\end{align*}
			and then using the product rule \eqref{eq:homogeneous_product} a second time with the parameters $(s_1, s_2) = \left( \nu , d - \frac{1}{2} - \nu \right)$ for some $\nu \in \left( \frac{d-1}{2} , \frac{d}{2} \right)$ so that $s_2 \in \left( \frac{d-1}{2} , \frac{d}{2} \right)$
			$$
			\left\| \varrho \Big( \mathbb{P} \left(u \cdot \nabla_x u\right) \Big) \right\|_{ \dot{\mathbb{H}}^{-\frac{1}{2}} } \lesssim \| \varrho \|_{L^2_x} \| u \|_{ \dot{\mathbb{H}}^\nu_x } \left\| \nabla_x u \right\|_{ \dot{\mathbb{H}}^{ d - \nu - \frac{1}{2} }_x }.$$
			When $d \ge 3$, we have $\nu \in \left( 1, s \right)$ and $d - \nu - \frac{1}{2} < s$, and thus
			$$\| \varrho u \cdot \nabla_x u \|_{ \dot{\mathbb{H}}^{-\frac{1}{2}} } \lesssim  \| \varrho \|_{ \mathbb{H}^s_x } \| \nabla_{x} u \|_{ H^{s-1}_x } \| \nabla_{x} u \|_{ H^{s}_x } \in L^1_t \cap L^2_t,$$
			and when $d = 2$, since $\nu \in (\frac{1}{2}, 1)$, we have by interpolation
			$$\| \varrho u \cdot \nabla_x u \|_{ \dot{\mathbb{H}}^{-\frac{1}{2}} } \lesssim \| \varrho \|_{ \mathbb{H}^s_x } \| u \|_{\mathbb{H}^s_x}^{1-\nu} \| \nabla_{x} u \|_{\mathbb{H}^s_x}^{1+\nu} \in L^2_t \cap L^{\frac{2}{1+\nu}}_t,$$
			thus, taking $\nu$ arbitrarily small to $1$ yields the result.
			
			\step{2}{The estimates for $\varrho \Delta_x u$}
			We rewrite this term as
			$$\varrho \Delta_x u = \nabla_x \cdot \left( \varrho \nabla_x u \right) - \nabla_x \varrho \cdot \nabla_x u,$$
			from which we deduce
			$$\| \varrho \Delta_x u \|_{H^{s-1} } \lesssim \| \varrho \nabla_x u \|_{\mathbb{H}^s_x} + \| \nabla_x \varrho \cdot \nabla_x u \|_{H^{s-1} }.$$
			Using for the first term the product rule \eqref{eq:inhomogeneous_product_2} with $\nu \in (0, 1)$ when $d = 2$ or $\nu = 1$ when $d \ge 3$, and \eqref{eq:inhomogeneous_product} for the second term, we have
			\begin{equation*}\begin{split}
				\| \varrho \Delta_x u \|_{H^{s-1} } \lesssim &
				\| | \nabla_x |^\nu \varrho \|_{H^{s-\nu}_x} \| \nabla_x u \|_{\mathbb{H}^s_x} 
				 + \| \nabla_x \varrho \|_{H^{s-1}_x} \| \nabla_x u \|_{\mathbb{H}^s_x} \\
				& + \| \nabla_x \varrho \|_{\mathbb{H}^s_x} \| \nabla_x u \|_{H^{s-1}_x} \in L^2_t \cap L^{ \frac{2}{1 + \nu } }.
			\end{split}\end{equation*}
			Furthermore, using the product rule \eqref{eq:homogeneous_product} with the parameters $(s_1, s_2) = \left( \nu, \frac{ d + 1 }{2} - \nu\right)$ for some~$\nu \in \left( \frac{1}{2} , \frac{d}{2} \right)$, and with $(s_1, s_2) = \left( \frac{d-1}{2}, 0 \right)$, we have
			\begin{equation*}\begin{split}
				\| \varrho \Delta_x u \|_{ \dot{\mathbb{H}}^{-\frac{1}{2}}_x } & \lesssim \| \varrho \nabla_x u \|_{ \dot{\mathbb{H}}^{\frac{1}{2}}_x } + \| \nabla_x \varrho \cdot \nabla_x u \|_{ \dot{\mathbb{H}}^{-\frac{1}{2}}_x } \\
				& \lesssim \| \varrho \|_{ \dot{\mathbb{H}}^{\nu}_x } \| \nabla_x u \|_{ \dot{\mathbb{H}}^{(d+1)/2 - \nu}_x } + \| \nabla_x \varrho \|_{ \dot{\mathbb{H}}^{(d-1)/2 }_x } \| \nabla_x u \|_{ L^2_x } \\
				& \lesssim \| \nabla_x u \|_{\mathbb{H}^s_x} \left( \| \varrho \|_{ \dot{\mathbb{H}}^\nu_x } + \| \nabla_x \varrho \|_{ H^{s-1}_x } \right).
					\end{split}\end{equation*}
			In the case $d \ge 3$, we deduce taking $\nu = 1$
			$$\| \varrho \Delta_x u \|_{ \dot{\mathbb{H}}^{-\frac{1}{2}}_x } \lesssim \| \nabla_x u \|_{\mathbb{H}^s_x} \| \nabla_x \varrho \|_{ H^{s-1}_x } \in L^1_t \cap L^2_t,$$
			and when $d = 2$, by interpolation,
			$$\| \varrho \Delta_x u \|_{ \dot{\mathbb{H}}^{-\frac{1}{2}}_x } \lesssim \| \nabla_x u \|_{\mathbb{H}^s_x} \| \nabla_x \varrho \|_{ H^{s-1} }^\nu \| \varrho \|_{ \mathbb{H}^s_x }^{1-\nu} \in L^{\frac{2}{1+\nu}}_t \cap L^2_t,$$
			from which we conclude the the result by taking $\nu$ arbitrarily close to $1$. This concludes the proof of the estimates for $\partial_t \varphi$.
			
			For the estimates of $\varphi$, one proves similarly
			$$\| \varrho u \|_{ \mathbb{H}^s_x } \lesssim \| \varrho \|_{ \mathbb{H}^s_x } \| u \|_{ \mathbb{H}^s_x }$$
			and
			$$\| | \nabla_x |^{1 - \alpha} (\varrho u) \|_{ \mathbb{H}^s_x } \lesssim \left( \| | \nabla_x |^{1 - \alpha} \varrho \|_{ \mathbb{H}^s_x } + \| \nabla_x \varrho \|_{ \mathbb{H}^s_x } \right) \| u \|_{ \mathbb{H}^s_x } + \| \varrho \|_{ \mathbb{H}^s_x } \| \nabla_x u \|_{ \mathbb{H}^s_x }$$
			which allows to conclude using Lemma \ref{lem:NS_parabolic_space}. This concludes the proof.
		\end{proof}
		
		\begin{lem}[\thttl{The Navier-Stokes-Fourier solution and the space $\SSSs$}]
			\label{lem:NS_parabolic_space}
			The Navier-Stokes solution in its kinetic form belongs to the space $\rSSSs{s}$ (where the parameter $\alpha$ defines this space):
			\begin{align*}
				\Nt f \Nt_{ \SSSs } \lesssim \| (\varrho_\ini, u_\ini, \theta_\ini) \|_{ \dot{\mathbb{H}}^{-\alpha}_x   } + \| (\varrho, u, \theta) \|_{ L^\infty \left( [0, T) ; \mathbb{H}^s_x \right) } + \| \nabla_x (\varrho, u, \theta) \|_{ L^2 \left( [0, T) ; \mathbb{H}^s_x \right) }\,.
			\end{align*}
			Furthermore, it can be approximated by a smoother sequence; there exists $\left(f_\eps\right)_{ \eps \in (0, 1] } \in \rSSSs{s+1}$ such that
			$$\lim_{\eps \to 0} \Nt f_\eps - f \Nt_{\rSSSs{s}}=0.$$
		\end{lem}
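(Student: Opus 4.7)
The plan is to exploit that $f(t,x,v)$ is purely macroscopic, taking values in $\nul(\LL)$ for every $(t,x)$. Because the weight $\mu$ has finite moments of all orders (as guaranteed by \ref{L2} together with Remark \ref{rem:KernelH2}), the mapping $(\varrho,u,\theta) \mapsto (\varrho + u\cdot v + C\theta(|v|^2-E))\mu$ is continuous from $\R^{d+2}$ to $\Ssp_v$ with bounded image. Consequently
\[
\|f(t,x,\cdot)\|_{\Ssp_v} \lesssim |\varrho(t,x)| + |u(t,x)| + |\theta(t,x)|,
\]
which, after taking $L^2_x$ norms of fractional derivatives, yields $\|g\cdot f(t)\|_{\mathbb{H}^s_x(\Ssp_v)} \lesssim \|g\cdot (\varrho,u,\theta)(t)\|_{\mathbb{H}^s_x}$ for any Fourier multiplier $g$. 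This macroscopic reduction turns the lemma into a statement purely about the hydrodynamic unknowns.

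Using $w_{\phi,\eta} \le 1$, the pointwise-in-time part of $\Nt f\Nt_{\SSSs^s}$ is immediately controlled by $\|(\varrho,u,\theta)\|_{L^\infty_t \mathbb{H}^s_x}$. For the time-integral part, I split frequencies:
\[
|\xi|^{2(1-\alpha)}\la\xi\ra^{2s} \lesssim \mathbf{1}_{|\xi|\le 1}|\xi|^{2(1-\alpha)} + \mathbf{1}_{|\xi|\ge 1}|\xi|^2\la\xi\ra^{2s},
\]
so that
\[
\||\nabla_x|^{1-\alpha} f\|_{\mathbb{H}^s_x(\Ssp_v)}^2 \lesssim \||\nabla_x|^{1-\alpha}(\varrho,u,\theta)\|_{L^2_x}^2 + \|\nabla_x(\varrho,u,\theta)\|_{\mathbb{H}^s_x}^2.
\]
The second term integrates in time to the third piece of the right-hand side. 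In $d\ge 3$ we have $\alpha=0$ and the first term reduces to $\|\nabla_x(\varrho,u,\theta)\|_{L^2_x}$, already covered. In $d=2$ with $0 < \alpha < 1/2$, the bound on the first term is the only nontrivial point.

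To handle it I invoke the Duhamel formulation \eqref{eq:NSF_1}:
\[
u(t) = e^{\kappa_\Inc t \Delta_x} \mathbb P u_\ini - \int_0^t e^{\kappa_\Inc(t-\tau)\Delta_x}\mathbb P\big[\nabla_x\cdot (u\otimes u)\big](\tau)\,\d\tau,
\]
and its analogue for $\theta$ (with $\varrho=-\theta$). For the linear part, a direct Fourier computation gives the standard parabolic smoothing estimate
\[
\big\|\,|\nabla_x|^{1-\alpha} e^{\kappa_\Inc t\Delta_x}\mathbb P u_\ini\big\|_{L^2_t L^2_x} \lesssim \|u_\ini\|_{\dot{\mathbb H}^{-\alpha}_x},
\]
which is exactly why the assumption $(\varrho_\ini,u_\ini,\theta_\ini)\in\dot{\mathbb H}^{-\alpha}_x$ appears in the statement. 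For the nonlinear contribution, the same parabolic smoothing reduces the task to controlling $\||\nabla_x|^{-\alpha}(u\otimes u)\|_{L^2_tL^2_x}$ (and similarly $u\theta$), which follows from the homogeneous product rule \eqref{eq:homogeneous_product} with parameters $(s_1,s_2)$ adjusted to land in $\dot{\mathbb H}^{-\alpha+d/2}$, combined with the controls $\|u\|_{L^\infty_t \mathbb H^s_x}$ and $\|\nabla_x u\|_{L^2_t \mathbb H^s_x}$ from the Cauchy theory. The Fourier estimate in $\theta$ is identical, and yields the required bound.

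For the approximation claim, set $f_\eps := \rho_\eps *_x f$ where $\rho_\eps$ is a smooth Friedrichs mollifier in the position variable. For each $\eps>0$, $f_\eps$ is arbitrarily smooth in $x$, hence lies in $\rSSSs{s+1}$, and the convergence $\Nt f_\eps - f\Nt_{\rSSSs{s}} \to 0$ is a routine density argument: both $\|\cdot\|_{\mathbb H^s_x(\Ssp_v)}$ and $\||\nabla_x|^{1-\alpha}\cdot\|_{L^2_t \mathbb H^s_x(\Ssp_v)}$ are continuous with respect to mollification thanks to the bounds on $f$ obtained in the first part. The main obstacle is the low-frequency estimate in $d=2$, specifically the bilinear control of the nonlinear source term in $\dot{\mathbb H}^{-\alpha}_x$ by means only of the quantities appearing on the right-hand side.
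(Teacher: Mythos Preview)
Your overall structure matches the paper's: reduce to macroscopic variables, handle high frequencies trivially, and for $d=2$ attack the low-frequency part $\||\nabla_x|^{1-\alpha}u\|_{L^2_tL^2_x}$ via Duhamel and parabolic smoothing. You also correctly flag the bilinear control of the nonlinear source in $\dot{\mathbb H}^{-\alpha}_x$ as the crux. However, your resolution of that crux has a genuine gap.

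You claim the nonlinear contribution reduces to $\|u\otimes u\|_{L^2_t\dot{\mathbb H}^{-\alpha}_x}$, to be closed by the product rule together with $\|u\|_{L^\infty_t\mathbb H^s_x}$ and $\|\nabla_x u\|_{L^2_t\mathbb H^s_x}$ alone. This does not work. In $d=2$ the product rule forces $s_1+s_2=1-\alpha$ with $s_1,s_2\in(-1,1)$; interpolating $u\in L^\infty_t L^2_x\cap L^2_t\dot{\mathbb H}^1_x$ gives $\|u\|_{\dot{\mathbb H}^{s_j}}\in L^{2/s_j}_t$, so the product lands in $L^{2/(1-\alpha)}_t$, which is strictly weaker than $L^2_t$ when $\alpha>0$. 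Any attempt to borrow an $L^\infty_t$ factor from $\|u\|_{\mathbb H^s_x}$ forces the remaining exponent to $s_1=1$ (unattainable) or to a negative index not controlled by the right-hand side. The paper's way out is a \emph{bootstrap}: it uses the other Young split (kernel in $L^2_t$, source in $L^1_t$) and the product estimate $\|u\otimes u\|_{\dot{\mathbb H}^{1-\alpha}}\lesssim\|u\|_{\dot{\mathbb H}^{1-\alpha/2}}^2\lesssim\|\nabla_x u\|_{L^2_x}\||\nabla_x|^{1-\alpha}u\|_{L^2_x}$, so that the unknown $X:=\int_0^T\||\nabla_x|^{1-\alpha}u\|_{L^2_x}^2$ reappears on the right as $X^{1/2}Y^{1/2}$ with $Y=\int_0^T\|\nabla_x u\|_{L^2_x}^2$, and is then absorbed by Young's inequality. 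This absorption step is the missing idea in your argument.

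For the approximation claim, your mollification-in-$x$ is a different route from the paper, which instead exploits the instantaneous $\mathbb H^{s+1}_x$-regularization of Navier--Stokes: the paper sets $f_\eps$ equal to a fixed $\mathbb H^{s+1}_x$ approximant of $f_\ini$ on a short interval $[0,\delta_\eps]$ and equal to $f$ afterwards. Your mollification would work too, but the uniform-in-time convergence $\sup_t\|f_\eps(t)-f(t)\|_{\mathbb H^s_x(\Ssp_v)}\to 0$ is not ``routine'': mollification does not converge uniformly over merely bounded sets, so you need to invoke compactness of the trajectory $\{f(t):t\in[0,T)\}$ in $\mathbb H^s_x(\Ssp_v)$, which follows from continuity of $f$ together with $f(t)\to 0$ as $t\to\infty$ in the global case.
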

		
		\begin{proof}
			\step{1}{Bound in $\SSSs$}
			We only need to consider the case $d = 2$ and only prove the estimate for $u$. We start by applying Duhamel's principle to \eqref{eq:NSF_2}:
			$$u(t) = e^{t \kappa_\Inc \Delta_x} u_\ini - \int_0^t e^{(t-\tau) \Delta_x} \varphi(\tau) \d \tau, \qquad \varphi := \mathbb{P} \big[ \nabla_x \cdot \left( u \otimes u \right) \big],$$
			from which we obtain
			\begin{align*}
				\int_0^T \| |\nabla_x|^{ 1  - \alpha } u(t) \|^2_{ L^2_x } \d t \lesssim & \int_0^T \left\| |\nabla_x|^{ 1  - \alpha } e^{ t \kappa_\Inc \Delta_x} u_\ini \right\|_{L^2_x}^2 \d t \\
				& + \int_0^T \left\| |\nabla_x|^{ 1  - \alpha } \int_0^t e^{(t-\tau) \Delta_x} \varphi(\tau) \d \tau  \right\|_{L^2_x}^2 \d t,
			\end{align*}
			or, equivalently, in Fourier variables:
				\begin{equation*}\begin{split}
				\int_0^T \| |\nabla_x|^{ 1  - \alpha } u(t) \|^2_{ L^2_x } \d t \lesssim & \int_{\R^{d}} | \xi |^{-2 \alpha} | \widehat{u}_\ini(\xi) |^2 \int_0^T | \xi |^2 e^{ - 2 \kappa_\Inc  t |\xi|^2} \d t \, \d \xi \\
				& + \int_{\R^{d}} | \xi|^{ - 2 \alpha } \int_0^T \left( \int_0^t | \xi | e^{(t-\tau) |\xi|^2} \widehat{\varphi}(\tau, \xi) \d \tau  \right)^2 \d t \, \d \xi.
			\end{split}\end{equation*}
			Using Young's convolution inequality in the form $L^2\left( [0, T] \right) \ast L^1\left( [0, T] \right) \hookrightarrow  L^2\left( [0, T] \right)$ followed by Minkowski's integral inequality for the second term, we thus get
			\begin{equation*}\begin{split}
					\int_0^T \| |\nabla_x|^{ 1  - \alpha } u(t) \|^2_{ L^2_x } \d t &\lesssim  \int_{\R^{d}} | \xi |^{-2 \alpha} | \widehat{u}_\ini(\xi) |^2 \d \xi 
					+ \int_{\R^{d}} | \xi|^{ - 2 \alpha } \left(\int_0^T \left| \widehat{\varphi}(t, \xi) \right| \d t\right)^2 \, \d \xi \\
					&\lesssim  \| u_\ini \|^2_{ \dot{\mathbb{H}}^{-\alpha}_x } + \left(\int_0^T \| \varphi(t) \|_{ \dot{\mathbb{H}}^{-\alpha}_x } \d t\right)^2.
			\end{split}\end{equation*}
			Since $\mathbb{P} \in \BBB\left( \dot{\mathbb{H}}_x^{-\alpha} \right)$, we get using the product rule \eqref{eq:homogeneous_product} (using that $\frac{d}{2} = 1$) and then by interpolation
			$$
				\| \varphi \|_{ \dot{\mathbb{H}}_x^{-\alpha} }  \lesssim \| u \otimes u \|_{\dot{\mathbb{H}}_x^{1-\alpha}}   \lesssim \| u \|_{\dot{\mathbb{H}}_x^{1-\frac{\alpha}{2}}}^2   \lesssim \| \nabla_x u \|_{L^2_x} \| | \nabla_x |^{1-\alpha} u \|_{L^2_x}$$			
				from which we conclude using Cauchy-Schwarz
			$$
				\int_0^T \| |\nabla_x|^{ 1  - \alpha } u(t) \|^2_{ L^2_x } \d t \lesssim  \| u_\ini \|^2_{ \dot{\mathbb{H}}^{-\alpha}_x } + \left(\int_0^T \| |\nabla_x|^{ 1  - \alpha } u(t) \|^2_{ L^2_x } \d t\right)^{\frac{1}{2}} \left(\int_0^T \| \nabla_x u(t) \|^2_{ L^2_x } \d t\right)^{\frac{1}{2}},
			$$
			and thus, by Young's inequality
			$$	\int_0^T \| |\nabla_x|^{ 1  - \alpha } u(t) \|^2_{ L^2_x } \d t \lesssim  \| u_\ini \|^2_{ \dot{\mathbb{H}}^{-\alpha}_x } +\int_0^T \| \nabla_x u(t) \|^2_{ L^2_x } \d t.
			$$
			This concludes this step.
			
			\step{2}{Approximation by functions in $\rSSSs{s+1}$}
			Since the solution is instantly regularized in the sense that 
			$$(\nabla_x \varrho, \nabla_x u, \nabla_x \theta) \in L^2 \left( [0, T) ; H^s_x \right),$$
			and in virtue of the control from Theorem \ref{thm:cauchy_NSF}, there holds for any $\delta \in (0, T)$
				\begin{equation*}\begin{split}
				\| (\varrho, u, \theta) \|_{ L^\infty \left( [\delta, T) ; H^{s+1} \right) } + \| \nabla_x & (\varrho, u, \theta) \|_{ L^2 \left( [\delta, T) ; H^{s+1} \right) } \\
				& \le C \left\| \left(\varrho(\delta), u(\delta), \theta(\delta) \right) \right\|_{\mathbb{H}^{s+1}_x } \exp\left( C \| \nabla_x u \|_{ L^2 \left( [0, T) ; H^{\frac{d}{2}-1}_x \right) } \right),
			\end{split}\end{equation*}
			and thus from \textit{Step 1}, we have that
			$f( \delta + \cdot ) \in \rSSSs{s+1}$. From this observation, considering (by the continuity of $f$ and the density of $H^{s+1}_x$) for any $\eps > 0$ some $\delta_\eps > 0$ and $f_{\ini, \eps} \in H^{s+1}_x \left( \Ss_v \right)$ such that
			$$\sup_{0 \le t \le \delta_\eps} \| f(t) - f_{\ini, \eps} \|_{ H^s_x \left( \Ss_v \right) } \le \eps, \qquad \int_0^{\delta_\eps} \| | \nabla_x |^{1-\alpha} f(t) \|_{ H^s_x \left( \Ss_v \right) }^2 \d t \le \eps,$$
			and up to a reduction of $\delta_\eps$
			$$\| | \nabla_x |^{1 - \alpha} f_{\ini, \eps} \|_{ H^s_x \left( \Ss_v \right) } \le \frac{\eps}{\delta_\eps}.$$
			We can therefore define
			$$f_\eps(t) =
			\begin{cases}
				f_{\ini, \eps}, & t \in [0, \delta_\eps), \\
				f(t), & t \in [\delta_\eps, T),
			\end{cases}
			$$
			so as to have $f_\eps \in \rSSSs{s+1}$ and
			$$\Nt f_\eps - f \Nt_{\rSSSs{s}}^2 \lesssim \sup_{0 \le t \le \delta_\eps} \| f(t) - f_{\ini, \eps} \|_{ H^s_x \left( \Ss_v \right) }^2  + \int_0^{\delta_\eps} \left\| | \nabla_x |^{1-\alpha} f(t) - | \nabla_x |^{1-\alpha} f_{\ini, \eps} \right\|_{ H^s_x \left( \Ss_v \right) }^2 \d t \lesssim \eps^2,$$			which concludes the proof.
		\end{proof}

		\bibliographystyle{plainnat-linked}

		%	\bibliography{biblio}

	\end{document}